\numberwithin{equation}{section}
\numberwithin{figure}{section}
\theoremstyle{plain}
\newtheorem{thm}{Theorem}[section]
\newtheorem{lem}[thm]{Lemma}
\newtheorem{cor}[thm]{Corollary}
\newtheorem{con}[thm]{Conjecture}
\theoremstyle{remark}
\newtheorem{rmk}[thm]{Remark}
\newcommand{\M}{\operatorname{M}}
\newcommand{\Hf}{\operatorname{H}}
\newcommand{\PP}{\operatorname{PP}}
\begin{document}

\title{Tiling Enumeration of Hexagons with Off-central Holes}

\author{Tri Lai}
\address{Department of Mathematics, University of Nebraska -- Lincoln, Lincoln, NE 68588}
\email{tlai3@unl.edu}
\thanks{This research was supported in part  by Simons Foundation Collaboration Grant (\# 585923).}

\subjclass[2010]{05A15,  05B45}

\keywords{perfect matchings, plane partitions, lozenge tilings, dual graph,  graphical condensation.}

\date{\today}

\dedicatory{}

\begin{abstract}
In the prequel of the paper (\texttt{arXiv:1803.02792}), we considered exact enumerations of the cored versions of a doubly-intruded hexagon. The result generalized Ciucu's work about $F$-cored hexagons (\emph{Adv. Math. 2017}). In this paper, we provide an extensive list of 30 tiling enumerations of hexagons with three collinear chains of triangular holes with alternating orientations. Besides two chains of holes attaching to the boundary of the hexagon, we remove one more chain of triangles that is slightly off the center of the hexagon. Two of our enumerations imply two conjectures posed by Ciucu, Eisenk\"{o}lbl, Krattenthaler, and Zare (\emph{J. Combin. Theory Ser. A 2001}) as two very special cases.
\end{abstract}

\maketitle

\section{Introduction}
Motivated by MacMahon's elegant product formula for lozenge tilings a hexagon \cite{Mac}, James Propp posed a problem asking for a tiling formula for an almost regular hexagon with the central unit triangle removed (see Problem 2 in \cite{Propp}). The problem has been solved and generalized by a number of authors (see e.g. \cite{Ciu2, HG, OK,CEKZ,CK,Ciu1,HoleDent}). A milestone on this line of work is when Ciucu, Eisenk\"{o}lbl, Krattenthaler and Zare \cite{CEKZ} enumerated the \emph{cored hexagon} (or \emph{punctured hexagon}) $C_{x,y,z}(m)$ that are obtained by removing the central equilateral triangle of side-length $m$ from the hexagon $H$ of side-lengths $x, y+m, z,x+m,y,z+m$. We define this region in details in the next paragraph.

\begin{figure}\centering
\includegraphics[width=12cm]{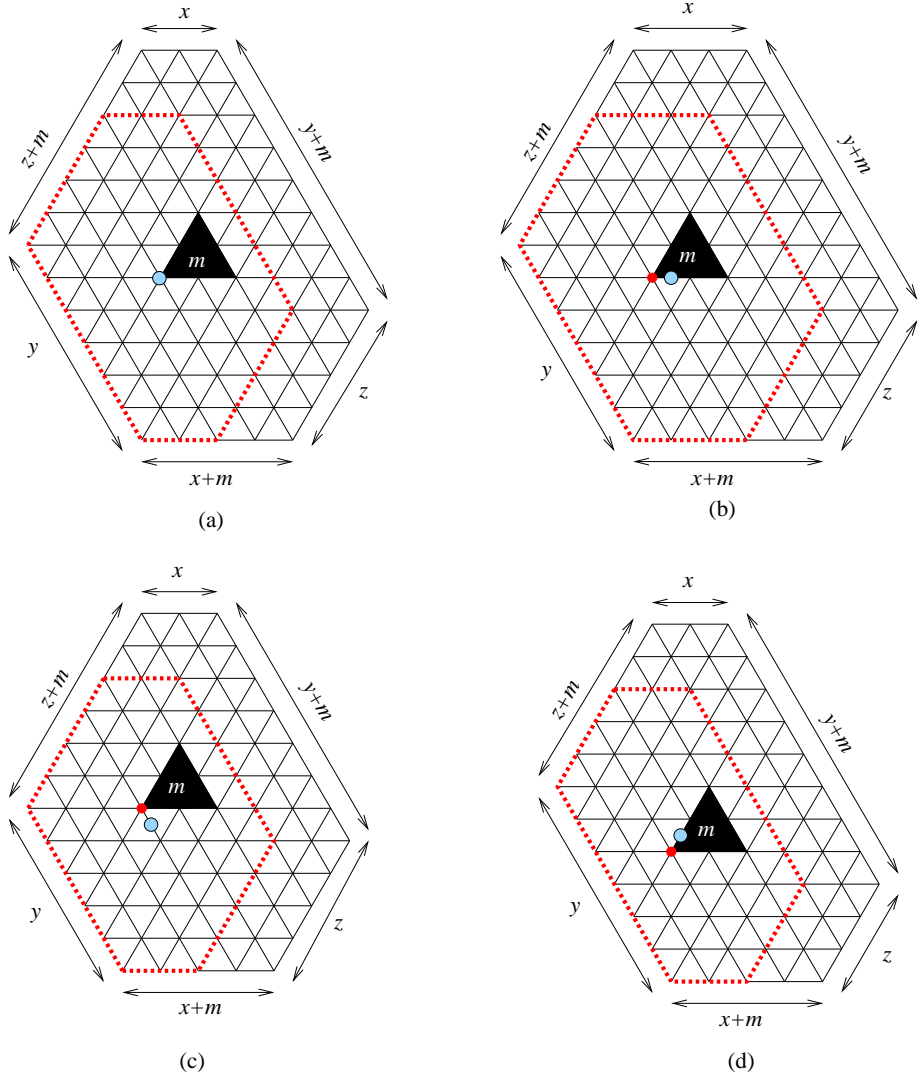}
\caption{(a) The cored hexagon $C_{2,6,4}(2)$. (b) The cored hexagon $C_{3,6,4}(2)$. (c) The cored hexagon $C_{2,5,4}(2)$. (d) The cored hexagon $C_{2,6,3}(2)$. }\label{fig:core}
\end{figure}

We start with an \emph{auxiliary hexagon} $H_0$ with side-lengths\footnote{From now on, we always list the side-lengths of a hexagon in the clockwise order from the north side.} $x,y,z,x,y,z$ (see the hexagons with the dashed boundary in Figure \ref{fig:core}). Next, we push the north, the northeast, and the southeast sides of the hexagon $m$ units outward, and keep other sides staying put. This way, we get a larger hexagon $H$, called the \emph{base hexagon}, of side-lengths $x, y+m, z,x+m,y,z+m$.  Finally, we remove an up-pointing $m$-triangle such that its left vertex is located at the closest lattice point to the center of the auxiliary hexagon $H_0$. Precisely, there are four cases to distinguish based on the parities of $x,y,z$.  When $x$, $y$ and $z$ have the same parity, the center of the hexagon is a lattice vertex and our removed triangle has the left vertex at the center. One readily sees that, in this case, the triangular hole stays evenly between each pair of parallel sides of the hexagon $H$. In particular, the distance between the north side of the hexagon and the top of the triangular hole and the distance between the base of the triangular hole and the south side of the hexagon are both $\frac{y+z}{2}$; the distances corresponding to the northeast and southwest sides of the hexagon are both $\frac{x+z}{2}$; the distances corresponding to the northwest and southeast sides of the hexagon are both $\frac{x+y}{2}$ (see Figure \ref{fig:core}(a); the hexagon with dashed boundary indicates the auxiliary hexagon).  Next, we consider the case when $x$ has parity different from that of $y$ and $z$. In this case, the center of the auxiliary hexagon $H_0$ is \emph{not} a lattice vertex anymore. It is the middle point of a horizontal unit lattice interval. We now place the triangular hole such that its leftmost is $1/2$ unit to the left of the center of the auxiliary hexagon (illustrated in Figure \ref{fig:core}(b); the larger shaded dot indicates the center of the auxiliary hexagon). Similarly, if $y$ has the opposite parity  to $x$ and $z$, we place the triangular hole $1/2$ unit to the northwest of the center of the auxiliary hexagon $H_0$ (shown in Figure \ref{fig:core}(c)). Finally, if $z$ has parity different from that of $x$ and $y$, the hole is located $1/2$ unit to the southwest of the center of $H_0$ (see Figure \ref{fig:core}(d)).

The work of Ciucu, Eisenk\"{o}lbl, Krattenthaler and Zare has been generalized further by Ciucu and Krattenthaler when the central triangular hole was extended to a cluster of  four triangles, called a `\emph{shamrock}' in \cite{CK},  and later to a chain of alternating triangles, called `\emph{fern}' in \cite{Ciu1} (see e.g. \cite{CL, Halfhex1,Halfhex2,Halfhex3} for more recent work about the fern structure). Recently, in the prequel of the paper \cite{HoleDent}, the author extended even more the work of Ciucu to a triple of removed ferns, besides the central fern removed, we cut off two more ferns from the boundary of the hexagon.

We note that in the above results, the hole must be located in the center of the region. In the general case, if the hole is moved away from the center, the tiling formula seems not to be simple anymore. However, Ciucu, Krattenthaler, Eisenk\"{o}lbl and Zare \cite{CEKZ} observed that when the triangular hole in the cored hexagon $C_{x,y,z}(m)$ is moved $1$ unit away, the number of tilings  seems to accept only several minor changes. In particular, they conjectured two explicit tiling formulas for the case when $x,y,z$ have the same parity and the triangular hole is 1-unit to the left of the center (see Figure \ref{fig:originoff}(a)), and the case  when $x$ has parity different from that of $y$ and $z$ and the triangular hole is $3/2$-unit to the left of the center (see Figure \ref{fig:originoff}(b)).

Next, we definition of the hyperfactorial function as follows:
\begin{equation}\label{hyper2}
\Hf(n)=\begin{cases}
\prod_{k=0}^{n-1}\Gamma(k+1) & \text{for $n$ a positive integer;}\\
\prod_{k=0}^{n-\frac{1}{2}}\Gamma(k+\frac{1}{2}) & \text{for $n$ a positive half-integer.}
\end{cases}
\end{equation}
where $\Gamma$ denotes the classical gamma function. Recall that $\Gamma(n+1)=n!$ and $\Gamma(n+\frac{1}{2})=\frac{(2n)!}{4^nn!}\sqrt{\pi}$, for a nonnegative integer $n$.

\begin{con}[Ciucu--Eisenk\"{o}lbl--Krattenthaler--Zare; Conjecture 1 in \cite{CEKZ}]\label{con1}
Let $x,y,z,m$ be nonnegative integers, $x,y,z$ having the same parity. The number of the lozenge tilings of a hexagon with side-lengths $x,y+m,z,x+m,y,z+m$, with an equilateral triangle of side-length $m$ removed at $1$ unit to the left of the center, equals
\begin{align}\label{phieq}
\Phi_{x,y,z}(m):=&\frac{1}{4} P_1(x,y,z,m)\frac{\Hf(m+x)\Hf(m+y)\Hf(m+z)\Hf(m+x+y+z)}{\Hf(m+x+y)\Hf(m+y+z)\Hf(m+z+x)}\notag\\
&\times \frac{\Hf(m+\left\lfloor\frac{x+y+z}{2}\right\rfloor )\Hf(m+\left\lceil\frac{x+y+z}{2}\right\rceil)}{\Hf(m+\frac{x+y}{2}+1)\Hf(m+\frac{y+z}{2})\Hf(m+\frac{z+x}{2}-1)}\notag\\
&\times \frac{\Hf(\frac{m}{2})^2\Hf(\left\lfloor\frac{x}{2}\right\rfloor)\Hf(\left\lceil\frac{x}{2}\right\rceil)\Hf(\left\lfloor\frac{y}{2}\right\rfloor)\Hf(\left\lceil\frac{y}{2}\right\rceil)\Hf(\left\lfloor\frac{z}{2}\right\rfloor)\Hf(\left\lceil\frac{z}{2}\right\rceil)}{\Hf(\frac{m}{2}+\left\lfloor\frac{x}{2}\right\rfloor)\Hf(\frac{m}{2}+\left\lceil\frac{x}{2}\right\rceil)\Hf(\frac{m}{2}+\left\lfloor\frac{y}{2}\right\rfloor)\Hf(\frac{m}{2}+\left\lceil\frac{y}{2}\right\rceil)\Hf(\frac{m}{2}+\left\lfloor\frac{z}{2}\right\rfloor)\Hf(\frac{m}{2}+\left\lceil\frac{z}{2}\right\rceil)}\notag\\
&\times \frac{\Hf(\frac{m}{2}+\left\lfloor\frac{x+y}{2}\right\rfloor)\Hf(\frac{m}{2}+\left\lceil\frac{x+y}{2}\right\rceil)\Hf(\frac{m}{2}+\frac{y+z}{2})^2\Hf(\frac{m}{2}+\left\lfloor\frac{z+x}{2}\right\rfloor)
\Hf(\frac{m}{2}+\left\lceil\frac{z+x}{2}\right\rceil)}{\Hf(\frac{m}{2}+\left\lfloor\frac{x+y+z}{2}\right\rfloor)\Hf(\frac{m}{2}+\left\lceil\frac{x+y+z}{2}\right\rceil)
\Hf(\frac{x+y}{2}-1)\Hf(\frac{y+z}{2})\Hf(\frac{z+x}{2}+1)}, \tag{T-1}
\end{align}
where $P_1(x,y,z,m)$ is the polynomial given by
\begin{equation}
P_1(x,y,z,m)=
\begin{cases}
(x+y)(x+z)+2xm & \text{ if $x$ is even,}\\
(x+y)(x+z)+2(x+y+z+m)m &\text{ if $x$ is odd.}
\end{cases}
\end{equation}
\end{con}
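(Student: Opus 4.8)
\medskip
\noindent\textbf{Plan of proof.}
Conjecture \ref{con1} will be obtained as a specialization of one of the thirty main identities of this paper: take the two boundary ferns to be empty, let the off-central fern be a single up-pointing triangle of side $m$, and let the off-central shift be the minimal admissible one ($1$ unit, since $x,y,z$ have the same parity). It therefore suffices to indicate how the general identity is established, the passage to \eqref{phieq} being a routine substitution into the product formula. The first step is standard: one passes from lozenge tilings of the region to perfect matchings of its dual graph $G$ on the triangular lattice, so that the quantity to be computed is $\M(G)$, or rather a weighted matching generating function $\M(G,\wt)$ once forced lozenges are stripped off. Along the slanted sides of the base hexagon and around each of the three chains of triangular holes a block of lozenges is forced; removing the corresponding forced edges replaces $G$ by a smaller, more balanced subgraph whose matching number differs from $\M(G)$ only by an explicit monomial, and it is this reduced region that one actually analyzes.

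\medskip
The engine of the proof is Kuo's graphical condensation. One applies the four-vertex version with the four distinguished unit triangles placed at carefully chosen positions --- roughly, two on the boundary of the hexagon and two next to the outermost triangles of the off-central chain --- so that the resulting identity
\[
\M(G)\,\M(G')=\M(G_1)\,\M(G_2)+\M(G_3)\,\M(G_4)
\]
expresses $\M(G)$ in terms of matching numbers of regions that are either (i) members of the same thirty-fold family with strictly smaller size parameters, or (ii) \emph{centrally} cored or ferned hexagons, whose tiling numbers are already available from MacMahon's box formula \cite{Mac}, from \cite{CEKZ}, from Ciucu's fern theorem \cite{Ciu1}, and from the prequel \cite{HoleDent}. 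Feeding this into an appropriately ordered induction --- a double induction on, say, $m$ and one of $x,y,z$, with $m=0$ and the minimal values of the side parameters as base cases, all of which reduce to the cited product formulas --- reduces the theorem to the purely algebraic statement that the candidate product $\Phi_{x,y,z}(m)$, together with its thirty relatives, satisfies the displayed recurrence. Rewriting each $\Hf$ as a ratio of $\Gamma$-values turns this into a rational-function identity in $x,y,z,m$ that one checks by direct manipulation; the polynomial prefactor $P_1(x,y,z,m)$ is exactly what makes the two sides balance, and its parity-split shape mirrors the split that arises according to whether the distinguished triangles in the condensation land on a lattice point or on a half-integer point.

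\medskip
The main difficulty is not any single computation but the combinatorial design: one must locate the four condensation vertices --- and, in some of the thirty cases, iterate the condensation or combine it with Ciucu's factorization theorem for the centrally symmetric auxiliary regions --- so that \emph{every} region appearing on the right-hand side again lies in a family with a known or inductively available product formula, and so that the induction is well-founded. A secondary but genuine burden is the bookkeeping of the floor and ceiling terms and of the correction monomial from the forced-lozenge reduction, which must be kept consistent across all parity cases. Once the recurrence is correctly set up, verifying that $\Phi_{x,y,z}(m)$ solves it --- and hence proving Conjecture \ref{con1} --- is mechanical.
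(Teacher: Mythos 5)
Your proposal follows essentially the same route as the paper: Conjecture \ref{con1} is deduced by specializing the $E^{(1)}$-type formula (Theorem \ref{off1thm1}) to empty side ferns and a single middle triangle of side $m$ placed one unit off center, and that theorem is itself proved via Kuo condensation recurrences feeding a well-founded induction whose base cases reduce to the Cohn--Larsen--Propp semihexagon formula and to the central regions of the prequel \cite{HoleDent}. The only inessential differences are that the paper inducts on the single quantity $p+x+z$ (quasi-perimeter plus $x+z$) rather than a double induction on $m$ and a side length, and that the case $y<z$ is handled by applying the same theorem to a horizontally reflected region $E^{(1)}_{x,\frac{z-y}{2},y}(\emptyset;\ 0,m;\ \emptyset)$.
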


\begin{figure}\centering
\includegraphics[width=12cm]{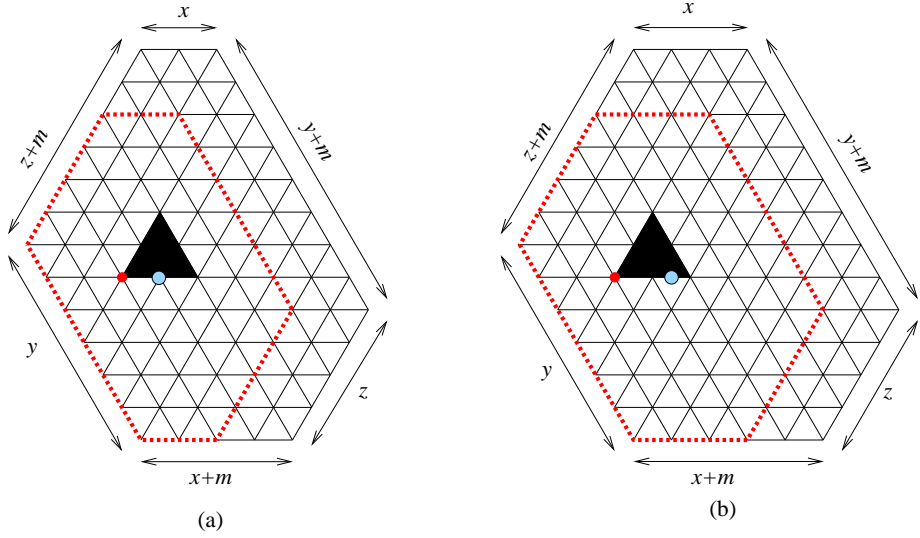}
\caption{(a) Removal of the triangle which is off-center by one unit in the case $x,y,z$ have the same parity. (b) Removal of the triangle which is off-center by $3/2$ units in the case when $x$ has parity opposite to $y$ and $z$.}\label{fig:originoff}
\end{figure}

\begin{con}[Ciucu--Eisenk\"{o}lbl--Krattenthaler--Zare; Conjecture 2 in \cite{CEKZ}]\label{con2}
Let $x,y,z,m$ be nonnegative integers, $x$ is of parity different from the parity of $y$ and $z$. The number of the lozenge tilings of a hexagon of side-lengths $x,y+m,z,x+m,y,z+m$, with an equilateral triangle of side-length $m$ removed at $3/2$ unit to the left of the center, equals
\begin{align}\label{psieq1}
\Psi_{x,z,y}(m):=&\frac{1}{16}P_2(x,y,z,m)\frac{\Hf(m+x)\Hf(m+y)\Hf(m+z)\Hf(m+x+y+z)}{\Hf(m+x+y)\Hf(m+y+z)\Hf(m+z+x)}\notag\\
&\times \frac{\Hf(\frac{m}{2})^2\Hf(\left\lfloor\frac{x}{2}\right\rfloor)\Hf(\left\lceil\frac{x}{2}\right\rceil)\Hf(\left\lfloor\frac{y}{2}\right\rfloor)\Hf(\left\lceil\frac{y}{2}\right\rceil)\Hf(\left\lfloor\frac{z}{2}\right\rfloor)\Hf(\left\lceil\frac{z}{2}\right\rceil)}{\Hf(\frac{m}{2}+\left\lfloor\frac{x}{2}\right\rfloor)\Hf(\frac{m}{2}+\left\lceil\frac{x}{2}\right\rceil)\Hf(\frac{m}{2}+\left\lfloor\frac{y}{2}\right\rfloor)\Hf(\frac{m}{2}+\left\lceil\frac{y}{2}\right\rceil)\Hf(\frac{m}{2}+\left\lfloor\frac{z}{2}\right\rfloor)\Hf(\frac{m}{2}+\left\lceil\frac{z}{2}\right\rceil)}\notag\\
&\times \frac{\Hf(\frac{m}{2}+\left\lfloor\frac{x+y}{2}\right\rfloor)\Hf(\frac{m}{2}+\left\lceil\frac{x+y}{2}\right\rceil)\Hf(\frac{m}{2}+\frac{y+z}{2})^2\Hf(\frac{m}{2}+\left\lfloor\frac{z+x}{2}\right\rfloor)
\Hf(\frac{m}{2}+\left\lceil\frac{z+x}{2}\right\rceil)}{\Hf(\frac{m}{2}+\left\lfloor\frac{x+y+z}{2}\right\rfloor)\Hf(\frac{m}{2}+\left\lceil\frac{x+y+z}{2}\right\rceil)
\Hf(\left\lfloor\frac{x+y}{2}\right\rfloor-1)\Hf(\frac{y+z}{2})\Hf(\left\lceil\frac{z+x}{2}\right\rceil+1)}\notag\\
&\times \frac{\Hf(m+\left\lfloor\frac{x+y+z}{2}\right\rfloor )\Hf(m+\left\lceil\frac{x+y+z}{2}\right\rceil)}{\Hf(m+\left\lceil\frac{x+y}{2}\right\rceil+1)\Hf(m+\frac{y+z}{2})\Hf(m+\left\lfloor\frac{z+x}{2}\right\rfloor-1)}, \tag{T-2}
\end{align}
where $P_2(x,y,z,m)$ is the polynomial given by
\begin{equation}
P_2(x,y,z,m)=
\begin{cases}
((x+y)^2-1)((x+z)^2-1)+4xm(x^2+2xy+y^2+2xz+3yz+z^2\\
\quad +2xm+3ym+3zm+2m^2-1) & \text{ if $x$ is even,}\\
((x+y)^2-1)((x+z)^2-1)+4(x+y+z+m)m(x^2+xy-1) &\text{ if $x$ is odd.}
\end{cases}
\end{equation}
\end{con}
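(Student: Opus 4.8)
The plan is to obtain Conjecture~\ref{con2} (and, in parallel, Conjecture~\ref{con1}) as a degenerate special case of the general enumeration of hexagons carrying three collinear chains of alternating triangles that forms the core of this paper. Concretely, one lets the two chains attached to the boundary be empty and lets the third chain collapse to a single up-pointing $m$-triangle; when $x$ has parity opposite to that of $y$ and $z$, the prescribed position of that chain places its left vertex exactly $3/2$ units to the left of the center of the auxiliary hexagon, which is precisely the configuration of Conjecture~\ref{con2}. Thus the statement to invoke is the one among our thirty formulas whose base hexagon has side-lengths $x,y+m,z,x+m,y,z+m$ and whose off-central triangle sits $3/2$ units from the center; what must then be checked is that its right-hand side simplifies to $\Psi_{x,z,y}(m)$.

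To prove that general formula in the first place I would follow the standard pipeline for problems of this type. First pass to the planar dual, so that a lozenge tiling of the holey hexagon becomes a perfect matching of a finite subgraph $G$ of the honeycomb lattice; one never evaluates the matching count directly. Instead apply Kuo's graphical condensation --- the balanced four-vertex version, supplemented by the unbalanced variants where the combinatorics demands it --- choosing the four distinguished vertices on the boundary so that all five regions appearing in the resulting recurrence again belong to the same family (one collinear axis, the same alternating-orientation pattern) with the parameters $x,y,z,m$ and the position of the off-central triangle perturbed by bounded amounts. A straightforward induction, e.g.\ on $x+y+z+m$, then reduces everything to base cases: a genuine hexagon, handled by MacMahon's formula; a hexagon with a fern on its symmetry axis, handled by the Ciucu--Krattenthaler fern formula; and the degenerate instances in which one of the three chains is empty.

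With the general product established, the remainder of the proof of Conjecture~\ref{con2} is purely algebraic: substitute the degenerate parameter values and simplify the resulting product of hyperfactorials to the right-hand side of \eqref{psieq1}. The tools are the recursions $\Hf(n)=\Gamma(n)\,\Hf(n-1)$ and the identity $\Gamma(n+\tfrac12)=\tfrac{(2n)!}{4^n n!}\sqrt{\pi}$, which let integer- and half-integer-argument hyperfactorials be merged; one splits according to the parity of $x$ so that the floor/ceiling quantities $\lfloor\frac{x+y}{2}\rfloor$, $\lceil\frac{z+x}{2}\rceil$, and so on resolve consistently, and then matches the leftover low-degree factor with the polynomial $P_2(x,y,z,m)$ in each of its two branches.

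The main obstacle, to my mind, is keeping the condensation recurrence inside the correct family of regions: because the hole is off-center, Ciucu's factorization theorem for reflectively symmetric regions is unavailable, so one cannot split into two smaller symmetric pieces, and the four condensation vertices must instead be placed so that each of the five terms is once more a hexagon with three collinear alternating chains --- this is delicate and may force one to induct on two (or more) of the thirty formulas simultaneously. A secondary difficulty is that the final hyperfactorial simplification is lengthy and the two parity cases genuinely differ, so the polynomial prefactors --- especially the ``$x$ odd'' branch of $P_2$ with its factor $(x+y+z+m)m(x^2+xy-1)$ --- must be verified case by case rather than by a single uniform computation.
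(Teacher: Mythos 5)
Your proposal follows essentially the same route as the paper: Conjecture \ref{con2} is obtained by specializing the general off-central three-fern enumeration (Theorem \ref{off32thm3} for the $F^{(3)}$-type regions, applied to $F^{(3)}_{x,\frac{y-z}{2},z}(\emptyset;\ m;\ \emptyset)$, and to a reflected region when $y<z$), whereupon every factor except the leading $\Psi$-factor cancels. The general theorems are proved exactly as you sketch, by a simultaneous Kuo-condensation induction over all thirty families, with the minor differences that the recurrences also leave the off-central class and use the central-case $R$- and $Q$-type results of the prequel as known input, and that the induction bottoms out at the degenerate cases $x=0$ or $z=0$, evaluated by the Cohn--Larsen--Propp dented-semihexagon formula rather than by MacMahon's or the fern formula.
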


The formulas (\ref{phieq}) and (\ref{psieq1}) almost agree with the corresponding tiling formulas for cored hexagons. The only remarkable differences are the appearances of the quadratic  factors $P_1(x,y,z,m)$ and $P_2(x,y,z,m)$.

It is worth noticing that these conjectures were recently proved by Rosengren \cite{Rosen}, using a different method from that in this paper. More precisely, he used  lattice path combinatorics and Selberg integral to obtain a complicated determinantal formula for a weighted sum of lozenge tilings of a hexagon with a triangular hole at an arbitrary position. Then, by evaluating the determinant for the special cases when the triangular hole is close to the  center, he was able to verify Ciucu--Eisenk\"{o}lbl--Krattenthaler--Zare's Conjectures \ref{con1} and \ref{con2}.

Inspired by the above two conjectures and by the previous work in \cite{HoleDent}, we would like to investigate similar situations for the case of hexagons with three ferns removed. Intuitively,  we remove a fern from the interior of the hexagon so that its leftmost vertex (called the `\emph{root}' of the fern in this paper) is located at a lattice point next to the center of the hexagon. Then we remove two more ferns at the same level such that the new ferns are touching the boundary of the hexagons.  Base on the parity of the side-lengths of our hexagons and the position of the inner fern, there are total \emph{thirty} different regions to enumerate. One may be surprised that the number of situations needed to considered is significantly larger than that in the case of cored hexagons. The reason is that, unlike the cored hexagons, $60^{\circ}$-rotations do \emph{not} preserve the structures of our regions.

The rest of the paper is organized as follows. In Section 2, we define carefully all 30 off-central regions and state their exact tiling enumerations. In Section 3, we quote several fundamental results in enumeration of tilings, especially two versions of Kuo's graphical condensation \cite{Kuo}, that will be the key of our proof. Section 4 is devoted to the detailed proof of our main theorems.

\section{Precise statement of the main result}\label{Statement1}

Recall that in the prequel paper \cite{HoleDent}, we enumerated $8$ different families of hexagons with three ferns removed (one fern is removed from the center and two more ferns are removed from the boundary). These families are divided into two groups based on the relative position of the lattice line $l$, that contains the three ferns and the west and east vertices of the hexagon. In particular, we have four `\emph{$R$-families}', $R^{\odot}$, $R^{\leftarrow}$, $R^{\nwarrow}$, and $R^{\swarrow}$, in the case when $l$ separates the east and west vertices of the hexagon, and  four `\emph{$Q$-families}', $Q^{\odot}$, $Q^{\leftarrow}$, $Q^{\nwarrow}$, and $Q^{\nearrow}$, in the case when $l$ leaves both east and west vertices of the hexagon on a same side (see detailed definition in Section 2 of \cite{HoleDent}).

Each of the above eight families of regions has several off-central counterparts, that will be defined carefully in the next eight subsections.

In order to state our next tilling enumerations, we define five more functions, besides the functions $\Phi$ and $\Psi$ in Conjectures \ref{con1} and \ref{con2}, as follows.

We define the functions $\Theta_{x,y,z}(m)$ and $\Theta'_{x,y,z}(m)$ by:
\begin{align}\label{thetaeq1}
\Theta_{x,y,z}(m):=&Q_{1}(y,z,m) \cdot \frac{\Hf(m+x)\Hf(m+y)\Hf(m+z)\Hf(m+x+y+z)}{\Hf(m+x+y)\Hf(m+y+z)\Hf(m+z+x)}\notag\\
&\times \frac{\Hf(m+\left\lfloor\frac{x+y+z}{2}\right\rfloor )\Hf(m+\left\lceil\frac{x+y+z}{2}\right\rceil)}{\Hf(m+\left\lfloor\frac{x+y}{2}\right\rfloor)\Hf(m+\frac{y+z}{2}+1)\Hf(m+\left\lfloor\frac{z+x}{2}\right\rfloor)}\notag\\
&\times \frac{\Hf(\frac{m}{2})^2\Hf(\left\lfloor\frac{x}{2}\right\rfloor)\Hf(\left\lceil\frac{x}{2}\right\rceil)\Hf(\left\lfloor\frac{y}{2}\right\rfloor)\Hf(\left\lceil\frac{y}{2}\right\rceil)\Hf(\left\lfloor\frac{z}{2}\right\rfloor)\Hf(\left\lceil\frac{z}{2}\right\rceil)}{\Hf(\frac{m}{2}+\left\lfloor\frac{x}{2}\right\rfloor)\Hf(\frac{m}{2}+\left\lceil\frac{x}{2}\right\rceil)\Hf(\frac{m}{2}+\left\lfloor\frac{y}{2}\right\rfloor)\Hf(\frac{m}{2}+\left\lceil\frac{y}{2}\right\rceil)\Hf(\frac{m}{2}+\left\lfloor\frac{z}{2}\right\rfloor)\Hf(\frac{m}{2}+\left\lceil\frac{z}{2}\right\rceil)}\notag\\
&\times \frac{\Hf(\frac{m}{2}+\left\lfloor\frac{x+y}{2}\right\rfloor)\Hf(\frac{m}{2}+\left\lceil\frac{x+y}{2}\right\rceil)
\Hf(\frac{m}{2}+\frac{y+z}{2})^2\Hf(\frac{m}{2}+\left\lfloor\frac{z+x}{2}\right\rfloor)\Hf(\frac{m}{2}+\left\lceil\frac{z+x}{2}\right\rceil)}
{\Hf(\frac{m}{2}+\left\lfloor\frac{x+y+z}{2}\right\rfloor)\Hf(\frac{m}{2}+\left\lceil\frac{x+y+z}{2}\right\rceil)\Hf(\left\lceil\frac{x+y}{2}\right\rceil)\Hf(\frac{y+z}{2})\Hf(\left\lceil\frac{z+x}{2}\right\rceil)} \tag{T-3}
\end{align}
and
\begin{align}\label{thetaeq2}
\Theta'_{x,y,z}(m):=&Q_{1}(y,z,m) \cdot \frac{\Hf(m+x)\Hf(m+y)\Hf(m+z)\Hf(m+x+y+z)}{\Hf(m+x+y)\Hf(m+y+z)\Hf(m+z+x)}\notag\\
&\times \frac{\Hf(m+\left\lfloor\frac{x+y+z}{2}\right\rfloor )\Hf(m+\left\lceil\frac{x+y+z}{2}\right\rceil)}{\Hf(m+\left\lceil\frac{x+y}{2}\right\rceil)\Hf(m+\frac{y+z}{2}-1)\Hf(m+\left\lceil\frac{z+x}{2}\right\rceil)}\notag\\
&\times \frac{\Hf(\frac{m}{2})^2\Hf(\left\lfloor\frac{x}{2}\right\rfloor)\Hf(\left\lceil\frac{x}{2}\right\rceil)\Hf(\left\lfloor\frac{y}{2}\right\rfloor)\Hf(\left\lceil\frac{y}{2}\right\rceil)\Hf(\left\lfloor\frac{z}{2}\right\rfloor)\Hf(\left\lceil\frac{z}{2}\right\rceil)}{\Hf(\frac{m}{2}+\left\lfloor\frac{x}{2}\right\rfloor)\Hf(\frac{m}{2}+\left\lceil\frac{x}{2}\right\rceil)\Hf(\frac{m}{2}+\left\lfloor\frac{y}{2}\right\rfloor)\Hf(\frac{m}{2}+\left\lceil\frac{y}{2}\right\rceil)\Hf(\frac{m}{2}+\left\lfloor\frac{z}{2}\right\rfloor)\Hf(\frac{m}{2}+\left\lceil\frac{z}{2}\right\rceil)}\notag\\
&\times \frac{\Hf(\frac{m}{2}+\left\lfloor\frac{x+y}{2}\right\rfloor)\Hf(\frac{m}{2}+\left\lceil\frac{x+y}{2}\right\rceil)
\Hf(\frac{m}{2}+\frac{y+z}{2})^2\Hf(\frac{m}{2}+\left\lfloor\frac{z+x}{2}\right\rfloor)\Hf(\frac{m}{2}+\left\lceil\frac{z+x}{2}\right\rceil)}
{\Hf(\frac{m}{2}+\left\lfloor\frac{x+y+z}{2}\right\rfloor)\Hf(\frac{m}{2}+\left\lceil\frac{x+y+z}{2}\right\rceil)\Hf(\left\lfloor\frac{x+y}{2}\right\rfloor)\Hf(\frac{y+z}{2}+1)\Hf(\left\lfloor\frac{z+x}{2}\right\rfloor)}, \tag{T'-3}
\end{align}
where $Q_{1}(y,z,m)$ is $m+\frac{y+z}{2}$ if $x$ is even, and is $\frac{y+z}{2}$ if $x$ is odd. One readily sees that the $\Theta$- and $\Theta'$-functions are almost the same, except for certain differences at several hyperfactorial factors.

We next define the function $\Lambda_{x,y,z}(m)$ and its variation $\Lambda'_{x,y,z}(m)$ by:
\begin{align}\label{lambdaeq1}
\Lambda_{x,y,z}(m):=&\frac{1}{8}Q_{2}(x,y,z,m) \cdot \frac{\Hf(m+x)\Hf(m+y)\Hf(m+z)\Hf(m+x+y+z)}{\Hf(m+x+y)\Hf(m+y+z)\Hf(m+z+x)}\notag\\
&\times \frac{\Hf(m+\left\lfloor\frac{x+y+z}{2}\right\rfloor)\Hf(m+\left\lceil\frac{x+y+z}{2}\right\rceil)}
{\Hf(m+\left\lceil\frac{x+y}{2}\right\rceil)\Hf(m+\frac{y+z}{2}+1)\Hf(m+\left\lfloor\frac{z+x}{2}\right\rfloor-1)}\notag\\
&\times \frac{\Hf(\frac{m}{2})^2\Hf(\left\lfloor\frac{x}{2}\right\rfloor)\Hf(\left\lceil\frac{x}{2}\right\rceil)\Hf(\left\lfloor\frac{y}{2}\right\rfloor)\Hf(\left\lceil\frac{y}{2}\right\rceil)\Hf(\left\lfloor\frac{z}{2}\right\rfloor)\Hf(\left\lceil\frac{z}{2}\right\rceil)}{\Hf(\frac{m}{2}+\left\lfloor\frac{x}{2}\right\rfloor)\Hf(\frac{m}{2}+\left\lceil\frac{x}{2}\right\rceil)\Hf(\frac{m}{2}+\left\lfloor\frac{y}{2}\right\rfloor)\Hf(\frac{m}{2}+\left\lceil\frac{y}{2}\right\rceil)\Hf(\frac{m}{2}+\left\lfloor\frac{z}{2}\right\rfloor)\Hf(\frac{m}{2}+\left\lceil\frac{z}{2}\right\rceil)}\notag\\
&\times \frac{\Hf(\frac{m}{2}+\left\lfloor\frac{x+y}{2}\right\rfloor)\Hf(\frac{m}{2}+\left\lceil\frac{x+y}{2}\right\rceil)
\Hf(\frac{m}{2}+\frac{y+z}{2})^2\Hf(\frac{m}{2}+\left\lfloor\frac{z+x}{2}\right\rfloor)\Hf(\frac{m}{2}+\left\lceil\frac{z+x}{2}\right\rceil)}
{\Hf(\frac{m}{2}+\left\lfloor\frac{x+y+z}{2}\right\rfloor)\Hf(\frac{m}{2}+\left\lceil\frac{x+y+z}{2}\right\rceil)\Hf(\left\lfloor\frac{x+y}{2}\right\rfloor)\Hf(\frac{y+z}{2}-1)\Hf(\left\lceil\frac{z+x}{2}\right\rceil+1)} \tag{T-4}
\end{align}
and
\begin{align}\label{lambdaeq2}
\Lambda' _{x,y,z}(m):=&\frac{1}{8}Q_{2}(x,y,z,m) \cdot \frac{\Hf(m+x)\Hf(m+y)\Hf(m+z)\Hf(m+x+y+z)}{\Hf(m+x+y)\Hf(m+y+z)\Hf(m+z+x)}\notag\\
&\times \frac{\Hf(m+\left\lfloor\frac{x+y+z}{2}\right\rfloor)\Hf(m+\left\lceil\frac{x+y+z}{2}\right\rceil)}
{\Hf(m+\left\lfloor\frac{x+y}{2}\right\rfloor)\Hf(m+\frac{y+z}{2}-1)\Hf(m+\left\lceil\frac{z+x}{2}\right\rceil+1)}\notag\\
&\times \frac{\Hf(\frac{m}{2})^2\Hf(\left\lfloor\frac{x}{2}\right\rfloor)\Hf(\left\lceil\frac{x}{2}\right\rceil)\Hf(\left\lfloor\frac{y}{2}\right\rfloor)\Hf(\left\lceil\frac{y}{2}\right\rceil)\Hf(\left\lfloor\frac{z}{2}\right\rfloor)\Hf(\left\lceil\frac{z}{2}\right\rceil)}{\Hf(\frac{m}{2}+\left\lfloor\frac{x}{2}\right\rfloor)\Hf(\frac{m}{2}+\left\lceil\frac{x}{2}\right\rceil)\Hf(\frac{m}{2}+\left\lfloor\frac{y}{2}\right\rfloor)\Hf(\frac{m}{2}+\left\lceil\frac{y}{2}\right\rceil)\Hf(\frac{m}{2}+\left\lfloor\frac{z}{2}\right\rfloor)\Hf(\frac{m}{2}+\left\lceil\frac{z}{2}\right\rceil)}\notag\\
&\times \frac{\Hf(\frac{m}{2}+\left\lfloor\frac{x+y}{2}\right\rfloor)\Hf(\frac{m}{2}+\left\lceil\frac{x+y}{2}\right\rceil)
\Hf(\frac{m}{2}+\frac{y+z}{2})^2\Hf(\frac{m}{2}+\left\lfloor\frac{z+x}{2}\right\rfloor)\Hf(\frac{m}{2}+\left\lceil\frac{z+x}{2}\right\rceil)}
{\Hf(\frac{m}{2}+\left\lfloor\frac{x+y+z}{2}\right\rfloor)\Hf(\frac{m}{2}+\left\lceil\frac{x+y+z}{2}\right\rceil)\Hf(\left\lceil\frac{x+y}{2}\right\rceil)\Hf(\frac{y+z}{2}+1)\Hf(\left\lfloor\frac{z+x}{2}\right\rfloor-1)}, \tag{T'-4}
\end{align}
in which the polynomial $Q_{2}(x,y,z,m)$ is defined as
 \begin{equation}
Q_{2}(x,y,z,m)=
\begin{cases}
(x+z+2m-1)((z+1)(x+z-1)+2x(m-1))\\\ \ \ \ \ \ \ \ \ \ \ \ \ \ \ \ \ +(y+1)((x+z)^2+4xm-1) &\text{ if $x$ is even,}\\
 z((x+z+2m)^2-1)+y((x+z+2m)^2-4m(x+m)-1) &\text{if $x$ is odd.}
  \end{cases}
\end{equation}
Finally, we define a counterpart of the function $\Psi$ in Conjecture \ref{con2}:
\begin{align}\label{psieq2}
\Psi'_{x,y,z}(m):=&\frac{1}{16}\frac{\Hf(m+x)\Hf(m+y)\Hf(m+z)\Hf(m+x+y+z)}{\Hf(m+x+y)\Hf(m+y+z)\Hf(m+z+x)}\notag\\
&\times \frac{\Hf(\frac{m}{2})^2\Hf(\left\lfloor\frac{x}{2}\right\rfloor)\Hf(\left\lceil\frac{x}{2}\right\rceil)\Hf(\left\lfloor\frac{y}{2}\right\rfloor)\Hf(\left\lceil\frac{y}{2}\right\rceil)\Hf(\left\lfloor\frac{z}{2}\right\rfloor)\Hf(\left\lceil\frac{z}{2}\right\rceil)}{\Hf(\frac{m}{2}+\left\lfloor\frac{x}{2}\right\rfloor)\Hf(\frac{m}{2}+\left\lceil\frac{x}{2}\right\rceil)\Hf(\frac{m}{2}+\left\lfloor\frac{y}{2}\right\rfloor)\Hf(\frac{m}{2}+\left\lceil\frac{y}{2}\right\rceil)\Hf(\frac{m}{2}+\left\lfloor\frac{z}{2}\right\rfloor)\Hf(\frac{m}{2}+\left\lceil\frac{z}{2}\right\rceil)}\notag\\
&\times \frac{\Hf(\frac{m}{2}+\left\lfloor\frac{x+y}{2}\right\rfloor)\Hf(\frac{m}{2}+\left\lceil\frac{x+y}{2}\right\rceil)\Hf(\frac{m}{2}+\frac{y+z}{2})^2\Hf(\frac{m}{2}+\left\lfloor\frac{z+x}{2}\right\rfloor)
\Hf(\frac{m}{2}+\left\lceil\frac{z+x}{2}\right\rceil)}{\Hf(\frac{m}{2}+\left\lfloor\frac{x+y+z}{2}\right\rfloor)\Hf(\frac{m}{2}+\left\lceil\frac{x+y+z}{2}\right\rceil)
\Hf(\left\lceil\frac{x+y}{2}\right\rceil+1)\Hf(\frac{y+z}{2})\Hf(\left\lfloor\frac{z+x}{2}\right\rfloor -1)}\notag\\
&\times \frac{\Hf(m+\left\lfloor\frac{x+y+z}{2}\right\rfloor )\Hf(m+\left\lceil\frac{x+y+z}{2}\right\rceil)}{\Hf(m+\left\lfloor\frac{x+y}{2}\right\rfloor-1)\Hf(m+\frac{y+z}{2})\Hf(m+\left\lceil\frac{z+x}{2}\right\rceil +1)} P_2(x,y,z,m), \tag{T'-2}
\end{align}
where $P_2(x,y,z,m)$ is the polynomial defined in the Conjecture \ref{con2}.

We also need the following well-known formula of Cohn, Larsen, and Propp for tiling number of dented semihexagon in the statements of our main theorems.

For a sequence $\textbf{a}:=(a_i)_{i=1}^{m}$, we denote $o_a:=\sum_{\text{$i$ odd}} a_i$ and $e_a:=\sum_{\text{$i$ even}}a_i$.  Let $S(a_1,a_2,\dotsc,a_m)$ denote the upper half of a hexagon of side-lengths $e_a,o_a,o_a,e_a,o_a,o_a$ in which $k:=\lfloor\frac{m}{2}\rfloor$ triangles of side-lengths $a_1,a_3,a_5,\dots,a_{2k+1}$ removed from the base, such that the distance between the $i$-th and the $(i+1)$-th removed triangles is $a_{2i}$ (see Figure \ref{fig:semihex} for an example).  We call  the region $S(a_1,a_2,\dotsc,a_m)$ a \emph{dented semihexagon}. Cohn, Larsen and Propp \cite{CLP} interpreted semi-strict Gelfand--Tsetlin patterns as lozenge tilings of  the dented semihexagon $S(a_1,a_2,\dotsc,a_m)$, and obtained the following tiling formula
\begin{align}\label{semieq}
s(a_1,a_2,\dots,a_{2l-1})&=s(a_1,a_2,\dots,a_{2l})\\
&=\dfrac{1}{\Hf(a_1+a_{3}+a_{5}+\dotsc+a_{2l-1})}\dfrac{\prod_{\substack{1\leq i<j\leq 2l-1\\
                  \text{$j-i$ odd}}}\Hf(a_i+a_{i+1}+\dotsc+a_{j})}{\prod_{\substack{1\leq i<j\leq 2l-1\\
                  \text{$j-i$ even}}}\Hf(a_i+a_{i+1}+\dotsc+a_{j})},
\end{align}
where $s(a_1,a_2,\dotsc,a_m)$ denotes the number of tilings
 of $S(a_1,a_2,\dotsc,a_m)$.

\begin{figure}\centering
\setlength{\unitlength}{3947sp}%
\begingroup\makeatletter\ifx\SetFigFont\undefined%
\gdef\SetFigFont#1#2#3#4#5{%
  \reset@font\fontsize{#1}{#2pt}%
  \fontfamily{#3}\fontseries{#4}\fontshape{#5}%
  \selectfont}%
\fi\endgroup%
\resizebox{8cm}{!}{
\begin{picture}(0,0)%
\includegraphics{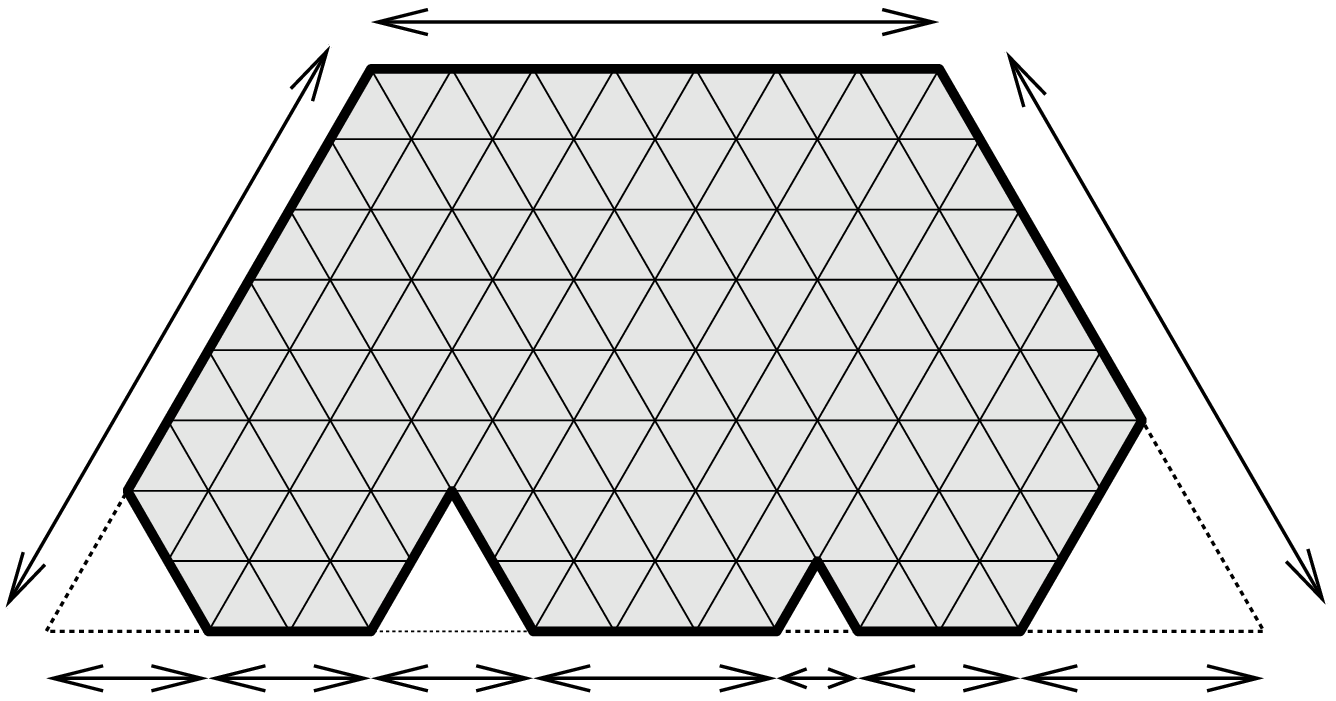}%
\end{picture}%
%
%

\begin{picture}(6994,4016)(1923,-5559)
\put(4502,-1877){\makebox(0,0)[lb]{\smash{{\SetFigFont{14}{16.8}{\rmdefault}{\mddefault}{\itdefault}{$a_2+a_4+a_6$}%
}}}}
\put(2178,-4122){\rotatebox{60.0}{\makebox(0,0)[lb]{\smash{{\SetFigFont{14}{16.8}{\rmdefault}{\mddefault}{\itdefault}{$a_1+a_3+a_5+a_7$}%
}}}}}
\put(2296,-5544){\makebox(0,0)[lb]{\smash{{\SetFigFont{14}{16.8}{\rmdefault}{\mddefault}{\itdefault}{$a_1$}%
}}}}
\put(3136,-5544){\makebox(0,0)[lb]{\smash{{\SetFigFont{14}{16.8}{\rmdefault}{\mddefault}{\itdefault}{$a_2$}%
}}}}
\put(3916,-5544){\makebox(0,0)[lb]{\smash{{\SetFigFont{14}{16.8}{\rmdefault}{\mddefault}{\itdefault}{$a_3$}%
}}}}
\put(4771,-5544){\makebox(0,0)[lb]{\smash{{\SetFigFont{14}{16.8}{\rmdefault}{\mddefault}{\itdefault}{$a_4$}%
}}}}
\put(5589,-5544){\makebox(0,0)[lb]{\smash{{\SetFigFont{14}{16.8}{\rmdefault}{\mddefault}{\itdefault}{$a_5$}%
}}}}
\put(6219,-5529){\makebox(0,0)[lb]{\smash{{\SetFigFont{14}{16.8}{\rmdefault}{\mddefault}{\itdefault}{$a_6$}%
}}}}
\put(7186,-5536){\makebox(0,0)[lb]{\smash{{\SetFigFont{14}{16.8}{\rmdefault}{\mddefault}{\itdefault}{$a_7$}%
}}}}
\put(7263,-2603){\rotatebox{300.0}{\makebox(0,0)[lb]{\smash{{\SetFigFont{14}{16.8}{\rmdefault}{\mddefault}{\itdefault}{$a_1+a_3+a_5+a_7$}%
}}}}}
\end{picture}}
\caption{The dented semihexagon $S(2,2,2,3,1,2,4)$.}
\label{fig:semihex}
\end{figure}

\subsection{The $E^{(i)}$-type regions}
We will define our region similar to the cored hexagons. In particular, we start from an auxiliary hexagon $H_0$, and push our its sides in a certain way to obtain a larger hexagon, called the base hexagon. Then we will remove three collinear ferns from the base hexagon in a particular way to obtain our regions.

Let us investigate first the case when  the auxiliary hexagon $H_0$ of side-lengths $x,z,z,x,z,z$, for $x$ and $z$ of the same parity, and the middle fern is removed  at $1$ unit from the center of the auxiliary hexagon. In this case, the center of $H_0$ is a lattice point, and there are six different lattice points, which have distance $1$ from the center (shown in Figure \ref{fig:offposition}(a); the darker shaded nodes indicates the center of the auxiliary hexagon $H_0$, and the lighter shaded nodes represent the possible off-central positions for the leftmost of the middle fern). We note that, in the case of cored hexagons, we only need to consider the regions corresponding to one of these six positions, since regions corresponding to other positions can be obtained from the latter region by $60^{\circ}$ rotations. However, this is \emph{not} true anymore for our hexagons with three ferns removed.

\begin{figure}\centering
\includegraphics[width=10cm]{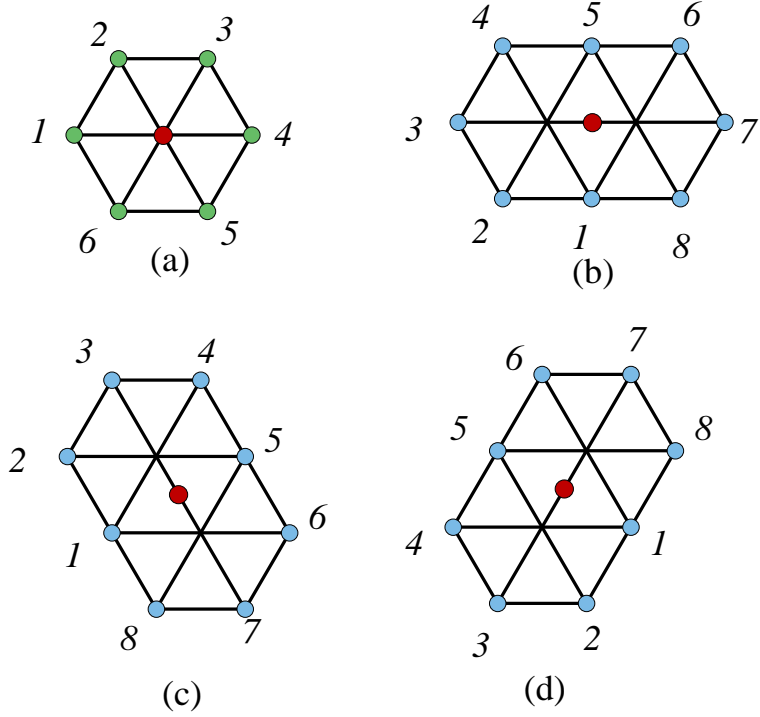}
\caption{(a) Six off-central positions for $E^{(i)}$- and $\overline{E}^{(i)}$-type regions. (b) Eight off-central positions  for $F^{(i)}$- and $\overline{F}^{(i)}$-type regions. (c) Eight off-central positions for $G^{(i)}$- and $\overline{G}^{(i)}$-type regions. (d) Eight off-central positions for $K^{(i)}$- an $\overline{K}^{(i)}$-type regions.}\label{fig:offposition}
\end{figure}

Assume that $x,z$ are nonnegative integers with the same parity, that $\textbf{a}=(a_1,a_2,\dotsc,a_m)$, $\textbf{b}:=(b_1,b_2,\dotsc,b_n)$, $\textbf{c}=(c_1,c_2,\dotsc,c_k)$ are three (may be empty) sequences  of nonnegative integers, and that $y$ is an integer that may take negative values in certain condition. The three sequences $\textbf{a}, \textbf{b}, \textbf{c}$ determine the side-lengths of triangles in the left, the right, and the central ferns, respectively.  Set
\begin{align}
e_a:=\sum_{i\ even}a_i, &\quad o_a:=\sum_{i \ odd} a_i,\notag\\
e_b:=\sum_{j\ even}b_j, &\quad o_b:=\sum_{j \ odd} b_j,\notag\\
e_c:=\sum_{t\ even}c_t, &\quad o_c:=\sum_{t \ odd} c_t,
\end{align}
and $a:=a_1+a_2+\dotsc$, $b:=b_1+b_2+\dotsc$, $c:=c_1+c_2+\dotsc$.

We first define the regions corresponding to the off-central positions $1$ and $4$ in Figure \ref{fig:offposition}(a). In this case, we assume further that $y\geq 0$ (the domain of the $y$-parameter changes in the other off-central cases). Let us start with the auxiliary hexagon $H_0$ of side-lengths $x,z,z,x,z,z$. We perform the following side-pushing process. We push outward all six sides  of the hexagon (in the clockwise order, starting from the north side) a distance of $e_a+o_b+o_c, y+\max(b-a,0),\  b+c,\ b+c+y+\max(a-b,0),\ o_a+e_b+e_c+y+\max(a-b,0),\ a,\ a+y+\max(b-a,0)$ units, respectively. We get a larger hexagon $H$, called the \emph{base hexagon}, has side-lengths $x+o_a+e_b+e_c,$  $2y+z+e_a+o_b+o_c+ |a-b|$,  $z+o_a+e_b+e_c,$ $x+e_a+o_b+o_c$, $2y+z+o_a+e_b+e_c+ |a-b|,$ $z+e_a+o_b+o_c$. This is illustrated by Figure \ref{pushing1} (for $j=0$); the auxiliary hexagon is the shaded one with the dashed boundary, and the base hexagon is the one restricted by the bold contour.

\begin{figure}\centering
 \centering
  \setlength{\unitlength}{3947sp}%
\begingroup\makeatletter\ifx\SetFigFont\undefined%
\gdef\SetFigFont#1#2#3#4#5{%
  \reset@font\fontsize{#1}{#2pt}%
  \fontfamily{#3}\fontseries{#4}\fontshape{#5}%
  \selectfont}%
\fi\endgroup%
\resizebox{15cm}{!}{
\begin{picture}(0,0)%
\includegraphics{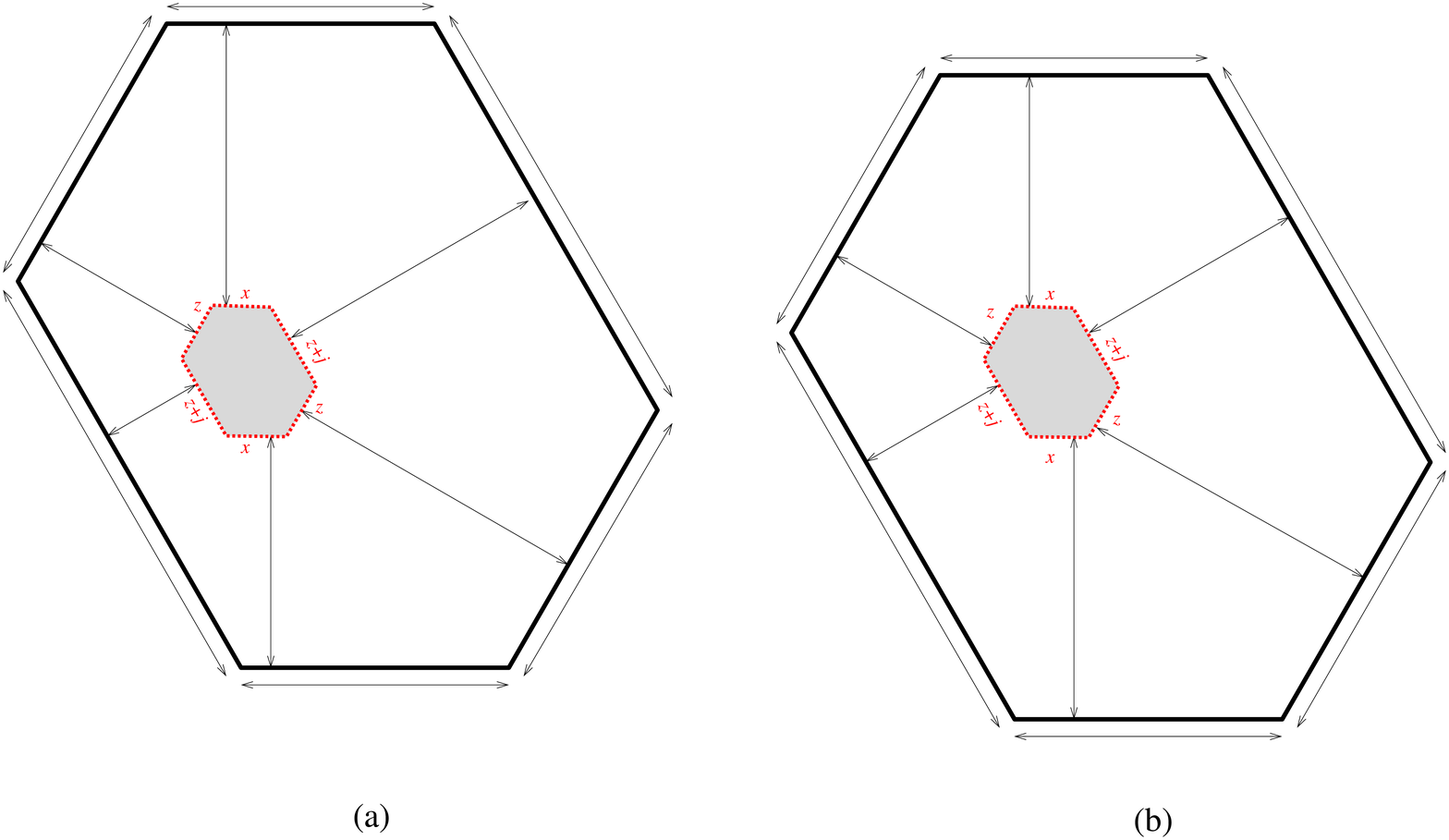}%
\end{picture}%
\begin{picture}(19869,11857)(3262,-12422)
\put(6445,-841){\makebox(0,0)[lb]{\smash{{\SetFigFont{14}{16.8}{\rmdefault}{\mddefault}{\itdefault}{\color[rgb]{0,0,0}$x+o_a+e_b+e_c$}%
}}}}
\put(3786,-3503){\rotatebox{60.0}{\makebox(0,0)[lb]{\smash{{\SetFigFont{14}{16.8}{\rmdefault}{\mddefault}{\itdefault}{\color[rgb]{0,0,0}$z+e_a+o_b+o_c$}%
}}}}}
\put(3683,-6143){\rotatebox{300.0}{\makebox(0,0)[lb]{\smash{{\SetFigFont{14}{16.8}{\rmdefault}{\mddefault}{\itdefault}{\color[rgb]{0,0,0}$2y+z+o_a+e_b+e_c+(b-a)+j$}%
}}}}}
\put(7366,-10617){\makebox(0,0)[lb]{\smash{{\SetFigFont{14}{16.8}{\rmdefault}{\mddefault}{\itdefault}{\color[rgb]{0,0,0}$x+e_a+o_b+o_c$}%
}}}}
\put(11282,-9200){\rotatebox{60.0}{\makebox(0,0)[lb]{\smash{{\SetFigFont{14}{16.8}{\rmdefault}{\mddefault}{\itdefault}{\color[rgb]{0,0,0}$z+o_a+e_b+e_c$}%
}}}}}
\put(10127,-1891){\rotatebox{300.0}{\makebox(0,0)[lb]{\smash{{\SetFigFont{14}{16.8}{\rmdefault}{\mddefault}{\itdefault}{\color[rgb]{0,0,0}$2y+z+e_a+o_b+o_c+(b-a)+j$}%
}}}}}
\put(8604,-4599){\rotatebox{30.0}{\makebox(0,0)[lb]{\smash{{\SetFigFont{14}{16.8}{\rmdefault}{\mddefault}{\itdefault}{\color[rgb]{0,0,0}$b+c$}%
}}}}}
\put(6708,-4476){\rotatebox{90.0}{\makebox(0,0)[lb]{\smash{{\SetFigFont{14}{16.8}{\rmdefault}{\mddefault}{\itdefault}{\color[rgb]{0,0,0}$y+e_a+o_b+o_c+(b-a)$}%
}}}}}
\put(8695,-7066){\rotatebox{330.0}{\makebox(0,0)[lb]{\smash{{\SetFigFont{14}{16.8}{\rmdefault}{\mddefault}{\itdefault}{\color[rgb]{0,0,0}$y+b+c$}%
}}}}}
\put(7211,-9436){\rotatebox{90.0}{\makebox(0,0)[lb]{\smash{{\SetFigFont{14}{16.8}{\rmdefault}{\mddefault}{\itdefault}{\color[rgb]{0,0,0}$y+o_a+e_b+e_c$}%
}}}}}
\put(5279,-6720){\rotatebox{30.0}{\makebox(0,0)[lb]{\smash{{\SetFigFont{14}{16.8}{\rmdefault}{\mddefault}{\itdefault}{\color[rgb]{0,0,0}$a$}%
}}}}}
\put(4502,-4232){\rotatebox{330.0}{\makebox(0,0)[lb]{\smash{{\SetFigFont{14}{16.8}{\rmdefault}{\mddefault}{\itdefault}{\color[rgb]{0,0,0}$y+a+(b-a)$}%
}}}}}
\put(17083,-1405){\makebox(0,0)[lb]{\smash{{\SetFigFont{14}{16.8}{\rmdefault}{\mddefault}{\itdefault}{\color[rgb]{0,0,0}$x+o_a+e_b+e_c$}%
}}}}
\put(14338,-4240){\rotatebox{60.0}{\makebox(0,0)[lb]{\smash{{\SetFigFont{14}{16.8}{\rmdefault}{\mddefault}{\itdefault}{\color[rgb]{0,0,0}$z+e_a+o_b+o_c$}%
}}}}}
\put(18208,-11326){\makebox(0,0)[lb]{\smash{{\SetFigFont{14}{16.8}{\rmdefault}{\mddefault}{\itdefault}{\color[rgb]{0,0,0}$x+e_a+o_b+o_c$}%
}}}}
\put(21907,-10263){\rotatebox{60.0}{\makebox(0,0)[lb]{\smash{{\SetFigFont{14}{16.8}{\rmdefault}{\mddefault}{\itdefault}{\color[rgb]{0,0,0}$z+o_a+e_b+e_c$}%
}}}}}
\put(20739,-2672){\rotatebox{300.0}{\makebox(0,0)[lb]{\smash{{\SetFigFont{14}{16.8}{\rmdefault}{\mddefault}{\itdefault}{\color[rgb]{0,0,0}$2y+z+e_a+o_b+o_c+(a-b)+j$}%
}}}}}
\put(18836,-4620){\rotatebox{30.0}{\makebox(0,0)[lb]{\smash{{\SetFigFont{14}{16.8}{\rmdefault}{\mddefault}{\itdefault}{\color[rgb]{0,0,0}$b+c$}%
}}}}}
\put(17747,-4476){\rotatebox{90.0}{\makebox(0,0)[lb]{\smash{{\SetFigFont{14}{16.8}{\rmdefault}{\mddefault}{\itdefault}{\color[rgb]{0,0,0}$y+e_a+o_b+o_c$}%
}}}}}
\put(19845,-7295){\rotatebox{330.0}{\makebox(0,0)[lb]{\smash{{\SetFigFont{14}{16.8}{\rmdefault}{\mddefault}{\itdefault}{\color[rgb]{0,0,0}$y+b+c+(a-b)$}%
}}}}}
\put(18361,-10263){\rotatebox{90.0}{\makebox(0,0)[lb]{\smash{{\SetFigFont{14}{16.8}{\rmdefault}{\mddefault}{\itdefault}{\color[rgb]{0,0,0}$y+o_a+e_b+e_c+(a-b)$}%
}}}}}
\put(15677,-6777){\rotatebox{30.0}{\makebox(0,0)[lb]{\smash{{\SetFigFont{14}{16.8}{\rmdefault}{\mddefault}{\itdefault}{\color[rgb]{0,0,0}$a$}%
}}}}}
\put(15446,-5113){\rotatebox{330.0}{\makebox(0,0)[lb]{\smash{{\SetFigFont{14}{16.8}{\rmdefault}{\mddefault}{\itdefault}{\color[rgb]{0,0,0}$y+a$}%
}}}}}
\put(14246,-6734){\rotatebox{300.0}{\makebox(0,0)[lb]{\smash{{\SetFigFont{14}{16.8}{\rmdefault}{\mddefault}{\itdefault}{\color[rgb]{0,0,0}$2y+z+e_a+o_b+o_c+(a-b)+j$}%
}}}}}
\end{picture}}
\caption{The edge-pushing procedure used in the definitions of the $E^{(i)}$-, $F^{(i)}$-, $G^{(i)}$-, $K^{(i)}$-type regions: (a) the case $a\leq b$, (b) the case $a\geq b$.}\label{pushing1}
\end{figure}

For $i=1,4$, we denote $E^{(i)}_{x,y,z}(\textbf{a};\textbf{c};\textbf{b})$ the region obtained from the base hexagon $H$  by removing three ferns as follows. The middle fern consists of equilateral triangles of side-lengths $c_1,c_2,\dots,c_k$ as they appear from left to right. The first triangle (of side-length $c_1$) is up-pointing and the next triangles in the fern are oriented in alternating orientations. We remove the middle fern, such that its root (i.e. its leftmost point) is at the off-central position $i$ as shown in Figure \ref{fig:offposition}(a),  for $i=1,4$. The left fern consists of triangles of side-lengths $a_1,a_2,\dots,a_m$ running from left to right and starts by a down-pointing triangle; while the right fern consists of triangles of side-lengths $b_1,b_2,\dots, b_n$ running from \emph{right to left} and starts by an \emph{up-pointing} triangles. These latter two ferns are removed such that they are at the same level as the middle fern, that the left fern is touching the southwest side of the hexagon, and that the right fern is touching the northeast side of the hexagon. See Figure \ref{fig:off1}(a) for an example.

For $i=2,3,5,6$, unlike the case $i=1,4$ above,  we start with an auxiliary hexagon of side-lengths $x,z+2,z,x,z+2,z$, we still perform the same side-pushing procedure to get the base hexagon of side-lengths $x+o_a+e_b+e_c,$  $2y+z+e_a+o_b+o_c+ |a-b|+2$,  $z+o_a+e_b+e_c,$ $x+e_a+o_b+o_c$, $2y+z+o_a+e_b+e_c+ |a-b|+2,$ $z+e_a+o_b+o_c$.
If $i=2,3$, we allow $y$ to take any integer values greater or equal to $\max(a-b,-2)$. In particular, $y\geq 0$ if $a\leq b$, however, $y$ may be negative when $a>b$. Our side-pushing process still works well in the same way as the definition of the $E^{(1)}$-type region, as even though $y$ may be negative, all the pushing distances are still nonnegative. Next, the three ferns are removed similarly, such that the root of the middle fern is at the off-central position $i$ in Figure \ref{fig:offposition}(a),  for $i=2,3$.
If $i=5,6$, we assume in addition that $y\geq \max (b-a,-2)$, and the  $E^{(5)}$- and $E^{(6)}$-type regions are defined similarly.
By, the symmetry it is enough to enumerate only three of the above regions, namely the $E^{(1)}$-, $E^{(2)}$- and $E^{(6)}$-type regions (see Figure \ref{fig:off1} for examples\footnote{In this section, we ignore the four shaded triangles of labels $u,v,w,s$ in the illustrative figures; these triangles will be used later in Section \ref{sec:proofoff}.}), as the remaining regions can be obtained from these three regions by $180^{\circ}$-rotations.

\begin{rmk}\label{rmkE}
In the definition of the $E^{(1)}$-type regions, if we remove the three ferns such that the root of the middle one is at the center of the auxiliary hexagon $H_0$, then we obtain the region $R^{\odot}_{x,y,z}(\textbf{a};\textbf{c};\textbf{b})$ in Theorem 2.2 of \cite{HoleDent}. In some sense, the $E^{(i)}$-type regions can be viewed as counterparts of the $R^{\odot}$-type regions investigated in \cite{HoleDent}.
\end{rmk}

We note that the positions of the two side ferns are determined uniquely by the level of the the middle fern. Thus, from now on, when define the three removed ferns, we only need to care about the position of the middle fern.

\begin{figure}\centering
\setlength{\unitlength}{3947sp}%
\begingroup\makeatletter\ifx\SetFigFont\undefined%
\gdef\SetFigFont#1#2#3#4#5{%
  \reset@font\fontsize{#1}{#2pt}%
  \fontfamily{#3}\fontseries{#4}\fontshape{#5}%
  \selectfont}%
\fi\endgroup%
\resizebox{15cm}{!}{
\begin{picture}(0,0)%
\includegraphics{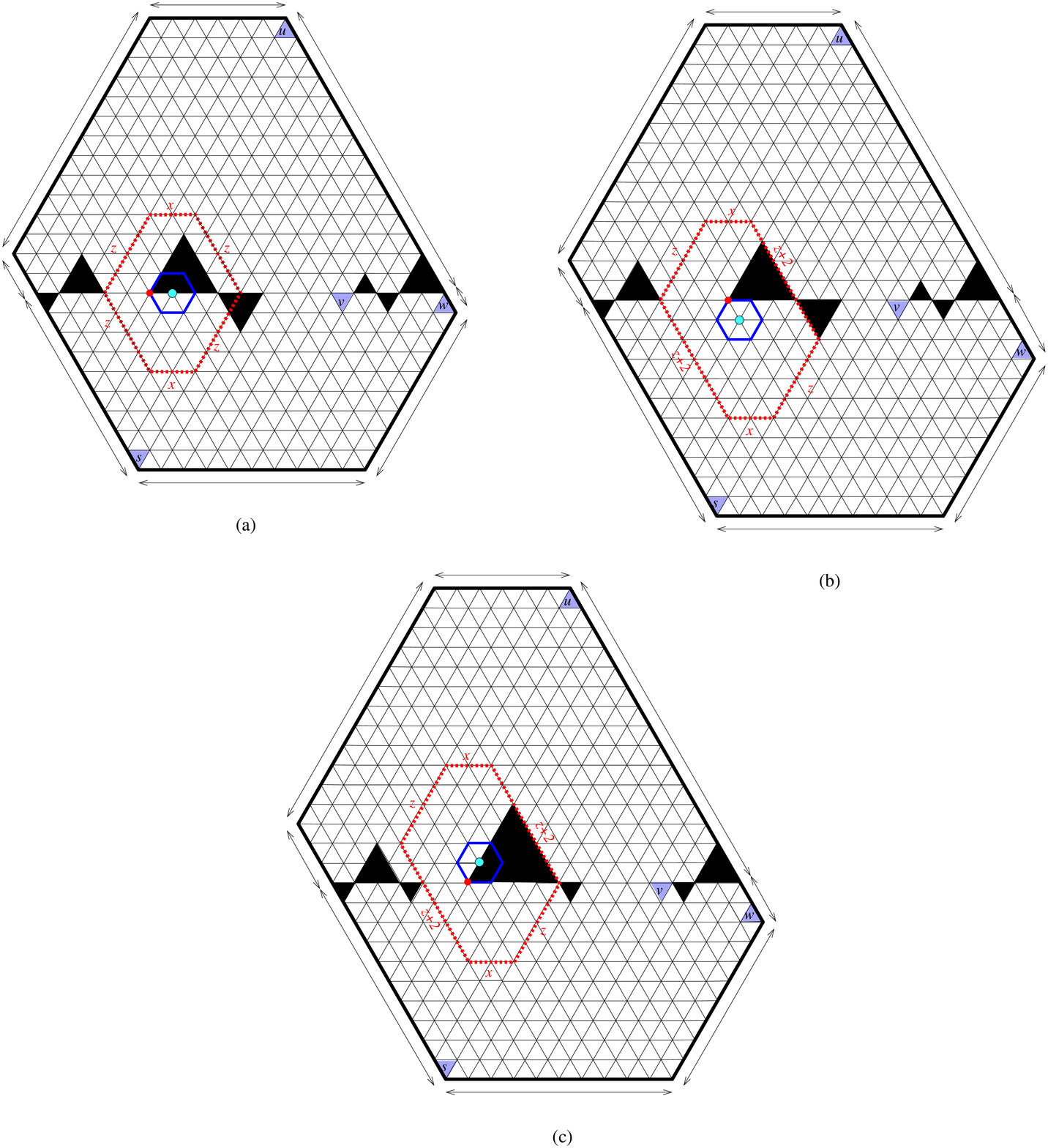}%
\end{picture}%
%
%

\begin{picture}(19395,20979)(2759,-21207)
\put(16776,-16402){\rotatebox{300.0}{\makebox(0,0)[lb]{\smash{{\SetFigFont{14}{16.8}{\familydefault}{\mddefault}{\updefault}{\color[rgb]{0,0,0}$y+(a-b)$}%
}}}}}
\put(14537,-12451){\rotatebox{300.0}{\makebox(0,0)[lb]{\smash{{\SetFigFont{14}{16.8}{\familydefault}{\mddefault}{\updefault}{\color[rgb]{0,0,0}$y+z++e_a+o_c+o_c+2$}%
}}}}}
\put(16068,-19597){\rotatebox{60.0}{\makebox(0,0)[lb]{\smash{{\SetFigFont{14}{16.8}{\familydefault}{\mddefault}{\updefault}{\color[rgb]{0,0,0}$z+o_a+e_b+e_c$}%
}}}}}
\put(12395,-20542){\makebox(0,0)[lb]{\smash{{\SetFigFont{14}{16.8}{\familydefault}{\mddefault}{\updefault}{\color[rgb]{0,0,0}$x+e_a+o_b+o_c$}%
}}}}
\put(8798,-17463){\rotatebox{300.0}{\makebox(0,0)[lb]{\smash{{\SetFigFont{14}{16.8}{\familydefault}{\mddefault}{\updefault}{\color[rgb]{0,0,0}$y+z+o_a+e_b+e_c+(a-b)$}%
}}}}}
\put(8082,-15929){\rotatebox{300.0}{\makebox(0,0)[lb]{\smash{{\SetFigFont{14}{16.8}{\familydefault}{\mddefault}{\updefault}{\color[rgb]{0,0,0}$y+2$}%
}}}}}
\put(8598,-13906){\rotatebox{60.0}{\makebox(0,0)[lb]{\smash{{\SetFigFont{14}{16.8}{\familydefault}{\mddefault}{\updefault}{\color[rgb]{0,0,0}$z+e_a+o_b+o_c$}%
}}}}}
\put(11349,-10666){\makebox(0,0)[lb]{\smash{{\SetFigFont{14}{16.8}{\familydefault}{\mddefault}{\updefault}{\color[rgb]{0,0,0}$x+o_a+e_b+e_c$}%
}}}}
\put(13518,-3976){\rotatebox{60.0}{\makebox(0,0)[lb]{\smash{{\SetFigFont{14}{16.8}{\familydefault}{\mddefault}{\updefault}{\color[rgb]{0,0,0}$z+e_a+o_b+o_c$}%
}}}}}
\put(12704,-5521){\rotatebox{300.0}{\makebox(0,0)[lb]{\smash{{\SetFigFont{14}{16.8}{\familydefault}{\mddefault}{\updefault}{\color[rgb]{0,0,0}$y+(b-a)$}%
}}}}}
\put(13401,-6716){\rotatebox{300.0}{\makebox(0,0)[lb]{\smash{{\SetFigFont{14}{16.8}{\familydefault}{\mddefault}{\updefault}{\color[rgb]{0,0,0}$y+z+o_a+e_b+e_c+2$}%
}}}}}
\put(17287,-10381){\makebox(0,0)[lb]{\smash{{\SetFigFont{14}{16.8}{\familydefault}{\mddefault}{\updefault}{\color[rgb]{0,0,0}$x+e_a+o_b+o_c$}%
}}}}
\put(20960,-9436){\rotatebox{60.0}{\makebox(0,0)[lb]{\smash{{\SetFigFont{14}{16.8}{\familydefault}{\mddefault}{\updefault}{\color[rgb]{0,0,0}$z+o_a+e_b+e_c$}%
}}}}}
\put(21709,-6167){\makebox(0,0)[lb]{\smash{{\SetFigFont{14}{16.8}{\familydefault}{\mddefault}{\updefault}{\color[rgb]{0,0,0}$y+2$}%
}}}}
\put(19429,-2290){\rotatebox{300.0}{\makebox(0,0)[lb]{\smash{{\SetFigFont{14}{16.8}{\familydefault}{\mddefault}{\updefault}{\color[rgb]{0,0,0}$y+z+(b-a)+e_a+o_c+o_c$}%
}}}}}
\put(16224,-511){\makebox(0,0)[lb]{\smash{{\SetFigFont{14}{16.8}{\familydefault}{\mddefault}{\updefault}{\color[rgb]{0,0,0}$x+o_a+e_b+e_c$}%
}}}}
\put(3571,-3748){\rotatebox{60.0}{\makebox(0,0)[lb]{\smash{{\SetFigFont{14}{16.8}{\familydefault}{\mddefault}{\updefault}{\color[rgb]{0,0,0}$z+e_a+o_b+o_c$}%
}}}}}
\put(2836,-5346){\rotatebox{300.0}{\makebox(0,0)[lb]{\smash{{\SetFigFont{14}{16.8}{\familydefault}{\mddefault}{\updefault}{\color[rgb]{0,0,0}$y+(b-a)$}%
}}}}}
\put(3489,-6583){\rotatebox{300.0}{\makebox(0,0)[lb]{\smash{{\SetFigFont{14}{16.8}{\familydefault}{\mddefault}{\updefault}{\color[rgb]{0,0,0}$y+z+o_a+e_b+e_c$}%
}}}}}
\put(6181,-478){\makebox(0,0)[lb]{\smash{{\SetFigFont{14}{16.8}{\familydefault}{\mddefault}{\updefault}{\color[rgb]{0,0,0}$x+o_a+e_b+e_c$}%
}}}}
\put(9196,-1806){\rotatebox{300.0}{\makebox(0,0)[lb]{\smash{{\SetFigFont{14}{16.8}{\familydefault}{\mddefault}{\updefault}{\color[rgb]{0,0,0}$y+z+(b-a)+e_a+o_c+o_c$}%
}}}}}
\put(11476,-5683){\makebox(0,0)[lb]{\smash{{\SetFigFont{14}{16.8}{\familydefault}{\mddefault}{\updefault}{\color[rgb]{0,0,0}$y$}%
}}}}
\put(10419,-8571){\rotatebox{60.0}{\makebox(0,0)[lb]{\smash{{\SetFigFont{14}{16.8}{\familydefault}{\mddefault}{\updefault}{\color[rgb]{0,0,0}$z+o_a+e_b+e_c$}%
}}}}}
\put(6436,-9433){\makebox(0,0)[lb]{\smash{{\SetFigFont{14}{16.8}{\familydefault}{\mddefault}{\updefault}{\color[rgb]{0,0,0}$x+e_a+o_b+o_c$}%
}}}}
\put(6181,-5301){\makebox(0,0)[lb]{\smash{{\SetFigFont{14}{16.8}{\familydefault}{\mddefault}{\updefault}{\color[rgb]{1,1,1}$c_1$}%
}}}}
\put(19932,-6073){\makebox(0,0)[lb]{\smash{{\SetFigFont{14}{16.8}{\familydefault}{\mddefault}{\updefault}{\color[rgb]{1,1,1}$b_2$}%
}}}}
\put(20524,-5743){\makebox(0,0)[lb]{\smash{{\SetFigFont{14}{16.8}{\familydefault}{\mddefault}{\updefault}{\color[rgb]{1,1,1}$b_1$}%
}}}}
\put(16469,-5684){\makebox(0,0)[lb]{\smash{{\SetFigFont{14}{16.8}{\familydefault}{\mddefault}{\updefault}{\color[rgb]{1,1,1}$c_1$}%
}}}}
\put(15796,-16289){\makebox(0,0)[lb]{\smash{{\SetFigFont{14}{16.8}{\familydefault}{\mddefault}{\updefault}{\color[rgb]{1,1,1}$b_1$}%
}}}}
\put(7186,-6051){\makebox(0,0)[lb]{\smash{{\SetFigFont{14}{16.8}{\familydefault}{\mddefault}{\updefault}{\color[rgb]{1,1,1}$c_2$}%
}}}}
\put(10501,-5616){\makebox(0,0)[lb]{\smash{{\SetFigFont{14}{16.8}{\familydefault}{\mddefault}{\updefault}{\color[rgb]{1,1,1}$b_1$}%
}}}}
\put(9909,-5946){\makebox(0,0)[lb]{\smash{{\SetFigFont{14}{16.8}{\familydefault}{\mddefault}{\updefault}{\color[rgb]{1,1,1}$b_2$}%
}}}}
\put(12281,-16021){\makebox(0,0)[lb]{\smash{{\SetFigFont{14}{16.8}{\familydefault}{\mddefault}{\updefault}{\color[rgb]{1,1,1}$c_1$}%
}}}}
\put(15234,-16619){\makebox(0,0)[lb]{\smash{{\SetFigFont{14}{16.8}{\familydefault}{\mddefault}{\updefault}{\color[rgb]{1,1,1}$b_2$}%
}}}}
\put(10332,-16557){\makebox(0,0)[lb]{\smash{{\SetFigFont{14}{16.8}{\familydefault}{\mddefault}{\updefault}{\color[rgb]{1,1,1}$a_3$}%
}}}}
\put(9718,-16287){\makebox(0,0)[lb]{\smash{{\SetFigFont{14}{16.8}{\familydefault}{\mddefault}{\updefault}{\color[rgb]{1,1,1}$a_2$}%
}}}}
\put(9104,-16642){\makebox(0,0)[lb]{\smash{{\SetFigFont{14}{16.8}{\familydefault}{\mddefault}{\updefault}{\color[rgb]{1,1,1}$a_1$}%
}}}}
\put(19534,-5848){\makebox(0,0)[lb]{\smash{{\SetFigFont{14}{16.8}{\familydefault}{\mddefault}{\updefault}{\color[rgb]{1,1,1}$b_3$}%
}}}}
\put(14360,-5791){\makebox(0,0)[lb]{\smash{{\SetFigFont{14}{16.8}{\familydefault}{\mddefault}{\updefault}{\color[rgb]{1,1,1}$a_2$}%
}}}}
\put(13790,-6069){\makebox(0,0)[lb]{\smash{{\SetFigFont{14}{16.8}{\familydefault}{\mddefault}{\updefault}{\color[rgb]{1,1,1}$a_1$}%
}}}}
\put(9511,-5721){\makebox(0,0)[lb]{\smash{{\SetFigFont{14}{16.8}{\familydefault}{\mddefault}{\updefault}{\color[rgb]{1,1,1}$b_3$}%
}}}}
\put(4306,-5676){\makebox(0,0)[lb]{\smash{{\SetFigFont{14}{16.8}{\familydefault}{\mddefault}{\updefault}{\color[rgb]{1,1,1}$a_2$}%
}}}}
\put(3766,-5946){\makebox(0,0)[lb]{\smash{{\SetFigFont{14}{16.8}{\familydefault}{\mddefault}{\updefault}{\color[rgb]{1,1,1}$a_1$}%
}}}}
\put(17645,-6174){\makebox(0,0)[lb]{\smash{{\SetFigFont{14}{16.8}{\familydefault}{\mddefault}{\updefault}{\color[rgb]{1,1,1}$c_2$}%
}}}}
\put(13230,-16629){\makebox(0,0)[lb]{\smash{{\SetFigFont{14}{16.8}{\familydefault}{\mddefault}{\updefault}{\color[rgb]{1,1,1}$c_2$}%
}}}}
\end{picture}%
}
\caption{(a) The region $E^{(1)}_{2,1,4}(1,2;\ 3,2;\ 2,1,1)$. (b) The region   $E^{(2)}_{2,1,4}(1,2;\ 3,2;\ 2,1,1)$. (c) The region   $E^{(6)}_{2,1,4}(1,2,1;\ 4,1;\ 2,1)$}\label{fig:off1}
\end{figure}


Before stating our enumerations, we note that one can always assume that each of our ferns has even number of triangles. Indeed, if a fern has an odd number of triangles, we can regard that the fern contains an even number of triangles by adding a triangle of side-length $0$ to its end. For the sake of simplicity, we assume this in the statements of our theorems throughout this paper.

\begin{thm}\label{off1thm1}
Assume that $\textbf{a}=(a_1,a_2,\dotsc,a_m)$, $\textbf{b}=(b_1,b_2,\dotsc,b_n)$, $\textbf{c}=(c_1,c_2,\dotsc,c_k)$ are three sequences  of a even number of nonnegative integers ($m,n,k$ are all even) and that $x,y,z$ are three nonnegative integers, such that $x$ and $z$ have the same parity.  Then
\begin{align}\label{off1eq1}
\M&(E^{(1)}_{x,y,z}(\textbf{a};\textbf{c};\textbf{b}))=\Phi_{x,2y+z+2\max(a,b),z}(c)\notag\\
&\times s\left(y+b-\min(a,b),a_1,\dotsc, a_{m},\frac{x+z}{2}-1,c_1,\dotsc,c_{k}+\frac{x+z}{2}+1+b_n,b_{n-1},\dotsc,b_1\right)\notag\\
&\times s\left(a_1,\dotsc, a_{m-1},a_{m}+\frac{x+z}{2}-1+c_1,\dotsc,c_{k},\frac{x+z}{2}+1,b_n,\dotsc,b_1,y+a-\min(a,b)\right)\notag\\
&\times\frac{\Hf(c+\frac{x+z}{2}-1)}{\Hf(c)\Hf(\frac{x+z}{2}-1)}\frac{\Hf(\max(a,b)+y+\frac{x+z}{2}-1)}{\Hf(\max(a,b)+c+y+\frac{x+z}{2}-1)}\notag\\
&\times \frac{\Hf(\max(a,b)+y+z)\Hf(\max(a,b)+c+y+z)}{\Hf(\max(a,b)-o_a+o_b+o_c+y+z)\Hf(\max(a,b)+o_a-o_b+e_c+y+z)}\notag\\
&\times \frac{\Hf(\max(a,b)-o_a+o_b+o_c+y)\Hf(\max(a,b)+o_a-o_b+e_c+y)}{\Hf(\max(a,b)+y)^2},
\end{align}
where $\Phi_{x,y,z}(m)$ is defined as in (\ref{phieq}) of Conjecture \ref{con1}.
\end{thm}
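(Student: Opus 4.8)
The plan is to prove Theorem \ref{off1thm1} by Kuo's graphical condensation applied to the planar dual graph of the region $E^{(1)}_{x,y,z}(\textbf{a};\textbf{c};\textbf{b})$, following the strategy used for the $R^{\odot}$-type regions in \cite{HoleDent} but now tracking the extra quadratic factor $P_1$ that distinguishes the off-central case. First I would set up the dual bipartite graph $G$ whose perfect matchings correspond to lozenge tilings of the region, and identify four boundary unit triangles $u,v,w,s$ (the shaded triangles flagged in Figure \ref{fig:off1}) whose removal yields the auxiliary regions needed in Kuo's identity. The choice of $u,v,w,s$ will be dictated by the requirement that the four regions $G-\{u,w\}$, $G-\{v,s\}$, $G-\{u,v\}$, $G-\{w,s\}$, $G-\{u,s\}$, $G-\{v,w\}$ appearing in the condensation recurrence each factor, via the standard region-splitting lemmas (forced edges along the ferns, and the decomposition into two dented semihexagons meeting along the central lattice line $l$), into products of $s(\cdots)$ terms and hyperfactorial ratios of the type appearing on the right-hand side of \eqref{off1eq1}, together with a cored-hexagon factor governed by Conjecture \ref{con1}/Rosengren's theorem or by the prequel \cite{HoleDent}.

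The key steps, in order, are: (i) apply the appropriate version of Kuo condensation (the one matching the bipartite color classes of $u,v,w,s$) to get an identity of the form $\M(E^{(1)})\cdot \M(R_0) = \M(R_1)\M(R_2) + \M(R_3)\M(R_4)$, where $R_0$ is a region whose tiling count is already known (a region with the central fern pushed to the true center, i.e.\ an $R^{\odot}$-region, or a cored hexagon), and $R_1,\dots,R_4$ are regions with slightly shifted fern data; (ii) decompose each $R_j$ by forced-edge arguments along the three ferns into two dented semihexagons and apply the Cohn--Larsen--Propp formula \eqref{semieq} to each half; (iii) assemble the resulting expression and check that the hyperfactorial bookkeeping collapses to the claimed product; (iv) isolate the remaining scalar identity — the one encoding that the combination of the $s$-products on the right of \eqref{off1eq1} against the known count of $R_0$ reproduces $\Phi_{x,2y+z+2\max(a,b),z}(c)$ times the extra factors — and verify it, which reduces (after cancelling all common hyperfactorials) to a polynomial identity in $x,y,z,a,b,c$ involving the factor $P_1$. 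An induction on the total number of triangles $m+n+k$ in the three ferns, or alternatively on one fern length at a time with base case the single-central-triangle situation handled by Conjectures \ref{con1}/\ref{con2}, closes the argument; one must separately treat $a\le b$ and $a\ge b$ (Figure \ref{pushing1}(a) vs (b)) and the parity cases $x,z$ even vs $x,z$ odd, since $P_1$ and $Q_1$ have case splits there.

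The main obstacle I anticipate is step (iv): verifying that the Kuo recurrence is \emph{consistent} with the proposed closed form, i.e.\ that plugging the conjectured right-hand sides of \eqref{off1eq1} (at the shifted parameter values) into $\M(R_1)\M(R_2)+\M(R_3)\M(R_4) - \M(E^{(1)})\M(R_0)$ gives zero. After the massive hyperfactorial cancellation this becomes a finite polynomial identity, but the quadratic factor $P_1$ (with its even/odd split) makes it genuinely nontrivial rather than a one-line simplification — this is exactly where the off-central case departs from the central case treated in \cite{HoleDent}, and it is where most of the computational work will live. A secondary, more bookkeeping-type difficulty is choosing $u,v,w,s$ so that \emph{all six} auxiliary regions simultaneously split cleanly; a wrong choice produces a region that does not factor, and one may need two different condensations (one to handle the induction on $\textbf{c}$, another for $\textbf{a},\textbf{b}$), with the base cases of the nested inductions being the cored/$R^{\odot}$ results quoted above. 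Finally, one should check that the degenerate conventions — empty ferns, trailing zero-triangles appended to make $m,n,k$ even, and the possibly-negative $y$ in neighbouring $E^{(i)}$-cases (not needed here since $y\ge 0$ for $i=1$) — do not create spurious hyperfactorials of negative argument, which is routine given the definition \eqref{hyper2}.
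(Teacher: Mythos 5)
Your overall toolkit is the right one (Kuo condensation on the dual graph, forced lozenges, the Cohn--Larsen--Propp formula, induction), but two structural points in your plan do not survive contact with how the condensation actually behaves for this region. First, in step (ii) you propose to evaluate each auxiliary region $R_j$ by splitting it along the line $\ell$ into two dented semihexagons. That splitting is only legitimate when the middle fern reaches the boundary of the hexagon, i.e.\ in the degenerate cases $x=0$ or $z=0$ (these are exactly the base cases of the paper, handled by Lemma \ref{RS} and \eqref{semieq}); for generic parameters the middle fern is an interior hole and no region-splitting applies. What the removal of $u,v,w,s$ and the forced lozenges actually produces are full three-fern regions of \emph{other} families with shifted parameters: in the paper's recurrence \eqref{offcenterrecurE1a}/\eqref{offcenterrecurE1b}, $\M(E^{(1)}_{x,y,z})$ is coupled to $\M(R^{\leftarrow})$ (a central region known from \cite{HoleDent}), $\M(K^{(1)})$, $\M(G^{(1)})$, and $\M(E^{(1)}_{x+1,y,z-1})$. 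Consequently Theorem \ref{off1thm1} cannot be closed by an induction on $E^{(1)}$ alone, nor by induction on the number of fern triangles $m+n+k$ (the recurrence never shortens the ferns --- it only bumps $\textbf{b}$ to $\textbf{b}^{+1}$ and shifts $x,y,z$). The paper instead proves all thirty off-central formulas \emph{simultaneously}, by induction on $h=p+x+z$ (quasi-perimeter plus $x$ plus $z$), feeding in the eight central-region theorems of \cite{HoleDent} as known input, and it needs the separate extremal lemmas (Lemmas \ref{lem3} and \ref{lem4}) to handle minimal $y$ and zero-length fern triangles, where the recurrences degenerate. Your proposal is missing this simultaneous, cross-family induction, which is the essential organizing idea.

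Second, your choice of base case is problematic: you suggest anchoring the induction at ``the single-central-triangle situation handled by Conjectures \ref{con1}/\ref{con2}.'' In the paper the logical flow is the reverse --- Conjecture \ref{con1} is \emph{deduced} from Theorem \ref{off1thm1} by specializing to $E^{(1)}_{x,\frac{y-z}{2},z}(\emptyset;\,m;\,\emptyset)$ --- so using the conjecture as a base case would be circular unless you explicitly import Rosengren's independent proof, and even then it does not furnish the base cases the induction actually requires, which are $x=0$, $z=0$ (semihexagon splitting) and the minimal-$y$ reductions. Finally, your anticipated ``main obstacle,'' a genuinely hard polynomial identity involving $P_1$, is less of an issue than you think: once the correct simultaneous induction is set up, each Kuo recurrence is verified by plugging in the already-established product formulas and cancelling hyperfactorials, a lengthy but routine check.
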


\begin{thm}\label{off1thm2}
Assume that $\textbf{a}=(a_1,a_2,\dotsc,a_m)$, $\textbf{b}=(b_1,b_2,\dotsc,b_n)$, $\textbf{c}=(c_1,c_2,\dotsc,c_k)$ are three sequences  of nonnegative integers ($m,n,k$ are all even)  and that $x,y,z$ are three integers, such that $x,z\geq 0$, $y\geq\max(a-b,-2)$, and $x$ and $z$ have the same parity.  Then
\begin{align}\label{off1eq2}
\M&(E^{(2)}_{x,y,z}(\textbf{a};\textbf{c};\textbf{b}))=\Phi_{2y+z+2\max(a,b)+2,x,z}(c)\notag\\
&\times s\left(y+b-\min(a,b),a_1,\dotsc, a_{m},\frac{x+z}{2},c_1,\dotsc,c_{k}+\frac{x+z}{2}+b_n,b_{n-1},\dotsc,b_1\right)\notag\\
&\times s\left(a_1,\dotsc, a_{m-1},a_{m}+\frac{x+z}{2}+c_1,\dotsc,c_{k},\frac{x+z}{2},b_n,\dotsc,b_1,y+a-\min(a,b)+2\right)\notag\\
&\times\frac{\Hf(c+\frac{x+z}{2})}{\Hf(c)\Hf(\frac{x+z}{2})}\frac{\Hf(\max(a,b)+y+\frac{x+z}{2})}{\Hf(\max(a,b)+c+y+\frac{x+z}{2})}\notag\\
&\times \frac{\Hf(\max(a,b)+y+z+2)\Hf(\max(a,b)+c+y+z)}{\Hf(\max(a,b)-o_a+o_b+o_c+y+z)\Hf(\max(a,b)+o_a-o_b+e_c+y+z+2)}\notag\\
&\times \frac{\Hf(\max(a,b)-o_a+o_b+o_c+y)\Hf(\max(a,b)+o_a-o_b+e_c+y+2)}{\Hf(\max(a,b)+y)\Hf(\max(a,b)+y+2)}.
\end{align}
\end{thm}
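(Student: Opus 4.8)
# Proof Proposal for Theorem \ref{off1thm2}

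The plan is to reduce the enumeration of $E^{(2)}$-type regions to that of the $E^{(1)}$-type regions via Kuo's graphical condensation, exactly as one expects the author to have set up the machinery in Section 3. The key observation is that the auxiliary hexagon for $E^{(2)}$ has side-lengths $x, z+2, z, x, z+2, z$ rather than $x,z,z,x,z,z$, and the off-central position $2$ differs from position $1$ by a controlled shift; this accounts for the replacement of $\frac{x+z}{2}-1$ by $\frac{x+z}{2}$ in the $s$-arguments and for the ``$+2$'' that decorates several of the hyperfactorial and $\Phi$-arguments. So the first step is to express $\M(E^{(2)}_{x,y,z}(\textbf{a};\textbf{c};\textbf{b}))$ in terms of a weighted count on the dual graph $G$ of the region, and to identify four unit triangles $u,v,w,s$ (the shaded triangles flagged in the caption of Figure \ref{fig:off1}) whose forced removal or retention converts $G$ into the dual graphs of four $E^{(1)}$-type regions (with slightly perturbed parameters) or of regions that split into a product of two dented semihexagons by a forced-lozenge argument.

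Concretely, I would apply the version of Kuo condensation (Theorem from Section 3, stated for regions with a horizontal lozenge forcing line) with the four marked vertices placed so that one of the four resulting regions is $E^{(1)}_{x,y',z'}$ with $y'=y$, $z'=z$ shifted appropriately, a second is an $E^{(1)}$-type region with one of the fern parameters bumped, and the remaining two factor through the Cohn--Larsen--Propp formula \eqref{semieq} into explicit hyperfactorial products. This yields a recurrence of the shape
\begin{equation}
\M(E^{(2)})\cdot X_0 = X_1 \cdot X_2 + X_3 \cdot X_4,
\end{equation}
where $X_0,\dots,X_4$ are each either an $\M$ of an $E^{(1)}$-region (already evaluated by Theorem \ref{off1thm1}) or a product of $s$-values. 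Substituting the closed form \eqref{off1eq1} for the $E^{(1)}$-factors and the product formula \eqref{semieq} for the $s$-factors turns the recurrence into a hyperfactorial identity, and the conjectured formula \eqref{off1eq2} is verified by checking that it satisfies this identity together with a base case (e.g. $k=0$, or $\textbf{c}$ empty, where the middle fern degenerates and the region reduces to one already handled in \cite{HoleDent} or to the $\Phi$-formula of Conjecture \ref{con1} itself).

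The main obstacle will be the bookkeeping in the Kuo-condensation step: one must choose the four marked boundary vertices so that \emph{all four} sub-regions are of a type already enumerated, and verify that the forced-lozenge regions near the placed vertices really do split off clean dented semihexagons with the exact arguments appearing in \eqref{off1eq2} — in particular tracking how $\max(a,b)$, $\min(a,b)$, $o_a-o_b$, and the ``$+2$'' shifts propagate through the side-pushing data of Figure \ref{pushing1}. A secondary but genuinely delicate point is the hyperfactorial algebra at the end: one must show that the ratio $\Phi_{2y+z+2\max(a,b)+2,x,z}(c)/\Phi_{x,2y+z+2\max(a,b),z}(c)$ (coming from comparing the $\Phi$-prefactor of \eqref{off1eq2} to that of \eqref{off1eq1}) combines with the ratios of the $s$-factors and the extra $\Hf$-quotients to exactly reproduce the recurrence; this is where the precise placement of floors and ceilings in the definition of $\Phi$ matters, and where I would expect to lean on the standard simplification lemmas for $\Hf$ (Gauss/Legendre duplication, $\Hf(n+1)=n!\,\Hf(n)$) quoted in Section 3. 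Once the recurrence and one base case are in hand, the induction — say on $k+m+n$, or on $y$ — closes the argument.
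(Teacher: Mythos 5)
Your overall instinct---apply Kuo condensation to the dual graph of $E^{(2)}_{x,y,z}(\textbf{a};\textbf{c};\textbf{b})$ with four marked unit triangles and then combine the resulting recurrence with known product formulas---is indeed the engine of the paper's proof, but the central structural claim in your plan is not available, and this is a genuine gap. You assume the four marked vertices can be chosen so that every subregion appearing in the condensation identity is either an $E^{(1)}$-type region (to be fed Theorem \ref{off1thm1}) or a region that splits off into dented semihexagons. That fails: deleting the marked triangles forces lozenges along the boundary which change one side length by a unit, so the parity relation between the horizontal sides and the position of the middle fern's root relative to the center both shift, and the subregions leave the $E$-family altogether. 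In the paper's actual recurrences (\ref{offcenterrecurE2a})--(\ref{offcenterrecurE2b}), $\M(E^{(2)}_{x,y,z})$ is coupled to $F^{(1)}$-type and $K^{(2)}$-type regions (which live in the opposite-parity regime), to the central region $R^{\nwarrow}$ from the prequel \cite{HoleDent}, and to $E^{(2)}$ itself with $(x+1,z-1)$; none of the factors is an $E^{(1)}$-region, and none factors into dented semihexagons for generic parameters (that splitting only occurs in the degenerate base cases $x=0$ or $z=0$). Consequently a two-step bootstrap ``prove $E^{(1)}$, then deduce $E^{(2)}$'' cannot close; the paper instead proves all thirty formulas simultaneously by induction on $h=p+x+z$ (quasi-perimeter plus $x$ plus $z$), drawing on the already-established central-case theorems of \cite{HoleDent}.

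Two secondary points would also need repair even granting a usable recurrence. First, your proposed induction parameter ($k+m+n$, or $y$) does not decrease along the condensation identity---the recurrence shrinks $x+z$ and the quasi-perimeter, not the fern lengths---so the induction must be organized on $h$, with the base cases $x=0$ and $z=0$ handled by cutting along the fern line and invoking Region-splitting plus Cohn--Larsen--Propp, and with $p<6$ reduced to these. Second, the recurrence degenerates when $y$ sits at its minimal value $\max(a-b,-2)$, and when fern triangles have side length $0$; the paper needs the separate reductions of Lemmas \ref{lem3} and \ref{lem4} (converting such a region into one of another family with strictly smaller $h$ and the same tiling number) to make the induction step legitimate. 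Your proposal is silent on both of these, and the minimal-$y$ case in particular cannot be waved away, since the theorem's hypothesis explicitly allows $y=\max(a-b,-2)$, where $y$ may be negative.
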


\begin{thm}\label{off1thm3}
Assume that $\textbf{a}=(a_1,a_2,\dotsc,a_m)$, $\textbf{b}=(b_1,b_2,\dotsc,b_n)$, $\textbf{c}=(c_1,c_2,\dotsc,c_k)$ are three sequences  of nonnegative integers (for $m,n,k$ are even) and that $x,y,z$ are three integers, such that $x,z\geq 0$, $y\geq\max(b-a,-2)$, and $x$ and $z$ have the same parity.  Then 
\begin{align}\label{off1eq3}
\M&(E^{(6)}_{x,y,z}(\textbf{a};\textbf{c};\textbf{b}))=\Phi_{z,2y+z+2\max(a,b)+2,x}(c)\notag\\
&\times s\left(y+b-\min(a,b)+2,a_1,\dotsc, a_{m},\frac{x+z}{2}-1,c_1,\dotsc,c_{k}+\frac{x+z}{2}+1+b_n,b_{n-1},\dotsc,b_1\right)\notag\\
&\times s\left(a_1,\dotsc, a_{m-1},a_{m}+\frac{x+z}{2}-1+c_1,\dotsc,c_{k},\frac{x+z}{2}+1,b_n,\dotsc,b_1,y+a-\min(a,b)\right)\notag\\
&\times\frac{\Hf(c+\frac{x+z}{2}-1)}{\Hf(c)\Hf(\frac{x+z}{2}-1)}\frac{\Hf(\max(a,b)+y+\frac{x+z}{2}+1)}{\Hf(\max(a,b)+c+y+\frac{x+z}{2}+1)}\notag\\
&\times \frac{\Hf(\max(a,b)+y+z)\Hf(\max(a,b)+c+y+z+2)}{\Hf(\max(a,b)-o_a+o_b+o_c+y+z+2)\Hf(\max(a,b)+o_a-o_b+e_c+y+z)}\notag\\
&\times \frac{\Hf(\max(a,b)-o_a+o_b+o_c+y+2)\Hf(\max(a,b)+o_a-o_b+e_c+y)}{\Hf(\max(a,b)+y)\Hf(\max(a,b)+y+2)}.
\end{align}
%
%
\end{thm}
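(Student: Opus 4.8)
The plan is to prove Theorems \ref{off1thm1}--\ref{off1thm3} together by one induction built on Kuo's graphical condensation; I describe the argument for the $E^{(6)}$-region \eqref{off1eq3}, the proofs of \eqref{off1eq1} and \eqref{off1eq2} being entirely parallel, and the positions $i\in\{3,4,5\}$ following from these three by the $180^{\circ}$-symmetry already noted. As usual, pass to the planar dual graph $G$ of $E^{(6)}_{x,y,z}(\textbf{a};\textbf{c};\textbf{b})$, so that $\M(G)$ counts the lozenge tilings of the region. Removing the lozenges that are forced along the six sides of the base hexagon and around the three ferns reduces $G$ to a shape on which condensation applies cleanly; the four shaded unit triangles $u,v,w,s$ in Figure \ref{fig:off1}(c) mark the four vertices of $G$ at which we condense.

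First I would apply the appropriate one of the two versions of Kuo condensation from Section 3 to $G$ with the vertices $u,v,w,s$, obtaining
\[
\M(G)\,\M(G-\{u,v,w,s\}) \;=\; \M(G-\{u,v\})\,\M(G-\{w,s\}) \;+\; \M(G-\{u,s\})\,\M(G-\{v,w\}).
\]
The point of the chosen placement of $u,v,w,s$ is that, after again deleting the lozenges forced by each removal, every one of the five regions appearing here is again of $E^{(i)}$-type (possibly a $180^{\circ}$-rotation of one, hence of $E^{(j)}$-type) with the data $x,y,z,\textbf{a},\textbf{b},\textbf{c}$ perturbed by a bounded amount --- typically $y\mapsto y\pm1$, or a single fern entry or one of $x,z$ shifted by $1$ or $2$ --- or else it degenerates into a product of two dented semihexagons, whose tiling numbers are given by \eqref{semieq}. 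I would then set the induction up on a suitable size statistic (for instance a positive combination of $x,z,a,b,c,y$ compatible with the hypotheses $x,z\geq0$ and $y\geq\max(b-a,-2)$), arranged so that the four regions other than $G$ are strictly smaller. In the base cases the two side ferns are empty and the remaining data is minimal; there the region is, up to forced lozenges, an off-central cored hexagon, and \eqref{off1eq3} follows from the formula \eqref{phieq} for $\Phi$ (Conjecture \ref{con1}, now established) together with \eqref{semieq} and, for the degenerate fern part, the comparison with Theorem 2.2 of \cite{HoleDent}.

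It then remains to check that the closed form on the right of \eqref{off1eq3} obeys the same three-term recurrence as $\M(E^{(6)})$. This is the routine but long step: dividing the recurrence through by the conjectured product and rewriting every ratio $\Hf(n)/\Hf(n-1)=\Gamma(n)$, together with the quotients of $s$-values via \eqref{semieq}, reduces the identity to a finite polynomial identity in $x,y,z$ and the fern entries. Because $\Phi$ carries the quadratic factor $P_1$ (and, when $F/G/K$-type neighbors occur, one also meets $Q_1,Q_2,P_2$), the check splits into the cases ``$x$ even'' and ``$x$ odd,'' each a bounded verification; alternatively, one may first prove the identity for all large parameters and extend it by the standard polynomiality argument.

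The hard part is the combinatorial bookkeeping of the condensation: one must place $u,v,w,s$ so that all five auxiliary regions stay inside the family $\{E^{(i)},\overline{E}^{(i)}\}$ rather than producing a genuinely new region, and must track exactly which side-length or fern entry is shifted in each --- including the sign cases $a\le b$ versus $a\ge b$ (which interchange the two side ferns and the $\max$/$\min$ appearing in \eqref{off1eq3}) and the shifts by $\pm2$ inherited from the auxiliary hexagon of side-lengths $x,z+2,z,x,z+2,z$ used to define position $i=6$. Once the recurrence is pinned down, the algebraic step is mechanical, and the same four-vertex scheme, with the evident relabelings, yields Theorems \ref{off1thm1} and \ref{off1thm2}.
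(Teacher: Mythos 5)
Your overall strategy (Kuo condensation plus induction, with dented-semihexagon degenerations handled by the Cohn--Larsen--Propp formula) is the right general template, but the core combinatorial claim on which your induction rests is false, and this is exactly the point where the real work lies. You propose to place $u,v,w,s$ so that "all five auxiliary regions stay inside the family $\{E^{(i)},\overline{E}^{(i)}\}$." This cannot be arranged: deleting a boundary unit triangle and the lozenges it forces shifts one side of the base hexagon by a unit, which changes the offset of the fern line relative to the center of the auxiliary hexagon, and hence necessarily changes the \emph{type} of the region. For the region $E^{(6)}_{x,y,z}(\textbf{a};\textbf{c};\textbf{b})$ the condensation identity one actually obtains (for $a\le b$) is
\begin{align*}
\M(E^{(6)}_{x,y,z}(\textbf{a};\textbf{c};\textbf{b}))\,\M(F^{(1)}_{x,y-1,z-1}(\textbf{a};\textbf{c};\textbf{b}^{+1}))
&=\M(R^{\swarrow}_{x,y-1,z-1}(\textbf{a};\textbf{c};\textbf{b}^{+1}))\,\M(G^{(4)}_{x,y-1,z}(\textbf{b};\overline{\textbf{c}};\textbf{a}))\\
&\quad+\M(E^{(6)}_{x+1,y,z-1}(\textbf{a};\textbf{c};\textbf{b}))\,\M(F^{(1)}_{x-1,y-1,z}(\textbf{a};\textbf{c};\textbf{b}^{+1})),
\end{align*}
so the companions are $F^{(1)}$-, $G^{(4)}$-, and $R^{\swarrow}$-type regions, not $E$-type ones. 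Consequently Theorem \ref{off1thm3} cannot be proved by an induction confined to Theorems \ref{off1thm1}--\ref{off1thm3}: one is forced to run a \emph{simultaneous} induction over all thirty off-central formulas (the $E$, $F$, $G$, $K$ and barred families), using as known input the eight central $R$- and $Q$-type formulas from the prequel, with the inductive statistic $h=p+x+z$ built from the quasi-perimeter. Your proposal acknowledges this bookkeeping as "the hard part" but offers no mechanism for it, and the mechanism you do sketch (staying inside one family) is unavailable.

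Two further gaps: (i) the recurrence decreases $y$ by $1$ in several factors, so it breaks down precisely when $y$ sits at its minimum $\max(b-a,-2)$; one needs a separate argument (forced-lozenge removal converting the region into one of the barred families with strictly smaller $h$, and a companion lemma eliminating $0$-triangles from the ferns) to cover this boundary case, which you do not address. (ii) Your base case is not the right one for this induction: the natural degenerate cases are $x=0$ or $z=0$, where the region splits along the fern line into two dented semihexagons and the formula follows from \eqref{semieq} and the region-splitting lemma; there is no need (and no clean way, given that your recurrences never empty the side ferns) to reduce to an off-central cored hexagon and invoke Conjecture \ref{con1}, which in this paper is a \emph{consequence} of Theorem \ref{off1thm1} rather than an input. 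The final algebraic verification step you describe is indeed routine once the correct recurrences are in hand.
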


One can see that the factor involving function $\Phi$ in formula (\ref{off1eq2}) (resp.,  formula (\ref{off1eq3})) is obtained from that in the formula (\ref{off1eq1}) by interchanging the $x$- and $y$-variables (resp., $x$- and $z$-variables).

\bigskip

Next, we show that Theorem \ref{off1thm1} implies Ciucu--Eisenk\"{o}lbl--Krattenthaler--Zare's Conjecture \ref{con1}.

\begin{proof}[Proof of Conjecture \ref{con1} by using Theorem \ref{off1thm1}]
Apply formula (\ref{off1eq1}) in Theorem \ref{off1thm1} to the region\\ $E^{(1)}_{x,\frac{y-z}{2},z}(\emptyset;\  m;\ \emptyset)$, we prove the conjecture for the case when $y\geq z$ (all factors in (\ref{off1eq1}), except for the first one, cancel out). The remaining case, when $y< z$, is obtained by applying Theorem \ref{off1thm1} to a horizontal reflection of the region $E^{(1)}_{x,\frac{z-y}{2},y}(\emptyset;\  0,m;\ \emptyset)$.
\end{proof}

\begin{rmk}[Geometrical Interpretation]
One can rewrite the equation (\ref{off1eq1}) in Theorem \ref{off1thm1} as
\begin{align}
\M&(E^{(1)}_{x,y,z}(\textbf{a};\textbf{c};\textbf{b}))=\M(C^{(1)}_{x,2y+z+2\max(a,b),z}(c))\notag\\
&\times s\left(y+b-\min(a,b),a_1,\dotsc, a_{m},\frac{x+z}{2}-1,c_1,\dotsc,c_{k}+\frac{x+z}{2}+1+b_n,b_{n-1},\dotsc,b_1\right)\notag\\
&\times s\left(a_1,\dotsc, a_{m-1},a_{m}+\frac{x+z}{2}-1+c_1,\dotsc,c_{k},\frac{x+z}{2}+1,b_n,\dotsc,b_1,y+a-\min(a,b)\right)\notag\\
&\frac{\PP(y+\max(a,b)-o_a+o_b+o_c,y+\max(a,b)+o_a-o_b+e_c,z)}{\PP(\frac{x+z}{2}-1,c,y+\max(a,b))\PP(y+\max(a,b)+c,y+\max(a,b),z)},
\end{align}
where $C^{(1)}_{x,y,z}(m)$ is the (off-central) cored hexagon in Conjecture \ref{con1} and \[\PP(a,b,c):=\frac{\Hf(a)\Hf(b)\Hf(c)\Hf(a+b+c)}{\Hf(a+b)\Hf(b+c)\Hf(c+a)}\] is the tiling number of the centrally symmetric hexagon of side-lengths $a,b,c,a,b,c$ (it is also the number of plane partitions fitting in an $a \times b \times c$ box \cite{Mac}). We have similar interpretation for other theorems in this section. This would be interesting to have a combinatorial proof for this.
\end{rmk}

\subsection{The $F^{(i)}$-type regions}

\begin{figure}\centering
\setlength{\unitlength}{3947sp}%
\begingroup\makeatletter\ifx\SetFigFont\undefined%
\gdef\SetFigFont#1#2#3#4#5{%
  \reset@font\fontsize{#1}{#2pt}%
  \fontfamily{#3}\fontseries{#4}\fontshape{#5}%
  \selectfont}%
\fi\endgroup%
\resizebox{15cm}{!}{
\begin{picture}(0,0)%
\includegraphics{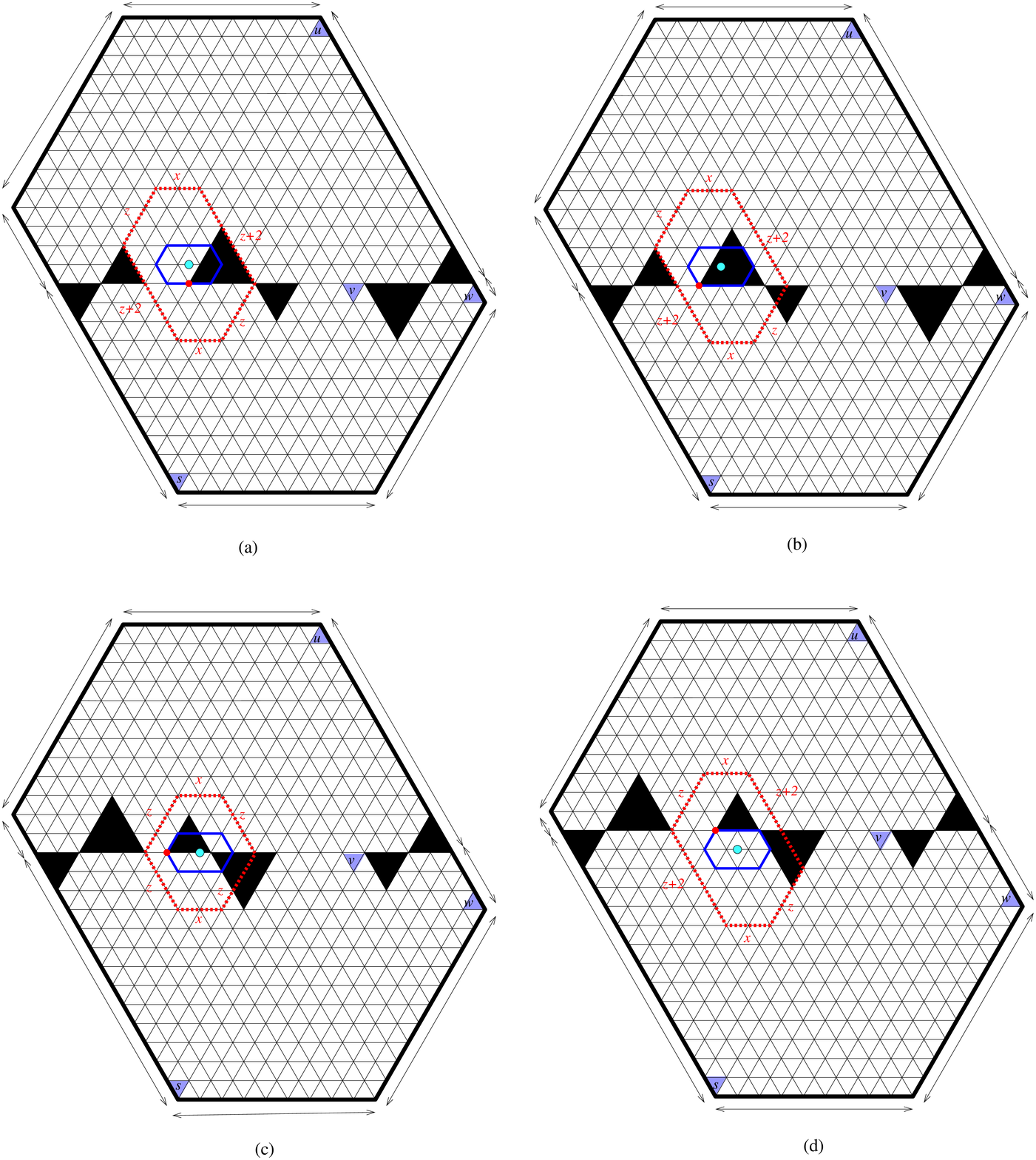}%
\end{picture}%
%
%

\begin{picture}(19911,21959)(642,-22126)
\put(4163,-410){\makebox(0,0)[lb]{\smash{{\SetFigFont{14}{16.8}{\familydefault}{\mddefault}{\updefault}{$x+o_a+e_b+e_c$}%
}}}}
\put(14086,-449){\makebox(0,0)[lb]{\smash{{\SetFigFont{14}{16.8}{\familydefault}{\mddefault}{\updefault}{$x+o_a+e_b+e_c$}%
}}}}
\put(14188,-11670){\makebox(0,0)[lb]{\smash{{\SetFigFont{14}{16.8}{\familydefault}{\mddefault}{\updefault}{$x+o_a+e_b+e_c$}%
}}}}
\put(4266,-11670){\makebox(0,0)[lb]{\smash{{\SetFigFont{14}{16.8}{\familydefault}{\mddefault}{\updefault}{$x+o_a+e_b+e_c$}%
}}}}
\put(5076,-21591){\makebox(0,0)[lb]{\smash{{\SetFigFont{14}{16.8}{\familydefault}{\mddefault}{\updefault}{$x+e_a+o_b+o_c$}%
}}}}
\put(15100,-21414){\makebox(0,0)[lb]{\smash{{\SetFigFont{14}{16.8}{\familydefault}{\mddefault}{\updefault}{$x+e_a+o_b+o_c$}%
}}}}
\put(15203,-10311){\makebox(0,0)[lb]{\smash{{\SetFigFont{14}{16.8}{\familydefault}{\mddefault}{\updefault}{$x+e_a+o_b+o_c$}%
}}}}
\put(5280,-10272){\makebox(0,0)[lb]{\smash{{\SetFigFont{14}{16.8}{\familydefault}{\mddefault}{\updefault}{$x+e_a+o_b+o_c$}%
}}}}
\put(1425,-3091){\rotatebox{60.0}{\makebox(0,0)[lb]{\smash{{\SetFigFont{14}{16.8}{\familydefault}{\mddefault}{\updefault}{$z+e_a+o_b+o_c$}%
}}}}}
\put(11347,-3268){\rotatebox{60.0}{\makebox(0,0)[lb]{\smash{{\SetFigFont{14}{16.8}{\familydefault}{\mddefault}{\updefault}{$z+e_a+o_b+o_c$}%
}}}}}
\put(11449,-14548){\rotatebox{60.0}{\makebox(0,0)[lb]{\smash{{\SetFigFont{14}{16.8}{\familydefault}{\mddefault}{\updefault}{$z+e_a+o_b+o_c$}%
}}}}}
\put(1323,-14607){\rotatebox{60.0}{\makebox(0,0)[lb]{\smash{{\SetFigFont{14}{16.8}{\familydefault}{\mddefault}{\updefault}{$z+e_a+o_b+o_c$}%
}}}}}
\put(8286,-2242){\rotatebox{300.0}{\makebox(0,0)[lb]{\smash{{\SetFigFont{14}{16.8}{\rmdefault}{\mddefault}{\itdefault}{$z+e_a+o_b+o_c+(b-a)+2$}%
}}}}}
\put(18217,-2147){\rotatebox{300.0}{\makebox(0,0)[lb]{\smash{{\SetFigFont{14}{16.8}{\rmdefault}{\mddefault}{\itdefault}{$z+e_a+o_b+o_c+(b-a)+2$}%
}}}}}
\put(18012,-13013){\rotatebox{300.0}{\makebox(0,0)[lb]{\smash{{\SetFigFont{14}{16.8}{\rmdefault}{\mddefault}{\itdefault}{$z+e_a+o_b+o_c$}%
}}}}}
\put(8086,-13181){\rotatebox{300.0}{\makebox(0,0)[lb]{\smash{{\SetFigFont{14}{16.8}{\rmdefault}{\mddefault}{\itdefault}{$z+e_a+o_b+o_c$}%
}}}}}
\put(11785,-6832){\rotatebox{300.0}{\makebox(0,0)[lb]{\smash{{\SetFigFont{14}{16.8}{\familydefault}{\mddefault}{\updefault}{$y+z+o_a+e_b+e_c$}%
}}}}}
\put(1965,-6714){\rotatebox{300.0}{\makebox(0,0)[lb]{\smash{{\SetFigFont{14}{16.8}{\familydefault}{\mddefault}{\updefault}{$y+z+o_a+e_b+e_c$}%
}}}}}
\put(11682,-17521){\rotatebox{300.0}{\makebox(0,0)[lb]{\smash{{\SetFigFont{14}{16.8}{\familydefault}{\mddefault}{\updefault}{$y+z+o_a+e_b+e_c+(a-b)+2$}%
}}}}}
\put(1863,-17993){\rotatebox{300.0}{\makebox(0,0)[lb]{\smash{{\SetFigFont{14}{16.8}{\familydefault}{\mddefault}{\updefault}{$y+z+o_a+e_b+e_c+(a-b)$}%
}}}}}
\put(20356,-5834){\makebox(0,0)[lb]{\smash{{\SetFigFont{14}{16.8}{\familydefault}{\mddefault}{\updefault}{$y$}%
}}}}
\put(10272,-5784){\makebox(0,0)[lb]{\smash{{\SetFigFont{14}{16.8}{\familydefault}{\mddefault}{\updefault}{$y$}%
}}}}
\put(862,-16414){\makebox(0,0)[lb]{\smash{{\SetFigFont{14}{16.8}{\familydefault}{\mddefault}{\updefault}{$y$}%
}}}}
\put(10783,-16119){\makebox(0,0)[lb]{\smash{{\SetFigFont{14}{16.8}{\familydefault}{\mddefault}{\updefault}{$y$}%
}}}}
\put(19742,-16045){\rotatebox{300.0}{\makebox(0,0)[lb]{\smash{{\SetFigFont{14}{16.8}{\familydefault}{\mddefault}{\updefault}{$y+(a-b)+2$}%
}}}}}
\put(10025,-16512){\rotatebox{300.0}{\makebox(0,0)[lb]{\smash{{\SetFigFont{14}{16.8}{\familydefault}{\mddefault}{\updefault}{$y+(a-b)$}%
}}}}}
\put(717,-4825){\rotatebox{300.0}{\makebox(0,0)[lb]{\smash{{\SetFigFont{14}{16.8}{\familydefault}{\mddefault}{\updefault}{$y+(b-a)+2$}%
}}}}}
\put(10639,-4706){\rotatebox{300.0}{\makebox(0,0)[lb]{\smash{{\SetFigFont{14}{16.8}{\familydefault}{\mddefault}{\updefault}{$y+(b-a)+2$}%
}}}}}
\put(8997,-9108){\rotatebox{60.0}{\makebox(0,0)[lb]{\smash{{\SetFigFont{14}{16.8}{\familydefault}{\mddefault}{\updefault}{$z+o_a+e_b+e_c$}%
}}}}}
\put(18817,-9227){\rotatebox{60.0}{\makebox(0,0)[lb]{\smash{{\SetFigFont{14}{16.8}{\familydefault}{\mddefault}{\updefault}{$z+o_a+e_b+e_c$}%
}}}}}
\put(19021,-20329){\rotatebox{60.0}{\makebox(0,0)[lb]{\smash{{\SetFigFont{14}{16.8}{\familydefault}{\mddefault}{\updefault}{$z+o_a+e_b+e_c$}%
}}}}}
\put(8997,-20329){\rotatebox{60.0}{\makebox(0,0)[lb]{\smash{{\SetFigFont{14}{16.8}{\familydefault}{\mddefault}{\updefault}{$z+o_a+e_b+e_c$}%
}}}}}
\put(14526,-5597){\makebox(0,0)[lb]{\smash{{\SetFigFont{14}{16.8}{\familydefault}{\mddefault}{\updefault}{\color[rgb]{1,1,1}$c_1$}%
}}}}
\put(4321,-16249){\makebox(0,0)[lb]{\smash{{\SetFigFont{14}{16.8}{\familydefault}{\mddefault}{\updefault}{\color[rgb]{1,1,1}$c_1$}%
}}}}
\put(5131,-5679){\makebox(0,0)[lb]{\smash{{\SetFigFont{14}{16.8}{\familydefault}{\mddefault}{\updefault}{\color[rgb]{1,1,1}$c_1$}%
}}}}
\put(14506,-15804){\makebox(0,0)[lb]{\smash{{\SetFigFont{14}{16.8}{\familydefault}{\mddefault}{\updefault}{\color[rgb]{1,1,1}$c_1$}%
}}}}
\put(8806,-16279){\makebox(0,0)[lb]{\smash{{\SetFigFont{14}{16.8}{\familydefault}{\mddefault}{\updefault}{\color[rgb]{1,1,1}$b_1$}%
}}}}
\put(9211,-5649){\makebox(0,0)[lb]{\smash{{\SetFigFont{14}{16.8}{\familydefault}{\mddefault}{\updefault}{\color[rgb]{1,1,1}$b_1$}%
}}}}
\put(18108,-6250){\makebox(0,0)[lb]{\smash{{\SetFigFont{14}{16.8}{\familydefault}{\mddefault}{\updefault}{\color[rgb]{1,1,1}$b_2$}%
}}}}
\put(17850,-16239){\makebox(0,0)[lb]{\smash{{\SetFigFont{14}{16.8}{\familydefault}{\mddefault}{\updefault}{\color[rgb]{1,1,1}$b_2$}%
}}}}
\put(7996,-16654){\makebox(0,0)[lb]{\smash{{\SetFigFont{14}{16.8}{\familydefault}{\mddefault}{\updefault}{\color[rgb]{1,1,1}$b_2$}%
}}}}
\put(8176,-6189){\makebox(0,0)[lb]{\smash{{\SetFigFont{14}{16.8}{\familydefault}{\mddefault}{\updefault}{\color[rgb]{1,1,1}$b_2$}%
}}}}
\put(3091,-5671){\makebox(0,0)[lb]{\smash{{\SetFigFont{14}{16.8}{\familydefault}{\mddefault}{\updefault}{\color[rgb]{1,1,1}$a_2$}%
}}}}
\put(15483,-6145){\makebox(0,0)[lb]{\smash{{\SetFigFont{14}{16.8}{\familydefault}{\mddefault}{\updefault}{\color[rgb]{1,1,1}$c_2$}%
}}}}
\put(1876,-16669){\makebox(0,0)[lb]{\smash{{\SetFigFont{14}{16.8}{\familydefault}{\mddefault}{\updefault}{\color[rgb]{1,1,1}$a_1$}%
}}}}
\put(11670,-16239){\makebox(0,0)[lb]{\smash{{\SetFigFont{14}{16.8}{\familydefault}{\mddefault}{\updefault}{\color[rgb]{1,1,1}$a_1$}%
}}}}
\put(12243,-6123){\makebox(0,0)[lb]{\smash{{\SetFigFont{14}{16.8}{\familydefault}{\mddefault}{\updefault}{\color[rgb]{1,1,1}$a_1$}%
}}}}
\put(2356,-6031){\makebox(0,0)[lb]{\smash{{\SetFigFont{14}{16.8}{\familydefault}{\mddefault}{\updefault}{\color[rgb]{1,1,1}$a_1$}%
}}}}
\put(5476,-16849){\makebox(0,0)[lb]{\smash{{\SetFigFont{14}{16.8}{\familydefault}{\mddefault}{\updefault}{\color[rgb]{1,1,1}$c_2$}%
}}}}
\put(15780,-16269){\makebox(0,0)[lb]{\smash{{\SetFigFont{14}{16.8}{\familydefault}{\mddefault}{\updefault}{\color[rgb]{1,1,1}$c_2$}%
}}}}
\put(5941,-6054){\makebox(0,0)[lb]{\smash{{\SetFigFont{14}{16.8}{\familydefault}{\mddefault}{\updefault}{\color[rgb]{1,1,1}$c_2$}%
}}}}
\put(2896,-16159){\makebox(0,0)[lb]{\smash{{\SetFigFont{14}{16.8}{\familydefault}{\mddefault}{\updefault}{\color[rgb]{1,1,1}$a_2$}%
}}}}
\put(13008,-5703){\makebox(0,0)[lb]{\smash{{\SetFigFont{14}{16.8}{\familydefault}{\mddefault}{\updefault}{\color[rgb]{1,1,1}$a_2$}%
}}}}
\put(12660,-15744){\makebox(0,0)[lb]{\smash{{\SetFigFont{14}{16.8}{\familydefault}{\mddefault}{\updefault}{\color[rgb]{1,1,1}$a_2$}%
}}}}
\put(19098,-5710){\makebox(0,0)[lb]{\smash{{\SetFigFont{14}{16.8}{\familydefault}{\mddefault}{\updefault}{\color[rgb]{1,1,1}$b_1$}%
}}}}
\put(18615,-15879){\makebox(0,0)[lb]{\smash{{\SetFigFont{14}{16.8}{\familydefault}{\mddefault}{\updefault}{\color[rgb]{1,1,1}$b_1$}%
}}}}
\end{picture}%
}
\caption{The four $F^{(i)}$-type regions: (a) $F^{(1)}_{2,1,3}(2,2;\ 3,2;\ 2,3)$, (b) $F^{(2)}_{2,1,3}(2,2;\ 3,2;\ 2,3)$, (c) $F^{(3)}_{2,2,3}(2,3;\ 2,3;\ 2,2)$, and (d) $F^{(4)}_{2,1,3}(2,3;\ 2,3;\ 2,2)$.  }\label{fig:offF}
\end{figure}

We now consider a similar situation to the case of $E^{(i)}$-type regions, however, $x$ and $z$ are now having opposite parities. Let $x,z$ be two nonnegative integers, $y$ an integer that may be negative, and $\textbf{a}, \textbf{b}, \textbf{c}$ three sequences of nonnegative integers that record the side-lengths of triangles in our ferns as usual. In this case, the center of our auxiliary hexagon $H_0$ is the middle point of a horizontal unit segment. There are total 8 off-central positions to remove the middle fern corresponding to the positions $1,2,3,\dots,8$ in Figure \ref{fig:offposition}(b). For $i=1,2,4,5,6,8$, we start with the auxiliary $H_0$ of side-lengths $x,z+2,z,x,z+2,z$, while $i=3,7$, we start with the auxiliary of side-lengths $x,z,z,x,z,z$. We assume in addition that $y\geq 0$ if $i=3,7$, $y\geq \max(a-b,-2)$ if $i=4,5,6$, and $y\geq \max(b-a,-2)$ if $i=2,1,8$. Next, we apply the same side-pushing procedure as in the definition of the $E^{(i)}$-type regions to obtain the base hexagon $H$. Finally, we remove similarly three collinear ferns from $H$ at the same level,  such that the root of the middle fern is at the off-central position $i$ as in Figure \ref{fig:offposition}(b) (and the two side ferns are defined uniquely such that left fern is touching the southwest side and the right fern is touching the northeast side of the base hexagon). Denote by $F^{(i)}_{x,y,z}(\textbf{a}; \textbf{c}; \textbf{b})$ the resulting region, $i=1,2,\dotsc,8$. However, by symmetry, we only need to enumerate four of them: the ones corresponding to the first, the second, the third, and the fourth off-central positions (see Figure \ref{fig:offF} for examples of these regions).

\begin{rmk}\label{rmkF}
In the definition of the $F^{(3)}$-type regions, if we remove the three ferns such that the root of the middle one is $1/2$-unit to the left of the center of the auxiliary hexagon $H_0$, then we obtain the region $R^{\leftarrow}_{x,y,z}(\textbf{a};\textbf{c};\textbf{b})$ in Theorem 2.3 of \cite{HoleDent}. In some sense, the $F^{(i)}$-type regions can be viewed as counterparts of the $R^{\leftarrow}$-type regions.
\end{rmk}

\begin{thm}\label{off32thm1}
Assume that $\textbf{a}=(a_1,a_2,\dotsc,a_m)$, $\textbf{b}=(b_1,b_2,\dotsc,b_n)$, $\textbf{c}=(c_1,c_2,\dotsc,c_k)$ are three sequences  of nonnegative integers  ($m,n,k$ are even) and that $x,y,z$ are three integers, such that $x\geq 0$, $y\geq \max(b-a,-2)$, $z\geq 0$, and $x$ has parity opposite to  $z$.
Then
%
\begin{align}\label{off32eq1}
\M&(F^{(1)}_{x,y,z}(\textbf{a};\textbf{c};\textbf{b}))=\Theta_{x,2y+z+2\max(a,b)+2,z}(c)\notag\\
&\times s\left(y+b-\min(a,b)+2,a_1,\dotsc, a_{m},\left\lfloor\frac{x+z}{2}\right\rfloor,c_1,\dotsc,c_{k}+\left\lceil\frac{x+z}{2}\right\rceil+b_n,b_{n-1},\dotsc,b_1\right)\notag\\
&\times s\left(a_1,\dotsc, a_{m-1},a_{m}+\left\lfloor\frac{x+z}{2}\right\rfloor+c_1,\dotsc,c_{k},\left\lceil\frac{x+z}{2}\right\rceil,b_n,\dotsc,b_1,y+a-\min(a,b)\right)\notag\\
&\times\frac{\Hf(c+\left\lfloor\frac{x+z}{2}\right\rfloor)}{\Hf(c)\Hf(\left\lfloor\frac{x+z}{2}\right\rfloor)}\frac{\Hf(\max(a,b)+y+\left\lceil\frac{x+z}{2}\right\rceil+1)}{\Hf(\max(a,b)+c+y+\left\lceil\frac{x+z}{2}\right\rceil+1)}\notag\\
&\times \frac{\Hf(\max(a,b)+y+z)\Hf(\max(a,b)+c+y+z+2)}{\Hf(\max(a,b)-o_a+o_b+o_c+y+z+2)\Hf(\max(a,b)+o_a-o_b+e_c+y+z)}\notag\\
&\times \frac{\Hf\max(a,b)-o_a+o_b+o_c+y+2)\Hf(\max(a,b)+o_a-o_b+e_c+y)}{\Hf(\max(a,b)+y)\Hf(\max(a,b)+y+2)},
\end{align}
where $\Theta_{x,y,z}(m)$ is given by (\ref{thetaeq1}).
\end{thm}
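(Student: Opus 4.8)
The plan is to prove (\ref{off32eq1}) by a simultaneous induction over the whole web of three-fern regions, with the inductive step driven by one of Kuo's graphical condensation identities quoted in Section~3, following the strategy used in the prequel \cite{HoleDent}.

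First I would pass to the planar dual graph $G$ of $F^{(1)}_{x,y,z}(\textbf{a};\textbf{c};\textbf{b})$, so that $\M(F^{(1)}_{x,y,z}(\textbf{a};\textbf{c};\textbf{b}))$ is the number of perfect matchings of $G$. The four shaded unit triangles labelled $u,v,w,s$ in Figure~\ref{fig:offF}(a) --- two near the northern part of the boundary, two near the southern part --- are taken to be the four distinguished vertices to which Kuo condensation is applied, producing a three-term identity
\[
\M(F^{(1)}_{x,y,z}(\textbf{a};\textbf{c};\textbf{b}))\cdot\M(G_{0})=\M(G_{1})\,\M(G_{2})+\M(G_{3})\,\M(G_{4}),
\]
where each $G_i$ is the dual of a region obtained from $F^{(1)}$ by deleting the cells at a prescribed subset of $\{u,v,w,s\}$. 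The first substantive step is to identify these five auxiliary regions. Since deleting a boundary cell shifts one of $x,z$ (or the outermost entry of a fern, or the effective value of $y$) by $1$ or $2$ and may flip a parity, I expect some $G_i$ to be $F$-type regions again, some to be $E$-type or central regions treated here or in \cite{HoleDent}, and some to collapse --- after a maximal batch of forced lozenges along a degenerate side has been removed --- to honest dented semihexagons $S(\cdot)$ evaluated by (\ref{semieq}). Thus the argument is really one big induction, ordered by, say, $x+z$ and then by the total fern size $a+b+c$, whose base cases are the thin regions (handled by the forced-lozenge reduction plus (\ref{semieq})) and the central regions of Theorem~2.3 of \cite{HoleDent}.

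Granting the identification of the $G_i$, the inductive step reduces to checking that the right-hand side of (\ref{off32eq1}) --- the product of $\Theta_{x,2y+z+2\max(a,b)+2,z}(c)$, the two dented-semihexagon factors, and the explicit hyperfactorial ratio --- satisfies the very same three-term recurrence. Expanding each $s$-factor by (\ref{semieq}), $\Theta$ by (\ref{thetaeq1}), and the hyperfactorials by (\ref{hyper2}), this collapses to a lengthy but elementary identity among products of $\Gamma$-values; the quadratic factor $Q_1(y,z,m)$ sitting inside $\Theta$ is exactly what makes the three-term combination balance, in the same way the polynomials $P_1$ and $P_2$ do for Conjectures~\ref{con1} and~\ref{con2}. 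A cleaner bookkeeping device is to pass, as in the geometric reinterpretation following Theorem~\ref{off1thm1}, to the $\PP$-normalization, so that the identity to be verified becomes one between products of $\PP$'s and $s$'s.

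I anticipate two obstacles. The conceptual one is the placement of the condensation vertices $u,v,w,s$: they must be chosen so that every $G_i$ is enumerable (an $F$-type, $E$-type or central region, or something that degenerates to a dented semihexagon) and so that the induction parameter strictly decreases, and the same choice must work coherently across the $F^{(i)}$-family (and, with obvious modifications, the $E$-, $G$-, $K$-families) --- a careless choice lands one in a region with no product formula. The computational one is the hyperfactorial verification: because the two $s$-factors fuse the last triangle of one fern with the first of the next (the entries $a_{m}+\left\lfloor\frac{x+z}{2}\right\rfloor+c_{1}$ and $c_{k}+\left\lceil\frac{x+z}{2}\right\rceil+b_{n}$), the cancellations between the $s$-parts and the hyperfactorial prefactor are delicate, and the case split $a\le b$ versus $a\ge b$ (compare Figure~\ref{pushing1}) must be carried through the whole computation; the opposite-parity hypothesis on $x$ and $z$ only affects which floor/ceiling value occurs, so that part stays uniform.
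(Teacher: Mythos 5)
Your overall strategy coincides with the paper's: apply Kuo's Theorem \ref{kuothm1} to the dual graph of $F^{(1)}_{x,y,z}(\textbf{a};\textbf{c};\textbf{b})$ at the four shaded triangles $u,v,w,s$ of Figure \ref{fig:offF}, identify the five auxiliary regions after removing forced lozenges, and run a mutual induction over the entire web of fern-cored families, with the degenerate cases $x=0$ or $z=0$ split along the fern line into dented semihexagons evaluated by the Cohn--Larsen--Propp formula (\ref{semieq}). In the paper the auxiliary regions for $F^{(1)}$ turn out to be of types $E^{(2)}$, $K^{(2)}$, $R^{\nwarrow}$ (a central region from \cite{HoleDent}) and $F^{(1)}$ again, giving the recurrences (\ref{offcenterrecurF1a})--(\ref{offcenterrecurF1b}), and the final step is indeed the hyperfactorial verification you describe, carried out separately for $a\leq b$ and $a>b$.

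There is, however, a genuine gap in how you make the induction well founded. Your proposed ordering --- first $x+z$, then the total fern size $a+b+c$ --- does not decrease along the condensation identity: the term $\M(F^{(1)}_{x+1,y,z-1}(\textbf{a};\textbf{c};\textbf{b}))$ has exactly the same $x+z$ and the same ferns as the original region, and the term $\M(K^{(2)}_{x,y-1,z}(\textbf{a};\textbf{c};\textbf{b}))$ differs only in $y$, which your measure ignores entirely. The paper instead inducts on $h=p+x+z$, where $p$ is the quasi-perimeter of the base hexagon (which contains $4y$), together with Lemma \ref{claimp} to control when $p$ is small. Relatedly, you never address the extremal case in which $y$ sits at its minimal value $\max(b-a,-2)$: the recurrences produce regions with $y$ decreased (or with a fern enlarged), which then leave the admissible parameter range, so Kuo condensation cannot be applied there. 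The paper needs Lemma \ref{lem4} (together with Lemma \ref{lem3} for zero-length fern triangles) to trade such a region, via forced lozenges, for an off-central region of a different family with strictly smaller $h$ before the induction closes. Without a genuinely decreasing measure and this minimal-$y$ reduction, the induction as you set it up would stall precisely at the terms your own recurrence generates.
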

Unlike the tiling formulas for the $E^{(i)}$-type regions above, which has the non-linear term $P_1(x,y,z,m)$, this formula is a `\emph{simple}' product formula, in the sense the largest factor is linear in the parameters of the region. Moreover, the corresponding cored hexagons of these regions have \emph{not} appeared in the previous work of Ciucu--Eisenk\"{o}lbl--Krattenthaler--Zare \cite{CEKZ}.

Theorem \ref{off32thm1} gives an exact enumeration for a new type of cored hexagons with the triangular hole down-pointing.
\begin{cor}\label{cor1}
Let $x,y,z,m$ be nonnegative integers. The number of tilings of the cored hexagon, in which the down-pointing triangular hole of side $m$ has its leftmost at  position $1$ as in Figure \ref{fig:offposition}(b), equals $\Theta'_{x,y,z}(m)$.
\end{cor}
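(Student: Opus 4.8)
The plan is to deduce this from Theorem~\ref{off32thm1} in exactly the way Conjecture~\ref{con1} was deduced from Theorem~\ref{off1thm1}: the cored hexagon in the statement is the degenerate $F^{(1)}$-type region in which the two lateral ferns are empty and the central fern is a single down-pointing triangle of side $m$, so it suffices to specialize the right-hand side of~(\ref{off32eq1}) and simplify.

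First I would realize the region. I take $\textbf{a}=\textbf{b}=\emptyset$ and, to respect the conventions that each fern has an even number of triangles and that $c_1$ is up-pointing, I encode the single down-pointing $m$-triangle as $\textbf{c}=(0,m)$; then the root of the central fern is exactly the leftmost (top-left) vertex of the down-pointing hole, as in off-central position $1$ of Figure~\ref{fig:offposition}(b). Matching the base hexagon of Theorem~\ref{off32thm1} to the cored hexagon of the statement fixes the region's middle parameter; the correct value is $\frac{y-z}{2}-1$, which is precisely what makes the prefactor $Q_1$ of the resulting $\Theta$ agree with the prefactor $Q_1(y,z,m)$ of $\Theta'_{x,y,z}(m)$, and which lies in the range $\geq -2$ permitted by Theorem~\ref{off32thm1} as long as $y\geq z-2$. (Here $y\equiv z$ is forced by this substitution, which together with the parity hypothesis $x\not\equiv z$ of Figure~\ref{fig:offposition}(b) places us in the regime $x\not\equiv y\equiv z$ in which $\Theta'$ is relevant.) The remaining values of $y$ are handled, as in the proof of Conjecture~\ref{con1}, by applying Theorem~\ref{off32thm1} to a horizontal reflection of the analogous region with $y$ and $z$ interchanged.

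Next I would collapse the combinatorial factors. With $a=b=0$ one has $o_a=e_a=o_b=e_b=0$ and $\max(a,b)=\min(a,b)=0$, and with $\textbf{c}=(0,m)$ one has $o_c=0$, $e_c=m$, $c=m$; hence the two dented-semihexagon factors in~(\ref{off32eq1}) degenerate into semihexagons with only three nonzero parts, of the form $s(y_0+2,\lfloor\frac{x+z}{2}\rfloor,0,m+\lceil\frac{x+z}{2}\rceil)$ and $s(\lfloor\frac{x+z}{2}\rfloor,m,\lceil\frac{x+z}{2}\rceil,y_0)$, where $y_0=\frac{y-z}{2}-1$. The Cohn--Larsen--Propp formula~(\ref{semieq}), which for three parts reads $s(a_1,a_2,a_3)=\Hf(a_1+a_2)\Hf(a_2+a_3)/(\Hf(a_1+a_3)\Hf(a_1+a_2+a_3))$, turns these into the explicit hyperfactorial ratios $\Hf(\lfloor\frac{x+z}{2}\rfloor)/\Hf(y_0+2)$ and $\Hf(\lfloor\frac{x+z}{2}\rfloor+m)\Hf(m+\lceil\frac{x+z}{2}\rceil)/(\Hf(x+z)\Hf(x+z+m))$.

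It then remains to carry out the hyperfactorial bookkeeping, which I expect to be the main obstacle. One substitutes the definition~(\ref{thetaeq1}) of $\Theta$ (with middle argument $2y_0+z+2=y$), multiplies by the two ratios just obtained and by the remaining explicit product of hyperfactorials in~(\ref{off32eq1}), and simplifies using repeatedly $\Hf(n+1)=\Gamma(n+1)\Hf(n)$ and its half-integer analogue to absorb the shifts by $\pm1$ and $\pm2$. The content of the verification is that all the $F^{(1)}$-specific factors cancel and that the $\Theta$-denominators $\Hf(m+\lfloor\frac{x+y}{2}\rfloor)\Hf(m+\frac{y+z}{2}+1)\Hf(m+\lfloor\frac{z+x}{2}\rfloor)$ and $\Hf(\lceil\frac{x+y}{2}\rceil)\Hf(\frac{y+z}{2})\Hf(\lceil\frac{z+x}{2}\rceil)$ of~(\ref{thetaeq1}) are converted, factor by factor, into the $\Theta'$-denominators $\Hf(m+\lceil\frac{x+y}{2}\rceil)\Hf(m+\frac{y+z}{2}-1)\Hf(m+\lceil\frac{z+x}{2}\rceil)$ and $\Hf(\lfloor\frac{x+y}{2}\rfloor)\Hf(\frac{y+z}{2}+1)\Hf(\lfloor\frac{z+x}{2}\rfloor)$ of~(\ref{thetaeq2}), so that~(\ref{off32eq1}) collapses to $\Theta'_{x,y,z}(m)$. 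This works precisely because $x\not\equiv z\pmod 2$ forces $\lfloor\frac{x+z}{2}\rfloor$ and $\lceil\frac{x+z}{2}\rceil$ to differ by $1$, which is exactly the small asymmetry separating $\Theta$ from $\Theta'$; besides this, the only thing to get right is the degenerate encoding $\textbf{c}=(0,m)$ (rather than $(m,0)$) and the reflection argument for the remaining range of $y$.
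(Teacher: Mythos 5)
Your route is the same as the paper's: specialize Theorem~\ref{off32thm1} to the degenerate $F^{(1)}$-region with $\textbf{a}=\textbf{b}=\emptyset$ and $\textbf{c}=(0,m)$, simplify the two $s$-factors by the Cohn--Larsen--Propp formula, and handle the complementary range of $y$ by a reflection. Your identification of the middle parameter as $\frac{y-z}{2}-1$ (so that the base hexagon has long sides $y$ and $y+m$, the leading factor becomes $\Theta_{x,y,z}(m)$, and the leftover factors must be shown to equal $\Theta'_{x,y,z}(m)/\Theta_{x,y,z}(m)$ rather than cancel) is the reading that is consistent with the statement: the paper's one-line proof literally writes $F^{(1)}_{x,\frac{y-z}{2},z}$, which, since the NE side of the $F^{(1)}$ base hexagon is $2\cdot\frac{y-z}{2}+z+2=y+2$, is off by one; your choice is the one surviving the sanity check $\Theta'_{x,y,z}(0)=\PP(x,y,z)$, the tiling number of the plain hexagon of sides $x,y,z$. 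That the remaining factors indeed convert the $\Theta$-denominators into the $\Theta'$-denominators is only asserted, not carried out, but the paper's proof is no more detailed, and you have correctly isolated exactly which floor/ceiling and $\pm 1$ shifts must be absorbed.

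There is, however, one concrete slip: the claim that the middle parameter is allowed down to $-2$ by Theorem~\ref{off32thm1}. The hypothesis there is $y\geq\max(b-a,-2)$, and with both side ferns empty one has $b-a=0$, so the theorem (and the very construction of $F^{(1)}_{x,Y,z}$, several of whose edge-pushing distances equal $Y$ itself) requires $Y\geq 0$, i.e.\ $y\geq z+2$ for your substitution $Y=\frac{y-z}{2}-1$. The reflection then covers $y\leq z-2$, so the boundary case $y=z$ is not reached by your argument as written (the region with $Y=-1$ is simply not produced by the paper's construction, so the theorem cannot be invoked there). This case needs a separate sentence -- for instance a different degeneration of one of the other theorems, or an interpolation/polynomiality argument -- before the corollary is established for all admissible $x,y,z,m$; the paper's own terse proof glosses over the same boundary issue.
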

\begin{proof}
The case when $y\geq z$ is obtained directly from Theorem \ref{off32thm1} for the region $F^{(1)}_{x,\frac{y-z}{2},z}(\emptyset;\ 0,m;\ \emptyset)$. The case
 when $y<z$ is obtained by applying Theorem \ref{off32thm1} to a vertical reflection  of the region $F^{(1)}_{x,\frac{z-y}{2},z}(\emptyset;\ 0, m;\ \emptyset)$.
\end{proof}

\begin{thm}\label{off32thm2}
Assume that $\textbf{a}=(a_1,a_2,\dotsc,a_m)$, $\textbf{b}=(b_1,b_2,\dotsc,b_n)$, $\textbf{c}=(c_1,c_2,\dotsc,c_k)$ are three sequences  of nonnegative integers ($m,n,k$ are even) and that $x,y,z$ are three  integers, such that $x\geq 0$, $y\geq \max(b-a,-2)$, $z\geq 0$, and $x$ and $z$ have opposite parities. We have
%
\begin{align}\label{off32eq2}
\M&(F^{(2)}_{x,y,z}(\textbf{a};\textbf{c};\textbf{b}))=\Lambda_{x,2y+z+2\max(a,b)+2,z}(c)\notag\\
&\times s\left(y+b-\min(a,b)+2,a_1,\dotsc, a_{m},\left\lfloor\frac{x+z}{2}\right\rfloor-1,c_1,\dotsc,c_{k}+\left\lceil\frac{x+z}{2}\right\rceil+b_n+1,b_{n-1},\dotsc,b_1\right)\notag\\
&\times s\left(a_1,\dotsc, a_{m-1},a_{m}+\left\lfloor\frac{x+z}{2}\right\rfloor-1+c_1,\dotsc,c_{k},\left\lceil\frac{x+z}{2}\right\rceil+1,b_n,\dotsc,b_1,y+a-\min(a,b)\right)\notag\\
&\times\frac{\Hf(c+\left\lfloor\frac{x+z}{2}\right\rfloor-1)}{\Hf(c)\Hf(\left\lfloor\frac{x+z}{2}\right\rfloor-1)}
\frac{\Hf(\max(a,b)+y+\left\lfloor\frac{x+z}{2}\right\rfloor+1)}{\Hf(\max(a,b)+c+y+\left\lfloor\frac{x+z}{2}\right\rfloor+1)}\notag\\
&\times \frac{\Hf(\max(a,b)+y+z)\Hf(\max(a,b)+c+y+z+2)}{\Hf(\max(a,b)-o_a+o_b+o_c+y+z+2)\Hf(\max(a,b)+o_a-o_b+e_c+y+z)}\notag\\
&\times \frac{\Hf(\max(a,b)-o_a+o_b+o_c+y+2)\Hf(\max(a,b)+o_a-o_b+e_c+y)}{\Hf(\max(a,b)+y)\Hf(\max(a,b)+y+2)}.
\end{align}
where $\Lambda_{x,y,z}(m)$ is defined as in (\ref{lambdaeq1}).
  \end{thm}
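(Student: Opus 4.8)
\textbf{Proof strategy for Theorem \ref{off32thm2}.} The plan is to follow the same Kuo-condensation scheme that (by the structure of the paper) will be used to prove Theorems \ref{off1thm1}--\ref{off1thm3} and Theorem \ref{off32thm1}, adapting it to the $F^{(2)}$-type region. First I would pass to the dual graph $G$ of the region $F^{(2)}_{x,y,z}(\textbf{a};\textbf{c};\textbf{b})$, so that lozenge tilings become perfect matchings and $\M(F^{(2)}_{x,y,z}(\textbf{a};\textbf{c};\textbf{b}))$ becomes $\operatorname{M}(G)$. Then I would select four vertices $u,v,w,s$ on the boundary of the region --- these are exactly the four shaded triangles flagged in the footnote accompanying Figure \ref{fig:offF} --- positioned so that Kuo's graphical condensation (the appropriate one of the two versions quoted in Section 3) applies and produces a recurrence expressing $\operatorname{M}(G)$ in terms of $\operatorname{M}(G\setminus\{u,v,w,s\})$ and three other regions obtained by deleting two of the four vertices. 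The key point in choosing $u,v,w,s$ is that deleting them should degenerate the region into pieces each of which is either (i) a previously enumerated region of the same or simpler type, or (ii) a dented semihexagon whose tiling count is given by the Cohn--Larsen--Propp formula (\ref{semieq}), or (iii) a region that forces a large block of lozenges and hence reduces to one of the earlier cases after removing that forced portion.

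The core of the argument is then an induction. I would induct on the total size of the ferns, say on $a+b+c$ together with a secondary parameter, with the base case being $\textbf{a}=\textbf{b}=\textbf{c}=\emptyset$ --- in which case $F^{(2)}_{x,y,z}(\emptyset;\emptyset;\emptyset)$ is (after removing forced lozenges) a cored hexagon with a single down-pointing triangular hole one unit off center, whose tiling number is the $\Lambda$-type specialization, established independently much as Corollary \ref{cor1} is deduced from Theorem \ref{off32thm1}. For the inductive step, the Kuo recurrence relates $\operatorname{M}(F^{(2)})$ to regions where one of the three ferns has been shortened (by splitting off its first or last triangle, or by absorbing a triangle into an adjacent segment of the hexagon boundary) and to dented-semihexagon factors; by the induction hypothesis each of those regions equals the corresponding $\Lambda$-, $\Phi$-, or $\Theta$-type product, and the $s(\cdots)$ factors in (\ref{off32eq2}) are exactly the Cohn--Larsen--Propp evaluations of the semihexagons that appear. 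What remains is to check that the right-hand sides satisfy the same recurrence: i.e., that the claimed product formula (\ref{off32eq2}), when substituted into the Kuo identity, reduces to an algebraic identity among hyperfactorials and the polynomial $Q_2(x,y,z,m)$.

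The main obstacle --- and the bulk of the work --- will be that last verification: showing the Kuo identity among the closed-form expressions holds. Because the $\Lambda$-function carries the genuinely quadratic factor $Q_2(x,y,z,m)$ (split into an even-$x$ and an odd-$x$ case), the recurrence will not collapse to a pure ratio of hyperfactorials the way a MacMahon-type identity would; instead, after cancelling all hyperfactorial quotients using the functional equation $\Hf(n)=\Gamma(n)\,\Hf(n-1)$ repeatedly, one is left with a polynomial identity of the schematic form $Q_2(\text{shifted args})\cdot(\text{rational factor}) = Q_2(\cdots)\cdot(\cdots)+Q_2(\cdots)\cdot(\cdots)$, plus contributions from the $\Theta$/$\Phi$ regions that enter the recurrence (whose large factors are the linear $Q_1$ or quadratic $P_1$). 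Verifying this requires carefully matching the half-integer floor/ceiling arguments across the even and odd parity cases of $x$ and $z$, and then confirming the resulting polynomial identity in $x,y,z,m$; I expect this to be a substantial but purely mechanical computation, which one organizes by first reducing all hyperfactorial ratios to polynomials in the parameters and then checking the resulting identity degree by degree (or by specialization at enough integer points). A secondary technical point is ensuring the chosen vertices $u,v,w,s$ keep all four condensation sub-regions non-degenerate across the full parameter range $y\geq\max(b-a,-2)$, which may require treating the boundary cases $y=-2,-1$ (and the cases where some fern is empty or a fern triangle has side $0$) separately, exactly as the convention of padding ferns with a $0$-triangle is set up to allow.
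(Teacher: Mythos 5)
Your overall skeleton — pass to the dual graph, apply Kuo condensation at four boundary triangles $u,v,w,s$, and then verify that the closed-form products satisfy the resulting recurrence — is the right general spirit, but the induction you build on it would not go through. The Kuo recurrences that this choice of $u,v,w,s$ actually produces for $F^{(2)}$ (cf. (\ref{offcenterrecurF2a})--(\ref{offcenterrecurF2b})) do \emph{not} shorten any fern: they relate $\M(F^{(2)}_{x,y,z}(\textbf{a};\textbf{c};\textbf{b}))$ to regions of \emph{other} types ($E^{(6)}$, $G^{(1)}$, $K^{(3)}$, and relatives with $x\pm1$, $y-1$, $z-1$), and the fern data is either unchanged or \emph{enlarged} (the $\textbf{b}^{+1}$ operation adds a unit to the last triangle of a fern). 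So an induction on $a+b+c$ "with a secondary parameter" has no reason to terminate; what does decrease is a size parameter of the hexagon itself, and the paper inducts on $h=p+x+z$ ($p$ the quasi-perimeter), running the induction \emph{simultaneously} for all 30 off-central families (together with the already-proved central families of the prequel), precisely because the recurrence for one type quotes the formulas for several other types. A proof of Theorem \ref{off32thm2} in isolation, as you frame it, cannot close the loop.

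The second gap is your base case. You propose $\textbf{a}=\textbf{b}=\textbf{c}=\emptyset$, i.e.\ the cored hexagon with a down-pointing hole one unit off center, "established independently much as Corollary \ref{cor1} is deduced from Theorem \ref{off32thm1}." That deduction runs theorem $\Rightarrow$ corollary, not the reverse: the enumeration of these down-pointing off-center cored hexagons (Corollary \ref{cor2}) is itself a \emph{new} result of this paper and is not available from \cite{CEKZ}, Rosengren, or elsewhere, so taking it as the base case is circular. The correct base cases are the degenerate hexagons $x=0$ or $z=0$ (and $p<6$, which forces one of these), where the region splits along the fern line into two dented semihexagons handled by the Region-splitting Lemma and the Cohn--Larsen--Propp formula (\ref{semieq}); this is also exactly where the two $s(\cdots)$ factors in (\ref{off32eq2}) come from. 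Your closing remarks about the minimal-$y$ boundary cases and the $0$-triangle conventions do match the paper's Lemmas \ref{lem3} and \ref{lem4}, and your final "verify the identity among closed forms" step is indeed how the inductive step is completed, but without the correct induction parameter and base case the argument as proposed does not constitute a proof.
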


\begin{thm}\label{off32thm3}
Assume that $\textbf{a}=(a_1,a_2,\dotsc,a_m)$, $\textbf{b}=(b_1,b_2,\dotsc,b_n)$, $\textbf{c}=(c_1,c_2,\dotsc,c_k)$ are three sequences  of nonnegative integers ($m,n,k$ are all even) and that $x,y,z$ are three nonnegative integers, such that $x$ has parity opposite to  $z$.  Then
%
\begin{align}\label{off32eq3}
\M&(F^{(3)}_{x,y,z}(\textbf{a};\textbf{c};\textbf{b}))=\Psi_{x,2y+z+2\max(a,b),z}(c)\notag\\
&\times s\left(y+b-\min(a,b),a_1,\dotsc, a_{m},\left\lfloor\frac{x+z}{2}\right\rfloor-1,c_1,\dotsc,c_{k}+\left\lceil\frac{x+z}{2}\right\rceil+b_n+1,b_{n-1},\dotsc,b_1\right)\notag\\
&\times s\left(a_1,\dotsc, a_{m-1},a_{m}+\left\lfloor\frac{x+z}{2}\right\rfloor-1+c_1,\dotsc,c_{k},\left\lceil\frac{x+z}{2}\right\rceil+1,b_n,\dotsc,b_1,y+a-\min(a,b)\right)\notag\\
&\times\frac{\Hf(c+\left\lfloor\frac{x+z}{2}\right\rfloor-1)}{\Hf(c)\Hf(\left\lfloor\frac{x+z}{2}\right\rfloor-1)}
\frac{\Hf(\max(a,b)+y+\left\lfloor\frac{x+z}{2}\right\rfloor-1)}{\Hf(\max(a,b)+c+y+\left\lfloor\frac{x+z}{2}\right\rfloor-1)}\notag\\
&\times \frac{\Hf(\max(a,b)+y+z)\Hf(\max(a,b)+c+y+z)}{\Hf(\max(a,b)-o_a+o_b+o_c+y+z)\Hf(\max(a,b)+o_a-o_b+e_c+y+z)}\notag\\
&\times \frac{\Hf(\max(a,b)-o_a+o_b+o_c+y)\Hf(\max(a,b)+o_a-o_b+e_c+y)}{\Hf(\max(a,b)+y)^2},
\end{align}
where $\Psi_{x,y,z}(m)$ is given by the formula (\ref{psieq1}) in Conjecture \ref{con2}.
%
\end{thm}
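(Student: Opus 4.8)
The plan is to prove Theorem \ref{off32thm3} by induction, using Kuo's graphical condensation (the two versions quoted in Section 3) as the engine that produces the recurrence, and the Cohn--Larsen--Propp formula \eqref{semieq} together with the centered enumeration of the $R^{\leftarrow}$-type regions from \cite{HoleDent} to anchor the base cases. First I would pass from the region $F^{(3)}_{x,y,z}(\textbf{a};\textbf{c};\textbf{b})$ to its dual graph $G$, whose perfect matchings are in bijection with the lozenge tilings, so that $\M(F^{(3)}_{x,y,z}(\textbf{a};\textbf{c};\textbf{b}))$ is the number of perfect matchings of $G$. The induction would be on the total fern size $a+b+c$ (equivalently, on the number of triangles in the three ferns, recalling the convention that each fern has even length); by the $180^\circ$-rotation symmetry noted in Section \ref{Statement1}, treating $F^{(3)}$ also covers $F^{(7)}$.

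For the inductive step I would apply the appropriate version of Kuo condensation to $G$, choosing the four distinguished unit triangles $u,v,w,s$ marked in Figure \ref{fig:offF}(c): two of them placed at the two ends of the middle fern and two placed against the northwest and southwest sides of the base hexagon, next to the left and right ferns. Kuo's identity then expresses $\M(F^{(3)}_{x,y,z}(\textbf{a};\textbf{c};\textbf{b}))$ times the tiling number of an auxiliary region (obtained by deleting the unit triangles at all of $u,v,w,s$) as a sum of two products, each a product of tiling numbers of regions obtained by deleting the unit triangles in complementary pairs. After stripping the forced lozenges that appear along the boundaries of the ferns and along the sides of the base hexagon, each of these five regions should be identifiable with a region already under control: either an $F^{(3)}$-type region in which one entry of $\textbf{c}$ (or of $\textbf{a}$ or $\textbf{b}$) has been decreased by one, an $F^{(1)}$- or $F^{(2)}$-type region (so Theorems \ref{off32thm1} and \ref{off32thm2} apply), or a dented semihexagon whose tiling number is given by \eqref{semieq}. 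The identifications will depend on whether $a\le b$ or $a\ge b$ (this is the source of the $\max$ and $\min$ in \eqref{off32eq3}) and on the parity of $x$, so this part has to be done carefully, possibly running the three theorems \ref{off32thm1}--\ref{off32thm3} in tandem to make sure the recursions close.

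The base cases should be treated directly. When the middle fern is empty ($c=0$), the central hole disappears and, after removing forced lozenges, $F^{(3)}_{x,y,z}(\textbf{a};\emptyset;\textbf{b})$ collapses onto a dented semihexagon; on the right-hand side of \eqref{off32eq3} the $\Psi$-factor becomes elementary, and both sides reduce to instances of \eqref{semieq}, which one checks by a direct hyperfactorial computation. When a side fern is empty the Kuo reduction terminates instead at the centered $R^{\leftarrow}$-type enumeration of \cite{HoleDent}. The hardest part of the argument, as usual in this line of work, will be verifying that the closed product formula on the right-hand side of \eqref{off32eq3} satisfies the very recurrence produced by Kuo condensation: this is a long identity among ratios of hyperfactorials $\Hf$, semihexagon tiling numbers $s(\cdot)$, and the quadratic polynomial $P_2(x,y,z,m)$ buried inside $\Psi$. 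The floor and ceiling functions force a split into $x$ even and $x$ odd, and in the odd case the cancellation involving $P_2$ is genuinely delicate; I would organize it by first clearing all the $\Hf$-ratios via the functional equation $\Hf(n)=\Gamma(n)\,\Hf(n-1)$, reducing to a polynomial identity, and then checking that polynomial identity by hand. Finally, Conjecture \ref{con2} follows by specializing $\textbf{a}=\textbf{b}=\emptyset$, $\textbf{c}=(0,m)$, and $y=\tfrac{y'-z}{2}$ in \eqref{off32eq3} (with a horizontal reflection to cover $y'<z$), exactly parallel to the deduction of Conjecture \ref{con1} from Theorem \ref{off1thm1}, since then all the $s$- and $\Hf$-factors on the right-hand side cancel and only $\Psi_{x,y',z}(m)$ survives.
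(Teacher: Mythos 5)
Your overall engine (Kuo condensation plus induction, anchored by Cohn--Larsen--Propp and the centered enumerations of \cite{HoleDent}) is the right family of ideas, but the specific inductive scheme you set up does not match how condensation actually acts on these regions, and this is a genuine gap rather than a presentational difference. With your choice of the four vertices (two at the ends of the middle fern, two against the boundary), the five regions in Kuo's identity are not ``$F^{(3)}$ with one fern entry decreased, or $F^{(1)}$/$F^{(2)}$, or a dented semihexagon.'' Deleting a unit triangle on the boundary changes one of the side-lengths $x$ or $z$ by one, which flips the parity relation between $x$ and $z$ and pushes the region out of the $F$-family entirely; and the forced lozenges created next to a fern do not shorten the ferns --- they typically lengthen the last triangle of a side fern (the $\textbf{b}^{+1}$ phenomenon) and shift the root of the middle fern to a different off-central position. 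Concretely, the condensation identity the paper obtains for $F^{(3)}_{x,y,z}(\textbf{a};\textbf{c};\textbf{b})$, e.g.\ (\ref{offcenterrecurF3a}), involves $E^{(1)}$-, $G^{(2)}$-, $K^{(4)}$-type regions and $F^{(3)}$ itself with $(x+1,z-1)$, never $F^{(1)}$ or $F^{(2)}$ and never shorter ferns. So the recursion cannot be closed by ``running Theorems \ref{off32thm1}--\ref{off32thm3} in tandem'' with semihexagons and $R^{\leftarrow}$: one is forced to prove all thirty off-central formulas simultaneously (using also the eight centered formulas of \cite{HoleDent}), and the induction parameter must be one the recurrences actually decrease --- in the paper $h=p+x+z$ with $p$ the quasi-perimeter --- not the fern size $a+b+c$, which condensation does not reduce (the $\textbf{b}^{+1}$ operation even increases it).

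A second gap concerns the base and extremal cases. An induction on $h$ bottoms out at $x=0$ or $z=0$, where cutting along the line of the ferns splits the region into two dented semihexagons (Lemma \ref{RS} plus \eqref{semieq}); the case $c=0$ that you propose does not make the region degenerate, since the two side ferns are still present and the region is not a product of semihexagons in general. More importantly, every recurrence above lowers $y$ (or $z$) in some factor, so when $y$ sits at its minimal allowed value (e.g.\ $y=\max(b-a,-2)$ for $F^{(3)}$-adjacent families) the recurrence refers to regions outside the admissible parameter range; the paper handles this with a separate reduction (Lemma \ref{lem4}) trading the region for one of the barred families with strictly smaller $h$. Your outline has no mechanism for this extremal case, and without it the induction stalls. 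Your final specialization deducing Conjecture \ref{con2} from \eqref{off32eq3} (empty side ferns, $\textbf{c}$ a single $m$-triangle, $y=\frac{y'-z}{2}$, with a reflection for $y'<z$) does match the paper and is fine.
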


\bigskip

Next, we show that Theorem \ref{off32thm3} has Conjecture \ref{con2} as a special case.

\begin{proof}[Proof of Conjecture \ref{con2} by using Theorem \ref{off32thm3}]
Apply Theorem \ref{off32thm3} to  the region $F^{(1)}_{x,\frac{y-z}{2},z}(\emptyset;\ m \; \emptyset)$, we verify the conjecture for $y\geq z$, as all factors in (\ref{off32eq1}), except for the first one, cancel out. The case $y<z$ is obtained by applying the theorem to a horizontal reflection of the region $F^{(1)}_{x,\frac{z-y}{2},z}(\emptyset;\ 0, m \; \emptyset)$.
\end{proof}

We have an exact enumeration of a new type of cored hexagons whose triangular hole is down-pointing:
\begin{cor}\label{cor3}
Let $x,y,z,m$ be nonnegative integers. The number of tilings of the cored hexagon, in which the down-pointing triangular hole has its leftmost vertex at the third position as in Figure \ref{fig:offposition}(b), equals $\Psi'_{x,y,z}(m)$ in (\ref{psieq2}).
\end{cor}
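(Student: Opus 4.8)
The plan is to derive Corollary \ref{cor3} from Theorem \ref{off32thm3} by the same degeneration used to obtain Corollary \ref{cor1} from Theorem \ref{off32thm1}: specialize the $F^{(3)}$-region to one in which both side ferns are empty and the middle fern is a single \emph{down-pointing} triangle of side $m$. In our conventions the down-pointing orientation is produced by padding the central sequence with a leading zero, so I would apply \eqref{off32eq3} to the region $F^{(3)}_{x,\frac{y-z}{2},z}(\emptyset;\,0,m;\,\emptyset)$. A direct inspection of the edge-pushing construction shows that, when $y\ge z$, this region is precisely the cored hexagon described in the corollary, with the down-pointing hole of side $m$ whose leftmost vertex lies at the third off-central position of Figure \ref{fig:offposition}(b).

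Under this specialization one has $\textbf{a}=\textbf{b}=\emptyset$, $\textbf{c}=(0,m)$, hence $a=b=0$, $o_a=o_b=o_c=0$, $e_c=m$, and with $y'=\frac{y-z}{2}$ the first factor on the right-hand side of \eqref{off32eq3} becomes $\Psi_{x,\,y,\,z}(m)$. What remains is the product of the two dented-semihexagon factors $s(\dots)$ together with an explicit finite product of ratios of hyperfactorials, which I would evaluate in closed form using the Cohn--Larsen--Propp formula \eqref{semieq}. The key point --- and the difference from the proof of Conjecture \ref{con2}, where the middle fern is $(m)$ and all auxiliary factors cancel --- is that here the central fern is $(0,m)$, so the roles of $o_c$ and $e_c$ are interchanged and the auxiliary product does \emph{not} collapse to $1$; instead it must be shown to carry $\Psi_{x,y,z}(m)$ over to its variant $\Psi'_{x,y,z}(m)$ of \eqref{psieq2}. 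Concretely, the combined $s$- and $\Hf$-factors have to reproduce exactly the replacements $\Hf(\lceil\tfrac{x+y}{2}\rceil)\to\Hf(\lceil\tfrac{x+y}{2}\rceil+1)$, $\Hf(\lfloor\tfrac{z+x}{2}\rfloor)\to\Hf(\lfloor\tfrac{z+x}{2}\rfloor-1)$, and the companion changes in the arguments $m+\lfloor\tfrac{x+y}{2}\rfloor$, $m+\tfrac{y+z}{2}$, $m+\lceil\tfrac{z+x}{2}\rceil$ that distinguish $\Psi'$ from $\Psi$. This verification is a sequence of elementary applications of $\Hf(n+1)=\Gamma(n+1)\,\Hf(n)$, together with $\Gamma(n+\tfrac12)=\frac{(2n)!}{4^n n!}\sqrt{\pi}$ to handle the half-integer arguments forced by $x$ and $z$ having opposite parity.

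Finally, Theorem \ref{off32thm3} requires its parameters to be nonnegative, which here means $y'=\frac{y-z}{2}\ge0$, i.e. $y\ge z$. The complementary range $y<z$ I would treat exactly as in the proof of Corollary \ref{cor1}: apply Theorem \ref{off32thm3} to the left--right mirror image of $F^{(3)}_{x,\frac{z-y}{2},z}(\emptyset;\,0,m;\,\emptyset)$, which has the same number of lozenge tilings and realizes the same cored hexagon with the roles of $y$ and $z$ interchanged; since $\Psi'$ is invariant (up to the relevant relabelling) under this symmetry, the two cases assemble into the claimed formula.

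I expect the main obstacle to be the hyperfactorial bookkeeping in the second step --- checking that the product of the two Cohn--Larsen--Propp evaluations and the ratio of $\Hf$-factors coming from \eqref{off32eq3} equals the quotient $\Psi'_{x,y,z}(m)/\Psi_{x,y,z}(m)$. Each individual identity is routine, but there are many floor/ceiling arguments to track, and the opposite-parity hypothesis on $x,z$ forces one to monitor the integer-versus-half-integer dichotomy consistently. A clean way to organize the computation is to split the hyperfactorial factors into the three blocks (those with argument $m+\cdots$, those with $\tfrac m2+\cdots$, and those with no $m$) that already structure the definitions of $\Psi$ and $\Psi'$, and to verify the required shift block by block.
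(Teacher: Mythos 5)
For the range $y\ge z$ your plan coincides with what the paper intends (and writes out explicitly for the analogous Corollaries \ref{cor1} and \ref{cor2}): apply Theorem \ref{off32thm3} to $F^{(3)}_{x,\frac{y-z}{2},z}(\emptyset;\,0,m;\,\emptyset)$, observe that the leading factor specializes to $\Psi_{x,y,z}(m)$ since $2\cdot\frac{y-z}{2}+z=y$ and $\max(a,b)=0$, and then verify that the two Cohn--Larsen--Propp factors together with the surviving hyperfactorial ratios (here $o_c=0$, $e_c=m$, so, as you correctly note, they do not all collapse to $1$ the way they do in the proof of Conjecture \ref{con2}) supply exactly the shifted arguments that convert $\Psi$ into $\Psi'$. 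This is the same route as the paper's, and your block-by-block organization of the hyperfactorial bookkeeping is a reasonable way to carry out what the paper leaves implicit.

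The $y<z$ half, however, contains a genuine gap as written. A left--right mirror keeps the hole down-pointing, but it turns the defining condition ``leftmost vertex at the third position'' into a condition on the \emph{rightmost} vertex of the image hole; the leftmost vertex of the mirrored hole lands at a point whose distance from the center depends on $m$ and is in general none of the eight marked positions, so the mirror image of $F^{(3)}_{x,\frac{z-y}{2},\,\cdot}(\emptyset;\,0,m;\,\emptyset)$ is not ``the same cored hexagon with the roles of $y$ and $z$ interchanged.'' Moreover, the invariance of $\Psi'$ that you invoke is not available: $\Psi'_{x,y,z}(m)$ is not symmetric under $y\leftrightarrow z$, since for odd $x$ the polynomial $P_2$ satisfies $P_2(x,y,z,m)-P_2(x,z,y,m)=4(x+y+z+m)\,m\,x\,(y-z)\neq 0$, and this asymmetry is not compensated by the hyperfactorial blocks (their discrepancy is a ratio of gamma factors, not a fixed rational function). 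The paper's own pattern shows what is actually needed: in the proof of Corollary \ref{cor2} the case $y<z$ is handled by re-identifying the reflected region as a \emph{different} region of the catalogue (there one passes to Theorem \ref{off32thm4} and a middle fern $(m)$ rather than $(0,m)$) and then simplifying the resulting product again. For Corollary \ref{cor3} you must do the same: determine precisely which $F^{(i)}$-type region (and which fern) the reflection of the $y<z$ cored hexagon is, apply the corresponding theorem, and check directly---not by a symmetry of $\Psi'$---that the product equals $\Psi'_{x,y,z}(m)$.
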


We now have a tiling  enumeration for the $F^{(4)}$-type region:

\begin{thm}\label{off32thm4}
Assume that $\textbf{a}=(a_1,a_2,\dotsc,a_m)$, $\textbf{b}=(b_1,b_2,\dotsc,b_n)$, $\textbf{c}=(c_1,c_2,\dotsc,c_k)$ are three sequences  of nonnegative integers ($m,n,k$ are all even) and that $x,y,z$ are three integers, such that $x\geq 0$, $y\geq \max(a-b,-2)$, $z\geq 0$,  and $x$ has parity opposite to  $z$. Then
%
\begin{align}\label{off32eq4}
\M&(F^{(4)}_{x,y,z}(\textbf{a};\textbf{c};\textbf{b}))=\Lambda'_{x,z,2y+z+2\max(a,b)+2}(c)\notag\\
&\times s\left(y+b-\min(a,b),a_1,\dotsc, a_{m},\left\lfloor\frac{x+z}{2}\right\rfloor,c_1,\dotsc,c_{k}+\left\lceil\frac{x+z}{2}\right\rceil+b_n,b_{n-1},\dotsc,b_1\right)\notag\\
&\times s\left(a_1,\dotsc, a_{m-1},a_{m}+\left\lfloor\frac{x+z}{2}\right\rfloor+c_1,\dotsc,c_{k},\left\lceil\frac{x+z}{2}\right\rceil,b_n,\dotsc,b_1,y+a-\min(a,b)+2\right)\notag\\
&\times\frac{\Hf(c+\left\lfloor\frac{x+z}{2}\right\rfloor)}{\Hf(c)\Hf(\left\lfloor\frac{x+z}{2}\right\rfloor)}
\frac{\Hf(\max(a,b)+y+\left\lfloor\frac{x+z}{2}\right\rfloor)}{\Hf(\max(a,b)+c+y+\left\lfloor\frac{x+z}{2}\right\rfloor)}\notag\\
&\times \frac{\Hf(\max(a,b)+y+z+2)\Hf(\max(a,b)+c+y+z)}{\Hf(\max(a,b)-o_a+o_b+o_c+y+z)\Hf(\max(a,b)+o_a-o_b+e_c+y+z+2)}\notag\\
&\times \frac{\Hf(\max(a,b)-o_a+o_b+o_c+y)\Hf(\max(a,b)+o_a-o_b+e_c+y+2)}{\Hf(\max(a,b)+y)\Hf(\max(a,b)+y+2)},
\end{align}
where $\Lambda' _{x,y,z}(m)$ is defined as in (\ref{lambdaeq2}).
  \end{thm}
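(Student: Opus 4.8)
The plan is to prove Theorem~\ref{off32thm4} by the graphical-condensation method already used for the $E^{(i)}$- and the other $F^{(i)}$-type regions. First I would pass to the planar dual graph $G$ of the triangular-lattice region $F^{(4)}_{x,y,z}(\textbf{a};\textbf{c};\textbf{b})$, so that $\M(F^{(4)}_{x,y,z}(\textbf{a};\textbf{c};\textbf{b}))$ is the number of perfect matchings of $G$. Along the parts of the boundary of the base hexagon that the two side ferns touch, and immediately around the three removed ferns, there are forced lozenges; deleting the corresponding forced edges turns $G$ into a smaller graph $G'$ with the same matching number, and it is $G'$ on which the condensation is performed. The whole argument is an induction on a size statistic, most conveniently $x+z+a+b+c$, so the base cases must be handled first.

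For the base cases I would take $\textbf{c}=\emptyset$, and within that $\textbf{a}=\textbf{b}=\emptyset$. In the latter case $F^{(4)}$ collapses to a hexagon with a single down-pointing triangular hole at the fourth off-central position of Figure~\ref{fig:offposition}(b), and the right-hand side of (\ref{off32eq4}) reduces to the cored-hexagon specialization of $\Lambda'$; this can be established directly from the Cohn--Larsen--Propp formula (\ref{semieq}) after an appropriate cut, or by a small auxiliary condensation, exactly as in Corollaries~\ref{cor1} and~\ref{cor3}. When $\textbf{c}=\emptyset$ but the side ferns survive, the line through the three ferns together with the removed ferns splits the region into two dented semihexagons, and (\ref{off32eq4}) becomes a product of two instances of (\ref{semieq}) together with hyperfactorial correction factors, which I would verify by direct manipulation.

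For the inductive step I would apply one of the two versions of Kuo's graphical condensation recalled in Section~3, with four suitably placed unit triangles $u,v,w,s$ on the boundary of $G'$ (chosen so that they appear in the cyclic order demanded by that version, as indicated by the shaded triangles in Figure~\ref{fig:offF}(d)). This yields an identity of the form
\[
\M\bigl(F^{(4)}\bigr)\cdot\M(R_0)=\M(R_1)\cdot\M(R_2)+\M(R_3)\cdot\M(R_4),
\]
where each $R_j$ is $G'$ with some of $u,v,w,s$ deleted. After removing the lozenges newly forced in each $R_j$, I expect each of them to be identifiable either with an $F^{(4)}$-type region in which one parameter has dropped (so the inductive hypothesis applies), with an $F^{(i)}$- or $E^{(i)}$-type region already enumerated in Theorems~\ref{off1thm1}--\ref{off1thm3} and \ref{off32thm1}--\ref{off32thm3}, or with a dented semihexagon covered by (\ref{semieq}). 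Substituting the known product formulas for every factor except $\M(F^{(4)})$ and solving the resulting linear equation, (\ref{off32eq4}) follows provided the identity among the functions $\Lambda'$, $\Lambda$, $\Theta$, $\Psi$, the relevant $s$-values, and the hyperfactorial ratios actually holds. Using $\Hf(n+1)=\Hf(n)\Gamma(n+1)$ and its half-integer analogue, almost all hyperfactorial quotients telescope and what is left is a polynomial identity in $x,y,z$ and the partial sums $a,b,c,o_a,o_b,o_c$ --- in particular one that forces the polynomial $Q_2$ of (\ref{lambdaeq2}) --- which I would check by expansion.

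The main obstacle is the combinatorial bookkeeping in the condensation step, not the algebra. One has to choose the positions of $u,v,w,s$ so that the five regions $R_0,\dots,R_4$ all remain inside the families that have already been enumerated (no genuinely new region is allowed to appear), and then read off the shifted parameters of each $R_j$ correctly after clearing the forced lozenges. This is delicate precisely because $x$ and $z$ have opposite parity: that is what produces the floors and ceilings in (\ref{off32eq4}), and it makes the relative position of the two side ferns and the off-central middle fern parity-sensitive, so the same geometric move can produce slightly different parameter shifts depending on the parities. Once the regions are correctly matched, collapsing the condensation identity to the polynomial identity for $Q_2$ is routine but lengthy.
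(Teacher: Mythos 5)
Your overall strategy (Kuo condensation plus induction) is the right family of ideas, but as set up the induction does not close, for three concrete reasons. First, the condensation at the vertices indicated in Figure \ref{fig:offF}(d) does not stay inside the families you allow yourself: after clearing forced lozenges (and rotating by $180^{\circ}$, which changes the region type), the five graphs in the Kuo identity for $F^{(4)}_{x,y,z}(\textbf{a};\textbf{c};\textbf{b})$ are of types $E^{(2)}$, $G^{(3)}$, $K^{(1)}$ and $F^{(4)}$ again (see (\ref{offcenterrecurF4a})--(\ref{offcenterrecurF4b})). The $G^{(3)}$- and $K^{(1)}$-formulas are themselves among the statements being proved (Theorems \ref{off32thm7} and \ref{off32thm9}), not previously known results, and no placement of $u,v,w,s$ avoids this: the paper's proof is a \emph{simultaneous} induction on all eight families (all 30 formulas) with statistic $h=p+x+z$, where $p$ is the quasi-perimeter. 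Second, your induction statistic $x+z+a+b+c$ does not decrease on all terms of the recurrence: the term $\M(F^{(4)}_{x+1,y,z-1}(\textbf{a};\textbf{c};\textbf{b}))$ has the same $x+z$ and the same ferns, and $\M(K^{(1)}_{x,y,z}(\textbf{b};\overline{\textbf{c}};\textbf{a}))$ has identical $x,y,z$ and total fern size; it is only the drop in the quasi-perimeter (respectively the change of auxiliary hexagon, $j=2$ versus $j=1$) that makes $h$ strictly smaller, which is why the paper inducts on $h$ rather than on a parameter count.

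Third, your base cases are not the ones for which the region degenerates. Setting $\textbf{c}=\emptyset$ while the side ferns survive does \emph{not} split $F^{(4)}$ into two dented semihexagons: the stretch of the line $\ell$ between the two side ferns is open, lozenges cross it, and the Region-splitting Lemma \ref{RS} does not apply; with $\textbf{a}=\textbf{b}=\textbf{c}=\emptyset$ there is no hole at all, so the reduction to a cored hexagon you describe also does not occur. The correct degenerate cases are $x=0$ or $z=0$ (and $p<6$, which forces one of these), where cutting along $\ell$ (after adding bumps and removing forced lozenges when $x=0$) produces two dented semihexagons evaluated by (\ref{semieq}). Finally, the recurrences involve the parameter $y-1$, so when $y$ sits at its minimum $\max(a-b,-2)$ the identity refers to regions outside the admissible range; the paper needs the separate reductions of Lemma \ref{lem4} (and Lemma \ref{lem3} for zero triangles) to cover exactly these extremal cases, and your proposal has no counterpart for them.
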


  \begin{cor}\label{cor2}
Let $x,y,z,m$ be nonnegative integers. The number of tilings of the cored hexagon, in which the down-pointing triangular hole has its leftmost vertex at the second position as in Figure \ref{fig:offposition}(b), equals $\Lambda' _{x,y,z}(m)$ in (\ref{lambdaeq2}).
\end{cor}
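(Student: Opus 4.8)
The plan is to prove Theorem~\ref{off32thm4} (the enumeration of the $F^{(4)}$-type regions) by the same graphical-condensation strategy that drives the proofs of Theorems~\ref{off1thm1}--\ref{off32thm3}. First I would observe, via the explicit side-pushing description, that the $F^{(4)}$-type region with empty left and right ferns but a general middle fern is (up to a reflection) exactly the region whose tiling number Corollary~\ref{cor2} asserts to be $\Lambda'_{x,y,z}(m)$; conversely, once Theorem~\ref{off32thm4} is established, Corollary~\ref{cor2} follows by specializing $\textbf{a}=\textbf{b}=\emptyset$, $\textbf{c}=(0,m)$ (or $(m)$ padded to even length) and noting that all factors except the $\Lambda'$-term telescope away, just as in the proofs of Conjectures~\ref{con1} and~\ref{con2} from Theorems~\ref{off1thm1} and~\ref{off32thm3}. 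So the real content is Theorem~\ref{off32thm4}, and I would prove it by induction on the total size of the three ferns, i.e. on $a+b+c$ (equivalently on the sum of the entries of $\textbf{a}$, $\textbf{b}$, $\textbf{c}$).

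The base cases are when two of the three ferns are empty: then the $F^{(4)}$-region degenerates to a cored hexagon with a single down-pointing triangular hole near the center, and the identity reduces to Corollary~\ref{cor2}'s closed form $\Lambda'_{x,y,z}(m)$, which in turn follows from Rosengren's evaluation (or can be checked directly against Conjecture~\ref{con2}-type formulas) — but more efficiently, I expect the induction to be set up so that the base case is simply ``some fern has size $0$,'' handled by the known $E/F$-results with fewer ferns together with the dented-semihexagon formula \eqref{semieq}. For the inductive step I would pass to the planar dual graph $G$ of the region, so that lozenge tilings correspond to perfect matchings, and apply one of the two versions of Kuo condensation quoted in Section~3. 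The key is to choose the four distinguished vertices (the shaded triangles labeled $u,v,w,s$ in Figures~\ref{fig:off1} and~\ref{fig:offF}, alluded to in the footnote) so that each of the five terms in Kuo's identity is again a region of $F^{(i)}$- or $E^{(i)}$-type — or a dented semihexagon $S(\cdot)$ — with strictly smaller fern-parameters, or with one fern entry decreased by $1$ and a neighboring parameter increased. Concretely, removing or adding a unit triangle at the root of the middle fern, or merging/splitting the last triangle of a side fern, shifts the region within the $F^{(i)}$ family and changes $c_1$ (or $c_k$, or $a_m$, or $b_n$) by $1$; the induction hypothesis then applies to every term.

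Having expressed $\M(F^{(4)}_{x,y,z}(\textbf{a};\textbf{c};\textbf{b}))$ via Kuo's recurrence in terms of four neighboring regions to which the induction hypothesis applies, the remaining task is purely algebraic: substitute the conjectured product formulas \eqref{off32eq4} (and its $E^{(i)}/F^{(i)}$ cousins, together with the Cohn--Larsen--Propp formula \eqref{semieq} for the $s(\cdot)$ factors that appear) into Kuo's identity and verify it. Since the $\Lambda'$, $\Theta$, $\Psi$, $\Phi$ factors are ratios of hyperfactorials $\Hf$, and the $s(\cdot)$ factors are likewise ratios of hyperfactorials with arguments shifted by small integers, every ratio of a ``neighbor'' formula to the ``central'' formula simplifies to a finite product of linear factors in the parameters $x,y,z,a,b,c,o_a,o_b,o_c$ and the polynomials $Q_2, P_2$; Kuo's identity then becomes a polynomial identity of bounded degree, checkable by direct expansion. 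The main obstacle — as is typical for this circle of results — is the bookkeeping: matching the exact arguments of the $\Hf$-factors and the exact entry-shifts in the $s(\cdot)$-sequences across all five Kuo terms, and in particular confirming that the quadratic factor $Q_2(x,y,z,m)$ in $\Lambda'$ assembles correctly from the linear and quadratic pieces contributed by the neighboring regions. A secondary subtlety is ensuring the chosen vertices $u,v,w,s$ keep all five terms nondegenerate (nonnegative parameters, hole positions still in the allowed range $y\geq\max(a-b,-2)$), which may force a case split according to whether $a\leq b$ or $a>b$ and according to small values of $x,z$; in the boundary cases one falls back on the already-proved $E^{(i)}$- or $F^{(i)}$-theorems or on \eqref{semieq} directly.
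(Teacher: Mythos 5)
Your final reduction step is in the same spirit as the paper's: Corollary \ref{cor2} is indeed obtained by specializing an $F^{(i)}$-type theorem to empty side ferns and a middle fern $(0,m)$, with all factors except the leading one cancelling. (The paper does this with Theorem \ref{off32thm2} applied to $F^{(2)}_{x,\frac{y-z}{2},z}(\emptyset;\,0,m;\,\emptyset)$ when $y\geq z$, and with Theorem \ref{off32thm4} applied to a reflected region when $y<z$; your write-up should make this case split on the sign of $y-z$ explicit, since the $y$-parameter of the $F$-region must be admissible.)

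However, the core of your proposal --- proving Theorem \ref{off32thm4} by induction on the total fern size $a+b+c$ --- has a genuine gap. First, your base case is circular: when both side ferns are empty and the middle fern is a single triangle, the $F$-type region \emph{is} the cored hexagon of Corollary \ref{cor2} (or \ref{cor1}, \ref{cor3}), which is one of the new results of this paper; it is not covered by Ciucu--Eisenk\"olbl--Krattenthaler--Zare, and Rosengren verified only Conjectures \ref{con1} and \ref{con2} (up-pointing holes at the two near-central positions), so you cannot import the down-pointing-hole evaluation as known. Second, the Kuo recurrences available for these regions do not decrease fern sizes: the four marked unit triangles sit at corners of the base hexagon and adjacent to the ferns, and the resulting identities (Sections 3.3--3.10) change $x,y,z$ by $\pm 1$ while leaving the ferns intact or even \emph{enlarging} a side fern (the $\textbf{b}^{+1}$, $\textbf{a}^{+1}$ operations), so an induction on $a+b+c$ is not well-founded for them. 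Moreover each recurrence for an $F^{(4)}$-region involves $E^{(2)}$-, $G^{(3)}$-, $K^{(1)}$- and central $R$-/$Q$-type regions, so Theorem \ref{off32thm4} cannot be proved in isolation within the $F^{(4)}$ family: one needs the simultaneous induction over all thirty families (plus the central ones from the prequel) on $h=p+x+z$, with the base cases $x=0$ or $z=0$ resolved by splitting the region along the fern line into dented semihexagons and invoking the Cohn--Larsen--Propp formula (\ref{semieq}), together with the separate treatment of minimal $y$ (Lemma \ref{lem4}). Without replacing your induction variable and base cases by something of this kind, the proposed argument does not close.
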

\begin{proof}
The case when $y\geq z$ is obtained directly from Theorem \ref{off32thm2} for the region $F^{(2)}_{x,\frac{y-z}{2},z}(\emptyset;\ 0,m;\ \emptyset)$. The case
 when $y<z$ is obtained by applying Theorem \ref{off32thm4} to a  horizontal reflection  of the region $F^{(1)}_{x,\frac{z-y}{2},z}(\emptyset;\ m;\ \emptyset)$.
\end{proof}

\medskip
\subsection{The $G^{(i)}$-type regions}
We now consider the case when the center of the auxiliary hexagon is the middle point of a southeast-to-northwest unit lattice interval. There are also $8$ off-central positions labeled by $1,2,\dots, 8$ around the center of the auxiliary hexagon $H_0$ as shown in Figure \ref{fig:offposition}(c). Let $x,z$ be two nonnegative integers with the same parity. In particular, for $i=1,2,5,6$, we consider the auxiliary hexagon $H_0$ of side-lengths $x,z+1,z,x,z+1,z$, while for $i=3,4,7,8$, the auxiliary hexagon has side-lengths $x,z+3,z,x,z+3,z$. The domain of the $y$-parameter can be defined in general as follows. If the position $i$ is $d$ units above the center of the auxiliary hexagon ($d=1/2$ or $3/2$ here), then $y\geq \max(a-b,-2d)$; if $i$ is $d$ units below the center, then $y\geq \max (b-a,-2d)$. Next, we apply the same side-pushing procedure as in the definition of the $E^{(i)}$-type region to the new auxiliary hexagon to get the base hexagon $H$. We then remove the three ferns at the same level similarly, such that the middle fern has the root at the off-central position $i$ ($i=1,2,\dots,8$). Let us denote by $G^{(i)}_{x,y,z}(\textbf{a}; \textbf{c}; \textbf{b})$ the newly defined regions (see Figure \ref{fig:offG} for examples). Again, by the symmetry, we only need to enumerate four of the eights regions above, namely $G^{(1)}$-, $G^{(2)}$-, $G^{(3)}$-, and $G^{(4)}$-type regions.

\begin{rmk}\label{rmkG}
In the definition of the $G^{(1)}$-type regions, if we remove the three ferns such that the root of the middle one is $1/2$-unit to the northwest of the center of the auxiliary hexagon $H_0$, then we obtain the region $R^{\nwarrow}_{x,y,z}(\textbf{a};\textbf{c};\textbf{b})$ in Theorem 2.4 of \cite{HoleDent}. It means that the $G^{(i)}$-type regions can be viewed as counterparts of the $R^{\nwarrow}$-type regions.
\end{rmk}

\begin{figure}\centering
\setlength{\unitlength}{3947sp}%
\begingroup\makeatletter\ifx\SetFigFont\undefined%
\gdef\SetFigFont#1#2#3#4#5{%
  \reset@font\fontsize{#1}{#2pt}%
  \fontfamily{#3}\fontseries{#4}\fontshape{#5}%
  \selectfont}%
\fi\endgroup%
\resizebox{15cm}{!}{
\begin{picture}(0,0)%
\includegraphics{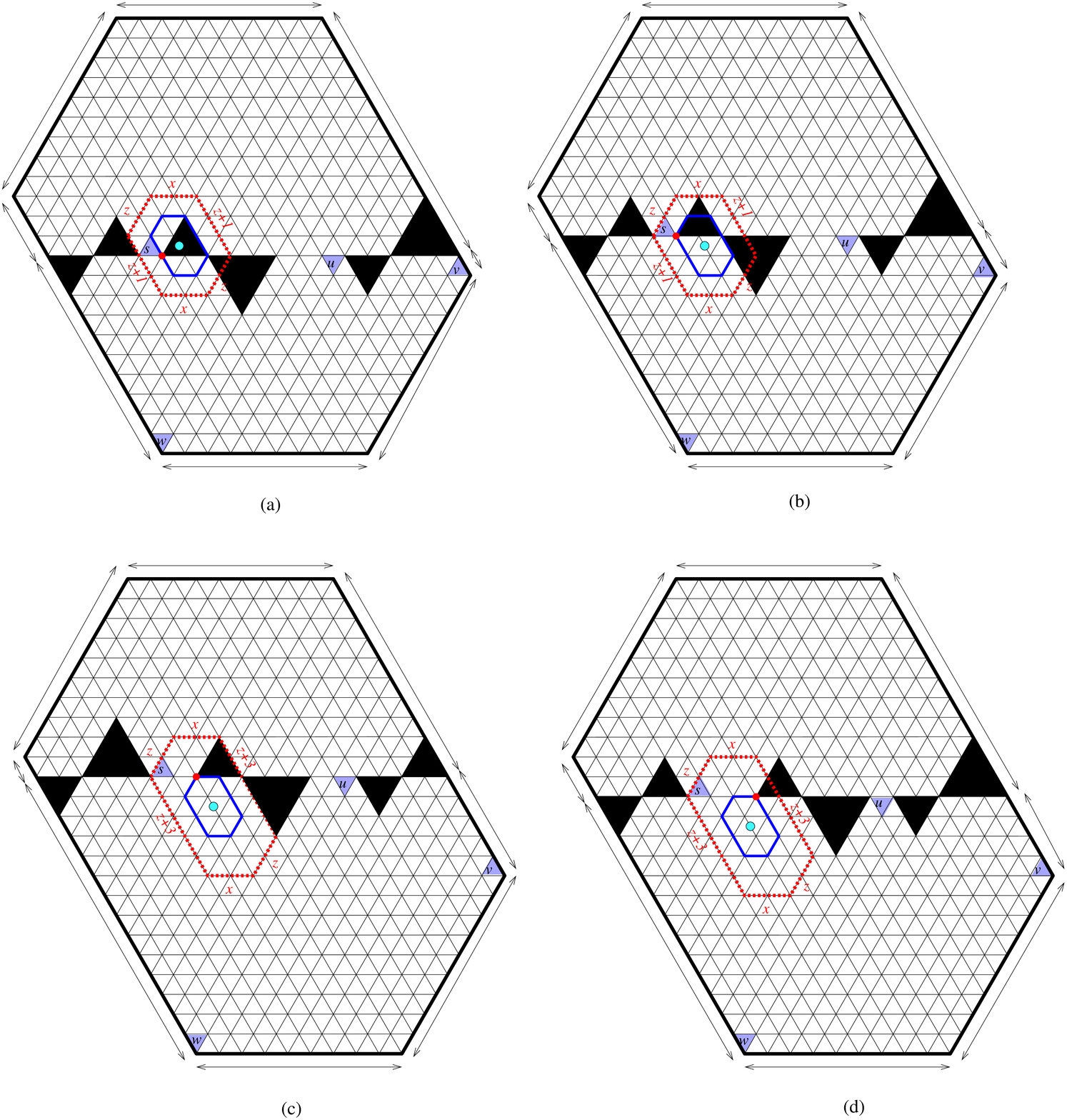}%
\end{picture}%
%
%

\begin{picture}(19430,20353)(1456,-21146)
\put(2566,-16246){\rotatebox{300.0}{\makebox(0,0)[lb]{\smash{{\SetFigFont{14}{16.8}{\familydefault}{\mddefault}{\updefault}{$y+z+o_a+e_b+e_c+(a-b)+3$}%
}}}}}
\put(12067,-6629){\rotatebox{300.0}{\makebox(0,0)[lb]{\smash{{\SetFigFont{14}{16.8}{\familydefault}{\mddefault}{\updefault}{$y+z+o_a+e_b+e_c+1$}%
}}}}}
\put(8719,-12390){\rotatebox{300.0}{\makebox(0,0)[lb]{\smash{{\SetFigFont{14}{16.8}{\rmdefault}{\mddefault}{\itdefault}{$z+o_a+o_b+o_c$}%
}}}}}
\put(19640,-5305){\rotatebox{300.0}{\makebox(0,0)[lb]{\smash{{\SetFigFont{14}{16.8}{\familydefault}{\mddefault}{\updefault}{$y+1$}%
}}}}}
\put(10534,-15384){\rotatebox{300.0}{\makebox(0,0)[lb]{\smash{{\SetFigFont{14}{16.8}{\familydefault}{\mddefault}{\updefault}{$y+(a-b)+3$}%
}}}}}
\put(10800,-4788){\rotatebox{300.0}{\makebox(0,0)[lb]{\smash{{\SetFigFont{14}{16.8}{\familydefault}{\mddefault}{\updefault}{$y+(b-a)$}%
}}}}}
\put(1846,-15076){\makebox(0,0)[lb]{\smash{{\SetFigFont{14}{16.8}{\familydefault}{\mddefault}{\updefault}{$y$}%
}}}}
\put(18693,-12670){\rotatebox{300.0}{\makebox(0,0)[lb]{\smash{{\SetFigFont{14}{16.8}{\rmdefault}{\mddefault}{\itdefault}{$z+o_a+o_b+o_c+(b-a)$}%
}}}}}
\put(20459,-15556){\rotatebox{300.0}{\makebox(0,0)[lb]{\smash{{\SetFigFont{14}{16.8}{\familydefault}{\mddefault}{\updefault}{$y+3$}%
}}}}}
\put(11543,-14804){\rotatebox{300.0}{\makebox(0,0)[lb]{\smash{{\SetFigFont{14}{16.8}{\familydefault}{\mddefault}{\updefault}{$y+(b-a)$}%
}}}}}
\put(12596,-16825){\rotatebox{300.0}{\makebox(0,0)[lb]{\smash{{\SetFigFont{14}{16.8}{\familydefault}{\mddefault}{\updefault}{$y+z+o_a+e_b+e_c+3$}%
}}}}}
\put(10026,-19153){\rotatebox{60.0}{\makebox(0,0)[lb]{\smash{{\SetFigFont{14}{16.8}{\familydefault}{\mddefault}{\updefault}{$z+o_a+e_b+e_c$}%
}}}}}
\put(19948,-19330){\rotatebox{60.0}{\makebox(0,0)[lb]{\smash{{\SetFigFont{14}{16.8}{\familydefault}{\mddefault}{\updefault}{$z+o_a+e_b+e_c$}%
}}}}}
\put(19027,-8405){\rotatebox{60.0}{\makebox(0,0)[lb]{\smash{{\SetFigFont{14}{16.8}{\familydefault}{\mddefault}{\updefault}{$z+o_a+e_b+e_c$}%
}}}}}
\put(15310,-9807){\makebox(0,0)[lb]{\smash{{\SetFigFont{14}{16.8}{\familydefault}{\mddefault}{\updefault}{$x+e_a+o_b+o_c$}%
}}}}
\put(16333,-20555){\makebox(0,0)[lb]{\smash{{\SetFigFont{14}{16.8}{\familydefault}{\mddefault}{\updefault}{$x+e_a+o_b+o_c$}%
}}}}
\put(6513,-20555){\makebox(0,0)[lb]{\smash{{\SetFigFont{14}{16.8}{\familydefault}{\mddefault}{\updefault}{$x+e_a+o_b+o_c$}%
}}}}
\put(4972,-11106){\makebox(0,0)[lb]{\smash{{\SetFigFont{14}{16.8}{\familydefault}{\mddefault}{\updefault}{$x+o_a+e_b+e_c$}%
}}}}
\put(14792,-11047){\makebox(0,0)[lb]{\smash{{\SetFigFont{14}{16.8}{\familydefault}{\mddefault}{\updefault}{$x+o_a+e_b+e_c$}%
}}}}
\put(14280,-1067){\makebox(0,0)[lb]{\smash{{\SetFigFont{14}{16.8}{\familydefault}{\mddefault}{\updefault}{$x+o_a+e_b+e_c$}%
}}}}
\put(2457,-13366){\rotatebox{60.0}{\makebox(0,0)[lb]{\smash{{\SetFigFont{14}{16.8}{\familydefault}{\mddefault}{\updefault}{$z+e_a+o_b+o_c$}%
}}}}}
\put(12276,-13366){\rotatebox{60.0}{\makebox(0,0)[lb]{\smash{{\SetFigFont{14}{16.8}{\familydefault}{\mddefault}{\updefault}{$z+e_a+o_b+o_c$}%
}}}}}
\put(11663,-3326){\rotatebox{60.0}{\makebox(0,0)[lb]{\smash{{\SetFigFont{14}{16.8}{\familydefault}{\mddefault}{\updefault}{$z+e_a+o_b+o_c$}%
}}}}}
\put(8431,-2236){\rotatebox{300.0}{\makebox(0,0)[lb]{\smash{{\SetFigFont{14}{16.8}{\rmdefault}{\mddefault}{\itdefault}{$z+o_a+o_b+o_c+(b-a)+1$}%
}}}}}
\put(10434,-5661){\makebox(0,0)[lb]{\smash{{\SetFigFont{14}{16.8}{\familydefault}{\mddefault}{\updefault}{$y$}%
}}}}
\put(9515,-8936){\rotatebox{60.0}{\makebox(0,0)[lb]{\smash{{\SetFigFont{14}{16.8}{\familydefault}{\mddefault}{\updefault}{$z+o_a+e_b+e_c$}%
}}}}}
\put(5934,-9909){\makebox(0,0)[lb]{\smash{{\SetFigFont{14}{16.8}{\familydefault}{\mddefault}{\updefault}{$x+e_a+o_b+o_c$}%
}}}}
\put(2671,-6826){\rotatebox{300.0}{\makebox(0,0)[lb]{\smash{{\SetFigFont{14}{16.8}{\familydefault}{\mddefault}{\updefault}{$y+z+o_a+e_b+e_c$}%
}}}}}
\put(1535,-4950){\rotatebox{300.0}{\makebox(0,0)[lb]{\smash{{\SetFigFont{14}{16.8}{\familydefault}{\mddefault}{\updefault}{$y+(b-a)+1$}%
}}}}}
\put(4911,-1051){\makebox(0,0)[lb]{\smash{{\SetFigFont{14}{16.8}{\familydefault}{\mddefault}{\updefault}{$x+o_a+e_b+e_c$}%
}}}}
\put(2357,-3294){\rotatebox{60.0}{\makebox(0,0)[lb]{\smash{{\SetFigFont{14}{16.8}{\familydefault}{\mddefault}{\updefault}{$z+e_a+o_b+o_c$}%
}}}}}
\put(18080,-2514){\rotatebox{300.0}{\makebox(0,0)[lb]{\smash{{\SetFigFont{14}{16.8}{\rmdefault}{\mddefault}{\itdefault}{$z+o_a+o_b+o_c+(b-a)$}%
}}}}}
\put(8261,-5968){\makebox(0,0)[lb]{\smash{{\SetFigFont{14}{16.8}{\familydefault}{\mddefault}{\updefault}{\color[rgb]{1,1,1}$b_2$}%
}}}}
\put(17467,-5614){\makebox(0,0)[lb]{\smash{{\SetFigFont{14}{16.8}{\familydefault}{\mddefault}{\updefault}{\color[rgb]{1,1,1}$b_2$}%
}}}}
\put(18081,-15771){\makebox(0,0)[lb]{\smash{{\SetFigFont{14}{16.8}{\familydefault}{\mddefault}{\updefault}{\color[rgb]{1,1,1}$b_2$}%
}}}}
\put(8466,-15417){\makebox(0,0)[lb]{\smash{{\SetFigFont{14}{16.8}{\familydefault}{\mddefault}{\updefault}{\color[rgb]{1,1,1}$b_2$}%
}}}}
\put(9346,-5356){\makebox(0,0)[lb]{\smash{{\SetFigFont{14}{16.8}{\familydefault}{\mddefault}{\updefault}{\color[rgb]{1,1,1}$b_1$}%
}}}}
\put(18474,-5101){\makebox(0,0)[lb]{\smash{{\SetFigFont{14}{16.8}{\familydefault}{\mddefault}{\updefault}{\color[rgb]{1,1,1}$b_1$}%
}}}}
\put(19119,-15166){\makebox(0,0)[lb]{\smash{{\SetFigFont{14}{16.8}{\familydefault}{\mddefault}{\updefault}{\color[rgb]{1,1,1}$b_1$}%
}}}}
\put(9294,-14836){\makebox(0,0)[lb]{\smash{{\SetFigFont{14}{16.8}{\familydefault}{\mddefault}{\updefault}{\color[rgb]{1,1,1}$b_1$}%
}}}}
\put(5056,-5341){\makebox(0,0)[lb]{\smash{{\SetFigFont{14}{16.8}{\familydefault}{\mddefault}{\updefault}{\color[rgb]{1,1,1}$c_1$}%
}}}}
\put(14236,-5236){\makebox(0,0)[lb]{\smash{{\SetFigFont{14}{16.8}{\familydefault}{\mddefault}{\updefault}{\color[rgb]{1,1,1}$c_1$}%
}}}}
\put(15691,-15166){\makebox(0,0)[lb]{\smash{{\SetFigFont{14}{16.8}{\familydefault}{\mddefault}{\updefault}{\color[rgb]{1,1,1}$c_1$}%
}}}}
\put(5611,-14851){\makebox(0,0)[lb]{\smash{{\SetFigFont{14}{16.8}{\familydefault}{\mddefault}{\updefault}{\color[rgb]{1,1,1}$c_1$}%
}}}}
\put(6061,-6061){\makebox(0,0)[lb]{\smash{{\SetFigFont{14}{16.8}{\familydefault}{\mddefault}{\updefault}{\color[rgb]{1,1,1}$c_2$}%
}}}}
\put(15436,-5641){\makebox(0,0)[lb]{\smash{{\SetFigFont{14}{16.8}{\familydefault}{\mddefault}{\updefault}{\color[rgb]{1,1,1}$c_2$}%
}}}}
\put(16621,-15796){\makebox(0,0)[lb]{\smash{{\SetFigFont{14}{16.8}{\familydefault}{\mddefault}{\updefault}{\color[rgb]{1,1,1}$c_2$}%
}}}}
\put(6616,-15391){\makebox(0,0)[lb]{\smash{{\SetFigFont{14}{16.8}{\familydefault}{\mddefault}{\updefault}{\color[rgb]{1,1,1}$c_2$}%
}}}}
\put(2746,-15346){\makebox(0,0)[lb]{\smash{{\SetFigFont{14}{16.8}{\familydefault}{\mddefault}{\updefault}{\color[rgb]{1,1,1}$a_1$}%
}}}}
\put(12781,-15661){\makebox(0,0)[lb]{\smash{{\SetFigFont{14}{16.8}{\familydefault}{\mddefault}{\updefault}{\color[rgb]{1,1,1}$a_1$}%
}}}}
\put(2941,-5941){\makebox(0,0)[lb]{\smash{{\SetFigFont{14}{16.8}{\familydefault}{\mddefault}{\updefault}{\color[rgb]{1,1,1}$a_1$}%
}}}}
\put(12196,-5626){\makebox(0,0)[lb]{\smash{{\SetFigFont{14}{16.8}{\familydefault}{\mddefault}{\updefault}{\color[rgb]{1,1,1}$a_1$}%
}}}}
\put(3781,-5491){\makebox(0,0)[lb]{\smash{{\SetFigFont{14}{16.8}{\familydefault}{\mddefault}{\updefault}{\color[rgb]{1,1,1}$a_2$}%
}}}}
\put(13006,-5176){\makebox(0,0)[lb]{\smash{{\SetFigFont{14}{16.8}{\familydefault}{\mddefault}{\updefault}{\color[rgb]{1,1,1}$a_2$}%
}}}}
\put(13606,-15226){\makebox(0,0)[lb]{\smash{{\SetFigFont{14}{16.8}{\familydefault}{\mddefault}{\updefault}{\color[rgb]{1,1,1}$a_2$}%
}}}}
\put(3781,-14731){\makebox(0,0)[lb]{\smash{{\SetFigFont{14}{16.8}{\familydefault}{\mddefault}{\updefault}{\color[rgb]{1,1,1}$a_2$}%
}}}}
\end{picture}%
}
\caption{The four $G^{(i)}$-type regions: (a) $G^{(1)}_{2,1,2}(2,2;\ 2,3;\ 3,2)$, (b) $G^{(2)}_{2,1,2}(2,2;\ 2,3;\ 3,2)$, (c)  $G^{(3)}_{2,1,2}(2,3;\ 2,3;\ 2,2)$, and (d)  $G^{(4)}_{2,1,2}(2,2;\ 2,3;\ 3,2)$.}\label{fig:offG}
\end{figure}

\begin{thm}\label{off32thm5}
Assume that $\textbf{a}=(a_1,a_2,\dotsc,a_m)$, $\textbf{b}=(b_1,b_2,\dotsc,b_n)$, $\textbf{c}=(c_1,c_2,\dotsc,c_k)$ are three sequences  of nonnegative integers ($m,n,k$ are all even) and that $x,y,z$ are three  integers, such that  $x\geq 0$, $y\geq \max(b-a,-1)$, $z\geq 0$, and $x$ and $z$ have the same parity. Then
\begin{align}\label{off32eq5}
\M&(G^{(1)}_{x,y,z}(\textbf{a};\textbf{c};\textbf{b}))=\Theta'_{2y+z+2\max(a,b)+1,z,x}(c)\notag\\
&\times s\left(y+b-\min(a,b)+1,a_1,\dotsc, a_{m},\frac{x+z}{2}-1,c_1,\dotsc,c_{k}+\frac{x+z}{2}+1+b_n,b_{n-1},\dotsc,b_1\right)\notag\\
&\times s\left(a_1,\dotsc, a_{m-1},a_{m}+\frac{x+z}{2}-1+c_1,\dotsc,c_{k},\frac{x+z}{2}+1,b_n,\dotsc,b_1,y+a-\min(a,b)\right)\notag\\
&\times\frac{\Hf(c+\frac{x+z}{2}-1)}{\Hf(c)\Hf(\frac{x+z}{2}-1)}\frac{\Hf(\max(a,b)+y+\frac{x+z}{2})}{\Hf(\max(a,b)+c+y+\frac{x+z}{2})}\notag\\
&\times \frac{\Hf(\max(a,b)+y+z)\Hf(\max(a,b)+c+y+z+1)}{\Hf(\max(a,b)-o_a+o_b+o_c+y+z+1)\Hf(\max(a,b)+o_a-o_b+e_c+y+z)}\notag\\
&\times \frac{\Hf(\max(a,b)-o_a+o_b+o_c+y+1)\Hf(\max(a,b)+o_a-o_b+e_c+y)}
{\Hf(\max(a,b)+y+1)\Hf(\max(a,b)+y)},
\end{align}
where $\Theta'_{x,y,z}(m)$ is defined as in (\ref{thetaeq2}).
\end{thm}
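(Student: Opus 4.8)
The plan is to prove Theorem~\ref{off32thm5} by the same strategy that underlies the thirty enumerations in Section~4: translate the region into a perfect‑matching problem on its planar dual graph, set up a recurrence via Kuo's graphical condensation, and run an induction whose base cases are the centered regions of \cite{HoleDent} (in particular $R^{\nwarrow}_{x,y,z}(\mathbf a;\mathbf c;\mathbf b)$ of Theorem~2.4 there, which, per Remark~\ref{rmkG}, is the $\tfrac12$‑unit collapse of $G^{(1)}$) together with the cored‑hexagon formula for $\Theta'$ in \eqref{thetaeq2}. Concretely, I would two‑color the unit triangles of the triangular lattice so that lozenge tilings of $G^{(1)}_{x,y,z}(\mathbf a;\mathbf c;\mathbf b)$ correspond to perfect matchings of the dual graph $G$, so that $\M(G^{(1)}_{x,y,z}(\mathbf a;\mathbf c;\mathbf b))$ equals the matching number of $G$.

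The central step is the condensation recurrence. I would place four unit triangles $u,v,w,s$ on the boundary of the base hexagon of $G^{(1)}$ — these are the shaded triangles already drawn in the illustrative figures and reserved for the proof — chosen so that the pairwise removals behave well. Applying the version of Kuo's theorem from Section~3 dictated by the colors of $u,v,w,s$ gives
\[
\M(G)\,\M(G_{uvws})=\M(G_{uv})\,\M(G_{ws})+\M(G_{uw})\,\M(G_{vs}),
\]
and the key geometric observation is that, after removing the lozenges forced by the excised triangles (and exploiting that a triangle touching a corner forces a strip of lozenges that peels off a dented semihexagon, cf.\ \eqref{semieq}), each of $G_{uv},G_{ws},G_{uw},G_{vs},G_{uvws}$ is the dual graph of a region in one of the off‑central families $E^{(i)},F^{(i)},G^{(i)}$ or a centered region from \cite{HoleDent}, with $x,y,z$ shifted by a bounded amount (typically $x\mapsto x\pm1$, which flips the parity and sends us into an $F$‑family, or $z\mapsto z\pm1$, or a shift of the off‑central position among the eight of Figure~\ref{fig:offposition}(c)), and with a fern truncated by one triangle ($\mathbf c\mapsto(c_1-1,c_2,\dots)$, say). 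The two regimes $a\le b$ and $a\ge b$ of Figure~\ref{pushing1}, and the parity of $x+z$, must be handled separately; in each, the forced‑edge picture pins down the five regions explicitly. Doing this for $G^{(1)}$ in parallel with the analogous condensations for $G^{(2)},G^{(3)},G^{(4)}$ (and recalling that the $E$‑ and $F$‑type terms that appear are already evaluated by the earlier theorems) yields a closed system of recurrences.

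It then remains to verify that the products on the right of \eqref{off32eq5} — and of the companion formulas involving $\Lambda,\Lambda',\Theta$ — satisfy this system together with the base cases. The base cases are the extremal values of the induction parameter: when $\mathbf a=\mathbf b=\mathbf c=\emptyset$ the region is the new down‑pointing cored hexagon, the two $s$‑factors and all hyperfactorial ratios telescope, and the claimed value reduces to the normalized $\Theta'_{2y+z+1,z,x}$; and when $x$ or $z$ is minimal one appeals directly to Theorem~2.4 of \cite{HoleDent}. The inductive step is then a pure identity among hyperfactorials and Cohn--Larsen--Propp values: using $\Hf(n+1)=n!\,\Hf(n)$ and its half‑integer counterpart, the explicit product \eqref{semieq} for $s(\cdot)$, and the defining product \eqref{thetaeq2} for $\Theta'$, one checks that substituting the proposed closed forms turns the condensation identity into an algebraic tautology.

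The hard part, I expect, is not this final (lengthy but mechanical) hyperfactorial verification but the identification step: arranging $u,v,w,s$ so that the recurrence closes within the list of thirty families, that no degenerate zero‑tiling region appears, and that the shifted parameters — including which of the eight off‑central positions and which $E/F/G$ family each reduced region belongs to — are read off correctly and uniformly across the cases $a\lessgtr b$ and the parities of $x$ and $z$. Once the recurrence is nailed down, Theorem~\ref{off32thm5} follows by the induction described above.
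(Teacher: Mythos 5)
Your overall strategy --- dual-graph matchings, Kuo condensation with four boundary triangles producing a recurrence that closes within the thirty off-central families together with the centered regions of \cite{HoleDent}, and a simultaneous induction finished by a hyperfactorial verification --- is exactly the paper's. But two pieces of your induction scaffolding are genuinely off. First, the base cases. The Kuo recurrences for the $G^{(i)}$-regions never shrink the ferns; they replace $\textbf{a}$, $\textbf{b}$ by $\textbf{a}^{+1}$, $\textbf{b}^{+1}$ (last entry enlarged) and instead decrease $x$, $z$, or the quasi-perimeter. So an induction whose base case is $\textbf{a}=\textbf{b}=\textbf{c}=\emptyset$ does not terminate, and taking ``the cored-hexagon formula for $\Theta'$'' as a known input is circular: that evaluation (Corollary \ref{cor1}) is itself a consequence of these theorems, not a prior result. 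The paper instead inducts on $h=p+x+z$ (with $p$ the quasi-perimeter), with base cases $x=0$, $z=0$ --- where the region splits along the fern line into two dented semihexagons evaluated by Cohn--Larsen--Propp, via Lemma \ref{RS} and (\ref{semieq}) --- and $p<6$; the results of \cite{HoleDent} enter only as already-proved factors appearing inside the recurrence, not as base cases.

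Second, the boundary value of $y$. The $G^{(1)}$-recurrences (\ref{offcenterrecurG1a})--(\ref{offcenterrecurG1c}) involve regions with $y$ replaced by $y-1$, so they cannot be applied when $y=\max(b-a,-1)$; your remark about avoiding ``degenerate zero-tiling regions'' gestures at this but offers no fix. The paper needs a separate reduction (Lemma \ref{lem4}, together with the $0$-triangle elimination of Lemma \ref{lem3}) showing that an off-central region with minimal $y$ has the same tiling number as another off-central region with strictly smaller $h$, so that the induction hypothesis still applies. Without these two repairs the induction as you describe it does not close; with them, your plan coincides with the paper's proof.
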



\begin{thm}\label{off32thm6}
Assume that $\textbf{a}=(a_1,a_2,\dotsc,a_m)$, $\textbf{b}=(b_1,b_2,\dotsc,b_n)$, $\textbf{c}=(c_1,c_2,\dotsc,c_k)$ are three sequences  of nonnegative integers ($m,n,k$ are all even) and that $x,y,z$ are three  integers, such that $x\geq 0$, $y\geq \max(a-b,-1)$, $z\geq 0$,  and $x$ and $z$ have the same parity. We have
%
\begin{align}\label{off32eq6}
\M&(G^{(2)}_{x,y,z}(\textbf{a};\textbf{c};\textbf{b}))=\Lambda'_{2y+z+2\max(a,b)+1,z,x}(c))\notag\\
&\times s\left(y+b-\min(a,b),a_1,\dotsc, a_{m},\frac{x+z}{2}-1,c_1,\dotsc,c_{k}+\frac{x+z}{2}+1+b_n,b_{n-1},\dotsc,b_1\right)\notag\\
&\times s\left(a_1,\dotsc, a_{m-1},a_{m}+\frac{x+z}{2}-1+c_1,\dotsc,c_{k},\frac{x+z}{2}+1,b_n,\dotsc,b_1,y+a-\min(a,b)+1\right)\notag\\
&\times\frac{\Hf(c+\frac{x+z}{2}-1)}{\Hf(c)\Hf(\frac{x+z}{2}-1)}\frac{\Hf(\max(a,b)+y+\frac{x+z}{2}-1)}{\Hf(\max(a,b)+c+y+\frac{x+z}{2}-1)}\notag\\
&\times \frac{\Hf(\max(a,b)+y+z+1)\Hf(\max(a,b)+c+y+z)}
{\Hf(\max(a,b)-o_a+o_b+o_c+y+z)\Hf(\max(a,b)+o_a-o_b+e_c+y+z+1)}\notag\\
&\times \frac{\Hf(\max(a,b)-o_a+o_b+o_c+y)\Hf(\max(a,b)+o_a-o_b+e_c+y+1)}
{\Hf(\max(a,b)+y+1)\Hf(\max(a,b)+y)},
\end{align}
where $\Lambda'_{x,y,z}(m)$ is defined as in (\ref{lambdaeq2}).
%
\end{thm}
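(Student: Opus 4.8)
The plan is to prove Theorem~\ref{off32thm6} by the scheme used throughout this paper: translate $\M$ into a count of perfect matchings of the planar bipartite dual graph of the region, apply one of the two versions of Kuo's graphical condensation recalled in Section~3, and verify by induction on the total fern size $a+b+c$ that the explicit product on the right-hand side of~\eqref{off32eq6} obeys the resulting recurrence. The base case is $\mathbf a=\mathbf b=\mathbf c=\mathbf 0$: here $G^{(2)}_{x,y,z}(\mathbf 0;\mathbf 0;\mathbf 0)$ is an ordinary hexagon of side-lengths $x,z+1,z,x,z+1,z$ with no hole, so $\M=\PP(x,z+1,z)$ by MacMahon's formula~\cite{Mac}, and one checks that every hyperfactorial ratio and both $s(\cdot)$ factors in~\eqref{off32eq6} telescope down to this value; the cases in which exactly one fern is a single nonzero triangle reduce to the off-central cored hexagon of Corollary~\ref{cor2} and to dented semihexagons, and serve as additional base cases. (The central-hole analogue $R^{\nwarrow}$, from Theorem~2.4 of~\cite{HoleDent}, and the $G^{(1)}$ formula of Theorem~\ref{off32thm5} provide useful consistency checks throughout.)

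For the inductive step I would fix the four unit triangles $u,v,w,s$ marked in Figure~\ref{fig:offG}(b) --- two of them near the west and east corners of the base hexagon, the other two on the southwest and northeast boundary segments adjacent to the left and right ferns --- chosen so that two are black and two white and they occur in the cyclic boundary order demanded by the balanced form of Kuo's condensation. That identity then yields a recurrence of the shape
\[
\M(G^{(2)})\,\M\!\big(G^{(2)}_{-uvws}\big)=\M\!\big(G^{(2)}_{-uv}\big)\,\M\!\big(G^{(2)}_{-ws}\big)+\M\!\big(G^{(2)}_{-uw}\big)\,\M\!\big(G^{(2)}_{-vs}\big)
\]
(or its unbalanced $3$--$1$ variant, according to the parity of $x+z$). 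The crucial combinatorial step is to identify the five regions appearing here: after removing the lozenges forced by each deletion, I expect $G^{(2)}_{-uvws}$ to be again a $G^{(2)}$-type region with strictly smaller $a+b+c$, and each of the four cross terms to be either a $G^{(2)}$-type region with one fern shortened (an endpoint triangle decreased or deleted), a $G^{(1)}$-type region covered by Theorem~\ref{off32thm5}, or a region that a horizontal band of forced lozenges along the fern line $l$ separates into two disjoint dented semihexagons, counted by the Cohn--Larsen--Propp formula~\eqref{semieq}. Feeding the inductive formula together with~\eqref{thetaeq2}, \eqref{lambdaeq2}, and~\eqref{semieq} into the recurrence reduces the theorem to a finite identity among hyperfactorials and semihexagon products; using $\Hf(n)=\Gamma(n)\,\Hf(n-1)$ and expanding the $s(\cdot)$'s via~\eqref{semieq}, this becomes a collection of $\Gamma$-function functional equations that can be checked term by term.

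I expect the main obstacle to be twofold. First, the placement of $u,v,w,s$ near off-central position~$2$ must be arranged so that \emph{every} region produced by the condensation is already under control --- a smaller $G^{(2)}$-type region, a $G^{(1)}$-type region from Theorem~\ref{off32thm5}, or a pair of dented semihexagons; a careless choice yields regions outside the scope of the available theorems, and the book-keeping of which end of which fern shrinks, and by how much, is delicate because of the $\min(a,b)$ and $|a-b|$ terms threaded through the two $s(\cdot)$ factors of~\eqref{off32eq6}. Second, the final algebraic verification is heavy: after expansion both sides of the Kuo recurrence carry several dozen hyperfactorial factors plus a handful of large $s(\cdot)$ factors, and matching them requires carefully following the floor/ceiling shifts that separate $\Lambda'$ from $\Lambda$ and $\Theta'$ from $\Theta$. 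I would tame this by first cancelling the common MacMahon block $\PP(\cdot,\cdot,\cdot)$ (as in the geometric reformulation after Theorem~\ref{off1thm1}), then converting each ratio of $s(\cdot)$'s into a ratio of $\Hf(\cdot)$'s via~\eqref{semieq}, and finally verifying the remaining rational identity in $x,y,z$ and the fern parameters.
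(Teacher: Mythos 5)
There is a genuine gap, and it lies in the structure of your induction. You propose to induct on the total fern size $a+b+c$ and you expect the Kuo condensation, applied near off-central position $2$, to stay inside the world of $G^{(2)}$-type regions, $G^{(1)}$-type regions, and pairs of dented semihexagons, with the main term $G^{(2)}_{-uvws}$ being a $G^{(2)}$-region with smaller ferns. Neither expectation survives contact with the geometry. Deleting boundary unit triangles of these regions forces lozenges along the \emph{sides of the base hexagon} (and along the ends of the ferns), so the condensation shrinks the hexagon parameters $x,z$ (and sometimes $y$) while the ferns stay the same size or actually \emph{grow} by a unit (the operations $\textbf{a}^{+1},\textbf{b}^{+1}$). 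In the paper's $G^{(2)}$-recurrence the five regions produced are, after removing forced lozenges, an $E^{(1)}$-type region (this is the $-\{u,v,w,s\}$ term, not a smaller $G^{(2)}$!), a $K^{(1)}$-type region, an $F^{(3)}$-type region, a $G^{(2)}$-type region with $x-1,z-1$ and both side ferns lengthened, and another $E^{(1)}$-type region. So a recurrence that closes only over $G^{(2)}$, $G^{(1)}$ and semihexagon products does not exist for any reasonable placement of $u,v,w,s$, and an induction on $a+b+c$ has no chance of terminating, since that quantity does not decrease. The paper instead proves all thirty formulas (together with the eight central-fern families $R^{\ast},Q^{\ast}$ from the prequel, which also appear as factors in other recurrences) \emph{simultaneously}, by induction on $h=p+x+z$ where $p$ is the quasi-perimeter; the perimeter drop compensates for the fern growth. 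Your Theorem~\ref{off32thm6} cannot be isolated from that joint induction by this method.

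Two further points would still need repair even after fixing the induction. First, your base case is misidentified: with $\textbf{a}=\textbf{b}=\textbf{c}=\emptyset$ the region is not the hexagon of sides $x,z+1,z,x,z+1,z$ (the side-pushing by $y$ remains), and in any case the paper's base cases are $x=0$ or $z=0$ (where the fern line cuts the region into two dented semihexagons handled by Lemma~\ref{RS} and the Cohn--Larsen--Propp formula), together with $p<6$. Second, the Kuo recurrence degenerates when $y$ sits at its minimal value $\max(a-b,-1)$, because some of the five regions are then undefined; the paper needs the separate reduction of Lemma~\ref{lem4} for exactly this situation, and your proposal does not address it. The final remark about heavy hyperfactorial bookkeeping is fair, but the verification only becomes a finite identity once the recurrence involves formulas already established for the \emph{other} families ($E^{(1)}$, $F^{(3)}$, $K^{(1)}$, $R^{\nwarrow}$, \dots) at strictly smaller $h$ — which is precisely the ingredient your plan omits.
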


\begin{thm}\label{off32thm7}
Assume that $\textbf{a}=(a_1,a_2,\dotsc,a_m)$, $\textbf{b}=(b_1,b_2,\dotsc,b_n)$, $\textbf{c}=(c_1,c_2,\dotsc,c_k)$ are three sequences  of nonnegative integers ($m,n,k$ are all even) and that $x,y,z$ are three integers, such that $x\geq 0$, $y\geq \max(a-b,-3)$, $z\geq 0$, and  $x$ and $z$ have the same parity. Then

\begin{align}\label{off32eq7}
\M&(G^{(3)}_{x,y,z}(\textbf{a};\textbf{c};\textbf{b}))=\Psi'_{2y+z+2\max(a,b)+3,z,x}(c)\notag\\
&\times s\left(y+b-\min(a,b),a_1,\dotsc, a_{m},\frac{x+z}{2},c_1,\dotsc,c_{k}+\frac{x+z}{2}+b_n,b_{n-1},\dotsc,b_1\right)\notag\\
&\times s\left(a_1,\dotsc, a_{m-1},a_{m}+\frac{x+z}{2}+c_1,\dotsc,c_{k},\frac{x+z}{2},b_n,\dotsc,b_1,y+a-\min(a,b)+3\right)\notag\\
&\times\frac{\Hf(c+\frac{x+z}{2})}{\Hf(c)\Hf(\frac{x+z}{2})}\frac{\Hf(\max(a,b)+y+\frac{x+z}{2})}{\Hf(\max(a,b)+c+y+\frac{x+z}{2})}\notag\\
&\times \frac{\Hf(\max(a,b)+y+z+3)\Hf(\max(a,b)+c+y+z)}
{\Hf(\max(a,b)-o_a+o_b+o_c+y+z)\Hf(\max(a,b)+o_a-o_b+e_c+y+z+3)}\notag\\
&\times \frac{\Hf(\max(a,b)-o_a+o_b+o_c+y)\Hf(\max(a,b)+o_a-o_b+e_c+y+3)}
{\Hf(\max(a,b)+y+3)\Hf(\max(a,b)+y)},
\end{align}
where $\Psi'_{x,y,z}(m)$ is defined as in (\ref{psieq2}).
%
\end{thm}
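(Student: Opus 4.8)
The plan is to establish Theorem~\ref{off32thm7} by a single simultaneous induction covering all the $G^{(i)}$-type regions (equivalently, the four formulas in Theorems~\ref{off32thm5}--\ref{off32thm7} together with the one obtained from them by $180^{\circ}$-rotation), since removing unit triangles from the planar dual graph of one such region typically produces regions of the other $G^{(i)}$-types. The induction runs on the size of the region --- for instance on $x+z+a+b+c$, with the standing convention that each fern carries an even number of triangles (adjoin a trailing $0$ if necessary) --- and the inductive step is powered by one of the two versions of Kuo's graphical condensation \cite{Kuo} quoted in Section~3, applied to the dual graph of $G^{(3)}_{x,y,z}(\textbf{a};\textbf{c};\textbf{b})$. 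Following the treatment of the adjacent $G^{(1)}$- and $G^{(2)}$-type regions, I would use as the four distinguished vertices the unit triangles $u,v,w,s$ marked in Figure~\ref{fig:offG}(c), positioned on the boundary of the base hexagon so that deleting any pair of them again yields a region from a family under control.

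First I would treat the base cases. When $\textbf{c}$ is empty, or when $x+z$ attains the minimum compatible with the hole sitting at position~$3$, the three ferns together with the off-central hole force a band of lozenges that severs the region into two \emph{dented semihexagons}; their tiling numbers are then given by the Cohn--Larsen--Propp formula~\eqref{semieq}, and the product matches the right-hand side of~\eqref{off32eq7} after hyperfactorial simplification. The ultimate base case of the whole induction is when the hole is returned to $3/2$ units from the center, where $G^{(3)}$ specializes to the central region $R^{\nwarrow}_{x,y,z}(\textbf{a};\textbf{c};\textbf{b})$ of Theorem~2.4 of~\cite{HoleDent} (see Remark~\ref{rmkG}), whose enumeration is already known. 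Throughout, the hypothesis $y\ge\max(a-b,-3)$ must be carried along so that every side-length and every pushing distance produced along the way stays nonnegative.

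For the inductive step, Kuo condensation gives a bilinear recurrence of the form
\[
\M(G^{(3)})\cdot \M(R_0)\;=\;\M(R_1)\,\M(R_2)\;+\;\M(R_3)\,\M(R_4),
\]
where $R_0,\dots,R_4$ arise by deleting from $G^{(3)}_{x,y,z}(\textbf{a};\textbf{c};\textbf{b})$ all four triangles $u,v,w,s$ and then the pairs $\{u,v\}$, $\{w,s\}$, $\{u,s\}$, $\{v,w\}$, respectively (perhaps after first adjoining a forced strip of lozenges). I expect $R_0$ to split off two dented semihexagons from an $R^{\nwarrow}$-core, a pair of the $R_i$ to be $G^{(3)}$-regions with one of $x$, $z$, $c$ or a single fern entry decremented --- hence covered by the induction hypothesis --- and the remaining pair to be $G^{(1)}$-, $G^{(2)}$-, or (after a parity flip) $E$-/$F$-type regions handled by Theorems~\ref{off32thm5},~\ref{off32thm6} and the results of the previous subsections. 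Plugging in the corresponding product formulas and the semihexagon evaluations~\eqref{semieq}, the theorem reduces to an identity among hyperfactorials and the polynomial $P_2$ built into $\Psi'$ (see~\eqref{psieq2}), which is verified by iterating $\Hf(n)=\Hf(n-1)\,\Gamma(n)$, using the duplication formula for $\Gamma$, and matching factors --- essentially the computation already carried out for Theorems~\ref{off32thm5} and~\ref{off32thm6}, with the shifts $+1$ replaced by $+3$.

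The main obstacle will be the geometric bookkeeping in the inductive step. Because the hole of $G^{(3)}$ sits $3/2$ units above the center and the auxiliary hexagon has the enlarged side-length $z+3$, the exact shapes of $R_0,\dots,R_4$ depend on the parities of $x$ and $z$ and on the sign of $a-b$; the triangles $u,v,w,s$ must be chosen so that in every such case the five regions land precisely in the families listed above and no stray region type appears. Once these identifications are settled, what remains --- reconciling the $\Psi'$-factor, the two $s(\cdots)$-factors, and the long chain of $\Hf$-quotients on the two sides of the recurrence --- is lengthy but entirely routine.
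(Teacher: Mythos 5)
Your skeleton---Kuo condensation on the dual graph, induction on a size parameter, base cases split into dented semihexagons and evaluated by the Cohn--Larsen--Propp formula (\ref{semieq})---is indeed the paper's strategy, but as you have set it up the induction does not close. The decisive point is that the condensation for $G^{(3)}$ does not stay inside the $G$-family: with the vertices of Figure \ref{fig:offG}(c) (where, incidentally, $u$ and $s$ are appended to the ends of the right and left ferns rather than sitting on the boundary of the base hexagon), one obtains the recurrences (\ref{offcenterrecurG3a})--(\ref{offcenterrecurG3c}), whose companion regions are of types $E^{(2)}$, $K^{(2)}$ and $F^{(4)}$, besides a smaller $G^{(3)}$. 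The formulas for those companions are not previously established results you may quote: they are theorems of this same paper, proved by the same condensation scheme, and their own recurrences involve $G$-type regions again (for instance the $F^{(3)}$- and $K^{(4)}$-recurrences contain $G^{(2)}$- and $G^{(1)}$-regions). Quoting them is therefore circular; the argument has to be one simultaneous induction over all thirty off-central families, with only the eight central families of \cite{HoleDent} as external input (these enter as factors such as $R^{\swarrow}$ in the $G^{(1)}$-recurrence). In the same vein, $G^{(3)}$ never ``specializes'' to the central region $R^{\nwarrow}$ inside the induction: by Remark \ref{rmkG} that region is the central counterpart of $G^{(1)}$, and the central regions are inputs from the prequel, not a base case reached by moving the hole back to the center.

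Second, your proposed measure $x+z+a+b+c$ does not decrease along the recurrence. In (\ref{offcenterrecurG3a}) the term $\M(G^{(3)}_{x-1,y,z-1}(\textbf{a}^{+1};\textbf{c};\textbf{b}^{+1}))$ has $x$ and $z$ lowered by one but the last entries of $\textbf{a}$ and $\textbf{b}$ raised by one, so $x+z+a+b+c$ is unchanged; likewise $\M(E^{(2)}_{x,y,z}(\textbf{a};\textbf{c};\textbf{b}))$ carries exactly the same parameters as the original region and differs only through its smaller auxiliary hexagon, which your measure cannot see. This is why the paper inducts on $h=p+x+z$ with $p$ the quasi-perimeter: that quantity strictly drops for every companion region, across families. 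Finally, the base-case analysis needs repair. The genuine base cases are $x=0$ or $z=0$, where Lemma \ref{RS} splits the region into two dented semihexagons, $p<6$ (reduced to the former via Lemma \ref{claimp}), and---because the recurrences contain $y-1$ entries such as $K^{(2)}_{x-1,y-1,z}$---the extremal case $y=\max(a-b,-3)$, which requires the separate reduction of Lemma \ref{lem4} (together with the elimination of $0$-triangles in Lemma \ref{lem3}); your proposal carries the hypothesis $y\ge\max(a-b,-3)$ along but makes no provision for the moment it is attained. Once these structural points are fixed, the remaining hyperfactorial verification is, as you say, routine.
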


\begin{thm}\label{off32thm8}
Assume that $\textbf{a}=(a_1,a_2,\dotsc,a_m)$, $\textbf{b}=(b_1,b_2,\dotsc,b_n)$, $\textbf{c}=(c_1,c_2,\dotsc,c_k)$ are three sequences  of nonnegative integers ($m,n,k$ are all even) and that $x,y,z$ are three integers, such that  $x\geq 0$, $y\geq \max(a-b,-3)$, $z\geq 0$, and $x$ and $z$ have the same parity. We have

\begin{align}\label{off32eq8}
\M&(G^{(4)}_{x,y,z}(\textbf{a};\textbf{c};\textbf{b}))=\Lambda_{z,2y+z+2\max(a,b)+3,x}(c)\notag\\
&\times s\left(y+b-\min(a,b),a_1,\dotsc, a_{m},\frac{x+z}{2}+1,c_1,\dotsc,c_{k}+\frac{x+z}{2}+b_n-1,b_{n-1},\dotsc,b_1\right)\notag\\
&\times s\left(a_1,\dotsc, a_{m-1},a_{m}+\frac{x+z}{2}+c_1+1,\dotsc,c_{k},\frac{x+z}{2}-1,b_n,\dotsc,b_1,y+a-\min(a,b)+3\right)\notag\\
&\times\frac{\Hf(c+\frac{x+z}{2}+1)}{\Hf(c)\Hf(\frac{x+z}{2}+1)}\frac{\Hf(\max(a,b)+y+\frac{x+z}{2}+1)}{\Hf(\max(a,b)+c+y+\frac{x+z}{2}+1)}\notag\\
&\times \frac{\Hf(\max(a,b)+y+z+3)\Hf(\max(a,b)+c+y+z)}
{\Hf(\max(a,b)-o_a+o_b+o_c+y+z)\Hf(\max(a,b)+o_a-o_b+e_c+y+z+3)}\notag\\
&\times \frac{\Hf(\max(a,b)-o_a+o_b+o_c+y)\Hf(\max(a,b)+o_a-o_b+e_c+y+3)}
{\Hf(\max(a,b)+y+3)\Hf(\max(a,b)+y)},
\end{align}
where $\Lambda_{x,y,z}(m)$ is defined as in (\ref{lambdaeq1}).
\end{thm}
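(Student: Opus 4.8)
The plan is to prove Theorem~\ref{off32thm8} by the same mechanism that handles the other three $G^{(i)}$-type regions in Theorems~\ref{off32thm5}--\ref{off32thm7}: Kuo's graphical condensation \cite{Kuo} together with an induction, so I only describe the ingredients specific to $G^{(4)}$. First I would pass from $G^{(4)}_{x,y,z}(\textbf{a};\textbf{c};\textbf{b})$ to the dual bipartite graph, whose perfect matchings are exactly the lozenge tilings being counted, and strip away the lozenges forced by the edge-pushing construction: the pushing forces a band of lozenges running alongside the three collinear ferns and along two oblique sides of the base hexagon, and after deleting them one is left with a simply connected lattice region $R$ with $\M(G^{(4)}_{x,y,z}(\textbf{a};\textbf{c};\textbf{b}))=\M(R)$.

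Next I would apply the appropriate one of the two versions of Kuo condensation quoted in Section~3 to $R$, taking the four distinguished unit triangles to be the triangles $u,v,w,s$ marked in Figure~\ref{fig:offG}(d) (this is the role for which those four triangles were introduced). This produces an identity
\[
\M(R)\,\M(R_{uvws})=\M(R_{uv})\,\M(R_{ws})+\M(R_{uw})\,\M(R_{vs}),
\]
in which, after a further round of forced-lozenge removal, each of the six regions on the two sides is again a $G^{(i)}$-type region whose data is obtained from $(x,y,z,\textbf{a},\textbf{b},\textbf{c})$ by decreasing $y$, by absorbing the outermost triangle of a fern into the adjacent oblique side, or by deleting a fern triangle, except that one or two of these regions degenerate to a \emph{dented semihexagon} $S(\dots)$ whose tiling number is given by the Cohn--Larsen--Propp formula~(\ref{semieq}). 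Since the superscripts of the $G$-type regions produced this way need not all equal $4$, the induction has to be run simultaneously over $G^{(1)},\dots,G^{(4)}$, i.e.\ over Theorems~\ref{off32thm5}--\ref{off32thm8} together.

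I would then induct on $a+b+c$ together with the value of $y$. The ultimate base case is all three ferns empty, where $R$ is a plain hexagon and $\M(R)$ is MacMahon's box formula \cite{Mac}, which one checks equals the right-hand side of~(\ref{off32eq8}) once all hyperfactorials collapse; the cases in which a fern triangle has been absorbed into a side give dented semihexagons and are handled by~(\ref{semieq}). In the inductive step I would substitute the closed forms~(\ref{off32eq8}) (and its $G^{(1)},G^{(2)},G^{(3)}$ analogues) and~(\ref{semieq}) into the Kuo identity; using $\Hf(n)=\Hf(n-1)\,\Gamma(n)$ the hyperfactorial quotients cancel routinely, and the whole identity collapses to a single polynomial identity --- essentially a three-term linear relation among the quadratic factors $Q_{2}$ attached to the four $G$-regions, twisted by a linear factor coming from the semihexagon pieces.

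The hard part will be this final polynomial verification. One must split into the two branches of $Q_{2}$ according to the parity of its first argument (here $z$, whose parity is pinned by that of $x$), and then carry the floor and ceiling symbols, the quantities $\max(a,b)$ and $\min(a,b)$, and the partial sums $o_{a},o_{b},o_{c},e_{c}$ through the cancellation without sign or off-by-one errors; keeping the four superscripts $G^{(1)},\dots,G^{(4)}$ synchronised so that the $Q_{2}$-recurrence actually closes is where essentially all of the bookkeeping lies, while the rest of the argument is mechanical.
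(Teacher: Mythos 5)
Your overall mechanism (Kuo condensation at the four marked triangles of Figure \ref{fig:offG}, followed by induction and an algebraic verification) is the paper's, but two of your structural claims are wrong in ways that break the argument. First, when Kuo's Theorem \ref{kuothm1} is applied to $G^{(4)}_{x,y,z}(\textbf{a};\textbf{c};\textbf{b})$ with those vertices, the regions obtained after removing forced lozenges are \emph{not} again $G^{(i)}$-type regions or dented semihexagons: the actual recurrences (\ref{offcenterrecurG4a})--(\ref{offcenterrecurG4c}) involve $E^{(6)}$-, $K^{(3)}$-, $F^{(1)}$- and $G^{(4)}$-type regions (and the recurrences for the other families also drag in the central $R$- and $Q$-regions of the prequel). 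So an induction run only over Theorems \ref{off32thm5}--\ref{off32thm8} does not close; the paper has to prove all $30$ off-central formulas simultaneously, quoting the eight central-case theorems of \cite{HoleDent} as known input. No dented semihexagon ever appears inside the Kuo identity itself; the $s(\dots)$ factors enter only through the base cases.

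Second, your induction parameter and base case do not match what the recurrence actually decreases. The sub-regions carry fern sequences $\textbf{a}^{+1}$, $\textbf{b}^{+1}$ (last entry increased by $1$) and sometimes $y+1$, so $a+b+c$ and $y$ can \emph{increase}; what decreases is the quantity $h=p+x+z$ of Section \ref{subsec:organize} ($p$ the quasi-perimeter), which is the paper's induction variable. Correspondingly the base cases are not ``all ferns empty, plain hexagon, MacMahon'': they are $x=0$, $z=0$ (where the region splits along the fern line into two dented semihexagons, handled by Lemma \ref{RS} and the Cohn--Larsen--Propp formula (\ref{semieq})) and $p<6$, which reduces to these by Lemma \ref{claimp}. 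Finally, you never address the degenerate situations in which the recurrence cannot be applied: one must first normalize away zero-length fern triangles (Lemma \ref{lem3}) and, when $y$ sits at its minimal value $\max(a-b,-3)$, replace the region by one of another family with strictly smaller $h$ (Lemma \ref{lem4}); without these two reductions the inductive step has holes exactly at the boundary of the parameter range stated in the theorem.
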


\subsection{Off-central counterparts of the $R^{\swarrow}$-type regions}
\begin{figure}\centering
\setlength{\unitlength}{3947sp}%
\begingroup\makeatletter\ifx\SetFigFont\undefined%
\gdef\SetFigFont#1#2#3#4#5{%
  \reset@font\fontsize{#1}{#2pt}%
  \fontfamily{#3}\fontseries{#4}\fontshape{#5}%
  \selectfont}%
\fi\endgroup%
\resizebox{15cm}{!}{
\begin{picture}(0,0)%
\includegraphics{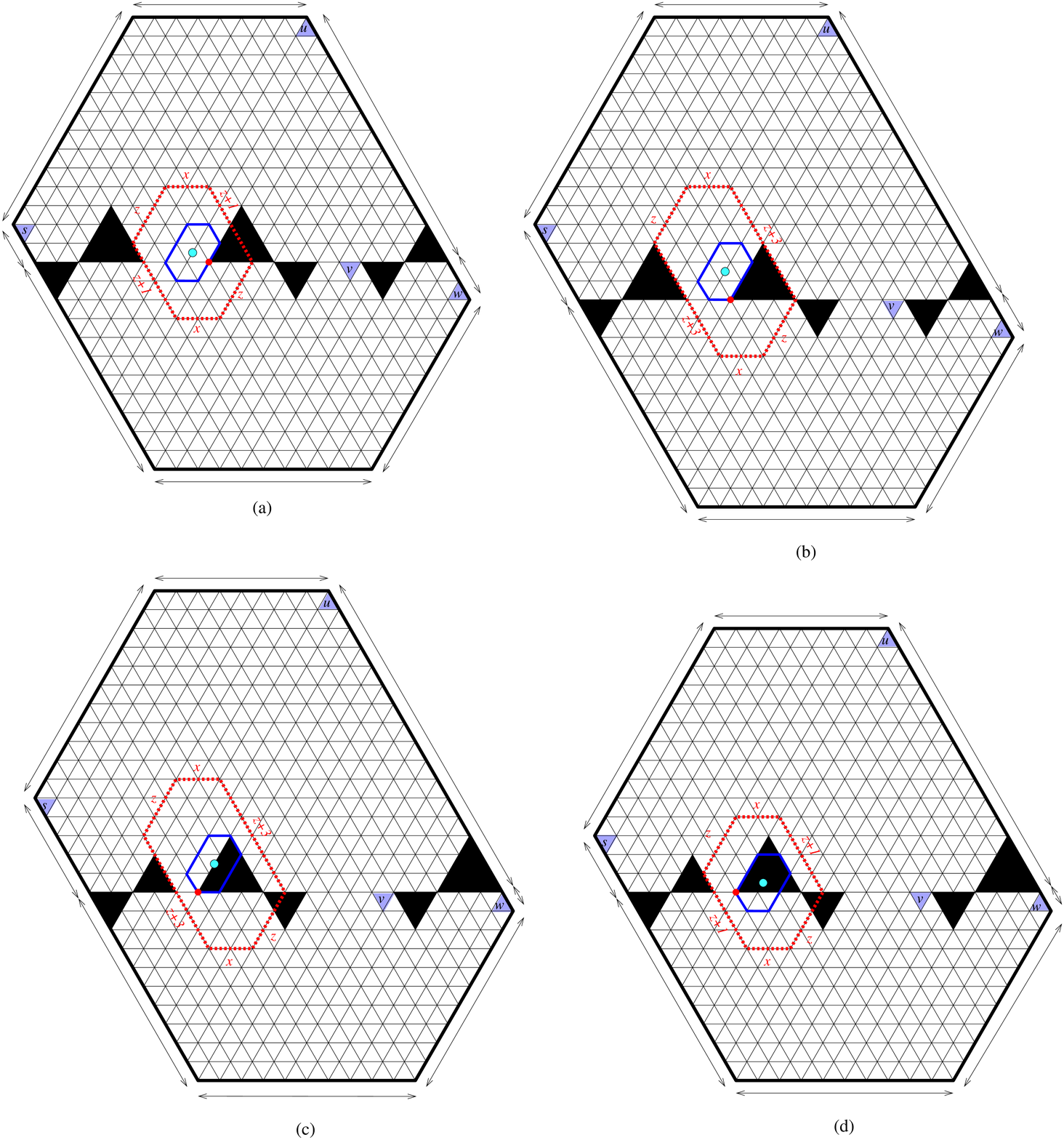}%
\end{picture}%
%
%

\begin{picture}(20308,21735)(1865,-21717)
\put(9449,-12840){\rotatebox{300.0}{\makebox(0,0)[lb]{\smash{{\SetFigFont{14}{16.8}{\rmdefault}{\mddefault}{\itdefault}{$z+e_a+o_b+o_c+(b-a)+3$}%
}}}}}
\put(20049,-13469){\rotatebox{300.0}{\makebox(0,0)[lb]{\smash{{\SetFigFont{14}{16.8}{\rmdefault}{\mddefault}{\itdefault}{$z+e_a+o_b+o_c+(b-a)+1$}%
}}}}}
\put(12846,-16362){\rotatebox{300.0}{\makebox(0,0)[lb]{\smash{{\SetFigFont{14}{16.8}{\familydefault}{\mddefault}{\updefault}{$y+(b-a)+1$}%
}}}}}
\put(13908,-18322){\rotatebox{300.0}{\makebox(0,0)[lb]{\smash{{\SetFigFont{14}{16.8}{\familydefault}{\mddefault}{\updefault}{$y+z+o_a+e_b+e_c$}%
}}}}}
\put(3440,-17787){\rotatebox{300.0}{\makebox(0,0)[lb]{\smash{{\SetFigFont{14}{16.8}{\familydefault}{\mddefault}{\updefault}{$y+z+o_a+e_b+e_c$}%
}}}}}
\put(22020,-17071){\makebox(0,0)[lb]{\smash{{\SetFigFont{14}{16.8}{\familydefault}{\mddefault}{\updefault}{$y$}%
}}}}
\put(11866,-16878){\makebox(0,0)[lb]{\smash{{\SetFigFont{14}{16.8}{\familydefault}{\mddefault}{\updefault}{$y$}%
}}}}
\put(2456,-16053){\rotatebox{300.0}{\makebox(0,0)[lb]{\smash{{\SetFigFont{14}{16.8}{\familydefault}{\mddefault}{\updefault}{$y+(b-a)+3$}%
}}}}}
\put(12646,-6685){\rotatebox{300.0}{\makebox(0,0)[lb]{\smash{{\SetFigFont{14}{16.8}{\familydefault}{\mddefault}{\updefault}{$y+z+o_a+e_b+e_c+(a-b)$}%
}}}}}
\put(19026,-2249){\rotatebox{300.0}{\makebox(0,0)[lb]{\smash{{\SetFigFont{14}{16.8}{\rmdefault}{\mddefault}{\itdefault}{$z+e_a+o_b+o_c+3$}%
}}}}}
\put(11894,-5382){\rotatebox{300.0}{\makebox(0,0)[lb]{\smash{{\SetFigFont{14}{16.8}{\familydefault}{\mddefault}{\updefault}{$y+3$}%
}}}}}
\put(21101,-5737){\rotatebox{300.0}{\makebox(0,0)[lb]{\smash{{\SetFigFont{14}{16.8}{\familydefault}{\mddefault}{\updefault}{$y+(a-b)$}%
}}}}}
\put(5688,-10988){\makebox(0,0)[lb]{\smash{{\SetFigFont{14}{16.8}{\familydefault}{\mddefault}{\updefault}{$x+o_a+e_b+e_c$}%
}}}}
\put(16121,-11697){\makebox(0,0)[lb]{\smash{{\SetFigFont{14}{16.8}{\familydefault}{\mddefault}{\updefault}{$x+o_a+e_b+e_c$}%
}}}}
\put(15098,-240){\makebox(0,0)[lb]{\smash{{\SetFigFont{14}{16.8}{\familydefault}{\mddefault}{\updefault}{$x+o_a+e_b+e_c$}%
}}}}
\put(17152,-21248){\makebox(0,0)[lb]{\smash{{\SetFigFont{14}{16.8}{\familydefault}{\mddefault}{\updefault}{$x+e_a+o_b+o_c$}%
}}}}
\put(7229,-21248){\makebox(0,0)[lb]{\smash{{\SetFigFont{14}{16.8}{\familydefault}{\mddefault}{\updefault}{$x+e_a+o_b+o_c$}%
}}}}
\put(16333,-10397){\makebox(0,0)[lb]{\smash{{\SetFigFont{14}{16.8}{\familydefault}{\mddefault}{\updefault}{$x+e_a+o_b+o_c$}%
}}}}
\put(10844,-20023){\rotatebox{60.0}{\makebox(0,0)[lb]{\smash{{\SetFigFont{14}{16.8}{\familydefault}{\mddefault}{\updefault}{$z+o_a+e_b+e_c$}%
}}}}}
\put(20971,-20023){\rotatebox{60.0}{\makebox(0,0)[lb]{\smash{{\SetFigFont{14}{16.8}{\familydefault}{\mddefault}{\updefault}{$z+o_a+e_b+e_c$}%
}}}}}
\put(20255,-9232){\rotatebox{60.0}{\makebox(0,0)[lb]{\smash{{\SetFigFont{14}{16.8}{\familydefault}{\mddefault}{\updefault}{$z+o_a+e_b+e_c$}%
}}}}}
\put(3070,-13645){\rotatebox{60.0}{\makebox(0,0)[lb]{\smash{{\SetFigFont{14}{16.8}{\familydefault}{\mddefault}{\updefault}{$z+e_a+o_b+o_c$}%
}}}}}
\put(13607,-14472){\rotatebox{60.0}{\makebox(0,0)[lb]{\smash{{\SetFigFont{14}{16.8}{\familydefault}{\mddefault}{\updefault}{$z+e_a+o_b+o_c$}%
}}}}}
\put(12481,-2854){\rotatebox{60.0}{\makebox(0,0)[lb]{\smash{{\SetFigFont{14}{16.8}{\familydefault}{\mddefault}{\updefault}{$z+e_a+o_b+o_c$}%
}}}}}
\put(10946,-4950){\rotatebox{300.0}{\makebox(0,0)[lb]{\smash{{\SetFigFont{14}{16.8}{\familydefault}{\mddefault}{\updefault}{$y+(a-b)$}%
}}}}}
\put(9104,-2012){\rotatebox{300.0}{\makebox(0,0)[lb]{\smash{{\SetFigFont{14}{16.8}{\rmdefault}{\mddefault}{\itdefault}{$z+e_a+o_b+o_c+1$}%
}}}}}
\put(9924,-8818){\rotatebox{60.0}{\makebox(0,0)[lb]{\smash{{\SetFigFont{14}{16.8}{\familydefault}{\mddefault}{\updefault}{$z+o_a+e_b+e_c$}%
}}}}}
\put(6206,-9630){\makebox(0,0)[lb]{\smash{{\SetFigFont{14}{16.8}{\familydefault}{\mddefault}{\updefault}{$x+e_a+o_b+o_c$}%
}}}}
\put(1944,-4950){\rotatebox{300.0}{\makebox(0,0)[lb]{\smash{{\SetFigFont{14}{16.8}{\familydefault}{\mddefault}{\updefault}{$y+1$}%
}}}}}
\put(2724,-6390){\rotatebox{300.0}{\makebox(0,0)[lb]{\smash{{\SetFigFont{14}{16.8}{\familydefault}{\mddefault}{\updefault}{$y+z+o_a+e_b+e_c+(a-b)$}%
}}}}}
\put(5279,-240){\makebox(0,0)[lb]{\smash{{\SetFigFont{14}{16.8}{\familydefault}{\mddefault}{\updefault}{$x+o_a+e_b+e_c$}%
}}}}
\put(2766,-2704){\rotatebox{60.0}{\makebox(0,0)[lb]{\smash{{\SetFigFont{14}{16.8}{\familydefault}{\mddefault}{\updefault}{$z+e_a+o_b+o_c$}%
}}}}}
\put(9961,-4989){\makebox(0,0)[lb]{\smash{{\SetFigFont{14}{16.8}{\familydefault}{\mddefault}{\updefault}{\color[rgb]{1,1,1}$b_1$}%
}}}}
\put(7486,-5476){\makebox(0,0)[lb]{\smash{{\SetFigFont{14}{16.8}{\familydefault}{\mddefault}{\updefault}{\color[rgb]{1,1,1}$c_2$}%
}}}}
\put(6340,-5089){\makebox(0,0)[lb]{\smash{{\SetFigFont{14}{16.8}{\familydefault}{\mddefault}{\updefault}{\color[rgb]{1,1,1}$c_1$}%
}}}}
\put(9114,-5521){\makebox(0,0)[lb]{\smash{{\SetFigFont{14}{16.8}{\familydefault}{\mddefault}{\updefault}{\color[rgb]{1,1,1}$b_2$}%
}}}}
\put(3973,-17349){\makebox(0,0)[lb]{\smash{{\SetFigFont{14}{16.8}{\familydefault}{\mddefault}{\updefault}{\color[rgb]{1,1,1}$a_1$}%
}}}}
\put(4791,-16877){\makebox(0,0)[lb]{\smash{{\SetFigFont{14}{16.8}{\familydefault}{\mddefault}{\updefault}{\color[rgb]{1,1,1}$a_2$}%
}}}}
\put(14918,-16878){\makebox(0,0)[lb]{\smash{{\SetFigFont{14}{16.8}{\familydefault}{\mddefault}{\updefault}{\color[rgb]{1,1,1}$a_2$}%
}}}}
\put(14201,-5614){\makebox(0,0)[lb]{\smash{{\SetFigFont{14}{16.8}{\familydefault}{\mddefault}{\updefault}{\color[rgb]{1,1,1}$a_2$}%
}}}}
\put(2986,-5514){\makebox(0,0)[lb]{\smash{{\SetFigFont{14}{16.8}{\familydefault}{\mddefault}{\updefault}{\color[rgb]{1,1,1}$a_1$}%
}}}}
\put(3969,-4929){\makebox(0,0)[lb]{\smash{{\SetFigFont{14}{16.8}{\familydefault}{\mddefault}{\updefault}{\color[rgb]{1,1,1}$a_2$}%
}}}}
\put(13281,-6264){\makebox(0,0)[lb]{\smash{{\SetFigFont{14}{16.8}{\familydefault}{\mddefault}{\updefault}{\color[rgb]{1,1,1}$a_1$}%
}}}}
\put(6442,-16877){\makebox(0,0)[lb]{\smash{{\SetFigFont{14}{16.8}{\familydefault}{\mddefault}{\updefault}{\color[rgb]{1,1,1}$c_1$}%
}}}}
\put(16467,-16701){\makebox(0,0)[lb]{\smash{{\SetFigFont{14}{16.8}{\familydefault}{\mddefault}{\updefault}{\color[rgb]{1,1,1}$c_1$}%
}}}}
\put(16262,-5614){\makebox(0,0)[lb]{\smash{{\SetFigFont{14}{16.8}{\familydefault}{\mddefault}{\updefault}{\color[rgb]{1,1,1}$c_1$}%
}}}}
\put(17285,-6205){\makebox(0,0)[lb]{\smash{{\SetFigFont{14}{16.8}{\familydefault}{\mddefault}{\updefault}{\color[rgb]{1,1,1}$c_2$}%
}}}}
\put(7317,-17385){\makebox(0,0)[lb]{\smash{{\SetFigFont{14}{16.8}{\familydefault}{\mddefault}{\updefault}{\color[rgb]{1,1,1}$c_2$}%
}}}}
\put(17489,-17388){\makebox(0,0)[lb]{\smash{{\SetFigFont{14}{16.8}{\familydefault}{\mddefault}{\updefault}{\color[rgb]{1,1,1}$c_2$}%
}}}}
\put(19314,-6211){\makebox(0,0)[lb]{\smash{{\SetFigFont{14}{16.8}{\familydefault}{\mddefault}{\updefault}{\color[rgb]{1,1,1}$b_2$}%
}}}}
\put(9725,-17355){\makebox(0,0)[lb]{\smash{{\SetFigFont{14}{16.8}{\familydefault}{\mddefault}{\updefault}{\color[rgb]{1,1,1}$b_2$}%
}}}}
\put(20154,-5791){\makebox(0,0)[lb]{\smash{{\SetFigFont{14}{16.8}{\familydefault}{\mddefault}{\updefault}{\color[rgb]{1,1,1}$b_1$}%
}}}}
\put(10715,-16800){\makebox(0,0)[lb]{\smash{{\SetFigFont{14}{16.8}{\familydefault}{\mddefault}{\updefault}{\color[rgb]{1,1,1}$b_1$}%
}}}}
\put(20842,-16818){\makebox(0,0)[lb]{\smash{{\SetFigFont{14}{16.8}{\familydefault}{\mddefault}{\updefault}{\color[rgb]{1,1,1}$b_1$}%
}}}}
\put(14100,-17350){\makebox(0,0)[lb]{\smash{{\SetFigFont{14}{16.8}{\familydefault}{\mddefault}{\updefault}{\color[rgb]{1,1,1}$a_1$}%
}}}}
\put(19897,-17373){\makebox(0,0)[lb]{\smash{{\SetFigFont{14}{16.8}{\familydefault}{\mddefault}{\updefault}{\color[rgb]{1,1,1}$b_2$}%
}}}}
\end{picture}%
}
\caption{Four $K^{(i)}$-type regions: (a) $K^{(1)}_{2,1,3}(2,3;\ 3,2;\ 2,2)$, (b) $K^{(2)}_{2,1,3}(2,3;\ 3,2;\ 2,2)$, (c) $K^{(3)}_{2,1,3}(2,2;\ 3,2;\ 3,2)$, and (d) $K^{(4)}_{2,1,3}(2,2;\ 3,2;\ 3,2)$.}\label{fig:offK}
\end{figure}

We consider the situation when the center of the auxiliary hexagon $H_0$ is the middle point of a southwest-to-northeast unit lattice interval. There are also $8$ off-central positions labeled by $1,2,\dots, 8$ around the center of the auxiliary hexagon $H_0$ as illustrated in Figure \ref{fig:offposition}(d). Let $x$ and $z$ be two nonnegative integers with opposite parities. For $i=1,4,5,8$, we consider the auxiliary hexagon $H_0$ of side-lengths $x,z+1,z,x,z+1,z$, while for $i=2,3,6,7$, the auxiliary hexagon has side-lengths $x,z+3,z,x,z+3,z$. The domain of the $y$-parameter is determined by the level $d$ of the position $i$ above or below the center of $H_0$ as in the definition of the $G^{(i)}$-type region ($d=1/2$ or $3/2$). Next, we apply the same side-pushing procedure as in the definition of the $E^{(i)}$-type regions and remove the three ferns at the same level, such that the middle fern has its root at the off-central position $i$ ($i=1,2,\dots,8$). Let us denote by $K^{(i)}_{x,y,z}(\textbf{a}; \textbf{c}; \textbf{b})$ the newly defined regions (see Figure \ref{fig:offK} for examples). Again, by the symmetry, we only need to enumerate four of the eights regions, namely $K^{(1)}$-, $K^{(2)}$-, $K^{(3)}$-, and $K^{(4)}$-type regions.

\begin{rmk}\label{rmkK}
In the definition of the $K^{(1)}$-type regions, if we remove the three ferns such that the root of the middle one is $1/2$-unit to the southwest of the center of the auxiliary hexagon $H_0$, then we obtain the region $R^{\swarrow}_{x,y,z}(\textbf{a};\textbf{c};\textbf{b})$ in Theorem 2.5 of \cite{HoleDent}. Therefore, we can view the $K^{(i)}$-type regions as counterparts of the $R^{\swarrow}$-type regions.
\end{rmk}

\begin{thm}\label{off32thm9}
Assume that $\textbf{a}=(a_1,a_2,\dotsc,a_m)$, $\textbf{b}=(b_1,b_2,\dotsc,b_n)$, $\textbf{c}=(c_1,c_2,\dotsc,c_k)$ are three sequences  of nonnegative integers ($m,n,k$ are all even) and that $x,y,z$ are three nonnegative integers, such that $x\geq 0$, $y\geq \max(b-a,-1)$, $z\geq 0$, and $x$ has parity opposite to  $z$. Then
%
\begin{align}\label{off32eq9}
\M&(K^{(1)}_{x,y,z}(\textbf{a};\textbf{c};\textbf{b}))=\Theta'_{z,x,2y+z+2\max(a,b)+1}(c)\notag\\
&\times s\left(y+b-\min(a,b)+1,a_1,\dotsc, a_{m},\left\lceil\frac{x+z}{2}\right\rceil,c_1,\dotsc,c_{k}+\left\lfloor\frac{x+z}{2}\right\rfloor+b_n,b_{n-1},\dotsc,b_1\right)\notag\\
&\times s\left(a_1,\dotsc, a_{m-1},a_{m}+\left\lceil\frac{x+z}{2}\right\rceil+c_1,\dotsc,c_{k},\left\lfloor\frac{x+z}{2}\right\rfloor,b_n,\dotsc,b_1,y+a-\min(a,b)\right)\notag\\
&\times\frac{\Hf(c+\left\lceil\frac{x+z}{2}\right\rceil)}{\Hf(c)\Hf(\left\lceil\frac{x+z}{2}\right\rceil)}\frac{\Hf(\max(a,b)+y+\left\lceil\frac{x+z}{2}\right\rceil+1)}{\Hf(\max(a,b)+c+y+\left\lceil\frac{x+z}{2}\right\rceil+1)}\notag\\
&\times \frac{\Hf(\max(a,b)+y+z)\Hf(\max(a,b)+c+y+z+1)}{\Hf(\max(a,b)-o_a+o_b+o_c+y+z+1)\Hf(\max(a,b)+o_a-o_b+e_c+y+z)}\notag\\
&\times \frac{\Hf(\max(a,b)-o_a+o_b+o_c+y+1)\Hf(\max(a,b)+o_a-o_b+e_c+y)}
{\Hf(\max(a,b)+y+1)\Hf(\max(a,b)+y)},
\end{align}
where the region $\Theta'_{x,y,z}(m)$ is defined as in (\ref{thetaeq2}).
%
\end{thm}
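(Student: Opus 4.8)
The plan is to use Kuo's graphical condensation (one of the two versions quoted in Section 3) to set up a recurrence for $\M(K^{(1)}_{x,y,z}(\textbf{a};\textbf{c};\textbf{b}))$ and then verify that the closed-form right-hand side of (\ref{off32eq9}) satisfies the same recurrence with the same initial data. First I would pass to the dual graph $G$ of the region $K^{(1)}_{x,y,z}(\textbf{a};\textbf{c};\textbf{b})$, so that lozenge tilings correspond to perfect matchings; the four shaded unit triangles labelled $u,v,w,s$ in the illustrative figures (deliberately recorded in the definition for later use, cf.\ the footnote in Section \ref{Statement1}) are placed so that they become the four distinguished vertices in Kuo condensation. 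Choosing them as the NW, NE, SW, SE extremal cells of a suitable sub-band of the region, Kuo's theorem produces an identity of the shape
\[
\M(G)\,\M(G-\{u,v,w,s\}) = \M(G-\{u,w\})\,\M(G-\{v,s\}) \pm \M(G-\{u,s\})\,\M(G-\{v,w\}),
\]
in which each of the six regions appearing is again of a type already enumerated: the ``deletion'' of a boundary triangle either shifts one of the parameters $x,y,z$ or one of the end-entries of the sequences $\textbf{a},\textbf{b},\textbf{c}$ by $1$, or merges the inner fern $\textbf{c}$ with one of the half-diagonal gaps $\lceil (x+z)/2\rceil$, $\lfloor (x+z)/2\rfloor$. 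Concretely I expect the six terms to be instances of $K^{(1)}$-, $K^{(4)}$-, $G^{(i)}$-type regions (and possibly dented semihexagons $S(\dots)$ whose count is given by (\ref{semieq})), so that the recurrence is \emph{self-contained} once Theorems \ref{off32thm5}--\ref{off32thm8} and the companion $K$-type statements are available.

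Next I would reduce the problem to a finite check. The region degenerates to a dented semihexagon in two limiting regimes — for instance when the inner fern is empty ($\textbf{c}=\emptyset$, i.e.\ $c=0$, $k=0$) the factor $\Theta'_{z,x,2y+z+2\max(a,b)+1}(0)$ collapses (all hyperfactorial ratios involving $m=0$ become $1$) and $K^{(1)}$ reduces to a region whose tiling count is a product of two $s(\cdot)$-factors, which one checks directly against the Cohn--Larsen--Propp formula (\ref{semieq}); similarly for $x=z=0$ or $a=b$, $y=0$. These base cases, together with the trivial vanishing cases (a side-length or gap forced to be negative makes both sides zero), anchor an induction on a single size parameter, say $N:=x+z+a+b+c+y$ (or on $c$ alone, peeling off $c_k$). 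Then I would substitute the conjectured formula (\ref{off32eq9}) — written in the ``$\Theta'$ times two semihexagon factors times an explicit hyperfactorial quotient'' form — into the Kuo identity and verify the resulting hyperfactorial identity. Because every factor is a ratio of $\Hf$'s at arguments that are affine in the parameters, the verification is purely a matter of matching arguments and of the basic simplification $\Hf(n+1)/\Hf(n)=\Gamma(n+1)=n!$; the $s(\cdot)$-factors transform under the deletions by one application of (\ref{semieq}), which is again an $\Hf$-quotient identity.

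The main obstacle, as usual with these Kuo-condensation arguments, is the \emph{bookkeeping}: one must choose the placement of $u,v,w,s$ (there is essentially one good choice, dictated by the position of the inner fern at off-central slot $1$ in Figure \ref{fig:offposition}(d)) so that \emph{all six} regions produced by condensation land inside the already-enumerated families with correctly shifted parameters — the asymmetry of the off-central position means the shifts are not symmetric in $x,y,z$, and the $\lceil\cdot\rceil$ versus $\lfloor\cdot\rfloor$ in the gap $\frac{x+z}{2}$ must be tracked against the parity hypothesis ($x$ opposite to $z$). A secondary, but genuinely tedious, obstacle is confirming that the $\pm$ sign in Kuo's identity is a $+$ here and that the conjectured product, after the substitution, satisfies it identically; I would organize this by isolating the ``new'' factors (those in $K^{(1)}$ not already accounted for by $\Theta'$ and the two $s$-terms) and checking that their cross-ratio equals the corresponding cross-ratio of Pochhammer-type quotients coming from the deleted boundary cells, exactly as in the analogous computations for the $R^{\swarrow}$-type regions in \cite{HoleDent}. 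Once the recurrence and the base cases are matched, the theorem follows by induction.
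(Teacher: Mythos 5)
Your skeleton — pass to the dual graph, apply Kuo condensation with the four marked triangles $u,v,w,s$, and verify that the closed product formula satisfies the resulting recurrence — is indeed the paper's strategy, but as set up your induction would not close, and this is a genuine gap rather than bookkeeping. The condensation identity for $K^{(1)}$ (the recurrences (\ref{offcenterrecurK1a})--(\ref{offcenterrecurK1b})) does not stay inside the $K$- and $G$-families: besides $K^{(1)}$ itself it produces $E^{(1)}$-type regions with the roles of $\textbf{a}$ and $\textbf{b}$ interchanged and $\textbf{b}$ replaced by $\textbf{b}^{+1}$, a $G^{(2)}$-type region, and an $R^{\leftarrow}$-type region from the prequel \cite{HoleDent}. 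None of the six regions has a shorter middle fern, so ``peeling off $c_k$'' is not what condensation does; and your tentative parameter $N=x+z+a+b+c+y$ actually \emph{increases} on the term $\M(E^{(1)}_{x,y+1,z-1}(\textbf{b}^{+1};\overline{\textbf{c}};\textbf{a}))$. Since the companion formulas for the $E$-, $F$-, $G$- and remaining $K$-families are themselves only obtainable the same way, Theorem \ref{off32thm9} cannot be proved in isolation with those statements ``available'': the paper runs one simultaneous induction over all thirty theorems on $h=p+x+z$, where $p$ is the quasi-perimeter (Lemma \ref{claimp}), taking the central-case formulas of \cite{HoleDent} as external input.

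The degenerate cases you propose are also not the right anchors. With $\textbf{c}=\emptyset$ the region does not split along the fern line (lozenges cross through the middle gap) and $\Theta'_{z,x,2y+z+2\max(a,b)+1}(0)$ is not $1$ — only the $\Hf(\tfrac m2+\cdot)$-blocks collapse, while the MacMahon-type block survives — so this case is as hard as the general one; and $x=z=0$ is excluded outright by the parity hypothesis. The paper's base cases are $x=0$ or $z=0$, where cutting along the line through the ferns splits the region into two dented semihexagons (Region-splitting Lemma \ref{RS} plus the Cohn--Larsen--Propp formula (\ref{semieq})), together with $p<6$. Two further reductions are indispensable and missing from your plan: Lemma \ref{lem3}, eliminating zero-side triangles from the ferns, and, crucially, Lemma \ref{lem4} for the case when $y$ attains its minimum $\max(b-a,-1)$, because the recurrence involves regions with $y-1$ (e.g.\ $K^{(1)}_{x-1,y-1,z+1}$) that leave the admissible parameter range, so the induction step has a hole exactly on the boundary of the domain. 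Your worry about a $\pm$ sign is harmless: both quoted versions of Kuo condensation (Theorems \ref{kuothm1} and \ref{kuothm2}) have fixed signs, and the $K$-type recurrences use Theorem \ref{kuothm1}.
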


\begin{thm}\label{off32thm10}
Assume that $\textbf{a}=(a_1,a_2,\dotsc,a_m)$, $\textbf{b}=(b_1,b_2,\dotsc,b_n)$, $\textbf{c}=(c_1,c_2,\dotsc,c_k)$ are three sequences  of nonnegative integers ($m,n,k$ are all even) and that $x,y,z$ are three integers, such that $x\geq 0$, $y\geq \max(b-a,-3)$, $z\geq 0$, and $x$ has parity opposite to  $z$. Then
%
\begin{align}\label{off32eq10}
\M&(K^{(2)}_{x,y,z}(\textbf{a};\textbf{c};\textbf{b}))=\Lambda'_{z,x,2y+z+2\max(a,b)+3}(c)\notag\\
&\times s\left(y+b-\min(a,b)+3,a_1,\dotsc, a_{m},\left\lfloor\frac{x+z}{2}\right\rfloor,c_1,\dotsc,c_{k}+\left\lceil\frac{x+z}{2}\right\rceil+b_n,b_{n-1},\dotsc,b_1\right)\notag\\
&\times s\left(a_1,\dotsc, a_{m-1},a_{m}+\left\lfloor\frac{x+z}{2}\right\rfloor+c_1,\dotsc,c_{k},\left\lceil\frac{x+z}{2}\right\rceil,b_n,\dotsc,b_1,y+a-\min(a,b)\right)\notag\\
&\times\frac{\Hf(c+\left\lfloor\frac{x+z}{2}\right\rfloor)}{\Hf(c)\Hf(\left\lfloor\frac{x+z}{2}\right\rfloor)}\frac{\Hf(\max(a,b)+y+\left\lceil\frac{x+z}{2}\right\rceil+2)}{\Hf(\max(a,b)+c+y+\left\lceil\frac{x+z}{2}\right\rceil+2)}\notag\\
&\times \frac{\Hf(\max(a,b)+y+z)\Hf(\max(a,b)+c+y+z+3)}{\Hf(\max(a,b)-o_a+o_b+o_c+y+z+3)
\Hf(\max(a,b)+o_a-o_b+e_c+y+z)}\notag\\
&\times \frac{\Hf(\max(a,b)-o_a+o_b+o_c+y+3)\Hf(\max(a,b)+o_a-o_b+e_c+y)}
{\Hf(\max(a,b)+y+3)\Hf(\max(a,b)+y)},
\end{align}
where the region $\Lambda'_{x,y,z}(m)$ is defined as in (\ref{lambdaeq2}).
%
\end{thm}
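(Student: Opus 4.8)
The plan is to establish Theorem \ref{off32thm10} by the same graphical-condensation argument used for the preceding $K^{(i)}$- and $G^{(i)}$-type theorems, so that in effect all the enumerations of this section are carried out by one simultaneous induction and the proof of Theorem \ref{off32thm10} itself merely reuses the lemmas developed for Theorem \ref{off32thm9}. Throughout I would pass to the planar dual graph of the $K^{(2)}$-region and count perfect matchings, and the four unit cells labelled $u,v,w,s$ in Figure \ref{fig:offK}(b) are exactly the cells to which one of the two versions of Kuo's condensation recalled in Section 4 will be applied.

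First I would fix the induction statistic: the pair $(m+n+k,\,y)$, ordered lexicographically. The base cases are those in which the ferns degenerate — all of $\mathbf a,\mathbf b,\mathbf c$ emptied out, or the middle fern shrunk to a single triangle with empty side ferns. There $K^{(2)}_{x,y,z}(\mathbf a;\mathbf c;\mathbf b)$ is a plain hexagon or an off-central cored hexagon with a single triangular hole near position $2$ of Figure \ref{fig:offposition}(d); the two $s(\dotsc)$ factors and all the hyperfactorial ratios in \eqref{off32eq10} then collapse, and the remaining identity is supplied by MacMahon's product formula, the Cohn--Larsen--Propp formula \eqref{semieq}, the prequel \cite{HoleDent}, and the now-proven Conjectures \ref{con1} and \ref{con2} (this is the $K^{(2)}$-analogue of the way Corollary \ref{cor2} follows from Theorem \ref{off32thm2} for the $F^{(2)}$-regions). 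One must also dispose of the truly degenerate instances — a side length vanishing, or an argument of some $s(\dotsc)$ becoming $0$ or negative — where the region forces a family of lozenges or admits no tiling at all and both sides of \eqref{off32eq10} are checked by hand.

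For the inductive step I would apply Kuo's condensation with $u,v,w,s$ positioned as in Figure \ref{fig:offK}(b). Deleting the cells of a two-element subset of $\{u,v,w,s\}$ from the $K^{(2)}$-region yields, case by case, either a $K^{(2)}$-type region in which one triangle of a fern has shrunk (or $y$ has decreased), or a region whose lozenges along a boundary strip are forced, so that it factors as a dented semihexagon $S(\dotsc)$ times a smaller, already-treated region. Kuo's identity therefore becomes a relation of the form
\[
\M\!\left(K^{(2)}\right)\M\!\left(K^{(2)}_{0}\right)=\M\!\left(K^{(2)}_{1}\right)\M(R_{1})+\M\!\left(K^{(2)}_{2}\right)\M(R_{2}),
\]
in which each $K^{(2)}_{j}$ has strictly smaller induction statistic and each $R_{i}$ is, by \eqref{semieq}, a product of dented-semihexagon counts (possibly together with one previously treated region). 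Substituting the claimed formula \eqref{off32eq10} into every factor, the recurrence collapses — once the huge common hyperfactorial part is cancelled — to a polynomial identity relating the auxiliary polynomials $P_{2}$ and $Q_{2}$ at shifted arguments, together with an elementary ratio identity for $\Hf$ that is immediate from $\Hf(n+1)/\Hf(n)=\Gamma(n+1)$ and its half-integer analogue. These two verifications are mechanical, and I would quarantine them in a computational lemma rather than grind through them in line.

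The main obstacle is the geometric bookkeeping in the condensation step rather than the algebra. Because the middle fern is rooted $3/2$ units away from a centre that is \emph{not} a lattice point, a single-unit change in a fern entry can push that root across a half-integer lattice interval, so the region produced by a particular deletion may jump between $K^{(2)}$-type and a neighbouring ($K^{(1)}$- or $G$-type) region, and which of these occurs depends on the sign of $a-b$ and on the parity of $x+z$. The recurrence must therefore be extracted separately in each branch, and in every branch one must verify that the induction statistic really decreases and that the degenerate sub-cases are absorbed. Once the correct recurrence has been pinned down in all cases, Theorem \ref{off32thm10} follows by the induction described above, in parallel with Theorems \ref{off32thm5}--\ref{off32thm9}.
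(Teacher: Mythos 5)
Your overall instinct (Kuo condensation at the four cells $u,v,w,s$ of Figure \ref{fig:offK}, combined with a simultaneous induction over the whole section) matches the paper, but the two load-bearing ingredients of your induction are wrong, and the argument as written does not close. First, the recurrence produced by condensation does not have the shape you claim. Deleting pairs from $\{u,v,w,s\}$ in the $K^{(2)}$-region does not shrink a fern triangle or peel off a dented semihexagon factor; after removing forced lozenges (and rotating), the five auxiliary regions are of types $E^{(2)}$, $F^{(1)}$, $G^{(3)}$ and $K^{(2)}$ with $x$, $y$, $z$ shifted by one and with the fern sequences \emph{modified outward} ($\textbf{b}$ becomes $\textbf{b}^{+1}$, $\textbf{c}$ becomes $\overline{\textbf{c}}$) -- see the recurrences (\ref{offcenterrecurK2a})--(\ref{offcenterrecurK2b}). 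Consequently your induction statistic $(m+n+k,\,y)$ does not decrease from left to right: the number of fern triangles is unchanged (or grows by a padded term), and what actually shrinks is the linear size of the base hexagon. This is why the paper inducts on $h=p+x+z$ ($p$ the quasi-perimeter) and must prove all $30$ off-central formulas \emph{and} invoke the eight central-case formulas of \cite{HoleDent} simultaneously; Theorem \ref{off32thm10} cannot be run as a self-contained induction on $K^{(2)}$-regions with occasional ``already-treated'' factors.

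Second, your base cases are not available. The paper's base cases are $x=0$ or $z=0$ (forced by $p\geq 2x+4z$ when $p<6$), where the region splits along the fern line into two dented semihexagons via the region-splitting Lemma \ref{RS} and is finished by Cohn--Larsen--Propp (\ref{semieq}); the degenerate situations you worry about are instead absorbed by Lemma \ref{lem3} (zero triangles) and Lemma \ref{lem4} (minimal $y$, where the condensation recurrence fails and one passes to a region of strictly smaller $h$). By contrast, your proposed base case ``all ferns empty'' is an off-central cored hexagon whose hole sits at a $K$-type position (half-integer center on a southwest--northeast edge); this enumeration is \emph{not} covered by Conjectures \ref{con1}--\ref{con2} (which concern different hole positions) nor by any prior result -- it is itself a consequence of the theorems being proved -- and appealing to the conjectures inside this paper would in any case be circular, since they are deduced here from Theorems \ref{off1thm1} and \ref{off32thm3}. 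Until the recurrence is stated with the correct region types, a statistic that genuinely decreases under it, and base cases reachable by that statistic, the proposed proof has a real gap.
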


\begin{thm}\label{off32thm11}
Assume that $\textbf{a}=(a_1,a_2,\dotsc,a_m)$, $\textbf{b}=(b_1,b_2,\dotsc,b_n)$, $\textbf{c}=(c_1,c_2,\dotsc,c_k)$ are three sequences  of nonnegative integers ($m,n,k$ are all even) and that $x,y,z$ are three integers, such that $x\geq 0$, $y\geq \max(b-a,-3)$, $z\geq 0$, and $x$ has parity opposite to  $x$. We get
%
\begin{align}\label{off32eq11}
\M&(K^{(3)}_{x,y,z}(\textbf{a};\textbf{c};\textbf{b}))=\Psi'_{z,x,2y+z+2\max(a,b)+3}(c)\notag\\
&\times s\left(y+b-\min(a,b)+3,a_1,\dotsc, a_{m},\left\lfloor\frac{x+z}{2}\right\rfloor-1,c_1,\dotsc,c_{k}+\left\lceil\frac{x+z}{2}\right\rceil+1+b_n,b_{n-1},\dotsc,b_1\right)\notag\\
&\times s\left(a_1,\dotsc, a_{m-1},a_{m}+\left\lfloor\frac{x+z}{2}\right\rfloor-1+c_1,\dotsc,c_{k},\left\lceil\frac{x+z}{2}\right\rceil+1,b_n,\dotsc,b_1,y+a-\min(a,b)\right)\notag\\
&\times\frac{\Hf(c+\left\lfloor\frac{x+z}{2}\right\rfloor-1)}{\Hf(c)\Hf(\left\lfloor\frac{x+z}{2}\right\rfloor-1)}\frac{\Hf(\max(a,b)+y+\left\lceil\frac{x+z}{2}\right\rceil+1)}{\Hf(\max(a,b)+c+y+\left\lceil\frac{x+z}{2}\right\rceil+1)}\notag\\
&\times \frac{\Hf(\max(a,b)+y+z)\Hf(\max(a,b)+c+y+z+3)}{\Hf(\max(a,b)-o_a+o_b+o_c+y+z+3)
\Hf(\max(a,b)+o_a-o_b+e_c+y+z)}\notag\\
&\times \frac{\Hf(\max(a,b)-o_a+o_b+o_c+y+3)\Hf(\max(a,b)+o_a-o_b+e_c+y)}
{\Hf(\max(a,b)+y+3)\Hf(\max(a,b)+y)},
\end{align}
where the region $\Psi'_{x,y,z}(m)$ is defined as in (\ref{psieq2}).
%
\end{thm}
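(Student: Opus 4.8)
\textbf{Proof proposal for Theorem~\ref{off32thm11}.}
The plan is to treat the $K^{(3)}$-type region by the same scheme used for the $K^{(1)}$- and $K^{(2)}$-type regions (Theorems~\ref{off32thm9} and \ref{off32thm10}): reduce it to Kuo's graphical condensation, with the base case furnished by the $R^{\swarrow}$-type enumeration of \cite{HoleDent}. First I would pass from lozenge tilings to perfect matchings, so that $\M(K^{(3)}_{x,y,z}(\textbf{a};\textbf{c};\textbf{b}))$ becomes the number of perfect matchings of the planar dual graph $G$ of the region. Next I would mark the four distinguished unit triangular cells $u,v,w,s$ (the shaded cells appearing, but ignored, in the figures of Section~\ref{Statement1}, and in particular in Figure~\ref{fig:offK}), placing two of them on the two upper sides and two on the two lower sides of the base hexagon, at the level of the three ferns, in such a way that the six regions produced by removing the four admissible subsets of $\{u,v,w,s\}$ (plus the whole region and the region with all four removed) are again of a type already handled: some are $K^{(3)}$-type regions with $y$, $z$, or an entry of one of the ferns decremented; some collapse onto $K^{(1)}$- or $K^{(2)}$-type regions; and the degenerate ones split, along a forced band of lozenges, into products of dented semihexagons, whose tiling numbers are given by the Cohn--Larsen--Propp formula \eqref{semieq}.

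Applying the appropriate version of Kuo condensation from \cite{Kuo} (quoted in Section~3) to this configuration yields a three-term bilinear recurrence for $\M(K^{(3)}_{x,y,z}(\textbf{a};\textbf{c};\textbf{b}))$. I would then argue by induction, most naturally on $z$, with a secondary induction on $y$ and on $a+b+c$. The base cases are those in which $z$ is minimal or the central fern $\textbf{c}$ is empty: in these situations the region either degenerates so that \eqref{semieq} applies directly after the forced lozenges are removed, or it reduces (via the identification in Remark~\ref{rmkK} of the ``root at the center'' specialization with $R^{\swarrow}_{x,y,z}(\textbf{a};\textbf{c};\textbf{b})$) to Theorem~2.5 of \cite{HoleDent}; in each case one checks that the right-hand side of \eqref{off32eq11} specializes correctly. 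For the inductive step it remains to verify that the product on the right-hand side of \eqref{off32eq11} — namely $\Psi'_{z,x,2y+z+2\max(a,b)+3}(c)$, the two dented-semihexagon factors $s(\cdots)$, and the displayed ratio of hyperfactorials — satisfies the same bilinear recurrence. After cancelling the hyperfactorial factors common to all terms and substituting the closed form \eqref{semieq} for the $s$-factors, this reduces to a rational-function identity whose only genuinely non-trivial content is a polynomial identity among shifted copies of $P_2(x,y,z,m)$ from Conjecture~\ref{con2} and of the entries appearing in \eqref{psieq2}.

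I expect the main obstacle to be two-fold. First, the geometry: one must place $u,v,w,s$ so that \emph{every} region on both sides of the Kuo recurrence is of a previously enumerated type. This is delicate because deleting a boundary cell adjacent to the holes effectively shifts the root of the middle fern by a unit, which can change the combinatorial type of the region; one has to check that each shift lands on a $K^{(1)}$-, $K^{(2)}$-, or smaller $K^{(3)}$-region and never on an un-enumerated configuration, and that the $y$-domain restrictions ($y\ge\max(b-a,-3)$ and its shifted versions) remain satisfied throughout the induction. Second, the algebra: because $\Psi'$ carries the \emph{quadratic} polynomial $P_2$ rather than a linear factor, the recurrence becomes a quadratic identity, and verifying it requires keeping careful track of the floors and ceilings in \eqref{psieq2} and of the two parity branches of $P_2$, so that the identity holds uniformly across the parity cases of $x,z$ allowed in the hypothesis.
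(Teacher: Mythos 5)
Your overall strategy (Kuo condensation plus induction, with Cohn--Larsen--Propp handling the degenerate cases) is indeed the method of the paper, but two of your structural choices would break down. First, the regions produced by applying Theorem \ref{kuothm1} to a $K^{(3)}$-type region are \emph{not}, after removing forced lozenges, regions of the $K$-family or products of dented semihexagons: with the placement of $u,v,w,s$ used for the $K$-regions (Figure \ref{fig:offK}), the recurrence one obtains (see \eqref{offcenterrecurK3a}--\eqref{offcenterrecurK3b}) is, for $a\leq b$,
\[
\M(K^{(3)}_{x,y,z}(\textbf{a};\textbf{c};\textbf{b}))\,\M(E^{(6)}_{x-1,y-1,z}(\textbf{a};\textbf{c};\textbf{b}^{+1}))
=\M(E^{(6)}_{x,y,z-1}(\textbf{a};\textbf{c};\textbf{b}^{+1}))\,\M(K^{(3)}_{x-1,y-1,z+1}(\textbf{a};\textbf{c};\textbf{b}))
+\M(F^{(2)}_{x,y,z}(\textbf{a};\textbf{c};\textbf{b}))\,\M(G^{(4)}_{x-1,y-1,z}(\textbf{b}^{+1};\overline{\textbf{c}};\textbf{a})),
\]
so the recurrence inevitably mixes the $E^{(6)}$-, $F^{(2)}$-, $G^{(4)}$- and $K^{(3)}$-families; deleting the marked cells shifts the root of the middle fern and changes the family, but it never lands on $K^{(1)}$ or $K^{(2)}$, nor on a mere semihexagon product. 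Consequently Theorem \ref{off32thm11} cannot be proved by an induction confined to the $K$-family as you describe; one must run a single induction proving all thirty off-central formulas (together with the eight central theorems of \cite{HoleDent}) simultaneously. Second, your proposed primary induction on $z$ fails even granting the cross-family setup, because one of the regions in the recurrence is $K^{(3)}_{x-1,y-1,z+1}$, in which $z$ \emph{increases} while $a+b+c$ is unchanged. The paper's induction parameter is $h=p+x+z$, with $p$ the quasi-perimeter (Lemma \ref{claimp}); this quantity strictly decreases for every region on the right-hand side, and some such global parameter is needed.

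Two further points. Your base cases are not the correct ones: emptiness of $\textbf{c}$ does not reduce $K^{(3)}$ to $R^{\swarrow}$ of \cite{HoleDent} (Remark \ref{rmkK} concerns the position of the root of the middle fern, not an empty fern, and the auxiliary hexagons for $K^{(1)}$ and $K^{(3)}$ differ by two units), whereas the genuine degenerate cases are $x=0$ and $z=0$, where the region splits along the fern line into two dented semihexagons and \eqref{semieq} applies via Lemma \ref{RS}. Also, you flag but do not resolve the boundary of the $y$-domain: when $y=\max(b-a,-3)$ the recurrence calls for regions with $y-1$ outside their admissible range, and the argument needs the separate reductions of Lemmas \ref{lem3} and \ref{lem4}, which replace such a region by one of another family with strictly smaller $h$ before the induction can proceed. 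Your final algebraic step --- checking that the claimed product formula satisfies the bilinear recurrence, which after cancellation amounts to an identity involving $P_2$ and the hyperfactorial factors of \eqref{psieq2} --- is consistent with what the paper's inductive step actually does.
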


\begin{thm}\label{off32thm12}
Assume that $\textbf{a}=(a_1,a_2,\dotsc,a_m)$, $\textbf{b}=(b_1,b_2,\dotsc,b_n)$, $\textbf{c}=(c_1,c_2,\dotsc,c_k)$ are three sequences  of nonnegative integers ($m,n,k$ are all even) and that $x,y,z$ are three integers, such that $x\geq 0$, $y\geq \max(b-a,-1)$, $z\geq 0$, and $x$ has parity opposite to  $z$. Then
%
\begin{align}\label{off32eq12}
\M&(K^{(4)}_{x,y,z}(\textbf{a};\textbf{c};\textbf{b}))=\Lambda_{2y+z+2\max(a,b)+1,x,z}(c)\notag\\
&\times s\left(y+b-\min(a,b)+1,a_1,\dotsc, a_{m},\left\lfloor\frac{x+z}{2}\right\rfloor-1,c_1,\dotsc,c_{k}+\left\lceil\frac{x+z}{2}\right\rceil+b_n+1,b_{n-1},\dotsc,b_1\right)\notag\\
&\times s\left(a_1,\dotsc, a_{m-1},a_{m}+\left\lfloor\frac{x+z}{2}\right\rfloor+c_1-1,\dotsc,c_{k},\left\lceil\frac{x+z}{2}\right\rceil+1,b_n,\dotsc,b_1,y+a-\min(a,b)\right)\notag\\
&\times\frac{\Hf(c+\left\lfloor\frac{x+z}{2}\right\rfloor-1)}{\Hf(c)\Hf(\left\lfloor\frac{x+z}{2}\right\rfloor-1)}\frac{\Hf(\max(a,b)+y+\left\lfloor\frac{x+z}{2}\right\rfloor)}{\Hf(\max(a,b)+c+y+\left\lfloor\frac{x+z}{2}\right\rfloor)}\notag\\
&\times \frac{\Hf(\max(a,b)+y+z)\Hf(\max(a,b)+c+y+z+1)}{\Hf(\max(a,b)-o_a+o_b+o_c+y+z+1)\Hf(\max(a,b)+o_a-o_b+e_c+y+z)}\notag\\
&\times \frac{\Hf(\max(a,b)-o_a+o_b+o_c+y+1)\Hf(\max(a,b)+o_a-o_b+e_c+y)}
{\Hf(\max(a,b)+y+1)\Hf(\max(a,b)+y)},
\end{align}
where the region $\Lambda_{x,y,z}(m)$ is defined as in (\ref{lambdaeq1}).
%
\end{thm}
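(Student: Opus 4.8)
The plan is to prove \eqref{off32eq12} by induction, using Kuo's graphical condensation (quoted in Section~3) to generate the governing recurrence, exactly as in the proofs of Theorems~\ref{off1thm1}--\ref{off32thm11}. First I pass to the planar dual graph $G$ of $K^{(4)}_{x,y,z}(\textbf{a};\textbf{c};\textbf{b})$, so that lozenge tilings of the region correspond to perfect matchings of $G$; after removing the lozenges that are forced near the three deleted ferns I mark the four boundary cells labelled $u,v,w,s$ in Figure~\ref{fig:offK}(d). Once one checks that these occur in the cyclic order and carry the bipartite colours demanded by the appropriate version of Kuo's theorem, that result produces an identity of the form
\[
\M(G)\,\M(G-\{u,v,w,s\})=\M(G-\{u,v\})\,\M(G-\{w,s\})+\M(G-\{v,w\})\,\M(G-\{u,s\}),
\]
in which each factor on the right is again the dual graph of a region obtained from $K^{(4)}_{x,y,z}(\textbf{a};\textbf{c};\textbf{b})$ by deleting a few more forced lozenges.

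The heart of the argument is to identify these six auxiliary regions. I expect that deleting the marked cells either decreases $y$ or strips a triangle off an end of one of the three ferns, so that $G-\{u,v,w,s\}$ and two of the remaining terms are again $K$-type regions with strictly fewer triangles in the ferns, to which the induction hypothesis applies (the induction being run simultaneously over the four families $K^{(1)},\dots,K^{(4)}$); the other two terms are terminal, reducing --- once the middle fern is exhausted --- to a region from \cite{HoleDent} (cf.\ Remark~\ref{rmkK}), or --- once a side fern becomes forced --- to a dented semihexagon enumerated by the Cohn--Larsen--Propp formula \eqref{semieq}. The delicate point here is to place $u,v,w,s$ so that the parameter shifts produced on the two sides of the condensation identity are compatible: the reduced regions must reproduce exactly the arguments of $\Lambda_{2y+z+2\max(a,b)+1,x,z}(c)$ and of the two $s(\cdots)$ factors in \eqref{off32eq12}, including the way $\left\lfloor\frac{x+z}{2}\right\rfloor$, $\left\lceil\frac{x+z}{2}\right\rceil$ and the offsets $+1,+3$ migrate as the cells are removed.

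For the base of the induction I would treat the cases in which the middle fern is empty ($c=0$), in which $a$ or $b$ vanishes, and in which $y$ attains its minimal admissible value $\max(b-a,-1)$; in each of these the region either pinches along a line of forced lozenges into pieces covered by \eqref{semieq} and by the prequel \cite{HoleDent}, or reduces after one more condensation to a region already enumerated earlier in this section. Granting the base cases, the proof is completed by checking that the right-hand side of \eqref{off32eq12} satisfies the same three-term recurrence as $\M(K^{(4)}_{x,y,z}(\textbf{a};\textbf{c};\textbf{b}))$, and this is the genuine obstacle. After cancelling the hyperfactorial blocks common to all three products and using $\Hf(n)=\Gamma(n)\,\Hf(n-1)$ together with the elementary behaviour of $\Phi,\Psi,\Theta,\Lambda$ and of $s(\cdots)$ under unit shifts of their arguments, the identity collapses to a rational identity in $x,z,c,\max(a,b),o_a,o_b,o_c$ supplemented by a short polynomial identity among the $Q_2$-type prefactors hidden inside $\Lambda$, which I would confirm by direct expansion.
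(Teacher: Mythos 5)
Your overall plan --- apply Kuo condensation to the dual graph of $K^{(4)}_{x,y,z}(\textbf{a};\textbf{c};\textbf{b})$ with the four cells $u,v,w,s$ of Figure \ref{fig:offK}(d), run an induction, and close it by checking that the conjectured product satisfies the resulting three-term recurrence --- is indeed the strategy of the paper. But the reduction structure you predict is not what the condensation actually produces, and this is where the argument as sketched would fail. Removing $u,v,w,s$ (together with the forced lozenges) from a $K^{(4)}$-region does not "strip triangles off the ferns" and does not yield further $K$-type regions plus terminal pieces: as in the paper's recurrences (\ref{offcenterrecurK4a})--(\ref{offcenterrecurK4b}), the six graphs in the identity are a $K^{(4)}$-region with $(x,y,z)$ shifted, $E^{(1)}$-regions, an $F^{(3)}$-region and a $G^{(1)}$-region, and in several of them the right fern is \emph{enlarged} via $\textbf{b}^{+1}$ (the last triangle grows by one) rather than shortened. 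Consequently an induction run "simultaneously over the four families $K^{(1)},\dots,K^{(4)}$" with the fern content as the induction variable cannot close: the condensation inescapably couples the $K$-families to the $E$-, $F$-, $G$-families (and, for those, further to the barred families and to the central $R$-/$Q$-regions of \cite{HoleDent}). This is exactly why the paper proves all thirty formulas at once, by induction on $h=p+x+z$ where $p$ is the quasi-perimeter, and why it needs the bookkeeping lemmas (Lemma \ref{lem3} to remove $0$-triangles, Lemma \ref{lem4} to trade a minimal-$y$ region for a region of another family with strictly smaller $h$) to guarantee the statistic actually decreases.

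Your proposed base cases are also not the right ones. The case $c=0$, or $a=b=0$ with the middle fern a single triangle, is not degenerate and does not pinch into dented semihexagons: it is precisely the off-central cored hexagon of Ciucu--Eisenk\"olbl--Krattenthaler--Zare, i.e.\ the hardest special case of the whole theorem (this is how Conjectures \ref{con1} and \ref{con2} are recovered), so it cannot serve as an input to the induction. The genuine base cases are $x=0$ or $z=0$ (where the region splits along the fern line into two dented semihexagons, handled by the Region-splitting Lemma \ref{RS} and the Cohn--Larsen--Propp formula (\ref{semieq})) together with small quasi-perimeter $p<6$, which Lemma \ref{claimp} reduces to the previous two; the minimal value of $y$ is treated not as a base case but through the exchange of Lemma \ref{lem4}. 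With the correct list of auxiliary regions, the correct induction statistic, and these base cases, the final verification you describe --- that the explicit right-hand sides satisfy the same recurrence --- is indeed how the proof is completed.
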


\subsection{The $\overline{E}^{(i)}$-type regions}

In the previous four families of off-central regions, the horizontal line $l$, that contains the three removed ferns, separates the east and the west vertices of base hexagon $H$. We next consider the other situation when the line $l$ leaves the west and the east vertices of the base hexagon on the same side. Without loss of generality, we assume that these vertices are both above the line $l$.

Similar to the cases of the previous four off-central families we define our next region as follows. We (1) start with a certain auxiliary hexagon $H_0$ of certain side-lengths, (2)  perform the an edge-pushing procedure on the sides of $H_0$ to get the base hexagon $H$, and (3) remove three ferns from the resulting base hexagon at the same level, such that the leftmost point of the middle fern is at one of the off-central positions as shown in Figure \ref{fig:offposition} and that the left and the right ferns touch the \emph{northwest} and the \emph{northeast} sides of the base hexagon. The only major difference is that, in the step (2), we will perform a different pushing procedure. In particular we now push respectively the north, northeast, southeast, south, southwest, northwest sides of  $H_0$ a distance of $o_a+o_b+o_c,b+c, b+c+y+\max(0,a-b), e_a+e_b+e_c+2y+|a-b|,a+y+\max(0,b-a),a$ units. This procedure is illustrated in Figure \ref{pushing2}.

\begin{figure}
  \centering
  \setlength{\unitlength}{3947sp}%
\begingroup\makeatletter\ifx\SetFigFont\undefined%
\gdef\SetFigFont#1#2#3#4#5{%
  \reset@font\fontsize{#1}{#2pt}%
  \fontfamily{#3}\fontseries{#4}\fontshape{#5}%
  \selectfont}%
\fi\endgroup%
\resizebox{15cm}{!}{
\begin{picture}(0,0)%
\includegraphics{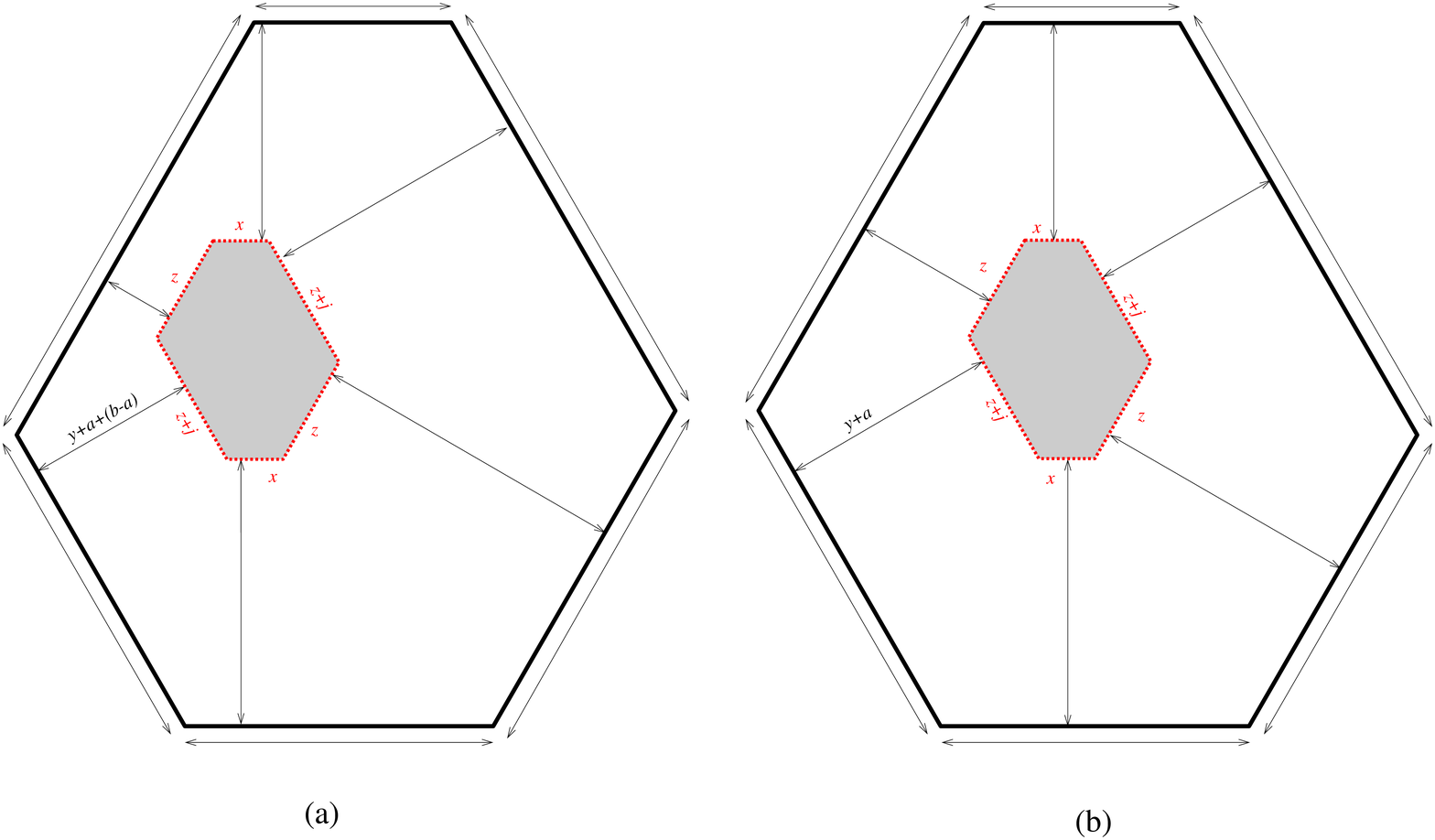}%
\end{picture}%
%
%

\begin{picture}(20891,12584)(4080,-20574)
\put(19695,-11327){\rotatebox{90.0}{\makebox(0,0)[lb]{\smash{{\SetFigFont{14}{16.8}{\rmdefault}{\mddefault}{\itdefault}{\color[rgb]{0,0,0}$o_a+o_b+o_c$}%
}}}}}
\put(23164,-18473){\rotatebox{60.0}{\makebox(0,0)[lb]{\smash{{\SetFigFont{14}{16.8}{\rmdefault}{\mddefault}{\itdefault}{\color[rgb]{0,0,0}$y+z+e_a+e_b+e_c$}%
}}}}}
\put(22662,-10085){\rotatebox{300.0}{\makebox(0,0)[lb]{\smash{{\SetFigFont{14}{16.8}{\rmdefault}{\mddefault}{\itdefault}{\color[rgb]{0,0,0}$y+z+o_a+o_b+o_c+j$}%
}}}}}
\put(7861,-18211){\rotatebox{90.0}{\makebox(0,0)[lb]{\smash{{\SetFigFont{14}{16.8}{\rmdefault}{\mddefault}{\itdefault}{\color[rgb]{0,0,0}$2y+e_a+e_b+e_c+(b-a)$}%
}}}}}
\put(10167,-14389){\rotatebox{330.0}{\makebox(0,0)[lb]{\smash{{\SetFigFont{14}{16.8}{\rmdefault}{\mddefault}{\itdefault}{\color[rgb]{0,0,0}$y+b+c$}%
}}}}}
\put(9322,-11111){\rotatebox{30.0}{\makebox(0,0)[lb]{\smash{{\SetFigFont{14}{16.8}{\rmdefault}{\mddefault}{\itdefault}{\color[rgb]{0,0,0}$b+c$}%
}}}}}
\put(8181,-11211){\rotatebox{90.0}{\makebox(0,0)[lb]{\smash{{\SetFigFont{14}{16.8}{\rmdefault}{\mddefault}{\itdefault}{\color[rgb]{0,0,0}$o_a+o_b+o_c$}%
}}}}}
\put(5628,-11535){\rotatebox{60.0}{\makebox(0,0)[lb]{\smash{{\SetFigFont{14}{16.8}{\rmdefault}{\mddefault}{\itdefault}{\color[rgb]{0,0,0}$y+z+o_a+o_b+o_c+(b-a)$}%
}}}}}
\put(11839,-9776){\rotatebox{300.0}{\makebox(0,0)[lb]{\smash{{\SetFigFont{14}{16.8}{\rmdefault}{\mddefault}{\itdefault}{\color[rgb]{0,0,0}$y+z+o_a+o_b+o_c+j$}%
}}}}}
\put(8362,-8271){\makebox(0,0)[lb]{\smash{{\SetFigFont{14}{16.8}{\rmdefault}{\mddefault}{\itdefault}{\color[rgb]{0,0,0}$x+e_a+e_b+e_c$}%
}}}}
\put(12291,-18321){\rotatebox{60.0}{\makebox(0,0)[lb]{\smash{{\SetFigFont{14}{16.8}{\rmdefault}{\mddefault}{\itdefault}{\color[rgb]{0,0,0}$y+z+e_a+e_b+e_c+(b-a)$}%
}}}}}
\put(4195,-15832){\rotatebox{300.0}{\makebox(0,0)[lb]{\smash{{\SetFigFont{14}{16.8}{\rmdefault}{\mddefault}{\itdefault}{\color[rgb]{0,0,0}$y+z+e_a+e_b+e_c+j$}%
}}}}}
\put(7945,-19492){\makebox(0,0)[lb]{\smash{{\SetFigFont{14}{16.8}{\rmdefault}{\mddefault}{\itdefault}{\color[rgb]{0,0,0}$x+o_a+o_b+o_c$}%
}}}}
\put(5934,-12541){\makebox(0,0)[lb]{\smash{{\SetFigFont{14}{16.8}{\rmdefault}{\mddefault}{\itdefault}{\color[rgb]{0,0,0}$a$}%
}}}}
\put(16264,-11544){\rotatebox{60.0}{\makebox(0,0)[lb]{\smash{{\SetFigFont{14}{16.8}{\rmdefault}{\mddefault}{\itdefault}{\color[rgb]{0,0,0}$z+o_a+o_b+o_c$}%
}}}}}
\put(18998,-8280){\makebox(0,0)[lb]{\smash{{\SetFigFont{14}{16.8}{\rmdefault}{\mddefault}{\itdefault}{\color[rgb]{0,0,0}$x+e_a+e_b+e_c$}%
}}}}
\put(18581,-19501){\makebox(0,0)[lb]{\smash{{\SetFigFont{14}{16.8}{\rmdefault}{\mddefault}{\itdefault}{\color[rgb]{0,0,0}$x+o_a+o_b+o_c$}%
}}}}
\put(19915,-18443){\rotatebox{90.0}{\makebox(0,0)[lb]{\smash{{\SetFigFont{14}{16.8}{\rmdefault}{\mddefault}{\itdefault}{\color[rgb]{0,0,0}$2y+e_a+e_b+e_c+(a-b)$}%
}}}}}
\put(21583,-15213){\rotatebox{330.0}{\makebox(0,0)[lb]{\smash{{\SetFigFont{14}{16.8}{\rmdefault}{\mddefault}{\itdefault}{\color[rgb]{0,0,0}$y+b+c+(a-b)$}%
}}}}}
\put(20984,-11695){\rotatebox{30.0}{\makebox(0,0)[lb]{\smash{{\SetFigFont{14}{16.8}{\rmdefault}{\mddefault}{\itdefault}{\color[rgb]{0,0,0}$b+c$}%
}}}}}
\put(17826,-12046){\makebox(0,0)[lb]{\smash{{\SetFigFont{14}{16.8}{\rmdefault}{\mddefault}{\itdefault}{\color[rgb]{0,0,0}$a$}%
}}}}
\put(15003,-15363){\rotatebox{300.0}{\makebox(0,0)[lb]{\smash{{\SetFigFont{14}{16.8}{\rmdefault}{\mddefault}{\itdefault}{\color[rgb]{0,0,0}$y+z+e_a+e_b+e_c+(a-b)+j$}%
}}}}}
\end{picture}}
\caption{The edge-pushing procedure used in the definitions of the $\overline{E}^{(i)}$-, $\overline{F}^{(i)}$-, $\overline{G}^{(i)}$-, $\overline{K}^{(i)}$-type regions: (a) the case $a\leq b$, (b) the case $a\geq b$.}\label{pushing2}
\end{figure}

 Let $x,z$ be two nonnegative integers of the same parity, and let $y$ be an integer that may be negative as indicated particularly in the statements of the theorems below. For $i=1$ or $4$, we perform the above pushing procedure on the auxiliary hexagon $H_0$ of side-lengths $x,z,z,x,z,z$.  This way we get the base hexagon $H$ of side-lengths $x+e_a+e_b+e_c, y+z+o_a+o_b+o_c+\max(a-b,0),y+z+e_a+e_b+e_c+\max(b-a,0),x+o_a+o_b+o_c,y+z+e_a+e_b+e_c+\max(a-b,0),y+z+o_a+o_b+o_c+\max(b-a,0)$. Denote by $\overline{E}^{(i)}$, the region corresponding to the choice of the middle fern whose leftmost is at the off-central position $i$ in Figure \ref{fig:offposition}(a), for $i=1,4$.  We do the same in the case $i=2,3$, the only difference is that we start with an auxiliary hexagon of side-lengths $x,z+2,z,x,z+2,z$ (as opposed to the one of side-lengths $x,z,z,x,z,z$). See Figure \ref{fig:offE2} for examples.

We note that in this family (and in the next three families) we will only allow the root of the middle fern to be above or on the same level as the center of the auxiliary hexagon $H_0$. In particularly, our region here is not defined for the case $i=5,6$.

 Again, by symmetries, it is enough for us to enumerate only three of the four new regions, namely $\overline{E}^{(1)}$-, $\overline{E}^{(2)}$-,$\overline{E}^{(3)}$-type regions, as any $\overline{E}^{(4)}$-type region is simply a reflection of some $\overline{E}^{(1)}$-type one over a vertical line.

 \begin{rmk}\label{rmkQE}
In the definition of the $\overline{E}^{(1)}$-type regions, if we remove the three ferns such that the root of the middle one is at the center of the auxiliary hexagon $H_0$, then we obtain the region $Q^{\odot}_{x,y,z}(\textbf{a};\textbf{c};\textbf{b})$ in Theorem 2.5 of \cite{HoleDent}. In some sense, the $\overline{E}^{(i)}$-type regions can be viewed as counterparts of the $Q^{\odot}$-type regions.
\end{rmk}

\begin{figure}\centering
\setlength{\unitlength}{3947sp}%
\begingroup\makeatletter\ifx\SetFigFont\undefined%
\gdef\SetFigFont#1#2#3#4#5{%
  \reset@font\fontsize{#1}{#2pt}%
  \fontfamily{#3}\fontseries{#4}\fontshape{#5}%
  \selectfont}%
\fi\endgroup%
\resizebox{15cm}{!}{
\begin{picture}(0,0)%
\includegraphics{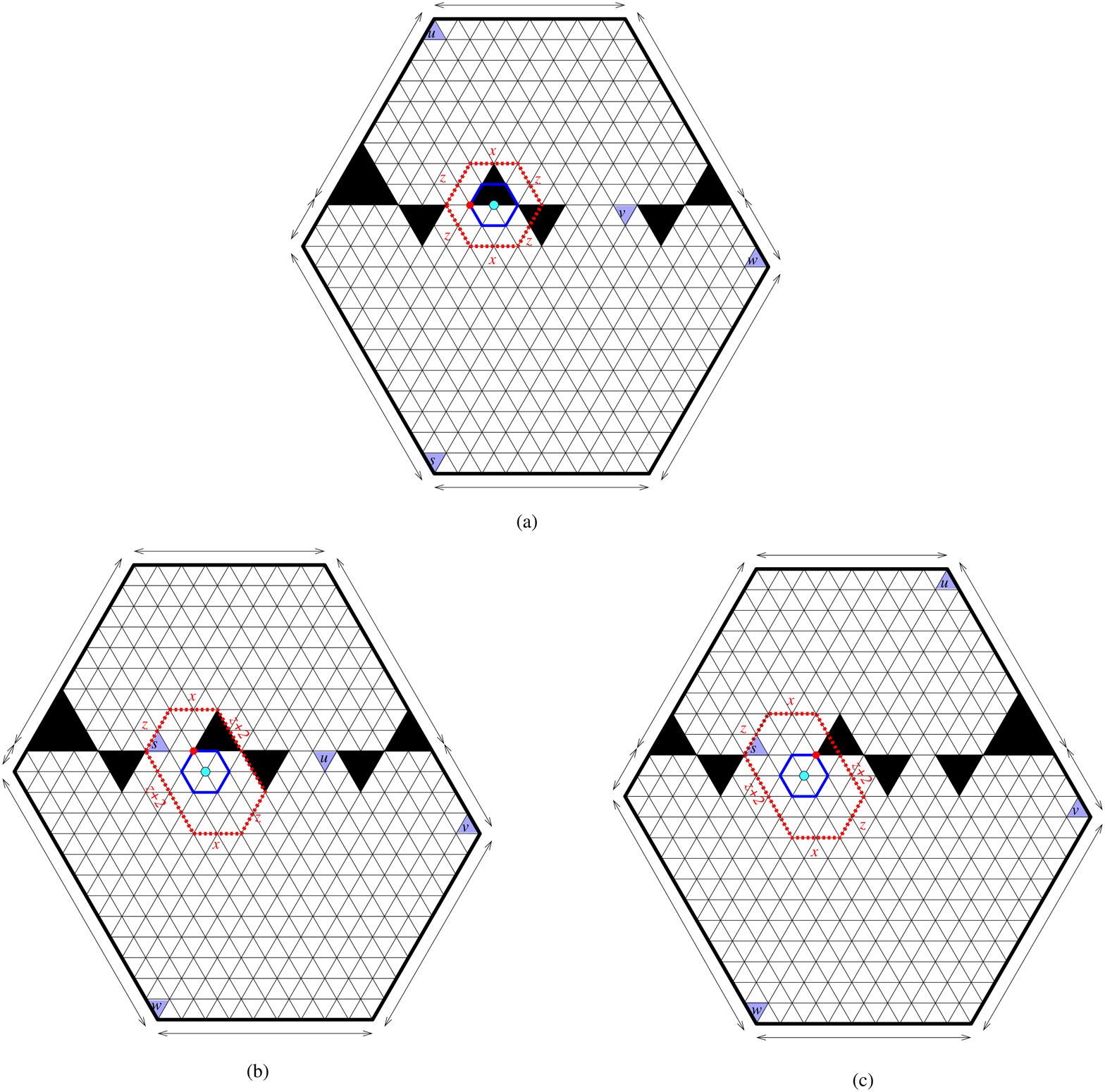}%
\end{picture}%

\begin{picture}(19730,18980)(758,-18838)
\put(11641,-14793){\rotatebox{300.0}{\makebox(0,0)[lb]{\smash{{\SetFigFont{14}{16.8}{\rmdefault}{\mddefault}{\itdefault}{$y+z+e_a+e_b+e_c+2$}%
}}}}}
\put(19644,-13316){\rotatebox{300.0}{\makebox(0,0)[lb]{\smash{{\SetFigFont{14}{16.8}{\rmdefault}{\mddefault}{\itdefault}{$y+2$}%
}}}}}
\put(11401,-13426){\rotatebox{60.0}{\makebox(0,0)[lb]{\smash{{\SetFigFont{14}{16.8}{\rmdefault}{\mddefault}{\itdefault}{$y+b-a$}%
}}}}}
\put(18093,-10656){\rotatebox{300.0}{\makebox(0,0)[lb]{\smash{{\SetFigFont{14}{16.8}{\rmdefault}{\mddefault}{\itdefault}{$z+o_a+o_b+o_c$}%
}}}}}
\put(12250,-12105){\rotatebox{60.0}{\makebox(0,0)[lb]{\smash{{\SetFigFont{14}{16.8}{\rmdefault}{\mddefault}{\itdefault}{$z+o_a+o_b+o_c$}%
}}}}}
\put(1593,-12051){\rotatebox{60.0}{\makebox(0,0)[lb]{\smash{{\SetFigFont{14}{16.8}{\rmdefault}{\mddefault}{\itdefault}{$z+o_a+o_b+o_c$}%
}}}}}
\put(7511,-10654){\rotatebox{300.0}{\makebox(0,0)[lb]{\smash{{\SetFigFont{14}{16.8}{\rmdefault}{\mddefault}{\itdefault}{$z+o_a+o_b+o_c$}%
}}}}}
\put(8992,-13144){\rotatebox{300.0}{\makebox(0,0)[lb]{\smash{{\SetFigFont{14}{16.8}{\rmdefault}{\mddefault}{\itdefault}{$y+a-b+2$}%
}}}}}
\put(963,-13019){\rotatebox{60.0}{\makebox(0,0)[lb]{\smash{{\SetFigFont{14}{16.8}{\rmdefault}{\mddefault}{\itdefault}{$y$}%
}}}}}
\put(1345,-14747){\rotatebox{300.0}{\makebox(0,0)[lb]{\smash{{\SetFigFont{14}{16.8}{\rmdefault}{\mddefault}{\itdefault}{$y+z+e_a+e_b+e_c+a-b+2$}%
}}}}}
\put(18687,-16856){\rotatebox{60.0}{\makebox(0,0)[lb]{\smash{{\SetFigFont{14}{16.8}{\rmdefault}{\mddefault}{\itdefault}{$y+z+e_a+e_b+e_c+b-a$}%
}}}}}
\put(8255,-17049){\rotatebox{60.0}{\makebox(0,0)[lb]{\smash{{\SetFigFont{14}{16.8}{\rmdefault}{\mddefault}{\itdefault}{$y+z+e_a+e_b+e_c$}%
}}}}}
\put(14697,-9573){\makebox(0,0)[lb]{\smash{{\SetFigFont{14}{16.8}{\rmdefault}{\mddefault}{\itdefault}{$x+e_a+e_b+e_c$}%
}}}}
\put(4062,-9481){\makebox(0,0)[lb]{\smash{{\SetFigFont{14}{16.8}{\rmdefault}{\mddefault}{\itdefault}{$x+e_a+e_b+e_c$}%
}}}}
\put(15001,-18183){\makebox(0,0)[lb]{\smash{{\SetFigFont{14}{16.8}{\rmdefault}{\mddefault}{\itdefault}{$x+o_a+o_b+o_c$}%
}}}}
\put(4689,-18128){\makebox(0,0)[lb]{\smash{{\SetFigFont{14}{16.8}{\rmdefault}{\mddefault}{\itdefault}{$x+o_a+o_b+o_c$}%
}}}}
\put(13014,-7595){\rotatebox{60.0}{\makebox(0,0)[lb]{\smash{{\SetFigFont{14}{16.8}{\rmdefault}{\mddefault}{\itdefault}{$y+z+e_a+e_b+e_c$}%
}}}}}
\put(9354,-8750){\makebox(0,0)[lb]{\smash{{\SetFigFont{14}{16.8}{\rmdefault}{\mddefault}{\itdefault}{$x+o_a+o_b+o_c$}%
}}}}
\put(6197,-5277){\rotatebox{300.0}{\makebox(0,0)[lb]{\smash{{\SetFigFont{14}{16.8}{\rmdefault}{\mddefault}{\itdefault}{$y+z+e_a+e_b+e_c+a-b$}%
}}}}}
\put(6024,-3890){\rotatebox{60.0}{\makebox(0,0)[lb]{\smash{{\SetFigFont{14}{16.8}{\rmdefault}{\mddefault}{\itdefault}{$y$}%
}}}}}
\put(6789,-2600){\rotatebox{60.0}{\makebox(0,0)[lb]{\smash{{\SetFigFont{14}{16.8}{\rmdefault}{\mddefault}{\itdefault}{$z+o_a+o_b+o_c$}%
}}}}}
\put(14109,-3747){\rotatebox{300.0}{\makebox(0,0)[lb]{\smash{{\SetFigFont{14}{16.8}{\rmdefault}{\mddefault}{\itdefault}{$y+a-b$}%
}}}}}
\put(12542,-1085){\rotatebox{300.0}{\makebox(0,0)[lb]{\smash{{\SetFigFont{14}{16.8}{\rmdefault}{\mddefault}{\itdefault}{$z+o_a+o_b+o_c$}%
}}}}}
\put(9212,-147){\makebox(0,0)[lb]{\smash{{\SetFigFont{14}{16.8}{\rmdefault}{\mddefault}{\itdefault}{$x+e_a+e_b+e_c$}%
}}}}
\put(8132,-3912){\makebox(0,0)[lb]{\smash{{\SetFigFont{14}{16.8}{\familydefault}{\mddefault}{\updefault}{\color[rgb]{1,1,1}$a_2$}%
}}}}
\put(7119,-3342){\makebox(0,0)[lb]{\smash{{\SetFigFont{14}{16.8}{\familydefault}{\mddefault}{\updefault}{\color[rgb]{1,1,1}$a_1$}%
}}}}
\put(13007,-3492){\makebox(0,0)[lb]{\smash{{\SetFigFont{14}{16.8}{\familydefault}{\mddefault}{\updefault}{\color[rgb]{1,1,1}$b_1$}%
}}}}
\put(2985,-13269){\makebox(0,0)[lb]{\smash{{\SetFigFont{14}{16.8}{\familydefault}{\mddefault}{\updefault}{\color[rgb]{1,1,1}$a_2$}%
}}}}
\put(18316,-12836){\makebox(0,0)[lb]{\smash{{\SetFigFont{14}{16.8}{\familydefault}{\mddefault}{\updefault}{\color[rgb]{1,1,1}$b_1$}%
}}}}
\put(12390,-12954){\makebox(0,0)[lb]{\smash{{\SetFigFont{14}{16.8}{\familydefault}{\mddefault}{\updefault}{\color[rgb]{1,1,1}$a_1$}%
}}}}
\put(17353,-13373){\makebox(0,0)[lb]{\smash{{\SetFigFont{14}{16.8}{\familydefault}{\mddefault}{\updefault}{\color[rgb]{1,1,1}$b_2$}%
}}}}
\put(10232,-3979){\makebox(0,0)[lb]{\smash{{\SetFigFont{14}{16.8}{\familydefault}{\mddefault}{\updefault}{\color[rgb]{1,1,1}$c_2$}%
}}}}
\put(9347,-3514){\makebox(0,0)[lb]{\smash{{\SetFigFont{14}{16.8}{\familydefault}{\mddefault}{\updefault}{\color[rgb]{1,1,1}$c_1$}%
}}}}
\put(16161,-13410){\makebox(0,0)[lb]{\smash{{\SetFigFont{14}{16.8}{\familydefault}{\mddefault}{\updefault}{\color[rgb]{1,1,1}$c_2$}%
}}}}
\put(15269,-12898){\makebox(0,0)[lb]{\smash{{\SetFigFont{14}{16.8}{\familydefault}{\mddefault}{\updefault}{\color[rgb]{1,1,1}$c_1$}%
}}}}
\put(13243,-13339){\makebox(0,0)[lb]{\smash{{\SetFigFont{14}{16.8}{\familydefault}{\mddefault}{\updefault}{\color[rgb]{1,1,1}$a_2$}%
}}}}
\put(1927,-12826){\makebox(0,0)[lb]{\smash{{\SetFigFont{14}{16.8}{\familydefault}{\mddefault}{\updefault}{\color[rgb]{1,1,1}$a_1$}%
}}}}
\put(4601,-12885){\makebox(0,0)[lb]{\smash{{\SetFigFont{14}{16.8}{\familydefault}{\mddefault}{\updefault}{\color[rgb]{1,1,1}$c_1$}%
}}}}
\put(5493,-13332){\makebox(0,0)[lb]{\smash{{\SetFigFont{14}{16.8}{\familydefault}{\mddefault}{\updefault}{\color[rgb]{1,1,1}$c_2$}%
}}}}
\put(7094,-13303){\makebox(0,0)[lb]{\smash{{\SetFigFont{14}{16.8}{\familydefault}{\mddefault}{\updefault}{\color[rgb]{1,1,1}$b_2$}%
}}}}
\put(12242,-3942){\makebox(0,0)[lb]{\smash{{\SetFigFont{14}{16.8}{\familydefault}{\mddefault}{\updefault}{\color[rgb]{1,1,1}$b_2$}%
}}}}
\put(7859,-12853){\makebox(0,0)[lb]{\smash{{\SetFigFont{14}{16.8}{\familydefault}{\mddefault}{\updefault}{\color[rgb]{1,1,1}$b_1$}%
}}}}
\end{picture}%
}
\caption{Three $\overline{E}^{(i)}$-type regions: (a) $\overline{E}^{(1)}_{2,2,2}(3,2 ;\ 2,2;\ 2,2)$, (b) $\overline{E}^{(2)}_{2,1,2}(3,2 ;\ 2,2;\ 2,2)$, and (c) $\overline{E}^{(3)}_{2,1,2}(2,2 ;\ 2,2;\ 3,2)$. }\label{fig:offE2}
\end{figure}

\begin{thm}\label{off1thmQ1}
Assume that $\textbf{a}=(a_1,a_2,\dotsc,a_m)$, $\textbf{b}=(b_1,b_2,\dotsc,b_n)$, $\textbf{c}=(c_1,c_2,\dotsc,c_k)$ are three sequences  of nonnegative integers ($m,n,k$ are all even) and that $x,y,z$ are three nonnegative integers, such that $x$  and $z$ have the same parity. Then
\begin{align}\label{off1eqQ1}
\M&(\overline{E}^{(1)}_{x,y,z}(\textbf{a};\textbf{c};\textbf{b}))=\Phi_{x,2y+z+2\max(a,b),z}(c)\notag\\
&\times s\left(a_1,\dotsc, a_{m}+\frac{x+z}{2}-1,c_1,\dotsc,c_{k}+\frac{x+z}{2}+b_n+1,b_{n-1},\dotsc,b_1\right)\notag\\
&\times s\left(y+b-\min(a,b), a_1,\dotsc,a_{m},\frac{x+z}{2}+c_1-1,\dotsc,c_{k},\frac{x+z}{2}+1,b_n,\dotsc,b_1,y+a-\min(a,b)\right)\notag\\
&\times\frac{\Hf(c+\frac{x+z}{2}-1)}{\Hf(c)\Hf(\frac{x+z}{2}-1)}\frac{\Hf(\max(a,b)+y+\frac{x+z}{2}-1)}{\Hf(\max(a,b)+c+y+\frac{x+z}{2}-1)}\notag\\
&\times \frac{\Hf(\max(a,b)+y+z)\Hf(\max(a,b)+c+y+z)}{\Hf(o_a+o_b+o_c+z)\Hf(|a-b|+e_a+e_b+e_c+2y+z)}\notag\\
&\times \frac{\Hf(o_a+o_b+o_c)\Hf(|a-b|+e_a+e_b+e_c+2y)}{\Hf(\max(a,b)+y)^2},
\end{align}
where $\Phi_{x,y,z}(m)$ denotes the formula in (\ref{phieq}) in Conjecture \ref{con1}.
\end{thm}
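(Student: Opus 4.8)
The plan is to prove Theorem~\ref{off1thmQ1} by induction, using Kuo's graphical condensation (the two versions recalled in Section~3) applied to the planar bipartite dual graph $G$ of the region $\overline{E}^{(1)}_{x,y,z}(\mathbf{a};\mathbf{c};\mathbf{b})$, whose perfect matchings are in bijection with the lozenge tilings of the region. First I would choose the four condensation vertices to be the unit triangles labelled $u,v,w,s$ in Figure~\ref{fig:offE2} — placed, as is standard in this circle of ideas, near the points where the left and right ferns meet the northwest and northeast sides of the base hexagon and near the two ends of the middle fern. Applying the appropriate version of Kuo's condensation to $\{u,v,w,s\}$ yields a three-term identity of the shape
\[
\M(G)\,\M(G-\{u,v,w,s\})=\M(G-\{u,w\})\,\M(G-\{v,s\})+\M(G-\{u,s\})\,\M(G-\{v,w\}),
\]
in which, after the usual forced-edge simplification, each of the five regions is again a region of one of the thirty admissible types, but with the root of the middle fern moved by a unit, or the outermost triangle of a side fern absorbing/releasing a unit, or $y$ shifted by $1$.

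Second, I would check that every region on the right-hand side, together with $G-\{u,v,w,s\}$, either has strictly smaller total size $a+b+c+x+z+y$ (so the induction hypothesis applies) or has already been enumerated among the $E^{(i)}$-type theorems proved earlier in the section. Substituting the claimed product formulas into the condensation identity then reduces the theorem to a hyperfactorial identity: one must verify that the right-hand side of (\ref{off1eqQ1}), regarded as a function of the parameters, satisfies the same three-term relation. This is carried out using the behaviour of the Cohn--Larsen--Propp formula (\ref{semieq}) under a unit shift of a single argument $a_i$ — which multiplies $s$ by an explicit ratio of $\Hf$'s — together with the functional equation $\Hf(t+1)=\Gamma(t+1)\Hf(t)$ and the corresponding recurrence for $\Phi_{x,y,z}(m)$; the latter is available because, by the proof of Conjecture~\ref{con1} via Theorem~\ref{off1thm1}, $\Phi_{x,y,z}(m)$ is the tiling number of the off-central cored hexagon and so inherits a condensation recurrence of its own.

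For the base of the induction I would take the fully degenerate case $\mathbf{a}=\mathbf{b}=\mathbf{c}=\emptyset$ with $y$ minimal: here all $s$-factors and all $\Hf$-ratios in (\ref{off1eqQ1}) collapse to $1$, and the claimed identity becomes exactly Theorem~\ref{off1thm1} (equivalently Conjecture~\ref{con1}) specialized, so nothing new is needed; a handful of near-degenerate cases with a single nonzero fern triangle may still have to be verified directly by decomposing the region into a cored hexagon glued to a dented semihexagon.

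The main obstacle I expect is the first step. Because the defining feature of the $\overline{E}$-regions is that the horizontal line $l$ through the three ferns leaves both of the base hexagon's horizontal vertices on the same side, the local configuration around the ferns is different from the $E$-case, and a careless choice of $u,v,w,s$ produces one of the five condensation regions falling outside the list of thirty admissible regions, or failing to strictly decrease in size; pinning down a choice for which the recursion closes within the class is the delicate geometric point. The remaining difficulty is bookkeeping: formula (\ref{off1eqQ1}) carries roughly twenty hyperfactorial factors, two semihexagon factors, and the $\Phi$-factor, and tracking how each unit shift propagates through all of them to confirm the three-term relation is routine but lengthy, and is where most of the actual work lies.
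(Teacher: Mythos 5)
Your overall strategy (Kuo condensation on the dual graph plus induction, then a hyperfactorial verification) is indeed the paper's, but the recursion does not close in the way you describe, and this is a genuine gap rather than bookkeeping. With the vertex placement of Figure \ref{fig:offE2}(a), the five companion regions produced by condensation and forced-lozenge removal are \emph{not} $E^{(i)}$-type regions ``proved earlier in the section'': they are two $Q^{\leftarrow}$-type regions (central regions from the prequel \cite{HoleDent}, which are legitimately known), an $\overline{K}^{(5)}$-type region, an $\overline{G}^{(5)}$-type region, and an $\overline{E}^{(1)}$-type region with parameters $(x+1,y,z-1)$; see the recurrences (\ref{offcenterrecurE1qa})--(\ref{offcenterrecurE1qb}). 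Since the formulas for $\overline{K}^{(5)}$ and $\overline{G}^{(5)}$ are themselves among the thirty statements being proved, a standalone induction for Theorem \ref{off1thmQ1} cannot terminate; one must run a simultaneous induction over all thirty families (with the eight central families of \cite{HoleDent} as external input), which is exactly how the paper organizes the argument. Moreover, your proposed induction measure $a+b+c+x+y+z$ fails to decrease on the term $\overline{E}^{(1)}_{x+1,y,z-1}$ (it is unchanged, since $x$ goes up as $z$ goes down), so the induction as you set it up is not well-founded; the paper instead inducts on $h=p+x+z$, $p$ the quasi-perimeter, which drops by $2$ on that term.

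The base cases are also not the ones you propose. The induction does not bottom out at empty ferns: it bottoms out when $x=0$ or $z=0$, where the region is split along the fern line into two dented semihexagons and the formula follows from the Region-splitting Lemma \ref{RS} together with Cohn--Larsen--Propp's formula (\ref{semieq}); and, separately, when $y$ attains its minimal value the Kuo recurrences are unavailable and one needs Lemma \ref{lem4} (preceded by Lemma \ref{lem3} to remove zero-size fern triangles) to replace the region by one of the thirty types with strictly smaller $h$. Your degenerate case $\textbf{a}=\textbf{b}=\textbf{c}=\emptyset$, which reduces to Conjecture \ref{con1}, covers none of these boundary situations. Finally, a small but real slip: the displayed ``Kuo identity'' mixes the two versions. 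In Theorem \ref{kuothm1} the right-hand products involve the adjacent pairs $\{u,v\},\{w,s\}$ and $\{u,s\},\{v,w\}$, while the diagonal pairs $\{u,w\},\{v,s\}$ occur only in Theorem \ref{kuothm2} and on the other side of that identity; as written, your equation is not either statement (under the labeling of Theorem \ref{kuothm1} the graphs $G-\{u,w\}$ and $G-\{v,s\}$ are unbalanced, so that product vanishes and the identity is false in general).
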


\begin{thm}\label{off1thmQ2}
Assume that $\textbf{a}=(a_1,a_2,\dotsc,a_m)$, $\textbf{b}=(b_1,b_2,\dotsc,b_n)$, $\textbf{c}=(c_1,c_2,\dotsc,c_k)$ are three sequences  of nonnegative integers ($m,n,k$ are all even) and that $x,y,z$ are three  integers, such that $x\geq 0$, $y\geq \max(a-b,-2)$, $z\geq0$, and $x$ and $z$ have the same parity.  Then
\begin{align}\label{off1eqQ2}
\M&(\overline{E}^{(2)}_{x,y,z}(\textbf{a};\textbf{c};\textbf{b}))=\Phi_{2y+z+2\max(a,b)+2,x,z}(c)\notag\\
&\times s\left(a_1,\dotsc, a_{m}+\frac{x+z}{2},c_1,\dotsc,c_{k}+\frac{x+z}{2}+b_n,b_{n-1},\dotsc,b_1\right)\notag\\
&\times s\left(y+b-\min(a,b), a_1,\dotsc,a_{m},\frac{x+z}{2}+c_1,\dotsc,c_{k},\frac{x+z}{2},b_n,\dotsc,b_1,y+a-\min(a,b)+2\right)\notag\\
&\times\frac{\Hf(c+\frac{x+z}{2})}{\Hf(c)\Hf(\frac{x+z}{2})}\frac{\Hf(\max(a,b)+y+\frac{x+z}{2})}{\Hf(\max(a,b)+c+y+\frac{x+z}{2})}\notag\\
&\times \frac{\Hf(\max(a,b)+y+z+2)\Hf(\max(a,b)+c+y+z)}{\Hf(o_a+o_b+o_c+z)\Hf(|a-b|+e_a+e_b+e_c+2y+z+2)}\notag\\
&\times \frac{\Hf(o_a+o_b+o_c)\Hf(|a-b|+e_a+e_b+e_c+2y+2)}{\Hf(\max(a,b)+y)\Hf(\max(a,b)+y+2)}.
\end{align}
\end{thm}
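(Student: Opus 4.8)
The plan is to establish \eqref{off1eqQ2} together with the companion enumeration theorems of this section by a single induction on the total size $a+b+c$ of the three ferns, with the two versions of Kuo's graphical condensation (quoted in Section \ref{sec:proofoff}) serving as the inductive engine. First I would pass from lozenge tilings of $\overline{E}^{(2)}_{x,y,z}(\mathbf{a};\mathbf{c};\mathbf{b})$ to perfect matchings of its planar bipartite dual graph $G$, so that $\M(\overline{E}^{(2)}_{x,y,z}(\mathbf{a};\mathbf{c};\mathbf{b}))$ is the number of perfect matchings of $G$. Because this is a $Q$-type configuration (the line $l$ carrying the three ferns leaves the east and west vertices of the base hexagon on the same side), the condensation must be set up on the side used for the $\overline{E}$-, $\overline{F}$-, $\overline{G}$-, $\overline{K}$-families.

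For the inductive step I would apply the appropriate version of Kuo condensation to four boundary vertices $u,v,w,s$ attached to the unit triangles marked in the figures for the $\overline{E}^{(i)}$-regions. Kuo's identity then reads, for a suitable cyclic labelling and sign,
\[
\M(G)\,\M(G\setminus\{u,v,w,s\}) = \M(G\setminus\{u,v\})\,\M(G\setminus\{w,s\}) \pm \M(G\setminus\{u,s\})\,\M(G\setminus\{v,w\}),
\]
and the crucial geometric point is that each of the five graphs occurring is again the dual graph of one of the regions enumerated in Section \ref{Statement1} --- an $\overline{E}^{(2)}$ (or $\overline{E}^{(1)}$) region with one fern shortened by a single triangle, or with $x,y,z$ perturbed by a bounded integer --- so the inductive hypothesis applies to four of the five terms and determines the fifth. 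Substituting the product formulas, with the dented-semihexagon factors $s(\cdot)$ expanded by the Cohn--Larsen--Propp formula \eqref{semieq}, turns the recurrence into an identity among products of $\Hf$-values, which one verifies by repeated use of $\Hf(n+1)/\Hf(n)=\Gamma(n+1)$ and its half-integer analogue.

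The base of the induction consists of the degenerate cases. When $\mathbf{c}=\emptyset$ there is no central hole, and after the standard forced-lozenge reductions along $l$ the region separates into two dented semihexagons whose tiling numbers are read off from \eqref{semieq}; if in addition $\mathbf{a}=\mathbf{b}=\emptyset$ the region is one of the off-central cored hexagons of Conjecture \ref{con1}, so $\M$ equals $\Phi_{2y+z+2\max(a,b)+2,x,z}(c)$, and one checks that all the extra hyperfactorial ratios on the right of \eqref{off1eqQ2} telescope to $1$. The extreme value $y=\max(a-b,-2)$ of the admissible range, where one of the pushed side-lengths of $H$ collapses, should be treated as a separate base case.

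The step I expect to be the main obstacle is the bookkeeping in the condensation: one must choose the quadruple $\{u,v,w,s\}$ so that all five regions it produces genuinely stay inside the list of thirty families (and are not, say, regions with a side ``pushed'' a negative distance), and then record precisely how the fern entries and the floor/ceiling corrections transform in each term --- the $\overline{E}^{(2)}$ case is exactly the one where the off-central shift interacts with the $\pm 1$ perturbations of $\frac{x+z}{2}$ inside the two $s$-factors, so the accounting is delicate. Once the recurrence is pinned down, the remaining verification is a mechanical, if lengthy, manipulation of $\Hf$-products; a secondary point is simply to confirm that the version of Kuo's lemma invoked is the one compatible with the colours of the boundary triangles $u,v,w,s$ in this orientation.
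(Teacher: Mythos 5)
Your overall template --- a simultaneous induction over the whole catalogue of regions, driven by Kuo condensation, with the $s(\cdot)$ factors expanded via Cohn--Larsen--Propp --- is the same skeleton the paper uses, but two of your specific choices would break the argument. First, the induction parameter: when Kuo condensation is applied to the dual graph of $\overline{E}^{(2)}_{x,y,z}(\textbf{a};\textbf{c};\textbf{b})$, the removal of the four boundary triangles creates forced lozenges that get \emph{absorbed into the ferns}, so the companion regions carry $\textbf{a}^{+1}$ and $\textbf{b}^{+1}$ (the last entry of a fern increased by one) while $x$, $y$ or $z$ drop by one. The actual recurrence is
\begin{align*}
\M(\overline{E}^{(2)}_{x,y,z}(\textbf{a};\textbf{c};\textbf{b}))\,\M(Q^{\nwarrow}_{x-1,y,z-1}(\textbf{a}^{+1};\textbf{c};\textbf{b}^{+1}))
&=\M(\overline{F}^{(5)}_{x-1,y-1,z}(\textbf{a};\textbf{c};\textbf{b}^{+1}))\,\M(\overline{K}^{(5)}_{x,y+1,z-1}(\textbf{a}^{+1};\textbf{c};\textbf{b}))\\
&\quad+\M(\overline{E}^{(2)}_{x-1,y,z-1}(\textbf{a}^{+1};\textbf{c};\textbf{b}^{+1}))\,\M(Q^{\nwarrow}_{x,y,z}(\textbf{a};\textbf{c};\textbf{b}))
\end{align*}
(with a variant when $a<b$), so your quantity $a+b+c$ \emph{increases} in several of the terms and an induction on fern size does not terminate; the paper instead inducts on $h=p+x+z$, where $p$ is the quasi-perimeter of the base hexagon, precisely because that quantity drops in every term. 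Second, the five graphs produced by the condensation do not stay inside the $\overline{E}^{(1)}/\overline{E}^{(2)}$ families: as the display shows, they include $\overline{F}^{(5)}$-, $\overline{K}^{(5)}$- and $Q^{\nwarrow}$-type regions, the last being a \emph{central} region whose formula must be imported from the prequel (Theorems 2.2--2.9 of \cite{HoleDent}) as external input. This is why the paper needs all thirty off-central formulas proved at once, together with some sixty-odd family-specific recurrences, rather than a self-contained recursion for one family.

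There is also a circularity in your proposed base case: when $\textbf{a}=\textbf{b}=\textbf{c}=\emptyset$ you evaluate the resulting off-central cored hexagon by appealing to Conjecture \ref{con1}, but in this paper that conjecture is a \emph{consequence} of the theorems being proved (it is derived from Theorem \ref{off1thm1} after the induction is complete), so it cannot be assumed at the base. The paper's base cases are instead $x=0$ or $z=0$ (the case $p<6$ reduces to these by Lemma \ref{claimp}): there the region splits along the fern line into two dented semihexagons, handled by the Region-splitting Lemma \ref{RS} and formula (\ref{semieq}) alone, with no cored-hexagon input. Finally, you correctly flag the extremal value $y=\max(a-b,-2)$ as problematic, but it needs an actual mechanism: the paper's Lemma \ref{lem4} shows that a region with minimal $y$ has the same tiling number as a region of a different (barred/unbarred) family with strictly smaller $h$, which is what lets the main induction proceed when the recurrences are unavailable.
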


\begin{thm}\label{off1thmQ3}
Assume that $\textbf{a}=(a_1,a_2,\dotsc,a_m)$, $\textbf{b}=(b_1,b_2,\dotsc,b_n)$, $\textbf{c}=(c_1,c_2,\dotsc,c_k)$ are three sequences  of nonnegative integers ($m,n,k$ are all even) and that $x,y,z$ are three integers, such that $x\geq 0$, $y\geq \max(a-b,-2)$, $z\geq0$, and $x$ and $z$ have the same parity. Then
\begin{align}\label{off1eqQ3}
\M&(\overline{E}^{(3)}_{x,y,z}(\textbf{a};\textbf{c};\textbf{b}))=\Phi_{z,x,2y+z+2\max(a,b)+2}(c)\notag\\
&\times s\left(a_1,\dotsc, a_{m}+\frac{x+z}{2}+1,c_1,\dotsc,c_{k}+\frac{x+z}{2}+b_n-1,b_{n-1},\dotsc,b_1\right)\notag\\
&\times s\left(y+b-\min(a,b), a_1,\dotsc,a_{m},\frac{x+z}{2}+c_1+1,\dotsc,c_{k},\frac{x+z}{2}-1,b_n,\dotsc,b_1,y+a-\min(a,b)+2\right)\notag\\
&\times\frac{\Hf(c+\frac{x+z}{2}-1)}{\Hf(c)\Hf(\frac{x+z}{2}-1)}\frac{\Hf(\max(a,b)+y+\frac{x+z}{2}+1)}{\Hf(\max(a,b)+c+y+\frac{x+z}{2}+1)}\notag\\
&\times \frac{\Hf(\max(a,b)+y+z)\Hf(\max(a,b)+c+y+z+2)}{\Hf(o_a+o_b+o_c+z)\Hf(|a-b|+e_a+e_b+e_c+2y+z+2)}\notag\\
&\times \frac{\Hf(o_a+o_b+o_c)\Hf(|a-b|+e_a+e_b+e_c+2y+2)}{\Hf(\max(a,b)+y)\Hf(\max(a,b)+y+2)}.
\end{align}
\end{thm}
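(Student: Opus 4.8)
The plan is to follow the same strategy as in the proof of Theorem \ref{off1thmQ1}, adjusting only the placement of the distinguished cells so as to target the third off-central position. First I would pass from the region $\overline{E}^{(3)}_{x,y,z}(\textbf{a};\textbf{c};\textbf{b})$ to its planar dual graph $G$, so that $\M(\overline{E}^{(3)}_{x,y,z}(\textbf{a};\textbf{c};\textbf{b}))$ equals the number of perfect matchings of $G$. I would then apply the appropriate one of the two versions of Kuo's graphical condensation recalled in Section 3, taking four suitable unit triangular cells along the boundary of the region (the cells playing the role of $u,v,w,s$ in the proof figures) so that, in the resulting identity $\M(G)\M(G')=\M(G_1)\M(G_2)+\M(G_3)\M(G_4)$, each of $G,G_1,G_2,G_3,G_4$ is again an $\overline{E}^{(3)}$-type region (with the width parameter $2y+z+2\max(a,b)+2$ shifted by $\pm 1$, and with the last entry of the middle fern $\textbf{c}$ increased or decreased by a unit), while $G'$ is a degenerate region from which a band of forced lozenges can be stripped to leave a dented semihexagon. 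The tiling number of that semihexagon is given by the Cohn--Larsen--Propp formula (\ref{semieq}), which is exactly what supplies the two $s(\cdots)$ factors and several of the hyperfactorial ratios on the right of (\ref{off1eqQ3}).

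With this recurrence in hand I would argue by induction on the total number of unit triangles in the three ferns (and, within that, on $x+z$). In the base case the ferns degenerate: keeping a single triangle in the middle fern and emptying the two side ferns, $\overline{E}^{(3)}_{x,y,z}(\emptyset;\textbf{c};\emptyset)$ reduces, after removing forced lozenges, to an off-central cored hexagon of precisely the type treated in Conjecture \ref{con1}, the particular ordering of the arguments in $\Phi_{z,x,2y+z+2\max(a,b)+2}(c)$ simply recording how that cored hexagon sits after the reduction; the fully degenerate case is MacMahon's box formula. Invoking the (now proven) Conjecture \ref{con1} for the cored piece, together with (\ref{semieq}) for the semihexagonal pieces, settles the base case. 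It then remains to verify that the explicit product on the right of (\ref{off1eqQ3}) satisfies the same Kuo identity: plugging the closed form into $\M(G)\M(G')=\M(G_1)\M(G_2)+\M(G_3)\M(G_4)$ and cancelling common hyperfactorials via $\Hf(n)/\Hf(n-1)=\Gamma(n)$ reduces the claim to a rational-function identity in $x,y,z,a,b,c,o_a,o_b,o_c$ and the polynomial $P_1$ appearing inside $\Phi$, which I would then confirm by direct simplification.

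I expect two steps to be the real obstacles. The first is combinatorial bookkeeping: the three off-central positions in Figure \ref{fig:offposition}(a) lie only one lattice step apart, so a slightly misplaced distinguished cell pushes the condensation into the $\overline{E}^{(1)}$- or $\overline{E}^{(2)}$-family instead, and one must keep careful track of the split of $x+z$ into $\left\lfloor\frac{x+z}{2}\right\rfloor$ and $\left\lceil\frac{x+z}{2}\right\rceil$, of the $\pm 1$ shifts inside the two $s(\cdots)$ terms, and of the domain condition on $y$ as $y$ changes by $1$ under condensation. The second obstacle is the algebraic verification itself: since $\Phi$ carries the non-linear factor $P_1(x,y,z,m)$ whose form depends on the parity of its first argument, the cases of even and odd $x$ have to be handled separately, and the cancellation that collapses the Kuo identity to a polynomial identity in $P_1$ is the most delicate part of the computation. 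Once these two points are settled the remaining hyperfactorial manipulations are routine, and the completed proofs of Theorems \ref{off1thmQ1} and \ref{off1thmQ2} supply a template for arranging them.
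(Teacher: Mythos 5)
There is a genuine gap, and it lies in the shape of the recurrence you plan to build. You assume that the four distinguished cells can be chosen so that every graph appearing in the Kuo identity is again an $\overline{E}^{(3)}$-type region (with $\textbf{c}$ or the width shifted), and that the term $G-\{u,v,w,s\}$ degenerates to a dented semihexagon. Neither happens for these regions. Removing a boundary cell forces a band of lozenges along an entire side of the base hexagon, which shrinks that side by one unit; since the family of the region is dictated by the parities of $x$ and $z$ and by the position of the middle fern's root relative to the center of the auxiliary hexagon, these forced bands shift the root off its position and change the family. This is exactly what the paper's recurrences record: in the actual $\overline{E}^{(3)}$-recurrences (\ref{offcenterrecurE3qa})--(\ref{offcenterrecurE3qb}) the companion terms are of types $Q^{\nearrow}$, $\overline{G}^{(5)}$ and $Q^{\leftarrow}$ (the last two being other off-central families and a central family from the prequel), not $\overline{E}^{(3)}$, and no term is a semihexagon. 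Because the condensation inescapably couples different families, the statement cannot be isolated and proved by induction inside the $\overline{E}^{(3)}$ family; the paper instead proves all thirty off-central formulas simultaneously (together with the already-established central $R$- and $Q$-theorems of the prequel) by induction on $h=p+x+z$. The Cohn--Larsen--Propp factors $s(\cdots)$ enter only in the base cases $x=0$ or $z=0$, where the region splits along the fern line into two dented semihexagons via the Region-splitting Lemma, not as a degenerate Kuo term. Also note that the first factor is stated directly as $\Phi_{z,x,2y+z+2\max(a,b)+2}(c)$; Conjecture \ref{con1} is a corollary of Theorem \ref{off1thm1} in this framework rather than an input, so invoking it for the base case inverts the paper's logical order (it is only independently available through Rosengren's work).

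Two further points your plan does not address and the paper must: the recurrence degenerates when $y$ attains its minimal value $\max(a-b,-2)$, and this boundary case is handled by a separate reduction (Lemma \ref{lem4}) relating the region to one with strictly smaller $h$; and the Kuo identity takes different forms according to whether $a<b$ or $a\geq b$ (and sometimes $a=b$), so the induction step requires a case split that your single recurrence glosses over. Your instinct that the final verification is a hyperfactorial simplification against the induction hypothesis is right, but the induction hypothesis has to cover all the interrelated families at smaller $h$, not just smaller $\overline{E}^{(3)}$-regions.
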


\begin{figure}
  \centering
  \setlength{\unitlength}{3947sp}%
\begingroup\makeatletter\ifx\SetFigFont\undefined%
\gdef\SetFigFont#1#2#3#4#5{%
  \reset@font\fontsize{#1}{#2pt}%
  \fontfamily{#3}\fontseries{#4}\fontshape{#5}%
  \selectfont}%
\fi\endgroup%
\resizebox{15cm}{!}{
\begin{picture}(0,0)%
\includegraphics{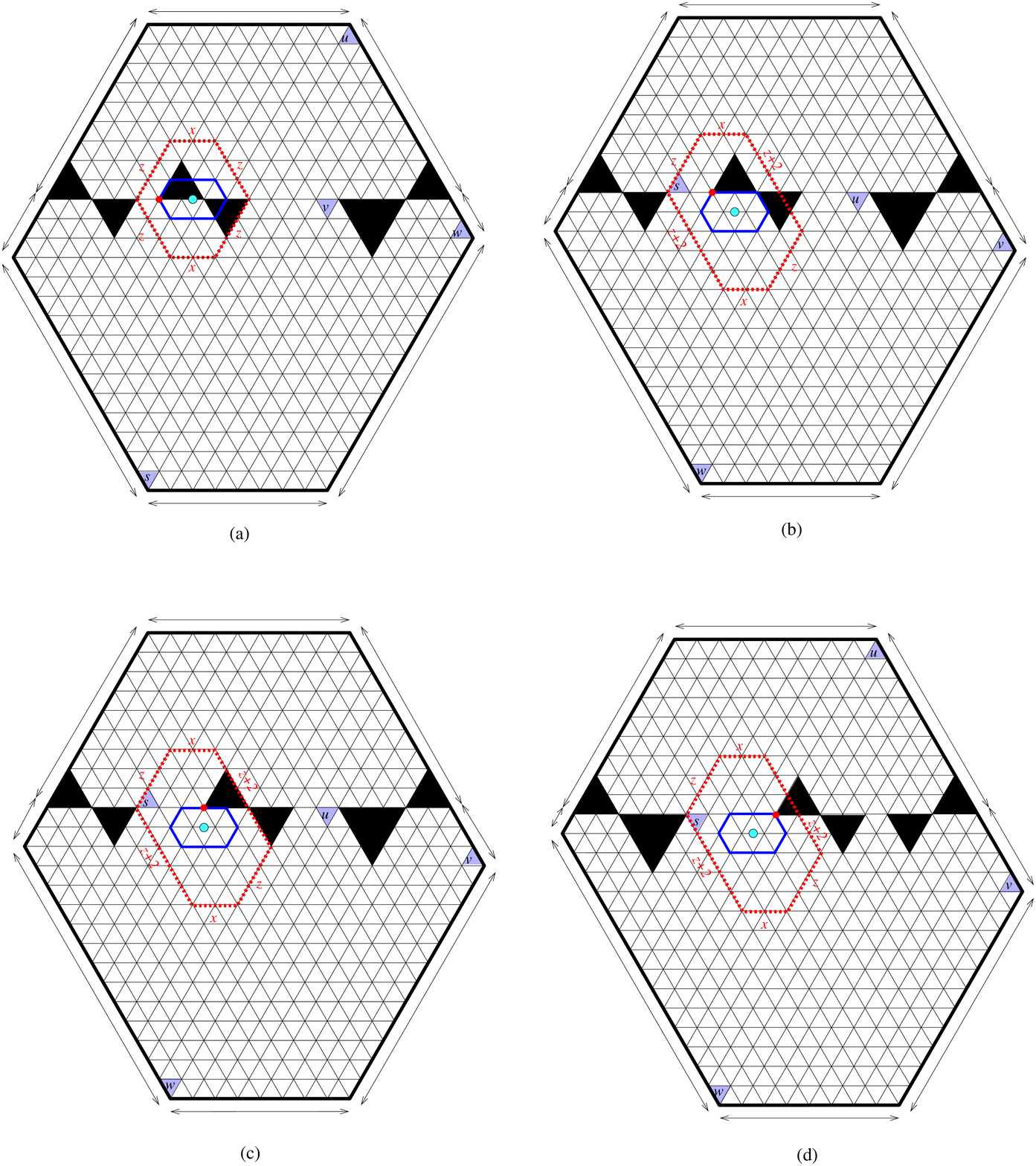}%
\end{picture}%
%
%

\begin{picture}(19432,21655)(604,-25796)
\put(9646,-19471){\rotatebox{300.0}{\makebox(0,0)[lb]{\smash{{\SetFigFont{14}{16.8}{\rmdefault}{\mddefault}{\itdefault}{$y+2$}%
}}}}}
\put(8071,-16921){\rotatebox{300.0}{\makebox(0,0)[lb]{\smash{{\SetFigFont{14}{16.8}{\rmdefault}{\mddefault}{\itdefault}{$z+o_a+o_b+o_c$}%
}}}}}
\put(19201,-19561){\rotatebox{300.0}{\makebox(0,0)[lb]{\smash{{\SetFigFont{14}{16.8}{\rmdefault}{\mddefault}{\itdefault}{$y+a-b+2$}%
}}}}}
\put(9478,-8143){\rotatebox{300.0}{\makebox(0,0)[lb]{\smash{{\SetFigFont{14}{16.8}{\rmdefault}{\mddefault}{\itdefault}{$y$}%
}}}}}
\put(17546,-16512){\rotatebox{300.0}{\makebox(0,0)[lb]{\smash{{\SetFigFont{14}{16.8}{\rmdefault}{\mddefault}{\itdefault}{$z+o_a+o_b+o_c$}%
}}}}}
\put(17836,-5776){\rotatebox{300.0}{\makebox(0,0)[lb]{\smash{{\SetFigFont{14}{16.8}{\rmdefault}{\mddefault}{\itdefault}{$z+o_a+o_b+o_c$}%
}}}}}
\put(19304,-8197){\rotatebox{300.0}{\makebox(0,0)[lb]{\smash{{\SetFigFont{14}{16.8}{\rmdefault}{\mddefault}{\itdefault}{$y+2$}%
}}}}}
\put(8116,-5836){\rotatebox{300.0}{\makebox(0,0)[lb]{\smash{{\SetFigFont{14}{16.8}{\rmdefault}{\mddefault}{\itdefault}{$z+o_a+o_b+o_c$}%
}}}}}
\put(11190,-18490){\rotatebox{60.0}{\makebox(0,0)[lb]{\smash{{\SetFigFont{14}{16.8}{\rmdefault}{\mddefault}{\itdefault}{$z+o_a+o_b+o_c$}%
}}}}}
\put(1709,-18451){\rotatebox{60.0}{\makebox(0,0)[lb]{\smash{{\SetFigFont{14}{16.8}{\rmdefault}{\mddefault}{\itdefault}{$z+o_a+o_b+o_c$}%
}}}}}
\put(11243,-7024){\rotatebox{60.0}{\makebox(0,0)[lb]{\smash{{\SetFigFont{14}{16.8}{\rmdefault}{\mddefault}{\itdefault}{$z+o_a+o_b+o_c$}%
}}}}}
\put(1709,-7261){\rotatebox{60.0}{\makebox(0,0)[lb]{\smash{{\SetFigFont{14}{16.8}{\rmdefault}{\mddefault}{\itdefault}{$z+o_a+o_b+o_c$}%
}}}}}
\put(10706,-19430){\rotatebox{60.0}{\makebox(0,0)[lb]{\smash{{\SetFigFont{14}{16.8}{\rmdefault}{\mddefault}{\itdefault}{$y$}%
}}}}}
\put(989,-19501){\rotatebox{60.0}{\makebox(0,0)[lb]{\smash{{\SetFigFont{14}{16.8}{\rmdefault}{\mddefault}{\itdefault}{$y+b-a$}%
}}}}}
\put(10694,-8296){\rotatebox{60.0}{\makebox(0,0)[lb]{\smash{{\SetFigFont{14}{16.8}{\rmdefault}{\mddefault}{\itdefault}{$y+b-a$}%
}}}}}
\put(809,-8731){\rotatebox{60.0}{\makebox(0,0)[lb]{\smash{{\SetFigFont{14}{16.8}{\rmdefault}{\mddefault}{\itdefault}{$y+b-a$}%
}}}}}
\put(11011,-10156){\rotatebox{300.0}{\makebox(0,0)[lb]{\smash{{\SetFigFont{14}{16.8}{\rmdefault}{\mddefault}{\itdefault}{$y+z+e_a+e_b+e_c+2$}%
}}}}}
\put(844,-10058){\rotatebox{300.0}{\makebox(0,0)[lb]{\smash{{\SetFigFont{14}{16.8}{\rmdefault}{\mddefault}{\itdefault}{$y+z+e_a+e_b+e_c+a-b$}%
}}}}}
\put(1276,-21286){\rotatebox{300.0}{\makebox(0,0)[lb]{\smash{{\SetFigFont{14}{16.8}{\rmdefault}{\mddefault}{\itdefault}{$y+z+e_a+e_b+e_c+2$}%
}}}}}
\put(11006,-20896){\rotatebox{300.0}{\makebox(0,0)[lb]{\smash{{\SetFigFont{14}{16.8}{\rmdefault}{\mddefault}{\itdefault}{$y+z+e_a+e_b+e_c+a-b+2$}%
}}}}}
\put(18278,-24018){\rotatebox{60.0}{\makebox(0,0)[lb]{\smash{{\SetFigFont{14}{16.8}{\rmdefault}{\mddefault}{\itdefault}{$y+z+e_a+e_b+e_c$}%
}}}}}
\put(8213,-23980){\rotatebox{60.0}{\makebox(0,0)[lb]{\smash{{\SetFigFont{14}{16.8}{\rmdefault}{\mddefault}{\itdefault}{$y+z+e_a+e_b+e_c+b-a$}%
}}}}}
\put(17926,-12511){\rotatebox{60.0}{\makebox(0,0)[lb]{\smash{{\SetFigFont{14}{16.8}{\rmdefault}{\mddefault}{\itdefault}{$y+z+e_a+e_b+e_c+b-a$}%
}}}}}
\put(7883,-12828){\rotatebox{60.0}{\makebox(0,0)[lb]{\smash{{\SetFigFont{14}{16.8}{\rmdefault}{\mddefault}{\itdefault}{$y+z+e_a+e_b+e_c$}%
}}}}}
\put(14627,-25251){\makebox(0,0)[lb]{\smash{{\SetFigFont{14}{16.8}{\rmdefault}{\mddefault}{\itdefault}{$x+o_a+o_b+o_c$}%
}}}}
\put(4592,-25123){\makebox(0,0)[lb]{\smash{{\SetFigFont{14}{16.8}{\rmdefault}{\mddefault}{\itdefault}{$x+o_a+o_b+o_c$}%
}}}}
\put(14170,-13897){\makebox(0,0)[lb]{\smash{{\SetFigFont{14}{16.8}{\rmdefault}{\mddefault}{\itdefault}{$x+o_a+o_b+o_c$}%
}}}}
\put(4067,-13986){\makebox(0,0)[lb]{\smash{{\SetFigFont{14}{16.8}{\rmdefault}{\mddefault}{\itdefault}{$x+o_a+o_b+o_c$}%
}}}}
\put(13786,-15823){\makebox(0,0)[lb]{\smash{{\SetFigFont{14}{16.8}{\rmdefault}{\mddefault}{\itdefault}{$x+e_a+e_b+e_c$}%
}}}}
\put(4171,-15688){\makebox(0,0)[lb]{\smash{{\SetFigFont{14}{16.8}{\rmdefault}{\mddefault}{\itdefault}{$x+e_a+e_b+e_c$}%
}}}}
\put(13241,-4430){\makebox(0,0)[lb]{\smash{{\SetFigFont{14}{16.8}{\rmdefault}{\mddefault}{\itdefault}{$x+e_a+e_b+e_c$}%
}}}}
\put(4240,-4548){\makebox(0,0)[lb]{\smash{{\SetFigFont{14}{16.8}{\rmdefault}{\mddefault}{\itdefault}{$x+e_a+e_b+e_c$}%
}}}}
\put(3928,-8008){\makebox(0,0)[lb]{\smash{{\SetFigFont{14}{16.8}{\familydefault}{\mddefault}{\updefault}{\color[rgb]{1,1,1}$c_1$}%
}}}}
\put(4741,-19081){\makebox(0,0)[lb]{\smash{{\SetFigFont{14}{16.8}{\familydefault}{\mddefault}{\updefault}{\color[rgb]{1,1,1}$c_1$}%
}}}}
\put(14049,-7847){\makebox(0,0)[lb]{\smash{{\SetFigFont{14}{16.8}{\familydefault}{\mddefault}{\updefault}{\color[rgb]{1,1,1}$c_1$}%
}}}}
\put(15316,-19171){\makebox(0,0)[lb]{\smash{{\SetFigFont{14}{16.8}{\familydefault}{\mddefault}{\updefault}{\color[rgb]{1,1,1}$c_1$}%
}}}}
\put(8469,-8034){\makebox(0,0)[lb]{\smash{{\SetFigFont{14}{16.8}{\familydefault}{\mddefault}{\updefault}{\color[rgb]{1,1,1}$b_1$}%
}}}}
\put(11633,-7850){\makebox(0,0)[lb]{\smash{{\SetFigFont{14}{16.8}{\familydefault}{\mddefault}{\updefault}{\color[rgb]{1,1,1}$a_1$}%
}}}}
\put(12403,-8286){\makebox(0,0)[lb]{\smash{{\SetFigFont{14}{16.8}{\familydefault}{\mddefault}{\updefault}{\color[rgb]{1,1,1}$a_2$}%
}}}}
\put(7449,-8589){\makebox(0,0)[lb]{\smash{{\SetFigFont{14}{16.8}{\familydefault}{\mddefault}{\updefault}{\color[rgb]{1,1,1}$b_2$}%
}}}}
\put(14852,-8329){\makebox(0,0)[lb]{\smash{{\SetFigFont{14}{16.8}{\familydefault}{\mddefault}{\updefault}{\color[rgb]{1,1,1}$c_2$}%
}}}}
\put(1951,-7974){\makebox(0,0)[lb]{\smash{{\SetFigFont{14}{16.8}{\familydefault}{\mddefault}{\updefault}{\color[rgb]{1,1,1}$a_1$}%
}}}}
\put(12601,-19861){\makebox(0,0)[lb]{\smash{{\SetFigFont{14}{16.8}{\familydefault}{\mddefault}{\updefault}{\color[rgb]{1,1,1}$a_2$}%
}}}}
\put(2720,-8413){\makebox(0,0)[lb]{\smash{{\SetFigFont{14}{16.8}{\familydefault}{\mddefault}{\updefault}{\color[rgb]{1,1,1}$a_2$}%
}}}}
\put(17108,-8447){\makebox(0,0)[lb]{\smash{{\SetFigFont{14}{16.8}{\familydefault}{\mddefault}{\updefault}{\color[rgb]{1,1,1}$b_2$}%
}}}}
\put(16066,-19741){\makebox(0,0)[lb]{\smash{{\SetFigFont{14}{16.8}{\familydefault}{\mddefault}{\updefault}{\color[rgb]{1,1,1}$c_2$}%
}}}}
\put(18152,-7910){\makebox(0,0)[lb]{\smash{{\SetFigFont{14}{16.8}{\familydefault}{\mddefault}{\updefault}{\color[rgb]{1,1,1}$b_1$}%
}}}}
\put(1951,-19077){\makebox(0,0)[lb]{\smash{{\SetFigFont{14}{16.8}{\familydefault}{\mddefault}{\updefault}{\color[rgb]{1,1,1}$a_1$}%
}}}}
\put(2721,-19513){\makebox(0,0)[lb]{\smash{{\SetFigFont{14}{16.8}{\familydefault}{\mddefault}{\updefault}{\color[rgb]{1,1,1}$a_2$}%
}}}}
\put(5575,-19566){\makebox(0,0)[lb]{\smash{{\SetFigFont{14}{16.8}{\familydefault}{\mddefault}{\updefault}{\color[rgb]{1,1,1}$c_2$}%
}}}}
\put(7426,-19674){\makebox(0,0)[lb]{\smash{{\SetFigFont{14}{16.8}{\familydefault}{\mddefault}{\updefault}{\color[rgb]{1,1,1}$b_2$}%
}}}}
\put(8470,-19137){\makebox(0,0)[lb]{\smash{{\SetFigFont{14}{16.8}{\familydefault}{\mddefault}{\updefault}{\color[rgb]{1,1,1}$b_1$}%
}}}}
\put(18078,-19249){\makebox(0,0)[lb]{\smash{{\SetFigFont{14}{16.8}{\familydefault}{\mddefault}{\updefault}{\color[rgb]{1,1,1}$b_1$}%
}}}}
\put(17244,-19726){\makebox(0,0)[lb]{\smash{{\SetFigFont{14}{16.8}{\familydefault}{\mddefault}{\updefault}{\color[rgb]{1,1,1}$b_2$}%
}}}}
\put(4820,-8451){\makebox(0,0)[lb]{\smash{{\SetFigFont{14}{16.8}{\familydefault}{\mddefault}{\updefault}{\color[rgb]{1,1,1}$c_2$}%
}}}}
\put(11566,-19195){\makebox(0,0)[lb]{\smash{{\SetFigFont{14}{16.8}{\familydefault}{\mddefault}{\updefault}{\color[rgb]{1,1,1}$a_1$}%
}}}}
\end{picture}%
}
  \caption{Four $\overline{F}^{(i)}$-type regions: (a) $\overline{F}^{(3)}_{2,2,3}(2,2;\ 2,2;\ 2,3)$, (b) $\overline{F}^{(4)}_{2,1,3}(2,2;\ 2,2;\ 2,3)$, (c) $\overline{F}^{(5)}_{2,1,3}(2,2;\ 2,2;\ 2,3)$, and (d) $\overline{F}^{(6)}_{2,1,3}(2,3;\ 2,2;\ 2,2)$.}\label{fig:offF2}
\end{figure}

\subsection{The $\overline{F}^{(i)}$-type regions}
 We now consider the case when $x$ and $z$ have opposite parities. For $i=4,5,6$, we start with an auxiliary hexagon $H_0$ of side-lengths $x,z+2,z,x,z+2,z$, and for $i=3,7$, we start with the auxiliary hexagon of side-lengths $x,z,z,x,z,z$. We still push all six sides of the auxiliary hexagon as in the definition of the $\overline{E}^{(i)}$-type regions to get the base hexagon $H$, and remove the three ferns from the resulting hexagon $H$, such that the root of the middle fern is now at the off-central position $i$ as shown in Figure \ref{fig:offposition}(b) (the left and right ferns are also determined by the position of the middle one). Similar to the case of the $\overline{E}^{(i)}$-type regions, there are no counterparts for the case $i=1,2,8$. Denote by $\overline{F}^{(i)}_{x,y,z}(\textbf{a}; \ \textbf{c};\ \textbf{b})$ the resulting regions (illustrated in Figure \ref{fig:offF2}). By the symmetry, we only need to enumerate four of the five regions (as any $\overline{F}^{(7)}$-type region is a vertical refection of an $\overline{F}^{(3)}$-type one).

 \begin{rmk}\label{rmkQF}
In the definition of the $\overline{F}^{(3)}$-type regions, if we remove the three ferns such that the root of the middle one is $1/2$-unit to the left of the center of the auxiliary hexagon $H_0$, then we obtain the region $Q^{\leftarrow}_{x,y,z}(\textbf{a};\textbf{c};\textbf{b})$ in Theorem 2.6 of \cite{HoleDent}.
\end{rmk}

\begin{thm}\label{off1thmQF3}
Assume that $\textbf{a}=(a_1,a_2,\dotsc,a_m)$, $\textbf{b}=(b_1,b_2,\dotsc,b_n)$, $\textbf{c}=(c_1,c_2,\dotsc,c_k)$ are three sequences  of nonnegative integers  ($m,n,k$ are all even) and that $x,y,z$ are three nonnegative integers, such that $x$ and $z$ have the different parities. Then

\begin{align}\label{off1eqQF3}
\M&(\overline{F}^{(3)}_{x,y,z}(\textbf{a};\textbf{c};\textbf{b}))=\Psi_{z,2y+z+2\max(a,b),x}(c)\notag\\
&\times s\left(a_1,\dotsc, a_{m}+\left\lfloor\frac{x+z}{2}\right\rfloor-1,c_1,\dotsc,c_{k}+\left\lceil\frac{x+z}{2}\right\rceil+b_n+1,b_{n-1},\dotsc,b_1\right)\notag\\
&\times s\left(y+b-\min(a,b), a_1,\dotsc,a_{m},\left\lfloor\frac{x+z}{2}\right\rfloor+c_1-1,\dotsc,c_{k},\left\lceil\frac{x+z}{2}\right\rceil+1,b_n,\dotsc,b_1,y+a-\min(a,b)\right)\notag\\
&\times\frac{\Hf(c+\left\lfloor\frac{x+z}{2}\right\rfloor-1)}{\Hf(c)\Hf(\left\lfloor\frac{x+z}{2}\right\rfloor-1)}\frac{\Hf(\max(a,b)+y+\left\lfloor\frac{x+z}{2}\right\rfloor-1)}{\Hf(\max(a,b)+c+y+\left\lfloor\frac{x+z}{2}\right\rfloor-1)}\notag\\
&\times \frac{\Hf(\max(a,b)+y+z)\Hf(\max(a,b)+c+y+z)}{\Hf(o_a+o_b+o_c+z)\Hf(|a-b|+e_a+e_b+e_c+2y+z)}\notag\\
&\times \frac{\Hf(o_a+o_b+o_c)\Hf(|a-b|+e_a+e_b+e_c+2y)}{\Hf(\max(a,b)+y)^2},
\end{align}
where $\Psi_{x,y,z}(m)$ denotes the formula in (\ref{psieq1}).
\end{thm}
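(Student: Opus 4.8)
The plan is to follow the template used for the $F^{(i)}$-type theorems, closely paralleling the proof of Theorem~\ref{off32thm3} (which handles the sibling region $F^{(3)}$, where the line $l$ separates the west and east vertices), and to adapt the side-pushing bookkeeping of Figure~\ref{pushing2} to the configuration at hand, in which $l$ leaves both the west and the east vertex of the base hexagon above it. First I would pass to the planar dual graph $G$ of $\overline{F}^{(3)}_{x,y,z}(\textbf{a};\textbf{c};\textbf{b})$, so that $\M(\overline{F}^{(3)}_{x,y,z}(\textbf{a};\textbf{c};\textbf{b}))$ equals the number of perfect matchings of $G$. Since the left and the right ferns are pushed flush against the northwest and the northeast sides of the base hexagon, a block of lozenges along those sides is forced; removing them first leaves a region with the same number of tilings but a cleaner boundary on which the condensation step is easier to organize. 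Structurally, the target expression~\eqref{off1eqQF3} is a product of the cored-hexagon factor $\Psi_{z,2y+z+2\max(a,b),x}(c)$ (already available through Theorem~\ref{off32thm3}), two Cohn--Larsen--Propp semihexagon factors $s(\cdots)$ coming from~\eqref{semieq}, and a ratio of hyperfactorials, so the proof should be arranged to reconstruct precisely these three ingredients.

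The engine is one (possibly two) application(s) of Kuo's graphical-condensation identities from~\cite{Kuo} (the versions quoted in Section~3), applied to the reduced graph with four vertices placed on the boundary --- two near the southwest and southeast corners of the base hexagon, and two near the root and the right end of the middle fern. For a suitable placement this yields a recurrence
\[
\M\bigl(\overline{F}^{(3)}_{x,y,z}(\ldots)\bigr)\,\M(R_0)=\M(R_1)\,\M(R_2)+\M(R_3)\,\M(R_4),
\]
in which $R_0,\dots,R_4$ are $\overline{F}^{(3)}$-type regions with one of the parameters $x,z,y$ (or a single fern entry) decreased, regions already enumerated in the prequel~\cite{HoleDent} (the $Q^{\leftarrow}$-type regions, accessible through Remark~\ref{rmkQF}), and dented semihexagons covered by~\eqref{semieq}. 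One then inducts --- on $x+z$ together with the total number of triangles in the three ferns --- with base cases being the degenerate regions: when the side ferns vanish, $\overline{F}^{(3)}$ collapses to a hexagon with a single central fern removed off center, which is handled as in the $F$-type arguments (and, for a single triangular hole, exactly as in Corollaries~\ref{cor3} and~\ref{cor2}); and when the base hexagon itself collapses, the region is a dented semihexagon, so the formula is an instance of~\eqref{semieq}.

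The remaining, and I expect the main, obstacle is to verify that the right-hand side of~\eqref{off1eqQF3} obeys the same condensation recurrence. After inserting the product formula for each of $R_0,\dots,R_4$, the semihexagon factors telescope using~\eqref{semieq} and $\Hf(n)/\Hf(n-1)=\Gamma(n)$, the $\Psi$-factors contribute a ratio of $\Hf$-values times the polynomial $P_2$ of Conjecture~\ref{con2}, and what is left is a single rational-function identity in $x,y,z,a,b,c$ and the partial sums $o_a,e_a,o_b,e_b,o_c,e_c$, to be checked directly. This is routine but long and error-prone; moreover its feasibility depends entirely on the earlier choice of the four condensation vertices, since an unlucky choice produces a region lying outside the present list and outside the scope of~\cite{HoleDent} and~\eqref{semieq}. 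Pinning down the placement for which every region on the right-hand side is already known --- inductively, from the prequel, or from the Cohn--Larsen--Propp formula --- is therefore the genuinely delicate part of the argument.
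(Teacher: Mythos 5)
There is a genuine structural gap in your induction scheme. You expect the Kuo recurrence for $\overline{F}^{(3)}$ to involve only smaller $\overline{F}^{(3)}$-regions, prequel regions of type $Q^{\leftarrow}$, and dented semihexagons, and you propose to induct on $x+z$ plus the fern sizes within this single family. In fact, when Kuo condensation (Theorem \ref{kuothm1}) is applied to $\overline{F}^{(3)}_{x,y,z}(\textbf{a};\textbf{c};\textbf{b})$, the removal of forced lozenges shifts the root of the middle fern relative to the center of the auxiliary hexagon and thereby \emph{changes the type} of the leftover regions: the recurrence one actually obtains (see (\ref{offcenterrecurF3qa})--(\ref{offcenterrecurF3qb})) reads
\begin{align*}
\M(\overline{F}^{(3)}_{x,y,z}(\textbf{a};\textbf{c};\textbf{b}))\,\M(\overline{E}^{(1)}_{x,y-1,z-1}(\textbf{a};\textbf{c};\textbf{b}^{+1}))
&=\M(\overline{G}^{(2)}_{x,y-1,z-1}(\textbf{a};\textbf{c};\textbf{b}^{+1}))\,\M(\overline{K}^{(8)}_{x,y-1,z}(\textbf{b};\textbf{c}^{\leftrightarrow};\textbf{a}))\\
&\quad+\M(\overline{F}^{(3)}_{x+1,y,z-1}(\textbf{a};\textbf{c};\textbf{b}))\,\M(\overline{E}^{(1)}_{x-1,y-1,z}(\textbf{a};\textbf{c};\textbf{b}^{+1})),
\end{align*}
so the companion regions are of types $\overline{E}^{(1)}$, $\overline{G}^{(2)}$, $\overline{K}^{(8)}$ --- none of which are covered by the prequel or by the Cohn--Larsen--Propp formula; their enumerations are themselves among the thirty statements to be proved. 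Consequently an induction confined to the $\overline{F}^{(3)}$ family cannot close, and there is no ``lucky'' placement of the four vertices that avoids this, precisely because forced-lozenge removal relocates the hole. The paper resolves this by proving all thirty off-central formulas \emph{simultaneously} (together with the eight central families of the prequel), by induction on $h=p+x+z$, where $p$ is the quasi-perimeter; each of the sixty-six recurrences then only refers to regions with strictly smaller $h$ or to regions from \cite{HoleDent}.

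Two further ingredients of the paper's argument are absent from your plan. First, the recurrences decrease $y$ (and the $a<b$ versus $a\geq b$ cases differ), so one must separately treat the case where $y$ sits at its minimal value $\max(\pm(a-b),-2d)$, where the recurrence is unavailable; this is Lemma \ref{lem4}, which trades such a region for one of another family with strictly smaller $h$. Likewise zero-length triangles in the ferns must be eliminated first (Lemma \ref{lem3}). Second, the base cases are not ``side ferns vanish'' but $x=0$ or $z=0$ (and $p<6$, which reduces to these by Lemma \ref{claimp}); there the region splits along the fern line into two dented semihexagons, handled by the Region-splitting Lemma \ref{RS} and (\ref{semieq}), which is exactly where the two $s(\cdots)$ factors of (\ref{off1eqQF3}) originate. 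Your closing step --- verifying that the right-hand sides satisfy the same recurrence --- is indeed how the induction step is completed, but without the joint induction over all families and the minimal-$y$ lemma the argument as proposed does not go through.
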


\begin{thm}\label{off1thmQF4}
Assume that $\textbf{a}=(a_1,a_2,\dotsc,a_m)$, $\textbf{b}=(b_1,b_2,\dotsc,b_n)$, $\textbf{c}=(c_1,c_2,\dotsc,c_k)$ are three sequences  of nonnegative integers ($m,n,k$ are all even) and that $x,y,z$ are three  integers, such that $x\geq 0$, $y\geq \max(a-b,-2)$, $z\geq0$, and  $x$ and $z$ have different parities. Then

\begin{align}\label{off1eqQF4}
\M&(\overline{F}^{(4)}_{x,y,z}(\textbf{a};\textbf{c};\textbf{b}))=\Lambda'_{x,z,2y+z+2\max(a,b)+2}(c)\notag\\
&\times s\left(a_1,\dotsc, a_{m}+\left\lfloor\frac{x+z}{2}\right\rfloor,c_1,\dotsc,c_{k}+\left\lceil\frac{x+z}{2}\right\rceil+b_n,b_{n-1},\dotsc,b_1\right)\notag\\
&\times s\left(y+b-\min(a,b), a_1,\dotsc,a_{m},\left\lfloor\frac{x+z}{2}\right\rfloor+c_1,\dotsc,c_{k},\left\lceil\frac{x+z}{2}\right\rceil,b_n,\dotsc,b_1,y+a-\min(a,b)+2 \right) \notag\\
&\times\frac{\Hf(c+\left\lfloor\frac{x+z}{2}\right\rfloor)}{\Hf(c)\Hf(\left\lfloor\frac{x+z}{2}\right\rfloor)}\frac{\Hf(\max(a,b)+y+\left\lfloor\frac{x+z}{2}\right\rfloor)}{\Hf(\max(a,b)+c+y+\left\lfloor\frac{x+z}{2}\right\rfloor)}\notag\\
&\times \frac{\Hf(\max(a,b)+y+z+2)\Hf(\max(a,b)+c+y+z)}{\Hf(o_a+o_b+o_c+z)\Hf(|a-b|+e_a+e_b+e_c+2y+z+2)}\notag\\
&\times \frac{\Hf(o_a+o_b+o_c)\Hf(|a-b|+e_a+e_b+e_c+2y+2)}{\Hf(\max(a,b)+y)\Hf(\max(a,b)+y+2)},
\end{align}
where $\Lambda'_{x,y,z}(m)$ is defined as in  (\ref{lambdaeq2}).
\end{thm}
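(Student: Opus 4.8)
The plan is to prove Theorem~\ref{off1thmQF4} by the method underlying all the theorems in this section: Kuo's graphical condensation \cite{Kuo} applied to the planar dual graph of the region, together with an induction on the sizes of the two side ferns. First I pass to the dual graph $G$ of $\overline{F}^{(4)}_{x,y,z}(\textbf{a};\textbf{c};\textbf{b})$, so that lozenge tilings become perfect matchings and the tiling number is $\M(G)$, and I normalise the three ferns so that $m,n,k$ are even (appending trailing zeros, as permitted before Theorem~\ref{off1thm1}). I then select four unit triangles $u,v,w,s$ on the boundary of the base hexagon — two of them near where the left fern meets the northwest side and two near where the right fern meets the northeast side — placed so that the appropriate version of Kuo condensation applies and produces an identity of the form
\[
\M(G)\,\M(G\setminus\{u,v,w,s\})=\M(G\setminus\{u,v\})\,\M(G\setminus\{w,s\})+\M(G\setminus\{u,s\})\,\M(G\setminus\{v,w\})
\]
(or its subtractive variant, depending on the colours and cyclic order of the four vertices). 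The decisive step in the set-up is to identify each of the four auxiliary regions: once the lozenges forced along the northwest and northeast sides are removed, $G\setminus\{u,v\}$ and $G\setminus\{w,s\}$ become $\overline{F}^{(4)}$-type regions in which the last triangle of one side fern has been decreased, while $G\setminus\{u,s\}$, $G\setminus\{v,w\}$ and $G\setminus\{u,v,w,s\}$ become regions already enumerated in this section — an $\overline{F}^{(3)}$-type region as in Theorem~\ref{off1thmQF3}, or an $\overline{E}^{(i)}$-type region as in Theorems~\ref{off1thmQ1}--\ref{off1thmQ3}, with suitably shifted parameters.

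With this recurrence in hand I argue by induction on $a+b$. The base case is $\textbf{a}=\textbf{b}=\emptyset$, where $\overline{F}^{(4)}_{x,y,z}(\emptyset;\textbf{c};\emptyset)$ is a cored hexagon whose hole is the single (up-pointing, then alternating) fern $\textbf{c}$ sitting at a fixed off-central lattice point; its tiling number equals the $\Lambda'$-value appearing in~(\ref{off1eqQF4}) — this in turn follows from the central-hole $Q$-type enumerations of \cite{HoleDent} combined with one further application of Kuo condensation to shift the hole to the required off-central position — and one checks directly that every remaining factor on the right-hand side of~(\ref{off1eqQF4}), namely the two Cohn--Larsen--Propp factors $s(\cdots)$ and the explicit hyperfactorial quotients, collapses to $1$. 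For the inductive step it remains to verify that the claimed product formula satisfies the displayed Kuo identity. For this I substitute the closed forms for all five regions, use formula~(\ref{semieq}) to rewrite the ratio of each shifted $s$-factor to its unshifted version as an explicit finite product of hyperfactorials, use $\Hf(n+1)/\Hf(n)=\Gamma(n+1)$ (and its half-integer analogue) to simplify the $\Lambda'$-, $\Psi$- and $\Phi$-quotients, and then — after the common MacMahon-type skeleton $\PP(\cdot,\cdot,\cdot)$ cancels, as in the remark following Theorem~\ref{off1thm1} — reduce the whole statement to a polynomial identity in $x,y,z,a,b,c,o_a,o_b,o_c$, split, as everywhere in the paper, according to the parity of $x$ and to whether $a\ge b$ or $a<b$.

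I expect the main obstacle to be twofold. Geometrically, one must choose $u,v,w,s$ so that \emph{all four} auxiliary regions land in the family handled either by the induction hypothesis or by the already proved Theorems~\ref{off1thmQF3} and~\ref{off1thmQ1}--\ref{off1thmQ3}; pinning this down requires carefully tracking how deleting the four vertices and removing the resulting forced lozenges reshape the side ferns near the northwest and northeast sides, and treating the subcases $a\ge b$ and $a<b$ separately — routine but delicate, and closely parallel to the analysis already carried out for $\overline{F}^{(3)}$. Algebraically, the hardest part is the final polynomial identity: because the prefactors $Q_2(x,y,z,m)$ and $P_2(x,y,z,m)$ are genuine quadratics whose two parity branches look quite different, showing that the linear combination of $Q_2$- and $P_2$-prefactors produced by the recurrence reproduces exactly the $Q_2$-prefactor of $\Lambda'$ for the target region is where the bookkeeping is heaviest; I would organise this by first matching the pure hyperfactorial content and only then reducing to an identity among the polynomial prefactors.
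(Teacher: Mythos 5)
Your overall strategy (Kuo condensation plus induction, then a routine algebraic verification) is in the spirit of the paper, but the two load-bearing steps of your plan do not hold up. First, the base case is wrong. For $\overline{F}^{(4)}_{x,y,z}(\emptyset;\textbf{c};\emptyset)$ the side-pushing that defines the region is asymmetric in $y$ (with $\textbf{a}=\textbf{b}=\emptyset$ the southeast, south and southwest sides of $H_0$ are pushed by $c+y$, $e_c+2y$ and $y$, the northwest side by $0$), so the removed fern sits at a distance from the center of the \emph{base} hexagon that grows with $y$; it is not a cored hexagon with the hole at the fixed off-central position counted by $\Lambda'$, and it cannot be obtained from the $Q$-type results of \cite{HoleDent} by ``one further application of Kuo condensation to shift the hole'' (a single condensation relates six regions and by itself determines none of them). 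Consistently with this, the remaining factors in (\ref{off1eqQF4}) do \emph{not} collapse to $1$ when $\textbf{a}=\textbf{b}=\emptyset$: the two $s(\cdots)$ factors are genuine dented-semihexagon counts and the hyperfactorial quotients are nontrivial. In the paper the base cases are instead $x=0$ or $z=0$ (or $p<6$), where the region splits along the fern line into two dented semihexagons (Lemma \ref{RS} plus Cohn--Larsen--Propp), which is precisely where those $s$-factors come from.

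Second, the recurrence you posit is not available, so the induction on $a+b$ has no engine. Deleting boundary unit triangles of an $\overline{F}^{(4)}$-region and clearing the forced lozenges changes the side lengths of the base hexagon (hence the parity/position label of the hole, i.e.\ the family the region belongs to) and \emph{lengthens} the last triangle of a side fern (the $\textbf{b}^{+1}$, $\textbf{a}^{+1}$ operations); there is no choice of $u,v,w,s$ known to produce two $\overline{F}^{(4)}$-regions with a side fern shortened while keeping the other three terms inside already-enumerated families. The paper's actual $\overline{F}^{(4)}$-recurrences, (\ref{offcenterrecurF4qa}) and (\ref{offcenterrecurF4qb}), relate $\overline{F}^{(4)}_{x,y,z}(\textbf{a};\textbf{c};\textbf{b})$ to $\overline{K}^{(5)}$-, $\overline{E}^{(2)}$-, $\overline{G}^{(2)}$-regions and to $\overline{F}^{(4)}_{x-1,y,z-1}(\textbf{a}^{+1};\textbf{c};\textbf{b}^{+1})$, and for this reason \emph{all} thirty formulas (together with the eight central ones of \cite{HoleDent}) are proved by one simultaneous induction on $h=p+x+z$, not by treating Theorems \ref{off1thmQF3} and \ref{off1thmQ1}--\ref{off1thmQ3} as independently established inputs. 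Finally, even granting a recurrence, you never address the degenerate situation where $y$ is at its minimal value $\max(a-b,-2)$, in which the shifted regions in the recurrence are no longer defined; the paper needs a separate reduction (Lemma \ref{lem4}) to strictly smaller $h$ for exactly this case.
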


\begin{thm}\label{off1thmQF5}
Assume that $\textbf{a}=(a_1,a_2,\dotsc,a_m)$, $\textbf{b}=(b_1,b_2,\dotsc,b_n)$, $\textbf{c}=(c_1,c_2,\dotsc,c_k)$ are three sequences  of nonnegative integers ($m,n,k$ are all even) and that $x,y,z$ are three integers, such that $x\geq 0$, $y\geq \max(a-b,-2)$, $z\geq0$, and $x$ and $z$ have different parities. Then
\begin{align}\label{off1eqQF5}
\M&(\overline{F}^{(5)}_{x,y,z}(\textbf{a};\textbf{c};\textbf{b}))=\Theta'_{x,2y+z+2\max(a,b)+2,z}(c)\notag\\
&\times s\left(a_1,\dotsc, a_{m}+\left\lceil\frac{x+z}{2}\right\rceil,c_1,\dotsc,c_{k}+\left\lfloor\frac{x+z}{2}\right\rfloor+b_n,b_{n-1},\dotsc,b_1\right)\notag\\
&\times s\left(y+b-\min(a,b), a_1,\dotsc,a_{m},\left\lceil\frac{x+z}{2}\right\rceil+c_1,\dotsc,c_{k},\left\lfloor\frac{x+z}{2}\right\rfloor,b_n,\dotsc,b_1,y+a-\min(a,b)+2 \right) \notag\\
&\times\frac{\Hf(c+\frac{x+z}{2})}{\Hf(c)\Hf(\frac{x+z}{2})}\frac{\Hf(\max(a,b)+y+\frac{x+z}{2}+1)}{\Hf(\max(a,b)+c+y+\frac{x+z}{2}+1)}\notag\\
&\times \frac{\Hf(\max(a,b)+y+z)\Hf(\max(a,b)+c+y+z+2)}{\Hf(o_a+o_b+o_c+z)\Hf(|a-b|+e_a+e_b+e_c+2y+z+2)}\notag\\
&\times \frac{\Hf(o_a+o_b+o_c)\Hf(a-b|+e_a+e_b+e_c+2y+2)}{\Hf(\max(a,b)+y)\Hf(\max(a,b)+y+2)},
\end{align}
where $\Theta'_{x,y,z}(m)$ is defined as in (\ref{thetaeq2}).
\end{thm}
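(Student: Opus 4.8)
The plan is to establish Theorem~\ref{off1thmQF5} by Kuo's graphical condensation combined with induction, treating it together with the neighbouring $\overline{F}$- and $\overline{E}$-theorems as part of one simultaneous induction, exactly in the spirit of the proofs of Theorems~\ref{off1thm1}--\ref{off1thmQF4}.

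First I would pass to the planar dual graph: lozenge tilings of $\overline{F}^{(5)}_{x,y,z}(\mathbf{a};\mathbf{c};\mathbf{b})$ correspond bijectively to perfect matchings of the associated bipartite graph $G$, and I would begin by deleting the lozenges that are forced along the boundary of the region and along the sides of the three removed ferns, leaving a reduced graph on which condensation applies cleanly. Next I would pick four boundary vertices of $G$ --- the author has in fact pre-positioned these as the shaded triangles labelled $u,v,w,s$ in the illustrative figures --- and apply the version of Kuo condensation from Section~3 appropriate to the parities at hand. This yields an identity
\[
\M(\overline{F}^{(5)})\,\M(R_0)=\M(R_1)\,\M(R_2)+\M(R_3)\,\M(R_4),
\]
where the whole point of the vertex placement is that $R_0,\dots,R_4$ are again regions of the $\overline{F}$- and $\overline{E}$-families with one of the side ferns shortened by a triangle, or with $y$ (resp.\ $x$ and $z$) shifted by a small constant; each of these is either already covered by Theorems~\ref{off1thmQ1}--\ref{off1thmQF4} or is a strictly smaller instance supplied by the induction hypothesis.

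The induction would run on an appropriate size parameter of the region (for instance the total number $m+n+k$ of triangles in the ferns, refined lexicographically by $a+b+c$, or alternatively by $x+z$), so that deleting a zero-length triangle, emptying a fern, or shrinking a side fern strictly decreases the relevant statistic. The base cases are: $\mathbf{c}=\emptyset$, where the central hole collapses and the region becomes a $Q^{\leftarrow}$-type region already enumerated in \cite{HoleDent}; any $a_i$, $b_i$ or $c_i$ equal to $0$, which drops us to a region with a shorter fern; and finally the all-empty case, which is a cored hexagon whose tiling number is given by the $\Theta'$-enumeration (Corollary~\ref{cor1}), reducing to MacMahon's box formula when $x=z=0$. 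Throughout I would split according to the parity of $x$ in order to control the $\lfloor\cdot\rfloor$, $\lceil\cdot\rceil$ coming from $x+z$ being odd, absorbing that bookkeeping into the two $s(\cdots)$ factors via the Cohn--Larsen--Propp formula~\eqref{semieq}.

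The remaining step is to verify that the closed product on the right-hand side of \eqref{off1eqQF5} satisfies the same recurrence as $\M(\overline{F}^{(5)})$. Here I would expand every $\Theta'$, every hyperfactorial ratio, and every $s(\cdots)$ (using~\eqref{semieq}) into a single product of $\Hf$'s, cancel the common factors between the two sides, and reduce the condensation identity to a polynomial identity in $x,y,z,a,b,c,o_a,o_b,o_c,e_a,e_b,e_c$; since the top factor $Q_1$ of $\Theta'$ is merely linear, this polynomial identity is of low degree and can be checked directly. I expect this last verification, rather than the geometry, to be the main obstacle: the sheer number of $\Hf$-factors, combined with the parity case-split and the need to match exactly the shifted arguments of the two $s$-functions, makes the algebra delicate even though each individual manipulation is routine.
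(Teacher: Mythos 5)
Your overall strategy (Kuo condensation on the dual graph with the pre-marked $u,v,w,s$, a simultaneous induction with the other off-central families, and an algebraic verification that the closed product satisfies the same recurrence) is the paper's strategy, but your induction scheme has a genuine gap: none of the size parameters you propose actually decreases under the recurrences that condensation produces. For the $\overline{F}^{(5)}$-region the identity obtained from Theorem \ref{kuothm1} is (\ref{offcenterrecurF5qa})--(\ref{offcenterrecurF5qb}); its companion regions are \emph{not} members of the $\overline{F}$/$\overline{E}$-families with a side fern shortened, but rather central $Q^{\nearrow}$-, $Q^{\nwarrow}$-type regions from \cite{HoleDent} (which enter as known quantities) together with $\overline{E}^{(3)}$- and $\overline{F}^{(5)}$-regions in which the ferns are \emph{lengthened} ($\textbf{a}^{+1}$, $\textbf{b}^{+1}$ add one unit, or even a new triangle, to the last term) and $x$, $z$ are shifted. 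Hence induction on $m+n+k$, or lexicographically on $a+b+c$, fails outright. Induction on $x+z$ alone also fails, because the simultaneous induction forces you through recurrences of the other families — e.g.\ the $\overline{E}^{(3)}$-recurrence (\ref{offcenterrecurE3qa}) contains $\overline{E}^{(3)}_{x+1,y,z-1}$, and the $K^{(1)}$-recurrence contains $K^{(1)}_{x-1,y-1,z+1}$ — in which $x+z$ is unchanged. The paper's parameter is $h=p+x+z$, with $p$ the quasi-perimeter of the base hexagon (Lemma \ref{claimp}); this is exactly what makes all $66$ recurrences strictly decreasing, and some replacement of this kind is indispensable.

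Two further points would need repair. First, your base cases do not match the recurrence structure: condensation shrinks $x$ and $z$, not the ferns, so the terminal cases are $x=0$, $z=0$ (and $p<6$), handled by splitting the region along the fern line into two dented semihexagons and applying the Cohn--Larsen--Propp formula (\ref{semieq}); setting $\textbf{c}=\emptyset$ or a fern entry to $0$ is only a normalization (Lemma \ref{lem3}), not a base case, and an off-central region with $\textbf{c}=\emptyset$ is not simply a $Q^{\leftarrow}$-region. Second, when $y$ sits at its minimal value $\max(a-b,-2)$ the recurrences produce regions that are no longer defined, so this extremal case must be treated separately; the paper does this in Lemma \ref{lem4} by passing to a companion region of strictly smaller $h$, and your proposal is silent on it.
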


\begin{thm}\label{off1thmQF6}
Assume that $\textbf{a}=(a_1,a_2,\dotsc,a_m)$, $\textbf{b}=(b_1,b_2,\dotsc,b_n)$, $\textbf{c}=(c_1,c_2,\dotsc,c_k)$ are three sequences  of nonnegative integers ($m,n,k$ are all even) and that $x,y,z$ are three  integers, such that $x\geq 0$, $y\geq \max(a-b,-2)$, $z\geq0$, and $x$ and $z$ have different parities. Then
\begin{align}\label{off1eqQF6}
\M&(\overline{F}^{(6)}_{x,y,z}(\textbf{a};\textbf{c};\textbf{b}))=\Lambda'_{x,2y+z+2\max(a,b)+2,z}(c)\notag\\
&\times s\left(a_1,\dotsc, a_{m}+\left\lceil\frac{x+z}{2}\right\rceil+1,c_1,\dotsc,c_{k}+\left\lfloor\frac{x+z}{2}\right\rfloor+b_n-1,b_{n-1},\dotsc,b_1\right)\notag\\
&\times s\left(y+b-\min(a,b), a_1,\dotsc,a_{m},\left\lceil\frac{x+z}{2}\right\rceil+c_1+1,\dotsc,c_{k},\left\lfloor\frac{x+z}{2}\right\rfloor-1,b_n,\dotsc,b_1,y+a-\min(a,b)+2 \right) \notag\\
&\times\frac{\Hf(c+\frac{x+z}{2}-1)}{\Hf(c)\Hf(\frac{x+z}{2}-1)}\frac{\Hf(\max(a,b)+y+\frac{x+z}{2}+1)}{\Hf(\max(a,b)+c+y+\frac{x+z}{2}+1)}\notag\\
&\times \frac{\Hf(\max(a,b)+y+z)\Hf(\max(a,b)+c+y+z+2)}{\Hf(o_a+o_b+o_c+z)\Hf(|a-b|+e_a+e_b+e_c+2y+z+2)}\notag\\
&\times \frac{\Hf(o_a+o_b+o_c)\Hf(|a-b|+e_a+e_b+e_c+2y+2)}{\Hf(\max(a,b)+y)\Hf(\max(a,b)+y+2)},
\end{align}
where $\Lambda'_{x,y,z}(m)$ is defined as in (\ref{lambdaeq2}).
\end{thm}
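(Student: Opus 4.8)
The plan is to establish (\ref{off1eqQF6}) by induction, using Kuo's graphical condensation \cite{Kuo} on the planar dual graph of the $\overline{F}^{(6)}$-type region, exactly along the template used for the earlier $\overline{F}$-type theorems (Theorems \ref{off1thmQF3}, \ref{off1thmQF4}, \ref{off1thmQF5}). Since each fern may be assumed to have an even number of triangles (padding with a trailing $0$ if necessary), one inducts on the total size $a+b+c$ of the three ferns, with $x+z$ as a secondary parameter, the parities of $x,z$ and the inequality $y\geq\max(a-b,-2)$ being preserved throughout. The base cases are: (i) all three ferns empty, where $\overline{F}^{(6)}_{x,y,z}(\emptyset;\emptyset;\emptyset)$ degenerates to a cored hexagon with a down-pointing triangular hole whose tiling count is already known --- from the prequel \cite{HoleDent} together with the off-central cored-hexagon enumerations recorded in Corollaries \ref{cor1}--\ref{cor3} (after a relabelling of $x,y,z$), or by a direct MacMahon-type evaluation; and (ii) small $x+z$ (e.g.\ one of $x,z$ equal to $0$), where forced lozenges along the pushed-out sides peel the region down to a dented semihexagon enumerated by the Cohn--Larsen--Propp formula (\ref{semieq}).

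For the inductive step one applies the appropriate version of Kuo condensation with the four distinguished triangles $u,v,w,s$ placed as indicated by the shaded triangles in Figure \ref{fig:offF2}(d). Writing $R:=\overline{F}^{(6)}_{x,y,z}(\mathbf{a};\mathbf{c};\mathbf{b})$, condensation yields an identity of the shape
\[
\M(R)\,\M(R^{uvws})=\M(R^{uv})\,\M(R^{ws})+\M(R^{uw})\,\M(R^{vs}),
\]
where $R^{\bullet}$ denotes the region obtained from $R$ by deleting (or, on the boundary, by attaching) the indicated triangles. The crucial combinatorial point, to be checked by inspecting the region, is that every region on the right is of a type already understood: one is again an $\overline{F}^{(6)}$-type region with a strictly smaller fern (covered by the induction hypothesis); others are $\overline{F}^{(5)}$- and $\overline{F}^{(4)}$- (or $\overline{F}^{(3)}$-) type regions, covered by Theorems \ref{off1thmQF5}, \ref{off1thmQF4} and \ref{off1thmQF3}; and the remaining ones, after removing forced lozenges near the northwest and northeast sides of the base hexagon, factor as a dented semihexagon given by (\ref{semieq}) times a smaller $\overline{F}$-type region. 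Because the root of the middle fern sits at position $6$ of Figure \ref{fig:offposition}(b), one step beyond position $5$, this is precisely the mechanism that promotes the $\Theta'$-factor of (\ref{off1eqQF5}) to the $\Lambda'$-factor of (\ref{off1eqQF6}), with the quadratic polynomial $Q_2$ (and the $P_2$ hidden in the neighbouring formulas) arising as the condensation discrepancy; in some parameter ranges one can alternatively absorb the off-centeredness of the root into the middle fern, which supplements but does not replace the condensation argument.

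It then remains to substitute the right-hand sides of (\ref{off1eqQF6}), of the earlier $\overline{F}$-formulas, of (\ref{semieq}), and of the inductive instance into the condensation identity, and to verify that it holds. Since $\Lambda'$ and $\Theta'$ (from (\ref{lambdaeq2}), (\ref{thetaeq2})) and the $s(\cdots)$ factors are all products of hyperfactorials $\Hf$, the identity collapses --- after repeated use of $\Hf(n)=\Gamma(n)\,\Hf(n-1)$ to cancel common factors --- to a polynomial identity in $x,y,z,a,b,c,o_a,o_b,o_c,e_a,e_b,e_c$, in which $Q_2(x,y,z,m)$ and $P_2(x,y,z,m)$ carry the nontrivial content.

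I expect the main obstacle to be exactly this final verification, together with the bookkeeping that identifies the five condensation regions. The numerous floor/ceiling symbols and the $\max(a,b),\min(a,b)$ combinations force a case split --- at least on whether $a\geq b$ or $a<b$, and on the parity of $x$, since $Q_2$ and $P_2$ are themselves defined by cases --- and one must check that the recurrence is consistent across all of these and at the boundary values $y=\max(a-b,-2)$ and $x+z$ minimal. Equally delicate is confirming that the placement of $u,v,w,s$ in Figure \ref{fig:offF2}(d) really produces only regions on the above list (i.e.\ that no genuinely new, un-enumerated region slips in), and that the forced-lozenge arguments along the two upper sides of the pushed-out base hexagon remain valid for every admissible choice of parameters. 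Once this combinatorial skeleton is pinned down, the algebraic step, though long, is routine and can be organized exactly as in the proofs of Theorems \ref{off1thmQF3}--\ref{off1thmQF5}.
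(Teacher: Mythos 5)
Your overall instinct (Kuo condensation at the four shaded triangles of Figure \ref{fig:offF2}(d), then an inductive/algebraic verification against the hyperfactorial formulas) is the right family of ideas, but the combinatorial skeleton you describe is not the one that the condensation actually produces, and your induction would not close. When Theorem \ref{kuothm2} is applied to $\overline{F}^{(6)}_{x,y,z}(\textbf{a};\textbf{c};\textbf{b})$ with the indicated $u,v,w,s$, the five companion regions (after removing forced lozenges and, for some of them, reflecting) are \emph{not} $\overline{F}^{(5)}$-, $\overline{F}^{(4)}$- or $\overline{F}^{(3)}$-type regions, nor $\overline{F}^{(6)}$-regions with smaller ferns, nor a semihexagon times a smaller $\overline{F}$-region. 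They are of types $\overline{G}^{(5)}$, $\overline{K}^{(8)}$ and $\overline{E}^{(1)}$ (with the modified fern sequences $\textbf{a}^{+1}$ and $\textbf{c}^{\leftrightarrow}$), together with one $\overline{F}^{(6)}$-region carrying the \emph{same} ferns but parameters $(x+1,y,z-1)$; see the recurrences (\ref{offcenterrecurF6qa})--(\ref{offcenterrecurF6qb}). Two consequences follow. First, your proposed induction on the total fern size $a+b+c$ does not decrease along the recurrence at all (the ferns are unchanged; what decreases is the quantity $h=p+x+z$ built from the quasi-perimeter, which is the induction variable used in the paper). Second, since the recurrence couples $\overline{F}^{(6)}$ to the $\overline{G}$-, $\overline{K}$- and $\overline{E}$-families, Theorem \ref{off1thmQF6} cannot be proved in isolation assuming only Theorems \ref{off1thmQF3}--\ref{off1thmQF5}: one must run a simultaneous induction over all thirty off-central families (together with the central-case formulas of \cite{HoleDent}), which is exactly how the paper organizes Section \ref{sec:proofoff}.

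Two further points in your plan need repair. The base cases are not ``all ferns empty'' (which for the $\overline{F}$-families does not reduce to the Ciucu--Eisenk\"olbl--Krattenthaler--Zare cored hexagon anyway, since the line $\ell$ leaves both the east and west vertices on one side); the paper's base cases are $x=0$, $z=0$, or quasi-perimeter $p<6$, where the region splits along $\ell$ into two dented semihexagons handled by Lemma \ref{RS} and the Cohn--Larsen--Propp formula (\ref{semieq}). And the boundary case in which $y$ attains its minimal value is not merely a consistency check: there the Kuo recurrences for some families leave the admissible parameter range, and a separate reduction (Lemma \ref{lem4}, converting the region into one of strictly smaller $h$ with the same tiling number, possibly of a different family) is required before the induction hypothesis can be invoked. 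With the correct list of companion regions, the correct induction variable, and these two lemmas in place, the remaining step is indeed the routine (if lengthy) hyperfactorial verification you describe.
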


\begin{figure}\centering
\setlength{\unitlength}{3947sp}%
\begingroup\makeatletter\ifx\SetFigFont\undefined%
\gdef\SetFigFont#1#2#3#4#5{%
  \reset@font\fontsize{#1}{#2pt}%
  \fontfamily{#3}\fontseries{#4}\fontshape{#5}%
  \selectfont}%
\fi\endgroup%
\resizebox{15cm}{!}{
\begin{picture}(0,0)%
\includegraphics{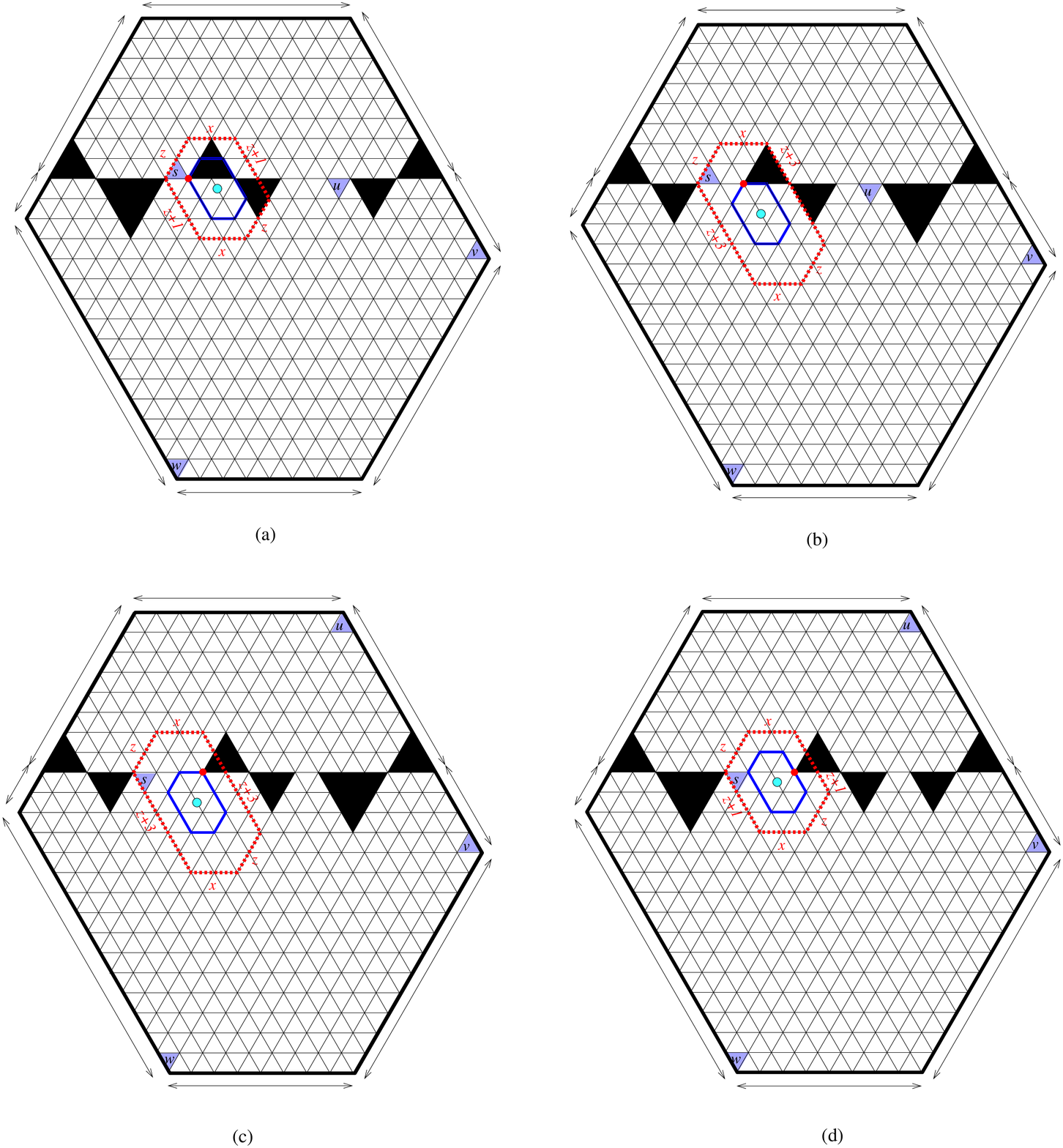}%
\end{picture}%
%
%

\begin{picture}(19276,20702)(794,-22102)
\put(1383,-6916){\rotatebox{300.0}{\makebox(0,0)[lb]{\smash{{\SetFigFont{14}{16.8}{\rmdefault}{\mddefault}{\itdefault}{$y+z+e_a+e_b+e_c+a-b+1$}%
}}}}}
\put(10934,-5536){\rotatebox{60.0}{\makebox(0,0)[lb]{\smash{{\SetFigFont{14}{16.8}{\rmdefault}{\mddefault}{\itdefault}{$y+b-a$}%
}}}}}
\put(19154,-5093){\rotatebox{300.0}{\makebox(0,0)[lb]{\smash{{\SetFigFont{14}{16.8}{\rmdefault}{\mddefault}{\itdefault}{$y+3$}%
}}}}}
\put(7777,-13000){\rotatebox{300.0}{\makebox(0,0)[lb]{\smash{{\SetFigFont{14}{16.8}{\rmdefault}{\mddefault}{\itdefault}{$z+o_a+o_b+o_c$}%
}}}}}
\put(19124,-15458){\rotatebox{300.0}{\makebox(0,0)[lb]{\smash{{\SetFigFont{14}{16.8}{\rmdefault}{\mddefault}{\itdefault}{$y+a-b+1$}%
}}}}}
\put(9164,-15593){\rotatebox{300.0}{\makebox(0,0)[lb]{\smash{{\SetFigFont{14}{16.8}{\rmdefault}{\mddefault}{\itdefault}{$y+3$}%
}}}}}
\put(989,-15901){\rotatebox{60.0}{\makebox(0,0)[lb]{\smash{{\SetFigFont{14}{16.8}{\rmdefault}{\mddefault}{\itdefault}{$y+b-a$}%
}}}}}
\put(11227,-15640){\rotatebox{60.0}{\makebox(0,0)[lb]{\smash{{\SetFigFont{14}{16.8}{\rmdefault}{\mddefault}{\itdefault}{$y$}%
}}}}}
\put(1053,-17155){\rotatebox{300.0}{\makebox(0,0)[lb]{\smash{{\SetFigFont{14}{16.8}{\rmdefault}{\mddefault}{\itdefault}{$y+z+e_a+e_b+e_c+3$}%
}}}}}
\put(11206,-7036){\rotatebox{300.0}{\makebox(0,0)[lb]{\smash{{\SetFigFont{14}{16.8}{\rmdefault}{\mddefault}{\itdefault}{$y+z+e_a+e_b+e_c+3$}%
}}}}}
\put(11161,-17101){\rotatebox{300.0}{\makebox(0,0)[lb]{\smash{{\SetFigFont{14}{16.8}{\rmdefault}{\mddefault}{\itdefault}{$y+z+e_a+e_b+e_c+a-b+1$}%
}}}}}
\put(18311,-9713){\rotatebox{60.0}{\makebox(0,0)[lb]{\smash{{\SetFigFont{14}{16.8}{\rmdefault}{\mddefault}{\itdefault}{$y+z+e_a+e_b+e_c+b-a$}%
}}}}}
\put(8126,-20258){\rotatebox{60.0}{\makebox(0,0)[lb]{\smash{{\SetFigFont{14}{16.8}{\rmdefault}{\mddefault}{\itdefault}{$y+z+e_a+e_b+e_c+b-a$}%
}}}}}
\put(18206,-20348){\rotatebox{60.0}{\makebox(0,0)[lb]{\smash{{\SetFigFont{14}{16.8}{\rmdefault}{\mddefault}{\itdefault}{$y+z+e_a+e_b+e_c$}%
}}}}}
\put(17707,-2500){\rotatebox{300.0}{\makebox(0,0)[lb]{\smash{{\SetFigFont{14}{16.8}{\rmdefault}{\mddefault}{\itdefault}{$z+o_a+o_b+o_c$}%
}}}}}
\put(17692,-12880){\rotatebox{300.0}{\makebox(0,0)[lb]{\smash{{\SetFigFont{14}{16.8}{\rmdefault}{\mddefault}{\itdefault}{$z+o_a+o_b+o_c$}%
}}}}}
\put(11554,-14790){\rotatebox{60.0}{\makebox(0,0)[lb]{\smash{{\SetFigFont{14}{16.8}{\rmdefault}{\mddefault}{\itdefault}{$z+o_a+o_b+o_c$}%
}}}}}
\put(1534,-14790){\rotatebox{60.0}{\makebox(0,0)[lb]{\smash{{\SetFigFont{14}{16.8}{\rmdefault}{\mddefault}{\itdefault}{$z+o_a+o_b+o_c$}%
}}}}}
\put(11594,-4456){\rotatebox{60.0}{\makebox(0,0)[lb]{\smash{{\SetFigFont{14}{16.8}{\rmdefault}{\mddefault}{\itdefault}{$z+o_a+o_b+o_c$}%
}}}}}
\put(14240,-1766){\makebox(0,0)[lb]{\smash{{\SetFigFont{14}{16.8}{\rmdefault}{\mddefault}{\itdefault}{$x+e_a+e_b+e_c$}%
}}}}
\put(14401,-12189){\makebox(0,0)[lb]{\smash{{\SetFigFont{14}{16.8}{\rmdefault}{\mddefault}{\itdefault}{$x+e_a+e_b+e_c$}%
}}}}
\put(4081,-12219){\makebox(0,0)[lb]{\smash{{\SetFigFont{14}{16.8}{\rmdefault}{\mddefault}{\itdefault}{$x+e_a+e_b+e_c$}%
}}}}
\put(14341,-10936){\makebox(0,0)[lb]{\smash{{\SetFigFont{14}{16.8}{\rmdefault}{\mddefault}{\itdefault}{$x+o_a+o_b+o_c$}%
}}}}
\put(4374,-21444){\makebox(0,0)[lb]{\smash{{\SetFigFont{14}{16.8}{\rmdefault}{\mddefault}{\itdefault}{$x+o_a+o_b+o_c$}%
}}}}
\put(14416,-21481){\makebox(0,0)[lb]{\smash{{\SetFigFont{14}{16.8}{\rmdefault}{\mddefault}{\itdefault}{$x+o_a+o_b+o_c$}%
}}}}
\put(1184,-5221){\rotatebox{60.0}{\makebox(0,0)[lb]{\smash{{\SetFigFont{14}{16.8}{\rmdefault}{\mddefault}{\itdefault}{$y$}%
}}}}}
\put(1684,-4335){\rotatebox{60.0}{\makebox(0,0)[lb]{\smash{{\SetFigFont{14}{16.8}{\rmdefault}{\mddefault}{\itdefault}{$z+o_a+o_b+o_c$}%
}}}}}
\put(4036,-1674){\makebox(0,0)[lb]{\smash{{\SetFigFont{14}{16.8}{\rmdefault}{\mddefault}{\itdefault}{$x+e_a+e_b+e_c$}%
}}}}
\put(7846,-2431){\rotatebox{300.0}{\makebox(0,0)[lb]{\smash{{\SetFigFont{14}{16.8}{\rmdefault}{\mddefault}{\itdefault}{$z+o_a+o_b+o_c$}%
}}}}}
\put(9391,-5011){\rotatebox{300.0}{\makebox(0,0)[lb]{\smash{{\SetFigFont{14}{16.8}{\rmdefault}{\mddefault}{\itdefault}{$y+a-b+1$}%
}}}}}
\put(8281,-9766){\rotatebox{60.0}{\makebox(0,0)[lb]{\smash{{\SetFigFont{14}{16.8}{\rmdefault}{\mddefault}{\itdefault}{$y+z+e_a+e_b+e_c$}%
}}}}}
\put(4576,-10876){\makebox(0,0)[lb]{\smash{{\SetFigFont{14}{16.8}{\rmdefault}{\mddefault}{\itdefault}{$x+o_a+o_b+o_c$}%
}}}}
\put(4636,-4838){\makebox(0,0)[lb]{\smash{{\SetFigFont{14}{16.8}{\familydefault}{\mddefault}{\updefault}{\color[rgb]{1,1,1}$c_1$}%
}}}}
\put(14435,-4870){\makebox(0,0)[lb]{\smash{{\SetFigFont{14}{16.8}{\familydefault}{\mddefault}{\updefault}{\color[rgb]{1,1,1}$c_1$}%
}}}}
\put(15388,-15250){\makebox(0,0)[lb]{\smash{{\SetFigFont{14}{16.8}{\familydefault}{\mddefault}{\updefault}{\color[rgb]{1,1,1}$c_1$}%
}}}}
\put(4918,-15261){\makebox(0,0)[lb]{\smash{{\SetFigFont{14}{16.8}{\familydefault}{\mddefault}{\updefault}{\color[rgb]{1,1,1}$c_1$}%
}}}}
\put(2799,-15786){\makebox(0,0)[lb]{\smash{{\SetFigFont{14}{16.8}{\familydefault}{\mddefault}{\updefault}{\color[rgb]{1,1,1}$a_2$}%
}}}}
\put(13015,-15887){\makebox(0,0)[lb]{\smash{{\SetFigFont{14}{16.8}{\familydefault}{\mddefault}{\updefault}{\color[rgb]{1,1,1}$a_2$}%
}}}}
\put(12782,-5350){\makebox(0,0)[lb]{\smash{{\SetFigFont{14}{16.8}{\familydefault}{\mddefault}{\updefault}{\color[rgb]{1,1,1}$a_2$}%
}}}}
\put(3181,-5394){\makebox(0,0)[lb]{\smash{{\SetFigFont{14}{16.8}{\familydefault}{\mddefault}{\updefault}{\color[rgb]{1,1,1}$a_2$}%
}}}}
\put(12040,-15280){\makebox(0,0)[lb]{\smash{{\SetFigFont{14}{16.8}{\familydefault}{\mddefault}{\updefault}{\color[rgb]{1,1,1}$a_1$}%
}}}}
\put(11942,-4878){\makebox(0,0)[lb]{\smash{{\SetFigFont{14}{16.8}{\familydefault}{\mddefault}{\updefault}{\color[rgb]{1,1,1}$a_1$}%
}}}}
\put(7078,-15914){\makebox(0,0)[lb]{\smash{{\SetFigFont{14}{16.8}{\familydefault}{\mddefault}{\updefault}{\color[rgb]{1,1,1}$b_2$}%
}}}}
\put(2000,-15292){\makebox(0,0)[lb]{\smash{{\SetFigFont{14}{16.8}{\familydefault}{\mddefault}{\updefault}{\color[rgb]{1,1,1}$a_1$}%
}}}}
\put(5683,-15786){\makebox(0,0)[lb]{\smash{{\SetFigFont{14}{16.8}{\familydefault}{\mddefault}{\updefault}{\color[rgb]{1,1,1}$c_2$}%
}}}}
\put(16138,-15775){\makebox(0,0)[lb]{\smash{{\SetFigFont{14}{16.8}{\familydefault}{\mddefault}{\updefault}{\color[rgb]{1,1,1}$c_2$}%
}}}}
\put(15253,-5373){\makebox(0,0)[lb]{\smash{{\SetFigFont{14}{16.8}{\familydefault}{\mddefault}{\updefault}{\color[rgb]{1,1,1}$c_2$}%
}}}}
\put(5394,-5311){\makebox(0,0)[lb]{\smash{{\SetFigFont{14}{16.8}{\familydefault}{\mddefault}{\updefault}{\color[rgb]{1,1,1}$c_2$}%
}}}}
\put(2128,-4793){\makebox(0,0)[lb]{\smash{{\SetFigFont{14}{16.8}{\familydefault}{\mddefault}{\updefault}{\color[rgb]{1,1,1}$a_1$}%
}}}}
\put(17398,-15797){\makebox(0,0)[lb]{\smash{{\SetFigFont{14}{16.8}{\familydefault}{\mddefault}{\updefault}{\color[rgb]{1,1,1}$b_2$}%
}}}}
\put(17068,-5485){\makebox(0,0)[lb]{\smash{{\SetFigFont{14}{16.8}{\familydefault}{\mddefault}{\updefault}{\color[rgb]{1,1,1}$b_2$}%
}}}}
\put(7441,-5274){\makebox(0,0)[lb]{\smash{{\SetFigFont{14}{16.8}{\familydefault}{\mddefault}{\updefault}{\color[rgb]{1,1,1}$b_2$}%
}}}}
\put(18163,-15302){\makebox(0,0)[lb]{\smash{{\SetFigFont{14}{16.8}{\familydefault}{\mddefault}{\updefault}{\color[rgb]{1,1,1}$b_1$}%
}}}}
\put(18110,-4893){\makebox(0,0)[lb]{\smash{{\SetFigFont{14}{16.8}{\familydefault}{\mddefault}{\updefault}{\color[rgb]{1,1,1}$b_1$}%
}}}}
\put(8289,-4801){\makebox(0,0)[lb]{\smash{{\SetFigFont{14}{16.8}{\familydefault}{\mddefault}{\updefault}{\color[rgb]{1,1,1}$b_1$}%
}}}}
\put(8143,-15299){\makebox(0,0)[lb]{\smash{{\SetFigFont{14}{16.8}{\familydefault}{\mddefault}{\updefault}{\color[rgb]{1,1,1}$b_1$}%
}}}}
\end{picture}%
}
\caption{Four $\overline{G}^{(i)}$-type regions: (a)  $\overline{G}^{(2)}_{2,2,2}(2,3;\ 2,2;\ 2,2)$, (b) $\overline{G}^{(3)}_{2,1,2}(2,2;\ 2,2;\ 2,3)$, (c) $\overline{G}^{(4)}_{2,1,2}(2,2;\ 2,2;\ 2,3)$, and (d) $\overline{G}^{(5)}_{2,2,2}(2,3;\ 2,2;\ 2,2)$.}\label{fig:offG2}
\end{figure}

\subsection{The $\overline{G}^{(i)}$-type regions}
Our regions in this section is defined by applying the pushing procedure in the definition of the $\overline{E}^{(i)}$-type region to the auxiliary hexagon as in the case of the $G^{(i)}$-type region. In particular, we are assuming that $x$ and $z$ are two nonnegative integers  having the same parity, and that $y$ is an integer with domain defined particularly in the theorems below.  For $i=2,5$, we start with an auxiliary hexagon of side-lengths $x,z+1,z,x,z+1,z$, for $i=3,4$, we have the auxiliary hexagon of side-lengths $x,z+3,z,x,z+3,z$. Next, we perform the 2-stage pushing to all six sides of the auxiliary hexagons above in the same way as in the definition of the $\overline{E}^{(i)}$-type regions to obtain the base hexagon $H$. Finally, the three ferns are removed from the base hexagon $H$, such that the leftmost of the middle fern is at the off-central positions $i$ as indicated in Figure \ref{fig:offposition}(c). There are no counterparts for $i=1,6,7,8$. Denote by $\overline{G}^{(i)}_{x,y,z}(\textbf{a};\ \textbf{c};\ \textbf{b})$ the resulting regions, for $i=2,3,4,5$.

 \begin{rmk}\label{rmkQG}
In the definition of the $\overline{G}^{(2)}$-type regions, if we remove the three ferns such that the root of the middle one is $1/2$-unit to the northwest of the center of the auxiliary hexagon $H_0$, then we obtain the region $Q^{\nwarrow}_{x,y,z}(\textbf{a};\textbf{c};\textbf{b})$ in Theorem 2.7 of \cite{HoleDent}. This way, the $\overline{G}^{(i)}$-type regions can be viewed as counterparts of the $Q^{\nwarrow}$-type regions.
\end{rmk}

\begin{thm}\label{off1thmQG2}
Assume that $\textbf{a}=(a_1,a_2,\dotsc,a_m)$, $\textbf{b}=(b_1,b_2,\dotsc,b_n)$, $\textbf{c}=(c_1,c_2,\dotsc,c_k)$ are three sequences  of nonnegative integers ($m,n,k$ are all even) and that $x,y,z$ are three integers, such that $x\geq 0$, $y\geq \max(a-b,-1)$, $z\geq0$, and $x$ and $z$ have the same parity. Then
\begin{align}\label{off1eqQG2}
\M&(\overline{G}^{(2)}_{x,y,z}(\textbf{a};\textbf{c};\textbf{b}))=\Lambda'_{2y+z+2\max(a,b)+1,z,x}(c)\notag\\
&\times s\left(a_1,\dotsc, a_{m}+\frac{x+z}{2}-1,c_1,\dotsc,c_{k}+\frac{x+z}{2}+b_n+1,b_{n-1},\dotsc,b_1\right)\notag\\
&\times s\left(y+b-\min(a,b), a_1,\dotsc,a_{m},\frac{x+z}{2}+c_1-1,\dotsc,c_{k},\frac{x+z}{2}+1,b_n,\dotsc,b_1,y+a-\min(a,b)+1 \right) \notag\\
&\times\frac{\Hf(c+\frac{x+z}{2}-1)}{\Hf(c)\Hf(\frac{x+z}{2}-1)}\frac{\Hf(\max(a,b)+y+\frac{x+z}{2}-1)}{\Hf(\max(a,b)+c+y+\frac{x+z}{2}-1)}\notag\\
&\times \frac{\Hf(\max(a,b)+y+z+1)\Hf(\max(a,b)+c+y+z)}{\Hf(o_a+o_b+o_c+z)\Hf(|a-b|+e_a+e_b+e_c+2y+z+1)}\notag\\
&\times \frac{\Hf(o_a+o_b+o_c)\Hf(|a-b|+e_a+e_b+e_c+2y)}{\Hf(\max(a,b)+y)\Hf(\max(a,b)+y+1)},
\end{align}
where $\Lambda'_{x,y,z}(m)$ is defined as in (\ref{lambdaeq2}).
\end{thm}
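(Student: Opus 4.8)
The plan is to compute $\M(\overline{G}^{(2)}_{x,y,z}(\textbf{a};\textbf{c};\textbf{b}))$ by passing to the planar dual graph of the triangulated region and running an induction on the total number of triangles in the three ferns (after the standing normalization that each fern has an even number of parts, padding with a $0$-triangle when needed). First I would strip away the lozenges forced along the boundary and at the three intrusions of the ferns into the northwest, northeast, and southwest sides; this cleans up the graph and, as a by-product, makes precise the assertion that the positions of the two side ferns are pinned down by the level of the middle one. The engine of the induction is one of the two forms of Kuo's graphical condensation recalled in Section~3, applied with the four unit triangles $u,v,w,s$ marked in Figure~\ref{fig:offG2}(a); which of the two condensation identities to use, and exactly where to place $u,v,w,s$, is governed by the parities of $x$ and $z$ and by the sign of $y$, which is precisely why the paper must isolate thirty separate cases.

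In the inductive step, the chosen condensation yields a three-term identity of the schematic shape
$$\M(\overline{G}^{(2)}_{x,y,z}(\textbf{a};\textbf{c};\textbf{b}))\cdot\M(R_0)=\M(R_1)\M(R_2)+\M(R_3)\M(R_4),$$
where $R_0,\dots,R_4$ all have strictly smaller fern size: each arises by deleting a triangle from the tip of one or more ferns (or by a unit shift of the $y$-parameter), and each is either an off-central region already enumerated earlier in Section~2 --- an $\overline{E}^{(i)}$- or $\overline{F}^{(i)}$-type region covered by Theorems~\ref{off1thmQ1}--\ref{off1thmQF6} --- or a $Q^{\nwarrow}$-type region of the prequel \cite{HoleDent} (Theorem~2.7 there). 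Substituting the known product formulas for those six regions, the theorem collapses to a single hyperfactorial identity, which I would check by the routine ingredients $\Hf(t)=\Gamma(t)\,\Hf(t-1)$ together with the way the Cohn--Larsen--Propp evaluation \eqref{semieq} of $s(\cdot)$ behaves when a part is increased or a new part is inserted --- this is exactly where the two trailing $s$-factors and the final hyperfactorial ratio of \eqref{off1eqQG2} are produced. The base of the induction is the trivial-fern case, where $\overline{G}^{(2)}$ degenerates to an off-central cored hexagon whose tiling number is a $\Lambda'$-evaluation (cf.\ Corollary~\ref{cor2}, up to the relabeling of parameters that distinguishes Figure~\ref{fig:offposition}(c) from Figure~\ref{fig:offposition}(b)); there the trailing $s$- and hyperfactorial factors reduce via \eqref{semieq} to the permutation of the $\Lambda'$-indices appearing on the right of \eqref{off1eqQG2}.

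Two auxiliary points require attention. When $y$ is negative --- allowed here for $\max(a-b,-1)\le y<0$ --- the base hexagon $H$ degenerates along the relevant side; I would dispose of this as in the proofs of Conjectures~\ref{con1} and~\ref{con2} above, by peeling off additional forced lozenges or replacing the region with a horizontal reflection so that the $y$-parameter becomes nonnegative. And to keep the recurrence self-contained one must verify that none of $R_0,\dots,R_4$ falls \emph{outside} the list of regions already handled; this can be arranged by folding the cases $k\le 2$ or $c_1=0$ into the base step and running the condensation only when $c_1\ge 1$, so that every removal of $u,v,w,s$ genuinely shrinks a nontrivial fern.

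I expect the main obstacle to be the condensation setup itself: choosing $u,v,w,s$ so that \emph{all five} auxiliary regions $R_0,\dots,R_4$ lie in families whose product formulas are already available (the $\overline{E}/\overline{F}$-type regions of this section or the $Q$-type regions of \cite{HoleDent}), and then reading off correctly which region with which shifted parameters occupies each slot. Once the recurrence is pinned down, the remaining hyperfactorial identity, though long, is mechanical.
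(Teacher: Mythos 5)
There is a genuine gap: your induction has no decreasing quantity. The Kuo condensation that works here (Theorem \ref{kuothm1} with $u,v,w,s$ as in Figure \ref{fig:offG2}(a)) does \emph{not} delete triangles from the tips of the ferns. Removing those four unit triangles forces lozenges that trim the sides of the base hexagon, so the auxiliary regions in the paper's recurrences \eqref{offcenterrecurG2qa}--\eqref{offcenterrecurG2qc} are
$\overline{E}^{(1)}_{x-1,y,z-1}(\textbf{a}^{+1};\textbf{c};\textbf{b}^{+1})$, $\overline{K}^{(5)}_{x-1,y\pm0/-1,z}(\textbf{a};\textbf{c};\textbf{b}^{+1})$, $\overline{F}^{(3)}_{x,y\,\text{or}\,y+1,z-1}(\textbf{a}^{+1};\textbf{c};\textbf{b})$, $\overline{G}^{(2)}_{x-1,y,z-1}(\textbf{a}^{+1};\textbf{c};\textbf{b}^{+1})$ and $\overline{E}^{(1)}_{x,y,z}(\textbf{a};\textbf{c};\textbf{b})$: the fern entries \emph{grow} (the last parts of $\textbf{a}$ and $\textbf{b}$ are increased by $1$) while $x$ and $z$ shrink. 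Hence an induction on the total size of the ferns cannot close, and the recurrence also involves a $\overline{K}^{(5)}$-term and a $\overline{G}^{(2)}$-term that your list of ``already enumerated'' regions omits. Since the $\overline{E}$-, $\overline{F}$-, $\overline{K}$-theorems of Section 2 are not independently established but are proved together with Theorem \ref{off1thmQG2}, the argument must be organized as a \emph{simultaneous} induction over all thirty families (together with the eight central families of the prequel); the paper's statistic $h=p+x+z$, where $p$ is the quasi-perimeter, is exactly what makes every factor in every recurrence strictly smaller.

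Your base case is also circular: the trivial-fern region is an off-central cored hexagon whose formula is Corollary \ref{cor2}, which in this paper is itself \emph{deduced} from the theorems being proved (it is not imported from elsewhere). The correct base cases are $x=0$ and $z=0$ (and small quasi-perimeter), where the region splits along the fern line into two dented semihexagons, handled by the Region-splitting Lemma \ref{RS} and the Cohn--Larsen--Propp formula \eqref{semieq}. Finally, when $y$ sits at its minimum the condensation degenerates, and reflecting the region does not fix this; one needs the conversion of Lemma \ref{lem4}, which trades the region for one of a different family with strictly smaller $h$, together with the $0$-triangle elimination of Lemma \ref{lem3}. With the induction re-founded on $h$, the rest of your outline (substituting the six product formulas and verifying a hyperfactorial identity) does match the paper's computation.
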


\begin{thm}\label{off1thmQG3}
Assume that $\textbf{a}=(a_1,a_2,\dotsc,a_m)$, $\textbf{b}=(b_1,b_2,\dotsc,b_n)$, $\textbf{c}=(c_1,c_2,\dotsc,c_k)$ are three sequences  of nonnegative integers  ($m,n,k$ are all even) and that $x,y,z$ are three  integers, such that $x\geq 0$, $y\geq \max(a-b,-3)$, $z\geq0$, and $x$ and $z$ have the same parity.  Then
\begin{align}\label{off1eqQG3}
\M&(\overline{G}^{(3)}_{x,y,z}(\textbf{a};\textbf{c};\textbf{b}))=\Psi'_{2y+z+2\max(a,b)+3,z,x}(c)\notag\\
&\times s\left(a_1,\dotsc, a_{m}+\frac{x+z}{2},c_1,\dotsc,c_{k}+\frac{x+z}{2}+b_n,b_{n-1},\dotsc,b_1\right)\notag\\
&\times s\left(y+b-\min(a,b), a_1,\dotsc,a_{m},\frac{x+z}{2}+c_1,\dotsc,c_{k},\frac{x+z}{2},b_n,\dotsc,b_1,y+a-\min(a,b)+3 \right) \notag\\
&\times\frac{\Hf(c+\frac{x+z}{2})}{\Hf(c)\Hf(\frac{x+z}{2})}\frac{\Hf(\max(a,b)+y+\frac{x+z}{2})}{\Hf(\max(a,b)+c+y+\frac{x+z}{2})}\notag\\
&\times \frac{\Hf(\max(a,b)+y+z+3)\Hf(\max(a,b)+c+y+z)}{\Hf(o_a+o_b+o_c+z)\Hf(|a-b|+e_a+e_b+e_c+2y+z+3)}\notag\\
&\times \frac{\Hf(o_a+o_b+o_c)\Hf(|a-b|+e_a+e_b+e_c+2y+3)}{\Hf(\max(a,b)+y)\Hf(\max(a,b)+y+3)},
\end{align}
where $\Psi'_{x,y,z}(m)$ is the formula in  (\ref{psieq2}).
\end{thm}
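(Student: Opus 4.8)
The plan is to prove \eqref{off1eqQG3} by induction on the total size $a+b+c$ of the three ferns, with Kuo's graphical condensation (the two forms recalled in Section~3, from \cite{Kuo}) as the main engine; the argument runs in parallel with the proofs of the other theorems of this section, e.g.\ Theorem~\ref{off1thmQG2}, the only genuine differences being the location of the four marked unit triangles and the resulting hyperfactorial identity. First I would pass to the planar dual graph $G$ of $\overline{G}^{(3)}_{x,y,z}(\textbf{a};\textbf{c};\textbf{b})$, so that $\M(\overline{G}^{(3)}_{x,y,z}(\textbf{a};\textbf{c};\textbf{b}))$ is the number of perfect matchings of $G$, and remove the forced lozenges along the northwest and northeast boundaries created by the two outer ferns and by the convexity of the base hexagon.

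Next I would apply Kuo condensation to $G$ with the four distinguished unit triangles $u,v,w,s$ taken to be the shaded triangles in the $\overline{G}^{(3)}$ picture, Figure~\ref{fig:offG2}(b): roughly, two of them placed on the boundary near where the horizontal line $l$ meets the region (so that deleting or restoring them shifts the root of the middle fern, or the parameter $x$, by a controlled amount) and the other two near the bottom corners (so that they act on the parameter $y$ and on the last entries $a_m,b_n$ of the outer ferns). The crucial and delicate point — the one I would check with care — is that each of the five regions occurring in the condensation identity is again one of the regions already introduced: another $\overline{G}^{(i)}$-type region, or, in the degenerate subcases, an $\overline{E}$- or $\overline{F}$-type region, a dented semihexagon $S(\cdot)$, or an off-central cored hexagon with a down-pointing hole. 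When that closure holds, the induction hypothesis applies to all of them, and for the base cases one uses the Cohn--Larsen--Propp formula \eqref{semieq} (when $\textbf{c}=\emptyset$ the middle fern disappears and, after removing forced lozenges, the region becomes a product of two dented semihexagons — precisely the two $s(\cdot)$-factors in \eqref{off1eqQG3}) and the evaluation $\Psi'$ of the off-central cored hexagon from Corollary~\ref{cor3} together with its closed form \eqref{psieq2} (when in addition $\textbf{a},\textbf{b}$ degenerate to a single triangle or to the empty fern).

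It then remains to verify that the right-hand side of \eqref{off1eqQG3} satisfies the same condensation recurrence. This is a finite but lengthy computation, carried out by repeatedly applying $\Hf(n)=\Gamma(n)\,\Hf(n-1)$, the symmetry $s(a_1,\dots,a_{2l-1})=s(a_1,\dots,a_{2l-1},0)$, and the explicit definition \eqref{psieq2} of $\Psi'$; I would organize it by first checking the identity for the ``simple'' factor $\Psi'_{2y+z+2\max(a,b)+3,z,x}(c)$ alone and then for the remaining ratio of $\Hf$-values and $s$-values, so that the two computations never interfere.

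The hard part will be the second step: arranging the four marked triangles so that the Kuo identity stays inside the finite list of thirty regions, and in particular disposing of the several subcases that the formula hides — the parity of $x$ (equal to that of $z$), which governs every $\lfloor\cdot\rfloor$ and $\lceil\cdot\rceil$ appearing in \eqref{off1eqQG3} and in \eqref{psieq2}, and the sign of $a-b$, which interchanges the roles of the two outer ferns in the edge-pushing procedure of Figure~\ref{pushing2}. Once the recurrence and the base cases are established the proof is routine, but essentially all of the real work is in the bookkeeping of these parity and $a\lessgtr b$ cases and in the attendant hyperfactorial algebra.
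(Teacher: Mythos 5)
Your overall strategy (Kuo condensation plus induction, with the right-hand side checked against the same recurrence) is the paper's strategy in outline, but the induction you set up has a genuine gap. You induct on the fern size $a+b+c$, and this parameter does not decrease under the condensation identity that the region $\overline{G}^{(3)}_{x,y,z}(\textbf{a};\textbf{c};\textbf{b})$ actually satisfies: applying Theorem \ref{kuothm1} with the four marked triangles of Figure \ref{fig:offG2}(b) produces (after removing forced lozenges) the regions $\overline{E}^{(2)}$, $\overline{K}^{(6)}$, $\overline{F}^{(4)}$ and a smaller $\overline{G}^{(3)}$, in which the hexagon parameters $x,z$ (and sometimes $y$) drop but the ferns are \emph{enlarged} to $\textbf{a}^{+1}$, $\textbf{b}^{+1}$ (recurrences (\ref{offcenterrecurG3qa})--(\ref{offcenterrecurG3qc})). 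So an induction on $a+b+c$ cannot close; moreover, since the recurrence mixes four different families, no single-theorem induction works at all --- one must prove all thirty formulas (together with the central results of the prequel) simultaneously, and the paper's induction variable is $h=p+x+z$, where $p$ is the quasi-perimeter, precisely so that every term on the right of the recurrence either has strictly smaller $h$ or is a previously known central region.

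Two further pieces are missing from your plan. First, the degenerate situations in which the Kuo recurrence is not available must be handled separately: when some $a_i,b_j,c_t$ vanish one needs Lemma \ref{lem3} to eliminate the $0$-triangles, and when $y$ sits at its minimal value $\max(a-b,-3)$ the chosen four-triangle configuration no longer exists and one needs Lemma \ref{lem4}, which converts the region into one of strictly smaller $h$ by removing forced lozenges; you do not address the minimal-$y$ case, and the $a\lessgtr b$ split you mention is exactly where it bites. Second, your base case is wrong: setting $\textbf{c}=\emptyset$ does not make the region factor into two dented semihexagons (the outer ferns and the off-central gap remain, and $\Psi'_{\cdot,\cdot,\cdot}(0)$ is a full hexagon count, not $1$). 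The correct base cases are $x=0$ or $z=0$ (equivalently $p<6$ via Lemma \ref{claimp}), where cutting along the common line $\ell$ of the three ferns and using the Region-splitting Lemma \ref{RS} reduces everything to the Cohn--Larsen--Propp formula (\ref{semieq}), producing the two $s(\cdot)$-factors of (\ref{off1eqQG3}).
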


\begin{thm}\label{off1thmQG4}
Assume that $\textbf{a}=(a_1,a_2,\dotsc,a_m)$, $\textbf{b}=(b_1,b_2,\dotsc,b_n)$, $\textbf{c}=(c_1,c_2,\dotsc,c_k)$ are three sequences  of nonnegative integers ($m,n,k$ are all even) and that $x,y,z$ are three integers, such that $x\geq 0$, $y\geq \max(a-b,-3)$, $z\geq0$, and $x$ and $z$ have the same parity. Then
\begin{align}\label{off1eqQG4}
\M&(\overline{G}^{(4)}_{x,y,z}(\textbf{a};\textbf{c};\textbf{b}))=\Lambda_{z,2y+z+2\max(a,b)+3,x}(c)\notag\\
&\times s\left(a_1,\dotsc, a_{m}+\frac{x+z}{2}+1,c_1,\dotsc,c_{k}+\frac{x+z}{2}+b_n,b_{n-1}-1,\dotsc,b_1\right)\notag\\
&\times s\left(y+b-\min(a,b), a_1,\dotsc,a_{m},\frac{x+z}{2}+c_1+1,\dotsc,c_{k},\frac{x+z}{2}-1,b_n,\dotsc,b_1,y+a-\min(a,b)+3 \right) \notag\\
&\times\frac{\Hf(c+\frac{x+z}{2}+1)}{\Hf(c)\Hf(\frac{x+z}{2}+1)}\frac{\Hf(\max(a,b)+y+\frac{x+z}{2}+1)}{\Hf(\max(a,b)+c+y+\frac{x+z}{2}+1)}\notag\\
&\times \frac{\Hf(\max(a,b)+y+z+3)\Hf(\max(a,b)+c+y+z)}{\Hf(o_a+o_b+o_c+z))\Hf(|a-b|+e_a+e_b+e_c+2y+z+3)}\notag\\
&\times \frac{\Hf(o_a+o_b+o_c)\Hf(|a-b|+e_a+e_b+e_c+2y+3)}{\Hf(\max(a,b)+y)\Hf(\max(a,b)+y+3)},
\end{align}
where $\Lambda_{x,y,z}(m)$ is defined as in (\ref{lambdaeq1}).
\end{thm}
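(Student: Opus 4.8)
The plan is to prove Theorem~\ref{off1thmQG4} as one case of a single induction that simultaneously establishes the $\overline{G}^{(i)}$-type enumerations (Theorems~\ref{off1thmQG2}, \ref{off1thmQG3}, \ref{off1thmQG4} and their companion for $i=5$, the latter being a vertical reflection of the $i=2$ case), the induction running on the total size of the region (roughly $x+z+a+b+c$, with the small admissible values of $y$ treated separately). First I would pass to the planar dual graph $G$ of $\overline{G}^{(4)}_{x,y,z}(\textbf{a};\textbf{c};\textbf{b})$, so that $\M$ of the region equals the number of perfect matchings of $G$. Then I would mark four unit triangles on the boundary of $G$ --- two near the point where the right fern abuts the northeast side of the base hexagon and two near where the left fern abuts the northwest side --- and apply the appropriate version of Kuo's graphical condensation quoted in Section~3, obtaining a quadratic recurrence
\begin{equation*}
\M(\overline{G}^{(4)}_{x,y,z}(\textbf{a};\textbf{c};\textbf{b}))\cdot\M(R_0)=\M(R_1)\,\M(R_2)+\M(R_3)\,\M(R_4),
\end{equation*}
in which each $R_j$ is obtained from the original region by deleting or forcing lozenges along the marked triangles. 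The key geometric point --- which must be read off from the edge-pushing construction of Figure~\ref{pushing2} --- is that the $R_j$ are again $\overline{G}$-type regions with one of $x$, $z$, or a single fern entry decreased (or with a triangle of side $0$ appended to a fern), possibly multiplied by one ``degenerate'' factor which is a dented semihexagon and hence is given by the Cohn--Larsen--Propp formula~\eqref{semieq}.

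Next I would verify that the conjectured right-hand side of~\eqref{off1eqQG4} --- the product of $\Lambda_{z,2y+z+2\max(a,b)+3,x}(c)$, the two $s(\cdots)$ factors, and the hyperfactorial ratio --- satisfies the very same recurrence. Using the inductive hypothesis, each $\M(R_j)$ may be replaced by its closed form, so the verification reduces to a finite identity among hyperfactorials $\Hf$, Cohn--Larsen--Propp products, and the polynomial $Q_2(x,y,z,m)$; it is checked by repeated application of $\Hf(n)=\Hf(n-1)\,\Gamma(n)$ and $\Gamma(n+1)=n\,\Gamma(n)$ together with the parity-split definition of $Q_2$. The base cases are the configurations in which the middle fern $\textbf{c}$ is empty --- where the region becomes a hexagon with only the two boundary ferns removed, whose number of tilings is supplied by the prequel \cite{HoleDent} (compare Remark~\ref{rmkQG}) --- and the extreme admissible values of $x$, $z$, $y$, where the region collapses to a hexagon counted by MacMahon's box formula via the $\PP$-function.

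The step I expect to be the main obstacle is the bookkeeping in the first step: a Kuo cut alters the side-lengths of the base hexagon and can therefore flip the parity of $x$, which interchanges every $\left\lfloor\frac{x+z}{2}\right\rfloor$ with the corresponding $\left\lceil\frac{x+z}{2}\right\rceil$ and moves us among the thirty region families, so one must pin down exactly which family each $R_j$ belongs to and align the attendant $\pm1$ shifts together with the $\max(a,b)$ and $o_a-o_b$ terms. Once the recurrence is correctly identified, the resulting hyperfactorial identity --- lengthy, chiefly on account of the quadratic factor $Q_2$ --- is a routine cancellation, and the induction closes.
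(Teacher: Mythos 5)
There is a genuine gap in the inductive set-up. You propose to close the induction within the $\overline{G}^{(i)}$ family alone, asserting that after a Kuo condensation step ``the $R_j$ are again $\overline{G}$-type regions ... possibly multiplied by one degenerate factor which is a dented semihexagon.'' This is not what happens. When the four marked triangles are deleted, the forced lozenges that get removed shift the root of the middle fern relative to the center of the auxiliary hexagon, and this changes the \emph{family} of the leftover region (the paper stresses exactly this point when setting up the recurrences). Concretely, applying Theorem \ref{kuothm2} to $\overline{G}^{(4)}_{x,y,z}(\textbf{a};\textbf{c};\textbf{b})$ with the vertex placement of Figure \ref{fig:offG2}(c) produces, besides $\overline{G}^{(4)}$-regions with smaller parameters, regions of types $\overline{F}^{(5)}$, $\overline{E}^{(3)}$ and $Q^{\nearrow}$ (the last belonging to the central families of the prequel \cite{HoleDent}); see recurrences (\ref{offcenterrecurG4qa})--(\ref{offcenterrecurG4qb}). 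Consequently an induction restricted to Theorems \ref{off1thmQG2}--\ref{off1thmQG5} cannot invoke the needed closed forms for $\M(R_1),\dots,\M(R_4)$: you are forced into the simultaneous induction over \emph{all} thirty off-central families (with the eight central formulas of \cite{HoleDent} as known input), on a parameter such as $h=p+x+z$ that the paper shows strictly decreases at each step. Your fallback claim that the $i=5$ case is ``a vertical reflection of the $i=2$ case'' is also false --- the formulas for $\overline{G}^{(2)}$ and $\overline{G}^{(5)}$ involve the distinct functions $\Lambda'$ and $\Theta$, and the paper proves all four $\overline{G}^{(i)}$ theorems separately --- so the family cannot even be trimmed that way.

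A secondary, smaller issue: your base cases (empty middle fern, extreme parameter values) do not match what the recursion actually produces. The Kuo recurrences never shorten the ferns (they only modify entries, e.g.\ $\textbf{a}^{+1}$, $\textbf{b}^{+1}$), so ``$\textbf{c}$ empty'' is never reached; the correct degenerate cases are $x=0$, $z=0$ or quasi-perimeter $p<6$, where the region splits along the fern line into two dented semihexagons handled by Lemma \ref{RS} and the Cohn--Larsen--Propp formula (\ref{semieq}), together with the separate treatment of minimal $y$ (Lemma \ref{lem4}) and of zero-triangles in the ferns (Lemma \ref{lem3}). With the induction widened to all thirty families and these base cases substituted, your outline matches the paper's argument; as stated, however, the recursion does not close.
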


\begin{thm}\label{off1thmQG5}
Assume that $\textbf{a}=(a_1,a_2,\dotsc,a_m)$, $\textbf{b}=(b_1,b_2,\dotsc,b_n)$, $\textbf{c}=(c_1,c_2,\dotsc,c_k)$ are three sequences  of nonnegative integers ($m,n,k$ are all even) and that $x,y,z$ are three integers, such that $x\geq 0$, $y\geq \max(a-b,-1)$, $z\geq0$, and $x$ and $z$ have the same parity.  Then
\begin{align}\label{off1eqQG5}
\M&(\overline{G}^{(5)}_{x,y,z}(\textbf{a};\textbf{c};\textbf{b}))=\Theta_{2y+z+2\max(a,b)+1,z,x}(c)\notag\\
&\times s\left(a_1,\dotsc, a_{m}+\frac{x+z}{2}+1,c_1,\dotsc,c_{k}+\frac{x+z}{2}+b_n,b_{n-1}-1,\dotsc,b_1\right)\notag\\
&\times s\left(y+b-\min(a,b), a_1,\dotsc,a_{m},\frac{x+z}{2}+c_1+1,\dotsc,c_{k},\frac{x+z}{2}-1,b_n,\dotsc,b_1,y+a-\min(a,b)+1 \right) \notag\\
&\times\frac{\Hf(c+\frac{x+z}{2}-1)}{\Hf(c)\Hf(\frac{x+z}{2}-1)}\frac{\Hf(\max(a,b)+y+\frac{x+z}{2})}{\Hf(\max(a,b)+c+y+\frac{x+z}{2})}\notag\\
&\times \frac{\Hf(\max(a,b)+y+z)\Hf(\max(a,b)+c+y+z+1)}{\Hf(o_a+o_b+o_c+z))\Hf(|a-b|+e_a+e_b+e_c+2y+z+1)}\notag\\
&\times \frac{\Hf(o_a+o_b+o_c)\Hf(|a-b|+e_a+e_b+e_c+2y+1)}{\Hf(\max(a,b)+y)\Hf(\max(a,b)+y+1)},
\end{align}
where $\Theta_{x,y,z}(m)$ is defined as in (\ref{thetaeq1}).
\end{thm}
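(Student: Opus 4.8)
The plan is to prove (\ref{off1eqQG5}) by the method that governs the other off-central families and the central $Q^{\nwarrow}$-type regions of \cite{HoleDent}: translate lozenge tilings of $\overline{G}^{(5)}_{x,y,z}(\textbf{a};\textbf{c};\textbf{b})$ into perfect matchings of the planar dual graph, apply one of Kuo's two graphical condensation theorems from Section 3 to obtain a recurrence for $\M(\overline{G}^{(5)}_{x,y,z}(\textbf{a};\textbf{c};\textbf{b}))$, and then verify by induction that the explicit product on the right-hand side of (\ref{off1eqQG5}) satisfies that recurrence with matching initial data. Since $x$ and $z$ have the same parity, $\frac{x+z}{2}$ is an integer, so none of the floor/ceiling complications of the $F^{(i)}$- or $K^{(i)}$-type regions occur here, and the auxiliary regions produced by the condensation stay inside the same family.

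Concretely, I would mark the four unit triangles $u,v,w,s$ on the boundary of the base hexagon shown (and deliberately ignored in this section) in Figure \ref{fig:offG2}(d), two of them flanking the west vertex and two the east vertex. Kuo condensation applied with these four vertices gives an identity expressing $\M(\overline{G}^{(5)})$ times the tiling number of the region with all four triangles removed as a sum of two products of tiling numbers of regions with two of the four triangles removed. The key combinatorial step is to identify these five auxiliary regions: after removing the lozenges forced by the deleted triangles and cutting along the appropriate straight lattice line (the standard region-splitting technique), each should factor into a \emph{dented semihexagon} — whose tiling number is the Cohn--Larsen--Propp product (\ref{semieq}) — together with a region that is again of $\overline{G}$-type but with either a fern shortened by one triangle, the parameter $y$ decreased, or the middle fern re-seated at an adjacent off-central position, so that an already-established formula applies (for instance Theorem \ref{off1thmQG4} for $\overline{G}^{(4)}$, or ultimately the central case $Q^{\nwarrow}$ of Theorem 2.7 of \cite{HoleDent}). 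Substituting these evaluations turns the condensation identity into a genuine recurrence for $\M(\overline{G}^{(5)})$ in which every term other than the unknown one is either of strictly smaller complexity — measured, say, by the total number of triangles in $\textbf{a},\textbf{b},\textbf{c}$ plus $y$ — or already known.

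With the recurrence in hand the proof becomes an induction on that complexity. The base cases are the degenerate ones: all three ferns empty, where $\overline{G}^{(5)}$ is an (ordinary or off-central) cored hexagon and the right-hand side collapses, after cancellation of the $s(\cdot)$- and $\Hf$-factors, to MacMahon's box formula or to one of the closed forms $\Theta,\Theta'$ already verified earlier in the paper; and $y$ at the left endpoint $\max(a-b,-1)$ of its range, where a marked triangle becomes forced and the region splits off a side fern entirely. The inductive step is the routine but lengthy verification that the product $\Theta_{2y+z+2\max(a,b)+1,z,x}(c)$, times the two dented-semihexagon factors $s(\cdot)$, times the displayed ratio of hyperfactorials, obeys the same recurrence; this uses only $\Gamma(t+1)=t\,\Gamma(t)$, the multiplicative definition (\ref{hyper2}) of $\Hf$, and the elementary fact that enlarging (or inserting) a triangle in a dented semihexagon multiplies $s(\cdot)$ by an explicit ratio of hyperfactorials read off from (\ref{semieq}).

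The hardest part will be the bookkeeping in the middle step: pinning down exactly which five auxiliary regions occur in the Kuo identity and how the off-central shift of the middle fern interacts with the side-pushing distances recorded in Figure \ref{fig:offG2}(d), so that each genuinely splits as a semihexagon times a region from a previously treated family. A secondary point requiring care is that the induction must remain inside the admissible range $x,z\geq 0$, $x\equiv z\pmod 2$, $y\geq\max(a-b,-1)$; near the boundary of this range some pushing distances vanish and a handful of small configurations must be checked directly, exactly as for the $\overline{E}^{(i)}$- and $G^{(i)}$-type regions.
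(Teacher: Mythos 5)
Your overall strategy (dual graph, Kuo condensation, induction, Cohn--Larsen--Propp for the degenerate cases) is the right one, but the central claim on which your induction rests is not what actually happens, and the induction as you set it up cannot close. When Kuo's Theorem \ref{kuothm2} is applied to the dual graph of $\overline{G}^{(5)}_{x,y,z}(\textbf{a};\textbf{c};\textbf{b})$ with the four marked triangles of Figure \ref{fig:offG2}(d), the five auxiliary regions do \emph{not} split as a dented semihexagon times a region of the same family, nor do they reduce to $\overline{G}^{(4)}$ or to the central $Q^{\nwarrow}$ case. After removing forced lozenges they are regions of type $Q^{\leftarrow}$ (from the prequel), $\overline{E}^{(1)}$, $\overline{K}^{(5)}$, and $\overline{G}^{(5)}$ with shifted parameters, as in recurrences (\ref{offcenterrecurG5qa})--(\ref{offcenterrecurG5qb}). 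The formulas for $\overline{E}^{(1)}$ and $\overline{K}^{(5)}$ are themselves theorems of this paper whose own recurrences feed back into yet other families, so a stand-alone induction for Theorem \ref{off1thmQG5} has nothing to quote for those factors; this is exactly why the paper proves all thirty formulas simultaneously, by induction on $h=p+x+z$ where $p$ is the quasi-perimeter. Relatedly, your appeal to ``the closed forms $\Theta,\Theta'$ already verified earlier in the paper'' in the base case is circular: those cored-hexagon evaluations appear only as corollaries of these very theorems, not as independent inputs. The splitting into two dented semihexagons is used only in the genuine base cases $x=0$ and $z=0$ (together with $p<6$), via the Region-splitting Lemma, not at every recurrence step.

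Your induction measure also fails on the actual recurrence: the term $\M(\overline{G}^{(5)}_{x+1,y,z-1}(\textbf{a};\textbf{c};\textbf{b}))$ has the same ferns and the same $y$, so ``total number of triangles in $\textbf{a},\textbf{b},\textbf{c}$ plus $y$'' does not decrease there, whereas $h=p+x+z$ drops by $2$ (the quasi-perimeter loses $2$ while $x+z$ is unchanged). Finally, the two extremal situations you gesture at are handled in the paper by separate lemmas that must be part of the argument: Lemma \ref{lem3} eliminates side-length-$0$ triangles without increasing $h$, and Lemma \ref{lem4} shows that when $y$ sits at its minimum $\max(a-b,-1)$ the region has the same tiling number as an off-central region of a \emph{different} family (an un-barred $G$-type region) with strictly smaller $h$ --- again forcing the simultaneous induction rather than a within-family one. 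So the proposal, as written, has a genuine gap: without the interlocking system of recurrences and the global induction on $h$, the key step ``every term other than the unknown one is already known'' is unavailable.
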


\begin{figure}\centering
\setlength{\unitlength}{3947sp}%
\begingroup\makeatletter\ifx\SetFigFont\undefined%
\gdef\SetFigFont#1#2#3#4#5{%
  \reset@font\fontsize{#1}{#2pt}%
  \fontfamily{#3}\fontseries{#4}\fontshape{#5}%
  \selectfont}%
\fi\endgroup%
\resizebox{15cm}{!}{
\begin{picture}(0,0)%
\includegraphics{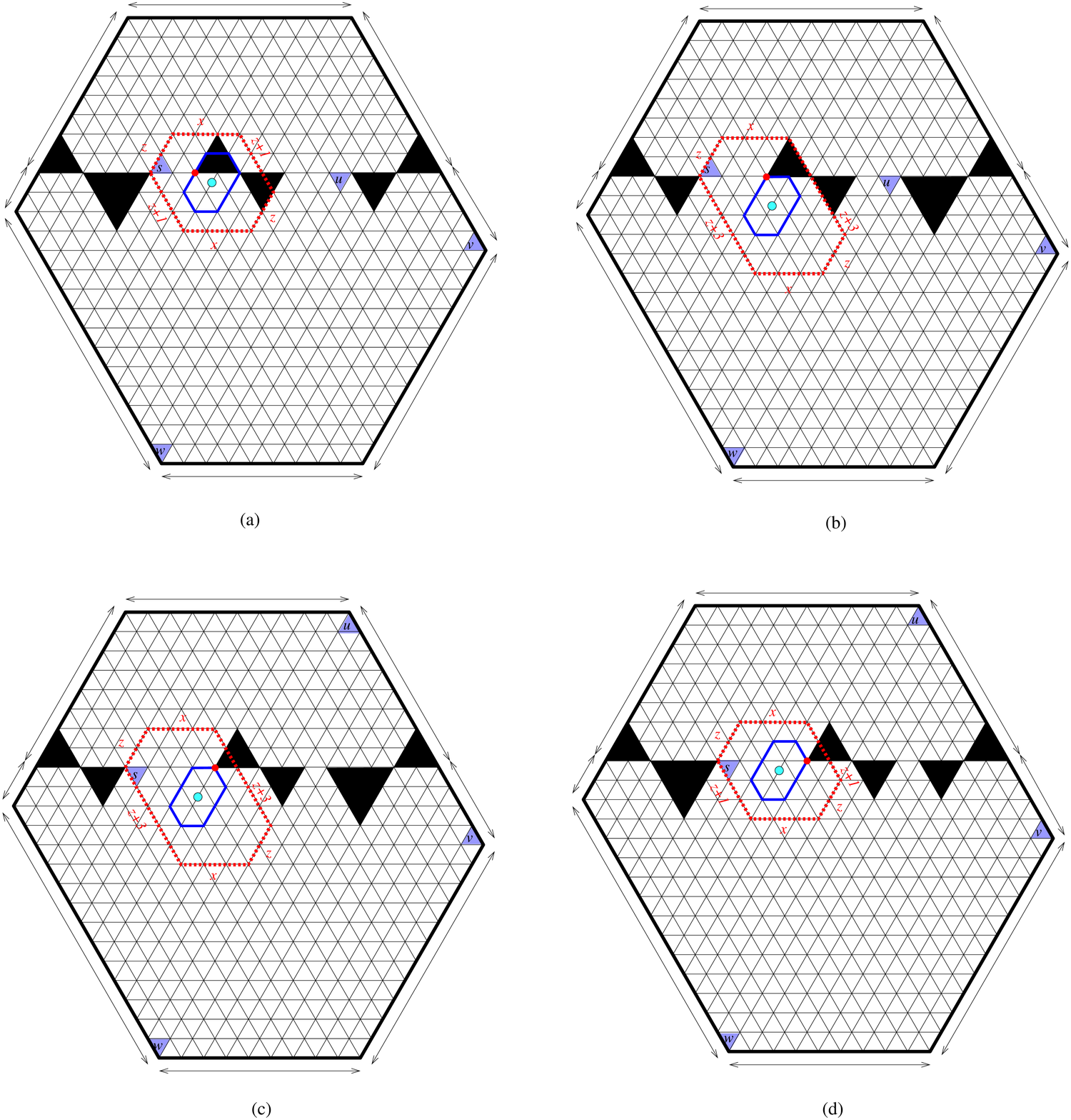}%
\end{picture}%
%
%

\begin{picture}(20276,20822)(996,-24277)
\put(11725,-7585){\rotatebox{60.0}{\makebox(0,0)[lb]{\smash{{\SetFigFont{14}{16.8}{\rmdefault}{\mddefault}{\itdefault}{$y+b-a$}%
}}}}}
\put(1191,-18308){\rotatebox{60.0}{\makebox(0,0)[lb]{\smash{{\SetFigFont{14}{16.8}{\rmdefault}{\mddefault}{\itdefault}{$y+b-a$}%
}}}}}
\put(12373,-9371){\rotatebox{300.0}{\makebox(0,0)[lb]{\smash{{\SetFigFont{14}{16.8}{\rmdefault}{\mddefault}{\itdefault}{$y+z+e_a+e_b+e_c+3$}%
}}}}}
\put(1539,-19944){\rotatebox{300.0}{\makebox(0,0)[lb]{\smash{{\SetFigFont{14}{16.8}{\rmdefault}{\mddefault}{\itdefault}{$y+z+e_a+e_b+e_c+3$}%
}}}}}
\put(15570,-13056){\makebox(0,0)[lb]{\smash{{\SetFigFont{14}{16.8}{\rmdefault}{\mddefault}{\itdefault}{$x+o_a+o_b+o_c$}%
}}}}
\put(4779,-12939){\makebox(0,0)[lb]{\smash{{\SetFigFont{14}{16.8}{\rmdefault}{\mddefault}{\itdefault}{$x+o_a+o_b+o_c$}%
}}}}
\put(15252,-23696){\makebox(0,0)[lb]{\smash{{\SetFigFont{14}{16.8}{\rmdefault}{\mddefault}{\itdefault}{$x+o_a+o_b+o_c$}%
}}}}
\put(4751,-23659){\makebox(0,0)[lb]{\smash{{\SetFigFont{14}{16.8}{\rmdefault}{\mddefault}{\itdefault}{$x+o_a+o_b+o_c$}%
}}}}
\put(15034,-14516){\makebox(0,0)[lb]{\smash{{\SetFigFont{14}{16.8}{\rmdefault}{\mddefault}{\itdefault}{$x+e_a+e_b+e_c$}%
}}}}
\put(4668,-14599){\makebox(0,0)[lb]{\smash{{\SetFigFont{14}{16.8}{\rmdefault}{\mddefault}{\itdefault}{$x+e_a+e_b+e_c$}%
}}}}
\put(4501,-3729){\makebox(0,0)[lb]{\smash{{\SetFigFont{14}{16.8}{\rmdefault}{\mddefault}{\itdefault}{$x+e_a+e_b+e_c$}%
}}}}
\put(14947,-3801){\makebox(0,0)[lb]{\smash{{\SetFigFont{14}{16.8}{\rmdefault}{\mddefault}{\itdefault}{$x+e_a+e_b+e_c$}%
}}}}
\put(18973,-4642){\rotatebox{300.0}{\makebox(0,0)[lb]{\smash{{\SetFigFont{14}{16.8}{\rmdefault}{\mddefault}{\itdefault}{$z+o_a+o_b+o_c$}%
}}}}}
\put(8422,-4435){\rotatebox{300.0}{\makebox(0,0)[lb]{\smash{{\SetFigFont{14}{16.8}{\rmdefault}{\mddefault}{\itdefault}{$z+o_a+o_b+o_c$}%
}}}}}
\put(8469,-15350){\rotatebox{300.0}{\makebox(0,0)[lb]{\smash{{\SetFigFont{14}{16.8}{\rmdefault}{\mddefault}{\itdefault}{$z+o_a+o_b+o_c$}%
}}}}}
\put(19030,-15402){\rotatebox{300.0}{\makebox(0,0)[lb]{\smash{{\SetFigFont{14}{16.8}{\rmdefault}{\mddefault}{\itdefault}{$z+o_a+o_b+o_c$}%
}}}}}
\put(12277,-17192){\rotatebox{60.0}{\makebox(0,0)[lb]{\smash{{\SetFigFont{14}{16.8}{\rmdefault}{\mddefault}{\itdefault}{$z+o_a+o_b+o_c$}%
}}}}}
\put(1896,-17155){\rotatebox{60.0}{\makebox(0,0)[lb]{\smash{{\SetFigFont{14}{16.8}{\rmdefault}{\mddefault}{\itdefault}{$z+o_a+o_b+o_c$}%
}}}}}
\put(1939,-6270){\rotatebox{60.0}{\makebox(0,0)[lb]{\smash{{\SetFigFont{14}{16.8}{\rmdefault}{\mddefault}{\itdefault}{$z+o_a+o_b+o_c$}%
}}}}}
\put(12355,-6402){\rotatebox{60.0}{\makebox(0,0)[lb]{\smash{{\SetFigFont{14}{16.8}{\rmdefault}{\mddefault}{\itdefault}{$z+o_a+o_b+o_c$}%
}}}}}
\put(8788,-23039){\rotatebox{60.0}{\makebox(0,0)[lb]{\smash{{\SetFigFont{14}{16.8}{\rmdefault}{\mddefault}{\itdefault}{$y+z+e_a+e_b+e_c+b-a$}%
}}}}}
\put(19382,-12136){\rotatebox{60.0}{\makebox(0,0)[lb]{\smash{{\SetFigFont{14}{16.8}{\rmdefault}{\mddefault}{\itdefault}{$y+z+e_a+e_b+e_c+b-a$}%
}}}}}
\put(20435,-7172){\rotatebox{300.0}{\makebox(0,0)[lb]{\smash{{\SetFigFont{14}{16.8}{\rmdefault}{\mddefault}{\itdefault}{$y+3$}%
}}}}}
\put(9901,-18000){\rotatebox{300.0}{\makebox(0,0)[lb]{\smash{{\SetFigFont{14}{16.8}{\rmdefault}{\mddefault}{\itdefault}{$y+3$}%
}}}}}
\put(19184,-22791){\rotatebox{60.0}{\makebox(0,0)[lb]{\smash{{\SetFigFont{14}{16.8}{\rmdefault}{\mddefault}{\itdefault}{$y+z+e_a+e_b+e_c$}%
}}}}}
\put(20417,-17947){\rotatebox{300.0}{\makebox(0,0)[lb]{\smash{{\SetFigFont{14}{16.8}{\rmdefault}{\mddefault}{\itdefault}{$y+a-b+1$}%
}}}}}
\put(9914,-7073){\rotatebox{300.0}{\makebox(0,0)[lb]{\smash{{\SetFigFont{14}{16.8}{\rmdefault}{\mddefault}{\itdefault}{$y+a-b+1$}%
}}}}}
\put(11976,-19608){\rotatebox{300.0}{\makebox(0,0)[lb]{\smash{{\SetFigFont{14}{16.8}{\rmdefault}{\mddefault}{\itdefault}{$y+z+e_a+e_b+e_c+a-b+1$}%
}}}}}
\put(11884,-17808){\rotatebox{60.0}{\makebox(0,0)[lb]{\smash{{\SetFigFont{14}{16.8}{\rmdefault}{\mddefault}{\itdefault}{$y$}%
}}}}}
\put(1507,-7165){\rotatebox{60.0}{\makebox(0,0)[lb]{\smash{{\SetFigFont{14}{16.8}{\rmdefault}{\mddefault}{\itdefault}{$y$}%
}}}}}
\put(1458,-8560){\rotatebox{300.0}{\makebox(0,0)[lb]{\smash{{\SetFigFont{14}{16.8}{\rmdefault}{\mddefault}{\itdefault}{$y+z+e_a+e_b+e_c+a-b+1$}%
}}}}}
\put(8996,-11873){\rotatebox{60.0}{\makebox(0,0)[lb]{\smash{{\SetFigFont{14}{16.8}{\rmdefault}{\mddefault}{\itdefault}{$y+z+e_a+e_b+e_c$}%
}}}}}
\put(16414,-17516){\makebox(0,0)[lb]{\smash{{\SetFigFont{14}{16.8}{\familydefault}{\mddefault}{\updefault}{\color[rgb]{1,1,1}$c_1$}%
}}}}
\put(5176,-6849){\makebox(0,0)[lb]{\smash{{\SetFigFont{14}{16.8}{\familydefault}{\mddefault}{\updefault}{\color[rgb]{1,1,1}$c_1$}%
}}}}
\put(15637,-6861){\makebox(0,0)[lb]{\smash{{\SetFigFont{14}{16.8}{\familydefault}{\mddefault}{\updefault}{\color[rgb]{1,1,1}$c_1$}%
}}}}
\put(5598,-17629){\makebox(0,0)[lb]{\smash{{\SetFigFont{14}{16.8}{\familydefault}{\mddefault}{\updefault}{\color[rgb]{1,1,1}$c_1$}%
}}}}
\put(13684,-18086){\makebox(0,0)[lb]{\smash{{\SetFigFont{14}{16.8}{\familydefault}{\mddefault}{\updefault}{\color[rgb]{1,1,1}$a_2$}%
}}}}
\put(7698,-18244){\makebox(0,0)[lb]{\smash{{\SetFigFont{14}{16.8}{\familydefault}{\mddefault}{\updefault}{\color[rgb]{1,1,1}$b_2$}%
}}}}
\put(8821,-6834){\makebox(0,0)[lb]{\smash{{\SetFigFont{14}{16.8}{\familydefault}{\mddefault}{\updefault}{\color[rgb]{1,1,1}$b_1$}%
}}}}
\put(5956,-7254){\makebox(0,0)[lb]{\smash{{\SetFigFont{14}{16.8}{\familydefault}{\mddefault}{\updefault}{\color[rgb]{1,1,1}$c_2$}%
}}}}
\put(2296,-6819){\makebox(0,0)[lb]{\smash{{\SetFigFont{14}{16.8}{\familydefault}{\mddefault}{\updefault}{\color[rgb]{1,1,1}$a_1$}%
}}}}
\put(8808,-17719){\makebox(0,0)[lb]{\smash{{\SetFigFont{14}{16.8}{\familydefault}{\mddefault}{\updefault}{\color[rgb]{1,1,1}$b_1$}%
}}}}
\put(12757,-6891){\makebox(0,0)[lb]{\smash{{\SetFigFont{14}{16.8}{\familydefault}{\mddefault}{\updefault}{\color[rgb]{1,1,1}$a_1$}%
}}}}
\put(2283,-17689){\makebox(0,0)[lb]{\smash{{\SetFigFont{14}{16.8}{\familydefault}{\mddefault}{\updefault}{\color[rgb]{1,1,1}$a_1$}%
}}}}
\put(12664,-17561){\makebox(0,0)[lb]{\smash{{\SetFigFont{14}{16.8}{\familydefault}{\mddefault}{\updefault}{\color[rgb]{1,1,1}$a_1$}%
}}}}
\put(3286,-7389){\makebox(0,0)[lb]{\smash{{\SetFigFont{14}{16.8}{\familydefault}{\mddefault}{\updefault}{\color[rgb]{1,1,1}$a_2$}%
}}}}
\put(19204,-17621){\makebox(0,0)[lb]{\smash{{\SetFigFont{14}{16.8}{\familydefault}{\mddefault}{\updefault}{\color[rgb]{1,1,1}$b_1$}%
}}}}
\put(18409,-18011){\makebox(0,0)[lb]{\smash{{\SetFigFont{14}{16.8}{\familydefault}{\mddefault}{\updefault}{\color[rgb]{1,1,1}$b_2$}%
}}}}
\put(19327,-6861){\makebox(0,0)[lb]{\smash{{\SetFigFont{14}{16.8}{\familydefault}{\mddefault}{\updefault}{\color[rgb]{1,1,1}$b_1$}%
}}}}
\put(18262,-7446){\makebox(0,0)[lb]{\smash{{\SetFigFont{14}{16.8}{\familydefault}{\mddefault}{\updefault}{\color[rgb]{1,1,1}$b_2$}%
}}}}
\put(8071,-7254){\makebox(0,0)[lb]{\smash{{\SetFigFont{14}{16.8}{\familydefault}{\mddefault}{\updefault}{\color[rgb]{1,1,1}$b_2$}%
}}}}
\put(16447,-7326){\makebox(0,0)[lb]{\smash{{\SetFigFont{14}{16.8}{\familydefault}{\mddefault}{\updefault}{\color[rgb]{1,1,1}$c_2$}%
}}}}
\put(6363,-18124){\makebox(0,0)[lb]{\smash{{\SetFigFont{14}{16.8}{\familydefault}{\mddefault}{\updefault}{\color[rgb]{1,1,1}$c_2$}%
}}}}
\put(17179,-18011){\makebox(0,0)[lb]{\smash{{\SetFigFont{14}{16.8}{\familydefault}{\mddefault}{\updefault}{\color[rgb]{1,1,1}$c_2$}%
}}}}
\put(13552,-7326){\makebox(0,0)[lb]{\smash{{\SetFigFont{14}{16.8}{\familydefault}{\mddefault}{\updefault}{\color[rgb]{1,1,1}$a_2$}%
}}}}
\put(3048,-18139){\makebox(0,0)[lb]{\smash{{\SetFigFont{14}{16.8}{\familydefault}{\mddefault}{\updefault}{\color[rgb]{1,1,1}$a_2$}%
}}}}
\end{picture}%
}
\caption{Four $\overline{K}^{(i)}$-type regions: (a) $\overline{K}^{(5)}_{3,2,2}(2,3;\ 2,2;\ 2,2)$, (b) $\overline{K}^{(6)}_{3,2,2}(2,2;\ 2,2;\ 2,3)$, (c) $\overline{K}^{(7)}_{3,2,2}(2,2;\ 2,2;\ 2,3)$, and (d) $\overline{K}^{(8)}_{3,2,2}(2,3;\ 2,2;\ 2,2)$.} \label{fig:offK2}
\end{figure}

\subsection{The $\overline{K}^{(i)}$-type regions}
We now define the final group of the off-central regions considered in this paper. We also start with an auxiliary hexagon $H_0$ whose side-lengths are $x,z+1,z,x,z+1,z$ for $i=5,8$, and are $x,z+3,z,x,z+3,z$, for $i=6,7$ ($x$ and $z$ have different parities in this case). After pushing the sides of the auxiliary hexagon $H_0$ in the same way as in the definition of the $\overline{E}^{(i)}$-type regions, we remove three ferns such that the root of the middle fern is at the off-central positions $i$ as shown in Figure \ref{fig:offposition}(d). There are no counterparts for the case $i=1,2,3,4$ here. Let $\overline{K}^{(i)}$ denote the newly defined region, $i=5,6,7,8$.

 \begin{rmk}\label{rmkQK}
In the definition of the $\overline{K}^{(5)}$-type regions, if we remove the three ferns such that the root of the middle one is $1/2$-unit to the southwest of the center of the auxiliary hexagon $H_0$, then we obtain the region $Q^{\swarrow}_{x,y,z}(\textbf{a};\textbf{c};\textbf{b})$ in Theorem 2.8 of \cite{HoleDent}. One can vew the $\overline{K}^{(i)}$-type regions as counterparts of the $Q^{\swarrow}$-type regions.
\end{rmk}

\begin{thm}\label{off1thmQK5}
Assume that $\textbf{a}=(a_1,a_2,\dotsc,a_m)$, $\textbf{b}=(b_1,b_2,\dotsc,b_n)$, $\textbf{c}=(c_1,c_2,\dotsc,c_k)$ are three sequences  of nonnegative integers ($m,n,k$ are all even) and that $x,y,z$ are three  integers, such that $x\geq 0$, $y\geq \max(a-b,-1)$, $z\geq0$, and $x$  and $z$ have different parities. Then
\begin{align}\label{off1eqQK5}
\M&(\overline{K}^{(5)}_{x,y,z}(\textbf{a};\textbf{c};\textbf{b}))=\Theta_{z,x,2y+z+2\max(a,b)+1}(c)\notag\\
&\times s\left(a_1,\dotsc, a_{m}+\left\lfloor\frac{x+z}{2}\right\rfloor,c_1,\dotsc,c_{k}+\left\lceil\frac{x+z}{2}\right\rceil+b_n,b_{n-1},\dotsc,b_1\right)\notag\\
&\times s\left(y+b-\min(a,b), a_1,\dotsc,a_{m},\left\lfloor\frac{x+z}{2}\right\rfloor+c_1,\dotsc,c_{k},\left\lceil\frac{x+z}{2}\right\rceil,b_n,\dotsc,b_1,y+a-\min(a,b)+1\right) \notag\\
&\times\frac{\Hf(c+\left\lceil\frac{x+z}{2}\right\rceil)}{\Hf(c)\Hf(\left\lceil\frac{x+z}{2}\right\rceil)}\frac{\Hf(\max(a,b)+y+\left\lceil\frac{x+z}{2}\right\rceil+1)}{\Hf(\max(a,b)+c+y+\left\lceil\frac{x+z}{2}\right\rceil+1}\notag\\
&\times \frac{\Hf(\max(a,b)+y+z)\Hf(\max(a,b)+c+y+z+1)}{\Hf(o_a+o_b+o_c+z))\Hf(|a-b|+e_a+e_b+e_c+2y+z+1)}\notag\\
&\times \frac{\Hf(o_a+o_b+o_c)\Hf(|a-b|+e_a+e_b+e_c+2y+1)}{\Hf(\max(a,b)+y)\Hf(\max(a,b)+y+1)},
\end{align}
where $\Theta_{x,y,z}(m)$ is defined as in (\ref{thetaeq1}).
\end{thm}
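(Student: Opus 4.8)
The plan is to prove Theorem \ref{off1thmQK5} by induction, using Kuo's graphical condensation (the two versions recalled in Section 3) applied to the planar dual graph $G$ of the region $\overline{K}^{(5)}_{x,y,z}(\textbf{a};\textbf{c};\textbf{b})$. I would induct on the total fern size $o_a+o_b+o_c+e_a+e_b+e_c$, with a nested induction on $x+z$ and on the height parameter $y$, arranged so that every region produced on the right-hand side of a condensation identity is either (i) strictly smaller and again of $\overline{K}^{(5)}$-type after a parameter shift, (ii) after removal of forced lozenges a dented semihexagon, whose tiling number is supplied by the Cohn--Larsen--Propp formula (\ref{semieq}), or (iii) a $Q^{\swarrow}$-type region already enumerated in Theorem 2.8 of \cite{HoleDent} (cf.\ Remark \ref{rmkQK}).

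For the base of the induction I would take the cases in which at least one side fern is empty: when $\textbf{a}=\textbf{b}=\emptyset$ the region degenerates to a cored hexagon whose down-pointing triangular hole of side $c$ sits at the off-central position $5$ of Figure \ref{fig:offposition}(d); in (\ref{off1eqQK5}) every factor except the leading $\Theta$-factor and the two semihexagon factors then cancels, and the two $s(\cdot)$-factors collapse to hyperfactorial ratios by (\ref{semieq}), so the claim becomes a direct tiling count of this cored hexagon, which I would verify, handling the subcase $y<z$ by a vertical reflection exactly as in the proof of Corollary \ref{cor1}. Further base cases, such as $\left\lfloor\frac{x+z}{2}\right\rfloor$ minimal or $\textbf{c}$ empty, are disposed of similarly by forced-lozenge removal.

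For the inductive step I would place the four unit triangles $u,v,w,s$ of Kuo's lemma along the base line $l$ of the region, as indicated by the shaded triangles $u,v,w,s$ in the illustrative figures (Figure \ref{fig:offK2}(a), with the two pushing regimes $a\le b$ and $a\ge b$ of Figure \ref{pushing2} treated separately), roughly with $u,s$ flanking the left fern and $v,w$ flanking the right fern. Condensation then yields an identity of the form
\[
\M(G)\,\M(G-\{u,v,w,s\})=\M(G-\{u,v\})\,\M(G-\{w,s\})+\M(G-\{u,w\})\,\M(G-\{v,s\}),
\]
and, after deleting the lozenges forced by each removal, $G$ and $G-\{u,v,w,s\}$ are $\overline{K}^{(5)}$-regions while the remaining four are $\overline{K}^{(5)}$-regions with exactly one of $a_m,b_n,c_k,x,z,y$ shifted by $\pm 1$. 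Substituting the inductive formula (\ref{off1eqQK5}) for each smaller region reduces the theorem to a single algebraic identity among the right-hand sides.

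The main obstacle is this last verification. After cancelling the common hyperfactorial ratios and invoking the quasi-multiplicative behaviour of the semihexagon function $s(\cdot)$ under a unit shift of a single argument, one is left with a polynomial identity in $x,y,z,a,b,c$ whose nontrivial content is carried by the $\Theta$-polynomial $Q_{1}$ of (\ref{thetaeq1}); checking that the two parity branches of $Q_{1}$ balance correctly against the shifts introduced by the condensation is the delicate computational core, and it is where the analogue of the off-central polynomial corrections $P_1,P_2$ of Conjectures \ref{con1} and \ref{con2} enters. The secondary difficulty is purely geometric: correctly identifying all six regions in the condensation identity after forced-lozenge removal, in particular tracking how the off-central offset and the (uniquely determined) positions of the two side ferns change, which requires careful bookkeeping against Figures \ref{fig:offK2} and \ref{pushing2}. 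Once the recurrence is pinned down the remaining manipulations are lengthy but routine, and the analogous statements for the $\overline{K}^{(6)}$-, $\overline{K}^{(7)}$-, and $\overline{K}^{(8)}$-type regions follow by the same method, with $\overline{K}^{(8)}$ additionally being a reflection of $\overline{K}^{(5)}$.
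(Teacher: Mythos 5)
Your overall toolkit (Kuo condensation, forced-lozenge removal, Cohn--Larsen--Propp, and the $Q$-type regions of the prequel) is the right one, but the inductive scheme as you describe it does not close. The central claim of your inductive step --- that after deleting forced lozenges all six graphs in the condensation identity are again $\overline{K}^{(5)}$-regions with a single parameter shifted by $\pm 1$ --- is false. When you delete the $u$-, $v$-, $w$-, $s$-triangles, the forced lozenges shift the off-central offset and hence change the \emph{type} of the region: in the paper's $\overline{K}^{(5)}$-recurrences (\ref{offcenterrecurK5qa})--(\ref{offcenterrecurK5qb}) the companion regions are of types $Q^{\leftarrow}$, $Q^{\nwarrow}$ and $\overline{E}^{(1)}$, not $\overline{K}^{(5)}$. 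Since $\overline{E}^{(1)}$ is itself one of the thirty families being proved, Theorem \ref{off1thmQK5} cannot be established by an induction confined to the $\overline{K}^{(5)}$ family; the paper instead proves all thirty formulas simultaneously, by induction on $h=p+x+z$ (quasi-perimeter plus $x$ plus $z$), feeding in the central $R$- and $Q$-type results of \cite{HoleDent} as known input. Relatedly, your proposed induction measure is broken: the recurrences involve regions such as $\overline{K}^{(5)}_{x-1,y,z-1}(\textbf{a}^{+1};\textbf{c};\textbf{b}^{+1})$, in which the fern sizes \emph{increase} while $x$ and $z$ decrease, so an outer induction on total fern size (with $x+z$ and $y$ nested inside) does not terminate, whereas the combined parameter $h$ does strictly decrease.

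Two further gaps. First, your base case ``$\textbf{a}=\textbf{b}=\emptyset$, verify the cored hexagon directly'' is not a base case at all: the off-central cored hexagon with a down-pointing hole at position $5$ is precisely the new, nontrivial content (it is deduced \emph{from} these theorems in the paper, in the manner of Corollaries \ref{cor1}--\ref{cor3}), so assuming it is circular. The paper's genuine base cases are $x=0$ or $z=0$ (and $p<6$, which reduces to these via Lemma \ref{claimp}), where the region splits along the fern line into two dented semihexagons handled by Lemma \ref{RS} and formula (\ref{semieq}). Second, when $y$ attains its minimal value $\max(a-b,-1)$ the Kuo recurrences produce ill-defined regions; you need the analogue of Lemma \ref{lem4}, which trades such a region for one of a \emph{different} family with strictly smaller $h$ (again forcing the simultaneous induction), together with Lemma \ref{lem3} to normalize zero-length fern triangles. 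Without these ingredients the ``delicate computational core'' you flag (balancing the $Q_1$-branches of $\Theta$) never even gets set up correctly, because the identity you would be verifying mixes $\Theta$, $\Lambda'$, $\Phi$-type factors from several families rather than $\Theta$ alone.
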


\begin{thm}\label{off1thmQK6}
Assume that $\textbf{a}=(a_1,a_2,\dotsc,a_m)$, $\textbf{b}=(b_1,b_2,\dotsc,b_n)$, $\textbf{c}=(c_1,c_2,\dotsc,c_k)$ are three sequences  of nonnegative integers ($m,n,k$ are all even) and that $x,y,z$ are three  integers, such that  $x\geq 0$, $y\geq \max(a-b,-3)$, $z\geq0$, and $x$ and $z$ have different parities. Then
\begin{align}\label{off1eqQK6}
\M&(\overline{K}^{(6)}_{x,y,z}(\textbf{a};\textbf{c};\textbf{b}))=\Lambda_{z,x,2y+z+2\max(a,b)+3}(c)\notag\\
&\times s\left(a_1,\dotsc, a_{m}+\left\lceil\frac{x+z}{2}\right\rceil,c_1,\dotsc,c_{k}+\left\lfloor\frac{x+z}{2}\right\rfloor,+b_n,b_{n-1},\dotsc,b_1\right)\notag\\
&\times s\left(y+b-\min(a,b), a_1,\dotsc,a_{m},\left\lceil\frac{x+z}{2}\right\rceil+c_1,\dotsc,c_{k},\left\lfloor\frac{x+z}{2}\right\rfloor,b_n,\dotsc,b_1,y+a-\min(a,b)+3\right) \notag\\
&\times\frac{\Hf(c+\left\lfloor\frac{x+z}{2}\right\rfloor)}{\Hf(c)\Hf(\left\lfloor\frac{x+z}{2}\right\rfloor)}\frac{\Hf(\max(a,b)+y+\left\lceil\frac{x+z}{2}\right\rceil+2)}{\Hf(\max(a,b)+c+y+\left\lceil\frac{x+z}{2}\right\rceil+2)}\notag\\
&\times \frac{\Hf(\max(a,b)+y+z)\Hf(\max(a,b)+c+y+z+3)}{\Hf(o_a+o_b+o_c+z))\Hf(|a-b|+e_a+e_b+e_c+2y+z+3)}\notag\\
&\times \frac{\Hf(o_a+o_b+o_c)\Hf(|a-b|+e_a+e_b+e_c+2y+3)}{\Hf(\max(a,b)+y)\Hf(\max(a,b)+y+3)},
\end{align}
where $\Lambda_{x,y,z}(m)$ is defined as in (\ref{lambdaeq1}).
\end{thm}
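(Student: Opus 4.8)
The plan is to prove \eqref{off1eqQK6} by the same mechanism already used for the companion members of this family --- Theorems~\ref{off1thmQK5} and \ref{off1thmQG4}, and the $K$-type analogue Theorem~\ref{off32thm10} --- namely a single application of Kuo's graphical condensation to produce a recurrence for $\M(\overline{K}^{(6)}_{x,y,z}(\textbf{a};\textbf{c};\textbf{b}))$, followed by an induction showing that the product on the right-hand side of \eqref{off1eqQK6} satisfies that recurrence once the base cases are checked. Throughout we use that every fern may be assumed to have an even number of triangles, and that $x$ and $z$ have opposite parities, so that $\lfloor\frac{x+z}{2}\rfloor$ and $\lceil\frac{x+z}{2}\rceil$ differ by one.

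\textbf{The condensation step.} First I would pass to the planar bipartite graph $G$ dual to $\overline{K}^{(6)}_{x,y,z}(\textbf{a};\textbf{c};\textbf{b})$ and apply the appropriate one of the two versions of Kuo condensation recalled in Section~3, choosing the four unit triangles $u,v,w,s$ exactly as marked in Figure~\ref{fig:offK2}: two near the west and east vertices of the base hexagon (which here lie on the same side of the fern line $l$) and two on the south side straddling the left and the right ferns. Removing a forced pair of these triangles collapses a band of lozenges, and each of the five regions produced by the condensation identity is recognized as one of: (i) $\overline{K}^{(6)}_{x,y,z}$ itself; (ii) an $\overline{K}^{(6)}$-type region in which one of the entries of $\textbf{a}$, $\textbf{b}$, or $\textbf{c}$ has been decreased by one (or in which $x$, resp.\ $z$, is shifted in a controlled way); and (iii) two dented semihexagons $S(\dots)$ whose tiling numbers are given by the Cohn--Larsen--Propp formula \eqref{semieq}. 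This yields a three-term recurrence for $\M(\overline{K}^{(6)})$ with coefficients that are explicit products of hyperfactorials $\Hf(\cdot)$ and of $s$-values.

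\textbf{Base cases and induction.} The induction runs on the total size $a+b+c$ of the three ferns (the condensation decreasing one triangle side-length). When all three ferns are empty the region is a plain hexagon and $\M$ is given by MacMahon's product $\PP(\cdot,\cdot,\cdot)$; when exactly two ferns are empty the region is a hexagon with a single triangular hole whose leftmost vertex is at off-central position $6$ of Figure~\ref{fig:offposition}(d), whose tiling number can be read off from Rosengren's evaluation quoted in the introduction (equivalently, by specializing one of the already-established off-central formulas together with a reflection). The degenerate cases in which some $c_t=0$, or in which only one fern is empty, reduce either to lower instances covered by the induction hypothesis or directly to dented semihexagons. One then checks that the claimed product in \eqref{off1eqQK6} --- with $\Lambda$ as in \eqref{lambdaeq1}, evaluated at $(z,x,2y+z+2\max(a,b)+3)$ and argument $c$ --- reproduces all base values and satisfies the condensation recurrence.

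\textbf{Main obstacle.} The real work is the algebraic verification: the condensation identity, after substituting the conjectured product for each term, becomes an identity of the schematic shape $P\cdot P_{uvws}=P_{uv}P_{ws}+P_{uw}P_{vs}$, where each $P_{\bullet}$ is a ratio of hyperfactorials times two $s$-values times the quadratic polynomial $Q_{2}$ appearing in \eqref{lambdaeq1}. Cancelling the common $\Hf$-factors via $\Gamma(t+1)=t\,\Gamma(t)$ and simplifying the $s$-values through \eqref{semieq} reduces this to a polynomial identity in $x,y,z,c$ and the partial sums of $\textbf{a},\textbf{b},\textbf{c}$; because $x$ and $z$ have opposite parities the floor/ceiling terms must be tracked carefully and the check splits into the cases $a\ge b$ and $a\le b$ matching the two pictures of Figure~\ref{pushing2}. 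As in Theorem~\ref{off32thm10}, it is precisely the presence of the quadratic factor $Q_{2}$ that makes this bookkeeping the delicate part of the argument; everything else is parallel to the cases already treated.
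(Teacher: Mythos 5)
Your overall instinct (Kuo condensation plus induction) matches the paper, but the specific scheme you describe would not close, for two concrete reasons. First, the condensation applied to $\overline{K}^{(6)}_{x,y,z}(\textbf{a};\textbf{c};\textbf{b})$ with the four triangles of Figure \ref{fig:offK2} does \emph{not} produce $\overline{K}^{(6)}$-type regions with a fern entry decreased, nor any dented semihexagons: removing the forced lozenges shifts the fern line relative to the center of the auxiliary hexagon, so the leftover regions change family. The paper's actual $\overline{K}^{(6)}$-recurrences (\ref{offcenterrecurK6qa})--(\ref{offcenterrecurK6qb}) express $\M(\overline{K}^{(6)}_{x,y,z})$ in terms of $\overline{F}^{(5)}$-, $\overline{G}^{(4)}$-, $\overline{E}^{(2)}$- and $\overline{K}^{(6)}$-type regions with parameters such as $(x-1,y,z-1)$ and ferns $\textbf{a}^{+1},\textbf{b}^{+1}$ (last entry \emph{increased} by one). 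Consequently your induction on the total fern size $a+b+c$ is the wrong parameter — the recurrence does not decrease it (it can increase it), and it leaves the single family you are inducting on. This is why the paper runs one simultaneous induction on $h=p+x+z$ over all $30$ off-central families at once, feeding in the $8$ central families of \cite{HoleDent} as known inputs; no member of the family can be proved in isolation by this method. Dented semihexagons and the Cohn--Larsen--Propp formula enter only in the base cases $x=0$ or $z=0$, where the region splits along the fern line via the Region-splitting Lemma, not as terms of the condensation identity.

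Second, you have no treatment of the extremal case in which $y$ attains its minimum $\max(a-b,-3)$: there the condensation recurrences involve ill-defined regions (the $y$-parameter would drop below its allowed range), and the paper needs a separate reduction (Lemma \ref{lem4}, trading a bar region for an un-bar region, or vice versa, with strictly smaller $h$) before the induction applies. Your proposed base cases also go in a different direction: appealing to Rosengren's evaluation for the two-empty-ferns case is both unnecessary and not aligned with the paper, whose argument is self-contained (indeed it reproves the Ciucu--Eisenk\"olbl--Krattenthaler--Zare conjectures rather than citing their proof). Finally, the "algebraic verification" you flag as the main obstacle is indeed present, but it is a verification that the thirty \emph{guessed} product formulas jointly satisfy the sixty-six mixed-type recurrences — not a within-family three-term recurrence in the fern sizes as you set it up.
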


\begin{thm}\label{off1thmQK7}
Assume that $\textbf{a}=(a_1,a_2,\dotsc,a_m)$, $\textbf{b}=(b_1,b_2,\dotsc,b_n)$, $\textbf{c}=(c_1,c_2,\dotsc,c_k)$ are three sequences  of nonnegative integers ($m,n,k$ are all even) and that $x,y,z$ are three  integers, such that $x\geq 0$, $y\geq \max(a-b,-3)$, $z\geq0$, and $x$ and $z$ have different parities.  Then
\begin{align}\label{off1eqQK7}
\M&(\overline{K}^{(7)}_{x,y,z}(\textbf{a};\textbf{c};\textbf{b}))=\Psi_{z,x,2y+z+2\max(a,b)+3}(c)\notag\\
&\times s\left(a_1,\dotsc, a_{m}+\left\lceil\frac{x+z}{2}\right\rceil+1,c_1,\dotsc,c_{k}+\left\lfloor\frac{x+z}{2}\right\rfloor-1,+b_n,b_{n-1},\dotsc,b_1\right)\notag\\
&\times s\left(y+b-\min(a,b), a_1,\dotsc,a_{m},\left\lceil\frac{x+z}{2}\right\rceil+c_1+1,\dotsc,c_{k},\left\lfloor\frac{x+z}{2}\right\rfloor-1,b_n,\dotsc,b_1,y+a-\min(a,b)+3\right) \notag\\
&\times\frac{\Hf(c+\left\lfloor\frac{x+z}{2}\right\rfloor-1)}{\Hf(c)\Hf(\left\lfloor\frac{x+z}{2}\right\rfloor-1)}\frac{\Hf(\max(a,b)+y+\left\lceil\frac{x+z}{2}\right\rceil+1)}{\Hf(\max(a,b)+c+y+\left\lceil\frac{x+z}{2}\right\rceil+1)}\notag\\
&\times \frac{\Hf(\max(a,b)+y+z)\Hf(\max(a,b)+c+y+z+3)}{\Hf(o_a+o_b+o_c+z))\Hf(|a-b|+e_a+e_b+e_c+2y+z+3)}\notag\\
&\times \frac{\Hf(o_a+o_b+o_c)\Hf(|a-b|+e_a+e_b+e_c+2y+3)}{\Hf(\max(a,b)+y)\Hf(\max(a,b)+y+3)},
\end{align}
where $\Psi_{x,y,z}(m)$ is defined as in (\ref{psieq1}).
\end{thm}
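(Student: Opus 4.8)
The plan is to prove Theorem~\ref{off1thmQK7} (and indeed all of Theorems~\ref{off1thmQK5}--\ref{off1thmQK7} and their siblings) by the uniform strategy that the paper announces in Section~4: reduce each off-central region to the corresponding dented semihexagons via Kuo's graphical condensation. Concretely, I would first pass to the dual graph $G$ of the region $\overline{K}^{(7)}_{x,y,z}(\textbf{a};\textbf{c};\textbf{b})$, so that $\M$ of the region equals the number of perfect matchings of $G$. The key device is to choose four vertices $u,v,w,s$ on the boundary of the region — these are precisely the four shaded unit triangles labelled $u,v,w,s$ that the author flags in the footnote ``these triangles will be used later in Section~\ref{sec:proofoff}'' — positioned so that the two versions of Kuo condensation from Section~3 express $\M(\overline{K}^{(7)})$ in terms of the matching numbers of several related regions, each of which is either a smaller off-central region of a type already handled or a product of a cored-hexagon factor with two dented semihexagons.

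The second step is to set up an induction. One inducts on the total size of the ferns, say on $a+b+c$ (or on the number of triangles, padding with $0$'s as the paper allows). The base case is when all three ferns are empty: then $\overline{K}^{(7)}_{x,y,z}(\emptyset;\emptyset;\emptyset)$ is an honest hexagon with a single down-pointing triangular hole near the centre, and its tiling number is $\Psi_{z,x,2y+z}(0)$ up to the explicit hyperfactorial prefactors — this is exactly the content of the previously-established off-central cored-hexagon enumeration (Conjecture~\ref{con2}, now a theorem, together with Corollary~\ref{cor3}-type statements), combined with the Cohn--Larsen--Propp formula~(\ref{semieq}) for the semihexagon pieces. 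For the inductive step, Kuo condensation applied with the chosen $u,v,w,s$ yields a recurrence of the shape $\M(G)\M(G')=\M(G_1)\M(G_1')+\M(G_2)\M(G_2')$ where $G,G',G_1,G_1',G_2,G_2'$ are all $\overline{K}$-type (or neighbouring $\overline{E},\overline{F},\overline{G},K$-type) regions with strictly smaller ferns; then I verify that the claimed product formula on the right-hand side of~(\ref{off1eqQK7}) satisfies the same algebraic recurrence. That last verification is a hyperfactorial identity: after substituting~(\ref{psieq1}), the $\PP$-type factors, and the explicit semihexagon products~(\ref{semieq}), one cancels matching $\Hf$-terms and is left with a polynomial identity among the $P_2$-factors and a few linear shifts — routine but lengthy, and I would only sketch it.

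The main obstacle, as always in this circle of results, is \emph{bookkeeping}: identifying which precise smaller regions appear in the Kuo recurrence for $\overline{K}^{(7)}$, and confirming that each of them is covered by the inductive hypothesis (or reduces, via a $180^\circ$ rotation or a reflection, to one of the twelve ``canonical'' cases the paper actually states). The parity hypothesis ``$x$ and $z$ have different parities'' together with the constraint $y\ge\max(a-b,-3)$ is exactly what is needed to keep all the floor/ceiling corrections $\lceil\frac{x+z}{2}\rceil+1$, $\lfloor\frac{x+z}{2}\rfloor-1$ and the shifts by $+3$ consistent as the ferns shrink; one must check that no intermediate region in the recurrence has a ``negative side-length'' that is not legitimately interpretable (the paper's convention of allowing $y$ to be negative in a controlled range is precisely designed to handle this). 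I expect the semihexagon factors to be the gentlest part: the two $s(\cdots)$ terms in~(\ref{off1eqQK7}) transform under the Kuo moves by the standard ``removing a forced lozenge / splitting off a triangle'' operations, for which~(\ref{semieq}) gives a clean multiplicative response.

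Finally, I would record the special-case corollary: setting $\textbf{a}=\textbf{b}=\emptyset$ and $\textbf{c}=(0,m)$ (or $(m)$ padded) in~(\ref{off1eqQK7}) collapses the two semihexagon products and all the auxiliary $\Hf$-ratios to $1$, leaving exactly $\Psi_{z,x,2y+z+2\max(0,0)+3}(m)=\Psi_{z,x,2y+z+3}(m)$, which after the substitution $2y+z+3\mapsto$ the appropriate third side-length is the tiling count of the cored hexagon whose down-pointing hole sits at the seventh off-central position of Figure~\ref{fig:offposition}(d); this is the analogue of Corollaries~\ref{cor1}--\ref{cor3} for the $\overline{K}$-family, and it is obtained, as there, by treating the cases $y\ge z$ and $y<z$ separately and using a vertical reflection in the latter.
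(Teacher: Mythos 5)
Your toolkit (Kuo condensation, reduction to dented semihexagons via Cohn--Larsen--Propp, an induction) is the right one, but the induction you set up does not run. The measure you propose---the total fern size $a+b+c$---does not decrease under the Kuo moves available for these regions. With $u,v,w,s$ placed as in Figure \ref{fig:offK2}, the recurrence one obtains for $\overline{K}^{(7)}$ is
\begin{align*}
\M(\overline{E}^{(3)}_{x,y+1,z-1}(\textbf{a}^{+1};\textbf{c};\textbf{b}))\,\M(\overline{F}^{(6)}_{x,y,z}(\textbf{a};\textbf{c};\textbf{b}))
&=\M(\overline{K}^{(7)}_{x,y,z}(\textbf{a};\textbf{c};\textbf{b}))\,\M(\overline{G}^{(5)}_{x,y+1,z-1}(\textbf{a}^{+1};\textbf{c};\textbf{b}))\\
&\quad+\M(\overline{K}^{(7)}_{x+1,y,z-1}(\textbf{a};\textbf{c};\textbf{b}))\,\M(\overline{G}^{(5)}_{x-1,y+1,z}(\textbf{a}^{+1};\textbf{c};\textbf{b}))
\end{align*}
(together with its $a<b$ sibling): the ferns are never shortened---on the contrary, the forced lozenges lengthen the last triangle of a fern ($\textbf{a}\mapsto\textbf{a}^{+1}$)---and what shrinks is the hexagon data. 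The paper's induction is accordingly on $h=p+x+z$, where $p$ is the quasi-perimeter, its base cases are the degenerate hexagons $x=0$ or $z=0$, where the region splits along the fern line into two dented semihexagons (Region-splitting Lemma \ref{RS}) and the formula is exactly Cohn--Larsen--Propp, and a separate reduction lemma handles the extremal value of $y$, where the recurrences cannot be applied. Your plan has no mechanism that makes the recursion terminate.

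Your base case is also not available. With all three ferns empty the region has no hole at all; the genuinely needed ``smallest'' inputs in your scheme are the off-central cored hexagons ($\textbf{a}=\textbf{b}=\emptyset$, $\textbf{c}=(m)$). But most of those enumerations (the $\Theta,\Theta',\Lambda,\Lambda',\Psi'$ formulas at the new hole positions) are not in the literature---the paper stresses that they do not appear in \cite{CEKZ}---and within this paper they are \emph{corollaries} of the very theorems being proved (Corollaries \ref{cor1}--\ref{cor3} and their analogues), so taking them as the base of the induction is circular; only Conjectures \ref{con1} and \ref{con2} could be imported from Rosengren, and that covers just two of the eight families that must be handled simultaneously (the recurrence above already forces you to know the $\overline{E}^{(3)}$, $\overline{F}^{(6)}$ and $\overline{G}^{(5)}$ formulas at the same time). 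You also omit the other essential external input: the centrally-placed three-fern regions $R^{\odot},R^{\leftarrow},\dots,Q^{\swarrow}$ of the prequel \cite{HoleDent}, which occur in many of the 66 recurrences and must be fed in as known quantities for the simultaneous induction to close.
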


\begin{thm}\label{off1thmQK8}
Assume that $\textbf{a}=(a_1,a_2,\dotsc,a_m)$, $\textbf{b}=(b_1,b_2,\dotsc,b_n)$, $\textbf{c}=(c_1,c_2,\dotsc,c_k)$ are three sequences  of nonnegative integers  ($m,n,k$ are all even)and that $x,y,z$ are three  integers, such that $x\geq 0$, $y\geq \max(a-b,-1)$, $z\geq0$, and $x$ and $z$ have the same parity. Then
\begin{align}\label{off1eqQK8}
\M&(\overline{K}^{(8)}_{x,y,z}(\textbf{a};\textbf{c};\textbf{b}))=\Lambda'_{2y+z+2\max(a,b)+1,x,z}(c)\notag\\
&\times s\left(a_1,\dotsc, a_{m}+\left\lceil\frac{x+z}{2}\right\rceil+1,c_1,\dotsc,c_{k}+\left\lfloor\frac{x+z}{2}\right\rfloor-1,+b_n,b_{n-1},\dotsc,b_1\right)\notag\\
&\times s\left(y+b-\min(a,b), a_1,\dotsc,a_{m},\left\lceil\frac{x+z}{2}\right\rceil+c_1+1,\dotsc,c_{k},\left\lfloor\frac{x+z}{2}\right\rfloor-1,b_n,\dotsc,b_1,y+a-\min(a,b)+1\right) \notag\\
&\times\frac{\Hf(c+\left\lfloor\frac{x+z}{2}\right\rfloor-1)}{\Hf(c)\Hf(\left\lfloor\frac{x+z}{2}\right\rfloor-1)}\frac{\Hf(\max(a,b)+y+\left\lceil\frac{x+z}{2}\right\rceil)}{\Hf(\max(a,b)+c+y+\left\lceil\frac{x+z}{2}\right\rceil)}\notag\\
&\times \frac{\Hf(\max(a,b)+y+z)\Hf(\max(a,b)+c+y+z+1)}{\Hf(o_a+o_b+o_c+z))\Hf(|a-b|+e_a+e_b+e_c+2y+z+1)}\notag\\
&\times \frac{\Hf(o_a+o_b+o_c)\Hf(|a-b|+e_a+e_b+e_c+2y+1)}{\Hf(\max(a,b)+y)\Hf(\max(a,b)+y+1)},
\end{align}
where $\Lambda'_{x,y,z}(m)$ is defined as in (\ref{lambdaeq2}).
\end{thm}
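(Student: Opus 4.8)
The plan is to prove Theorem \ref{off1thmQK8} with the same engine that drives every main enumeration of the paper: pass to the planar dual graph, apply one of Kuo's graphical condensation identities from Section 3, and close a recurrence by induction. Write $G$ for the planar dual graph of $\overline{K}^{(8)}_{x,y,z}(\mathbf{a};\mathbf{c};\mathbf{b})$, so that $\M(\overline{K}^{(8)}_{x,y,z}(\mathbf{a};\mathbf{c};\mathbf{b}))$ is the number of perfect matchings of $G$. First I would locate on the boundary of $G$ the four unit triangles $u,v,w,s$ drawn in Figure \ref{fig:offK2}(d) --- placed, as in the other proofs of this section, so that two of them lie near the west/southwest part of the base hexagon and two border the horizontal strip carrying the three ferns --- and apply the version of Kuo's lemma matching the way these four vertices are $2$-colored. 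This produces an identity
\[
\M(G)\,\M(G_{uvws})=\M(G_{uw})\,\M(G_{vs})+\M(G_{us})\,\M(G_{vw}),
\]
in which $G_{\bullet}$ denotes $G$ with the listed triangles removed (the precise sign and pairing depending on which of the two Kuo lemmas is invoked).

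The combinatorial crux is to identify each of the six graphs. After deleting the lozenges that become forced once $u,v,w,s$ are excised, each $G_{\bullet}$ is the dual graph of a region of the same broad type: $\M(G)$ is the left side of \eqref{off1eqQK8}, while the five other terms are $\overline{K}^{(5)}$-, $\overline{K}^{(6)}$-, $\overline{K}^{(7)}$- and $\overline{K}^{(8)}$-type regions --- possibly together with one boundary-degenerate region --- whose parameters differ from $(x,y,z;\mathbf{a},\mathbf{b},\mathbf{c})$ only by decrementing $z$ (or $x$) by one or two, decrementing $y$, or trimming/removing the first triangle of one fern; throughout one uses the paper's convention that a fern may be padded with a trailing $0$ to keep an even number of triangles. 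This accounting has to respect the stated parity hypothesis on $x$ and $z$ and the $\lfloor\cdot\rfloor/\lceil\cdot\rceil$ dichotomies packed into $\Theta$, $\Lambda$, $\Psi$, $\Lambda'$; in genuinely degenerate configurations some terms vanish and must be handled separately.

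I would then run the induction on a suitable complexity parameter (for instance $x+z+a+b+c$), carrying Theorems \ref{off1thmQK5}--\ref{off1thmQK8} for the $\overline{K}^{(5)}$--$\overline{K}^{(8)}$ families simultaneously so that all four are proved together. Feeding the product formulas into the condensation identity and replacing each dented-semihexagon factor $s(\cdot)$ by its closed form \eqref{semieq} reduces the theorem to a single identity among the hyperfactorial $\Hf$, the two-branch quadratic polynomials $Q_2$ and $P_2$ buried in $\Lambda'$ and $\Psi$, and integer shifts of their arguments; this is dispatched by repeated application of $\Hf(n+1)=\Gamma(n+1)\,\Hf(n)$ (which telescopes the $\Hf$-ratios) together with direct polynomial algebra. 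The induction bottoms out at configurations already settled: when $\mathbf{a}=\mathbf{b}=\mathbf{c}=\emptyset$ the region collapses to a plain hexagon and $\M$ is MacMahon's product; the centered-middle-fern limit is the $Q^{\swarrow}$ enumeration of \cite{HoleDent} (see Remark \ref{rmkQK}); and the simplest one-triangle subcases are covered by the now-established Conjectures \ref{con1}--\ref{con2} of \cite{CEKZ,Rosen}. Any remaining small cases (e.g. $z$ at its minimum) are finite checks.

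The step I expect to be the main obstacle is the junction of the second and third stages: finding one choice of the quadruple $u,v,w,s$ that works uniformly across all parity cases, so that every one of the six graphs produced by condensation lands inside the finite family being inducted over, with exactly the parameter shifts that make the hyperfactorial identity come out right --- rather than spilling over to a region not yet enumerated. The ensuing $\Hf$-identity is long, since the two-branch polynomials $Q_2$, $P_2$ and the floor/ceiling splits multiply the number of subcases, but it is mechanical once the correct recurrence has been isolated; the real care lies in carrying out the forced-lozenge reductions correctly and in extracting a single recurrence valid in every case.
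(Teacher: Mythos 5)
Your overall engine (Kuo condensation on the dual graph, forced-lozenge identification of the six terms, induction, reduction of the resulting hyperfactorial identity) is the paper's engine, but the specific induction you set up does not close, and the reason is exactly the obstacle you flag at the end. When Kuo's Theorem \ref{kuothm2} is applied to $\overline{K}^{(8)}_{x,y,z}(\textbf{a};\textbf{c};\textbf{b})$ with the vertices of Figure \ref{fig:offK2}, the five companion graphs are \emph{not} $\overline{K}^{(5)}$--$\overline{K}^{(8)}$-type regions with slightly shifted parameters: after removing forced lozenges (and reflecting), they are $\overline{E}^{(1)}$-, $\overline{F}^{(3)}$- and $\overline{G}^{(2)}$-type regions with the ferns interchanged and modified ($\textbf{b};\textbf{c}^{\leftrightarrow};\textbf{a}^{+1}$), plus one new $\overline{K}^{(8)}$-type region; see \eqref{offcenterrecurK8qa}--\eqref{offcenterrecurK8qb}. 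Because the condensation inevitably mixes families (this is why the paper stresses that no ``universal recurrence'' exists), there is no choice of $u,v,w,s$ keeping all six graphs inside the four-member $\overline{K}$-family, so an induction carrying only Theorems \ref{off1thmQK5}--\ref{off1thmQK8} cannot be fed by the recurrence. The paper's resolution is to prove all thirty off-central formulas simultaneously, using in addition the eight central-fern enumerations of \cite{HoleDent} (the $Q$- and $R$-families) as known inputs, with induction on $h=p+x+z$ where $p$ is the quasi-perimeter; note that your suggested measure $x+z+a+b+c$ does not even decrease for the term $\overline{K}^{(8)}_{x+1,y,z-1}(\textbf{a};\textbf{c};\textbf{b})$, whereas $h$ drops by $2$ there.

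Two further points of divergence matter. First, the base cases are not MacMahon's hexagon, the centered $Q^{\swarrow}$ case, or the CEKZ conjectures: the induction bottoms out at $x=0$ or $z=0$ (and $p<6$, which reduces to these), where the region is split along the fern line into two dented semihexagons and evaluated by the Cohn--Larsen--Propp formula \eqref{semieq} via the Region-splitting Lemma \ref{RS}; these are infinite families, not finite checks. Second, when $y$ sits at its minimal value $\max(a-b,-1)$ the regions appearing in the recurrence are no longer all defined, and this is not a small-case verification either: the paper needs a separate forced-lozenge reduction (Lemma \ref{lem4}, preceded by the $0$-triangle elimination of Lemma \ref{lem3}) converting such a region into one of strictly smaller $h$, typically of a different family. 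Without enlarging the induction family, correcting the identification of the condensation terms, and supplying these two boundary mechanisms, the proposed argument has a genuine gap.
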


\section{Proofs of the main results for off-central regions}\label{sec:proofoff}

\subsection{Organization of the proof}\label{subsec:organize}

Recall that our 8 families of regions, $E^{(i)}$, $F^{(i)}$, $G^{(i)}$,  $K^{(i)}$, $\overline{E}^{(i)}$,  $\overline{F}^{(i)}$, $\overline{G}^{(i)}$  and $\overline{K}^{(i)}$,  are all obtained from a  certain base hexagon $H$ by removing three ferns along a common lattice line $\ell$. We call the perimeter  of the base hexagon the \emph{quasi-perimeter} of the regions, denoted by $p$ in the rest of the proof. One readily sees that
\begin{lem}\label{claimp} We always have for all regions in the eight families
\[p \geq 2x+4z.\]
\end{lem}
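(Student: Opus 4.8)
The plan is to derive Lemma~\ref{claimp} uniformly for all eight families from the monotonicity of perimeter under inclusion of convex bodies, together with a one-line computation of the perimeter of the auxiliary hexagon.

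Recall that every region in the eight families is obtained from an auxiliary hexagon $H_0$ by the edge-pushing procedure of Figure~\ref{pushing1} (for $E^{(i)},F^{(i)},G^{(i)},K^{(i)}$) or Figure~\ref{pushing2} (for the barred families), which translates each of the six sides of $H_0$ \emph{outward} by a prescribed distance, producing the convex base hexagon $H$. The first step is to check that every one of these pushing distances is nonnegative throughout the full parameter range of its family; this is precisely why the theorems impose lower bounds on $y$ of the form $y\ge\max(\pm(a-b),-\delta)$ with $\delta\in\{0,1,2,3\}$, which keep all the distances nonnegative even when $y<0$. Consequently each push enlarges the region and $H_0\subseteq H$ as planar convex bodies. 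The second step is to observe that in every family $H_0$ has side-lengths $(x,\,z+\delta,\,z,\,x,\,z+\delta,\,z)$ for the same $\delta\in\{0,1,2,3\}$, so that
\[
\operatorname{perim}(H_0)=2x+2(z+\delta)+2z=2x+4z+2\delta\ \ge\ 2x+4z .
\]
The third step is to invoke the classical fact that perimeter is monotone under inclusion of convex planar bodies, which gives
\[
p=\operatorname{perim}(H)\ \ge\ \operatorname{perim}(H_0)\ \ge\ 2x+4z .
\]

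The step I expect to be the main obstacle is the first one: verifying nonnegativity of all pushing distances means going through the eight families and matching the asymmetric $y$-bounds (with $a-b$ versus $b-a$, and with $-\delta$ for $\delta=1,2,3$) against the precise distances appearing in Figures~\ref{pushing1} and~\ref{pushing2}, and noticing that the extra $\delta$ built into two sides of $H_0$ is always paired with a correspondingly relaxed bound on $y$. As an independent check that avoids convex geometry, one can instead sum the six explicit side-lengths of $H$ listed in the base-hexagon descriptions; uniformly over the eight families this yields
\[
p=2x+4z+4y+3(a+b+c)+2\lvert a-b\rvert+2\delta ,
\]
and $4y+3(a+b+c)+2\lvert a-b\rvert+2\delta\ge0$ then follows by splitting on the sign of $a-b$ and, when the relevant quantity $\pm(a-b)$ exceeds $\delta$ in absolute value, using that all parameters are integers; in each sub-case the nonnegativity is immediate from $x,z,a,b,c\ge0$. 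I would present the convex-geometry argument in the paper and relegate this computation to a remark.
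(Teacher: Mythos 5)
Your fallback computation (the ``remark'') is essentially the paper's own proof: the paper also writes $p=2x+4y+4z+3a+3b+2|a-b|+2j$ (your version correctly carries the extra $3c$, which the paper's display omits) and deduces nonnegativity of the correction term from $y\geq\max(\pm(a-b),-j)$ exactly as you sketch; that part is fine, and no integrality is actually needed. The problem is with the argument you propose to feature. Its first step --- that the stated lower bounds on $y$ force \emph{all six} pushing distances to be nonnegative, so that $H_0\subseteq H$ --- is false in corner cases of the admissible parameter range. For the pushing procedure of Figure \ref{pushing1} the south side is pushed by $o_a+e_b+e_c+y+\max(a-b,0)$; take, say, $E^{(2)}_{x,y,z}(\emptyset;\emptyset;(b_1,0))$ with $b_1\geq 2$, so $a=c=0$, $o_a=e_b=e_c=0$, and $y=-2$ is allowed since $y\geq\max(a-b,-2)=-2$. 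Then the south side is pushed by $-2$, i.e.\ \emph{inward}, so $H_0\not\subseteq H$ and the monotonicity of perimeter under inclusion of convex bodies cannot be invoked (the paper's informal remark that ``all the pushing distances are still nonnegative'' is itself loose on this point; its proof of the lemma does not use it). The same phenomenon occurs in the $\overline{E}$-, $\overline{G}$-, $\overline{K}$-families, where $y$ may go down to $-3$ while the corresponding $e$- or $o$-sums vanish. So the conclusion $p\geq 2x+4z+2\delta$ is still true, but your geometric route does not prove it for those parameters.

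The fix is simply to promote your second argument to the proof: sum the six listed side-lengths of the base hexagon in each family to get $p=2x+4z+4y+3(a+b+c)+2|a-b|+2\delta$, and then bound $4y+2|a-b|+2\delta\geq 0$ using $y\geq -\delta$ and $y\geq -|a-b|$ (both consequences of $y\geq\max(\pm(a-b),-\delta)$), discarding the nonnegative term $3(a+b+c)$. If you want to keep the convex-geometry picture, you would have to restrict it to the parameter range where every pushing distance is genuinely nonnegative and handle the remaining corner cases by the computation anyway, which defeats the purpose of presenting it as the main proof.
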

\begin{proof}
If we start with an auxiliary hexagon $H_0$ of side-lengths $x,z+j,z,x,z+j,z$ (for $j=0,1,2,3$) in the definition of our region, then the quasi perimeter is always
\begin{equation}
p=2x+4y+4z+3a+3b+2|a-b|+2j.
\end{equation}
In this case, we always have $y\geq\max(-|a-b|,-j)$. It means that $4y+2|a-b|+2j\geq 0$, so $p\geq 2x+4z+3a+3b\geq2x+4z$.
\end{proof}

We aim to prove \emph{all} 30 tiling formulas in Theorems \ref{off1thm1}--\ref{off1thmQK8} at once by induction on $h:=p+x+z$, where $p$ is the quasi-perimeter of the region. Our proof is organized as follows. In Section 3.2, we quote the particular versions the Kuo condensation that will be employed in our proofs. Next, in Sections 3.3--3.10, we will present carefully 66 recurrences for our 8 families of regions obtained by applying Kuo condensation. Each family of regions will have more than one recurrences, depending on whether $a> b$, $a=b$, or $a< b$.  We would like to emphasize that, due to the difference in the structures of our regions, the universal recurrence seems \emph{not} to exist.  In Section 3.11, we investigate the two extremal cases of our proof. Section 3.12  is devoted to the main arguments of the inductive proof. 

\subsection{Kuo condensation and other preliminary results}\label{sec:kuo}

A \emph{forced lozenge} in a region $R$ on the triangular lattice is a lozenge contained in any tilings of $R$. Removal of forced lozenges does not change the tiling number of a region.

A region on the triangular lattice is said to be \emph{balanced} if it has the same number of up- and down-pointing unit triangles (this is a necessary condition that a region admits a tiling). The following useful lemma allows us to decompose a large region into several smaller ones.

\begin{lem}[Region-splitting Lemma \cite{Tri1, Tri2}]\label{RS}
Let $R$ be a balanced region on the triangular lattice. Assume that a blanched sub-region $Q$ of $R$ satisfies the condition that there is only one type of unit triangles running along each side of the border between $Q$ and $R-Q$.
Then
$\M(R)=\M(Q)\, \M(R-Q).$
\end{lem}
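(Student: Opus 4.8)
\textbf{Proof proposal for Lemma \ref{RS} (Region-splitting Lemma).}

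The plan is to translate the statement into the language of perfect matchings of the planar dual graph and then invoke a standard fact: if a connected bipartite planar graph $G$ has a subset of vertices $V(Q)$ whose induced cut-edges all emanate from the same color class, then every perfect matching of $G$ restricts to a perfect matching of $Q$ and of $G-Q$. Concretely, let $G$ be the planar dual graph of the region $R$: vertices of $G$ are the unit triangles of $R$, edges join two triangles sharing an edge, and lozenge tilings of $R$ correspond bijectively to perfect matchings of $G$. The hypothesis that only one type of unit triangle (say, up-pointing) runs along each side of the border between $Q$ and $R-Q$ means precisely that every edge of $G$ crossing this border has its $Q$-endpoint of one fixed color and its $(R-Q)$-endpoint of the other fixed color, but — and this is the key point — along each \emph{side} of the border the crossing edges are incident in $Q$ only to up-triangles (equivalently only to down-triangles), so that no matching edge can be ``shared'' in a way that pairs a $Q$-vertex with an $(R-Q)$-vertex of the color that would be needed.

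First I would set up the bijection $\M(R) \leftrightarrow$ (perfect matchings of $G$), which is the classical correspondence between lozenge tilings and dimer covers of the triangular-lattice dual, and note that $R$ being balanced is exactly $|V(G)|$ being evenly split between the two color classes (a necessary condition for a perfect matching to exist). Then I would argue that for any perfect matching $M$ of $G$, no edge of $M$ can cross the border between $Q$ and $R-Q$: indeed, along each border side all the triangles of $Q$ adjacent to that side are of a single type, hence a single color; counting within $Q$, since $Q$ is itself balanced, the matching edges internal to $Q$ already saturate all vertices of $Q$ of both colors unless some crossing edge is used, but a crossing edge would pair a $Q$-vertex of that fixed color with an $(R-Q)$-vertex, creating a color imbalance in the remaining unmatched vertices of $Q$ (one color deficit that cannot be repaired internally because $Q$ is balanced and all further crossing edges along that side consume vertices of the \emph{same} $Q$-color). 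Pushing this parity/color-counting argument along every border side simultaneously forces $M$ to have no crossing edge at all. Therefore $M$ decomposes as $M_Q \sqcup M_{R-Q}$ where $M_Q$ is a perfect matching of $Q$ and $M_{R-Q}$ is a perfect matching of $G-Q$; conversely any such pair glues to a perfect matching of $G$. This yields $\M(R)=\M(Q)\,\M(R-Q)$.

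The routine but slightly delicate step is the color-counting that rules out crossing edges: one must be careful that it is not enough that each crossing edge individually respects the bipartition — one needs that \emph{all} crossing edges along a given side attach in $Q$ to the same color, which is exactly the content of ``only one type of unit triangle running along each side of the border.'' I would phrase this as: if $e_1,\dots,e_r$ are the border-crossing edges along one side, their $Q$-endpoints $u_1,\dots,u_r$ are pairwise distinct triangles of the same orientation, hence the same color $\chi$; if a perfect matching $M$ uses $t\ge 1$ of these edges, then $M$ saturates $t$ vertices of color $\chi$ in $Q$ by edges leaving $Q$, so the edges of $M$ internal to $Q$ saturate only $|V(Q)|/2 - t$ vertices of color $\chi$ but must saturate all $|V(Q)|/2$ vertices of the opposite color that are not hit by crossing edges — and since crossing edges hit \emph{no} vertex of the opposite color in $Q$ (they all hit color $\chi$), that is $|V(Q)|/2$ of them, a contradiction with the handshake/parity identity for matchings restricted to $Q$. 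Hence $t=0$. Summing the (at most a few) border sides, $M$ has no crossing edge and the factorization follows. The main obstacle is purely bookkeeping: making the ``same color along each side'' hypothesis precise enough to drive the counting, which is why this lemma is quoted from \cite{Tri1, Tri2} rather than reproved in detail here.

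\begin{proof}
Pass to the planar dual graph $G$ of $R$, whose vertices are the unit triangles of $R$ and whose edges join adjacent triangles; lozenge tilings of $R$ correspond bijectively to perfect matchings of $G$, and likewise for $Q$ and $R-Q$ with their dual graphs $G_Q$ and $G_{R-Q}$. Since $R$, $Q$, and $R-Q$ are all balanced, each of $G$, $G_Q$, $G_{R-Q}$ has its two color classes of equal size. Fix a perfect matching $M$ of $G$. Along any one side $s$ of the border between $Q$ and $R-Q$, the hypothesis says that all triangles of $Q$ meeting $s$ have the same orientation, hence the same color $\chi_s$ in the bipartition; consequently every edge of $G$ crossing the border along $s$ joins a color-$\chi_s$ vertex of $Q$ to a color-$\bar\chi_s$ vertex of $R-Q$. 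Suppose $M$ uses $t\ge 1$ border-crossing edges along $s$ and none along the other sides; these edges saturate $t$ distinct color-$\chi_s$ vertices of $Q$ and $0$ color-$\bar\chi_s$ vertices of $Q$. The remaining edges of $M$ lie inside $Q$ or inside $R-Q$; those inside $Q$ must then saturate all $|V(Q)|/2$ color-$\bar\chi_s$ vertices of $Q$ while saturating only $|V(Q)|/2 - t$ color-$\chi_s$ vertices, which is impossible since a matching saturates equal numbers of vertices of each color. Hence $t=0$. Applying this to each border side in turn shows $M$ uses no border-crossing edge. Therefore $M$ restricts to a perfect matching of $G_Q$ and a perfect matching of $G_{R-Q}$, and conversely any pair of perfect matchings of $G_Q$ and $G_{R-Q}$ assembles to a perfect matching of $G$. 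This bijection gives $\M(R)=\M(Q)\,\M(R-Q)$.
\end{proof}
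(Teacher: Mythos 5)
The paper does not actually prove this lemma (it is quoted from \cite{Tri1,Tri2}), so the only thing to assess is the internal correctness of your argument, and there is a genuine gap in it. Your reduction to perfect matchings of the dual graph and the colour count inside $Q$ are the right ingredients, but the last step, ``Applying this to each border side in turn shows $M$ uses no border-crossing edge,'' is not valid: your contradiction was derived under the assumption that $M$ uses crossing edges along \emph{one} side $s$ and none along the other sides, so it only excludes matchings whose crossings are confined to a single side. If two sides $s_1,s_2$ carry $Q$-triangles of opposite colours $\chi_{s_1}\neq\chi_{s_2}$, then $t_1$ crossings along $s_1$ and $t_2$ along $s_2$ with $t_1=t_2$ leave the unmatched part of $Q$ colour-balanced, and your count produces no contradiction. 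This is not a repairable bookkeeping issue under the purely side-by-side reading of the hypothesis, because under that reading the conclusion is simply false: take $R$ the hexagon of side $1$ (six unit triangles, $\M(R)=2$) and $Q$ the vertical rhombus formed by the left up-triangle of the top row and the left down-triangle of the bottom row. Then $Q$ and $R-Q$ are balanced, the border consists of two unit sides, each meeting exactly one $Q$-triangle (up-pointing along one side, down-pointing along the other), yet $\M(Q)\,\M(R-Q)=1\cdot 1\neq 2$; indeed each of the two tilings of $R$ crosses the border once at each side, exactly the compensating configuration your count cannot rule out.

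What the lemma needs, and what holds in every application in this paper (the cut runs along a single lattice line, so all border triangles of $Q$ are of the same orientation), is the \emph{global} hypothesis: all unit triangles of $Q$ adjacent to the border have one and the same orientation, say up-pointing. With that, the argument is both correct and simpler than your per-side count: in any tiling, a lozenge crossing the border consists of a border-adjacent triangle of $Q$ and one of $R-Q$, hence its $Q$-triangle is up-pointing; therefore every down-pointing triangle of $Q$ is covered by a lozenge lying entirely in $Q$, and these internal lozenges use exactly $\#\{\text{down-triangles of }Q\}=\#\{\text{up-triangles of }Q\}$ up-pointing triangles of $Q$ (here balancedness of $Q$ is used), leaving none available for crossing lozenges. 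Hence no lozenge crosses the border, the tiling splits, and the product formula follows together with the trivial gluing converse. You should either strengthen the hypothesis you work with to this global form (which is how the lemma is used in Section 3.12) or restructure the count so that all sides are treated simultaneously with a single colour class, rather than ``in turn.''
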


Let $G$ be a finite graph without loops, however the multiple-edges are allowed. A \emph{perfect matching} of $G$ is a collection of disjoint edges covering all vertices of $G$. The \emph{(planar) dual graph} of a region $R$ on the triangular lattice  is the graph whose vertices are unit triangles in $R$ and whose edges connects precisely two unit triangles sharing an edge.  We can identify the tilings of a region and perfect matchings of its dual graph.  In this point of view, we use the notation $\M(G)$ for the number of perfect matchings of the graph $G$.

The following two theorems of  Kuo are the key of our proofs in this paper.
\begin{thm}[Theorem 5.1 \cite{Kuo}]\label{kuothm1}
Let $G=(V_1,V_2,E)$ be a bipartite planar graph in which $|V_1|=|V_2|$. Assume that  $u, v, w, s$ are four vertices appearing in a cyclic order on a face of $G$ so that $u,w \in V_1$ and $v,s \in V_2$. Then
\begin{equation}\label{kuoeq1}
\M(G)\M(G-\{u, v, w, s\})=\M(G-\{u, v\})\M(G-\{ w, s\})+\M(G-\{u, s\})\M(G-\{v, w\}).
\end{equation}
\end{thm}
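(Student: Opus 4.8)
\textbf{Proof strategy for Theorem \ref{kuothm1} (Kuo condensation).}

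The statement to be established is Kuo's graphical condensation identity, and the plan is to follow the original strategy of Kuo \cite{Kuo}, which rests on the Lindstr\"om--Gessel--Viennot style manipulation of the perfect-matching generating function, or equivalently on a combinatorial superposition argument. First I would pass to a weighted version: assign to each edge $e$ a formal indeterminate weight $x_e$, and for a subgraph $G'$ let $\M(G')$ denote the sum over perfect matchings of the product of the weights of the edges used. It suffices to prove the identity at the level of these weighted counts, since setting all $x_e=1$ recovers the unweighted statement; carrying the weights along is harmless and actually clarifies the bookkeeping.

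The heart of the argument is to interpret each product $\M(G-\{u,v\})\M(G-\{w,s\})$, etc., as a sum over \emph{pairs} of perfect matchings, one of $G$ minus one pair of the four special vertices and one of $G$ minus the complementary pair. Superimposing such a pair of matchings on the vertex set $V(G)$ yields a disjoint union of doubled edges and of paths/cycles; because exactly the four vertices $u,v,w,s$ are the ones left uncovered (two by each matching), the superposition of any term on the left-hand side $\M(G)\M(G-\{u,v,w,s\})$ produces a configuration whose only path endpoints lie among $\{u,v,w,s\}$, and similarly for each term on the right. The key planarity input is that $u,v,w,s$ lie in cyclic order on a single face: this forces the two vertex-disjoint paths connecting them to pair the four points up either as $\{u,v\},\{w,s\}$ or as $\{u,s\},\{v,w\}$, but \emph{not} as $\{u,w\},\{v,s\}$, because two disjoint paths inside the plane joining four points on a common face cannot ``cross.'' This is precisely the combinatorial dichotomy that matches the two terms on the right-hand side.

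Concretely, I would set up a sign-reversing (here, sign-free) bijection: given a perfect matching $M_0$ of $G$ together with a perfect matching $M_1$ of $G-\{u,v,w,s\}$, the symmetric difference $M_0\triangle M_1$ consists of doubled edges plus exactly two vertex-disjoint alternating paths whose four endpoints are $u,v,w,s$. Splitting into the two topological cases (the paths join $\{u,v\}$ and $\{w,s\}$, or they join $\{u,s\}$ and $\{v,w\}$) and re-pairing the edges along each path between $M_0$ and $M_1$ produces, in the first case, a matching of $G-\{u,v\}$ together with a matching of $G-\{w,s\}$, and in the second case a matching of $G-\{u,s\}$ together with a matching of $G-\{v,w\}$; the edge weights are preserved because the multiset of edges is unchanged. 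Running the same construction backwards on each term of the right-hand side shows the map is a weight-preserving bijection, which yields \eqref{kuoeq1}.

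The main obstacle is the planarity/topology step: one must argue rigorously that two vertex-disjoint paths in a planar graph with all four endpoints on one face necessarily realize a non-crossing pairing of those endpoints, and that every non-crossing pairing is achievable. I would handle this by a standard Jordan-curve argument — close up the face into a disc, so the four points sit on the boundary circle in the cyclic order $u,v,w,s$; a path from $u$ to $w$ would separate $v$ from $s$ inside the disc, contradicting the existence of a second disjoint path from $v$ to $s$. Everything else (the symmetric-difference decomposition into paths and doubled edges, the re-pairing of alternating paths, weight preservation) is routine and I would present it compactly, since it is the familiar mechanism underlying all condensation-type identities.
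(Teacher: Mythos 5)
The paper itself gives no proof of Theorem \ref{kuothm1} (it is quoted directly from Kuo), so your proposal has to be measured against Kuo's original superposition argument, which is indeed the proof you are reconstructing: weighted matchings, decomposition of the union of two matchings into doubled edges, cycles and two defect paths, and re-pairing along a path. The skeleton is right, but there is a genuine gap: nowhere do you actually use the bipartiteness hypothesis ($u,w\in V_1$, $v,s\in V_2$, $|V_1|=|V_2|$), and planarity alone cannot give the two-term identity \eqref{kuoeq1}. For planar non-bipartite graphs with four vertices in cyclic order on a face the correct identity acquires the extra term $\M(G-\{u,w\})\M(G-\{v,s\})$, and \eqref{kuoeq1} fails outright: take the $4$-cycle $u,v,w,s$ together with two interior vertices $p,q$ and edges $up,vp,wq,sq,pq$; then $\M(G)\M(G-\{u,v,w,s\})=4\cdot 1=4$, while $\M(G-\{u,v\})\M(G-\{w,s\})+\M(G-\{u,s\})\M(G-\{v,w\})=1\cdot 1+2\cdot 2=5$. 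Since your argument as written would apply verbatim to this graph, some step must secretly require bipartiteness.

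The step in question is the reverse direction, which you dispatch with ``running the same construction backwards.'' When you superimpose a matching of $G-\{u,v\}$ with one of $G-\{w,s\}$, the two defect paths could a priori pair the endpoints as $\{u,s\},\{v,w\}$; this pairing is \emph{non-crossing}, so your Jordan-curve argument does not exclude it, and if it occurred the backwards re-pairing would land in $\M(G-\{u,w\})\times\M(G-\{v,s\})$ rather than in $\M(G)\times\M(G-\{u,v,w,s\})$, so your forward map would be injective but not surjective (exactly what happens in the example above, accounting for the deficit $4<5$). What rules this pairing out is parity, not topology: each defect path alternates between the two matchings, so a path whose first and last edges come from \emph{different} matchings has even length and joins two vertices of the same colour class; a $u$--$s$ path of this kind would join $V_1$ to $V_2$, a contradiction. (Equivalently, $G-\{u,w\}$ and $G-\{v,s\}$ are colour-unbalanced and admit no perfect matching.) Kuo's proof makes precisely this parity count, and you need to add it here and in the analogous check for pairs in $\M(G-\{u,s\})\times\M(G-\{v,w\})$. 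One smaller correction: the exchange must be performed along exactly one of the two defect paths (say the one through $u$); swapping along both paths merely interchanges the roles of the two matchings and does not produce the asserted pair in $\M(G-\{u,v\})\times\M(G-\{w,s\})$.
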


\begin{thm}[Theorem 5.2 \cite{Kuo}]\label{kuothm2}
Let $G=(V_1,V_2,E)$ be a bipartite planar graph in which $|V_1|=|V_2|$. Assume that  $u, v, w, s$ are four vertices appearing in a cyclic order on a face of $G$ so that $u,v \in V_1$ and $w,s \in V_2$. Then
\begin{equation}\label{kuoeq2}
\M(G-\{u, s\})\M(G-\{v, w\})=\M(G)\M(G-\{u, v, w, s\})+\M(G-\{u,w\})\M(G-\{v, s\}).
\end{equation}
\end{thm}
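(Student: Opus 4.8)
The plan is to prove the identity \eqref{kuoeq2} by the classical \emph{superposition of matchings} technique, exploiting both the bipartite and the planar structure of $G$. Since the four deleted-vertex sets appearing in \eqref{kuoeq2} differ only in which of $u,v,w,s$ are removed, I would compare the three products by overlaying pairs of matchings. For a product $\M(G-A)\M(G-B)$ I consider pairs $(M_1,M_2)$ with $M_1$ a perfect matching of $G-A$ and $M_2$ a perfect matching of $G-B$, and I form the superposition $M_1\cup M_2$, with edges common to both counted with multiplicity two. Every vertex outside $A\cup B$ has degree $2$ in $M_1\cup M_2$, while each of $u,v,w,s$ is covered by exactly one of the two matchings and so has degree $1$; hence $M_1\cup M_2$ decomposes into doubled edges, even alternating cycles, and exactly two vertex-disjoint alternating paths whose four endpoints are $u,v,w,s$.

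The first key step is to record, at each of $u,v,w,s$, whether the incident path-edge lies in $M_1$ or in $M_2$: a vertex deleted in $G-A$ is covered only by $M_2$, and conversely. Walking along an alternating path, the colour class ($V_1$ versus $V_2$) alternates with each edge, so the parity of a path's length is forced by the colours of its endpoints, and the membership ($M_1$ or $M_2$) of the two end-edges is forced by whether the path has odd or even length. Running this bookkeeping for each of the three products, using $u,v\in V_1$ and $w,s\in V_2$, I would show that every product can realize only two partitions of $\{u,v,w,s\}$ into endpoint-pairs: the left side $\M(G-\{u,s\})\M(G-\{v,w\})$ gives $\{u,s\}\!\mid\!\{v,w\}$ or $\{u,v\}\!\mid\!\{w,s\}$; the term $\M(G)\M(G-\{u,v,w,s\})$ gives $\{u,w\}\!\mid\!\{v,s\}$ or $\{u,s\}\!\mid\!\{v,w\}$; and the term $\M(G-\{u,w\})\M(G-\{v,s\})$ gives $\{u,w\}\!\mid\!\{v,s\}$ or $\{u,v\}\!\mid\!\{w,s\}$.

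Next I would invoke planarity to eliminate the crossing partition $\{u,w\}\!\mid\!\{v,s\}$. Because $u,v,w,s$ lie in this cyclic order on a single face, the two paths can be closed up by disjoint Jordan arcs drawn inside that face; if the endpoints were paired as $\{u,w\}$ and $\{v,s\}$, then $u,w$ would separate $v,s$ along the face boundary, forcing the two vertex-disjoint paths to cross, contradicting planarity. This removes the crossing type from both right-hand products, leaving the left side with contributions of types $\{u,s\}\!\mid\!\{v,w\}$ and $\{u,v\}\!\mid\!\{w,s\}$, the first right-hand product with only $\{u,s\}\!\mid\!\{v,w\}$, and the second with only $\{u,v\}\!\mid\!\{w,s\}$.

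Finally, I would set up the two bijections that finish the proof. Given a left-side pair $(M_1,M_2)$ whose superposition has the non-crossing type $\{u,s\}\!\mid\!\{v,w\}$, I reassign the ownership of the edges along the two paths, swapping which of the two output matchings covers each path, while leaving the doubled edges and cycles split as before. Since this operation does not change the edge multiset $M_1\cup M_2$, it preserves the path/cycle decomposition and is manifestly reversible, and it produces exactly a pair counted by $\M(G)\M(G-\{u,v,w,s\})$; the same construction applied to type $\{u,v\}\!\mid\!\{w,s\}$ lands in $\M(G-\{u,w\})\M(G-\{v,s\})$. These bijections yield $\M(G-\{u,s\})\M(G-\{v,w\}) = \M(G)\M(G-\{u,v,w,s\}) + \M(G-\{u,w\})\M(G-\{v,s\})$. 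The main obstacle, and the step deserving the most care, is the planarity argument: verifying rigorously, via the Jordan curve theorem applied to the chosen face, that no pair of vertex-disjoint alternating paths can realize the crossing pairing. This is precisely where the hypothesis that $u,v,w,s$ occur in cyclic order on a common face is used, and it is what makes the right-hand side a sum of exactly two terms rather than acquiring a spurious third contribution.
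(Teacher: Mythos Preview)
The paper does not prove this statement at all: it is quoted verbatim as Theorem~5.2 of \cite{Kuo} and used as a black box. Your proposal reproduces Kuo's original superposition-of-matchings argument, and the outline is correct: the bipartite parity analysis of the three products is accurate, and the planarity step ruling out the crossing pairing $\{u,w\}\mid\{v,s\}$ is the standard Jordan-curve argument.

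There is one imprecision worth fixing in your final bijection. You write that you ``reassign the ownership of the edges along the two paths, swapping which of the two output matchings covers each path.'' Taken literally, swapping \emph{both} paths sends a left-side pair $(M_1,M_2)$ for $\M(G-\{u,s\})\M(G-\{v,w\})$ to a pair for $\M(G-\{v,w\})\M(G-\{u,s\})$, i.e.\ back to the left side with the roles of $M_1$ and $M_2$ exchanged. The correct move is to swap the coloring along exactly \emph{one} of the two paths: for a superposition of type $\{u,s\}\mid\{v,w\}$, swap only the $u$--$s$ path (so that all four vertices become covered by the new $M_1'$, landing in $\M(G)\M(G-\{u,v,w,s\})$); for type $\{u,v\}\mid\{w,s\}$, swap only the $w$--$s$ path to land in $\M(G-\{u,w\})\M(G-\{v,s\})$. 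Equivalently, one observes that for a fixed superposition multiset the number of compatible ordered pairs is $2^{\#\text{cycles}}$ on each side, since path colorings are forced by the endpoints. With this correction your argument is complete.
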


Theorems \ref{kuothm1} and \ref{kuothm2} are usually mentioned two variants of  \emph{Kuo condensation}. Kuo condensation (or \emph{graphical condensation} as called in \cite{Kuo}) can be considered as a combinatorial interpretation of the well-known \emph{Dodgson condensation} in linear algebra (which is based on the Jacobi--Desnanot identity, see e.g. \cite{Abeles}, \cite{Dod} and \cite{Mui}, pp. 136--148, and \cite{Zeil} for a bijective proof). The Dodgson condensation was named after Charles Lutwidge Dodgson (1832--1898), better known by his pen name Lewis Carroll, an English writer, mathematician, and photographer.

The preliminary version of Kuo condensation (when the for vertices $u,v,w,s$ in Theorem \ref{kuothm1} form a $4$-cycle in the graph $G$) was originally conjectured by Alexandru Ionescu in context of Aztec diamond graphs, and was proved by Propp in 1993 (see e.g. \cite{Propp2}). Eric H. Kuo introduced Kuo condensation  in his 2004 paper \cite{Kuo} with four different versions, two of them are Theorems \ref{kuothm1} and \ref{kuothm2} stated above. Kuo condensation has become a powerful tool in the enumeration of tilings with a number of applications. We refer the reader to \cite{Ciucu, Ful, Knuth, Kuo06, speyer, YYZ, YZ} for various aspects and generalizations of Kuo condensation, and e.g. \cite{CF, CK, CL, KW, LMNT, Lai15a, Tri1, Tri2, Halfhex1, Halfhex2, Halfhex3, Minor, LM, LR, Ranjan1, Ranjan2} for recent applications of the method.


\subsection{Recurrences for  $E^{(i)}$-type regions}\label{subsec:offrecurE}
We apply Kuo condensation in Theorem \ref{kuothm1} to the dual graph $G$ of the region $E^{(1)}_{x,y,z}(\textbf{a};\ \textbf{c};\ \textbf{b})$ with the four vertices $u,v,w,s$ chosen as in Figure \ref{fig:off1}. In particular, the unit triangle corresponding to $u$ is the shaded one on the northeast corner of the region, the $v$-triangle is appended to the left of the right fern, the $w$-triangle is at the east corner of the region, and the $s$-triangle is at the southwest corner.


Let us consider the region corresponding to the graph $G-\{u,v,w,s\}$ (see Figure \ref{fig:kuooff1}(b)). The removal of the $u$-, $w$-, $s$-triangles yields several forced lozenges on the boundary of the region. The removal of the $v$-triangle yields forced lozenges on the side of the last down-pointing triangle of the $b$-fern. After removing these forced lozenges, we get a $R^{\leftarrow}$-type region considered in Theorem 2.3 of \cite{HoleDent}. We note that Figure \ref{fig:kuooff1}(c) illustrates the case when the $b$-fern has an even number of triangles, i.e. it ends with a down-pointing triangle. In the case $b$-fern has an odd number of triangles, we can regard that the $b$-fern ends with a down-pointing triangle of side-length $0$. In particular, we obtain the region $R^{\leftarrow}_{x,y-1,z-1}(\textbf{a};\ \textbf{c}; \ \textbf{b}^{+1})$, where $\textbf{b}^{+1}$ denotes the sequence obtained from $\textbf{b}$ by adding 1 to the last term if $\textbf{b}$ has an even number of terms, in the case $\textbf{b}$ has an odd number of terms, we include a new term $1$ to the end of the sequence.

We note that, besides changing the side-length of the bases hexagon and the $b$-fern, the removal of forced lozenges may change the center of the region. As a consequence, it may change the type of our region, in particular, from $E^{(1)}$ to $R^{\leftarrow}$ in this case. Since the removal of the forced lozenges does not change the tiling number, we get
\begin{equation}\label{kuothm1eq1}
\M(G-\{u,v,w,s\})=\M(R^{\leftarrow}_{x,y-1,z-1}(\textbf{a};\ \textbf{c}; \ \textbf{b}^{+1})).
\end{equation}

By considering forced lozenges yields by the removal of $u$-, $v$-, $w$-, $s$-triangles as in Figures \ref{fig:kuooff1}(d), (e) and (f), we have
\begin{equation}\label{kuothm1eq2}
\M(G-\{w,s\})=\M(G^{(1)}_{x,y-1,z}(\textbf{a};\ \textbf{c};\ \textbf{b})),
\end{equation}
\begin{equation}\label{kuothm1eq3}
\M(G-\{u,s\})=\M(E^{(1)}_{x+1,y,z-1}(\textbf{a};\ \textbf{c};\ \textbf{b}))
\end{equation}
and
\begin{equation}\label{kuothm1eq4}
\M(G-\{v,w\})=\M(R^{\leftarrow}_{x-1,y,z}(\textbf{a};\ \textbf{c};\ \textbf{b}^{+1})).
\end{equation}

Finally, we remove forced lozenges from the region corresponding to $G-\{u,v\}$, $180^{\circ}$-rotate the resulting region, and obtain
\begin{equation}\label{kuothm1eq5}
\M(G-\{u,v\})=\M(K^{(1)}_{x,y-1,z-1}(\textbf{b}^{+1};\ \overline{\textbf{c}};\ \textbf{a})),
\end{equation}
where $\overline{\textbf{c}}$ is the sequence obtained from $\textbf{c}$ by reverting the order of terms its if $\textbf{c}$ has an even number of terms, otherwise, we revert $\textbf{c}$ and include a new 0 term in the beginning (see Figure \ref{fig:kuooff1}(c)).

Plugging equations (\ref{kuothm1eq1})--(\ref{kuothm1eq5}) into the equation in Kuo's Theorem \ref{kuothm1}, we have the following $E^{(1)}$-recurrence for $a\leq b$:
\begin{align}\label{offcenterrecurE1a}
\M(E^{(1)}_{x,y,z}(\textbf{a};\ \textbf{c};\ \textbf{b}))\M(R^{\leftarrow}_{x,y-1,z-1}(\textbf{a};\ \textbf{c}; \ \textbf{b}^{+1}))&=\M(K^{(1)}_{x,y-1,z-1}(\textbf{b}^{+1};\ \overline{\textbf{c}};\ \textbf{a})) \M(G^{(1)}_{x,y-1,z}(\textbf{a};\ \textbf{c};\ \textbf{b}))\notag\\
&+
\M(E^{(1)}_{x+1,y,z-1}(\textbf{a};\ \textbf{c};\ \textbf{b})) \M(R^{\leftarrow}_{x-1,y-1,z}(\textbf{a};\ \textbf{c};\ \textbf{b}^{+1})).
\end{align}

Working similarly for the case $a>b$, we get a slightly different $E^{(1)}$-recurrence:
\begin{align}\label{offcenterrecurE1b}
\M(E^{(1)}_{x,y,z}(\textbf{a};\ \textbf{c};\ \textbf{b}))\M(R^{\leftarrow}_{x,y,z-1}(\textbf{a};\ \textbf{c}; \ \textbf{b}^{+1}))&=\M(K^{(1)}_{x,y,z-1}(\textbf{b}^{+1};\ \overline{\textbf{c}};\ \textbf{a})) \M(G^{(1)}_{x,y-1,z}(\textbf{a};\ \textbf{c};\ \textbf{b}))\notag\\
&+
\M(E^{(1)}_{x+1,y,z-1}(\textbf{a};\ \textbf{c};\ \textbf{b})) \M(R^{\leftarrow}_{x-1,y,z}(\textbf{a};\ \textbf{c};\ \textbf{b}^{+1})).
\end{align}

\begin{figure}\centering
\setlength{\unitlength}{3947sp}%
\begingroup\makeatletter\ifx\SetFigFont\undefined%
\gdef\SetFigFont#1#2#3#4#5{%
  \reset@font\fontsize{#1}{#2pt}%
  \fontfamily{#3}\fontseries{#4}\fontshape{#5}%
  \selectfont}%
\fi\endgroup%
\resizebox{15cm}{!}{
\begin{picture}(0,0)%
\includegraphics{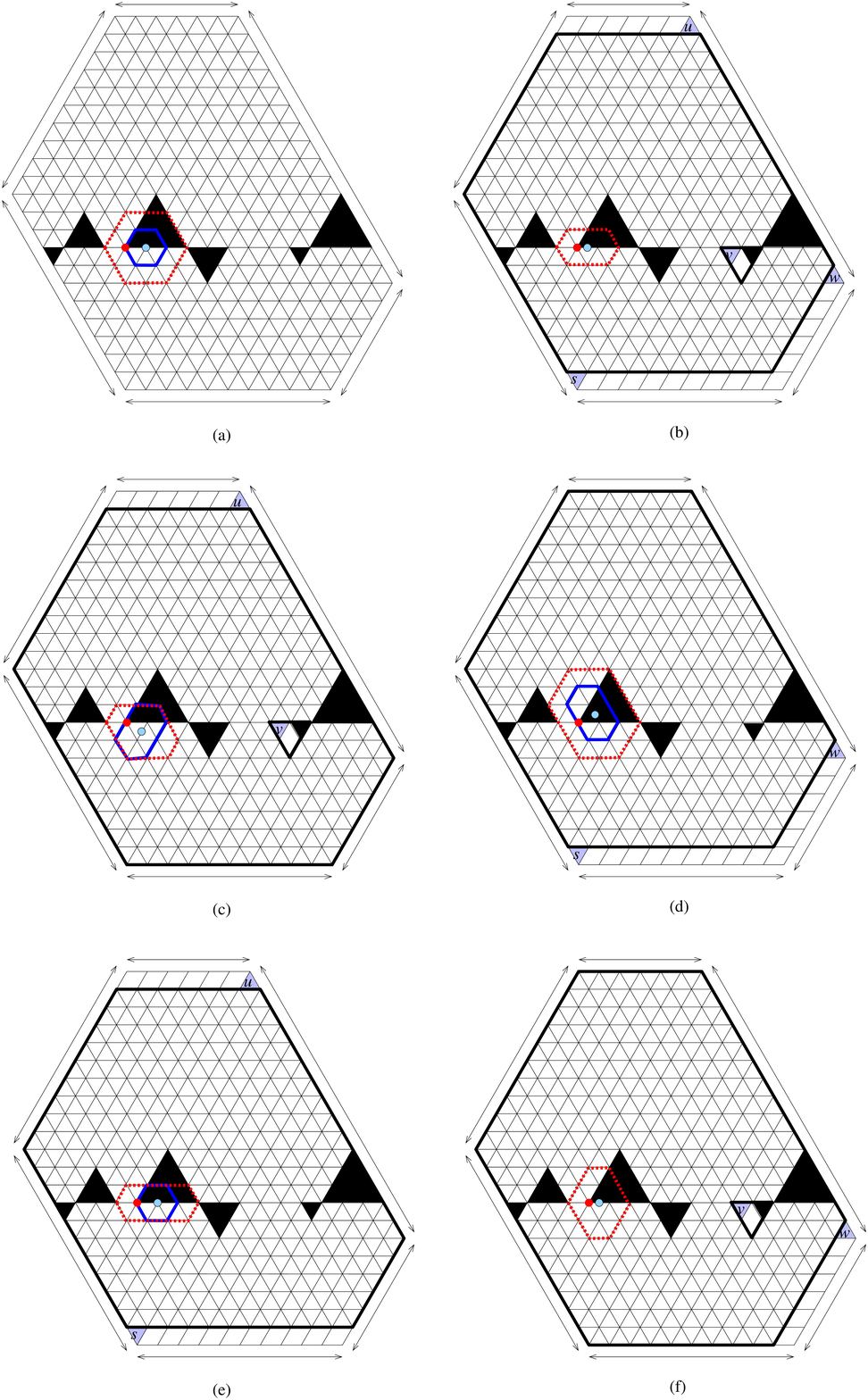}%
\end{picture}%
%
%

\begin{picture}(17695,28209)(1962,-29687)
\put(18545,-28405){\rotatebox{60.0}{\makebox(0,0)[lb]{\smash{{\SetFigFont{14}{16.8}{\familydefault}{\mddefault}{\updefault}{\color[rgb]{0,0,0}$z+o_a+e_b+e_c$}%
}}}}}
\put(9544,-28405){\rotatebox{60.0}{\makebox(0,0)[lb]{\smash{{\SetFigFont{14}{16.8}{\familydefault}{\mddefault}{\updefault}{\color[rgb]{0,0,0}$z+o_a+e_b+e_c$}%
}}}}}
\put(9309,-9378){\rotatebox{60.0}{\makebox(0,0)[lb]{\smash{{\SetFigFont{14}{16.8}{\familydefault}{\mddefault}{\updefault}{\color[rgb]{0,0,0}$z+o_a+e_b+e_c$}%
}}}}}
\put(18340,-18838){\rotatebox{60.0}{\makebox(0,0)[lb]{\smash{{\SetFigFont{14}{16.8}{\familydefault}{\mddefault}{\updefault}{\color[rgb]{0,0,0}$z+o_a+e_b+e_c$}%
}}}}}
\put(9339,-18838){\rotatebox{60.0}{\makebox(0,0)[lb]{\smash{{\SetFigFont{14}{16.8}{\familydefault}{\mddefault}{\updefault}{\color[rgb]{0,0,0}$z+o_a+e_b+e_c$}%
}}}}}
\put(18310,-9378){\rotatebox{60.0}{\makebox(0,0)[lb]{\smash{{\SetFigFont{14}{16.8}{\familydefault}{\mddefault}{\updefault}{\color[rgb]{0,0,0}$z+o_a+e_b+e_c$}%
}}}}}
\put(4808,-1759){\makebox(0,0)[lb]{\smash{{\SetFigFont{14}{16.8}{\familydefault}{\mddefault}{\updefault}{\color[rgb]{0,0,0}$x+o_a+e_b+e_c$}%
}}}}
\put(7672,-2952){\rotatebox{300.0}{\makebox(0,0)[lb]{\smash{{\SetFigFont{14}{16.8}{\familydefault}{\mddefault}{\updefault}{\color[rgb]{0,0,0}$2y+z+e_a+o_b+o_c+b-a$}%
}}}}}
\put(5320,-10027){\makebox(0,0)[lb]{\smash{{\SetFigFont{14}{16.8}{\familydefault}{\mddefault}{\updefault}{\color[rgb]{0,0,0}$x+e_a+o_b+o_c$}%
}}}}
\put(2047,-6377){\rotatebox{300.0}{\makebox(0,0)[lb]{\smash{{\SetFigFont{14}{16.8}{\familydefault}{\mddefault}{\updefault}{\color[rgb]{0,0,0}$2y+z+o_a+e_b+e_c+b-a$}%
}}}}}
\put(2354,-4417){\rotatebox{60.0}{\makebox(0,0)[lb]{\smash{{\SetFigFont{14}{16.8}{\familydefault}{\mddefault}{\updefault}{\color[rgb]{0,0,0}$z+e_a+o_b+o_c$}%
}}}}}
\put(13809,-1759){\makebox(0,0)[lb]{\smash{{\SetFigFont{14}{16.8}{\familydefault}{\mddefault}{\updefault}{\color[rgb]{0,0,0}$x+o_a+e_b+e_c$}%
}}}}
\put(16673,-2952){\rotatebox{300.0}{\makebox(0,0)[lb]{\smash{{\SetFigFont{14}{16.8}{\familydefault}{\mddefault}{\updefault}{\color[rgb]{0,0,0}$2y+z+e_a+o_b+o_c+b-a$}%
}}}}}
\put(14321,-10027){\makebox(0,0)[lb]{\smash{{\SetFigFont{14}{16.8}{\familydefault}{\mddefault}{\updefault}{\color[rgb]{0,0,0}$x+e_a+o_b+o_c$}%
}}}}
\put(11048,-6377){\rotatebox{300.0}{\makebox(0,0)[lb]{\smash{{\SetFigFont{14}{16.8}{\familydefault}{\mddefault}{\updefault}{\color[rgb]{0,0,0}$2y+z+o_a+e_b+e_c+b-a$}%
}}}}}
\put(11355,-4417){\rotatebox{60.0}{\makebox(0,0)[lb]{\smash{{\SetFigFont{14}{16.8}{\familydefault}{\mddefault}{\updefault}{\color[rgb]{0,0,0}$z+e_a+o_b+o_c$}%
}}}}}
\put(4838,-11219){\makebox(0,0)[lb]{\smash{{\SetFigFont{14}{16.8}{\familydefault}{\mddefault}{\updefault}{\color[rgb]{0,0,0}$x+o_a+e_b+e_c$}%
}}}}
\put(7702,-12412){\rotatebox{300.0}{\makebox(0,0)[lb]{\smash{{\SetFigFont{14}{16.8}{\familydefault}{\mddefault}{\updefault}{\color[rgb]{0,0,0}$2y+z+e_a+o_b+o_c+b-a$}%
}}}}}
\put(5350,-19487){\makebox(0,0)[lb]{\smash{{\SetFigFont{14}{16.8}{\familydefault}{\mddefault}{\updefault}{\color[rgb]{0,0,0}$x+e_a+o_b+o_c$}%
}}}}
\put(2077,-15837){\rotatebox{300.0}{\makebox(0,0)[lb]{\smash{{\SetFigFont{14}{16.8}{\familydefault}{\mddefault}{\updefault}{\color[rgb]{0,0,0}$2y+z+o_a+e_b+e_c+b-a$}%
}}}}}
\put(2384,-13877){\rotatebox{60.0}{\makebox(0,0)[lb]{\smash{{\SetFigFont{14}{16.8}{\familydefault}{\mddefault}{\updefault}{\color[rgb]{0,0,0}$z+e_a+o_b+o_c$}%
}}}}}
\put(13839,-11219){\makebox(0,0)[lb]{\smash{{\SetFigFont{14}{16.8}{\familydefault}{\mddefault}{\updefault}{\color[rgb]{0,0,0}$x+o_a+e_b+e_c$}%
}}}}
\put(16703,-12412){\rotatebox{300.0}{\makebox(0,0)[lb]{\smash{{\SetFigFont{14}{16.8}{\familydefault}{\mddefault}{\updefault}{\color[rgb]{0,0,0}$2y+z+e_a+o_b+o_c+b-a$}%
}}}}}
\put(14351,-19487){\makebox(0,0)[lb]{\smash{{\SetFigFont{14}{16.8}{\familydefault}{\mddefault}{\updefault}{\color[rgb]{0,0,0}$x+e_a+o_b+o_c$}%
}}}}
\put(11078,-15837){\rotatebox{300.0}{\makebox(0,0)[lb]{\smash{{\SetFigFont{14}{16.8}{\familydefault}{\mddefault}{\updefault}{\color[rgb]{0,0,0}$2y+z+o_a+e_b+e_c+b-a$}%
}}}}}
\put(11385,-13877){\rotatebox{60.0}{\makebox(0,0)[lb]{\smash{{\SetFigFont{14}{16.8}{\familydefault}{\mddefault}{\updefault}{\color[rgb]{0,0,0}$z+e_a+o_b+o_c$}%
}}}}}
\put(5043,-20786){\makebox(0,0)[lb]{\smash{{\SetFigFont{14}{16.8}{\familydefault}{\mddefault}{\updefault}{\color[rgb]{0,0,0}$x+o_a+e_b+e_c$}%
}}}}
\put(7907,-21979){\rotatebox{300.0}{\makebox(0,0)[lb]{\smash{{\SetFigFont{14}{16.8}{\familydefault}{\mddefault}{\updefault}{\color[rgb]{0,0,0}$2y+z+e_a+o_b+o_c+b-a$}%
}}}}}
\put(5555,-29054){\makebox(0,0)[lb]{\smash{{\SetFigFont{14}{16.8}{\familydefault}{\mddefault}{\updefault}{\color[rgb]{0,0,0}$x+e_a+o_b+o_c$}%
}}}}
\put(2282,-25404){\rotatebox{300.0}{\makebox(0,0)[lb]{\smash{{\SetFigFont{14}{16.8}{\familydefault}{\mddefault}{\updefault}{\color[rgb]{0,0,0}$2y+z+o_a+e_b+e_c+b-a$}%
}}}}}
\put(2589,-23444){\rotatebox{60.0}{\makebox(0,0)[lb]{\smash{{\SetFigFont{14}{16.8}{\familydefault}{\mddefault}{\updefault}{\color[rgb]{0,0,0}$z+e_a+o_b+o_c$}%
}}}}}
\put(14044,-20786){\makebox(0,0)[lb]{\smash{{\SetFigFont{14}{16.8}{\familydefault}{\mddefault}{\updefault}{\color[rgb]{0,0,0}$x+o_a+e_b+e_c$}%
}}}}
\put(16908,-21979){\rotatebox{300.0}{\makebox(0,0)[lb]{\smash{{\SetFigFont{14}{16.8}{\familydefault}{\mddefault}{\updefault}{\color[rgb]{0,0,0}$2y+z+e_a+o_b+o_c+b-a$}%
}}}}}
\put(14556,-29054){\makebox(0,0)[lb]{\smash{{\SetFigFont{14}{16.8}{\familydefault}{\mddefault}{\updefault}{\color[rgb]{0,0,0}$x+e_a+o_b+o_c$}%
}}}}
\put(11283,-25404){\rotatebox{300.0}{\makebox(0,0)[lb]{\smash{{\SetFigFont{14}{16.8}{\familydefault}{\mddefault}{\updefault}{\color[rgb]{0,0,0}$2y+z+o_a+e_b+e_c+b-a$}%
}}}}}
\put(11590,-23444){\rotatebox{60.0}{\makebox(0,0)[lb]{\smash{{\SetFigFont{14}{16.8}{\familydefault}{\mddefault}{\updefault}{\color[rgb]{0,0,0}$z+e_a+o_b+o_c$}%
}}}}}
\end{picture}%
}
\caption{Obtaining a recurrence for $E^{(1)}$-type regions with $a\leq b$. Kuo condensation is applied to the region $E^{(1)}_{2,2,2}(1,2 ;\ 3,2 ;\ 3,1)$ (picture (a)) as shown on the picture (b).}\label{fig:kuooff1}
\end{figure}

Apply Kuo condensation to the dual graph $G$ of the region $E^{(2)}_{x,y,z}(\textbf{a};\ \textbf{c};\ \textbf{b})$ with the same choice of the four vertices  $u,v,w,s$ as above, we get the $E^{(2)}$-recurrence for $a\leq b$ as
\begin{align}\label{offcenterrecurE2a}
\M(E^{(2)}_{x,y,z}(\textbf{a};\ \textbf{c};\ \textbf{b}))\M(F^{(1)}_{x,y-1,z-1}(\textbf{b}^{+1};\ \overline{\textbf{c}}; \ \textbf{a}))&=\M(K^{(2)}_{x,y-1,z-1}(\textbf{b}^{+1};\ \overline{\textbf{c}};\ \textbf{a})) \M(R^{\nwarrow}_{x,y-1,z}(\textbf{a};\ \textbf{c};\ \textbf{b}))\notag\\
&+
\M(E^{(2)}_{x+1,y,z-1}(\textbf{a};\ \textbf{c};\ \textbf{b}))
 \M(F^{(1)}_{x-1,y-1,z}(\textbf{b}^{+1};\ \overline{\textbf{c}};\ \textbf{a})),
\end{align}
 and the $E^{(2)}$-recurrence for $a> b$:
\begin{align}\label{offcenterrecurE2b}
\M(E^{(2)}_{x,y,z}(\textbf{a};\ \textbf{c};\ \textbf{b}))\M(F^{(1)}_{x,y,z-1}(\textbf{b}^{+1};\ \overline{\textbf{c}}; \ \textbf{a}))&=\M(K^{(2)}_{x,y,z-1}(\textbf{b}^{+1};\ \overline{\textbf{c}};\ \textbf{a})) \M(R^{\nwarrow}_{x,y-1,z}(\textbf{a};\ \textbf{c};\ \textbf{b}))\notag\\
&+
\M(E^{(2)}_{x+1,y,z-1}(\textbf{a};\ \textbf{c};\ \textbf{b})) \M(F^{(1)}_{x-1,y,z}(\textbf{b}^{+1};\ \overline{\textbf{c}};\ \textbf{a})).
\end{align}
We note that the removal of forced lozenges from the region corresponding with $G-\{w,s\}$ yield a $R^{\nwarrow}$-type region in Theorem 2.4 in \cite{HoleDent}.

Finally, we have the recurrence for $E^{(6)}$-type region by the same application of Kuo condensation. For $a\leq b$, we have
\begin{align}\label{offcenterrecurE6a}
\M(E^{(6)}_{x,y,z}(\textbf{a};\ \textbf{c};\ \textbf{b}))\M(F^{(1)}_{x,y-1,z-1}(\textbf{a};\ \textbf{c}; \ \textbf{b}^{+1}))&=\M(R^{\swarrow}_{x,y-1,z-1}(\textbf{a};\ \textbf{c}; \ \textbf{b}^{+1})) \M(G^{(4)}_{x,y-1,z}(\textbf{b};\ \overline{\textbf{c}};\ \textbf{a}))\notag\\
&+
\M(E^{(6)}_{x+1,y,z-1}(\textbf{a};\ \textbf{c};\ \textbf{b})) \M(F^{(1)}_{x-1,y-1,z}(\textbf{a};\ \textbf{c}; \ \textbf{b}^{+1})),
\end{align}
and for $a> b$
\begin{align}\label{offcenterrecurE6b}
\M(E^{(6)}_{x,y,z}(\textbf{a};\ \textbf{c};\ \textbf{b}))\M(F^{(1)}_{x,y,z-1}(\textbf{a};\ \textbf{c}; \ \textbf{b}^{+1}))&=\M(R^{\swarrow}_{x,y,z-1}(\textbf{a};\ \textbf{c}; \ \textbf{b}^{+1})) \M(G^{(4)}_{x,y-1,z}(\textbf{b};\ \overline{\textbf{c}};\ \textbf{a}))\notag\\
&+
\M(E^{(6)}_{x+1,y,z-1}(\textbf{a};\ \textbf{c};\ \textbf{b})) \M(F^{(1)}_{x-1,y,z}(\textbf{a};\ \textbf{c}; \ \textbf{b}^{+1})).
\end{align}
The region corresponding with $G-\{u,v\}$ is congruent with, after removing forced lozenges, a region of type $R^{\swarrow}$ in Theorem 2.5 of \cite{HoleDent}.

\subsection{Recurrences for $F^{(i)}$-type regions} \label{subsec:offrecurF}



Applying Kuo Theorem \ref{kuothm1} to the dual graph $G$ of region $F^{(1)}_{x,y,z}(\textbf{a};\ \textbf{c};\ \textbf{b})$, for $a>b$, with the same choice of the four vertices $u,v,w,s$ as in the case for $E^{(i)}$-type regions in the previous subsection (see Figure \ref{fig:offF}). By considering forced lozenges yielded by the removal of the $u$-, $v$-, $w$-,$s$-triangles as in Figures \ref{fig:kuooff2}(b)--(f), we get respectively
\begin{equation}\label{kuothm2eq1}
\M(G-\{u,v,w,s\})=\M(E^{(2)}_{x,y,z-1}(\textbf{b}^{+1};\ \overline{ \textbf{c}}; \ \textbf{a}))
\end{equation}
\begin{equation}\label{kuothm2eq2}
\M(G-\{u,v\})=\M(R^{\nwarrow}_{x,y+1,z-1}(\textbf{b}^{+1};\ \overline{ \textbf{c}}; \ \textbf{a}))
\end{equation}
\begin{equation}\label{kuothm2eq3}
\M(G-\{w,s\})=\M(K^{(2)}_{x,y-1,z}(\textbf{a};\ \textbf{c};\ \textbf{b}))
\end{equation}
\begin{equation}\label{kuothm2eq4}
\M(G-\{u,s\})=\M(F^{(1)}_{x+1,y,z-1}(\textbf{a};\ \textbf{c};\ \textbf{b}))
\end{equation}
\begin{equation}\label{kuothm2eq5}
\M(G-\{v,w\})=\M(E^{(2)}_{x-1,y,z}(\textbf{b}^{+1};\ \overline{ \textbf{c}}; \ \textbf{a})).
\end{equation}
We note that the regions on the right-hand sides of (\ref{kuothm2eq1}), (\ref{kuothm2eq2}) and (\ref{kuothm2eq5}) obtained by $180^{\circ}$-rotating the leftover regions (i.e. the remaining ones after removing forced lozenges; these regions are illustrated as the ones with bold contour).

Plugging the above  5 equations into the equation in Kuo's Theorem \ref{kuothm1}, we get the $F^{(1)}$-recurrence for $a>b$
\begin{align}\label{offcenterrecurF1b}
\M(F^{(1)}_{x,y,z}(\textbf{a};\ \textbf{c};\ \textbf{b}))\M(E^{(2)}_{x,y,z-1}(\textbf{b}^{+1};\ \overline{ \textbf{c}}; \ \textbf{a}))&=\M(R^{\nwarrow}_{x,y+1,z-1}(\textbf{b}^{+1};\ \overline{ \textbf{c}}; \ \textbf{a})) \M(K^{(2)}_{x,y-1,z}(\textbf{a};\ \textbf{c};\ \textbf{b}))\notag\\
&+
\M(F^{(1)}_{x+1,y,z-1}(\textbf{a};\ \textbf{c};\ \textbf{b})) \M(E^{(2)}_{x-1,y,z}(\textbf{b}^{+1};\ \overline{ \textbf{c}}; \ \textbf{a})).
\end{align}
Working similarly for the case $a\leq b$, we have the `sibling' recurrence for $a\leq b$
\begin{align}\label{offcenterrecurF1a}
\M(F^{(1)}_{x,y,z}(\textbf{a};\ \textbf{c};\ \textbf{b}))\M(E^{(2)}_{x,y-1,z-1}(\textbf{b}^{+1};\ \overline{ \textbf{c}}; \ \textbf{a}))&=\M(R^{\nwarrow}_{x,y,z-1}(\textbf{b}^{+1};\ \overline{ \textbf{c}}; \ \textbf{a})) \M(K^{(2)}_{x,y-1,z}(\textbf{a};\ \textbf{c};\ \textbf{b}))\notag\\
&+
\M(F^{(1)}_{x+1,y,z-1}(\textbf{a};\ \textbf{c};\ \textbf{b}))
\M(E^{(2)}_{x-1,y-1,z}(\textbf{b}^{+1};\ \overline{ \textbf{c}}; \ \textbf{a})).
\end{align}

\begin{figure}\centering
\setlength{\unitlength}{3947sp}%
\begingroup\makeatletter\ifx\SetFigFont\undefined%
\gdef\SetFigFont#1#2#3#4#5{%
  \reset@font\fontsize{#1}{#2pt}%
  \fontfamily{#3}\fontseries{#4}\fontshape{#5}%
  \selectfont}%
\fi\endgroup%
\resizebox{15cm}{!}{
\begin{picture}(0,0)%
\includegraphics{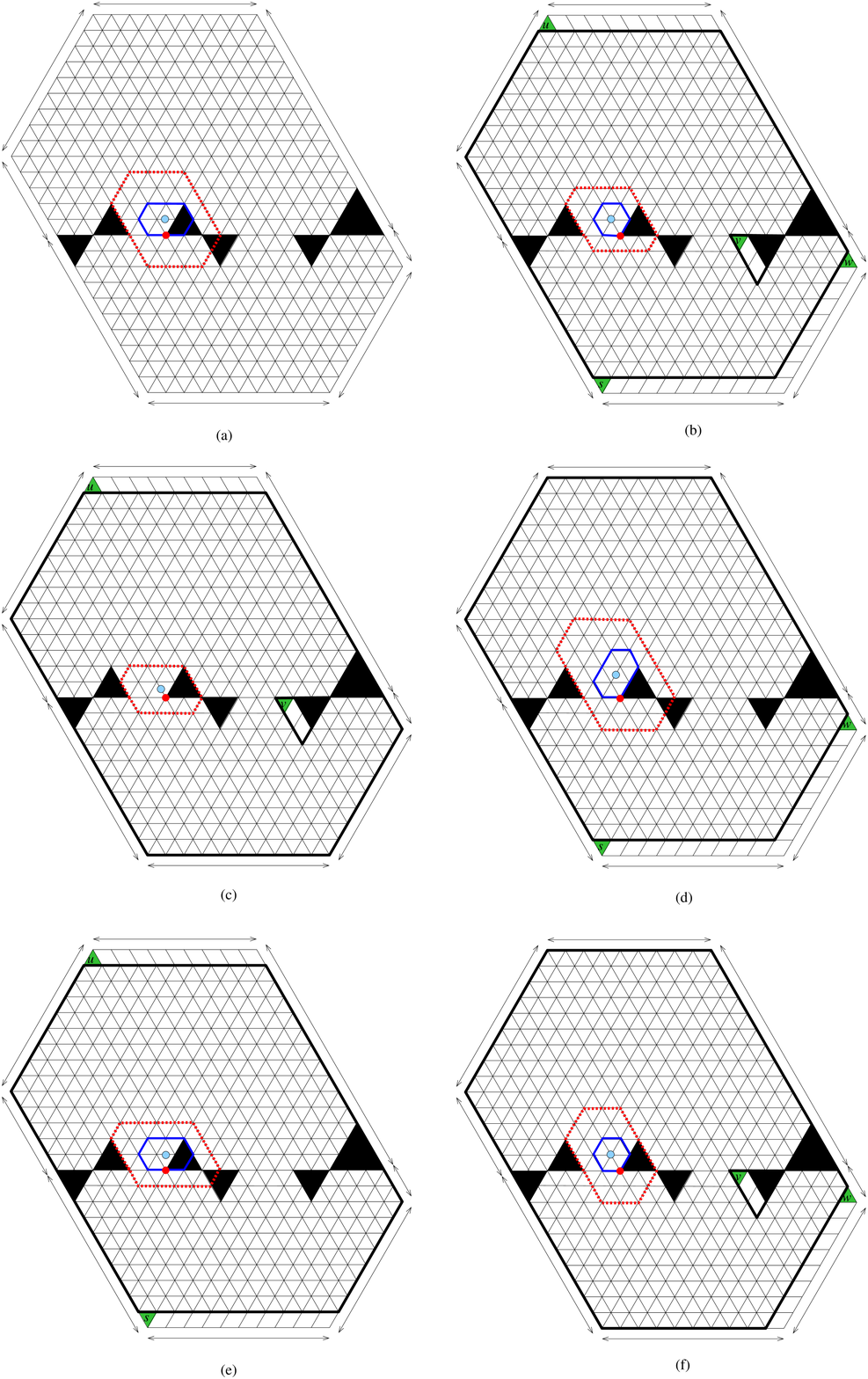}%
\end{picture}%
%
%

\begin{picture}(19498,31352)(1415,-31167)
\put(20749,-26739){\makebox(0,0)[lb]{\smash{{\SetFigFont{14}{16.8}{\rmdefault}{\mddefault}{\updefault}{\color[rgb]{0,0,0}$y$}%
}}}}
\put(18089,-21803){\rotatebox{300.0}{\makebox(0,0)[lb]{\smash{{\SetFigFont{14}{16.8}{\rmdefault}{\mddefault}{\updefault}{\color[rgb]{0,0,0}$y+z+e_a+o_b+o_c+b-a+2$}%
}}}}}
\put(14509,-21154){\makebox(0,0)[lb]{\smash{{\SetFigFont{14}{16.8}{\rmdefault}{\mddefault}{\updefault}{\color[rgb]{0,0,0}$x+o_a+e_b+e_c$}%
}}}}
\put(19726,-29717){\rotatebox{60.0}{\makebox(0,0)[lb]{\smash{{\SetFigFont{14}{16.8}{\rmdefault}{\mddefault}{\updefault}{\color[rgb]{0,0,0}$z+o_a+e_b+e_c$}%
}}}}}
\put(15839,-30603){\makebox(0,0)[lb]{\smash{{\SetFigFont{14}{16.8}{\rmdefault}{\mddefault}{\updefault}{\color[rgb]{0,0,0}$x+e_a+o_b+o_c$}%
}}}}
\put(12872,-27473){\rotatebox{300.0}{\makebox(0,0)[lb]{\smash{{\SetFigFont{14}{16.8}{\rmdefault}{\mddefault}{\updefault}{\color[rgb]{0,0,0}$y+z+o_a+e_b+e_c$}%
}}}}}
\put(11747,-25541){\rotatebox{300.0}{\makebox(0,0)[lb]{\smash{{\SetFigFont{14}{16.8}{\rmdefault}{\mddefault}{\updefault}{\color[rgb]{0,0,0}$y+b-a+2$}%
}}}}}
\put(12084,-23516){\rotatebox{60.0}{\makebox(0,0)[lb]{\smash{{\SetFigFont{14}{16.8}{\rmdefault}{\mddefault}{\updefault}{\color[rgb]{0,0,0}$z+e_a+o_b+o_c$}%
}}}}}
\put(1866,-23500){\rotatebox{60.0}{\makebox(0,0)[lb]{\smash{{\SetFigFont{14}{16.8}{\rmdefault}{\mddefault}{\updefault}{\color[rgb]{0,0,0}$z+e_a+o_b+o_c$}%
}}}}}
\put(1529,-25525){\rotatebox{300.0}{\makebox(0,0)[lb]{\smash{{\SetFigFont{14}{16.8}{\rmdefault}{\mddefault}{\updefault}{\color[rgb]{0,0,0}$y+b-a+2$}%
}}}}}
\put(2654,-27457){\rotatebox{300.0}{\makebox(0,0)[lb]{\smash{{\SetFigFont{14}{16.8}{\rmdefault}{\mddefault}{\updefault}{\color[rgb]{0,0,0}$y+z+o_a+e_b+e_c$}%
}}}}}
\put(5621,-30587){\makebox(0,0)[lb]{\smash{{\SetFigFont{14}{16.8}{\rmdefault}{\mddefault}{\updefault}{\color[rgb]{0,0,0}$x+e_a+o_b+o_c$}%
}}}}
\put(9508,-29701){\rotatebox{60.0}{\makebox(0,0)[lb]{\smash{{\SetFigFont{14}{16.8}{\rmdefault}{\mddefault}{\updefault}{\color[rgb]{0,0,0}$z+o_a+e_b+e_c$}%
}}}}}
\put(10531,-26723){\makebox(0,0)[lb]{\smash{{\SetFigFont{14}{16.8}{\rmdefault}{\mddefault}{\updefault}{\color[rgb]{0,0,0}$y$}%
}}}}
\put(7871,-21787){\rotatebox{300.0}{\makebox(0,0)[lb]{\smash{{\SetFigFont{14}{16.8}{\rmdefault}{\mddefault}{\updefault}{\color[rgb]{0,0,0}$y+z+e_a+o_b+o_c+b-a+2$}%
}}}}}
\put(4291,-21138){\makebox(0,0)[lb]{\smash{{\SetFigFont{14}{16.8}{\rmdefault}{\mddefault}{\updefault}{\color[rgb]{0,0,0}$x+o_a+e_b+e_c$}%
}}}}
\put(12084,-12886){\rotatebox{60.0}{\makebox(0,0)[lb]{\smash{{\SetFigFont{14}{16.8}{\rmdefault}{\mddefault}{\updefault}{\color[rgb]{0,0,0}$z+e_a+o_b+o_c$}%
}}}}}
\put(11747,-14911){\rotatebox{300.0}{\makebox(0,0)[lb]{\smash{{\SetFigFont{14}{16.8}{\rmdefault}{\mddefault}{\updefault}{\color[rgb]{0,0,0}$y+b-a+2$}%
}}}}}
\put(12872,-16843){\rotatebox{300.0}{\makebox(0,0)[lb]{\smash{{\SetFigFont{14}{16.8}{\rmdefault}{\mddefault}{\updefault}{\color[rgb]{0,0,0}$y+z+o_a+e_b+e_c$}%
}}}}}
\put(15839,-19973){\makebox(0,0)[lb]{\smash{{\SetFigFont{14}{16.8}{\rmdefault}{\mddefault}{\updefault}{\color[rgb]{0,0,0}$x+e_a+o_b+o_c$}%
}}}}
\put(19726,-19087){\rotatebox{60.0}{\makebox(0,0)[lb]{\smash{{\SetFigFont{14}{16.8}{\rmdefault}{\mddefault}{\updefault}{\color[rgb]{0,0,0}$z+o_a+e_b+e_c$}%
}}}}}
\put(20749,-16109){\makebox(0,0)[lb]{\smash{{\SetFigFont{14}{16.8}{\rmdefault}{\mddefault}{\updefault}{\color[rgb]{0,0,0}$y$}%
}}}}
\put(18089,-11173){\rotatebox{300.0}{\makebox(0,0)[lb]{\smash{{\SetFigFont{14}{16.8}{\rmdefault}{\mddefault}{\updefault}{\color[rgb]{0,0,0}$y+z+e_a+o_b+o_c+b-a+2$}%
}}}}}
\put(14509,-10524){\makebox(0,0)[lb]{\smash{{\SetFigFont{14}{16.8}{\rmdefault}{\mddefault}{\updefault}{\color[rgb]{0,0,0}$x+o_a+e_b+e_c$}%
}}}}
\put(1866,-12870){\rotatebox{60.0}{\makebox(0,0)[lb]{\smash{{\SetFigFont{14}{16.8}{\rmdefault}{\mddefault}{\updefault}{\color[rgb]{0,0,0}$z+e_a+o_b+o_c$}%
}}}}}
\put(1529,-14895){\rotatebox{300.0}{\makebox(0,0)[lb]{\smash{{\SetFigFont{14}{16.8}{\rmdefault}{\mddefault}{\updefault}{\color[rgb]{0,0,0}$y+b-a+2$}%
}}}}}
\put(4297,-106){\makebox(0,0)[lb]{\smash{{\SetFigFont{14}{16.8}{\rmdefault}{\mddefault}{\updefault}{\color[rgb]{0,0,0}$x+o_a+e_b+e_c$}%
}}}}
\put(7877,-755){\rotatebox{300.0}{\makebox(0,0)[lb]{\smash{{\SetFigFont{14}{16.8}{\rmdefault}{\mddefault}{\updefault}{\color[rgb]{0,0,0}$y+z+e_a+o_b+o_c+b-a+2$}%
}}}}}
\put(10537,-5691){\makebox(0,0)[lb]{\smash{{\SetFigFont{14}{16.8}{\rmdefault}{\mddefault}{\updefault}{\color[rgb]{0,0,0}$y$}%
}}}}
\put(9514,-8669){\rotatebox{60.0}{\makebox(0,0)[lb]{\smash{{\SetFigFont{14}{16.8}{\rmdefault}{\mddefault}{\updefault}{\color[rgb]{0,0,0}$z+o_a+e_b+e_c$}%
}}}}}
\put(5627,-9555){\makebox(0,0)[lb]{\smash{{\SetFigFont{14}{16.8}{\rmdefault}{\mddefault}{\updefault}{\color[rgb]{0,0,0}$x+e_a+o_b+o_c$}%
}}}}
\put(2660,-6425){\rotatebox{300.0}{\makebox(0,0)[lb]{\smash{{\SetFigFont{14}{16.8}{\rmdefault}{\mddefault}{\updefault}{\color[rgb]{0,0,0}$y+z+o_a+e_b+e_c$}%
}}}}}
\put(1535,-4493){\rotatebox{300.0}{\makebox(0,0)[lb]{\smash{{\SetFigFont{14}{16.8}{\rmdefault}{\mddefault}{\updefault}{\color[rgb]{0,0,0}$y+b-a+2$}%
}}}}}
\put(1872,-2468){\rotatebox{60.0}{\makebox(0,0)[lb]{\smash{{\SetFigFont{14}{16.8}{\rmdefault}{\mddefault}{\updefault}{\color[rgb]{0,0,0}$z+e_a+o_b+o_c$}%
}}}}}
\put(14515,-122){\makebox(0,0)[lb]{\smash{{\SetFigFont{14}{16.8}{\rmdefault}{\mddefault}{\updefault}{\color[rgb]{0,0,0}$x+o_a+e_b+e_c$}%
}}}}
\put(18095,-771){\rotatebox{300.0}{\makebox(0,0)[lb]{\smash{{\SetFigFont{14}{16.8}{\rmdefault}{\mddefault}{\updefault}{\color[rgb]{0,0,0}$y+z+e_a+o_b+o_c+b-a+2$}%
}}}}}
\put(20755,-5707){\makebox(0,0)[lb]{\smash{{\SetFigFont{14}{16.8}{\rmdefault}{\mddefault}{\updefault}{\color[rgb]{0,0,0}$y$}%
}}}}
\put(19732,-8685){\rotatebox{60.0}{\makebox(0,0)[lb]{\smash{{\SetFigFont{14}{16.8}{\rmdefault}{\mddefault}{\updefault}{\color[rgb]{0,0,0}$z+o_a+e_b+e_c$}%
}}}}}
\put(15845,-9571){\makebox(0,0)[lb]{\smash{{\SetFigFont{14}{16.8}{\rmdefault}{\mddefault}{\updefault}{\color[rgb]{0,0,0}$x+e_a+o_b+o_c$}%
}}}}
\put(12878,-6441){\rotatebox{300.0}{\makebox(0,0)[lb]{\smash{{\SetFigFont{14}{16.8}{\rmdefault}{\mddefault}{\updefault}{\color[rgb]{0,0,0}$y+z+o_a+e_b+e_c$}%
}}}}}
\put(11753,-4509){\rotatebox{300.0}{\makebox(0,0)[lb]{\smash{{\SetFigFont{14}{16.8}{\rmdefault}{\mddefault}{\updefault}{\color[rgb]{0,0,0}$y+b-a+2$}%
}}}}}
\put(12090,-2484){\rotatebox{60.0}{\makebox(0,0)[lb]{\smash{{\SetFigFont{14}{16.8}{\rmdefault}{\mddefault}{\updefault}{\color[rgb]{0,0,0}$z+e_a+o_b+o_c$}%
}}}}}
\put(4291,-10508){\makebox(0,0)[lb]{\smash{{\SetFigFont{14}{16.8}{\rmdefault}{\mddefault}{\updefault}{\color[rgb]{0,0,0}$x+o_a+e_b+e_c$}%
}}}}
\put(7871,-11157){\rotatebox{300.0}{\makebox(0,0)[lb]{\smash{{\SetFigFont{14}{16.8}{\rmdefault}{\mddefault}{\updefault}{\color[rgb]{0,0,0}$y+z+e_a+o_b+o_c+b-a+2$}%
}}}}}
\put(10531,-16093){\makebox(0,0)[lb]{\smash{{\SetFigFont{14}{16.8}{\rmdefault}{\mddefault}{\updefault}{\color[rgb]{0,0,0}$y$}%
}}}}
\put(9508,-19071){\rotatebox{60.0}{\makebox(0,0)[lb]{\smash{{\SetFigFont{14}{16.8}{\rmdefault}{\mddefault}{\updefault}{\color[rgb]{0,0,0}$z+o_a+e_b+e_c$}%
}}}}}
\put(5621,-19957){\makebox(0,0)[lb]{\smash{{\SetFigFont{14}{16.8}{\rmdefault}{\mddefault}{\updefault}{\color[rgb]{0,0,0}$x+e_a+o_b+o_c$}%
}}}}
\put(2654,-16827){\rotatebox{300.0}{\makebox(0,0)[lb]{\smash{{\SetFigFont{14}{16.8}{\rmdefault}{\mddefault}{\updefault}{\color[rgb]{0,0,0}$y+z+o_a+e_b+e_c$}%
}}}}}
\end{picture}%
}
\caption{Obtaining a recurrence for $F^{(1)}$-type regions with $a\leq b$. Kuo condensation is applied to the region $F^{(1)}_{3,2,2}(2,2;\ 2,2 ;\ 3,2)$ (picture (a)) as shown on the picture (b).}\label{fig:kuooff2}
\end{figure}
Similar application of Kuo condensation we have the following recurrences for other $F^{(i)}$-type regions.
The $F^{(2)}$-recurrence for $a\leq b$ is
\begin{align}\label{offcenterrecurF2a}
\M(F^{(2)}_{x,y,z}(\textbf{a};\ \textbf{c};\ \textbf{b}))\M(E^{(6)}_{x,y-1,z-1}(\textbf{a};\ \textbf{c}; \ \textbf{b}^{+1}))&=\M(G^{(1)}_{x,y,z-1}(\textbf{a};\ \textbf{c};\ \textbf{b}^{+1})) \M(K^{(3)}_{x,y-1,z}(\textbf{a};\ \textbf{c};\ \textbf{b}))\notag\\
&+
\M(F^{(2)}_{x+1,y,z-1}(\textbf{a};\ \textbf{c};\ \textbf{b})) \M(E^{(6)}_{x-1,y-1,z}(\textbf{a};\ \textbf{c};\ \textbf{b}^{+1})),
\end{align}
and its $a>b$ counterpart is
\begin{align}\label{offcenterrecurF2b}
\M(F^{(2)}_{x,y,z}(\textbf{a};\ \textbf{c};\ \textbf{b}))\M(E^{(6)}_{x,y,z-1}(\textbf{a};\ \textbf{c}; \ \textbf{b}^{+1}))&=\M(G^{(1)}_{x,y+1,z-1}(\textbf{a};\ \textbf{c};\ \textbf{b}^{+1})) \M(K^{(3)}_{x,y-1,z}(\textbf{a};\ \textbf{c};\ \textbf{b}))\notag\\
&+
\M(F^{(2)}_{x+1,y,z-1}(\textbf{a};\ \textbf{c};\ \textbf{b})) \M(E^{(6)}_{x-1,y,z}(\textbf{a};\ \textbf{c};\ \textbf{b}^{+1})).
\end{align}

Next, we have the $F^{(3)}$-recurrences:
\begin{align}\label{offcenterrecurF3a}
\M(F^{(3)}_{x,y,z}(\textbf{a};\ \textbf{c};\ \textbf{b}))\M(E^{(1)}_{x,y-1,z-1}(\textbf{a};\ \textbf{c}; \ \textbf{b}^{+1}))&=\M(G^{(2)}_{x,y-1,z-1}(\textbf{a};\ \textbf{c};\ \textbf{b}^{+1})) \M(K^{(4)}_{x,y-1,z}(\textbf{a};\ \textbf{c};\ \textbf{b}))\notag\\
&+
\M(F^{(3)}_{x+1,y,z-1}(\textbf{a};\ \textbf{c};\ \textbf{b})) \M(E^{(1)}_{x-1,y-1,z}(\textbf{a};\ \textbf{c};\ \textbf{b}^{+1}))
\end{align}
if $a\leq b$, and
\begin{align}\label{offcenterrecurF3b}
\M(F^{(3)}_{x,y,z}(\textbf{a};\ \textbf{c};\ \textbf{b}))\M(E^{(1)}_{x,y,z-1}(\textbf{a};\ \textbf{c}; \ \textbf{b}^{+1}))&=\M(G^{(2)}_{x,y,z-1}(\textbf{a};\ \textbf{c};\ \textbf{b}^{+1})) \M(K^{(4)}_{x,y-1,z}(\textbf{a};\ \textbf{c};\ \textbf{b}))\notag\\
&+
\M(F^{(3)}_{x+1,y,z-1}(\textbf{a};\ \textbf{c};\ \textbf{b})) \M(E^{(1)}_{x-1,y,z}(\textbf{a};\ \textbf{c};\ \textbf{b}^{+1}))
\end{align}
if $a>b$.

The last recurrence in this subsection is the $F^{(4)}$-recurrences below.
 For $a\leq b$, we have
\begin{align}\label{offcenterrecurF4a}
\M(F^{(4)}_{x,y,z}(\textbf{a};\ \textbf{c};\ \textbf{b}))\M(E^{(2)}_{x,y-1,z-1}(\textbf{a};\ \textbf{c}; \ \textbf{b}^{+1}))&=\M(G^{(3)}_{x,y-1,z-1}(\textbf{a};\ \textbf{c};\ \textbf{b}^{+1})) \M(K^{(1)}_{x,y,z}(\textbf{b};\ \overline{\textbf{c}};\ \textbf{a}))\notag\\
&+
\M(F^{(4)}_{x+1,y,z-1}(\textbf{a};\ \textbf{c};\ \textbf{b})) \M(E^{(2)}_{x-1,y-1,z}(\textbf{a};\ \textbf{c};\ \textbf{b}^{+1})),
\end{align}
and for $a>b$
\begin{align}\label{offcenterrecurF4b}
\M(F^{(4)}_{x,y,z}(\textbf{a};\ \textbf{c};\ \textbf{b}))\M(E^{(2)}_{x,y,z-1}(\textbf{a};\ \textbf{c}; \ \textbf{b}^{+1}))&=\M(G^{(3)}_{x,y,z-1}(\textbf{a};\ \textbf{c};\ \textbf{b}^{+1})) \M(K^{(1)}_{x,y,z}(\textbf{b};\ \overline{\textbf{c}};\ \textbf{a}))\notag\\
&+
\M(F^{(4)}_{x+1,y,z-1}(\textbf{a};\ \textbf{c};\ \textbf{b})) \M(E^{(2)}_{x-1,y,z}(\textbf{a};\ \textbf{c};\ \textbf{b}^{+1})).
\end{align}



\subsection{Recurrences for $G^{(i)}$-type regions} \label{subsec:offrecurG}


For the $G^{(i)}$-type regions, we still apply Kuo's Theorem \ref{kuothm1}, however the four vertices $u,v,w,s$ are chosen differently as in Figure \ref{fig:offG}. In particular, the $u$- and $s$-triangles are the shaded one appended to the ends of the right and
left ferns, respectively. The $v$ and $w$-triangles are located at the east and southwest corners of the region, respectively.

Let us consider the dual graph $G$ of the region $G^{(2)}_{x,y,z}(\textbf{a};\ \textbf{c};\ \textbf{b})$ in the case $a>b$. Working on forced lozenges formed from the removal of the $u$-, $v$-, $w$-,$s$-triangles as in Figures \ref{fig:kuooff3}(b)--(f), we have respectively
\begin{equation}\label{kuothm3eq1}
\M(G-\{u,v,w,s\})=\M(E^{(1)}_{x-1,y,z-1}(\textbf{a}^{+1};\ \textbf{c}; \ \textbf{b}^{+1}))
\end{equation}
\begin{equation}\label{kuothm3eq2}
\M(G-\{u,v\})=\M(K^{(1)}_{x-1,y,z}(\textbf{b}^{+1};\ \overline{\textbf{c}}; \ \textbf{a}))
\end{equation}
\begin{equation}\label{kuothm3eq3}
\M(G-\{w,s\})=\M(F^{(3)}_{x,y,z-1}(\textbf{a}^{+1};\ \textbf{c};\ \textbf{b}))
\end{equation}
\begin{equation}\label{kuothm3eq4}
\M(G-\{u,s\})=\M(G^{(2)}_{x-1,y,z-1}(\textbf{a}^{+1};\ \textbf{c};\ \textbf{b}^{+1}))
\end{equation}
\begin{equation}\label{kuothm3eq5}
\M(G-\{v,w\})=\M(E^{(1)}_{x,y,z}(\textbf{a};\ \textbf{c};\ \textbf{b})).
\end{equation}
We note that the region on the right-hand side of (\ref{kuothm3eq2}) is obtained from the leftover region (the one restricted by the bold contour in Figure \ref{fig:kuooff3}(c)) by a $180^{\circ}$-rotation.

Plugging the above 5 equalities into the equation in Kuo's Theorem \ref{kuothm1}, we get the $G^{(2)}$-recurrences for $a>b$:
\begin{align}\label{offcenterrecurG2b}
\M(G^{(2)}_{x,y,z}(\textbf{a};\ \textbf{c};\ \textbf{b}))\M(E^{(1)}_{x-1,y,z-1}(\textbf{a}^{+1};\ \textbf{c}; \ \textbf{b}^{+1}))&=\M(K^{(1)}_{x-1,y,z}(\textbf{b}^{+1};\ \overline{\textbf{c}}; \ \textbf{a})) \M(F^{(3)}_{x,y,z-1}(\textbf{a}^{+1};\ \textbf{c};\ \textbf{b}))\notag\\
&+
\M(G^{(2)}_{x-1,y,z-1}(\textbf{a}^{+1};\ \textbf{c};\ \textbf{b}^{+1})) \M(E^{(1)}_{x,y,z}(\textbf{a};\ \textbf{c};\ \textbf{b})).
\end{align}
Similarly, we get the $G^{(2)}$-recurrence for $a< b$
\begin{align}\label{offcenterrecurG2a}
\M(G^{(2)}_{x,y,z}(\textbf{a};\ \textbf{c};\ \textbf{b}))\M(E^{(1)}_{x-1,y,z-1}(\textbf{a}^{+1};\ \textbf{c}; \ \textbf{b}^{+1}))&=\M(K^{(1)}_{x-1,y-1,z}(\textbf{b}^{+1};\ \overline{\textbf{c}}; \ \textbf{a})) \M(F^{(3)}_{x,y+1,z-1}(\textbf{a}^{+1};\ \textbf{c};\ \textbf{b}))\notag\\
&+
\M(G^{(2)}_{x-1,y,z-1}(\textbf{a}^{+1};\ \textbf{c};\ \textbf{b}^{+1})) \M(E^{(1)}_{x,y,z}(\textbf{a};\ \textbf{c};\ \textbf{b})),
\end{align}
 and for for $a=b$
\begin{align}\label{offcenterrecurG2c}
\M(G^{(2)}_{x,y,z}(\textbf{a};\ \textbf{c};\ \textbf{b}))\M(E^{(1)}_{x-1,y,z-1}(\textbf{a}^{+1};\ \textbf{c}; \ \textbf{b}^{+1}))&=\M(K^{(1)}_{x-1,y-1,z}(\textbf{b}^{+1};\ \overline{\textbf{c}}; \ \textbf{a})) \M(F^{(3)}_{x,y,z-1}(\textbf{a}^{+1};\ \textbf{c};\ \textbf{b}))\notag\\
&+
\M(G^{(2)}_{x-1,y,z-1}(\textbf{a}^{+1};\ \textbf{c};\ \textbf{b}^{+1})) \M(E^{(1)}_{x,y,z}(\textbf{a};\ \textbf{c};\ \textbf{b})).
\end{align}

The same application of Kuo condensation gives us the recurrence of other $G^{(i)}$-type regions.

The $G^{(1)}$-recurrence for $a<b$
\begin{align}\label{offcenterrecurG1a}
\M(G^{(1)}_{x,y,z}(\textbf{a};\ \textbf{c};\ \textbf{b}))\M(E^{(6)}_{x-1,y-1,z-1}(\textbf{a}^{+1};\ \textbf{c}; \ \textbf{b}^{+1}))&=\M(R^{\swarrow}_{x-1,y-1,z}(\textbf{a};\ \textbf{c}; \ \textbf{b}^{+1})) \M(F^{(2)}_{x,y,z-1}(\textbf{a}^{+1};\ \textbf{c};\ \textbf{b}))\notag\\
&+
\M(G^{(1)}_{x-1,y,z-1}(\textbf{a}^{+1};\ \textbf{c};\ \textbf{b}^{+1})) \M(E^{(6)}_{x,y-1,z}(\textbf{a};\ \textbf{c};\ \textbf{b}))
\end{align}, for $a> b$
\begin{align}\label{offcenterrecurG1b}
\M(G^{(1)}_{x,y,z}(\textbf{a};\ \textbf{c};\ \textbf{b}))\M(E^{(6)}_{x-1,y-1,z-1}(\textbf{a}^{+1};\ \textbf{c}; \ \textbf{b}^{+1}))&=\M(R^{\swarrow}_{x-1,y,z}(\textbf{a};\ \textbf{c}; \ \textbf{b}^{+1})) \M(F^{(2)}_{x,y-1,z-1}(\textbf{a}^{+1};\ \textbf{c};\ \textbf{b}))\notag\\
&+
\M(G^{(1)}_{x-1,y,z-1}(\textbf{a}^{+1};\ \textbf{c};\ \textbf{b}^{+1})) \M(E^{(6)}_{x,y-1,z}(\textbf{a};\ \textbf{c};\ \textbf{b})),
\end{align}
and for $a=b$
\begin{align}\label{offcenterrecurG1c}
\M(G^{(1)}_{x,y,z}(\textbf{a};\ \textbf{c};\ \textbf{b}))\M(E^{(6)}_{x-1,y-1,z-1}(\textbf{a}^{+1};\ \textbf{c}; \ \textbf{b}^{+1}))&=\M(R^{\swarrow}_{x-1,y-1,z}(\textbf{a};\ \textbf{c}; \ \textbf{b}^{+1})) \M(F^{(2)}_{x,y-1,z-1}(\textbf{a}^{+1};\ \textbf{c};\ \textbf{b}))\notag\\
&+
\M(G^{(1)}_{x-1,y,z-1}(\textbf{a}^{+1};\ \textbf{c};\ \textbf{b}^{+1})) \M(E^{(6)}_{x,y-1,z}(\textbf{a};\ \textbf{c};\ \textbf{b})).
\end{align}

\begin{figure}\centering
\setlength{\unitlength}{3947sp}%
\begingroup\makeatletter\ifx\SetFigFont\undefined%
\gdef\SetFigFont#1#2#3#4#5{%
  \reset@font\fontsize{#1}{#2pt}%
  \fontfamily{#3}\fontseries{#4}\fontshape{#5}%
  \selectfont}%
\fi\endgroup%
\resizebox{15cm}{!}{
\begin{picture}(0,0)%
\includegraphics{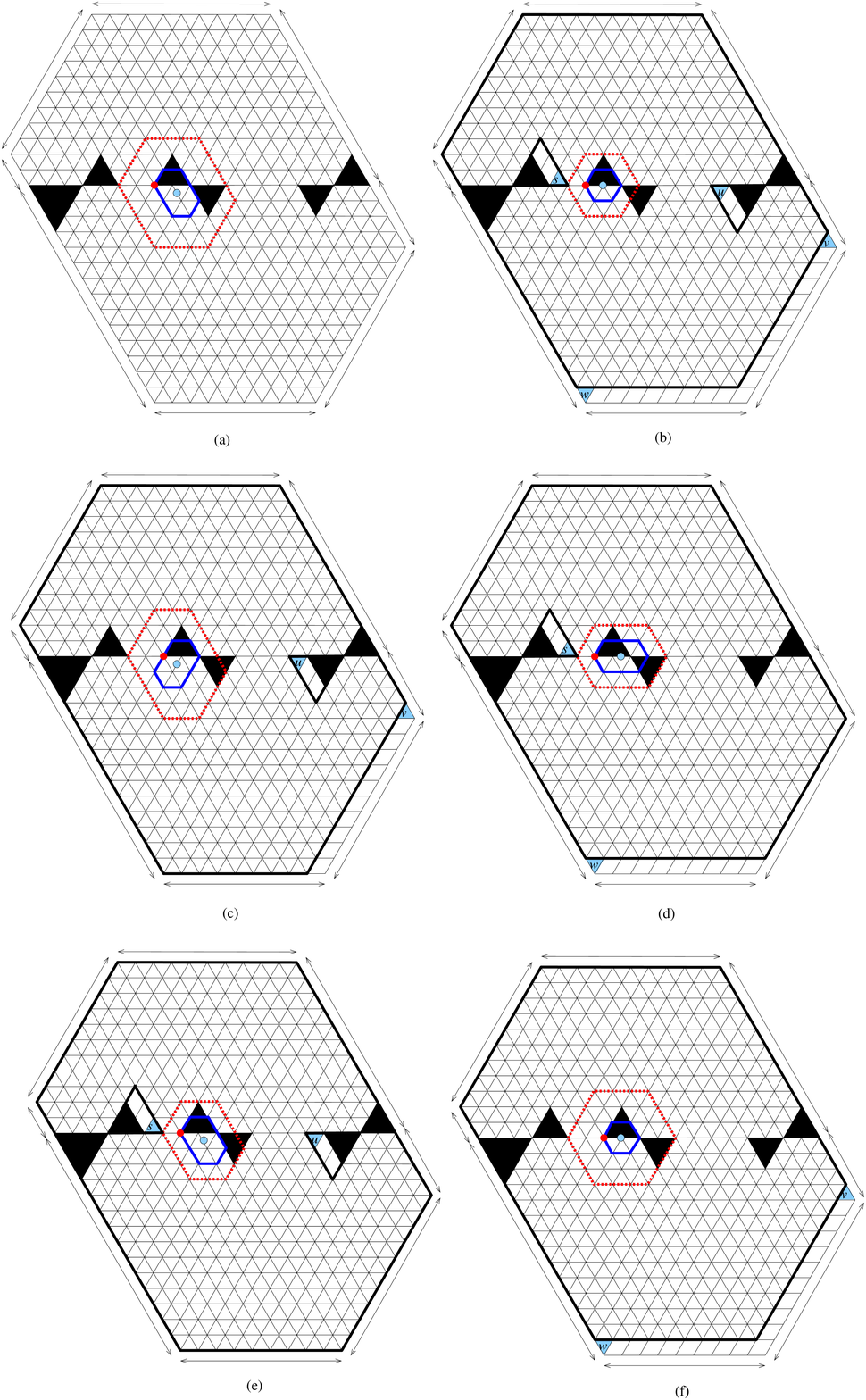}%
\end{picture}%
%
%

\begin{picture}(20105,32232)(2946,-34218)
\put(13793,-26307){\rotatebox{60.0}{\makebox(0,0)[lb]{\smash{{\SetFigFont{14}{16.8}{\rmdefault}{\mddefault}{\itdefault}{\color[rgb]{0,0,0}$z+e_a+o_b+o_c$}%
}}}}}
\put(13213,-28137){\makebox(0,0)[lb]{\smash{{\SetFigFont{14}{16.8}{\rmdefault}{\mddefault}{\itdefault}{\color[rgb]{0,0,0}$y$}%
}}}}
\put(21423,-32637){\rotatebox{60.0}{\makebox(0,0)[lb]{\smash{{\SetFigFont{14}{16.8}{\rmdefault}{\mddefault}{\itdefault}{\color[rgb]{0,0,0}$z+o_a+e_b+e_c$}%
}}}}}
\put(17663,-33787){\makebox(0,0)[lb]{\smash{{\SetFigFont{14}{16.8}{\rmdefault}{\mddefault}{\itdefault}{\color[rgb]{0,0,0}$x+e_a+o_b+o_c$}%
}}}}
\put(14195,-29777){\rotatebox{300.0}{\makebox(0,0)[lb]{\smash{{\SetFigFont{14}{16.8}{\rmdefault}{\mddefault}{\itdefault}{\color[rgb]{0,0,0}$y+z+o_a+e_b+e_c+a-b+1$}%
}}}}}
\put(6381,-2271){\makebox(0,0)[lb]{\smash{{\SetFigFont{14}{16.8}{\rmdefault}{\mddefault}{\itdefault}{\color[rgb]{0,0,0}$x+o_a+e_b+e_c$}%
}}}}
\put(10171,-3381){\rotatebox{300.0}{\makebox(0,0)[lb]{\smash{{\SetFigFont{14}{16.8}{\rmdefault}{\mddefault}{\itdefault}{\color[rgb]{0,0,0}$y+z+e_a+o_b+o_c$}%
}}}}}
\put(11981,-6571){\rotatebox{300.0}{\makebox(0,0)[lb]{\smash{{\SetFigFont{14}{16.8}{\rmdefault}{\mddefault}{\itdefault}{\color[rgb]{0,0,0}$y+a-b+1$}%
}}}}}
\put(3943,-8041){\rotatebox{300.0}{\makebox(0,0)[lb]{\smash{{\SetFigFont{14}{16.8}{\rmdefault}{\mddefault}{\itdefault}{\color[rgb]{0,0,0}$y+z+o_a+e_b+e_c+a-b+1$}%
}}}}}
\put(7411,-12051){\makebox(0,0)[lb]{\smash{{\SetFigFont{14}{16.8}{\rmdefault}{\mddefault}{\itdefault}{\color[rgb]{0,0,0}$x+e_a+o_b+o_c$}%
}}}}
\put(11171,-10901){\rotatebox{60.0}{\makebox(0,0)[lb]{\smash{{\SetFigFont{14}{16.8}{\rmdefault}{\mddefault}{\itdefault}{\color[rgb]{0,0,0}$z+o_a+e_b+e_c$}%
}}}}}
\put(2961,-6401){\makebox(0,0)[lb]{\smash{{\SetFigFont{14}{16.8}{\rmdefault}{\mddefault}{\itdefault}{\color[rgb]{0,0,0}$y$}%
}}}}
\put(3541,-4571){\rotatebox{60.0}{\makebox(0,0)[lb]{\smash{{\SetFigFont{14}{16.8}{\rmdefault}{\mddefault}{\itdefault}{\color[rgb]{0,0,0}$z+e_a+o_b+o_c$}%
}}}}}
\put(16215,-2276){\makebox(0,0)[lb]{\smash{{\SetFigFont{14}{16.8}{\rmdefault}{\mddefault}{\itdefault}{\color[rgb]{0,0,0}$x+o_a+e_b+e_c$}%
}}}}
\put(20005,-3386){\rotatebox{300.0}{\makebox(0,0)[lb]{\smash{{\SetFigFont{14}{16.8}{\rmdefault}{\mddefault}{\itdefault}{\color[rgb]{0,0,0}$y+z+e_a+o_b+o_c$}%
}}}}}
\put(21815,-6576){\rotatebox{300.0}{\makebox(0,0)[lb]{\smash{{\SetFigFont{14}{16.8}{\rmdefault}{\mddefault}{\itdefault}{\color[rgb]{0,0,0}$y+a-b+1$}%
}}}}}
\put(13777,-8046){\rotatebox{300.0}{\makebox(0,0)[lb]{\smash{{\SetFigFont{14}{16.8}{\rmdefault}{\mddefault}{\itdefault}{\color[rgb]{0,0,0}$y+z+o_a+e_b+e_c+a-b+1$}%
}}}}}
\put(17245,-12056){\makebox(0,0)[lb]{\smash{{\SetFigFont{14}{16.8}{\rmdefault}{\mddefault}{\itdefault}{\color[rgb]{0,0,0}$x+e_a+o_b+o_c$}%
}}}}
\put(21005,-10906){\rotatebox{60.0}{\makebox(0,0)[lb]{\smash{{\SetFigFont{14}{16.8}{\rmdefault}{\mddefault}{\itdefault}{\color[rgb]{0,0,0}$z+o_a+e_b+e_c$}%
}}}}}
\put(12795,-6406){\makebox(0,0)[lb]{\smash{{\SetFigFont{14}{16.8}{\rmdefault}{\mddefault}{\itdefault}{\color[rgb]{0,0,0}$y$}%
}}}}
\put(13375,-4576){\rotatebox{60.0}{\makebox(0,0)[lb]{\smash{{\SetFigFont{14}{16.8}{\rmdefault}{\mddefault}{\itdefault}{\color[rgb]{0,0,0}$z+e_a+o_b+o_c$}%
}}}}}
\put(6591,-13019){\makebox(0,0)[lb]{\smash{{\SetFigFont{14}{16.8}{\rmdefault}{\mddefault}{\itdefault}{\color[rgb]{0,0,0}$x+o_a+e_b+e_c$}%
}}}}
\put(10381,-14129){\rotatebox{300.0}{\makebox(0,0)[lb]{\smash{{\SetFigFont{14}{16.8}{\rmdefault}{\mddefault}{\itdefault}{\color[rgb]{0,0,0}$y+z+e_a+o_b+o_c$}%
}}}}}
\put(12191,-17319){\rotatebox{300.0}{\makebox(0,0)[lb]{\smash{{\SetFigFont{14}{16.8}{\rmdefault}{\mddefault}{\itdefault}{\color[rgb]{0,0,0}$y+a-b+1$}%
}}}}}
\put(4153,-18789){\rotatebox{300.0}{\makebox(0,0)[lb]{\smash{{\SetFigFont{14}{16.8}{\rmdefault}{\mddefault}{\itdefault}{\color[rgb]{0,0,0}$y+z+o_a+e_b+e_c+a-b+1$}%
}}}}}
\put(7621,-22799){\makebox(0,0)[lb]{\smash{{\SetFigFont{14}{16.8}{\rmdefault}{\mddefault}{\itdefault}{\color[rgb]{0,0,0}$x+e_a+o_b+o_c$}%
}}}}
\put(11381,-21649){\rotatebox{60.0}{\makebox(0,0)[lb]{\smash{{\SetFigFont{14}{16.8}{\rmdefault}{\mddefault}{\itdefault}{\color[rgb]{0,0,0}$z+o_a+e_b+e_c$}%
}}}}}
\put(3171,-17149){\makebox(0,0)[lb]{\smash{{\SetFigFont{14}{16.8}{\rmdefault}{\mddefault}{\itdefault}{\color[rgb]{0,0,0}$y$}%
}}}}
\put(3751,-15319){\rotatebox{60.0}{\makebox(0,0)[lb]{\smash{{\SetFigFont{14}{16.8}{\rmdefault}{\mddefault}{\itdefault}{\color[rgb]{0,0,0}$z+e_a+o_b+o_c$}%
}}}}}
\put(16425,-13024){\makebox(0,0)[lb]{\smash{{\SetFigFont{14}{16.8}{\rmdefault}{\mddefault}{\itdefault}{\color[rgb]{0,0,0}$x+o_a+e_b+e_c$}%
}}}}
\put(20215,-14134){\rotatebox{300.0}{\makebox(0,0)[lb]{\smash{{\SetFigFont{14}{16.8}{\rmdefault}{\mddefault}{\itdefault}{\color[rgb]{0,0,0}$y+z+e_a+o_b+o_c$}%
}}}}}
\put(22025,-17324){\rotatebox{300.0}{\makebox(0,0)[lb]{\smash{{\SetFigFont{14}{16.8}{\rmdefault}{\mddefault}{\itdefault}{\color[rgb]{0,0,0}$y+a-b+1$}%
}}}}}
\put(13987,-18794){\rotatebox{300.0}{\makebox(0,0)[lb]{\smash{{\SetFigFont{14}{16.8}{\rmdefault}{\mddefault}{\itdefault}{\color[rgb]{0,0,0}$y+z+o_a+e_b+e_c+a-b+1$}%
}}}}}
\put(17455,-22804){\makebox(0,0)[lb]{\smash{{\SetFigFont{14}{16.8}{\rmdefault}{\mddefault}{\itdefault}{\color[rgb]{0,0,0}$x+e_a+o_b+o_c$}%
}}}}
\put(21215,-21654){\rotatebox{60.0}{\makebox(0,0)[lb]{\smash{{\SetFigFont{14}{16.8}{\rmdefault}{\mddefault}{\itdefault}{\color[rgb]{0,0,0}$z+o_a+e_b+e_c$}%
}}}}}
\put(13005,-17154){\makebox(0,0)[lb]{\smash{{\SetFigFont{14}{16.8}{\rmdefault}{\mddefault}{\itdefault}{\color[rgb]{0,0,0}$y$}%
}}}}
\put(13585,-15324){\rotatebox{60.0}{\makebox(0,0)[lb]{\smash{{\SetFigFont{14}{16.8}{\rmdefault}{\mddefault}{\itdefault}{\color[rgb]{0,0,0}$z+e_a+o_b+o_c$}%
}}}}}
\put(6972,-23897){\makebox(0,0)[lb]{\smash{{\SetFigFont{14}{16.8}{\rmdefault}{\mddefault}{\itdefault}{\color[rgb]{0,0,0}$x+o_a+e_b+e_c$}%
}}}}
\put(10762,-25007){\rotatebox{300.0}{\makebox(0,0)[lb]{\smash{{\SetFigFont{14}{16.8}{\rmdefault}{\mddefault}{\itdefault}{\color[rgb]{0,0,0}$y+z+e_a+o_b+o_c$}%
}}}}}
\put(12572,-28197){\rotatebox{300.0}{\makebox(0,0)[lb]{\smash{{\SetFigFont{14}{16.8}{\rmdefault}{\mddefault}{\itdefault}{\color[rgb]{0,0,0}$y+a-b+1$}%
}}}}}
\put(4534,-29667){\rotatebox{300.0}{\makebox(0,0)[lb]{\smash{{\SetFigFont{14}{16.8}{\rmdefault}{\mddefault}{\itdefault}{\color[rgb]{0,0,0}$y+z+o_a+e_b+e_c+a-b+1$}%
}}}}}
\put(8002,-33677){\makebox(0,0)[lb]{\smash{{\SetFigFont{14}{16.8}{\rmdefault}{\mddefault}{\itdefault}{\color[rgb]{0,0,0}$x+e_a+o_b+o_c$}%
}}}}
\put(11762,-32527){\rotatebox{60.0}{\makebox(0,0)[lb]{\smash{{\SetFigFont{14}{16.8}{\rmdefault}{\mddefault}{\itdefault}{\color[rgb]{0,0,0}$z+o_a+e_b+e_c$}%
}}}}}
\put(3552,-28027){\makebox(0,0)[lb]{\smash{{\SetFigFont{14}{16.8}{\rmdefault}{\mddefault}{\itdefault}{\color[rgb]{0,0,0}$y$}%
}}}}
\put(4132,-26197){\rotatebox{60.0}{\makebox(0,0)[lb]{\smash{{\SetFigFont{14}{16.8}{\rmdefault}{\mddefault}{\itdefault}{\color[rgb]{0,0,0}$z+e_a+o_b+o_c$}%
}}}}}
\put(16633,-24007){\makebox(0,0)[lb]{\smash{{\SetFigFont{14}{16.8}{\rmdefault}{\mddefault}{\itdefault}{\color[rgb]{0,0,0}$x+o_a+e_b+e_c$}%
}}}}
\put(20423,-25117){\rotatebox{300.0}{\makebox(0,0)[lb]{\smash{{\SetFigFont{14}{16.8}{\rmdefault}{\mddefault}{\itdefault}{\color[rgb]{0,0,0}$y+z+e_a+o_b+o_c$}%
}}}}}
\put(22233,-28307){\rotatebox{300.0}{\makebox(0,0)[lb]{\smash{{\SetFigFont{14}{16.8}{\rmdefault}{\mddefault}{\itdefault}{\color[rgb]{0,0,0}$y+a-b+1$}%
}}}}}
\end{picture}%
}
\caption{Obtaining a recurrence for $G^{(2)}$-type regions with $a> b$. Kuo condensation is applied to the region $G^{(2)}_{3,3,2}(3,2 ;\ 2,2 ;\ 2,2)$ (picture (a)) as shown on the picture (b).}\label{fig:kuooff3}
\end{figure}

The $G^{(3)}$-recurrence for $a> b$ is
\begin{align}\label{offcenterrecurG3a}
\M(G^{(3)}_{x,y,z}(\textbf{a};\ \textbf{c};\ \textbf{b}))\M(E^{(2)}_{x-1,y,z-1}(\textbf{a}^{+1};\ \textbf{c}; \ \textbf{b}^{+1}))&=\M(K^{(2)}_{x-1,y-1,z}(\textbf{b}^{+1};\ \overline{\textbf{c}}; \ \textbf{a})) \M(F^{(4)}_{x,y+1,z-1}(\textbf{a}^{+1};\ \textbf{c};\ \textbf{b}))\notag\\
&+
\M(G^{(3)}_{x-1,y,z-1}(\textbf{a}^{+1};\ \textbf{c};\ \textbf{b}^{+1})) \M(E^{(2)}_{x,y,z}(\textbf{a};\ \textbf{c};\ \textbf{b})).
\end{align}
When $a> b$, this recurrence becomes
\begin{align}\label{offcenterrecurG3b}
\M(G^{(3)}_{x,y,z}(\textbf{a};\ \textbf{c};\ \textbf{b}))\M(E^{(2)}_{x-1,y,z-1}(\textbf{a}^{+1};\ \textbf{c}; \ \textbf{b}^{+1}))&=\M(K^{(2)}_{x-1,y,z}(\textbf{b}^{+1};\ \overline{\textbf{c}}; \ \textbf{a})) \M(F^{(4)}_{x,y,z-1}(\textbf{a}^{+1};\ \textbf{c};\ \textbf{b}))\notag\\
&+
\M(G^{(3)}_{x-1,y,z-1}(\textbf{a}^{+1};\ \textbf{c};\ \textbf{b}^{+1})) \M(E^{(2)}_{x,y,z}(\textbf{a};\ \textbf{c};\ \textbf{b})),
\end{align}
and when $a=b$ we have
\begin{align}\label{offcenterrecurG3c}
\M(G^{(3)}_{x,y,z}(\textbf{a};\ \textbf{c};\ \textbf{b}))\M(E^{(2)}_{x-1,y,z-1}(\textbf{a}^{+1};\ \textbf{c}; \ \textbf{b}^{+1}))&=\M(K^{(2)}_{x-1,y-1,z}(\textbf{b}^{+1};\ \overline{\textbf{c}}; \ \textbf{a})) \M(F^{(4)}_{x,y,z-1}(\textbf{a}^{+1};\ \textbf{c};\ \textbf{b}))\notag\\
&+
\M(G^{(3)}_{x-1,y,z-1}(\textbf{a}^{+1};\ \textbf{c};\ \textbf{b}^{+1})) \M(E^{(2)}_{x,y,z}(\textbf{a};\ \textbf{c};\ \textbf{b})).
\end{align}

Finally, the application of Kuo condensation gives the $G^{(4)}$-recurrence as
\begin{align}\label{offcenterrecurG4a}
\M(G^{(4)}_{x,y,z}(\textbf{a};\ \textbf{c};\ \textbf{b}))\M(E^{(6)}_{x-1,y,z-1}(\textbf{b}^{+1};\ \overline{\textbf{c}}; \ \textbf{a}^{+1}))&=\M(K^{(3)}_{x-1,y-1,z}(\textbf{b}^{+1};\ \overline{\textbf{c}}; \ \textbf{a})) \M(F^{(1)}_{x,y+1,z-1}(\textbf{b};\ \textbf{c};\ \textbf{a}^{+1}))\notag\\
&+
\M(G^{(4)}_{x-1,y,z-1}(\textbf{a}^{+1};\ \textbf{c};\ \textbf{b}^{+1})) \M(E^{(6)}_{x,y,z}(\textbf{b};\ \overline{\textbf{c}};\ \textbf{a}))
\end{align}
for $a< b$,
\begin{align}\label{offcenterrecurG4b}
\M(G^{(4)}_{x,y,z}(\textbf{a};\ \textbf{c};\ \textbf{b}))\M(E^{(6)}_{x-1,y,z-1}(\textbf{b}^{+1};\ \overline{\textbf{c}}; \ \textbf{a}^{+1}))&=\M(K^{(3)}_{x-1,y,z}(\textbf{b}^{+1};\ \overline{\textbf{c}}; \ \textbf{a})) \M(F^{(1)}_{x,y,z-1}(\textbf{b};\ \textbf{c};\ \textbf{a}^{+1}))\notag\\
&+
\M(G^{(4)}_{x-1,y,z-1}(\textbf{a}^{+1};\ \textbf{c};\ \textbf{b}^{+1})) \M(E^{(6)}_{x,y,z}(\textbf{b};\ \overline{\textbf{c}};\ \textbf{a}))
\end{align}
for $a>b$, and
\begin{align}\label{offcenterrecurG4c}
\M(G^{(4)}_{x,y,z}(\textbf{a};\ \textbf{c};\ \textbf{b}))\M(E^{(6)}_{x-1,y,z-1}(\textbf{b}^{+1};\ \overline{\textbf{c}}; \ \textbf{a}^{+1}))&=\M(K^{(3)}_{x-1,y-1,z}(\textbf{b}^{+1};\ \overline{\textbf{c}}; \ \textbf{a})) \M(F^{(1)}_{x,y,z-1}(\textbf{b};\ \textbf{c};\ \textbf{a}^{+1}))\notag\\
&+
\M(G^{(4)}_{x-1,y,z-1}(\textbf{a}^{+1};\ \textbf{c};\ \textbf{b}^{+1})) \M(E^{(6)}_{x,y,z}(\textbf{b};\ \overline{\textbf{c}};\ \textbf{a}))
\end{align}
for $a=b$.

\subsection{Recurrences for $K^{(i)}$-type regions}\label{subsec:offrecurK}



For the case of $K^{(i)}$-regions, we still apply Kuo's Theorem \ref{kuothm1} with the three vertices $u,v,w$ chosen similarly to that in the cases of $E^{(i)}$- and $F^{(i)}$-type regions, the only difference is that the $s$-triangle is now located at the west corner (see Figure \ref{fig:offK}). Let us work in detail for the case of $K^{(1)}$-type regions with $a>b$.

Let $G$ be the dual graph of the region $K^{(1)}_{x,y,z}(\textbf{a};\ \textbf{c};\ \textbf{b})$ (for $a<b$). Consider forced lozenges as in Figures \ref{fig:kuooff4}(b)--(f), we get
\begin{equation}\label{kuothm4eq1}
\M(G-\{u,v,w,s\})=\M(E^{(1)}_{x-1,y,z-1}(E^{(1)}_{x-1,y,z}(\textbf{b}^{+1};\ \overline{\textbf{c}}; \ \textbf{a}))
\end{equation}
\begin{equation}\label{kuothm4eq2}
\M(G-\{u,v\})=\M(E^{(1)}_{x,y+1,z-1}(\textbf{b}^{+1};\ \overline{\textbf{c}}; \ \textbf{a}))
\end{equation}
\begin{equation}\label{kuothm4eq3}
\M(G-\{w,s\})=\M(K^{(1)}_{x-1,y-1,z+1}(\textbf{a};\ \textbf{c};\ \textbf{b}))
\end{equation}
\begin{equation}\label{kuothm4eq4}
\M(G-\{u,s\})=\M(R^{\leftarrow}_{x,y,z}(\textbf{a};\ \textbf{c};\ \textbf{b}))
\end{equation}
\begin{equation}\label{kuothm4eq5}
\M(G-\{v,w\})=\M(G^{(2)}_{x-1,y,z}(\textbf{b}^{+1};\ \overline{\textbf{c}};\ \textbf{a})).
\end{equation}
We need to $180^{\circ}$-rotate the leftover regions in the case of (\ref{kuothm4eq1}), (\ref{kuothm4eq2}), (\ref{kuothm4eq5}).
By Eqs. (\ref{kuothm4eq1})--(\ref{kuothm4eq5}) and Theorem \ref{kuothm1}, we have the $K^{(1)}$-recurrence for $a>b$:
\begin{align}\label{offcenterrecurK1b}
\M(K^{(1)}_{x,y,z}(\textbf{a};\ \textbf{c};\ \textbf{b}))\M(E^{(1)}_{x-1,y,z}(\textbf{b}^{+1};\ \overline{\textbf{c}}; \ \textbf{a}))&=\M(E^{(1)}_{x,y+1,z-1}(\textbf{b}^{+1};\ \overline{\textbf{c}}; \ \textbf{a})) \M(K^{(1)}_{x-1,y-1,z+1}(\textbf{a};\ \textbf{c};\ \textbf{b}))\notag\\
&+
\M(R^{\leftarrow}_{x,y,z}(\textbf{a};\ \textbf{c};\ \textbf{b})) \M(G^{(2)}_{x-1,y,z}(\textbf{b}^{+1};\ \overline{\textbf{c}};\ \textbf{a})).
\end{align}
By applying Kuo condensation similarly, we get other recurrences:

 The $K^{(1)}$-recurrence for $a\leq b$:
\begin{align}\label{offcenterrecurK1a}
\M(K^{(1)}_{x,y,z}(\textbf{a};\ \textbf{c};\ \textbf{b}))\M(E^{(1)}_{x-1,y-1,z}(\textbf{b}^{+1};\ \overline{\textbf{c}}; \ \textbf{a}))&=\M(E^{(1)}_{x,y,z-1}(\textbf{b}^{+1};\ \overline{\textbf{c}}; \ \textbf{a})) \M(K^{(1)}_{x-1,y-1,z+1}(\textbf{a};\ \textbf{c};\ \textbf{b}))\notag\\
&+
\M(R^{\leftarrow}_{x,y,z}(\textbf{a};\ \textbf{c};\ \textbf{b})) \M(G^{(2)}_{x-1,y-1,z}(\textbf{b}^{+1};\ \overline{\textbf{c}};\ \textbf{a})).
\end{align}

\begin{figure}\centering
\setlength{\unitlength}{3947sp}%
\begingroup\makeatletter\ifx\SetFigFont\undefined%
\gdef\SetFigFont#1#2#3#4#5{%
  \reset@font\fontsize{#1}{#2pt}%
  \fontfamily{#3}\fontseries{#4}\fontshape{#5}%
  \selectfont}%
\fi\endgroup%
\resizebox{14cm}{!}{
\begin{picture}(0,0)%
\includegraphics{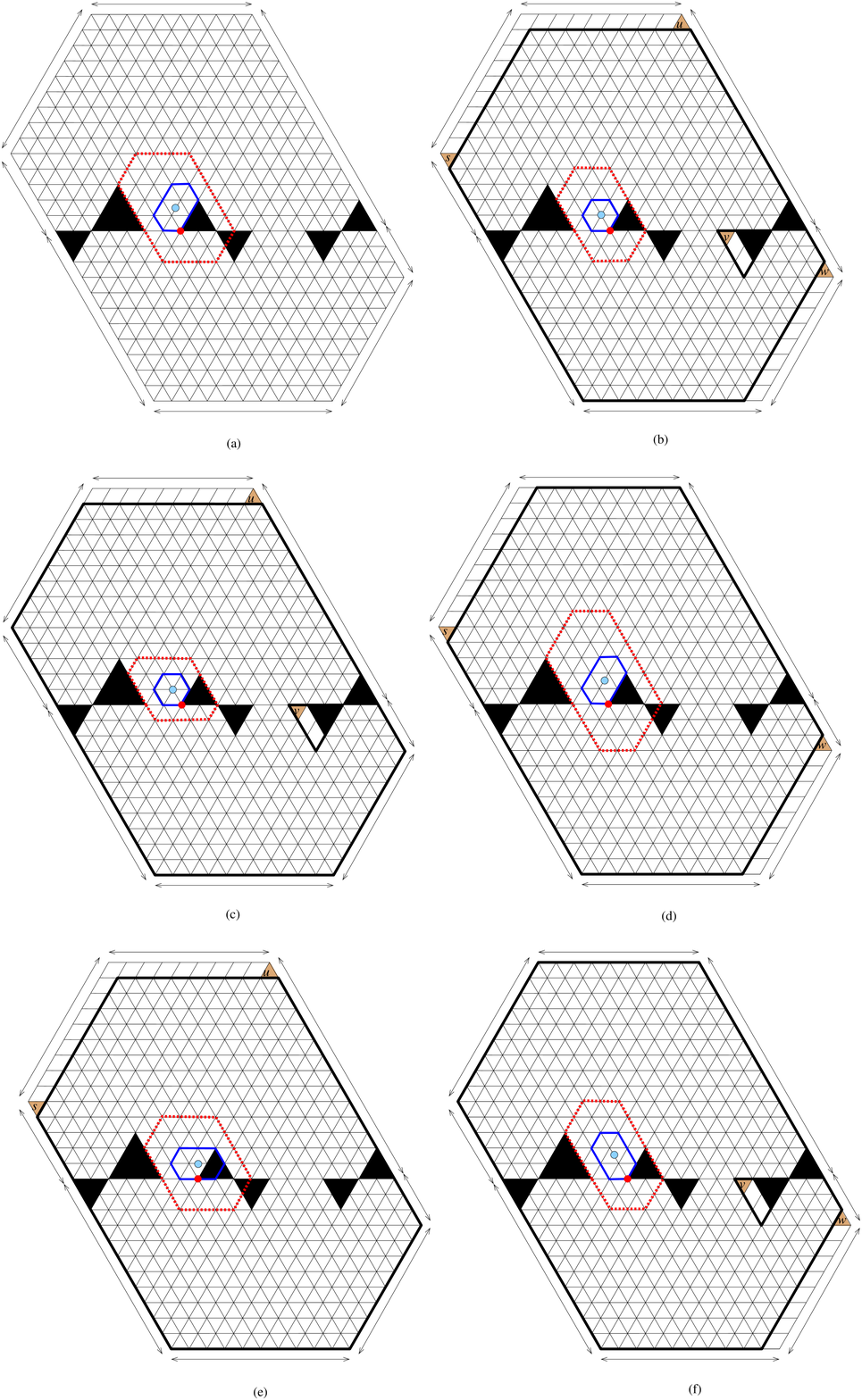}%
\end{picture}%
%
%

\begin{picture}(20084,32450)(2035,-33527)
\put(21416,-28409){\rotatebox{300.0}{\makebox(0,0)[lb]{\smash{{\SetFigFont{17}{20.4}{\rmdefault}{\mddefault}{\itdefault}{\color[rgb]{0,0,0}$y+a-b$}%
}}}}}
\put(19136,-24669){\rotatebox{300.0}{\makebox(0,0)[lb]{\smash{{\SetFigFont{17}{20.4}{\rmdefault}{\mddefault}{\itdefault}{\color[rgb]{0,0,0}$y+z+e_a+o_b+o_c+3$}%
}}}}}
\put(20690,-31984){\rotatebox{60.0}{\makebox(0,0)[lb]{\smash{{\SetFigFont{17}{20.4}{\rmdefault}{\mddefault}{\itdefault}{\color[rgb]{0,0,0}$z+o_a+e_b+e_c$}%
}}}}}
\put(16950,-32934){\makebox(0,0)[lb]{\smash{{\SetFigFont{17}{20.4}{\rmdefault}{\mddefault}{\itdefault}{\color[rgb]{0,0,0}$x+e_a+o_b+o_c$}%
}}}}
\put(13220,-29014){\rotatebox{300.0}{\makebox(0,0)[lb]{\smash{{\SetFigFont{17}{20.4}{\rmdefault}{\mddefault}{\itdefault}{\color[rgb]{0,0,0}$y+z+o_a+e_b+e_c+a-b$}%
}}}}}
\put(12388,-27557){\rotatebox{300.0}{\makebox(0,0)[lb]{\smash{{\SetFigFont{17}{20.4}{\rmdefault}{\mddefault}{\itdefault}{\color[rgb]{0,0,0}$y+3$}%
}}}}}
\put(12670,-25594){\rotatebox{60.0}{\makebox(0,0)[lb]{\smash{{\SetFigFont{17}{20.4}{\rmdefault}{\mddefault}{\itdefault}{\color[rgb]{0,0,0}$z+e_a+o_b+o_c$}%
}}}}}
\put(15150,-23128){\makebox(0,0)[lb]{\smash{{\SetFigFont{17}{20.4}{\rmdefault}{\mddefault}{\itdefault}{\color[rgb]{0,0,0}$x+o_a+e_b+e_c$}%
}}}}
\put(2830,-25594){\rotatebox{60.0}{\makebox(0,0)[lb]{\smash{{\SetFigFont{17}{20.4}{\rmdefault}{\mddefault}{\itdefault}{\color[rgb]{0,0,0}$z+e_a+o_b+o_c$}%
}}}}}
\put(2548,-27557){\rotatebox{300.0}{\makebox(0,0)[lb]{\smash{{\SetFigFont{17}{20.4}{\rmdefault}{\mddefault}{\itdefault}{\color[rgb]{0,0,0}$y+3$}%
}}}}}
\put(3380,-29014){\rotatebox{300.0}{\makebox(0,0)[lb]{\smash{{\SetFigFont{17}{20.4}{\rmdefault}{\mddefault}{\itdefault}{\color[rgb]{0,0,0}$y+z+o_a+e_b+e_c+a-b$}%
}}}}}
\put(7110,-32934){\makebox(0,0)[lb]{\smash{{\SetFigFont{17}{20.4}{\rmdefault}{\mddefault}{\itdefault}{\color[rgb]{0,0,0}$x+e_a+o_b+o_c$}%
}}}}
\put(10850,-31984){\rotatebox{60.0}{\makebox(0,0)[lb]{\smash{{\SetFigFont{17}{20.4}{\rmdefault}{\mddefault}{\itdefault}{\color[rgb]{0,0,0}$z+o_a+e_b+e_c$}%
}}}}}
\put(11576,-28409){\rotatebox{300.0}{\makebox(0,0)[lb]{\smash{{\SetFigFont{17}{20.4}{\rmdefault}{\mddefault}{\itdefault}{\color[rgb]{0,0,0}$y+a-b$}%
}}}}}
\put(9296,-24669){\rotatebox{300.0}{\makebox(0,0)[lb]{\smash{{\SetFigFont{17}{20.4}{\rmdefault}{\mddefault}{\itdefault}{\color[rgb]{0,0,0}$y+z+e_a+o_b+o_c+3$}%
}}}}}
\put(5310,-23128){\makebox(0,0)[lb]{\smash{{\SetFigFont{17}{20.4}{\rmdefault}{\mddefault}{\itdefault}{\color[rgb]{0,0,0}$x+o_a+e_b+e_c$}%
}}}}
\put(12230,-14714){\rotatebox{60.0}{\makebox(0,0)[lb]{\smash{{\SetFigFont{17}{20.4}{\rmdefault}{\mddefault}{\itdefault}{\color[rgb]{0,0,0}$z+e_a+o_b+o_c$}%
}}}}}
\put(11948,-16677){\rotatebox{300.0}{\makebox(0,0)[lb]{\smash{{\SetFigFont{17}{20.4}{\rmdefault}{\mddefault}{\itdefault}{\color[rgb]{0,0,0}$y+3$}%
}}}}}
\put(12780,-18134){\rotatebox{300.0}{\makebox(0,0)[lb]{\smash{{\SetFigFont{17}{20.4}{\rmdefault}{\mddefault}{\itdefault}{\color[rgb]{0,0,0}$y+z+o_a+e_b+e_c+a-b$}%
}}}}}
\put(16510,-22054){\makebox(0,0)[lb]{\smash{{\SetFigFont{17}{20.4}{\rmdefault}{\mddefault}{\itdefault}{\color[rgb]{0,0,0}$x+e_a+o_b+o_c$}%
}}}}
\put(20250,-21104){\rotatebox{60.0}{\makebox(0,0)[lb]{\smash{{\SetFigFont{17}{20.4}{\rmdefault}{\mddefault}{\itdefault}{\color[rgb]{0,0,0}$z+o_a+e_b+e_c$}%
}}}}}
\put(20976,-17529){\rotatebox{300.0}{\makebox(0,0)[lb]{\smash{{\SetFigFont{17}{20.4}{\rmdefault}{\mddefault}{\itdefault}{\color[rgb]{0,0,0}$y+a-b$}%
}}}}}
\put(18696,-13789){\rotatebox{300.0}{\makebox(0,0)[lb]{\smash{{\SetFigFont{17}{20.4}{\rmdefault}{\mddefault}{\itdefault}{\color[rgb]{0,0,0}$y+z+e_a+o_b+o_c+3$}%
}}}}}
\put(14710,-12248){\makebox(0,0)[lb]{\smash{{\SetFigFont{17}{20.4}{\rmdefault}{\mddefault}{\itdefault}{\color[rgb]{0,0,0}$x+o_a+e_b+e_c$}%
}}}}
\put(2460,-14734){\rotatebox{60.0}{\makebox(0,0)[lb]{\smash{{\SetFigFont{17}{20.4}{\rmdefault}{\mddefault}{\itdefault}{\color[rgb]{0,0,0}$z+e_a+o_b+o_c$}%
}}}}}
\put(2178,-16697){\rotatebox{300.0}{\makebox(0,0)[lb]{\smash{{\SetFigFont{17}{20.4}{\rmdefault}{\mddefault}{\itdefault}{\color[rgb]{0,0,0}$y+3$}%
}}}}}
\put(3010,-18154){\rotatebox{300.0}{\makebox(0,0)[lb]{\smash{{\SetFigFont{17}{20.4}{\rmdefault}{\mddefault}{\itdefault}{\color[rgb]{0,0,0}$y+z+o_a+e_b+e_c+a-b$}%
}}}}}
\put(4911,-1405){\makebox(0,0)[lb]{\smash{{\SetFigFont{17}{20.4}{\rmdefault}{\mddefault}{\itdefault}{\color[rgb]{0,0,0}$x+o_a+e_b+e_c$}%
}}}}
\put(8897,-2946){\rotatebox{300.0}{\makebox(0,0)[lb]{\smash{{\SetFigFont{17}{20.4}{\rmdefault}{\mddefault}{\itdefault}{\color[rgb]{0,0,0}$y+z+e_a+o_b+o_c+3$}%
}}}}}
\put(11177,-6686){\rotatebox{300.0}{\makebox(0,0)[lb]{\smash{{\SetFigFont{17}{20.4}{\rmdefault}{\mddefault}{\itdefault}{\color[rgb]{0,0,0}$y+a-b$}%
}}}}}
\put(10451,-10261){\rotatebox{60.0}{\makebox(0,0)[lb]{\smash{{\SetFigFont{17}{20.4}{\rmdefault}{\mddefault}{\itdefault}{\color[rgb]{0,0,0}$z+o_a+e_b+e_c$}%
}}}}}
\put(6711,-11211){\makebox(0,0)[lb]{\smash{{\SetFigFont{17}{20.4}{\rmdefault}{\mddefault}{\itdefault}{\color[rgb]{0,0,0}$x+e_a+o_b+o_c$}%
}}}}
\put(2981,-7291){\rotatebox{300.0}{\makebox(0,0)[lb]{\smash{{\SetFigFont{17}{20.4}{\rmdefault}{\mddefault}{\itdefault}{\color[rgb]{0,0,0}$y+z+o_a+e_b+e_c+a-b$}%
}}}}}
\put(2149,-5834){\rotatebox{300.0}{\makebox(0,0)[lb]{\smash{{\SetFigFont{17}{20.4}{\rmdefault}{\mddefault}{\itdefault}{\color[rgb]{0,0,0}$y+3$}%
}}}}}
\put(2431,-3871){\rotatebox{60.0}{\makebox(0,0)[lb]{\smash{{\SetFigFont{17}{20.4}{\rmdefault}{\mddefault}{\itdefault}{\color[rgb]{0,0,0}$z+e_a+o_b+o_c$}%
}}}}}
\put(14750,-1398){\makebox(0,0)[lb]{\smash{{\SetFigFont{17}{20.4}{\rmdefault}{\mddefault}{\itdefault}{\color[rgb]{0,0,0}$x+o_a+e_b+e_c$}%
}}}}
\put(18736,-2939){\rotatebox{300.0}{\makebox(0,0)[lb]{\smash{{\SetFigFont{17}{20.4}{\rmdefault}{\mddefault}{\itdefault}{\color[rgb]{0,0,0}$y+z+e_a+o_b+o_c+3$}%
}}}}}
\put(21016,-6679){\rotatebox{300.0}{\makebox(0,0)[lb]{\smash{{\SetFigFont{17}{20.4}{\rmdefault}{\mddefault}{\itdefault}{\color[rgb]{0,0,0}$y+a-b$}%
}}}}}
\put(20290,-10254){\rotatebox{60.0}{\makebox(0,0)[lb]{\smash{{\SetFigFont{17}{20.4}{\rmdefault}{\mddefault}{\itdefault}{\color[rgb]{0,0,0}$z+o_a+e_b+e_c$}%
}}}}}
\put(16550,-11204){\makebox(0,0)[lb]{\smash{{\SetFigFont{17}{20.4}{\rmdefault}{\mddefault}{\itdefault}{\color[rgb]{0,0,0}$x+e_a+o_b+o_c$}%
}}}}
\put(12820,-7284){\rotatebox{300.0}{\makebox(0,0)[lb]{\smash{{\SetFigFont{17}{20.4}{\rmdefault}{\mddefault}{\itdefault}{\color[rgb]{0,0,0}$y+z+o_a+e_b+e_c+a-b$}%
}}}}}
\put(11988,-5827){\rotatebox{300.0}{\makebox(0,0)[lb]{\smash{{\SetFigFont{17}{20.4}{\rmdefault}{\mddefault}{\itdefault}{\color[rgb]{0,0,0}$y+3$}%
}}}}}
\put(12270,-3864){\rotatebox{60.0}{\makebox(0,0)[lb]{\smash{{\SetFigFont{17}{20.4}{\rmdefault}{\mddefault}{\itdefault}{\color[rgb]{0,0,0}$z+e_a+o_b+o_c$}%
}}}}}
\put(4940,-12268){\makebox(0,0)[lb]{\smash{{\SetFigFont{17}{20.4}{\rmdefault}{\mddefault}{\itdefault}{\color[rgb]{0,0,0}$x+o_a+e_b+e_c$}%
}}}}
\put(8926,-13809){\rotatebox{300.0}{\makebox(0,0)[lb]{\smash{{\SetFigFont{17}{20.4}{\rmdefault}{\mddefault}{\itdefault}{\color[rgb]{0,0,0}$y+z+e_a+o_b+o_c+3$}%
}}}}}
\put(11206,-17549){\rotatebox{300.0}{\makebox(0,0)[lb]{\smash{{\SetFigFont{17}{20.4}{\rmdefault}{\mddefault}{\itdefault}{\color[rgb]{0,0,0}$y+a-b$}%
}}}}}
\put(10480,-21124){\rotatebox{60.0}{\makebox(0,0)[lb]{\smash{{\SetFigFont{17}{20.4}{\rmdefault}{\mddefault}{\itdefault}{\color[rgb]{0,0,0}$z+o_a+e_b+e_c$}%
}}}}}
\put(6740,-22074){\makebox(0,0)[lb]{\smash{{\SetFigFont{17}{20.4}{\rmdefault}{\mddefault}{\itdefault}{\color[rgb]{0,0,0}$x+e_a+o_b+o_c$}%
}}}}
\end{picture}%
}
\caption{Obtaining a recurrence for $K^{(2)}$-type regions with $a> b$. Kuo condensation is applied to the region $K^{(2)}_{3,2,2}(2,3 ;\ 2,2;\ 2,2)$ (picture (a)) as shown on the picture (b).}\label{fig:kuooff4}
\end{figure}

The $K^{(2)}$-recurrence for $a\leq b$:
\begin{align}\label{offcenterrecurK2a}
\M(K^{(2)}_{x,y,z}(\textbf{a};\ \textbf{c};\ \textbf{b}))\M(E^{(2)}_{x-1,y-1,z}(\textbf{b}^{+1};\ \overline{\textbf{c}}; \ \textbf{a}))&=\M(E^{(2)}_{x,y,z-1}(\textbf{b}^{+1};\ \overline{\textbf{c}}; \ \textbf{a})) \M(K^{(2)}_{x-1,y-1,z+1}(\textbf{a};\ \textbf{c};\ \textbf{b}))\notag\\
&+
\M(F^{(1)}_{x,y,z}(\textbf{a};\ \textbf{c};\ \textbf{b})) \M(G^{(3)}_{x-1,y-1,z}(\textbf{b}^{+1};\ \overline{\textbf{c}};\ \textbf{a})).
\end{align}

The $K^{(2)}$-recurrence for $a> b$:
\begin{align}\label{offcenterrecurK2b}
\M(K^{(2)}_{x,y,z}(\textbf{a};\ \textbf{c};\ \textbf{b}))\M(E^{(2)}_{x-1,y,z}(\textbf{b}^{+1};\ \overline{\textbf{c}}; \ \textbf{a}))&=\M(E^{(2)}_{x,y+1,z-1}(\textbf{b}^{+1};\ \overline{\textbf{c}}; \ \textbf{a})) \M(K^{(2)}_{x-1,y-1,z+1}(\textbf{a};\ \textbf{c};\ \textbf{b}))\notag\\
&+
\M(F^{(1)}_{x,y,z}(\textbf{a};\ \textbf{c};\ \textbf{b})) \M(G^{(3)}_{x-1,y,z}(\textbf{b}^{+1};\ \overline{\textbf{c}};\ \textbf{a})).
\end{align}

The $K^{(3)}$-recurrence for $a\leq b$:
\begin{align}\label{offcenterrecurK3a}
\M(K^{(3)}_{x,y,z}(\textbf{a};\ \textbf{c};\ \textbf{b}))\M(E^{(6)}_{x-1,y-1,z}(\textbf{a};\ \textbf{c}; \ \textbf{b}^{+1}))&=\M(E^{(6)}_{x,y,z-1}(\textbf{a};\ \textbf{c}; \ \textbf{b}^{+1})) \M(K^{(3)}_{x-1,y-1,z+1}(\textbf{a};\ \textbf{c};\ \textbf{b}))\notag\\
&+
\M(F^{(2)}_{x,y,z}(\textbf{a};\ \textbf{c};\ \textbf{b})) \M(G^{(4)}_{x-1,y-1,z}(\textbf{b}^{+1};\ \overline{\textbf{c}};\ \textbf{a})).
\end{align}

The $K^{(3)}$-recurrence for $a> b$:
\begin{align}\label{offcenterrecurK3b}
\M(K^{(3)}_{x,y,z}(\textbf{a};\ \textbf{c};\ \textbf{b}))\M(E^{(6)}_{x-1,y,z}(\textbf{a};\ \textbf{c}; \ \textbf{b}^{+1}))&=\M(E^{(6)}_{x,y+1,z-1}(\textbf{a};\ \textbf{c}; \ \textbf{b}^{+1})) \M(K^{(3)}_{x-1,y-1,z+1}(\textbf{a};\ \textbf{c};\ \textbf{b}))\notag\\
&+
\M(F^{(2)}_{x,y,z}(\textbf{a};\ \textbf{c};\ \textbf{b})) \M(G^{(4)}_{x-1,y,z}(\textbf{b}^{+1};\ \overline{\textbf{c}};\ \textbf{a})).
\end{align}

The $K^{(4)}$-recurrence for $a\leq b$:
\begin{align}\label{offcenterrecurK4a}
\M(K^{(4)}_{x,y,z}(\textbf{a};\ \textbf{c};\ \textbf{b}))\M(E^{(1)}_{x-1,y-1,z}(\textbf{a};\ \textbf{c}; \ \textbf{b}^{+1}))&=\M(E^{(1)}_{x,y,z-1}(\textbf{a};\ \textbf{c}; \ \textbf{b}^{+1})) \M(K^{(4)}_{x-1,y-1,z+1}(\textbf{a};\ \textbf{c};\ \textbf{b}))\notag\\
&+
\M(F^{(3)}_{x,y,z}(\textbf{a};\ \textbf{c};\ \textbf{b})) \M(G^{(1)}_{x-1,y-1,z}(\textbf{a};\ \textbf{c};\ \textbf{b}^{+1})).
\end{align}

The $K^{(4)}$-recurrence for $a> b$:
\begin{align}\label{offcenterrecurK4b}
\M(K^{(4)}_{x,y,z}(\textbf{a};\ \textbf{c};\ \textbf{b}))\M(E^{(1)}_{x-1,y,z}(\textbf{a};\ \textbf{c}; \ \textbf{b}^{+1}))&=\M(E^{(1)}_{x,y+1,z-1}(\textbf{a};\ \textbf{c}; \ \textbf{b}^{+1})) \M(K^{(4)}_{x-1,y-1,z+1}(\textbf{a};\ \textbf{c};\ \textbf{b}))\notag\\
&+
\M(F^{(3)}_{x,y,z}(\textbf{a};\ \textbf{c};\ \textbf{b})) \M(G^{(1)}_{x-1,y,z}(\textbf{a};\ \textbf{c};\ \textbf{b}^{+1})).
\end{align}

\subsection{Recurrences for  $\overline{E}^{(i)}$-type regions}\label{subsec:offrecurE2}

We apply Theorem \ref{kuothm1}  to the $\overline{E}^{(1)}$- and  $\overline{E}^{(2)}$-type regions with the particular choice of
the four vertices $u,v,w,s$ as shown in Figures \ref{fig:offE2}(a) and (b), respectively.  For the  $\overline{E}^{(3)}$-type region, we apply Theorem \ref{kuothm2} as in Figure  \ref{fig:offE2}(c).

Let us work out in detail for case of $\overline{E}^{(1)}$-type regions when $a\leq b$. We also apply Kuo's Theorem \ref{kuothm1} to the dual graph $G$ of the region $\overline{E}^{(1)}_{x,y,z}(\textbf{a};\ \textbf{c};\ \textbf{b})$. Consider forced lozenges as in Figures \ref{fig:kuooff5}(b)--(f), we have
\begin{equation}\label{kuothm5eq1}
\M(G-\{u,v,w,s\})=\M(Q^{\leftarrow}_{x,y,z-1}(\textbf{a};\ \textbf{c}; \ \textbf{b}^{+1}))
\end{equation}
\begin{equation}\label{kuothm5eq2}
\M(G-\{u,v\})=\M(\overline{K}^{(5)}_{x,y,z-1}(\textbf{a};\ \textbf{c}; \ \textbf{b}^{+1}))
\end{equation}
\begin{equation}\label{kuothm5eq3}
\M(G-\{w,s\})=\M(\overline{G}^{(5)}_{x,y-1,z}(\textbf{b};\ \textbf{c}^{\leftrightarrow};\ \textbf{a}))
\end{equation}
\begin{equation}\label{kuothm5eq4}
\M(G-\{u,s\})=\M(\overline{E}^{(1)}_{x+1,y,z-1}(\textbf{a};\ \textbf{c};\ \textbf{b}))
\end{equation}
\begin{equation}\label{kuothm5eq5}
\M(G-\{v,w\})=\M(Q^{\leftarrow}_{x-1,y,z}(\textbf{a};\ \textbf{c};\ \textbf{b}^{+1})).
\end{equation}
The region of type $Q^{\leftarrow}$ was enumerated in Theorem 2.7 of \cite{HoleDent}. 
The sequence $\textbf{c}^{\leftrightarrow}$ obtained by reverting the sequence $\textbf{c}$ when it has an odd number of terms, otherwise, we revert and add a new $0$ term in the beginning. More precisely, the leftover region after the removal of forced lozenges in Figure  (\ref{kuothm5eq3}) is not of our interests. We need to reflect the this region to get $\overline{G}^{(5)}_{x,y-1,z}(\textbf{b};\ \textbf{c}^{\leftrightarrow};\ \textbf{a})$.

Plugging the above five equations into that of Theorem \ref{kuothm1}, we get the $\overline{E}^{(1)}$-recurrence for $a\leq b$:
\begin{align}\label{offcenterrecurE1qb}
\M(\overline{E}^{(1)}_{x,y,z}(\textbf{a};\ \textbf{c};\ \textbf{b}))\M(Q^{\leftarrow}_{x,y,z-1}(\textbf{a};\ \textbf{c}; \ \textbf{b}^{+1}))&=\M(\overline{K}^{(5)}_{x,y,z-1}(\textbf{a};\ \textbf{c}; \ \textbf{b}^{+1})) \M(\overline{G}^{(5)}_{x,y-1,z}(\textbf{b};\ \textbf{c}^{\leftrightarrow};\ \textbf{a}))\notag\\
&+
\M(\overline{E}^{(1)}_{x+1,y,z-1}(\textbf{a};\ \textbf{c};\ \textbf{b})) \M(Q^{\leftarrow}_{x-1,y,z}(\textbf{a};\ \textbf{c};\ \textbf{b}^{+1})).
\end{align}

Working out similarly, we get other recurrences:

The $\overline{E}^{(1)}$-recurrence for $a< b$
\begin{align}\label{offcenterrecurE1qa}
\M(\overline{E}^{(1)}_{x,y,z}(\textbf{a};\ \textbf{c};\ \textbf{b}))\M(Q^{\leftarrow}_{x,y-1,z-1}(\textbf{a};\ \textbf{c}; \ \textbf{b}^{+1}))&=\M(\overline{K}^{(5)}_{x,y-1,z-1}(\textbf{a};\ \textbf{c}; \ \textbf{b}^{+1})) \M(\overline{G}^{(5)}_{x,y-1,z}(\textbf{b};\ \textbf{c}^{\leftrightarrow};\ \textbf{a}))\notag\\
&+
\M(\overline{E}^{(1)}_{x+1,y,z-1}(\textbf{a};\ \textbf{c};\ \textbf{b})) \M(Q^{\leftarrow}_{x-1,y-1,z}(\textbf{a};\ \textbf{c};\ \textbf{b}^{+1})).
\end{align}

\begin{figure}\centering
\setlength{\unitlength}{3947sp}%
\begingroup\makeatletter\ifx\SetFigFont\undefined%
\gdef\SetFigFont#1#2#3#4#5{%
  \reset@font\fontsize{#1}{#2pt}%
  \fontfamily{#3}\fontseries{#4}\fontshape{#5}%
  \selectfont}%
\fi\endgroup%
\resizebox{15cm}{!}{
\begin{picture}(0,0)%
\includegraphics{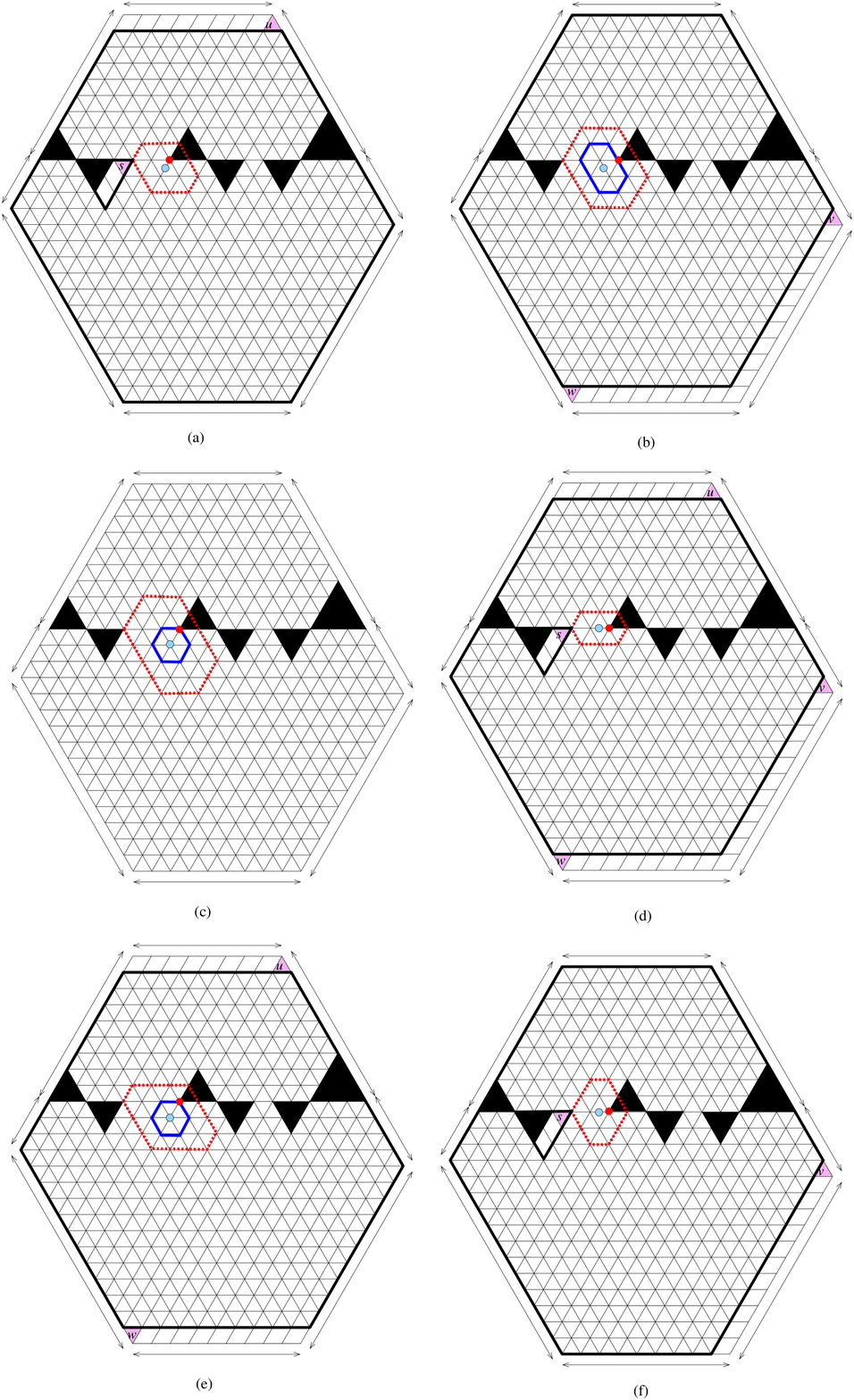}%
\end{picture}%
%
%

\begin{picture}(19377,31096)(3818,-31100)
\put(11160,-11597){\rotatebox{300.0}{\makebox(0,0)[lb]{\smash{{\SetFigFont{17}{20.4}{\rmdefault}{\mddefault}{\itdefault}{\color[rgb]{0,0,0}$z+o_a+o_b+o_c$}%
}}}}}
\put(10944,-1299){\rotatebox{300.0}{\makebox(0,0)[lb]{\smash{{\SetFigFont{17}{20.4}{\rmdefault}{\mddefault}{\itdefault}{\color[rgb]{0,0,0}$z+o_a+o_b+o_c$}%
}}}}}
\put(20580,-11577){\rotatebox{300.0}{\makebox(0,0)[lb]{\smash{{\SetFigFont{17}{20.4}{\rmdefault}{\mddefault}{\itdefault}{\color[rgb]{0,0,0}$z+o_a+o_b+o_c$}%
}}}}}
\put(11150,-21967){\rotatebox{300.0}{\makebox(0,0)[lb]{\smash{{\SetFigFont{17}{20.4}{\rmdefault}{\mddefault}{\itdefault}{\color[rgb]{0,0,0}$z+o_a+o_b+o_c$}%
}}}}}
\put(20570,-22197){\rotatebox{300.0}{\makebox(0,0)[lb]{\smash{{\SetFigFont{17}{20.4}{\rmdefault}{\mddefault}{\itdefault}{\color[rgb]{0,0,0}$z+o_a+o_b+o_c$}%
}}}}}
\put(20790,-1307){\rotatebox{300.0}{\makebox(0,0)[lb]{\smash{{\SetFigFont{17}{20.4}{\rmdefault}{\mddefault}{\itdefault}{\color[rgb]{0,0,0}$z+o_a+o_b+o_c$}%
}}}}}
\put(22116,-25053){\rotatebox{300.0}{\makebox(0,0)[lb]{\smash{{\SetFigFont{17}{20.4}{\rmdefault}{\mddefault}{\itdefault}{\color[rgb]{0,0,0}$y+2$}%
}}}}}
\put(20966,-29483){\rotatebox{60.0}{\makebox(0,0)[lb]{\smash{{\SetFigFont{17}{20.4}{\rmdefault}{\mddefault}{\itdefault}{\color[rgb]{0,0,0}$y+z+e_a+e_b+e_c+b-a$}%
}}}}}
\put(14693,-23701){\rotatebox{60.0}{\makebox(0,0)[lb]{\smash{{\SetFigFont{17}{20.4}{\rmdefault}{\mddefault}{\itdefault}{\color[rgb]{0,0,0}$z+o_a+o_b+o_c$}%
}}}}}
\put(13670,-25354){\rotatebox{60.0}{\makebox(0,0)[lb]{\smash{{\SetFigFont{17}{20.4}{\rmdefault}{\mddefault}{\itdefault}{\color[rgb]{0,0,0}$y+b-a$}%
}}}}}
\put(13892,-26978){\rotatebox{300.0}{\makebox(0,0)[lb]{\smash{{\SetFigFont{17}{20.4}{\rmdefault}{\mddefault}{\itdefault}{\color[rgb]{0,0,0}$y+z+e_a+e_b+e_c+2$}%
}}}}}
\put(17195,-30669){\makebox(0,0)[lb]{\smash{{\SetFigFont{17}{20.4}{\rmdefault}{\mddefault}{\itdefault}{\color[rgb]{0,0,0}$x+o_a+o_b+o_c$}%
}}}}
\put(17093,-21221){\makebox(0,0)[lb]{\smash{{\SetFigFont{17}{20.4}{\rmdefault}{\mddefault}{\itdefault}{\color[rgb]{0,0,0}$x+e_a+e_b+e_c$}%
}}}}
\put(7775,-30439){\makebox(0,0)[lb]{\smash{{\SetFigFont{17}{20.4}{\rmdefault}{\mddefault}{\itdefault}{\color[rgb]{0,0,0}$x+o_a+o_b+o_c$}%
}}}}
\put(4472,-26748){\rotatebox{300.0}{\makebox(0,0)[lb]{\smash{{\SetFigFont{17}{20.4}{\rmdefault}{\mddefault}{\itdefault}{\color[rgb]{0,0,0}$y+z+e_a+e_b+e_c+2$}%
}}}}}
\put(4250,-25124){\rotatebox{60.0}{\makebox(0,0)[lb]{\smash{{\SetFigFont{17}{20.4}{\rmdefault}{\mddefault}{\itdefault}{\color[rgb]{0,0,0}$y+b-a$}%
}}}}}
\put(5273,-23471){\rotatebox{60.0}{\makebox(0,0)[lb]{\smash{{\SetFigFont{17}{20.4}{\rmdefault}{\mddefault}{\itdefault}{\color[rgb]{0,0,0}$z+o_a+o_b+o_c$}%
}}}}}
\put(11546,-29253){\rotatebox{60.0}{\makebox(0,0)[lb]{\smash{{\SetFigFont{17}{20.4}{\rmdefault}{\mddefault}{\itdefault}{\color[rgb]{0,0,0}$y+z+e_a+e_b+e_c+b-a$}%
}}}}}
\put(12696,-24823){\rotatebox{300.0}{\makebox(0,0)[lb]{\smash{{\SetFigFont{17}{20.4}{\rmdefault}{\mddefault}{\itdefault}{\color[rgb]{0,0,0}$y+2$}%
}}}}}
\put(7673,-20991){\makebox(0,0)[lb]{\smash{{\SetFigFont{17}{20.4}{\rmdefault}{\mddefault}{\itdefault}{\color[rgb]{0,0,0}$x+e_a+e_b+e_c$}%
}}}}
\put(17205,-20049){\makebox(0,0)[lb]{\smash{{\SetFigFont{17}{20.4}{\rmdefault}{\mddefault}{\itdefault}{\color[rgb]{0,0,0}$x+o_a+o_b+o_c$}%
}}}}
\put(13902,-16358){\rotatebox{300.0}{\makebox(0,0)[lb]{\smash{{\SetFigFont{17}{20.4}{\rmdefault}{\mddefault}{\itdefault}{\color[rgb]{0,0,0}$y+z+e_a+e_b+e_c+2$}%
}}}}}
\put(13680,-14734){\rotatebox{60.0}{\makebox(0,0)[lb]{\smash{{\SetFigFont{17}{20.4}{\rmdefault}{\mddefault}{\itdefault}{\color[rgb]{0,0,0}$y+b-a$}%
}}}}}
\put(14703,-13081){\rotatebox{60.0}{\makebox(0,0)[lb]{\smash{{\SetFigFont{17}{20.4}{\rmdefault}{\mddefault}{\itdefault}{\color[rgb]{0,0,0}$z+o_a+o_b+o_c$}%
}}}}}
\put(20976,-18863){\rotatebox{60.0}{\makebox(0,0)[lb]{\smash{{\SetFigFont{17}{20.4}{\rmdefault}{\mddefault}{\itdefault}{\color[rgb]{0,0,0}$y+z+e_a+e_b+e_c+b-a$}%
}}}}}
\put(22126,-14433){\rotatebox{300.0}{\makebox(0,0)[lb]{\smash{{\SetFigFont{17}{20.4}{\rmdefault}{\mddefault}{\itdefault}{\color[rgb]{0,0,0}$y+2$}%
}}}}}
\put(17103,-10601){\makebox(0,0)[lb]{\smash{{\SetFigFont{17}{20.4}{\rmdefault}{\mddefault}{\itdefault}{\color[rgb]{0,0,0}$x+e_a+e_b+e_c$}%
}}}}
\put(7467,-323){\makebox(0,0)[lb]{\smash{{\SetFigFont{17}{20.4}{\rmdefault}{\mddefault}{\itdefault}{\color[rgb]{0,0,0}$x+e_a+e_b+e_c$}%
}}}}
\put(12490,-4155){\rotatebox{300.0}{\makebox(0,0)[lb]{\smash{{\SetFigFont{17}{20.4}{\rmdefault}{\mddefault}{\itdefault}{\color[rgb]{0,0,0}$y+2$}%
}}}}}
\put(11340,-8585){\rotatebox{60.0}{\makebox(0,0)[lb]{\smash{{\SetFigFont{17}{20.4}{\rmdefault}{\mddefault}{\itdefault}{\color[rgb]{0,0,0}$y+z+e_a+e_b+e_c+b-a$}%
}}}}}
\put(5067,-2803){\rotatebox{60.0}{\makebox(0,0)[lb]{\smash{{\SetFigFont{17}{20.4}{\rmdefault}{\mddefault}{\itdefault}{\color[rgb]{0,0,0}$z+o_a+o_b+o_c$}%
}}}}}
\put(4044,-4456){\rotatebox{60.0}{\makebox(0,0)[lb]{\smash{{\SetFigFont{17}{20.4}{\rmdefault}{\mddefault}{\itdefault}{\color[rgb]{0,0,0}$y+b-a$}%
}}}}}
\put(4266,-6080){\rotatebox{300.0}{\makebox(0,0)[lb]{\smash{{\SetFigFont{17}{20.4}{\rmdefault}{\mddefault}{\itdefault}{\color[rgb]{0,0,0}$y+z+e_a+e_b+e_c+2$}%
}}}}}
\put(7569,-9771){\makebox(0,0)[lb]{\smash{{\SetFigFont{17}{20.4}{\rmdefault}{\mddefault}{\itdefault}{\color[rgb]{0,0,0}$x+o_a+o_b+o_c$}%
}}}}
\put(17313,-331){\makebox(0,0)[lb]{\smash{{\SetFigFont{17}{20.4}{\rmdefault}{\mddefault}{\itdefault}{\color[rgb]{0,0,0}$x+e_a+e_b+e_c$}%
}}}}
\put(22336,-4163){\rotatebox{300.0}{\makebox(0,0)[lb]{\smash{{\SetFigFont{17}{20.4}{\rmdefault}{\mddefault}{\itdefault}{\color[rgb]{0,0,0}$y+2$}%
}}}}}
\put(21186,-8593){\rotatebox{60.0}{\makebox(0,0)[lb]{\smash{{\SetFigFont{17}{20.4}{\rmdefault}{\mddefault}{\itdefault}{\color[rgb]{0,0,0}$y+z+e_a+e_b+e_c+b-a$}%
}}}}}
\put(14913,-2811){\rotatebox{60.0}{\makebox(0,0)[lb]{\smash{{\SetFigFont{17}{20.4}{\rmdefault}{\mddefault}{\itdefault}{\color[rgb]{0,0,0}$z+o_a+o_b+o_c$}%
}}}}}
\put(13890,-4464){\rotatebox{60.0}{\makebox(0,0)[lb]{\smash{{\SetFigFont{17}{20.4}{\rmdefault}{\mddefault}{\itdefault}{\color[rgb]{0,0,0}$y+b-a$}%
}}}}}
\put(14112,-6088){\rotatebox{300.0}{\makebox(0,0)[lb]{\smash{{\SetFigFont{17}{20.4}{\rmdefault}{\mddefault}{\itdefault}{\color[rgb]{0,0,0}$y+z+e_a+e_b+e_c+$}%
}}}}}
\put(17415,-9779){\makebox(0,0)[lb]{\smash{{\SetFigFont{17}{20.4}{\rmdefault}{\mddefault}{\itdefault}{\color[rgb]{0,0,0}$x+o_a+o_b+o_c$}%
}}}}
\put(7683,-10621){\makebox(0,0)[lb]{\smash{{\SetFigFont{17}{20.4}{\rmdefault}{\mddefault}{\itdefault}{\color[rgb]{0,0,0}$x+e_a+e_b+e_c$}%
}}}}
\put(12706,-14453){\rotatebox{300.0}{\makebox(0,0)[lb]{\smash{{\SetFigFont{17}{20.4}{\rmdefault}{\mddefault}{\itdefault}{\color[rgb]{0,0,0}$y+2$}%
}}}}}
\put(11556,-18883){\rotatebox{60.0}{\makebox(0,0)[lb]{\smash{{\SetFigFont{17}{20.4}{\rmdefault}{\mddefault}{\itdefault}{\color[rgb]{0,0,0}$y+z+e_a+e_b+e_c+b-a$}%
}}}}}
\put(5283,-13101){\rotatebox{60.0}{\makebox(0,0)[lb]{\smash{{\SetFigFont{17}{20.4}{\rmdefault}{\mddefault}{\itdefault}{\color[rgb]{0,0,0}$z+o_a+o_b+o_c$}%
}}}}}
\put(4260,-14754){\rotatebox{60.0}{\makebox(0,0)[lb]{\smash{{\SetFigFont{17}{20.4}{\rmdefault}{\mddefault}{\itdefault}{\color[rgb]{0,0,0}$y+b-a$}%
}}}}}
\put(4482,-16378){\rotatebox{300.0}{\makebox(0,0)[lb]{\smash{{\SetFigFont{17}{20.4}{\rmdefault}{\mddefault}{\itdefault}{\color[rgb]{0,0,0}$y+z+e_a+e_b+e_c+2$}%
}}}}}
\put(7785,-20069){\makebox(0,0)[lb]{\smash{{\SetFigFont{17}{20.4}{\rmdefault}{\mddefault}{\itdefault}{\color[rgb]{0,0,0}$x+o_a+o_b+o_c$}%
}}}}
\end{picture}%
}
\caption{Obtaining a recurrence for $\overline{E}^{(3)}$-type regions with $a\leq b$. Kuo condensation is applied to the region $\overline{E}^{(3)}_{2,2,2}(2,2 ;\ 2,2 ;\ 3,2)$ (picture (a)) as shown on the picture (b).}\label{fig:kuooff5}
\end{figure}

The $\overline{E}^{(2)}$-recurrence for $a< b$
\begin{align}\label{offcenterrecurE2qa}
\M(\overline{E}^{(2)}_{x,y,z}(\textbf{a};\ \textbf{c};\ \textbf{b}))\M(Q^{\nwarrow}_{x-1,y,z-1}(\textbf{a}^{+1};\ \textbf{c}; \ \textbf{b}^{+1}))&=\M(\overline{F}^{(5)}_{x-1,y-1,z}(\textbf{a};\ \textbf{c}; \ \textbf{b}^{+1})) \M(\overline{K}^{(5)}_{x,y+1,z-1}(\textbf{a}^{+1};\ \textbf{c};\ \textbf{b}))\notag\\
&+
\M(\overline{E}^{(2)}_{x-1,y,z-1}(\textbf{a}^{+1};\ \textbf{c};\ \textbf{b}^{+1})) \M(Q^{\nwarrow}_{x,y,z}(\textbf{a};\ \textbf{c};\ \textbf{b})).
\end{align}

The $\overline{E}^{(2)}$-recurrence for $a\geq b$
\begin{align}\label{offcenterrecurE2qb}
\M(\overline{E}^{(2)}_{x,y,z}(\textbf{a};\ \textbf{c};\ \textbf{b}))\M(Q^{\nwarrow}_{x-1,y,z-1}(\textbf{a}^{+1};\ \textbf{c}; \ \textbf{b}^{+1}))&=\M(\overline{F}^{(5)}_{x-1,y,z}(\textbf{a};\ \textbf{c}; \ \textbf{b}^{+1})) \M(\overline{K}^{(5)}_{x,y,z-1}(\textbf{a}^{+1};\ \textbf{c};\ \textbf{b}))\notag\\
&+
\M(\overline{E}^{(2)}_{x-1,y,z-1}(\textbf{a}^{+1};\ \textbf{c};\ \textbf{b}^{+1})) \M(Q^{\nwarrow}_{x,y,z}(\textbf{a};\ \textbf{c};\ \textbf{b})).
\end{align}

The $\overline{E}^{(3)}$-recurrence for $a< b$
\begin{align}\label{offcenterrecurE3qa}
\M(Q^{\nearrow}_{x,y+1,z-1}(\textbf{a}^{+1};\ \textbf{c}; \ \textbf{b})) \M(\overline{G}^{(5)}_{x,y,z}(\textbf{a};\ \textbf{c};\ \textbf{b}))&=\M(\overline{E}^{(3)}_{x,y,z}(\textbf{a};\ \textbf{c};\ \textbf{b}))\M(Q^{\leftarrow}_{x,y+1,z-1}(\textbf{b};\ \textbf{c}^{\leftrightarrow}; \ \textbf{a}^{+1}))\notag\\
&+
\M(\overline{E}^{(3)}_{x+1,y,z-1}(\textbf{a};\ \textbf{c};\ \textbf{b})) \M(Q^{\leftarrow}_{x-1,y+1,z}(\textbf{b};\ \textbf{c}^{\leftrightarrow};\ \textbf{a}^{+1})).
\end{align}

The $\overline{E}^{(3)}$-recurrence for $a\geq b$
\begin{align}\label{offcenterrecurE3qb}
\M(Q^{\nearrow}_{x,y,z-1}(\textbf{a}^{+1};\ \textbf{c}; \ \textbf{b})) \M(\overline{G}^{(5)}_{x,y,z}(\textbf{a};\ \textbf{c};\ \textbf{b}))&=\M(\overline{E}^{(3)}_{x,y,z}(\textbf{a};\ \textbf{c};\ \textbf{b}))\M(Q^{\leftarrow}_{x,y,z-1}(\textbf{b};\ \textbf{c}^{\leftrightarrow}; \ \textbf{a}^{+1}))\notag\\
&+
\M(\overline{E}^{(3)}_{x+1,y,z-1}(\textbf{a};\ \textbf{c};\ \textbf{b})) \M(Q^{\leftarrow}_{x-1,y,z}(\textbf{b};\ \textbf{c}^{\leftrightarrow};\ \textbf{a}^{+1})).
\end{align}

\subsection{Recurrences for $\overline{F}^{(i)}$-type regions}\label{subsec:offrecurF2}

We apply Kuo's Theorem \ref{kuothm1} to the $\overline{F}^{(3)}$-, $\overline{F}^{(4)}$- and $\overline{F}^{(5)}$-type regions as shown in Figures \ref{fig:offF2}(a), (b) and (c).
The second Kuo's theorem, Theorem \ref{kuothm2},
 is used for the $\overline{F}^{(6)}$-type regions as in Figure \ref{fig:offF2}(d).

Let us demonstrate in detail for the case of $\overline{F}^{(6)}$-type region when $a\geq b$. We apply Kuo's Theorem \ref{kuothm2} to the dual graph $G$ of the  region $\overline{F}^{(6)}_{x,y,z}(\textbf{a};\ \textbf{c};\ \textbf{b})$ with the choice of the four vertices $u,v,w,s$ shown  in Figure \ref{fig:kuooff6}(d). By considering forced lozenges as in Figures \ref{fig:kuooff6}(a),(b),(d),(e),(f), we get
\begin{equation}\label{kuothm6eq1}
\M(G-\{u,s\})=\M(\overline{G}^{(5)}_{x,y,z-1}(\textbf{a}^{+1};\ \textbf{c}; \ \textbf{b}))
\end{equation}
\begin{equation}\label{kuothm6eq2}
\M(G-\{v,w\})=\M(\overline{K}^{(8)}_{x,y,z}(\textbf{a};\ \textbf{c};\ \textbf{b}))
\end{equation}
\begin{equation}\label{kuothm6eq3}
\M(G-\{u,v,w,s\})=\M(\overline{E}^{(1)}_{x,y,z-1}(\textbf{b};\ \textbf{c}^{\leftrightarrow}; \ \textbf{a}^{+1}))
\end{equation}
\begin{equation}\label{kuothm6eq4}
\M(G-\{u,w\})=\M(\overline{F}^{(6)}_{x+1,y,z-1}(\textbf{a};\ \textbf{c};\ \textbf{b}))
\end{equation}
\begin{equation}\label{kuothm6eq5}
\M(G-\{v,s\})=\M(\overline{E}^{(1)}_{x-1,y,z}(\textbf{b};\ \textbf{c}^{\leftrightarrow};\ \textbf{a}^{+1})).
\end{equation}
We note that we reflected  the leftover regions by a vertical line in (\ref{kuothm6eq3}) and (\ref{kuothm6eq5}) to get respectively $\overline{E}^{(1)}_{x,y,z-1}(\textbf{b};\ \textbf{c}^{\leftrightarrow}; \ \textbf{a}^{+1})$ and $\overline{E}^{(1)}_{x-1,y,z}(\textbf{b};\ \textbf{c}^{\leftrightarrow};\ \textbf{a}^{+1})$.

Plugging the above 5 equalities into that of Kuo's Theorem \ref{kuothm2}, we have the $\overline{F}^{(6)}$-recurrence for $a\geq b$:
\begin{align}\label{offcenterrecurF6qb}
\M(\overline{G}^{(5)}_{x,y,z-1}(\textbf{a}^{+1};\ \textbf{c}; \ \textbf{b})) \M(\overline{K}^{(8)}_{x,y,z}(\textbf{a};\ \textbf{c};\ \textbf{b}))&=\M(\overline{F}^{(6)}_{x,y,z}(\textbf{a};\ \textbf{c};\ \textbf{b}))\M(\overline{E}^{(1)}_{x,y,z-1}(\textbf{b};\ \textbf{c}^{\leftrightarrow}; \ \textbf{a}^{+1}))\notag\\
&+
\M(\overline{F}^{(6)}_{x+1,y,z-1}(\textbf{a};\ \textbf{c};\ \textbf{b})) \M(\overline{E}^{(1)}_{x-1,y,z}(\textbf{b};\ \textbf{c}^{\leftrightarrow};\ \textbf{a}^{+1})).
\end{align}

Similar application of Kuo condensation give the other 6 recurrences:

The $\overline{F}^{(3)}$-recurrence for $a< b$
\begin{align}\label{offcenterrecurF3qa}
\M(\overline{F}^{(3)}_{x,y,z}(\textbf{a};\ \textbf{c};\ \textbf{b}))\M(\overline{E}^{(1)}_{x,y-1,z-1}(\textbf{a};\ \textbf{c}; \ \textbf{b}^{+1}))&=\M(\overline{G}^{(2)}_{x,y-1,z-1}(\textbf{a};\ \textbf{c}; \ \textbf{b}^{+1})) \M(\overline{K}^{(8)}_{x,y-1,z}(\textbf{b};\ \textbf{c}^{\leftrightarrow};\ \textbf{a}))\notag\\
&+
\M(\overline{F}^{(3)}_{x+1,y,z-1}(\textbf{a};\ \textbf{c};\ \textbf{b})) \M(\overline{E}^{(1)}_{x-1,y-1,z}(\textbf{a};\ \textbf{c};\ \textbf{b}^{+1})).
\end{align}

The $\overline{F}^{(3)}$-recurrence for $a\geq b$
\begin{align}\label{offcenterrecurF3qb}
\M(\overline{F}^{(3)}_{x,y,z}(\textbf{a};\ \textbf{c};\ \textbf{b}))\M(\overline{E}^{(1)}_{x,y,z-1}(\textbf{a};\ \textbf{c}; \ \textbf{b}^{+1}))&=\M(\overline{G}^{(2)}_{x,y,z-1}(\textbf{a};\ \textbf{c}; \ \textbf{b}^{+1})) \M(\overline{K}^{(8)}_{x,y-1,z}(\textbf{b};\ \textbf{c}^{\leftrightarrow};\ \textbf{a}))\notag\\
&+
\M(\overline{F}^{(3)}_{x+1,y,z-1}(\textbf{a};\ \textbf{c};\ \textbf{b})) \M(\overline{E}^{(1)}_{x-1,y,z}(\textbf{a};\ \textbf{c};\ \textbf{b}^{+1})).
\end{align}

The $\overline{F}^{(4)}$-recurrence for $a< b$
\begin{align}\label{offcenterrecurF4qa}
\M(\overline{F}^{(4)}_{x,y,z}(\textbf{a};\ \textbf{c};\ \textbf{b}))\M(\overline{K}^{(5)}_{x-1,y,z-1}(\textbf{a}^{+1};\ \textbf{c}; \ \textbf{b}^{+1}))&=\M(\overline{E}^{(2)}_{x-1,y-1,z}(\textbf{a};\ \textbf{c}; \ \textbf{b}^{+1})) \M(\overline{G}^{(2)}_{x,y+1,z-1}(\textbf{a}^{+1};\ \textbf{c};\ \textbf{b}))\notag\\
&+
\M(\overline{F}^{(4)}_{x-1,y,z-1}(\textbf{a}^{+1};\ \textbf{c};\ \textbf{b}^{+1})) \M(\overline{K}^{(5)}_{x,y,z}(\textbf{a};\ \textbf{c};\ \textbf{b})).
\end{align}

The $\overline{F}^{(4)}$-recurrence for $a\geq b$
\begin{align}\label{offcenterrecurF4qb}
\M(\overline{F}^{(4)}_{x,y,z}(\textbf{a};\ \textbf{c};\ \textbf{b}))\M(\overline{K}^{(5)}_{x-1,y,z-1}(\textbf{a}^{+1};\ \textbf{c}; \ \textbf{b}^{+1}))&=\M(\overline{E}^{(2)}_{x-1,y,z}(\textbf{a};\ \textbf{c}; \ \textbf{b}^{+1})) \M(\overline{G}^{(2)}_{x,y,z-1}(\textbf{a}^{+1};\ \textbf{c};\ \textbf{b}))\notag\\
&+
\M(\overline{F}^{(4)}_{x-1,y,z-1}(\textbf{a}^{+1};\ \textbf{c};\ \textbf{b}^{+1})) \M(\overline{K}^{(5)}_{x,y,z}(\textbf{a};\ \textbf{c};\ \textbf{b})).
\end{align}

The $\overline{F}^{(5)}$-recurrence for $a< b$
\begin{align}\label{offcenterrecurF5qa}
\M(\overline{F}^{(5)}_{x,y,z}(\textbf{a};\ \textbf{c};\ \textbf{b}))\M(Q^{\nearrow}_{x-1,y,z-1}(\textbf{a}^{+1};\ \textbf{c}; \ \textbf{b}^{+1}))&=\M(\overline{E}^{(3)}_{x-1,y-1,z}(\textbf{a};\ \textbf{c}; \ \textbf{b}^{+1})) \M(Q^{\nwarrow}_{x,y+1,z-1}(\textbf{a}^{+1};\ \textbf{c};\ \textbf{b}))\notag\\
&+
\M(\overline{F}^{(5)}_{x-1,y,z-1}(\textbf{a}^{+1};\ \textbf{c};\ \textbf{b}^{+1})) \M(Q^{\nearrow}_{x,y,z}(\textbf{a};\ \textbf{c};\ \textbf{b})).
\end{align}

The $\overline{F}^{(5)}$-recurrence for $a\geq b$
\begin{align}\label{offcenterrecurF5qb}
\M(\overline{F}^{(5)}_{x,y,z}(\textbf{a};\ \textbf{c};\ \textbf{b}))\M(Q^{\nearrow}_{x-1,y,z-1}(\textbf{a}^{+1};\ \textbf{c}; \ \textbf{b}^{+1}))&=\M(\overline{E}^{(3)}_{x-1,y,z}(\textbf{a};\ \textbf{c}; \ \textbf{b}^{+1})) \M(Q^{\nwarrow}_{x,y,z-1}(\textbf{a}^{+1};\ \textbf{c};\ \textbf{b}))\notag\\
&+
\M(\overline{F}^{(5)}_{x-1,y,z-1}(\textbf{a}^{+1};\ \textbf{c};\ \textbf{b}^{+1})) \M(Q^{\nearrow}_{x,y,z}(\textbf{a};\ \textbf{c};\ \textbf{b})).
\end{align}

The $\overline{F}^{(6)}$-recurrence for $a< b$
\begin{align}\label{offcenterrecurF6qa}
\M(\overline{G}^{(5)}_{x,y+1,z-1}(\textbf{a}^{+1};\ \textbf{c}; \ \textbf{b})) \M(\overline{K}^{(8)}_{x,y,z}(\textbf{a};\ \textbf{c};\ \textbf{b}))&=\M(\overline{F}^{(6)}_{x,y,z}(\textbf{a};\ \textbf{c};\ \textbf{b}))\M(\overline{E}^{(1)}_{x,y+1,z-1}(\textbf{b};\ \textbf{c}^{\leftrightarrow}; \ \textbf{a}^{+1}))\notag\\
&+
\M(\overline{F}^{(6)}_{x+1,y,z-1}(\textbf{a};\ \textbf{c};\ \textbf{b})) \M(\overline{E}^{(1)}_{x-1,y+1,z}(\textbf{b};\ \textbf{c}^{\leftrightarrow};\ \textbf{a}^{+1})).
\end{align}

\begin{figure}\centering
\setlength{\unitlength}{3947sp}%
\begingroup\makeatletter\ifx\SetFigFont\undefined%
\gdef\SetFigFont#1#2#3#4#5{%
  \reset@font\fontsize{#1}{#2pt}%
  \fontfamily{#3}\fontseries{#4}\fontshape{#5}%
  \selectfont}%
\fi\endgroup%
\resizebox{15cm}{!}{
\begin{picture}(0,0)%
\includegraphics{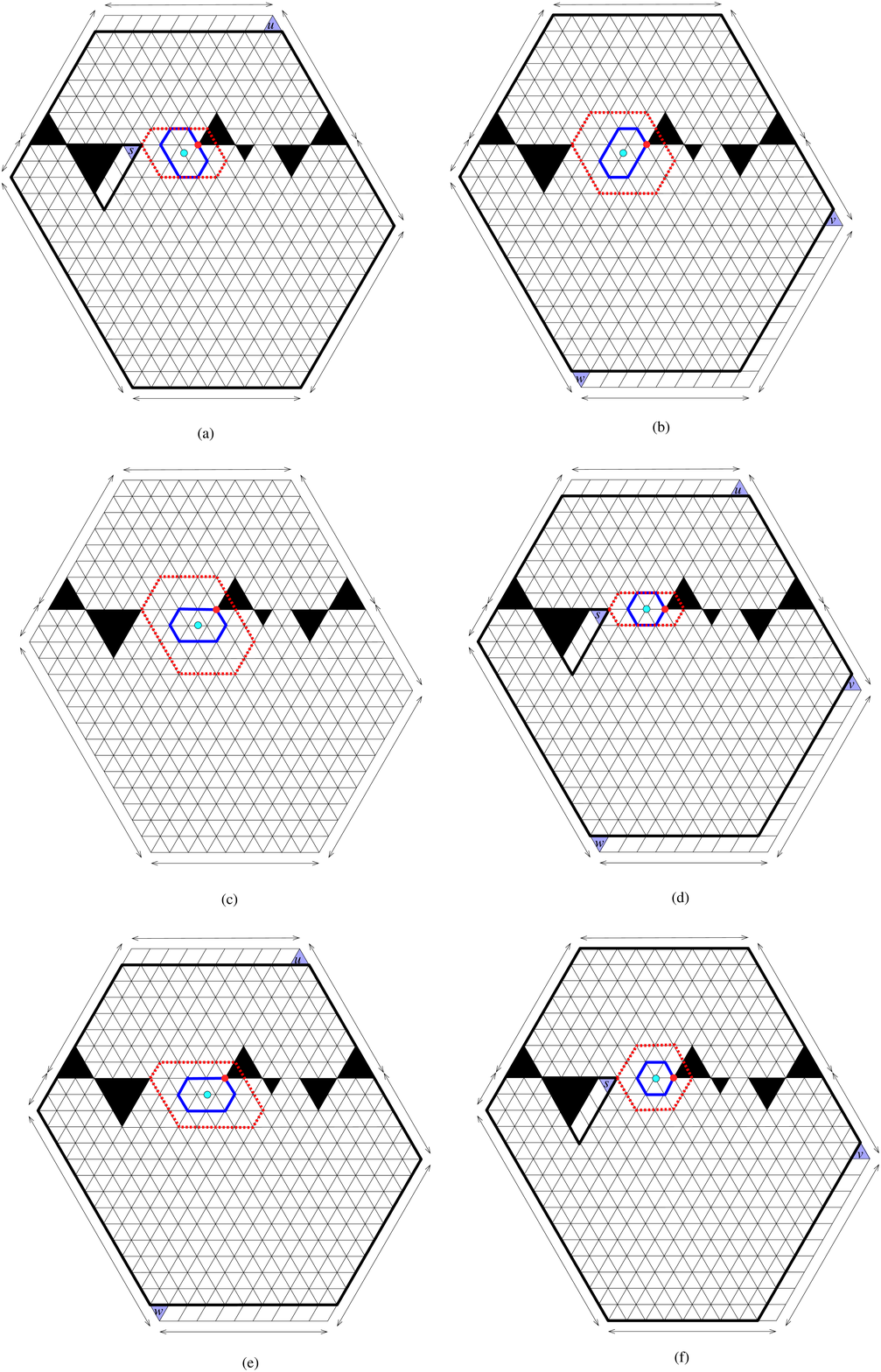}%
\end{picture}%
%

\begin{picture}(19525,30407)(3361,-34534)
\put(4596,-27384){\rotatebox{60.0}{\makebox(0,0)[lb]{\smash{{\SetFigFont{14}{16.8}{\rmdefault}{\mddefault}{\updefault}{\color[rgb]{0,0,0}$z+o_a+o_b+o_c$}%
}}}}}
\put(17060,-24836){\makebox(0,0)[lb]{\smash{{\SetFigFont{14}{16.8}{\rmdefault}{\mddefault}{\updefault}{\color[rgb]{0,0,0}$x+e_a+e_b+e_c$}%
}}}}
\put(20537,-25663){\rotatebox{300.0}{\makebox(0,0)[lb]{\smash{{\SetFigFont{14}{16.8}{\rmdefault}{\mddefault}{\updefault}{\color[rgb]{0,0,0}$z+o_a+o_b+o_c$}%
}}}}}
\put(21969,-28261){\rotatebox{300.0}{\makebox(0,0)[lb]{\smash{{\SetFigFont{14}{16.8}{\rmdefault}{\mddefault}{\updefault}{\color[rgb]{0,0,0}$y+(a-b)+2$}%
}}}}}
\put(21356,-32986){\rotatebox{60.0}{\makebox(0,0)[lb]{\smash{{\SetFigFont{14}{16.8}{\rmdefault}{\mddefault}{\updefault}{\color[rgb]{0,0,0}$y+z+e_a+e_b+e_c$}%
}}}}}
\put(17673,-33930){\makebox(0,0)[lb]{\smash{{\SetFigFont{14}{16.8}{\rmdefault}{\mddefault}{\updefault}{\color[rgb]{0,0,0}$x+o_a+o_b+o_c$}%
}}}}
\put(13889,-29383){\rotatebox{300.0}{\makebox(0,0)[lb]{\smash{{\SetFigFont{14}{16.8}{\rmdefault}{\mddefault}{\updefault}{\color[rgb]{0,0,0}$y+z+e_a+e_b+e_c+(a-b)+2$}%
}}}}}
\put(13786,-28261){\makebox(0,0)[lb]{\smash{{\SetFigFont{14}{16.8}{\rmdefault}{\mddefault}{\updefault}{\color[rgb]{0,0,0}$y$}%
}}}}
\put(14417,-27375){\rotatebox{60.0}{\makebox(0,0)[lb]{\smash{{\SetFigFont{14}{16.8}{\rmdefault}{\mddefault}{\updefault}{\color[rgb]{0,0,0}$z+o_a+o_b+o_c$}%
}}}}}
\put(6650,-4417){\makebox(0,0)[lb]{\smash{{\SetFigFont{14}{16.8}{\rmdefault}{\mddefault}{\updefault}{\color[rgb]{0,0,0}$x+e_a+e_b+e_c$}%
}}}}
\put(10127,-5244){\rotatebox{300.0}{\makebox(0,0)[lb]{\smash{{\SetFigFont{14}{16.8}{\rmdefault}{\mddefault}{\updefault}{\color[rgb]{0,0,0}$z+o_a+o_b+o_c$}%
}}}}}
\put(11559,-7842){\rotatebox{300.0}{\makebox(0,0)[lb]{\smash{{\SetFigFont{14}{16.8}{\rmdefault}{\mddefault}{\updefault}{\color[rgb]{0,0,0}$y+(a-b)+2$}%
}}}}}
\put(10946,-12567){\rotatebox{60.0}{\makebox(0,0)[lb]{\smash{{\SetFigFont{14}{16.8}{\rmdefault}{\mddefault}{\updefault}{\color[rgb]{0,0,0}$y+z+e_a+e_b+e_c$}%
}}}}}
\put(7263,-13511){\makebox(0,0)[lb]{\smash{{\SetFigFont{14}{16.8}{\rmdefault}{\mddefault}{\updefault}{\color[rgb]{0,0,0}$x+o_a+o_b+o_c$}%
}}}}
\put(3479,-8964){\rotatebox{300.0}{\makebox(0,0)[lb]{\smash{{\SetFigFont{14}{16.8}{\rmdefault}{\mddefault}{\updefault}{\color[rgb]{0,0,0}$y+z+e_a+e_b+e_c+(a-b)+2$}%
}}}}}
\put(3376,-7842){\makebox(0,0)[lb]{\smash{{\SetFigFont{14}{16.8}{\rmdefault}{\mddefault}{\updefault}{\color[rgb]{0,0,0}$y$}%
}}}}
\put(4007,-6956){\rotatebox{60.0}{\makebox(0,0)[lb]{\smash{{\SetFigFont{14}{16.8}{\rmdefault}{\mddefault}{\updefault}{\color[rgb]{0,0,0}$z+o_a+o_b+o_c$}%
}}}}}
\put(16471,-4408){\makebox(0,0)[lb]{\smash{{\SetFigFont{14}{16.8}{\rmdefault}{\mddefault}{\updefault}{\color[rgb]{0,0,0}$x+e_a+e_b+e_c$}%
}}}}
\put(19948,-5235){\rotatebox{300.0}{\makebox(0,0)[lb]{\smash{{\SetFigFont{14}{16.8}{\rmdefault}{\mddefault}{\updefault}{\color[rgb]{0,0,0}$z+o_a+o_b+o_c$}%
}}}}}
\put(21380,-7833){\rotatebox{300.0}{\makebox(0,0)[lb]{\smash{{\SetFigFont{14}{16.8}{\rmdefault}{\mddefault}{\updefault}{\color[rgb]{0,0,0}$y+(a-b)+2$}%
}}}}}
\put(20767,-12558){\rotatebox{60.0}{\makebox(0,0)[lb]{\smash{{\SetFigFont{14}{16.8}{\rmdefault}{\mddefault}{\updefault}{\color[rgb]{0,0,0}$y+z+e_a+e_b+e_c$}%
}}}}}
\put(17084,-13502){\makebox(0,0)[lb]{\smash{{\SetFigFont{14}{16.8}{\rmdefault}{\mddefault}{\updefault}{\color[rgb]{0,0,0}$x+o_a+o_b+o_c$}%
}}}}
\put(13300,-8955){\rotatebox{300.0}{\makebox(0,0)[lb]{\smash{{\SetFigFont{14}{16.8}{\rmdefault}{\mddefault}{\updefault}{\color[rgb]{0,0,0}$y+z+e_a+e_b+e_c+(a-b)+2$}%
}}}}}
\put(13197,-7833){\makebox(0,0)[lb]{\smash{{\SetFigFont{14}{16.8}{\rmdefault}{\mddefault}{\updefault}{\color[rgb]{0,0,0}$y$}%
}}}}
\put(13828,-6947){\rotatebox{60.0}{\makebox(0,0)[lb]{\smash{{\SetFigFont{14}{16.8}{\rmdefault}{\mddefault}{\updefault}{\color[rgb]{0,0,0}$z+o_a+o_b+o_c$}%
}}}}}
\put(7052,-14589){\makebox(0,0)[lb]{\smash{{\SetFigFont{14}{16.8}{\rmdefault}{\mddefault}{\updefault}{\color[rgb]{0,0,0}$x+e_a+e_b+e_c$}%
}}}}
\put(10529,-15416){\rotatebox{300.0}{\makebox(0,0)[lb]{\smash{{\SetFigFont{14}{16.8}{\rmdefault}{\mddefault}{\updefault}{\color[rgb]{0,0,0}$z+o_a+o_b+o_c$}%
}}}}}
\put(11961,-18014){\rotatebox{300.0}{\makebox(0,0)[lb]{\smash{{\SetFigFont{14}{16.8}{\rmdefault}{\mddefault}{\updefault}{\color[rgb]{0,0,0}$y+(a-b)+2$}%
}}}}}
\put(11348,-22739){\rotatebox{60.0}{\makebox(0,0)[lb]{\smash{{\SetFigFont{14}{16.8}{\rmdefault}{\mddefault}{\updefault}{\color[rgb]{0,0,0}$y+z+e_a+e_b+e_c$}%
}}}}}
\put(7665,-23683){\makebox(0,0)[lb]{\smash{{\SetFigFont{14}{16.8}{\rmdefault}{\mddefault}{\updefault}{\color[rgb]{0,0,0}$x+o_a+o_b+o_c$}%
}}}}
\put(3881,-19136){\rotatebox{300.0}{\makebox(0,0)[lb]{\smash{{\SetFigFont{14}{16.8}{\rmdefault}{\mddefault}{\updefault}{\color[rgb]{0,0,0}$y+z+e_a+e_b+e_c+(a-b)+2$}%
}}}}}
\put(3778,-18014){\makebox(0,0)[lb]{\smash{{\SetFigFont{14}{16.8}{\rmdefault}{\mddefault}{\updefault}{\color[rgb]{0,0,0}$y$}%
}}}}
\put(4409,-17128){\rotatebox{60.0}{\makebox(0,0)[lb]{\smash{{\SetFigFont{14}{16.8}{\rmdefault}{\mddefault}{\updefault}{\color[rgb]{0,0,0}$z+o_a+o_b+o_c$}%
}}}}}
\put(16873,-14580){\makebox(0,0)[lb]{\smash{{\SetFigFont{14}{16.8}{\rmdefault}{\mddefault}{\updefault}{\color[rgb]{0,0,0}$x+e_a+e_b+e_c$}%
}}}}
\put(20350,-15407){\rotatebox{300.0}{\makebox(0,0)[lb]{\smash{{\SetFigFont{14}{16.8}{\rmdefault}{\mddefault}{\updefault}{\color[rgb]{0,0,0}$z+o_a+o_b+o_c$}%
}}}}}
\put(21782,-18005){\rotatebox{300.0}{\makebox(0,0)[lb]{\smash{{\SetFigFont{14}{16.8}{\rmdefault}{\mddefault}{\updefault}{\color[rgb]{0,0,0}$y+(a-b)+2$}%
}}}}}
\put(21169,-22730){\rotatebox{60.0}{\makebox(0,0)[lb]{\smash{{\SetFigFont{14}{16.8}{\rmdefault}{\mddefault}{\updefault}{\color[rgb]{0,0,0}$y+z+e_a+e_b+e_c$}%
}}}}}
\put(17486,-23674){\makebox(0,0)[lb]{\smash{{\SetFigFont{14}{16.8}{\rmdefault}{\mddefault}{\updefault}{\color[rgb]{0,0,0}$x+o_a+o_b+o_c$}%
}}}}
\put(13702,-19127){\rotatebox{300.0}{\makebox(0,0)[lb]{\smash{{\SetFigFont{14}{16.8}{\rmdefault}{\mddefault}{\updefault}{\color[rgb]{0,0,0}$y+z+e_a+e_b+e_c+(a-b)+2$}%
}}}}}
\put(13599,-18005){\makebox(0,0)[lb]{\smash{{\SetFigFont{14}{16.8}{\rmdefault}{\mddefault}{\updefault}{\color[rgb]{0,0,0}$y$}%
}}}}
\put(14230,-17119){\rotatebox{60.0}{\makebox(0,0)[lb]{\smash{{\SetFigFont{14}{16.8}{\rmdefault}{\mddefault}{\updefault}{\color[rgb]{0,0,0}$z+o_a+o_b+o_c$}%
}}}}}
\put(7239,-24845){\makebox(0,0)[lb]{\smash{{\SetFigFont{14}{16.8}{\rmdefault}{\mddefault}{\updefault}{\color[rgb]{0,0,0}$x+e_a+e_b+e_c$}%
}}}}
\put(10716,-25672){\rotatebox{300.0}{\makebox(0,0)[lb]{\smash{{\SetFigFont{14}{16.8}{\rmdefault}{\mddefault}{\updefault}{\color[rgb]{0,0,0}$z+o_a+o_b+o_c$}%
}}}}}
\put(12148,-28270){\rotatebox{300.0}{\makebox(0,0)[lb]{\smash{{\SetFigFont{14}{16.8}{\rmdefault}{\mddefault}{\updefault}{\color[rgb]{0,0,0}$y+(a-b)+2$}%
}}}}}
\put(11535,-32995){\rotatebox{60.0}{\makebox(0,0)[lb]{\smash{{\SetFigFont{14}{16.8}{\rmdefault}{\mddefault}{\updefault}{\color[rgb]{0,0,0}$y+z+e_a+e_b+e_c$}%
}}}}}
\put(7852,-33939){\makebox(0,0)[lb]{\smash{{\SetFigFont{14}{16.8}{\rmdefault}{\mddefault}{\updefault}{\color[rgb]{0,0,0}$x+o_a+o_b+o_c$}%
}}}}
\put(4068,-29392){\rotatebox{300.0}{\makebox(0,0)[lb]{\smash{{\SetFigFont{14}{16.8}{\rmdefault}{\mddefault}{\updefault}{\color[rgb]{0,0,0}$y+z+e_a+e_b+e_c+(a-b)+2$}%
}}}}}
\put(3965,-28270){\makebox(0,0)[lb]{\smash{{\SetFigFont{14}{16.8}{\rmdefault}{\mddefault}{\updefault}{\color[rgb]{0,0,0}$y$}%
}}}}
\end{picture}%
}
\caption{Obtaining a recurrence for $\overline{F}^{(6)}$-type regions with $a> b$. Kuo condensation is applied to the region $\overline{F}^{(6)}_{3,2,2}(2,3;\ 2,1 ;\ 2,2)$ (picture (c)) as shown on the picture (d).}\label{fig:kuooff6}
\end{figure}

\subsection{Recurrences for $\overline{G}^{(i)}$-type regions}\label{subsec:offrecurG2}

We apply Theorem \ref{kuothm1} to the $\overline{G}^{(2)}$- and  $\overline{G}^{(3)}$-type regions as in Figures \ref{fig:offG2}(a) and (b). For the case of $\overline{G}^{(4)}$- and $\overline{G}^{(5)}$-type regions, we
employ Theorem \ref{kuothm2} with the choice of the vertices $u,v,w,s$ shown in Figures \ref{fig:offG2}(c) and (d).

Let us work out in detail for the case of $\overline{G}^{(3)}$-type region when $a< b$. We apply Kuo's Theorem \ref{kuothm1} to the dual graph $G$ of the  region $\overline{G}^{(3)}_{x,y,z}(\textbf{a};\ \textbf{c};\ \textbf{b})$. By considering forced lozenges as in Figures \ref{fig:kuooff7}(b)--(f), we get
\begin{equation}\label{kuothm7eq1}
\M(G-\{u,v,w,s\})=\M(\overline{E}^{(2)}_{x-1,y,z-1}(\textbf{a}^{+1};\ \textbf{c}; \ \textbf{b}^{+1}))
\end{equation}
\begin{equation}\label{kuothm7eq2}
\M(G-\{u,v\})=\M(\overline{K}^{(6)}_{x-1,y-1,z}(\textbf{a};\ \textbf{c}; \ \textbf{b}^{+1}))
\end{equation}
\begin{equation}\label{kuothm7eq3}
\M(G-\{w,s\})=\M(\overline{F}^{(4)}_{x,y+1,z-1}(\textbf{a}^{+1};\ \textbf{c};\ \textbf{b}))
\end{equation}
\begin{equation}\label{kuothm7eq4}
\M(G-\{u,s\})=\M(\overline{G}^{(3)}_{x-1,y,z-1}(\textbf{a}^{+1};\ \textbf{c};\ \textbf{b}^{+1}))
\end{equation}
\begin{equation}\label{kuothm7eq5}
\M(G-\{v,w\})=\M(\overline{E}^{(2)}_{x,y,z}(\textbf{a};\ \textbf{c};\ \textbf{b})).
\end{equation}
Plugging the above 5 recurrences into the equation in Kuo's Theorem \ref{kuothm1}, we obtain the $\overline{G}^{(3)}$-recurrence for $a< b$:
\begin{align}\label{offcenterrecurG3qa}
\M(\overline{G}^{(3)}_{x,y,z}(\textbf{a};\ \textbf{c};\ \textbf{b}))\M(\overline{E}^{(2)}_{x-1,y,z-1}(\textbf{a}^{+1};\ \textbf{c}; \ \textbf{b}^{+1}))&=\M(\overline{K}^{(6)}_{x-1,y-1,z}(\textbf{a};\ \textbf{c}; \ \textbf{b}^{+1})) \M(\overline{F}^{(4)}_{x,y+1,z-1}(\textbf{a}^{+1};\ \textbf{c};\ \textbf{b}))\notag\\
&+
\M(\overline{G}^{(3)}_{x-1,y,z-1}(\textbf{a}^{+1};\ \textbf{c};\ \textbf{b}^{+1})) \M(\overline{E}^{(2)}_{x,y,z}(\textbf{a};\ \textbf{c};\ \textbf{b})).
\end{align}

Similarly, we have the other 7 recurrences:

The $\overline{G}^{(2)}$-recurrence for $a< b$
\begin{align}\label{offcenterrecurG2qa}
\M(\overline{G}^{(2)}_{x,y,z}(\textbf{a};\ \textbf{c};\ \textbf{b}))\M(\overline{E}^{(1)}_{x-1,y,z-1}(\textbf{a}^{+1};\ \textbf{c}; \ \textbf{b}^{+1}))&=\M(\overline{K}^{(5)}_{x-1,y-1,z}(\textbf{a};\ \textbf{c}; \ \textbf{b}^{+1})) \M(\overline{F}^{(3)}_{x,y+1,z-1}(\textbf{a}^{+1};\ \textbf{c};\ \textbf{b}))\notag\\
&+
\M(\overline{G}^{(2)}_{x-1,y,z-1}(\textbf{a}^{+1};\ \textbf{c};\ \textbf{b}^{+1})) \M(\overline{E}^{(1)}_{x,y,z}(\textbf{a};\ \textbf{c};\ \textbf{b})).
\end{align}

The $\overline{G}^{(2)}$-recurrence for $a> b$
\begin{align}\label{offcenterrecurG2qb}
\M(\overline{G}^{(2)}_{x,y,z}(\textbf{a};\ \textbf{c};\ \textbf{b}))\M(\overline{E}^{(1)}_{x-1,y,z-1}(\textbf{a}^{+1};\ \textbf{c}; \ \textbf{b}^{+1}))&=\M(\overline{K}^{(5)}_{x-1,y,z}(\textbf{a};\ \textbf{c}; \ \textbf{b}^{+1})) \M(\overline{F}^{(3)}_{x,y,z-1}(\textbf{a}^{+1};\ \textbf{c};\ \textbf{b}))\notag\\
&+
\M(\overline{G}^{(2)}_{x-1,y,z-1}(\textbf{a}^{+1};\ \textbf{c};\ \textbf{b}^{+1})) \M(\overline{E}^{(1)}_{x,y,z}(\textbf{a};\ \textbf{c};\ \textbf{b})).
\end{align}

The $\overline{G}^{(2)}$-recurrence for $a= b$
\begin{align}\label{offcenterrecurG2qc}
\M(\overline{G}^{(2)}_{x,y,z}(\textbf{a};\ \textbf{c};\ \textbf{b}))\M(\overline{E}^{(1)}_{x-1,y,z-1}(\textbf{a}^{+1};\ \textbf{c}; \ \textbf{b}^{+1}))&=\M(\overline{K}^{(5)}_{x-1,y-1,z}(\textbf{a};\ \textbf{c}; \ \textbf{b}^{+1})) \M(\overline{F}^{(3)}_{x,y,z-1}(\textbf{a}^{+1};\ \textbf{c};\ \textbf{b}))\notag\\
&+
\M(\overline{G}^{(2)}_{x-1,y,z-1}(\textbf{a}^{+1};\ \textbf{c};\ \textbf{b}^{+1})) \M(\overline{E}^{(1)}_{x,y,z}(\textbf{a};\ \textbf{c};\ \textbf{b})).
\end{align}

\begin{figure}\centering
\setlength{\unitlength}{3947sp}%
\begingroup\makeatletter\ifx\SetFigFont\undefined%
\gdef\SetFigFont#1#2#3#4#5{%
  \reset@font\fontsize{#1}{#2pt}%
  \fontfamily{#3}\fontseries{#4}\fontshape{#5}%
  \selectfont}%
\fi\endgroup%
\resizebox{15cm}{!}{
\begin{picture}(0,0)%
\includegraphics{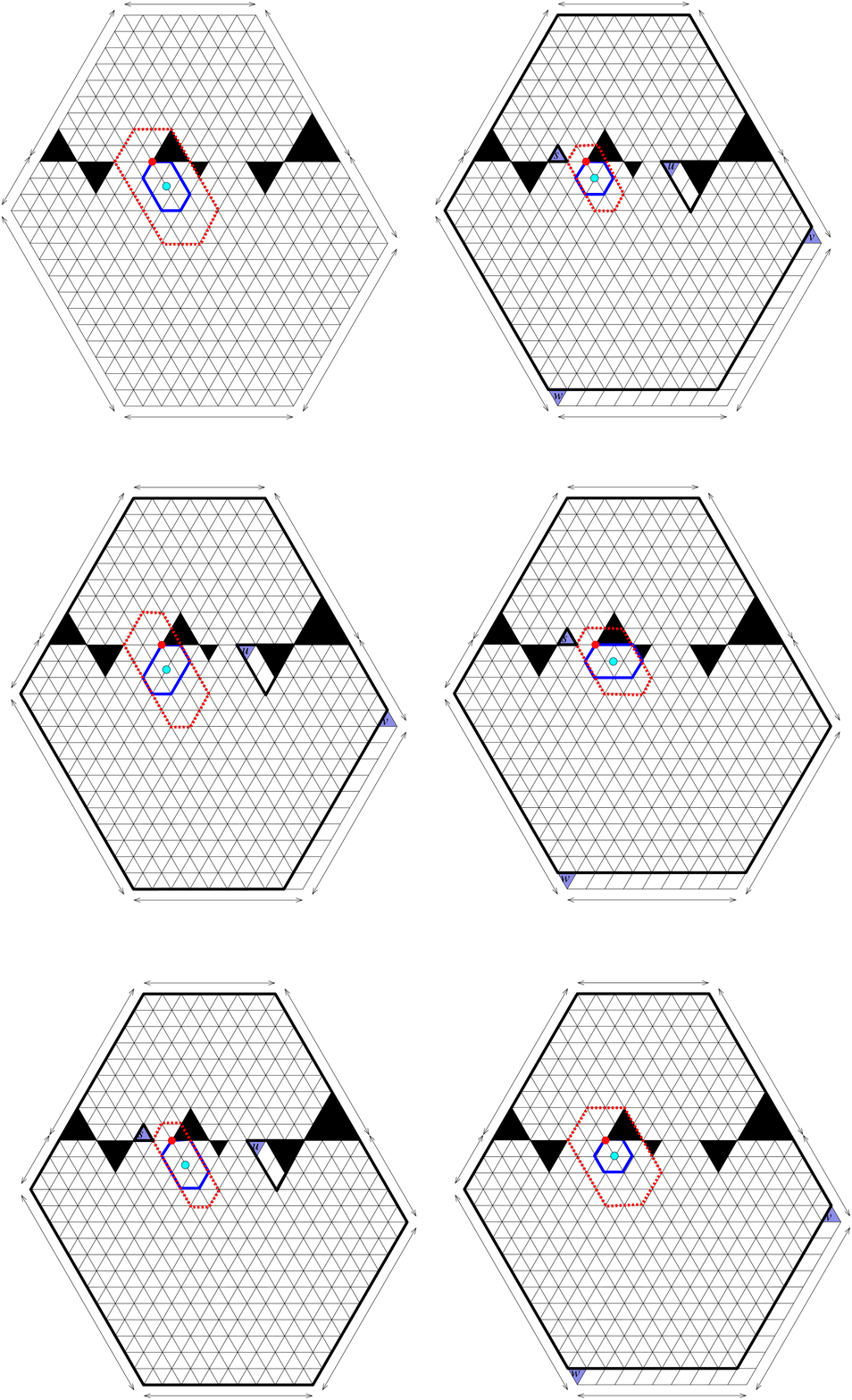}%
\end{picture}%
%
%
\setlength{\unitlength}{3947sp}%
\begingroup\makeatletter\ifx\SetFigFont\undefined%
\gdef\SetFigFont#1#2#3#4#5{%
  \reset@font\fontsize{#1}{#2pt}%
  \fontfamily{#3}\fontseries{#4}\fontshape{#5}%
  \selectfont}%
\fi\endgroup%
\begin{picture}(18976,31596)(537,-32834)
\put(4312,-22809){\makebox(0,0)[lb]{\smash{{\SetFigFont{14}{16.8}{\rmdefault}{\mddefault}{\itdefault}{\color[rgb]{0,0,0}$x+e_a+e_b+e_c$}%
}}}}
\put(7380,-23400){\rotatebox{300.0}{\makebox(0,0)[lb]{\smash{{\SetFigFont{14}{16.8}{\rmdefault}{\mddefault}{\itdefault}{\color[rgb]{0,0,0}$z+o_a+o_b+o_c$}%
}}}}}
\put(9222,-26601){\rotatebox{300.0}{\makebox(0,0)[lb]{\smash{{\SetFigFont{14}{16.8}{\rmdefault}{\mddefault}{\itdefault}{\color[rgb]{0,0,0}$y+3$}%
}}}}}
\put(8321,-31431){\rotatebox{60.0}{\makebox(0,0)[lb]{\smash{{\SetFigFont{14}{16.8}{\rmdefault}{\mddefault}{\itdefault}{\color[rgb]{0,0,0}$y+z+e_a+e_b+e_c+b-a$}%
}}}}}
\put(4721,-32258){\makebox(0,0)[lb]{\smash{{\SetFigFont{14}{16.8}{\rmdefault}{\mddefault}{\itdefault}{\color[rgb]{0,0,0}$x+o_a+o_b+o_c$}%
}}}}
\put(1448,-28608){\rotatebox{300.0}{\makebox(0,0)[lb]{\smash{{\SetFigFont{14}{16.8}{\rmdefault}{\mddefault}{\itdefault}{\color[rgb]{0,0,0}$y+z+e_a+e_b+e_c+3$}%
}}}}}
\put(1161,-27061){\rotatebox{60.0}{\makebox(0,0)[lb]{\smash{{\SetFigFont{14}{16.8}{\rmdefault}{\mddefault}{\itdefault}{\color[rgb]{0,0,0}$y+b-a$}%
}}}}}
\put(2082,-25526){\rotatebox{60.0}{\makebox(0,0)[lb]{\smash{{\SetFigFont{14}{16.8}{\rmdefault}{\mddefault}{\itdefault}{\color[rgb]{0,0,0}$z+o_a+o_b+o_c$}%
}}}}}
\put(13744,-22805){\makebox(0,0)[lb]{\smash{{\SetFigFont{14}{16.8}{\rmdefault}{\mddefault}{\itdefault}{\color[rgb]{0,0,0}$x+e_a+e_b+e_c$}%
}}}}
\put(16812,-23396){\rotatebox{300.0}{\makebox(0,0)[lb]{\smash{{\SetFigFont{14}{16.8}{\rmdefault}{\mddefault}{\itdefault}{\color[rgb]{0,0,0}$z+o_a+o_b+o_c$}%
}}}}}
\put(18654,-26597){\rotatebox{300.0}{\makebox(0,0)[lb]{\smash{{\SetFigFont{14}{16.8}{\rmdefault}{\mddefault}{\itdefault}{\color[rgb]{0,0,0}$y+3$}%
}}}}}
\put(17753,-31427){\rotatebox{60.0}{\makebox(0,0)[lb]{\smash{{\SetFigFont{14}{16.8}{\rmdefault}{\mddefault}{\itdefault}{\color[rgb]{0,0,0}$y+z+e_a+e_b+e_c+b-a$}%
}}}}}
\put(14153,-32254){\makebox(0,0)[lb]{\smash{{\SetFigFont{14}{16.8}{\rmdefault}{\mddefault}{\itdefault}{\color[rgb]{0,0,0}$x+o_a+o_b+o_c$}%
}}}}
\put(10880,-28604){\rotatebox{300.0}{\makebox(0,0)[lb]{\smash{{\SetFigFont{14}{16.8}{\rmdefault}{\mddefault}{\itdefault}{\color[rgb]{0,0,0}$y+z+e_a+e_b+e_c+3$}%
}}}}}
\put(10593,-27057){\rotatebox{60.0}{\makebox(0,0)[lb]{\smash{{\SetFigFont{14}{16.8}{\rmdefault}{\mddefault}{\itdefault}{\color[rgb]{0,0,0}$y+b-a$}%
}}}}}
\put(11514,-25522){\rotatebox{60.0}{\makebox(0,0)[lb]{\smash{{\SetFigFont{14}{16.8}{\rmdefault}{\mddefault}{\itdefault}{\color[rgb]{0,0,0}$z+o_a+o_b+o_c$}%
}}}}}
\put(4881,-11401){\makebox(0,0)[lb]{\smash{{\SetFigFont{20}{24.0}{\familydefault}{\mddefault}{\updefault}{\color[rgb]{0,0,0}(a)}%
}}}}
\put(14321,-11341){\makebox(0,0)[lb]{\smash{{\SetFigFont{20}{24.0}{\familydefault}{\mddefault}{\updefault}{\color[rgb]{0,0,0}(b)}%
}}}}
\put(5241,-22021){\makebox(0,0)[lb]{\smash{{\SetFigFont{20}{24.0}{\familydefault}{\mddefault}{\updefault}{\color[rgb]{0,0,0}(c)}%
}}}}
\put(14641,-21961){\makebox(0,0)[lb]{\smash{{\SetFigFont{20}{24.0}{\familydefault}{\mddefault}{\updefault}{\color[rgb]{0,0,0}(d)}%
}}}}
\put(5401,-32721){\makebox(0,0)[lb]{\smash{{\SetFigFont{20}{24.0}{\familydefault}{\mddefault}{\updefault}{\color[rgb]{0,0,0}(e)}%
}}}}
\put(14801,-32661){\makebox(0,0)[lb]{\smash{{\SetFigFont{20}{24.0}{\familydefault}{\mddefault}{\updefault}{\color[rgb]{0,0,0}(f)}%
}}}}
\put(3888,-1523){\makebox(0,0)[lb]{\smash{{\SetFigFont{14}{16.8}{\rmdefault}{\mddefault}{\itdefault}{\color[rgb]{0,0,0}$x+e_a+e_b+e_c$}%
}}}}
\put(6956,-2114){\rotatebox{300.0}{\makebox(0,0)[lb]{\smash{{\SetFigFont{14}{16.8}{\rmdefault}{\mddefault}{\itdefault}{\color[rgb]{0,0,0}$z+o_a+o_b+o_c$}%
}}}}}
\put(8798,-5315){\rotatebox{300.0}{\makebox(0,0)[lb]{\smash{{\SetFigFont{14}{16.8}{\rmdefault}{\mddefault}{\itdefault}{\color[rgb]{0,0,0}$y+3$}%
}}}}}
\put(7897,-10145){\rotatebox{60.0}{\makebox(0,0)[lb]{\smash{{\SetFigFont{14}{16.8}{\rmdefault}{\mddefault}{\itdefault}{\color[rgb]{0,0,0}$y+z+e_a+e_b+e_c+b-a$}%
}}}}}
\put(4297,-10972){\makebox(0,0)[lb]{\smash{{\SetFigFont{14}{16.8}{\rmdefault}{\mddefault}{\itdefault}{\color[rgb]{0,0,0}$x+o_a+o_b+o_c$}%
}}}}
\put(1024,-7322){\rotatebox{300.0}{\makebox(0,0)[lb]{\smash{{\SetFigFont{14}{16.8}{\rmdefault}{\mddefault}{\itdefault}{\color[rgb]{0,0,0}$y+z+e_a+e_b+e_c+3$}%
}}}}}
\put(737,-5775){\rotatebox{60.0}{\makebox(0,0)[lb]{\smash{{\SetFigFont{14}{16.8}{\rmdefault}{\mddefault}{\itdefault}{\color[rgb]{0,0,0}$y+b-a$}%
}}}}}
\put(1658,-4240){\rotatebox{60.0}{\makebox(0,0)[lb]{\smash{{\SetFigFont{14}{16.8}{\rmdefault}{\mddefault}{\itdefault}{\color[rgb]{0,0,0}$z+o_a+o_b+o_c$}%
}}}}}
\put(13320,-1519){\makebox(0,0)[lb]{\smash{{\SetFigFont{14}{16.8}{\rmdefault}{\mddefault}{\itdefault}{\color[rgb]{0,0,0}$x+e_a+e_b+e_c$}%
}}}}
\put(16388,-2110){\rotatebox{300.0}{\makebox(0,0)[lb]{\smash{{\SetFigFont{14}{16.8}{\rmdefault}{\mddefault}{\itdefault}{\color[rgb]{0,0,0}$z+o_a+o_b+o_c$}%
}}}}}
\put(18230,-5311){\rotatebox{300.0}{\makebox(0,0)[lb]{\smash{{\SetFigFont{14}{16.8}{\rmdefault}{\mddefault}{\itdefault}{\color[rgb]{0,0,0}$y+3$}%
}}}}}
\put(17329,-10141){\rotatebox{60.0}{\makebox(0,0)[lb]{\smash{{\SetFigFont{14}{16.8}{\rmdefault}{\mddefault}{\itdefault}{\color[rgb]{0,0,0}$y+z+e_a+e_b+e_c+b-a$}%
}}}}}
\put(13729,-10968){\makebox(0,0)[lb]{\smash{{\SetFigFont{14}{16.8}{\rmdefault}{\mddefault}{\itdefault}{\color[rgb]{0,0,0}$x+o_a+o_b+o_c$}%
}}}}
\put(10456,-7318){\rotatebox{300.0}{\makebox(0,0)[lb]{\smash{{\SetFigFont{14}{16.8}{\rmdefault}{\mddefault}{\itdefault}{\color[rgb]{0,0,0}$y+z+e_a+e_b+e_c+3$}%
}}}}}
\put(10169,-5771){\rotatebox{60.0}{\makebox(0,0)[lb]{\smash{{\SetFigFont{14}{16.8}{\rmdefault}{\mddefault}{\itdefault}{\color[rgb]{0,0,0}$y+b-a$}%
}}}}}
\put(11090,-4236){\rotatebox{60.0}{\makebox(0,0)[lb]{\smash{{\SetFigFont{14}{16.8}{\rmdefault}{\mddefault}{\itdefault}{\color[rgb]{0,0,0}$z+o_a+o_b+o_c$}%
}}}}}
\put(4092,-12029){\makebox(0,0)[lb]{\smash{{\SetFigFont{14}{16.8}{\rmdefault}{\mddefault}{\itdefault}{\color[rgb]{0,0,0}$x+e_a+e_b+e_c$}%
}}}}
\put(7160,-12620){\rotatebox{300.0}{\makebox(0,0)[lb]{\smash{{\SetFigFont{14}{16.8}{\rmdefault}{\mddefault}{\itdefault}{\color[rgb]{0,0,0}$z+o_a+o_b+o_c$}%
}}}}}
\put(9002,-15821){\rotatebox{300.0}{\makebox(0,0)[lb]{\smash{{\SetFigFont{14}{16.8}{\rmdefault}{\mddefault}{\itdefault}{\color[rgb]{0,0,0}$y+3$}%
}}}}}
\put(8101,-20651){\rotatebox{60.0}{\makebox(0,0)[lb]{\smash{{\SetFigFont{14}{16.8}{\rmdefault}{\mddefault}{\itdefault}{\color[rgb]{0,0,0}$y+z+e_a+e_b+e_c+b-a$}%
}}}}}
\put(4501,-21478){\makebox(0,0)[lb]{\smash{{\SetFigFont{14}{16.8}{\rmdefault}{\mddefault}{\itdefault}{\color[rgb]{0,0,0}$x+o_a+o_b+o_c$}%
}}}}
\put(1228,-17828){\rotatebox{300.0}{\makebox(0,0)[lb]{\smash{{\SetFigFont{14}{16.8}{\rmdefault}{\mddefault}{\itdefault}{\color[rgb]{0,0,0}$y+z+e_a+e_b+e_c+3$}%
}}}}}
\put(941,-16281){\rotatebox{60.0}{\makebox(0,0)[lb]{\smash{{\SetFigFont{14}{16.8}{\rmdefault}{\mddefault}{\itdefault}{\color[rgb]{0,0,0}$y+b-a$}%
}}}}}
\put(1862,-14746){\rotatebox{60.0}{\makebox(0,0)[lb]{\smash{{\SetFigFont{14}{16.8}{\rmdefault}{\mddefault}{\itdefault}{\color[rgb]{0,0,0}$z+o_a+o_b+o_c$}%
}}}}}
\put(13524,-12025){\makebox(0,0)[lb]{\smash{{\SetFigFont{14}{16.8}{\rmdefault}{\mddefault}{\itdefault}{\color[rgb]{0,0,0}$x+e_a+e_b+e_c$}%
}}}}
\put(16592,-12616){\rotatebox{300.0}{\makebox(0,0)[lb]{\smash{{\SetFigFont{14}{16.8}{\rmdefault}{\mddefault}{\itdefault}{\color[rgb]{0,0,0}$z+o_a+o_b+o_c$}%
}}}}}
\put(18434,-15817){\rotatebox{300.0}{\makebox(0,0)[lb]{\smash{{\SetFigFont{14}{16.8}{\rmdefault}{\mddefault}{\itdefault}{\color[rgb]{0,0,0}$y+3$}%
}}}}}
\put(17533,-20647){\rotatebox{60.0}{\makebox(0,0)[lb]{\smash{{\SetFigFont{14}{16.8}{\rmdefault}{\mddefault}{\itdefault}{\color[rgb]{0,0,0}$y+z+e_a+e_b+e_c+b-a$}%
}}}}}
\put(13933,-21474){\makebox(0,0)[lb]{\smash{{\SetFigFont{14}{16.8}{\rmdefault}{\mddefault}{\itdefault}{\color[rgb]{0,0,0}$x+o_a+o_b+o_c$}%
}}}}
\put(10660,-17824){\rotatebox{300.0}{\makebox(0,0)[lb]{\smash{{\SetFigFont{14}{16.8}{\rmdefault}{\mddefault}{\itdefault}{\color[rgb]{0,0,0}$y+z+e_a+e_b+e_c+3$}%
}}}}}
\put(10373,-16277){\rotatebox{60.0}{\makebox(0,0)[lb]{\smash{{\SetFigFont{14}{16.8}{\rmdefault}{\mddefault}{\itdefault}{\color[rgb]{0,0,0}$y+b-a$}%
}}}}}
\put(11294,-14742){\rotatebox{60.0}{\makebox(0,0)[lb]{\smash{{\SetFigFont{14}{16.8}{\rmdefault}{\mddefault}{\itdefault}{\color[rgb]{0,0,0}$z+o_a+o_b+o_c$}%
}}}}}
\end{picture}%
}
\caption{Obtaining a recurrence for $\overline{G}^{(3)}$-type regions with $a\leq b$. Kuo condensation is applied to the region $\overline{G}^{(3)}_{2,2,2}(2,2 ;\ 2,1 ;\ 3,2)$
 (picture (a)) as shown on the picture (b).}\label{fig:kuooff7}
\end{figure}

The $\overline{G}^{(3)}$-recurrence for $a>b$
\begin{align}\label{offcenterrecurG3qb}
\M(\overline{G}^{(3)}_{x,y,z}(\textbf{a};\ \textbf{c};\ \textbf{b}))\M(\overline{E}^{(2)}_{x-1,y,z-1}(\textbf{a}^{+1};\ \textbf{c}; \ \textbf{b}^{+1}))&=\M(\overline{K}^{(6)}_{x-1,y,z}(\textbf{a};\ \textbf{c}; \ \textbf{b}^{+1})) \M(\overline{F}^{(4)}_{x,y,z-1}(\textbf{a}^{+1};\ \textbf{c};\ \textbf{b}))\notag\\
&+
\M(\overline{G}^{(3)}_{x-1,y,z-1}(\textbf{a}^{+1};\ \textbf{c};\ \textbf{b}^{+1})) \M(\overline{E}^{(2)}_{x,y,z}(\textbf{a};\ \textbf{c};\ \textbf{b})).
\end{align}

The $\overline{G}^{(3)}$-recurrence for $a=b$
\begin{align}\label{offcenterrecurG3qc}
\M(\overline{G}^{(3)}_{x,y,z}(\textbf{a};\ \textbf{c};\ \textbf{b}))\M(\overline{E}^{(2)}_{x-1,y,z-1}(\textbf{a}^{+1};\ \textbf{c}; \ \textbf{b}^{+1}))&=\M(\overline{K}^{(6)}_{x-1,y-1,z}(\textbf{a};\ \textbf{c}; \ \textbf{b}^{+1})) \M(\overline{F}^{(4)}_{x,y,z-1}(\textbf{a}^{+1};\ \textbf{c};\ \textbf{b}))\notag\\
&+
\M(\overline{G}^{(3)}_{x-1,y,z-1}(\textbf{a}^{+1};\ \textbf{c};\ \textbf{b}^{+1})) \M(\overline{E}^{(2)}_{x,y,z}(\textbf{a};\ \textbf{c};\ \textbf{b})).
\end{align}

The $\overline{G}^{(4)}$-recurrence for $a< b$
\begin{align}\label{offcenterrecurG4qa}
\M(\overline{F}^{(5)}_{x,y+1,z-1}(\textbf{a}^{+1};\ \textbf{c}; \ \textbf{b})) \M(\overline{E}^{(3)}_{x,y,z}(\textbf{a};\ \textbf{c};\ \textbf{b}))&=\M(\overline{G}^{(4)}_{x,y,z}(\textbf{a};\ \textbf{c};\ \textbf{b}))\M(Q^{\nearrow}_{x,y+1,z-1}(\textbf{a}^{+1};\ \textbf{c}; \ \textbf{b}))\notag\\
&+
\M(\overline{G}^{(4)}_{x+1,y,z-1}(\textbf{a};\ \textbf{c};\ \textbf{b})) \M(Q^{\nearrow}_{x-1,y+1,z}(\textbf{a}^{+1};\ \textbf{c};\ \textbf{b})).
\end{align}

The $\overline{G}^{(4)}$-recurrence for $a\geq b$
\begin{align}\label{offcenterrecurG4qb}
\M(\overline{F}^{(5)}_{x,y,z-1}(\textbf{a}^{+1};\ \textbf{c}; \ \textbf{b})) \M(\overline{E}^{(3)}_{x,y,z}(\textbf{a};\ \textbf{c};\ \textbf{b}))&=\M(\overline{G}^{(4)}_{x,y,z}(\textbf{a};\ \textbf{c};\ \textbf{b}))\M(Q^{\nearrow}_{x,y,z-1}(\textbf{a}^{+1};\ \textbf{c}; \ \textbf{b}))\notag\\
&+
\M(\overline{G}^{(4)}_{x+1,y,z-1}(\textbf{a};\ \textbf{c};\ \textbf{b})) \M(Q^{\nearrow}_{x-1,y,z}(\textbf{a}^{+1};\ \textbf{c};\ \textbf{b})).
\end{align}

The $\overline{G}^{(5)}$-recurrence for $a< b$
\begin{align}\label{offcenterrecurG5qa}
\M(Q^{\leftarrow}_{x,y+1,z-1}(\textbf{b};\ \textbf{c}^{\leftrightarrow}; \ \textbf{a}^{+1})) \M(\overline{E}^{(1)}_{x,y,z}(\textbf{b};\ \textbf{c}^{\leftrightarrow}; \ \textbf{a}))&=\M(\overline{G}^{(5)}_{x,y,z}(\textbf{a};\ \textbf{c};\ \textbf{b}))\M(\overline{K}^{(5)}_{x,y,z-1}(\textbf{b};\ \textbf{c}^{\leftrightarrow}; \ \textbf{a}^{+1}))\notag\\
&+
\M(\overline{G}^{(5)}_{x+1,y,z-1}(\textbf{a};\ \textbf{c};\ \textbf{b})) \M(\overline{K}^{(5)}_{x-1,y,z}(\textbf{b};\ \textbf{c}^{\leftrightarrow}; \ \textbf{a}^{+1})).
\end{align}

The $\overline{G}^{(5)}$-recurrence for $a\geq b$
\begin{align}\label{offcenterrecurG5qb}
\M(Q^{\leftarrow}_{x,y,z-1}(\textbf{b};\ \textbf{c}^{\leftrightarrow}; \ \textbf{a}^{+1})) \M(\overline{E}^{(1)}_{x,y,z}(\textbf{b};\ \textbf{c}^{\leftrightarrow}; \ \textbf{a}))&=\M(\overline{G}^{(5)}_{x,y,z}(\textbf{a};\ \textbf{c};\ \textbf{b}))\M(\overline{K}^{(5)}_{x,y,z-1}(\textbf{b};\ \textbf{c}^{\leftrightarrow}; \ \textbf{a}^{+1}))\notag\\
&+
\M(\overline{G}^{(5)}_{x+1,y,z-1}(\textbf{a};\ \textbf{c};\ \textbf{b})) \M(\overline{K}^{(5)}_{x-1,y-1,z}(\textbf{b};\ \textbf{c}^{\leftrightarrow}; \ \textbf{a}^{+1})).
\end{align}

\subsection{Recurrences for $\overline{K}^{(i)}$-type regions}\label{subsec:offrecurK2}

For the final groups of regions, $\overline{K}^{(5)}$--$\overline{K}^{(8)}$, we apply Kuo condensation in the same way we did for the  $\overline{G}^{(i)}$-type regions above. The placements of the $u$-, $v$-, $w$- and $s$-
triangles are shown particularly in Figure \ref{fig:offK2}.

Let us demonstrate in detail for the case of $\overline{K}^{(8)}$-type region when $a\geq b$. We apply Kuo's Theorem \ref{kuothm2} to the dual graph $G$ of the  region $\overline{K}^{(8)}_{x,y,z}(\textbf{a};\ \textbf{c};\ \textbf{b})$. By considering forced lozenges as in Figures \ref{fig:kuooff8}(a),(b),(d),(e),(f), we get
\begin{equation}\label{kuothm8eq1}
\M(G-\{u,s\})=\M(\overline{E}^{(1)}_{x,y,z-1}(\textbf{b};\ \textbf{c}^{\leftrightarrow}; \ \textbf{a}^{+1}))
\end{equation}
\begin{equation}\label{kuothm8eq2}
\M(G-\{v,w\})=\M(\overline{F}^{(3)}_{x,y,z}(\textbf{b};\ \textbf{c}^{\leftrightarrow}; \ \textbf{a}))
\end{equation}
\begin{equation}\label{kuothm8eq3}
\M(G-\{u,v,w,s\})=\M(\overline{G}^{(2)}_{x,y-1,z-1}(\textbf{b};\ \textbf{c}^{\leftrightarrow}; \ \textbf{a}^{+1}))
\end{equation}
\begin{equation}\label{kuothm8eq4}
\M(G-\{u,w\})=\M(\overline{K}^{(8)}_{x+1,y,z-1}(\textbf{a};\ \textbf{c};\ \textbf{b}))
\end{equation}
\begin{equation}\label{kuothm8eq5}
\M(G-\{v,s\})=\M(\overline{G}^{(2)}_{x-1,y-1,z}(\textbf{b};\ \textbf{c}^{\leftrightarrow}; \ \textbf{a}^{+1}).
\end{equation}
One can realize that we reflected the leftover regions by a vertical line in (\ref{kuothm8eq1}), (\ref{kuothm8eq2}), (\ref{kuothm8eq3}) and (\ref{kuothm8eq5}).

Plugging the above 5 equalities into that of Kuo's Theorem \ref{kuothm2},  we have the $\overline{K}^{(8)}$-recurrence for $a\geq b$ (illustrated in Figure \ref{fig:kuooff8})
\begin{align}\label{offcenterrecurK8qb}
\M(\overline{E}^{(1)}_{x,y,z-1}(\textbf{b};\ \textbf{c}^{\leftrightarrow}; \ \textbf{a}^{+1})) \M(\overline{F}^{(3)}_{x,y,z}(\textbf{b};\ \textbf{c}^{\leftrightarrow}; \ \textbf{a}))&=\M(\overline{K}^{(8)}_{x,y,z}(\textbf{a};\ \textbf{c};\ \textbf{b}))\M(\overline{G}^{(2)}_{x,y-1,z-1}(\textbf{b};\ \textbf{c}^{\leftrightarrow}; \ \textbf{a}^{+1}))\notag\\
&+
\M(\overline{K}^{(8)}_{x+1,y,z-1}(\textbf{a};\ \textbf{c};\ \textbf{b})) \M(\overline{G}^{(2)}_{x-1,y-1,z}(\textbf{b};\ \textbf{c}^{\leftrightarrow}; \ \textbf{a}^{+1})).
\end{align}

We also have 7 more recurrences by applying Kuo condensation:

The $\overline{K}^{(5)}$-recurrence for $a< b$
\begin{align}\label{offcenterrecurK5qa}
\M(\overline{K}^{(5)}_{x,y,z}(\textbf{a};\ \textbf{c};\ \textbf{b}))\M(Q^{\leftarrow}_{x-1,y,z-1}(\textbf{a}^{+1};\ \textbf{c}; \ \textbf{b}^{+1}))&=\M(Q^{\nwarrow}_{x-1,y-1,z}(\textbf{a};\ \textbf{c}; \ \textbf{b}^{+1})) \M(\overline{E}^{(1)}_{x,y+1,z-1}(\textbf{a}^{+1};\ \textbf{c};\ \textbf{b}))\notag\\
&+
\M(\overline{K}^{(5)}_{x-1,y,z-1}(\textbf{a}^{+1};\ \textbf{c};\ \textbf{b}^{+1})) \M(Q^{\leftarrow}_{x,y,z}(\textbf{a};\ \textbf{c};\ \textbf{b})).
\end{align}

The $\overline{K}^{(5)}$-recurrence for $a\geq  b$
\begin{align}\label{offcenterrecurK5qb}
\M(\overline{K}^{(5)}_{x,y,z}(\textbf{a};\ \textbf{c};\ \textbf{b}))\M(Q^{\leftarrow}_{x-1,y,z-1}(\textbf{a}^{+1};\ \textbf{c}; \ \textbf{b}^{+1}))&=\M(Q^{\nwarrow}_{x-1,y,z}(\textbf{a};\ \textbf{c}; \ \textbf{b}^{+1})) \M(\overline{E}^{(1)}_{x,y,z-1}(\textbf{a}^{+1};\ \textbf{c};\ \textbf{b}))\notag\\
&+
\M(\overline{K}^{(5)}_{x-1,y,z-1}(\textbf{a}^{+1};\ \textbf{c};\ \textbf{b}^{+1})) \M(Q^{\leftarrow}_{x,y,z}(\textbf{a};\ \textbf{c};\ \textbf{b})).
\end{align}

The $\overline{K}^{(6)}$-recurrence for $a< b$
\begin{align}\label{offcenterrecurK6qa}
\M(\overline{K}^{(6)}_{x,y,z}(\textbf{a};\ \textbf{c};\ \textbf{b}))\M(\overline{F}^{(5)}_{x-1,y,z-1}(\textbf{a}^{+1};\ \textbf{c}; \ \textbf{b}^{+1}))&=\M(\overline{G}^{(4)}_{x-1,y-1,z}(\textbf{a};\ \textbf{c}; \ \textbf{b}^{+1})) \M(\overline{E}^{(2)}_{x,y+1,z-1}(\textbf{a}^{+1};\ \textbf{c};\ \textbf{b}))\notag\\
&+
\M(\overline{K}^{(6)}_{x-1,y,z-1}(\textbf{a}^{+1};\ \textbf{c};\ \textbf{b}^{+1})) \M(\overline{F}^{(5)}_{x,y,z}(\textbf{a};\ \textbf{c};\ \textbf{b})).
\end{align}

The $\overline{K}^{(6)}$-recurrence for $a\geq  b$
\begin{align}\label{offcenterrecurK6qb}
\M(\overline{K}^{(6)}_{x,y,z}(\textbf{a};\ \textbf{c};\ \textbf{b}))\M(\overline{F}^{(5)}_{x-1,y,z-1}(\textbf{a}^{+1};\ \textbf{c}; \ \textbf{b}^{+1}))&=\M(\overline{G}^{(4)}_{x-1,y,z}(\textbf{a};\ \textbf{c}; \ \textbf{b}^{+1})) \M(\overline{E}^{(2)}_{x,y,z-1}(\textbf{a}^{+1};\ \textbf{c};\ \textbf{b}))\notag\\
&+
\M(\overline{K}^{(6)}_{x-1,y,z-1}(\textbf{a}^{+1};\ \textbf{c};\ \textbf{b}^{+1})) \M(\overline{F}^{(5)}_{x,y,z}(\textbf{a};\ \textbf{c};\ \textbf{b})).
\end{align}

The $\overline{K}^{(7)}$-recurrence for $a< b$
\begin{align}\label{offcenterrecurK7qa}
\M(\overline{E}^{(3)}_{x,y+1,z-1}(\textbf{a}^{+1};\ \textbf{c}; \ \textbf{b})) \M(\overline{F}^{(6)}_{x,y,z}(\textbf{a};\ \textbf{c};\ \textbf{b}))&=\M(\overline{K}^{(7)}_{x,y,z}(\textbf{a};\ \textbf{c};\ \textbf{b}))\M(\overline{G}^{(5)}_{x,y+1,z-1}(\textbf{a}^{+1};\ \textbf{c}; \ \textbf{b}))\notag\\
&+
\M(\overline{K}^{(7)}_{x+1,y,z-1}(\textbf{a};\ \textbf{c};\ \textbf{b})) \M(\overline{G}^{(5)}_{x-1,y+1,z}(\textbf{a}^{+1};\ \textbf{c};\ \textbf{b})).
\end{align}

The $\overline{K}^{(7)}$-recurrence for $a\geq b$
\begin{align}\label{offcenterrecurK7qb}
\M(\overline{E}^{(3)}_{x,y,z-1}(\textbf{a}^{+1};\ \textbf{c}; \ \textbf{b})) \M(\overline{F}^{(6)}_{x,y,z}(\textbf{a};\ \textbf{c};\ \textbf{b}))&=\M(\overline{K}^{(7)}_{x,y,z}(\textbf{a};\ \textbf{c};\ \textbf{b}))\M(\overline{G}^{(5)}_{x,y,z-1}(\textbf{a}^{+1};\ \textbf{c}; \ \textbf{b}))\notag\\
&+
\M(\overline{K}^{(7)}_{x+1,y,z-1}(\textbf{a};\ \textbf{c};\ \textbf{b})) \M(\overline{G}^{(5)}_{x-1,y,z}(\textbf{a}^{+1};\ \textbf{c};\ \textbf{b})).
\end{align}

The $\overline{K}^{(8)}$-recurrence for $a< b$
\begin{align}\label{offcenterrecurK8qa}
\M(\overline{E}^{(1)}_{x,y+1,z-1}(\textbf{b};\ \textbf{c}^{\leftrightarrow}; \ \textbf{a}^{+1})) \M(\overline{F}^{(3)}_{x,y,z}(\textbf{b};\ \textbf{c}^{\leftrightarrow}; \ \textbf{a}))&=\M(\overline{K}^{(8)}_{x,y,z}(\textbf{a};\ \textbf{c};\ \textbf{b}))\M(\overline{G}^{(2)}_{x,y,z-1}(\textbf{b};\ \textbf{c}^{\leftrightarrow}; \ \textbf{a}^{+1}))\notag\\
&+
\M(\overline{K}^{(8)}_{x+1,y,z-1}(\textbf{a};\ \textbf{c};\ \textbf{b})) \M(\overline{G}^{(2)}_{x-1,y,z}(\textbf{b};\ \textbf{c}^{\leftrightarrow}; \ \textbf{a}^{+1})).
\end{align}

\begin{figure}\centering
\setlength{\unitlength}{3947sp}%
\begingroup\makeatletter\ifx\SetFigFont\undefined%
\gdef\SetFigFont#1#2#3#4#5{%
  \reset@font\fontsize{#1}{#2pt}%
  \fontfamily{#3}\fontseries{#4}\fontshape{#5}%
  \selectfont}%
\fi\endgroup
\resizebox{15cm}{!}{
\begin{picture}(0,0)%
\includegraphics{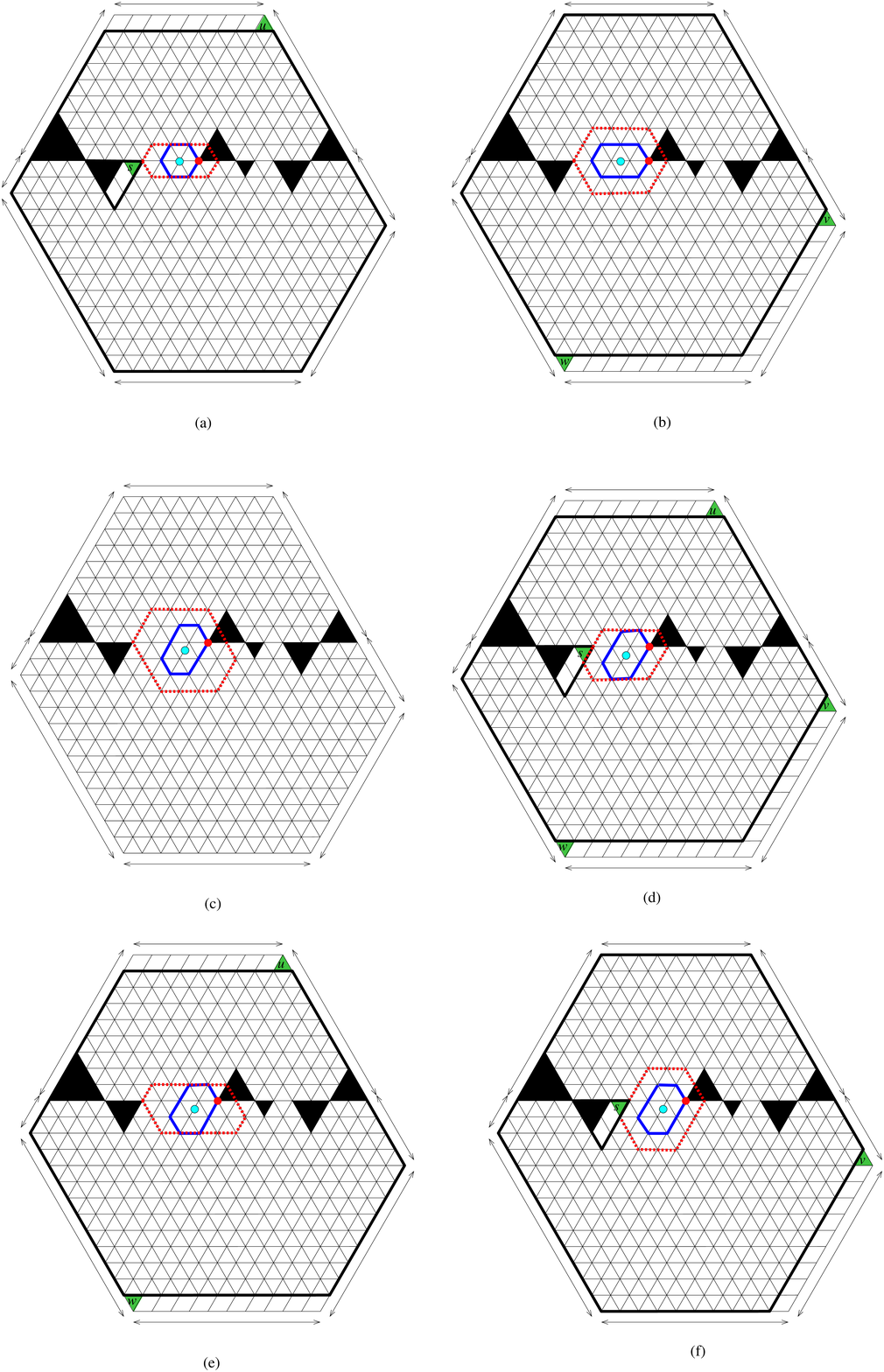}%
\end{picture}%
%
%

\begin{picture}(19871,30376)(1041,-30969)
\put(4398,-11407){\makebox(0,0)[lb]{\smash{{\SetFigFont{14}{16.8}{\rmdefault}{\mddefault}{\itdefault}{\color[rgb]{0,0,0}$x+e_a+e_b+e_c$}%
}}}}
\put(7875,-12115){\rotatebox{300.0}{\makebox(0,0)[lb]{\smash{{\SetFigFont{14}{16.8}{\rmdefault}{\mddefault}{\itdefault}{\color[rgb]{0,0,0}$z+o_a+o_b+o_c$}%
}}}}}
\put(2066,-14182){\rotatebox{60.0}{\makebox(0,0)[lb]{\smash{{\SetFigFont{14}{16.8}{\rmdefault}{\mddefault}{\itdefault}{\color[rgb]{0,0,0}$z+o_a+o_b+o_c$}%
}}}}}
\put(9512,-15021){\rotatebox{300.0}{\makebox(0,0)[lb]{\smash{{\SetFigFont{14}{16.8}{\rmdefault}{\mddefault}{\itdefault}{\color[rgb]{0,0,0}$y+(a-b)+1$}%
}}}}}
\put(1431,-15127){\makebox(0,0)[lb]{\smash{{\SetFigFont{14}{16.8}{\rmdefault}{\mddefault}{\itdefault}{\color[rgb]{0,0,0}$y$}%
}}}}
\put(8694,-19320){\rotatebox{60.0}{\makebox(0,0)[lb]{\smash{{\SetFigFont{14}{16.8}{\rmdefault}{\mddefault}{\itdefault}{\color[rgb]{0,0,0}$y+z+e_a+e_b+e_c$}%
}}}}}
\put(4909,-20206){\makebox(0,0)[lb]{\smash{{\SetFigFont{14}{16.8}{\rmdefault}{\mddefault}{\itdefault}{\color[rgb]{0,0,0}$x+o_a+o_b+o_c$}%
}}}}
\put(1329,-16439){\rotatebox{300.0}{\makebox(0,0)[lb]{\smash{{\SetFigFont{14}{16.8}{\rmdefault}{\mddefault}{\itdefault}{\color[rgb]{0,0,0}$y+z+e_a+e_b+e_c+(a-b)+1$}%
}}}}}
\put(14045,-11494){\makebox(0,0)[lb]{\smash{{\SetFigFont{14}{16.8}{\rmdefault}{\mddefault}{\itdefault}{\color[rgb]{0,0,0}$x+e_a+e_b+e_c$}%
}}}}
\put(17522,-12202){\rotatebox{300.0}{\makebox(0,0)[lb]{\smash{{\SetFigFont{14}{16.8}{\rmdefault}{\mddefault}{\itdefault}{\color[rgb]{0,0,0}$z+o_a+o_b+o_c$}%
}}}}}
\put(11713,-14269){\rotatebox{60.0}{\makebox(0,0)[lb]{\smash{{\SetFigFont{14}{16.8}{\rmdefault}{\mddefault}{\itdefault}{\color[rgb]{0,0,0}$z+o_a+o_b+o_c$}%
}}}}}
\put(19159,-15108){\rotatebox{300.0}{\makebox(0,0)[lb]{\smash{{\SetFigFont{14}{16.8}{\rmdefault}{\mddefault}{\itdefault}{\color[rgb]{0,0,0}$y+(a-b)+1$}%
}}}}}
\put(11078,-15214){\makebox(0,0)[lb]{\smash{{\SetFigFont{14}{16.8}{\rmdefault}{\mddefault}{\itdefault}{\color[rgb]{0,0,0}$y$}%
}}}}
\put(18341,-19407){\rotatebox{60.0}{\makebox(0,0)[lb]{\smash{{\SetFigFont{14}{16.8}{\rmdefault}{\mddefault}{\itdefault}{\color[rgb]{0,0,0}$y+z+e_a+e_b+e_c$}%
}}}}}
\put(14556,-20293){\makebox(0,0)[lb]{\smash{{\SetFigFont{14}{16.8}{\rmdefault}{\mddefault}{\itdefault}{\color[rgb]{0,0,0}$x+o_a+o_b+o_c$}%
}}}}
\put(10976,-16526){\rotatebox{300.0}{\makebox(0,0)[lb]{\smash{{\SetFigFont{14}{16.8}{\rmdefault}{\mddefault}{\itdefault}{\color[rgb]{0,0,0}$y+z+e_a+e_b+e_c+(a-b)+1$}%
}}}}}
\put(4610,-21424){\makebox(0,0)[lb]{\smash{{\SetFigFont{14}{16.8}{\rmdefault}{\mddefault}{\itdefault}{\color[rgb]{0,0,0}$x+e_a+e_b+e_c$}%
}}}}
\put(8087,-22132){\rotatebox{300.0}{\makebox(0,0)[lb]{\smash{{\SetFigFont{14}{16.8}{\rmdefault}{\mddefault}{\itdefault}{\color[rgb]{0,0,0}$z+o_a+o_b+o_c$}%
}}}}}
\put(2278,-24199){\rotatebox{60.0}{\makebox(0,0)[lb]{\smash{{\SetFigFont{14}{16.8}{\rmdefault}{\mddefault}{\itdefault}{\color[rgb]{0,0,0}$z+o_a+o_b+o_c$}%
}}}}}
\put(9724,-25038){\rotatebox{300.0}{\makebox(0,0)[lb]{\smash{{\SetFigFont{14}{16.8}{\rmdefault}{\mddefault}{\itdefault}{\color[rgb]{0,0,0}$y+(a-b)+1$}%
}}}}}
\put(1643,-25144){\makebox(0,0)[lb]{\smash{{\SetFigFont{14}{16.8}{\rmdefault}{\mddefault}{\itdefault}{\color[rgb]{0,0,0}$y$}%
}}}}
\put(8906,-29337){\rotatebox{60.0}{\makebox(0,0)[lb]{\smash{{\SetFigFont{14}{16.8}{\rmdefault}{\mddefault}{\itdefault}{\color[rgb]{0,0,0}$y+z+e_a+e_b+e_c$}%
}}}}}
\put(5121,-30223){\makebox(0,0)[lb]{\smash{{\SetFigFont{14}{16.8}{\rmdefault}{\mddefault}{\itdefault}{\color[rgb]{0,0,0}$x+o_a+o_b+o_c$}%
}}}}
\put(1541,-26456){\rotatebox{300.0}{\makebox(0,0)[lb]{\smash{{\SetFigFont{14}{16.8}{\rmdefault}{\mddefault}{\itdefault}{\color[rgb]{0,0,0}$y+z+e_a+e_b+e_c+(a-b)+1$}%
}}}}}
\put(14840,-21424){\makebox(0,0)[lb]{\smash{{\SetFigFont{14}{16.8}{\rmdefault}{\mddefault}{\itdefault}{\color[rgb]{0,0,0}$x+e_a+e_b+e_c$}%
}}}}
\put(18317,-22132){\rotatebox{300.0}{\makebox(0,0)[lb]{\smash{{\SetFigFont{14}{16.8}{\rmdefault}{\mddefault}{\itdefault}{\color[rgb]{0,0,0}$z+o_a+o_b+o_c$}%
}}}}}
\put(12508,-24199){\rotatebox{60.0}{\makebox(0,0)[lb]{\smash{{\SetFigFont{14}{16.8}{\rmdefault}{\mddefault}{\itdefault}{\color[rgb]{0,0,0}$z+o_a+o_b+o_c$}%
}}}}}
\put(19954,-25038){\rotatebox{300.0}{\makebox(0,0)[lb]{\smash{{\SetFigFont{14}{16.8}{\rmdefault}{\mddefault}{\itdefault}{\color[rgb]{0,0,0}$y+(a-b)+1$}%
}}}}}
\put(11873,-25144){\makebox(0,0)[lb]{\smash{{\SetFigFont{14}{16.8}{\rmdefault}{\mddefault}{\itdefault}{\color[rgb]{0,0,0}$y$}%
}}}}
\put(19136,-29337){\rotatebox{60.0}{\makebox(0,0)[lb]{\smash{{\SetFigFont{14}{16.8}{\rmdefault}{\mddefault}{\itdefault}{\color[rgb]{0,0,0}$y+z+e_a+e_b+e_c$}%
}}}}}
\put(15351,-30223){\makebox(0,0)[lb]{\smash{{\SetFigFont{14}{16.8}{\rmdefault}{\mddefault}{\itdefault}{\color[rgb]{0,0,0}$x+o_a+o_b+o_c$}%
}}}}
\put(11771,-26456){\rotatebox{300.0}{\makebox(0,0)[lb]{\smash{{\SetFigFont{14}{16.8}{\rmdefault}{\mddefault}{\itdefault}{\color[rgb]{0,0,0}$y+z+e_a+e_b+e_c+(a-b)+1$}%
}}}}}
\put(4195,-874){\makebox(0,0)[lb]{\smash{{\SetFigFont{14}{16.8}{\rmdefault}{\mddefault}{\itdefault}{\color[rgb]{0,0,0}$x+e_a+e_b+e_c$}%
}}}}
\put(7672,-1582){\rotatebox{300.0}{\makebox(0,0)[lb]{\smash{{\SetFigFont{14}{16.8}{\rmdefault}{\mddefault}{\itdefault}{\color[rgb]{0,0,0}$z+o_a+o_b+o_c$}%
}}}}}
\put(1863,-3649){\rotatebox{60.0}{\makebox(0,0)[lb]{\smash{{\SetFigFont{14}{16.8}{\rmdefault}{\mddefault}{\itdefault}{\color[rgb]{0,0,0}$z+o_a+o_b+o_c$}%
}}}}}
\put(9309,-4488){\rotatebox{300.0}{\makebox(0,0)[lb]{\smash{{\SetFigFont{14}{16.8}{\rmdefault}{\mddefault}{\itdefault}{\color[rgb]{0,0,0}$y+(a-b)+1$}%
}}}}}
\put(1228,-4594){\makebox(0,0)[lb]{\smash{{\SetFigFont{14}{16.8}{\rmdefault}{\mddefault}{\itdefault}{\color[rgb]{0,0,0}$y$}%
}}}}
\put(8491,-8787){\rotatebox{60.0}{\makebox(0,0)[lb]{\smash{{\SetFigFont{14}{16.8}{\rmdefault}{\mddefault}{\itdefault}{\color[rgb]{0,0,0}$y+z+e_a+e_b+e_c$}%
}}}}}
\put(4706,-9673){\makebox(0,0)[lb]{\smash{{\SetFigFont{14}{16.8}{\rmdefault}{\mddefault}{\itdefault}{\color[rgb]{0,0,0}$x+o_a+o_b+o_c$}%
}}}}
\put(1126,-5906){\rotatebox{300.0}{\makebox(0,0)[lb]{\smash{{\SetFigFont{14}{16.8}{\rmdefault}{\mddefault}{\itdefault}{\color[rgb]{0,0,0}$y+z+e_a+e_b+e_c+(a-b)+1$}%
}}}}}
\put(14033,-874){\makebox(0,0)[lb]{\smash{{\SetFigFont{14}{16.8}{\rmdefault}{\mddefault}{\itdefault}{\color[rgb]{0,0,0}$x+e_a+e_b+e_c$}%
}}}}
\put(17510,-1582){\rotatebox{300.0}{\makebox(0,0)[lb]{\smash{{\SetFigFont{14}{16.8}{\rmdefault}{\mddefault}{\itdefault}{\color[rgb]{0,0,0}$z+o_a+o_b+o_c$}%
}}}}}
\put(11701,-3649){\rotatebox{60.0}{\makebox(0,0)[lb]{\smash{{\SetFigFont{14}{16.8}{\rmdefault}{\mddefault}{\itdefault}{\color[rgb]{0,0,0}$z+o_a+o_b+o_c$}%
}}}}}
\put(19147,-4488){\rotatebox{300.0}{\makebox(0,0)[lb]{\smash{{\SetFigFont{14}{16.8}{\rmdefault}{\mddefault}{\itdefault}{\color[rgb]{0,0,0}$y+(a-b)+1$}%
}}}}}
\put(11066,-4594){\makebox(0,0)[lb]{\smash{{\SetFigFont{14}{16.8}{\rmdefault}{\mddefault}{\itdefault}{\color[rgb]{0,0,0}$y$}%
}}}}
\put(18329,-8787){\rotatebox{60.0}{\makebox(0,0)[lb]{\smash{{\SetFigFont{14}{16.8}{\rmdefault}{\mddefault}{\itdefault}{\color[rgb]{0,0,0}$y+z+e_a+e_b+e_c$}%
}}}}}
\put(14544,-9673){\makebox(0,0)[lb]{\smash{{\SetFigFont{14}{16.8}{\rmdefault}{\mddefault}{\itdefault}{\color[rgb]{0,0,0}$x+o_a+o_b+o_c$}%
}}}}
\put(10964,-5906){\rotatebox{300.0}{\makebox(0,0)[lb]{\smash{{\SetFigFont{14}{16.8}{\rmdefault}{\mddefault}{\itdefault}{\color[rgb]{0,0,0}$y+z+e_a+e_b+e_c+(a-b)+1$}%
}}}}}
\end{picture}%
}
\caption{Obtaining a recurrence for $\overline{K}^{(8)}$-type regions with $a> b$. Kuo condensation is applied to the region $\overline{K}^{(8)}_{3,2,2}(3,2 ;\ 2,1 ;\ 2,2)$
(picture (c)) as shown on the picture (d).}\label{fig:kuooff8}.
\end{figure}

\subsection{Two extremal cases for off-central regions}\label{sec:extremcase}

This subsection is devoted to two special cases when certain parameters in the off-central regions achieve their minimal values. Our first special case is when some of triangles in our ferns have side-length $0$.

\begin{figure}\centering
\setlength{\unitlength}{3947sp}%
\begingroup\makeatletter\ifx\SetFigFont\undefined%
\gdef\SetFigFont#1#2#3#4#5{%
  \reset@font\fontsize{#1}{#2pt}%
  \fontfamily{#3}\fontseries{#4}\fontshape{#5}%
  \selectfont}%
\fi\endgroup%
\resizebox{15cm}{!}{
\begin{picture}(0,0)%
\includegraphics{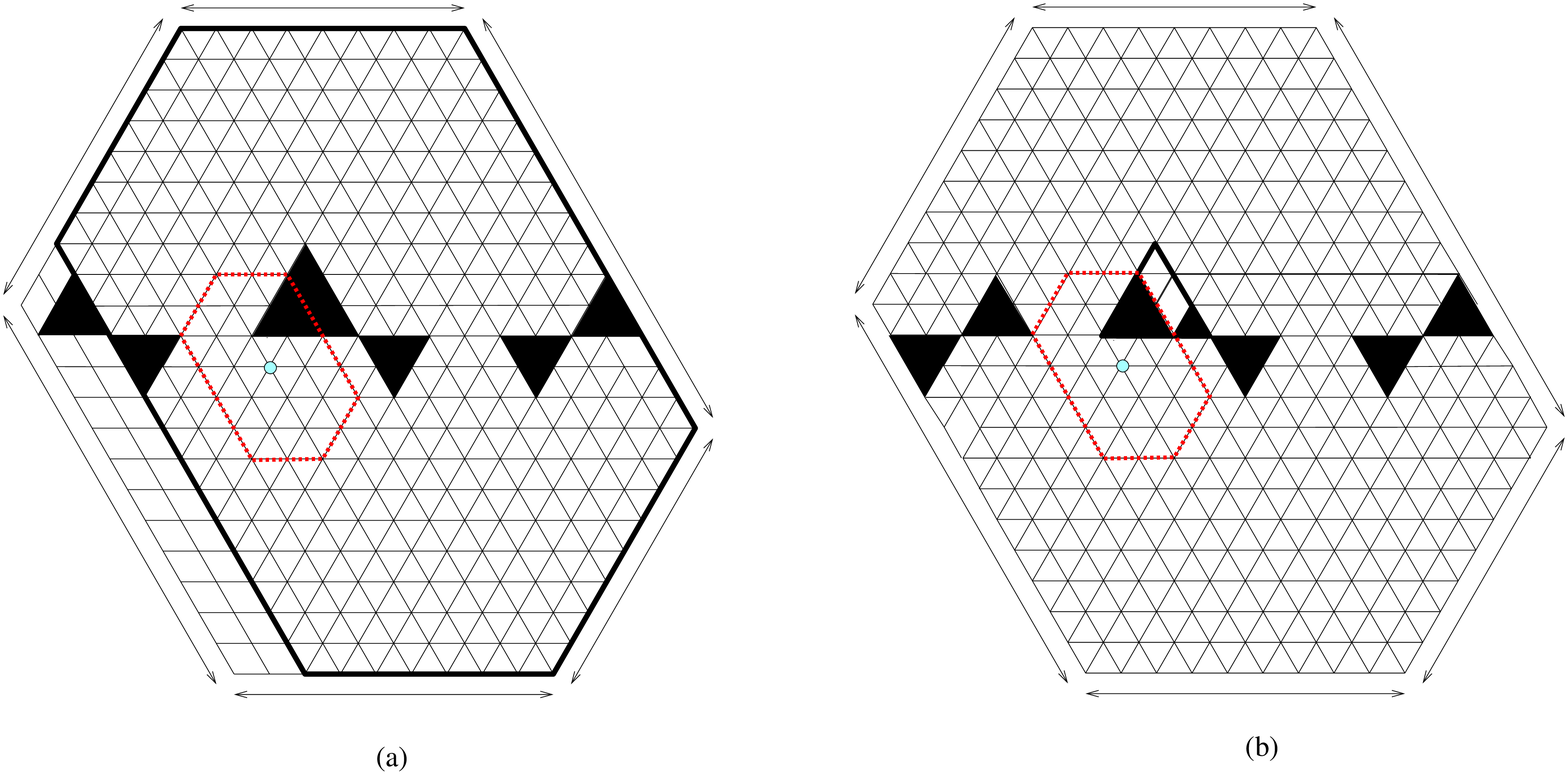}%
\end{picture}%
%
%

\begin{picture}(18044,9252)(1830,-10258)
\put(5320,-9673){\makebox(0,0)[lb]{\smash{{\SetFigFont{14}{16.8}{\familydefault}{\mddefault}{\updefault}{\color[rgb]{0,0,0}$x+o_a+e_b+e_c$}%
}}}}
\put(1944,-5905){\rotatebox{300.0}{\makebox(0,0)[lb]{\smash{{\SetFigFont{14}{16.8}{\familydefault}{\mddefault}{\updefault}{\color[rgb]{0,0,0}$2y+z+e_a+o_b+o_c+|a-b|$}%
}}}}}
\put(2091,-3911){\rotatebox{60.0}{\makebox(0,0)[lb]{\smash{{\SetFigFont{14}{16.8}{\familydefault}{\mddefault}{\updefault}{\color[rgb]{0,0,0}$z+o_a+e_b+e_c$}%
}}}}}
\put(14167,-1287){\makebox(0,0)[lb]{\smash{{\SetFigFont{14}{16.8}{\familydefault}{\mddefault}{\updefault}{\color[rgb]{0,0,0}$x+e_a+o_b+o_c$}%
}}}}
\put(15195,-9673){\makebox(0,0)[lb]{\smash{{\SetFigFont{14}{16.8}{\familydefault}{\mddefault}{\updefault}{\color[rgb]{0,0,0}$x+o_a+e_b+e_c$}%
}}}}
\put(11633,-5735){\rotatebox{300.0}{\makebox(0,0)[lb]{\smash{{\SetFigFont{14}{16.8}{\familydefault}{\mddefault}{\updefault}{\color[rgb]{0,0,0}$2y+z+e_a+o_b+o_c+|a-b|$}%
}}}}}
\put(17775,-2081){\rotatebox{300.0}{\makebox(0,0)[lb]{\smash{{\SetFigFont{14}{16.8}{\familydefault}{\mddefault}{\updefault}{\color[rgb]{0,0,0}$2y+z+o_a+e_b+e_c+|a-b|$}%
}}}}}
\put(11906,-3829){\rotatebox{60.0}{\makebox(0,0)[lb]{\smash{{\SetFigFont{14}{16.8}{\familydefault}{\mddefault}{\updefault}{\color[rgb]{0,0,0}$z+o_a+e_b+e_c$}%
}}}}}
\put(18757,-8852){\rotatebox{60.0}{\makebox(0,0)[lb]{\smash{{\SetFigFont{14}{16.8}{\familydefault}{\mddefault}{\updefault}{\color[rgb]{0,0,0}$z+e_a+o_b+o_c$}%
}}}}}
\put(2491,-5086){\makebox(0,0)[lb]{\smash{{\SetFigFont{14}{16.8}{\rmdefault}{\mddefault}{\updefault}{\color[rgb]{1,1,1}$a_2$}%
}}}}
\put(3281,-5508){\makebox(0,0)[lb]{\smash{{\SetFigFont{14}{16.8}{\rmdefault}{\mddefault}{\updefault}{\color[rgb]{1,1,1}$a_3$}%
}}}}
\put(5057,-5023){\makebox(0,0)[lb]{\smash{{\SetFigFont{14}{16.8}{\rmdefault}{\mddefault}{\updefault}{\color[rgb]{1,1,1}$c_1$}%
}}}}
\put(6167,-5463){\makebox(0,0)[lb]{\smash{{\SetFigFont{14}{16.8}{\rmdefault}{\mddefault}{\updefault}{\color[rgb]{1,1,1}$c_2$}%
}}}}
\put(8657,-5043){\makebox(0,0)[lb]{\smash{{\SetFigFont{14}{16.8}{\rmdefault}{\mddefault}{\updefault}{\color[rgb]{1,1,1}$b_1$}%
}}}}
\put(7787,-5473){\makebox(0,0)[lb]{\smash{{\SetFigFont{14}{16.8}{\rmdefault}{\mddefault}{\updefault}{\color[rgb]{1,1,1}$b_2$}%
}}}}
\put(12422,-5446){\makebox(0,0)[lb]{\smash{{\SetFigFont{14}{16.8}{\rmdefault}{\mddefault}{\updefault}{\color[rgb]{1,1,1}$a_1$}%
}}}}
\put(13154,-4993){\makebox(0,0)[lb]{\smash{{\SetFigFont{14}{16.8}{\rmdefault}{\mddefault}{\updefault}{\color[rgb]{1,1,1}$a_2$}%
}}}}
\put(14658,-5086){\makebox(0,0)[lb]{\smash{{\SetFigFont{14}{16.8}{\rmdefault}{\mddefault}{\updefault}{\color[rgb]{1,1,1}$c_1$}%
}}}}
\put(15378,-5156){\makebox(0,0)[lb]{\smash{{\SetFigFont{14}{16.8}{\rmdefault}{\mddefault}{\updefault}{\color[rgb]{1,1,1}$c_3$}%
}}}}
\put(15918,-5476){\makebox(0,0)[lb]{\smash{{\SetFigFont{14}{16.8}{\rmdefault}{\mddefault}{\updefault}{\color[rgb]{1,1,1}$c_4$}%
}}}}
\put(17673,-5476){\makebox(0,0)[lb]{\smash{{\SetFigFont{14}{16.8}{\rmdefault}{\mddefault}{\updefault}{\color[rgb]{1,1,1}$b_2$}%
}}}}
\put(18395,-5069){\makebox(0,0)[lb]{\smash{{\SetFigFont{14}{16.8}{\rmdefault}{\mddefault}{\updefault}{\color[rgb]{1,1,1}$b_1$}%
}}}}
\put(4604,-1311){\makebox(0,0)[lb]{\smash{{\SetFigFont{14}{16.8}{\familydefault}{\mddefault}{\updefault}{\color[rgb]{0,0,0}$x+e_a+o_b+o_c$}%
}}}}
\put(7861,-2031){\rotatebox{300.0}{\makebox(0,0)[lb]{\smash{{\SetFigFont{14}{16.8}{\familydefault}{\mddefault}{\updefault}{\color[rgb]{0,0,0}$2y+z+o_a+e_b+e_c+|a-b|$}%
}}}}}
\put(8911,-8781){\rotatebox{60.0}{\makebox(0,0)[lb]{\smash{{\SetFigFont{14}{16.8}{\familydefault}{\mddefault}{\updefault}{\color[rgb]{0,0,0}$z+e_a+o_b+o_c$}%
}}}}}
\end{picture}}
\caption{Eliminating triangles of side-length $0$ from the ferns.}\label{Specialoff}
\end{figure}

\begin{lem}\label{lem3}
For any off-central regions, we can find a new region of the same type (1) whose number of tilings is the same,
(2) whose $h$-parameter is at most that of the original region, (3) whose left and right ferns consist of triangles with positive side-lengths, and (4) whose middle fern contain
at most one possible triangle of side length $0$ at the beginning.
\end{lem}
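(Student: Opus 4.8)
**The plan is to prove Lemma \ref{lem3} by a two-part reduction: first eliminating zero-length triangles from the side ferns, then eliminating all but (possibly) one leading zero from the middle fern.** The key observation, illustrated in Figure \ref{Specialoff}, is that a triangle of side-length $0$ in a fern is geometrically invisible: the fern simply continues straight through that position. Thus, if $\mathbf{a}=(a_1,\dots,a_m)$ and some $a_i=0$, the region $E^{(j)}_{x,y,z}(\mathbf{a};\mathbf{c};\mathbf{b})$ is \emph{literally the same region} as one obtained by deleting that entry — but we must be careful, because deleting a single entry changes the parity of the number of entries and shifts the alternating up/down orientation of all subsequent triangles. The clean fix is to delete zero entries \emph{in adjacent pairs}: if $a_i=0$, then since the fern has an even number of entries by our standing convention, we can always pair $a_i$ with an adjacent $a_{i\pm1}$; if $a_{i\pm1}=0$ as well we delete both; if $a_{i\pm1}>0$ we instead merge — a down-pointing triangle of side $a_{i-1}$ followed by an up-pointing triangle of side $0$ followed by a down-pointing triangle of side $a_{i+1}$ is the same as a single down-pointing triangle of side $a_{i-1}+a_{i+1}$ only when $a_i=0$, so we replace the three entries $(a_{i-1},0,a_{i+1})$ by the single entry $(a_{i-1}+a_{i+1})$, reducing the count by two and preserving the parity pattern. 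Iterating removes all zeros from $\mathbf a$, and likewise from $\mathbf b$.

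**Next I would handle the middle fern $\mathbf{c}$.** Here the same merging trick applies to any zero entry $c_t=0$ with $t\ge 2$: the triangle $c_{t-1}$ (whatever its orientation) followed by a zero and then $c_{t+1}$ collapses to a single triangle $c_{t-1}+c_{t+1}$ of the same orientation as $c_{t-1}$, again decreasing $k$ by two. The only entry we cannot always eliminate this way is $c_1$: if $c_1=0$ and $c_2>0$, merging would need the \emph{root} of the fern to move, which is forbidden since the root position is exactly what pins down which of the thirty region types we are in. So after exhausting all merges we are left with a middle fern having at most one zero, necessarily in position $1$ — exactly conclusion (4). Throughout, conclusion (1) (tiling number unchanged) is immediate because the regions are literally equal as subsets of the triangular lattice; conclusion (3) and (4) are what the merging produces by construction.

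**For conclusion (2), that the parameter $h=p+x+z$ does not increase, I would check that each merge preserves or decreases the quasi-perimeter $p$.** Recall from the proof of Lemma \ref{claimp} that $p=2x+4y+4z+3a+3b+2|a-b|+2j$ (with $j\in\{0,1,2,3\}$ the auxiliary offset). Merging two fern entries into their sum leaves $a=a_1+a_2+\cdots$, $b$, $c$ each \emph{unchanged} (the sum of the side-lengths is invariant), leaves $x,y,z,j$ unchanged, and hence leaves $p$ — and therefore $h$ — literally unchanged. Deleting a pair of genuine zeros likewise changes nothing. So in fact $h$ is constant under the reduction, which is slightly stronger than (2). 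The one subtlety worth spelling out: one must confirm that the merge operation keeps us within the \emph{same} family (e.g. an $E^{(1)}$-region stays an $E^{(1)}$-region), which holds precisely because the root of the middle fern, the side-lengths $x,z$, their parities, and the level of the middle fern are all untouched — the only thing that changes is the internal combinatorial description of a fern that traces out the same staircase.

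**I expect the main obstacle to be purely bookkeeping rather than conceptual: keeping the alternating-orientation convention consistent through each merge, and verifying that the ``$e_a,o_a$'' quantities** appearing all over the theorem statements behave correctly. When we replace $(a_{i-1},0,a_{i+1})$ by $(a_{i-1}+a_{i+1})$, the indices of everything after position $i+1$ shift down by $2$, so an entry that was at an even position stays at an even position — hence $e_a=\sum_{i\text{ even}}a_i$ and $o_a=\sum_{i\text{ odd}}a_i$ are individually preserved, not merely their sum. This is the crucial compatibility check: it guarantees that all the $e_a,o_a,e_b,o_b,e_c,o_c$ terms in formulas (\ref{off1eq1})--(\ref{off1eqQK8}) are unaffected, so the reduced region and the original satisfy the same target formula, which is exactly why this lemma is the right normalization to set up before the induction in Section \ref{subsec:organize}. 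The proof is then a short paragraph once these invariances are recorded; no serious computation is needed.
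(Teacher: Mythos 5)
Your interior-merge argument (replacing $(a_{i-1},0,a_{i+1})$ by $(a_{i-1}+a_{i+1})$, with the check that $e_a$ and $o_a$ are individually preserved because indices shift by two) and your treatment of the middle fern are exactly the paper's procedures (3) and (1), and that part is sound. But there is a genuine gap: your reduction cannot remove a \emph{leading} zero in a side fern when the next entry is positive, i.e.\ the case $a_1=0$, $a_2>0$ (and symmetrically $b_1=0$, $b_2>0$). There is no $a_0$ to merge with, and you cannot delete the pair $(a_1,a_2)$ since $a_2>0$; so your claim ``iterating removes all zeros from $\mathbf a$'' fails already for $\mathbf a=(0,2,3,1)$, and conclusion (3) of Lemma \ref{lem3} is not reached. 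Nor can you simply drop the $0$ and relabel: the left fern must begin with a down-pointing triangle, so the fern $(a_2,a_3,\dots)$ describes a \emph{different} region in which the triangle of side $a_2$ has the opposite orientation, and moreover dropping a single leading entry swaps $o_a$ and $e_a$, which changes the edge-pushing distances and hence the base hexagon. So the ``geometrically invisible'' principle, which underlies your whole claim that all reductions are literal equalities of regions, breaks down precisely here.

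The paper handles this case by a different mechanism (its procedure (2), Figure \ref{Specialoff}(a)): when $a_1=0$ and $a_2>0$, the up-pointing triangle of side $a_2$ sits against the southwest side of the base hexagon and forces a strip of lozenges; removing those forced lozenges yields a region of the same type with left fern $(a_3,\dots,a_m)$, the same tiling number, and an $h$-parameter that is strictly smaller (the quasi-perimeter drops because $a$ decreases). This also shows why your claimed strengthening of (2) --- that $h$ stays constant throughout --- is not a bonus but a symptom of the missing step: the lemma only asserts ``at most,'' exactly because this forced-lozenge reduction genuinely shrinks the region. To repair your proof you need to add this case (and its mirror for the right fern); the rest of your bookkeeping can stand as is.
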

\begin{proof}
The proof is essentially the same as that of Lemma 3.5 in \cite{HoleDent}, however in order to make our paper more self-contained, we still present the full proof here.
We only consider the $R:=E^{(2)}_{x,y,z}(\textbf{a}; \textbf{c}; \textbf{b})$ region, the other regions can be treated similarly.

We will show how to eliminate $0$-triangles from the three ferns without changing the tiling number or increasing the $h$-parameter. We consider the following three $0$-eliminating procedures for the left fern:

(1) If $a_1=a_2=\dotsc=a_{2i}=0$, for some $i\geq 1$, we can simply truncate the first $2i$ zero terms in the sequence $\textbf{a}$. The new
region is `exactly' the old one, however, strictly speaking, it has less $0$-triangles in the left fern.

(2) If $a_1=0$ and $a_2>0$, then we can remove forced lozenges along the southwest side of the region $R$ and obtain the region
 $E^{(2)}_{x,y,z}(a_3,\dots,a_m;\ \textbf{c};\ \textbf{b})$ (see Figure \ref{Specialoff}(a)).  The new region has the same number of tilings as the original one, the
 $h$-parameter $a_1$-unit less than $h$, and less $0$-triangles in the left fern.

(3) If $a_i=0$, for some $i>1$, then we can eliminate this $0$-triangle by combining the $(i-1)$-th and the $(i+1)$-th triangles in the fern (as shown
in Figure \ref{Specialoff}(b)).

Repeating these three procedures if needed, one can eliminate all $0$-triangles from the left fern. Working similarly for the right fern, we obtain a region with no $0$-triangle in the left and right ferns. For the middle fern, we apply the procedure (3) to eliminate all $0$-triangles, except for a possible $0$-triangle at the beginning. This finishes our proof.
\end{proof}

In the next special case, we deal with off-central regions whose $y$-parameter achieves it minimal value.

\begin{lem}\label{lem4}
For any off-central region $R$ with the $y$-parameter minimal,  we can find another off-central region whose number of tilings is the same
 as that of $R$ and whose $h$-parameter is strictly smaller than that of $R$.
 \end{lem}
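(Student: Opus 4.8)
The statement asserts that when the $y$-parameter of an off-central region attains its minimal allowed value, the region can be rewritten as another off-central region with a strictly smaller $h$-parameter while preserving the tiling number. The plan is to handle this case by case according to which of the eight families $E^{(i)}$, $F^{(i)}$, $G^{(i)}$, $K^{(i)}$, $\overline{E}^{(i)}$, $\overline{F}^{(i)}$, $\overline{G}^{(i)}$, $\overline{K}^{(i)}$ the region $R$ belongs to, exploiting the fact (Lemma \ref{lem3}) that we may already assume the left and right ferns contain only positive triangles and the middle fern has at most one leading $0$-triangle. Recall from the proof of Lemma \ref{claimp} that the $y$-parameter satisfies $y\geq\max(-|a-b|,-j)$ where $j\in\{0,1,2,3\}$ is the offset used in choosing the auxiliary hexagon. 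So "$y$ minimal" means either $y=-j$ (when $|a-b|\geq j$) or $y=-|a-b|$ (when $|a-b|< j$); I would split along these two subcases.

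\textbf{Key steps.} First I would treat the subcase $y=-|a-b|$, say with $a\geq b$ so $y=-(a-b)$ (the $a<b$ situation is symmetric via a reflection, using the symmetries already invoked throughout Section \ref{Statement1}). In this situation one of the pushing distances built into the base hexagon becomes $0$: inspecting the edge-pushing procedure in Figure \ref{pushing1} (or Figure \ref{pushing2} for the barred families), the side of length $y+a+\max(b-a,0)$ or $2y+z+\cdots+|a-b|$ degenerates, forcing a strip of lozenges along one side of the base hexagon. I would remove those forced lozenges; removal does not change the tiling number (as noted at the start of Section \ref{sec:kuo}), and it strictly decreases the perimeter $p$ of the base hexagon, hence strictly decreases $h=p+x+z$. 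The remaining region, after the removal and possibly a $180^\circ$-rotation or reflection, must be identified with a bona fide off-central region of one of the eight families — this identification is the bookkeeping heart of the argument, and it parallels exactly the leftover-region identifications already carried out in Sections \ref{subsec:offrecurE}--\ref{subsec:offrecurK2}. Second, for the subcase $y=-j$ with $j\geq1$ (so $j\in\{1,2,3\}$, occurring only for the families whose auxiliary hexagon has $z+j$ in place of $z$ with $j>0$): here the relevant side of the auxiliary hexagon, of length depending on $2y+z+\cdots+j$, again collapses to its minimum, producing forced lozenges along the north or south portion of the region. The same recipe applies — strip off the forced lozenges, re-identify the leftover as an off-central region (typically one with $j$ smaller, e.g. passing from a "$z+2$" auxiliary hexagon to a "$z$" one, which is precisely how the $E^{(2)}$/$E^{(6)}$ regions relate to $E^{(1)}$ regions), and check that $h$ strictly drops. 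Throughout, I would use the Region-splitting Lemma (Lemma \ref{RS}) wherever the forced-lozenge strip separates the region into two pieces, to make the re-identification clean.

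\textbf{Main obstacle.} The genuine difficulty is not the forced-lozenge removal itself — that is routine once the degenerate side is located — but rather the combinatorial case analysis needed to name the leftover region correctly in each of the (roughly) sixteen-to-twenty subcases, tracking how the center of the auxiliary hexagon shifts and hence which off-central type $1,\dots,8$ (or $\overline{\cdot}$ version) results, and how each of the three ferns' parameters $\textbf{a},\textbf{b},\textbf{c}$ and the integers $x,y,z$ transform. In particular one must verify that the transformed parameters still lie in the admissible ranges stated in Theorems \ref{off1thm1}--\ref{off1thmQK8}, and that no new $0$-triangles are introduced that would violate the hypotheses of Lemma \ref{lem3}. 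I expect that, as with Lemma \ref{lem4}'s analogue Lemma 3.6 in \cite{HoleDent}, it suffices to write the argument in full detail for one representative region from each broad type (say $E^{(2)}$, $F^{(1)}$, $G^{(2)}$, $K^{(2)}$ and their barred counterparts) and remark that the others are entirely similar; the symmetries (180$^\circ$-rotations, vertical/horizontal reflections) already used to reduce thirty theorems to this proof will cut the number of genuinely distinct subcases down substantially.
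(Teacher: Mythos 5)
Your overall mechanism is the paper's: at the minimal $y$ the region acquires forced lozenges, their removal preserves the tiling number, and the leftover must be recognized as another off-central region with strictly smaller $h$. But your plan goes wrong at the two places where the actual content of the lemma sits. First, the case split. The inequality $y\geq\max(-|a-b|,-j)$ from Lemma \ref{claimp} is only a weak consequence of the true constraints: the minimum of $y$ is $\max(a-b,-2d)$ when the root of the middle fern lies $d$ units \emph{above} the center and $\max(b-a,-2d)$ when it lies below, so it depends on the sign of $a-b$ and on the upper/lower distinction, not merely on comparing $|a-b|$ with $j$, and it can equal $0$ (or be positive) rather than $-j$ or $-|a-b|$. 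With your two subcases you would be working at values of $y$ that several families never attain, and you would miss entirely the situations treated in the paper's Cases 1a and 2a, where the minimum is $0$ and the reduction consists of absorbing the first triangle $a_1$ of the left fern (resp.\ $b_1$ of the right fern) and resetting $y$ to $\min(a_1,a-b)$ (resp.\ $\min(b_1,b-a)$). The paper needs six cases (upper/lower crossed with how $a-b$ compares to $0$ and $\pm 2d$), not two.

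Second, the identification of the leftover region, which you defer to ``bookkeeping,'' is exactly what the lemma asserts, and the one concrete guess you offer is wrong: the passage is \emph{not} from a ``$z+2$'' auxiliary hexagon to a ``$z$'' one within the same family (it is not how $E^{(2)}$ relates to $E^{(1)}$). In every case of the paper's proof the leftover switches groups: an unbarred region $E^{(i)},F^{(i)},G^{(i)},K^{(i)}$ at minimal $y$ becomes a barred one $\overline{E}^{(i)},\overline{F}^{(i)},\overline{G}^{(i)},\overline{K}^{(i)}$ (with the index shifted to $3+i\bmod 6$, resp.\ $4+i\bmod 8$, in the lower cases, and with the roles of $\textbf{a}$ and $\textbf{b}$ swapped and $\textbf{c}$ reversed there), and conversely barred regions at $y=0$ become unbarred ones; in each instance the first triangle of the left or right fern is absorbed into the boundary, possibly a $0$-triangle is prepended to the middle fern (which Lemma \ref{lem3} explicitly permits, so your worry on that point is unnecessary), and the decrease of $h$ is exactly the side length $a_1$ or $b_1>0$ of the absorbed triangle — positivity being the reason Lemma \ref{lem3} is invoked first. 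This barred/unbarred switch is what guarantees the leftover is again one of the thirty enumerated off-central regions, so that the induction in Section 3.12 can close; without identifying it, the re-identification step is not routine leftover-matching as in Sections 3.3--3.10 but an unproved claim, and the proposal leaves the lemma essentially open.
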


\begin{proof}
By Lemma \ref{lem3}, we can assume that $a_i$'s, $b_j$'s, $c_t$'s are all positive, for $i\geq 1$, $j\geq 1$, $t\geq 2$.

Recall that if $R$ has the leftmost of its middle fern $d$ unit above (resp., below) the center of the auxiliary hexagon $H_0$, for $d=0,\frac{1}{2}, 1, \frac{3}{2}$, then
$y \geq \max(a-b,-2d)$ (resp., $y \geq \max(b-a,-2d)$). We call these regions \emph{upper regions}  (resp., \emph{lower regions}). We note that
all $\overline{E}^{(i)}$'s, $\overline{F}^{(i)}$'s, $\overline{G}^{(i)}$'s, and $\overline{K}^{(i)}$'s regions are upper ones. There  are
six cases to distinguish:


\begin{figure}\centering
\end{figure}

\medskip

\emph{Case 1a: $R$ is an upper region, and $a\geq b$.}

We consider the case $R$ has the leftmost of the middle fern $d$-unit above the center of the auxiliary hexagon, for  $d=1/2,1,3/2$. If $a\geq b$, then $y\geq 0$, we can find a `bar' or `un-bar' counterpart of the region that has the same number of tilings and has
$h$-parameter  strictly smaller. In particular, by considering forced lozenges, we get
\begin{align}
 \M(E^{(i)}_{x,0,z}(\textbf{a}; \textbf{c}; \textbf{b}))&=\M(\overline{E}^{(i)}_{x,\min(a_1, a-b),z}(a_2,\dotsc,a_m;\textbf{c}; \textbf{b}))\\
 \M(F^{(j)}_{x,0,z}(\textbf{a}; \textbf{c}; \textbf{b}))&=\M(\overline{F}^{(j)}_{x,\min(a_1, a-b),z}(a_2,\dotsc,a_m;\textbf{c}; \textbf{b}))\\
 \M(G^{(k)}_{x,0,z}(\textbf{a}; \textbf{c}; \textbf{b}))&=\M(\overline{G}^{(k)}_{x,\min(a_1, a-b),z}(a_2,\dotsc,a_m;\textbf{c}; \textbf{b}))\\
 \M(K^{(l)}_{x,0,z}(\textbf{a}; \textbf{c}; \textbf{b}))&=\M(\overline{K}^{(l)}_{x,\min(a_1, a-b),z}(a_2,\dotsc,a_m;\textbf{c}; \textbf{b}))
 \end{align}
for $i=1,2,3,4$, $j=3,4,5,6$, $k=2,3,4,5$, and $l=5,6,7,8$ (see Figure \ref{fig:Specialoff1} for examples). Similarly, we get
\begin{align}
 \M(\overline{E}^{(i)}_{x,0,z}(\textbf{a}; \textbf{c}; \textbf{b}))&=\M(E^{(i)}_{x,\min(a_1, a-b),z}(a_2,\dotsc,a_m;\textbf{c}; \textbf{b}))\\
 \M(\overline{F}^{(j)}_{x,0,z}(\textbf{a}; \textbf{c}; \textbf{b}))&=\M(F^{(j)}_{x,\min(a_1, a-b),z}(a_2,\dotsc,a_m;\textbf{c}; \textbf{b}))\\
 \M(\overline{G}^{(k)}_{x,0,z}(\textbf{a}; \textbf{c}; \textbf{b}))&=\M(G^{(k)}_{x,\min(a_1, a-b),z}(a_2,\dotsc,a_m;\textbf{c}; \textbf{b}))\\
 \M(\overline{K}^{(l)}_{x,0,z}(\textbf{a}; \textbf{c}; \textbf{b}))&=\M(K^{(l)}_{x,\min(a_1, a-b),z}(a_2,\dotsc,a_m;\textbf{c}; \textbf{b}))
 \end{align}
for $i=1,2,3,4$, $j=3,4,5,6$, $k=2,3,4,5$, and $l=5,6,7,8$ (see Figure \ref{fig:Specialoff2} for examples).

\begin{figure}\centering
\setlength{\unitlength}{3947sp}%
\begingroup\makeatletter\ifx\SetFigFont\undefined%
\gdef\SetFigFont#1#2#3#4#5{%
  \reset@font\fontsize{#1}{#2pt}%
  \fontfamily{#3}\fontseries{#4}\fontshape{#5}%
  \selectfont}%
\fi\endgroup%
\resizebox{15cm}{!}{
\begin{picture}(0,0)%
\includegraphics{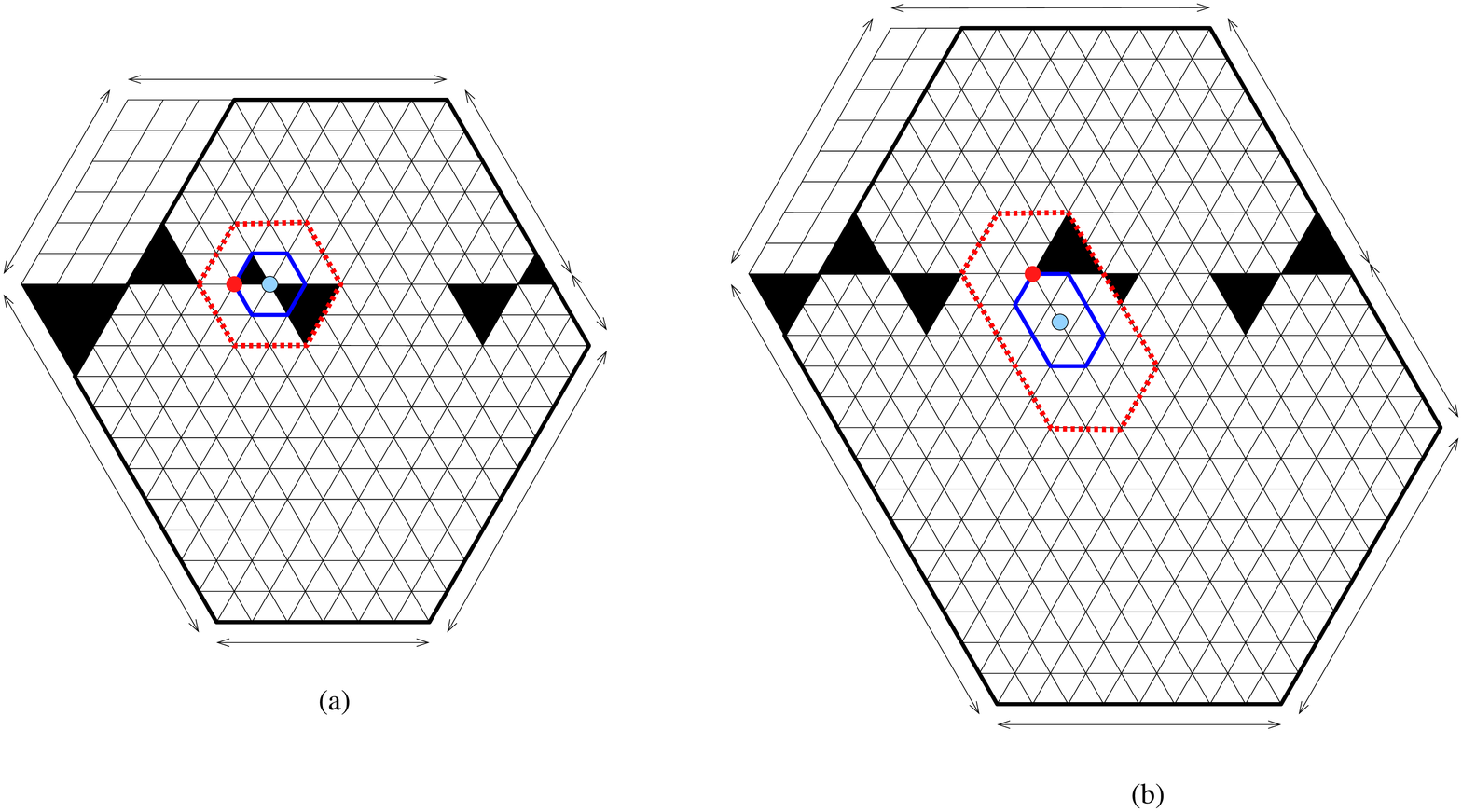}%
\end{picture}%
%
%

\begin{picture}(17137,9724)(2035,-11874)
\put(4641,-3251){\makebox(0,0)[lb]{\smash{{\SetFigFont{14}{16.8}{\rmdefault}{\mddefault}{\updefault}{\color[rgb]{0,0,0}$x+o_a+e_b+e_c$}%
}}}}
\put(7843,-3701){\rotatebox{300.0}{\makebox(0,0)[lb]{\smash{{\SetFigFont{14}{16.8}{\rmdefault}{\mddefault}{\updefault}{\color[rgb]{0,0,0}$z+e_a+o_b+o_c$}%
}}}}}
\put(9061,-5751){\rotatebox{300.0}{\makebox(0,0)[lb]{\smash{{\SetFigFont{14}{16.8}{\rmdefault}{\mddefault}{\updefault}{\color[rgb]{0,0,0}$a-b$}%
}}}}}
\put(7911,-9271){\rotatebox{60.0}{\makebox(0,0)[lb]{\smash{{\SetFigFont{14}{16.8}{\rmdefault}{\mddefault}{\updefault}{\color[rgb]{0,0,0}$z+o_a+e_b+e_c$}%
}}}}}
\put(5001,-10221){\makebox(0,0)[lb]{\smash{{\SetFigFont{14}{16.8}{\rmdefault}{\mddefault}{\updefault}{\color[rgb]{0,0,0}$x+e_a+o_b+o_c$}%
}}}}
\put(2451,-6911){\rotatebox{300.0}{\makebox(0,0)[lb]{\smash{{\SetFigFont{14}{16.8}{\rmdefault}{\mddefault}{\updefault}{\color[rgb]{0,0,0}$z+o_a+e_b+e_c+a-b$}%
}}}}}
\put(2281,-5261){\rotatebox{60.0}{\makebox(0,0)[lb]{\smash{{\SetFigFont{14}{16.8}{\rmdefault}{\mddefault}{\updefault}{\color[rgb]{0,0,0}$z+e_a+o_b+o_c$}%
}}}}}
\put(13161,-2431){\makebox(0,0)[lb]{\smash{{\SetFigFont{14}{16.8}{\rmdefault}{\mddefault}{\updefault}{\color[rgb]{0,0,0}$x+o_a+e_b+e_c$}%
}}}}
\put(14216,-11271){\makebox(0,0)[lb]{\smash{{\SetFigFont{14}{16.8}{\rmdefault}{\mddefault}{\updefault}{\color[rgb]{0,0,0}$x+e_a+o_b+o_c$}%
}}}}
\put(17889,-10229){\rotatebox{60.0}{\makebox(0,0)[lb]{\smash{{\SetFigFont{14}{16.8}{\rmdefault}{\mddefault}{\updefault}{\color[rgb]{0,0,0}$z+o_a+e_b+e_c$}%
}}}}}
\put(16846,-3050){\rotatebox{300.0}{\makebox(0,0)[lb]{\smash{{\SetFigFont{14}{16.8}{\rmdefault}{\mddefault}{\updefault}{\color[rgb]{0,0,0}$z+e_a+o_b+o_c$}%
}}}}}
\put(10627,-5063){\rotatebox{60.0}{\makebox(0,0)[lb]{\smash{{\SetFigFont{14}{16.8}{\rmdefault}{\mddefault}{\updefault}{\color[rgb]{0,0,0}$z+e_a+o_b+o_c$}%
}}}}}
\put(10926,-7035){\rotatebox{300.0}{\makebox(0,0)[lb]{\smash{{\SetFigFont{14}{16.8}{\rmdefault}{\mddefault}{\updefault}{\color[rgb]{0,0,0}$z+o_a+e_b+e_c+a-b+3$}%
}}}}}
\put(18481,-6051){\rotatebox{300.0}{\makebox(0,0)[lb]{\smash{{\SetFigFont{14}{16.8}{\rmdefault}{\mddefault}{\updefault}{\color[rgb]{0,0,0}$a-b+3$}%
}}}}}
\put(17201,-5581){\makebox(0,0)[lb]{\smash{{\SetFigFont{14}{16.8}{\rmdefault}{\mddefault}{\updefault}{\color[rgb]{1,1,1}$b_1$}%
}}}}
\put(12781,-5931){\makebox(0,0)[lb]{\smash{{\SetFigFont{14}{16.8}{\rmdefault}{\mddefault}{\updefault}{\color[rgb]{1,1,1}$a_3$}%
}}}}
\put(16411,-5951){\makebox(0,0)[lb]{\smash{{\SetFigFont{14}{16.8}{\rmdefault}{\mddefault}{\updefault}{\color[rgb]{1,1,1}$b_2$}%
}}}}
\put(2991,-6271){\makebox(0,0)[lb]{\smash{{\SetFigFont{14}{16.8}{\rmdefault}{\mddefault}{\updefault}{\color[rgb]{1,1,1}$a_1$}%
}}}}
\put(3881,-5651){\makebox(0,0)[lb]{\smash{{\SetFigFont{14}{16.8}{\rmdefault}{\mddefault}{\updefault}{\color[rgb]{1,1,1}$a_2$}%
}}}}
\put(8261,-5731){\makebox(0,0)[lb]{\smash{{\SetFigFont{14}{16.8}{\rmdefault}{\mddefault}{\updefault}{\color[rgb]{1,1,1}$b_1$}%
}}}}
\put(7561,-6121){\makebox(0,0)[lb]{\smash{{\SetFigFont{14}{16.8}{\rmdefault}{\mddefault}{\updefault}{\color[rgb]{1,1,1}$b_2$}%
}}}}
\put(11136,-5991){\makebox(0,0)[lb]{\smash{{\SetFigFont{14}{16.8}{\rmdefault}{\mddefault}{\updefault}{\color[rgb]{1,1,1}$a_1$}%
}}}}
\put(11856,-5541){\makebox(0,0)[lb]{\smash{{\SetFigFont{14}{16.8}{\rmdefault}{\mddefault}{\updefault}{\color[rgb]{1,1,1}$a_2$}%
}}}}
\end{picture}%
}
\caption{(a) Obtaining  an $\overline{E}^{(1)}$-type region from the region $E^{(1)}_{x,0,z}(\textbf{a};\textbf{c};\textbf{b})$ with $a\geq b$. (b) Obtaining a $\overline{G}^{(3)}$-type region from the region  $G^{(3)}_{x,0,z}(\textbf{a};\textbf{c};\textbf{b})$ with $a\geq b$.}\label{fig:Specialoff1}
\end{figure}

\begin{figure}\centering
\setlength{\unitlength}{3947sp}%
\begingroup\makeatletter\ifx\SetFigFont\undefined%
\gdef\SetFigFont#1#2#3#4#5{%
  \reset@font\fontsize{#1}{#2pt}%
  \fontfamily{#3}\fontseries{#4}\fontshape{#5}%
  \selectfont}%
\fi\endgroup%
\resizebox{15cm}{!}{
\begin{picture}(0,0)%
\includegraphics{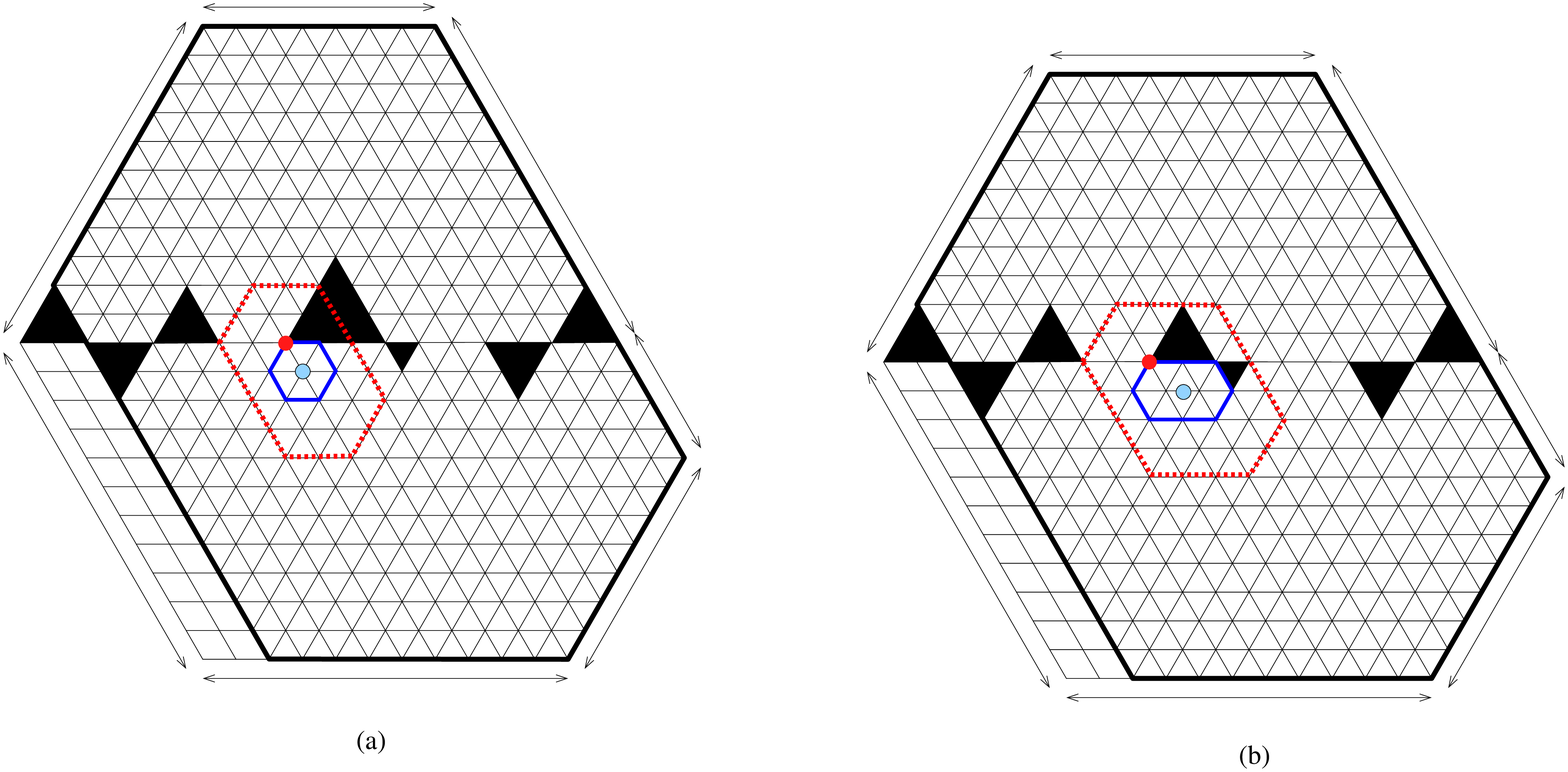}%
\end{picture}%
%
%

\begin{picture}(19555,9890)(2444,-9774)
\put(5627,-165){\makebox(0,0)[lb]{\smash{{\SetFigFont{14}{16.8}{\familydefault}{\mddefault}{\updefault}{\color[rgb]{0,0,0}$x+e_a+e_b+e_c$}%
}}}}
\put(8695,-1299){\rotatebox{300.0}{\makebox(0,0)[lb]{\smash{{\SetFigFont{14}{16.8}{\familydefault}{\mddefault}{\updefault}{\color[rgb]{0,0,0}$z+o_a+o_b+o_c$}%
}}}}}
\put(10639,-4606){\rotatebox{300.0}{\makebox(0,0)[lb]{\smash{{\SetFigFont{14}{16.8}{\familydefault}{\mddefault}{\updefault}{\color[rgb]{0,0,0}$a-b+2$}%
}}}}}
\put(10148,-8138){\rotatebox{60.0}{\makebox(0,0)[lb]{\smash{{\SetFigFont{14}{16.8}{\familydefault}{\mddefault}{\updefault}{\color[rgb]{0,0,0}$z+e_a+e_b+e_c$}%
}}}}}
\put(6061,-8971){\makebox(0,0)[lb]{\smash{{\SetFigFont{14}{16.8}{\familydefault}{\mddefault}{\updefault}{\color[rgb]{0,0,0}$x+o_a+o_b+o_c$}%
}}}}
\put(2558,-5433){\rotatebox{300.0}{\makebox(0,0)[lb]{\smash{{\SetFigFont{14}{16.8}{\familydefault}{\mddefault}{\updefault}{\color[rgb]{0,0,0}$z+e_a+e_b+e_c+a-b+2$}%
}}}}}
\put(2885,-3177){\rotatebox{60.0}{\makebox(0,0)[lb]{\smash{{\SetFigFont{14}{16.8}{\familydefault}{\mddefault}{\updefault}{\color[rgb]{0,0,0}$z+o_a+o_b+o_c$}%
}}}}}
\put(15921,-648){\makebox(0,0)[lb]{\smash{{\SetFigFont{14}{16.8}{\familydefault}{\mddefault}{\updefault}{\color[rgb]{0,0,0}$x+e_a+e_b+e_c$}%
}}}}
\put(19551,-1863){\rotatebox{300.0}{\makebox(0,0)[lb]{\smash{{\SetFigFont{14}{16.8}{\familydefault}{\mddefault}{\updefault}{\color[rgb]{0,0,0}$z+o_a+o_b+o_c$}%
}}}}}
\put(21308,-4907){\rotatebox{300.0}{\makebox(0,0)[lb]{\smash{{\SetFigFont{14}{16.8}{\familydefault}{\mddefault}{\updefault}{\color[rgb]{0,0,0}$a-b+2$}%
}}}}}
\put(20605,-8732){\rotatebox{60.0}{\makebox(0,0)[lb]{\smash{{\SetFigFont{14}{16.8}{\familydefault}{\mddefault}{\updefault}{\color[rgb]{0,0,0}$z+e_a+e_b+e_c$}%
}}}}}
\put(16509,-9152){\makebox(0,0)[lb]{\smash{{\SetFigFont{14}{16.8}{\familydefault}{\mddefault}{\updefault}{\color[rgb]{0,0,0}$x+o_a+o_b+o_c$}%
}}}}
\put(12932,-5398){\rotatebox{300.0}{\makebox(0,0)[lb]{\smash{{\SetFigFont{14}{16.8}{\familydefault}{\mddefault}{\updefault}{\color[rgb]{0,0,0}$z+e_a+e_b+e_c+a-b+2$}%
}}}}}
\put(13331,-3852){\rotatebox{60.0}{\makebox(0,0)[lb]{\smash{{\SetFigFont{14}{16.8}{\familydefault}{\mddefault}{\updefault}{\color[rgb]{0,0,0}$z+o_a+o_b+o_c$}%
}}}}}
\put(15206,-4561){\makebox(0,0)[lb]{\smash{{\SetFigFont{14}{16.8}{\familydefault}{\mddefault}{\updefault}{\color[rgb]{1,1,1}$a_3$}%
}}}}
\put(13566,-4581){\makebox(0,0)[lb]{\smash{{\SetFigFont{14}{16.8}{\familydefault}{\mddefault}{\updefault}{\color[rgb]{1,1,1}$a_1$}%
}}}}
\put(2967,-4299){\makebox(0,0)[lb]{\smash{{\SetFigFont{14}{16.8}{\familydefault}{\mddefault}{\updefault}{\color[rgb]{1,1,1}$a_1$}%
}}}}
\put(3786,-4771){\makebox(0,0)[lb]{\smash{{\SetFigFont{14}{16.8}{\familydefault}{\mddefault}{\updefault}{\color[rgb]{1,1,1}$a_2$}%
}}}}
\put(4604,-4299){\makebox(0,0)[lb]{\smash{{\SetFigFont{14}{16.8}{\familydefault}{\mddefault}{\updefault}{\color[rgb]{1,1,1}$a_3$}%
}}}}
\put(9514,-4299){\makebox(0,0)[lb]{\smash{{\SetFigFont{14}{16.8}{\familydefault}{\mddefault}{\updefault}{\color[rgb]{1,1,1}$b_1$}%
}}}}
\put(8695,-4771){\makebox(0,0)[lb]{\smash{{\SetFigFont{14}{16.8}{\familydefault}{\mddefault}{\updefault}{\color[rgb]{1,1,1}$b_2$}%
}}}}
\put(19298,-5077){\makebox(0,0)[lb]{\smash{{\SetFigFont{14}{16.8}{\familydefault}{\mddefault}{\updefault}{\color[rgb]{1,1,1}$b_2$}%
}}}}
\put(20101,-4591){\makebox(0,0)[lb]{\smash{{\SetFigFont{14}{16.8}{\familydefault}{\mddefault}{\updefault}{\color[rgb]{1,1,1}$b_1$}%
}}}}
\put(14366,-5001){\makebox(0,0)[lb]{\smash{{\SetFigFont{14}{16.8}{\familydefault}{\mddefault}{\updefault}{\color[rgb]{1,1,1}$a_2$}%
}}}}
\end{picture}%
}
\caption{(a) Obtaining an $E^{(2)}$-type region from the region $\overline{E}^{(2)}_{x,0,z}(\textbf{a};\textbf{c};\textbf{b})$ with $a\geq b$.
(b) Obtaining an $E^{(4)}$-type region from the region $\overline{E}^{(4)}_{x,0,z}(\textbf{a};\textbf{c};\textbf{b})$ with $a\geq b$.}\label{fig:Specialoff2}
\end{figure}

\begin{figure}\centering
\setlength{\unitlength}{3947sp}%
\begingroup\makeatletter\ifx\SetFigFont\undefined%
\gdef\SetFigFont#1#2#3#4#5{%
  \reset@font\fontsize{#1}{#2pt}%
  \fontfamily{#3}\fontseries{#4}\fontshape{#5}%
  \selectfont}%
\fi\endgroup%
\resizebox{15cm}{!}{
\begin{picture}(0,0)%
\includegraphics{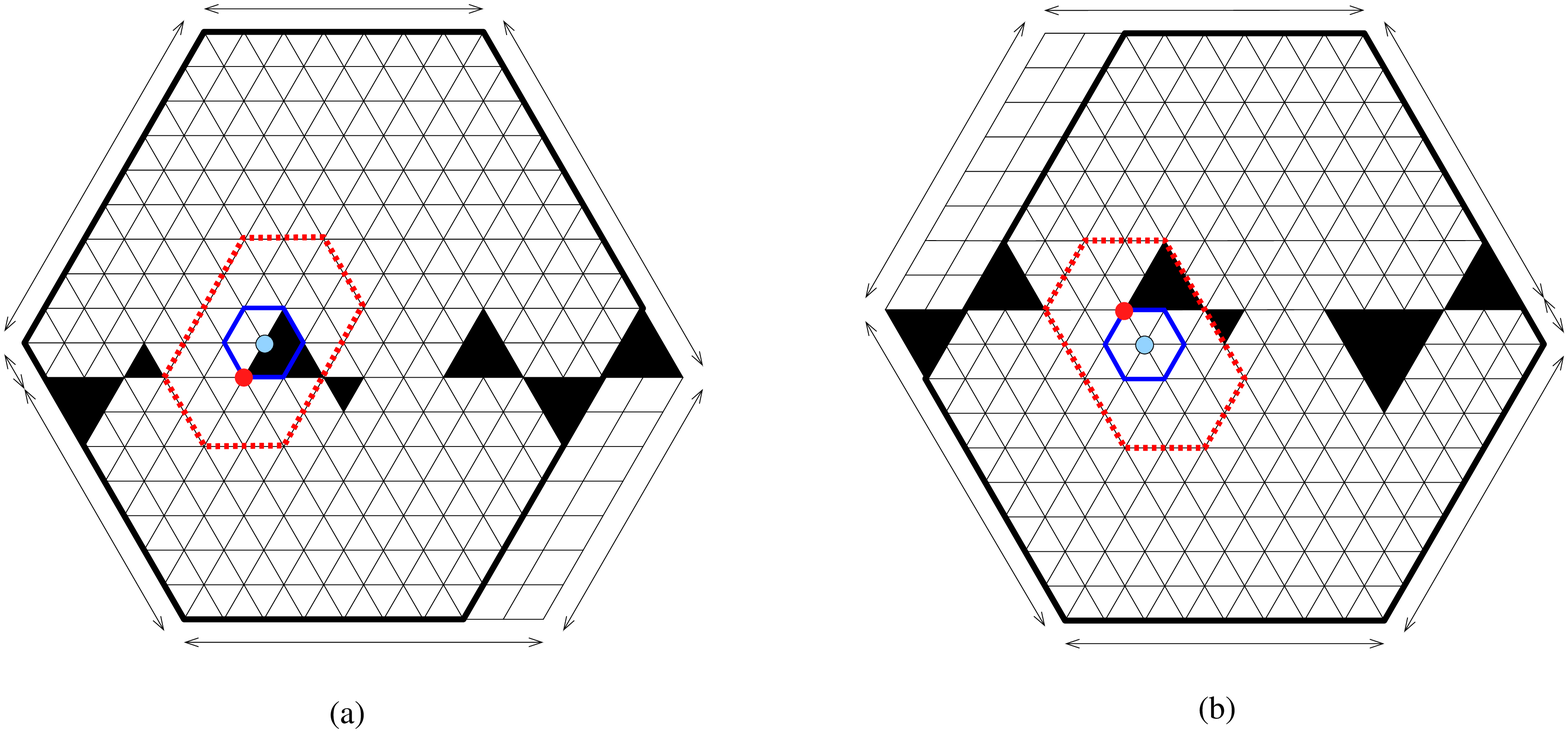}%
\end{picture}%
%
%

\begin{picture}(17951,7909)(751,-10487)
\put(11906,-5981){\makebox(0,0)[lb]{\smash{{\SetFigFont{14}{16.8}{\familydefault}{\mddefault}{\updefault}{\color[rgb]{1,1,1}$a_2$}%
}}}}
\put(11116,-6431){\makebox(0,0)[lb]{\smash{{\SetFigFont{14}{16.8}{\familydefault}{\mddefault}{\updefault}{\color[rgb]{1,1,1}$a_1$}%
}}}}
\put(4331,-2870){\makebox(0,0)[lb]{\smash{{\SetFigFont{14}{16.8}{\familydefault}{\mddefault}{\updefault}{\color[rgb]{0,0,0}$x+o_a+e_b+e_c$}%
}}}}
\put(7503,-3750){\rotatebox{300.0}{\makebox(0,0)[lb]{\smash{{\SetFigFont{14}{16.8}{\familydefault}{\mddefault}{\updefault}{\color[rgb]{0,0,0}$z+e_a+o_b+o_c+b-a-2$}%
}}}}}
\put(8031,-9150){\rotatebox{60.0}{\makebox(0,0)[lb]{\smash{{\SetFigFont{14}{16.8}{\familydefault}{\mddefault}{\updefault}{\color[rgb]{0,0,0}$z+o_a+e_b+e_c$}%
}}}}}
\put(4321,-9880){\makebox(0,0)[lb]{\smash{{\SetFigFont{14}{16.8}{\familydefault}{\mddefault}{\updefault}{\color[rgb]{0,0,0}$x+e_a+o_b+o_c$}%
}}}}
\put(1871,-7390){\rotatebox{300.0}{\makebox(0,0)[lb]{\smash{{\SetFigFont{14}{16.8}{\familydefault}{\mddefault}{\updefault}{\color[rgb]{0,0,0}$z+o_a+e_b+e_c$}%
}}}}}
\put(766,-6879){\makebox(0,0)[lb]{\smash{{\SetFigFont{14}{16.8}{\familydefault}{\mddefault}{\updefault}{\color[rgb]{0,0,0}$b-a-2$}%
}}}}
\put(2051,-5580){\rotatebox{60.0}{\makebox(0,0)[lb]{\smash{{\SetFigFont{14}{16.8}{\familydefault}{\mddefault}{\updefault}{\color[rgb]{0,0,0}$z+e_a+o_b+o_c$}%
}}}}}
\put(13001,-2871){\makebox(0,0)[lb]{\smash{{\SetFigFont{14}{16.8}{\familydefault}{\mddefault}{\updefault}{\color[rgb]{0,0,0}$x+o_a+e_b+e_c$}%
}}}}
\put(16443,-3661){\rotatebox{300.0}{\makebox(0,0)[lb]{\smash{{\SetFigFont{14}{16.8}{\familydefault}{\mddefault}{\updefault}{\color[rgb]{0,0,0}$z+e_a+o_b+o_c$}%
}}}}}
\put(17811,-6151){\makebox(0,0)[lb]{\smash{{\SetFigFont{14}{16.8}{\familydefault}{\mddefault}{\updefault}{\color[rgb]{0,0,0}$2-(b-a)$}%
}}}}
\put(16729,-9089){\rotatebox{60.0}{\makebox(0,0)[lb]{\smash{{\SetFigFont{14}{16.8}{\familydefault}{\mddefault}{\updefault}{\color[rgb]{0,0,0}$z+o_a+e_b+e_c$}%
}}}}}
\put(10523,-6731){\rotatebox{300.0}{\makebox(0,0)[lb]{\smash{{\SetFigFont{14}{16.8}{\familydefault}{\mddefault}{\updefault}{\color[rgb]{0,0,0}$z+o_a+e_b+e_c+2-(b-a)$}%
}}}}}
\put(10687,-5583){\rotatebox{60.0}{\makebox(0,0)[lb]{\smash{{\SetFigFont{14}{16.8}{\familydefault}{\mddefault}{\updefault}{\color[rgb]{0,0,0}$z+e_a+o_b+o_c$}%
}}}}}
\put(13196,-9891){\makebox(0,0)[lb]{\smash{{\SetFigFont{14}{16.8}{\familydefault}{\mddefault}{\updefault}{\color[rgb]{0,0,0}$x+e_a+o_b+o_c$}%
}}}}
\put(2521,-7130){\makebox(0,0)[lb]{\smash{{\SetFigFont{14}{16.8}{\familydefault}{\mddefault}{\updefault}{\color[rgb]{1,1,1}$a_1$}%
}}}}
\put(3111,-6770){\makebox(0,0)[lb]{\smash{{\SetFigFont{14}{16.8}{\familydefault}{\mddefault}{\updefault}{\color[rgb]{1,1,1}$a_2$}%
}}}}
\put(8221,-6690){\makebox(0,0)[lb]{\smash{{\SetFigFont{14}{16.8}{\familydefault}{\mddefault}{\updefault}{\color[rgb]{1,1,1}$b_1$}%
}}}}
\put(7431,-7140){\makebox(0,0)[lb]{\smash{{\SetFigFont{14}{16.8}{\familydefault}{\mddefault}{\updefault}{\color[rgb]{1,1,1}$b_2$}%
}}}}
\put(6621,-6730){\makebox(0,0)[lb]{\smash{{\SetFigFont{14}{16.8}{\familydefault}{\mddefault}{\updefault}{\color[rgb]{1,1,1}$b_3$}%
}}}}
\put(16826,-5971){\makebox(0,0)[lb]{\smash{{\SetFigFont{14}{16.8}{\familydefault}{\mddefault}{\updefault}{\color[rgb]{1,1,1}$b_1$}%
}}}}
\put(15776,-6581){\makebox(0,0)[lb]{\smash{{\SetFigFont{14}{16.8}{\familydefault}{\mddefault}{\updefault}{\color[rgb]{1,1,1}$b_2$}%
}}}}
\end{picture}%
}
\caption{ (a) Obtaining a (horizontally reflected) $\overline{E}^{(2)}$-type region
from the region $E^{(2)}_{x,-2,z}(\textbf{a};\textbf{c};\textbf{b})$ in the case $b-a\geq 2$. (b) Obtaining a $\overline{E}^{(2)}$-type region
from the region $E^{(2)}_{x,-2,z}(\textbf{a};\textbf{c};\textbf{b})$ in the case $0<b-a\leq 2$.}\label{fig:Specialoff4}
\end{figure}

\medskip

\emph{Case 1b: $R$ is an upper region, and $0<2d\leq  b-a$.}

\medskip

We have in this case $y\geq -2d$. If $y=-2d$, then
\begin{align}
  \M(E^{(i)}_{x,-2d,z}(\textbf{a}; \textbf{c}; \textbf{b}))&=\M(\overline{E}^{(i)}_{x,\min(b_1, b-a)-2d,z}(\textbf{a};\ {}^0\textbf{c};\  b_2,\dotsc,b_n))\\
  \M(F^{(j)}_{x,-2d,z}(\textbf{a}; \textbf{c}; \textbf{b}))&=\M(\overline{F}^{(j)}_{x,\min(b_1, b-a)-2d,z}(\textbf{a};\ {}^0\textbf{c};\  b_2,\dotsc,b_n))\\
  \M(G^{(k)}_{x,-2d,z}(\textbf{a}; \textbf{c}; \textbf{b}))&=\M(\overline{G}^{(k)}_{x,\min(b_1, b-a)-2d,z}(\textbf{a};\ {}^0\textbf{c};\ b_2,\dotsc,b_n))\\
  \M(K^{(l)}_{x,-2d,z}(\textbf{a}; \textbf{c}; \textbf{b}))&=\M(\overline{K}^{(l)}_{x,\min(b_1, b-a)-2d,z}(\textbf{a};\ {}^0\textbf{c};\ b_2,\dotsc,b_n)),
 \end{align}
for $i=2,3$, $j=4,5,6$, $k=2,3,4,5$, and $l=5,6,7,8$ (illustrated in Figure \ref{fig:Specialoff4}(a)). Recall that we use the notation ${}^0\textbf{c}$ for the sequence obtained from $\textbf{c}$ by including a new $0$ term in the front.
It is easy to see that the regions on the right-hand sides of the above equalities satisfy the conditions of the lemma.

\medskip

\emph{Case 1c: $R$ is an upper region, and $0< b-a\leq 2d$.}

\medskip

We have here $y\geq a-b$. If $y=a-b$, then
\begin{align}
  \M(E^{(i)}_{x,a-b,z}(\textbf{a}; \textbf{c}; \textbf{b}))&=\M(\overline{E}^{(i)}_{x,a-b,z}(a_2,\dotsc,a_m;\textbf{c};\textbf{b}))\\
  \M(F^{(j)}_{x,-2d,z}(\textbf{a}; \textbf{c}; \textbf{b}))&=\M(\overline{F}^{(j)}_{x,a-b,z}(a_2,\dotsc,a_m;\textbf{c};\textbf{b}))\\
  \M(G^{(k)}_{x,-2d,z}(\textbf{a}; \textbf{c}; \textbf{b}))&=\M(\overline{G}^{(k)}_{x,a-b,z}(a_2,\dotsc,a_m;\textbf{c};\textbf{b}))\\
  \M(K^{(l)}_{x,-2d,z}(\textbf{a}; \textbf{c}; \textbf{b}))&=\M(\overline{K}^{(l)}_{x,a-b,z}(a_2,\dotsc,a_m;\textbf{c};\textbf{b})),
 \end{align}
for $i=2,3$, $j=4,5,6$, $k=2,3,4,5$, and $l=5,6,7,8$  (see Figure \ref{fig:Specialoff4}(b) for an example). It is easy to see that the regions on the right-hand sides of the above equalities satisfy the conditions of the lemma.

\medskip

\emph{Case 2a: $R$ is a lower region and $a\leq b$.}

Then $y\geq 0$, and we get by removing forced lozenges:
\begin{align}
  \M(E^{(i)}_{x,0,z}(\textbf{a}; \textbf{c}; \textbf{b}))&=\M(\overline{E}^{(3+i \mod 6)}_{x,\min(b_1, b-a),z}(b_2,\dotsc,b_n;\overline{\textbf{c}}; \textbf{a}))\\
 \M(F^{(j)}_{x,0,z}(\textbf{a}; \textbf{c}; \textbf{b}))&=\M(\overline{F}^{(4+j \mod 8)}_{x,\min(b_1, b-a),z}(b_2,\dotsc,b_n;\overline{\textbf{c}}; \textbf{a}))\\
 \M(G^{(k)}_{x,0,z}(\textbf{a}; \textbf{c}; \textbf{b}))&=\M(\overline{G}^{(4+k \mod 8)}_{x,\min(b_1, b-a),z}(b_2,\dotsc,b_n;\overline{\textbf{c}}; \textbf{a}))\\
 \M(K^{(l)}_{x,0,z}(\textbf{a}; \textbf{c}; \textbf{b}))&=\M(\overline{K}^{(4+l \mod 8)}_{x,\min(b_1, b-a),z}(b_2,\dotsc,b_n;\overline{\textbf{c}}; \textbf{a})),
 \end{align}
for $i=1,4,5,6$, $j=1,2,3,7,8$, $k=1,6,7,8$, and $l=1,2,3,4$. Recall that $\overline{\textbf{c}}$ is the sequence obtained from $\textbf{c}$ by reverting the order of the terms if we have an even number of terms, otherwise, we revert the sequence and include a new $0$ in front of the resulting one. One readily sees that the regions on the right-hand sides of the above equalities satisfy the conditions of the lemma.

\begin{figure}\centering\setlength{\unitlength}{3947sp}%
\begingroup\makeatletter\ifx\SetFigFont\undefined%
\gdef\SetFigFont#1#2#3#4#5{%
  \reset@font\fontsize{#1}{#2pt}%
  \fontfamily{#3}\fontseries{#4}\fontshape{#5}%
  \selectfont}%
\fi\endgroup%
\resizebox{15cm}{!}{
\begin{picture}(0,0)%
\includegraphics{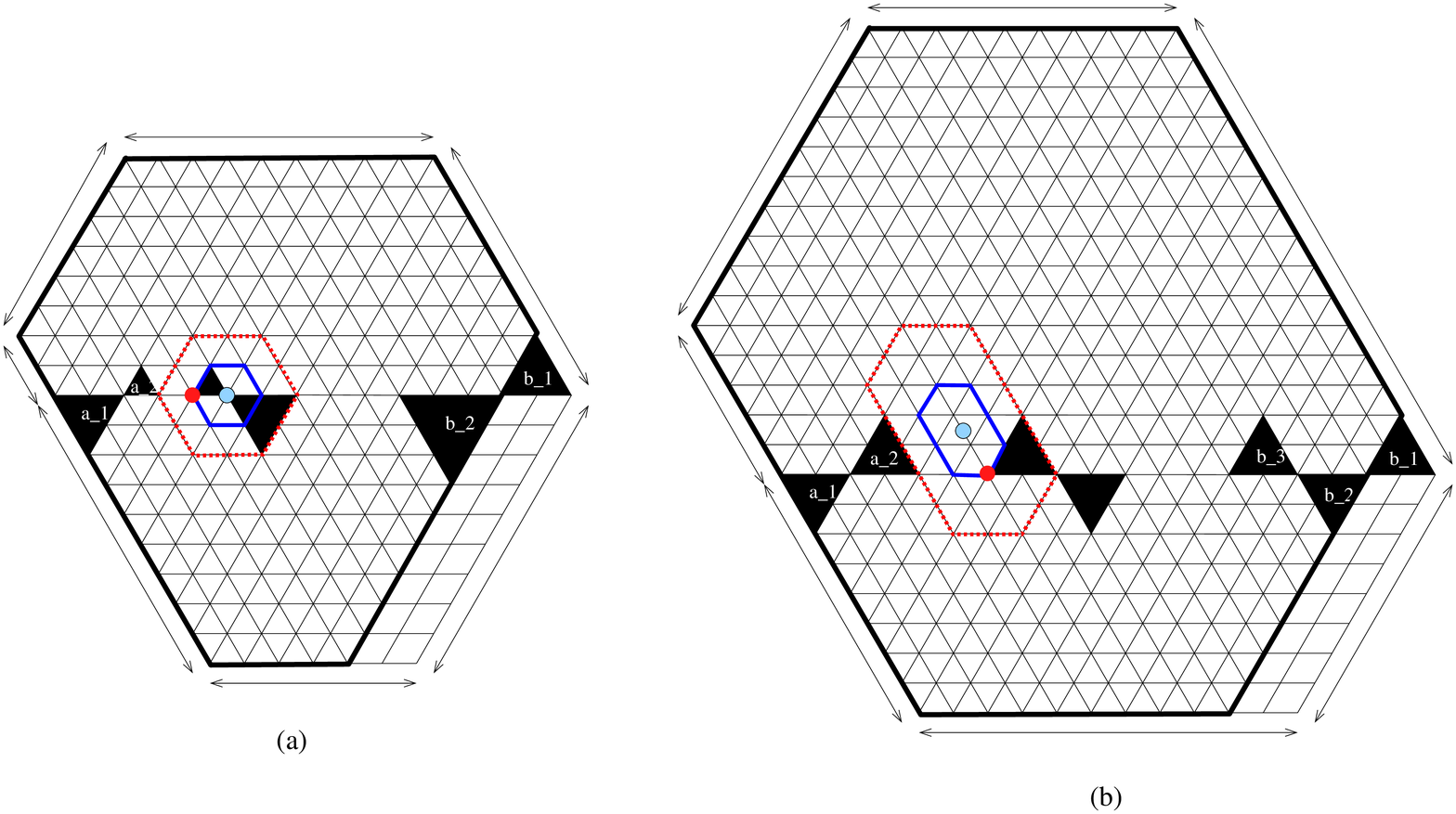}%
\end{picture}%
%
%

\begin{picture}(17517,10024)(1184,-10294)
\put(3660,-2050){\makebox(0,0)[lb]{\smash{{\SetFigFont{14}{16.8}{\familydefault}{\mddefault}{\updefault}{\color[rgb]{0,0,0}$x+o_a+e_b+e_c$}%
}}}}
\put(7042,-2550){\rotatebox{300.0}{\makebox(0,0)[lb]{\smash{{\SetFigFont{14}{16.8}{\familydefault}{\mddefault}{\updefault}{\color[rgb]{0,0,0}$z+e_a+o_b+o_c+b-a$}%
}}}}}
\put(7050,-8270){\rotatebox{60.0}{\makebox(0,0)[lb]{\smash{{\SetFigFont{14}{16.8}{\familydefault}{\mddefault}{\updefault}{\color[rgb]{0,0,0}$z+o_a+e_b+e_c$}%
}}}}}
\put(4158,-9023){\makebox(0,0)[lb]{\smash{{\SetFigFont{14}{16.8}{\familydefault}{\mddefault}{\updefault}{\color[rgb]{0,0,0}$x+e_a+o_b+o_c$}%
}}}}
\put(1792,-6030){\rotatebox{300.0}{\makebox(0,0)[lb]{\smash{{\SetFigFont{14}{16.8}{\familydefault}{\mddefault}{\updefault}{\color[rgb]{0,0,0}$z+o_a+e_b+e_c$}%
}}}}}
\put(1212,-5050){\rotatebox{300.0}{\makebox(0,0)[lb]{\smash{{\SetFigFont{14}{16.8}{\familydefault}{\mddefault}{\updefault}{\color[rgb]{0,0,0}$b-a$}%
}}}}}
\put(1430,-4080){\rotatebox{60.0}{\makebox(0,0)[lb]{\smash{{\SetFigFont{14}{16.8}{\familydefault}{\mddefault}{\updefault}{\color[rgb]{0,0,0}$z+e_a+o_b+o_c$}%
}}}}}
\put(9301,-5108){\rotatebox{300.0}{\makebox(0,0)[lb]{\smash{{\SetFigFont{14}{16.8}{\familydefault}{\mddefault}{\updefault}{\color[rgb]{0,0,0}$b-a+3$}%
}}}}}
\put(12321,-551){\makebox(0,0)[lb]{\smash{{\SetFigFont{14}{16.8}{\familydefault}{\mddefault}{\updefault}{\color[rgb]{0,0,0}$x+o_a+e_b+e_c$}%
}}}}
\put(9677,-3173){\rotatebox{60.0}{\makebox(0,0)[lb]{\smash{{\SetFigFont{14}{16.8}{\familydefault}{\mddefault}{\updefault}{\color[rgb]{0,0,0}$z+e_a+o_b+o_c$}%
}}}}}
\put(16553,-1851){\rotatebox{300.0}{\makebox(0,0)[lb]{\smash{{\SetFigFont{14}{16.8}{\familydefault}{\mddefault}{\updefault}{\color[rgb]{0,0,0}$z+e_a+o_b+o_c+b-a+3$}%
}}}}}
\put(17589,-9009){\rotatebox{60.0}{\makebox(0,0)[lb]{\smash{{\SetFigFont{14}{16.8}{\familydefault}{\mddefault}{\updefault}{\color[rgb]{0,0,0}$z+o_a+e_b+e_c$}%
}}}}}
\put(10478,-7069){\rotatebox{300.0}{\makebox(0,0)[lb]{\smash{{\SetFigFont{14}{16.8}{\familydefault}{\mddefault}{\updefault}{\color[rgb]{0,0,0}$z+o_a+e_b+e_c$}%
}}}}}
\put(13636,-9681){\makebox(0,0)[lb]{\smash{{\SetFigFont{14}{16.8}{\familydefault}{\mddefault}{\updefault}{\color[rgb]{0,0,0}$x+e_a+o_b+o_c$}%
}}}}
\end{picture}%
}
\caption{(a) Obtaining  an $\overline{E}^{(1)}$-type region from the region $E^{(1)}_{x,0,z}(\textbf{a};\textbf{c};\textbf{b})$ with $a\leq b$. (b) Obtaining a $\overline{K}^{(6)}$-type region from the region  $K^{(2)}_{x,0,z}(\textbf{a};\textbf{c};\textbf{b})$ with $a\leq b$.}\label{fig:Specialoff3}
\end{figure}

\begin{figure}\centering
\setlength{\unitlength}{3947sp}%
\begingroup\makeatletter\ifx\SetFigFont\undefined%
\gdef\SetFigFont#1#2#3#4#5{%
  \reset@font\fontsize{#1}{#2pt}%
  \fontfamily{#3}\fontseries{#4}\fontshape{#5}%
  \selectfont}%
\fi\endgroup%
\resizebox{15cm}{!}{
\begin{picture}(0,0)%
\includegraphics{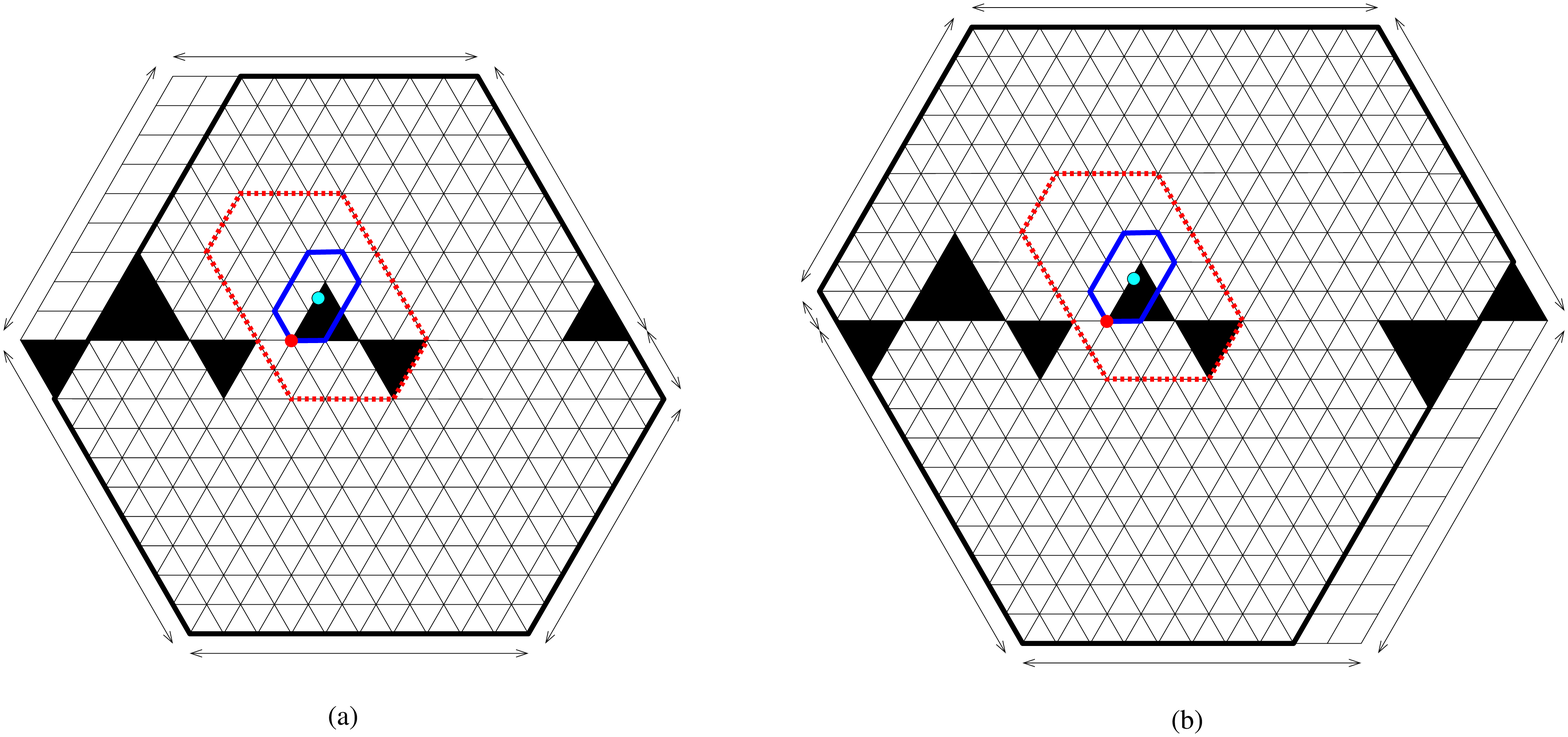}%
\end{picture}%
%
%

\begin{picture}(19117,9233)(3641,-11177)
\put(11018,-9474){\rotatebox{60.0}{\makebox(0,0)[lb]{\smash{{\SetFigFont{14}{16.8}{\familydefault}{\mddefault}{\updefault}{\color[rgb]{0,0,0}$z+o_a+e_b+e_c$}%
}}}}}
\put(6994,-10522){\makebox(0,0)[lb]{\smash{{\SetFigFont{14}{16.8}{\familydefault}{\mddefault}{\updefault}{\color[rgb]{0,0,0}$x+e_a+o_b+o_c$}%
}}}}
\put(11631,-6389){\rotatebox{300.0}{\makebox(0,0)[lb]{\smash{{\SetFigFont{14}{16.8}{\familydefault}{\mddefault}{\updefault}{\color[rgb]{0,0,0}$a-b-3$}%
}}}}}
\put(9929,-3220){\rotatebox{300.0}{\makebox(0,0)[lb]{\smash{{\SetFigFont{14}{16.8}{\familydefault}{\mddefault}{\updefault}{\color[rgb]{0,0,0}$z+e_a+o_b+o_c$}%
}}}}}
\put(16886,-2235){\makebox(0,0)[lb]{\smash{{\SetFigFont{14}{16.8}{\familydefault}{\mddefault}{\updefault}{\color[rgb]{0,0,0}$x+o_a+e_b+e_c$}%
}}}}
\put(20978,-2838){\rotatebox{300.0}{\makebox(0,0)[lb]{\smash{{\SetFigFont{14}{16.8}{\familydefault}{\mddefault}{\updefault}{\color[rgb]{0,0,0}$z+e_a+o_b+o_c+3-(a-b)$}%
}}}}}
\put(21203,-9203){\rotatebox{60.0}{\makebox(0,0)[lb]{\smash{{\SetFigFont{14}{16.8}{\familydefault}{\mddefault}{\updefault}{\color[rgb]{0,0,0}$z+o_a+e_b+e_c$}%
}}}}}
\put(12480,-6248){\makebox(0,0)[lb]{\smash{{\SetFigFont{14}{16.8}{\familydefault}{\mddefault}{\updefault}{\color[rgb]{0,0,0}$3-(a-b)$}%
}}}}
\put(17185,-10618){\makebox(0,0)[lb]{\smash{{\SetFigFont{14}{16.8}{\familydefault}{\mddefault}{\updefault}{\color[rgb]{0,0,0}$x+e_a+o_b+o_c$}%
}}}}
\put(13715,-7078){\rotatebox{300.0}{\makebox(0,0)[lb]{\smash{{\SetFigFont{14}{16.8}{\familydefault}{\mddefault}{\updefault}{\color[rgb]{0,0,0}$z+o_a+e_b+e_c$}%
}}}}}
\put(13633,-4715){\rotatebox{60.0}{\makebox(0,0)[lb]{\smash{{\SetFigFont{14}{16.8}{\familydefault}{\mddefault}{\updefault}{\color[rgb]{0,0,0}$z+e_a+o_b+o_c$}%
}}}}}
\put(6721,-2862){\makebox(0,0)[lb]{\smash{{\SetFigFont{14}{16.8}{\familydefault}{\mddefault}{\updefault}{\color[rgb]{0,0,0}$x+o_a+e_b+e_c$}%
}}}}
\put(4063,-5222){\rotatebox{60.0}{\makebox(0,0)[lb]{\smash{{\SetFigFont{14}{16.8}{\familydefault}{\mddefault}{\updefault}{\color[rgb]{0,0,0}$z+e_a+o_b+o_c$}%
}}}}}
\put(3755,-7452){\rotatebox{300.0}{\makebox(0,0)[lb]{\smash{{\SetFigFont{14}{16.8}{\familydefault}{\mddefault}{\updefault}{\color[rgb]{0,0,0}$z+o_a+e_b+e_c+a-b-$3}%
}}}}}
\put(4211,-6631){\makebox(0,0)[lb]{\smash{{\SetFigFont{14}{16.8}{\familydefault}{\mddefault}{\updefault}{\color[rgb]{1,1,1}$a_1$}%
}}}}
\put(5171,-6061){\makebox(0,0)[lb]{\smash{{\SetFigFont{14}{16.8}{\familydefault}{\mddefault}{\updefault}{\color[rgb]{1,1,1}$a_2$}%
}}}}
\put(6271,-6621){\makebox(0,0)[lb]{\smash{{\SetFigFont{14}{16.8}{\familydefault}{\mddefault}{\updefault}{\color[rgb]{1,1,1}$a_3$}%
}}}}
\put(10751,-6211){\makebox(0,0)[lb]{\smash{{\SetFigFont{14}{16.8}{\familydefault}{\mddefault}{\updefault}{\color[rgb]{1,1,1}$b_1$}%
}}}}
\put(14031,-6371){\makebox(0,0)[lb]{\smash{{\SetFigFont{14}{16.8}{\familydefault}{\mddefault}{\updefault}{\color[rgb]{1,1,1}$a_1$}%
}}}}
\put(14991,-5841){\makebox(0,0)[lb]{\smash{{\SetFigFont{14}{16.8}{\familydefault}{\mddefault}{\updefault}{\color[rgb]{1,1,1}$a_2$}%
}}}}
\put(16071,-6381){\makebox(0,0)[lb]{\smash{{\SetFigFont{14}{16.8}{\familydefault}{\mddefault}{\updefault}{\color[rgb]{1,1,1}$a_3$}%
}}}}
\put(21811,-5941){\makebox(0,0)[lb]{\smash{{\SetFigFont{14}{16.8}{\familydefault}{\mddefault}{\updefault}{\color[rgb]{1,1,1}$b_1$}%
}}}}
\put(20875,-6487){\makebox(0,0)[lb]{\smash{{\SetFigFont{14}{16.8}{\familydefault}{\mddefault}{\updefault}{\color[rgb]{1,1,1}$b_2$}%
}}}}
\end{picture}%
}
\caption{(a) Obtaining an $\overline{K}^{(3)}$-type region from the region $K^{(3)}_{x,-3,z}(\textbf{a};\textbf{c};\textbf{b})$ with $a-b \geq 3$.
(b) Obtaining an (horizontally reflected) $\overline{K}^{(7)}$-type region from the region $K^{(3)}_{x,a-b,z}(\textbf{a};\textbf{c};\textbf{b})$ with $0<a-b\leq 3$. }\label{fig:Specialoff5}
\end{figure}

\medskip

\emph{Case 2b: $R$ is a lower region and $ 0<2d\leq a-b$.}

\medskip

We have $y\geq -2d$. If $y=-2d$, then by investigating forced lozenges
\begin{align}
  \M(E^{(i)}_{x,a-b,z}(\textbf{a}; \textbf{c}; \textbf{b}))&=\M(\overline{E}^{(3+i \mod 6)}_{x,\min(a_1,a-b)-2d,z}(a_2,\dotsc,a_m;\textbf{c};\textbf{b}))\\
  \M(F^{(j)}_{x,-2d,z}(\textbf{a}; \textbf{c}; \textbf{b}))&=\M(\overline{F}^{(4+j \mod 8)}_{x,\min(a_1,a-b)-2d,z}(a_2,\dotsc,a_m;\textbf{c};\textbf{b}))\\
  \M(G^{(k)}_{x,-2d,z}(\textbf{a}; \textbf{c}; \textbf{b}))&=\M(\overline{G}^{(4+k \mod 8)}_{x,\min(a_1,a-b)-2d,z}(a_2,\dotsc,a_m;\textbf{c};\textbf{b}))\\
  \M(K^{(l)}_{x,-2d,z}(\textbf{a}; \textbf{c}; \textbf{b}))&=\M(\overline{K}^{(4+l \mod 8)}_{x,\min(a_1,a-b)-2d,z}(a_2,\dotsc,a_m;\textbf{c};\textbf{b})),
 \end{align}
for $i=5,6$, $j=1,2,8$, $k=1,6,7,8$, and $l=1,2,3,4$ (see Figure \ref{fig:Specialoff5}(a) for an example). Again, we can check that the new regions (the one on the right-hand sides of the above identities) have $h$-parameter strictly less than that of the corresponding region on the left-hand side.

\medskip

\emph{Case 2c: $R$ is a lower region and  $0<a-b\leq 2d$.}

\medskip

We have $y\geq b-a$ (in this case $b-a$ is negative). If $y=b-a$, then
\begin{align}
  \M(E^{(i)}_{x,a-b,z}(\textbf{a}; \textbf{c}; \textbf{b}))&=\M(\overline{E}^{(3+i \mod 6)}_{x,b-a,z}(\textbf{a};\ {}^0\textbf{c};\ b_2,\dotsc,b_n))\\
  \M(F^{(j)}_{x,-2d,z}(\textbf{a}; \textbf{c}; \textbf{b}))&=\M(\overline{F}^{(4+j \mod 8)}_{x,b-a,z}(\textbf{a};\ {}^0\textbf{c};\ b_2,\dotsc,b_n))\\
  \M(G^{(k)}_{x,-2d,z}(\textbf{a}; \textbf{c}; \textbf{b}))&=\M(\overline{G}^{(4+k \mod 8)}_{x,b-a,z}(\textbf{a};\ {}^0\textbf{c};\ b_2,\dotsc,b_n))\\
  \M(K^{(l)}_{x,-2d,z}(\textbf{a}; \textbf{c}; \textbf{b}))&=\M(\overline{K}^{(4+l \mod 8)}_{x,b-a,z}(\textbf{a};\ {}^0\textbf{c};\ b_2,\dotsc,b_n)),
 \end{align}
for $i=5,6$, $j=1,2,8$, $k=1,6,7,8$, and $l=1,2,3,4$ (illustrated in Figure \ref{fig:Specialoff5}(b)).  One readily sees that the regions on the right-hand sides of the above equalities satisfy the conditions of the lemma.
 This finishes our proof.
\end{proof}

\subsection{Combined proof for  all off-central regions}

We prove all $30$ tiling formulas in Section 2 at the same time by induction on $h=p+x+z$. Recall that $p$ denotes the quasi-perimeter of the region, i.e. the perimeter of the base hexagon that our region is obtained by removing three ferns from.

First by Lemma \ref{lem3}, we can assume that our three ferns consists of all triangles of positive side lengths, except for a possible $0$-triangle in front of the middle fern.

The base cases are the cases $x=0$, $z=0$, and $p<6$.

\begin{figure}\centering
\setlength{\unitlength}{3947sp}%
\begingroup\makeatletter\ifx\SetFigFont\undefined%
\gdef\SetFigFont#1#2#3#4#5{%
  \reset@font\fontsize{#1}{#2pt}%
  \fontfamily{#3}\fontseries{#4}\fontshape{#5}%
  \selectfont}%
\fi\endgroup%
\resizebox{8cm}{!}{
\begin{picture}(0,0)%
\includegraphics{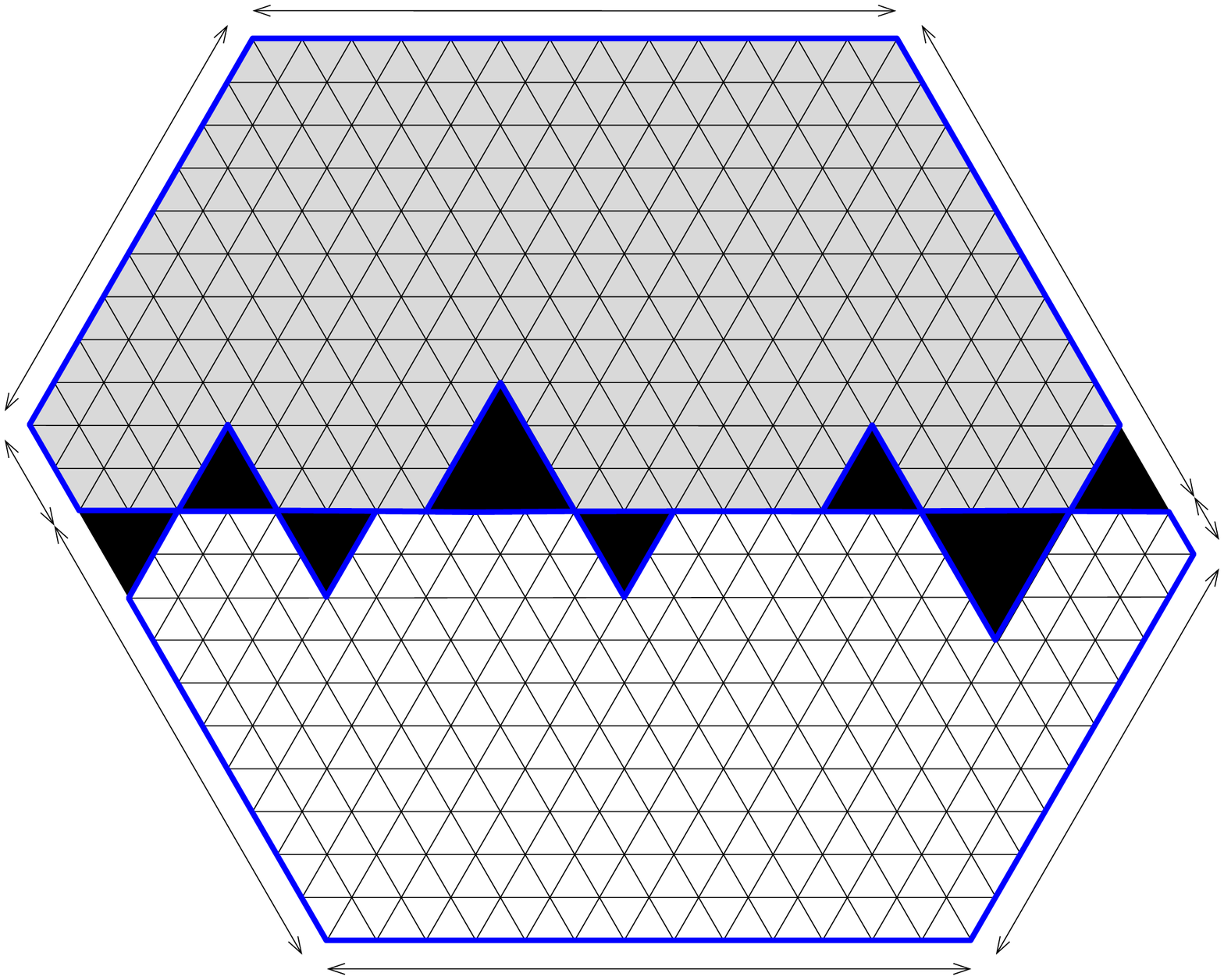}%
\end{picture}%
%
%

\begin{picture}(10594,8721)(1465,-10616)
\put(2151,-4851){\rotatebox{60.0}{\makebox(0,0)[lb]{\smash{{\SetFigFont{14}{16.8}{\rmdefault}{\mddefault}{\updefault}{\color[rgb]{0,0,0}$z+e_a+o_b+o_c$}%
}}}}}
\put(2661,-6731){\makebox(0,0)[lb]{\smash{{\SetFigFont{14}{16.8}{\rmdefault}{\mddefault}{\updefault}{\color[rgb]{1,1,1}$a_1$}%
}}}}
\put(3561,-6331){\makebox(0,0)[lb]{\smash{{\SetFigFont{14}{16.8}{\rmdefault}{\mddefault}{\updefault}{\color[rgb]{1,1,1}$a_2$}%
}}}}
\put(4361,-6731){\makebox(0,0)[lb]{\smash{{\SetFigFont{14}{16.8}{\rmdefault}{\mddefault}{\updefault}{\color[rgb]{1,1,1}$a_3$}%
}}}}
\put(5729,-6248){\makebox(0,0)[lb]{\smash{{\SetFigFont{14}{16.8}{\rmdefault}{\mddefault}{\updefault}{\color[rgb]{1,1,1}$c_1$}%
}}}}
\put(10961,-6321){\makebox(0,0)[lb]{\smash{{\SetFigFont{14}{16.8}{\rmdefault}{\mddefault}{\updefault}{\color[rgb]{1,1,1}$b_1$}%
}}}}
\put(9921,-6891){\makebox(0,0)[lb]{\smash{{\SetFigFont{14}{16.8}{\rmdefault}{\mddefault}{\updefault}{\color[rgb]{1,1,1}$b_2$}%
}}}}
\put(8931,-6331){\makebox(0,0)[lb]{\smash{{\SetFigFont{14}{16.8}{\rmdefault}{\mddefault}{\updefault}{\color[rgb]{1,1,1}$b_3$}%
}}}}
\put(5013,-6720){\makebox(0,0)[lb]{\smash{{\SetFigFont{14}{16.8}{\rmdefault}{\mddefault}{\updefault}{\color[rgb]{0,0,0}$\frac{x}{2}-1$}%
}}}}
\put(7468,-6838){\makebox(0,0)[lb]{\smash{{\SetFigFont{14}{16.8}{\rmdefault}{\mddefault}{\updefault}{\color[rgb]{0,0,0}$\frac{x}{2}+1$}%
}}}}
\put(5601,-2181){\makebox(0,0)[lb]{\smash{{\SetFigFont{14}{16.8}{\rmdefault}{\mddefault}{\updefault}{\color[rgb]{0,0,0}$x+o_a+e_b+e_c$}%
}}}}
\put(10071,-3331){\rotatebox{300.0}{\makebox(0,0)[lb]{\smash{{\SetFigFont{14}{16.8}{\rmdefault}{\mddefault}{\updefault}{\color[rgb]{0,0,0}$y+z+e_a+o_b+o_c$}%
}}}}}
\put(11891,-6471){\makebox(0,0)[lb]{\smash{{\SetFigFont{14}{16.8}{\rmdefault}{\mddefault}{\updefault}{\color[rgb]{0,0,0}$y$}%
}}}}
\put(10741,-9531){\rotatebox{60.0}{\makebox(0,0)[lb]{\smash{{\SetFigFont{14}{16.8}{\rmdefault}{\mddefault}{\updefault}{\color[rgb]{0,0,0}$z+o_a+e_b+e_c$}%
}}}}}
\put(5901,-10601){\makebox(0,0)[lb]{\smash{{\SetFigFont{14}{16.8}{\rmdefault}{\mddefault}{\updefault}{\color[rgb]{0,0,0}$x+e_a+o_b+o_c$}%
}}}}
\put(2371,-7411){\rotatebox{300.0}{\makebox(0,0)[lb]{\smash{{\SetFigFont{14}{16.8}{\rmdefault}{\mddefault}{\updefault}{\color[rgb]{0,0,0}$y+z+o_a+e_b+e_c$}%
}}}}}
\put(1551,-6041){\rotatebox{300.0}{\makebox(0,0)[lb]{\smash{{\SetFigFont{14}{16.8}{\rmdefault}{\mddefault}{\updefault}{\color[rgb]{0,0,0}$y+b-a$}%
}}}}}
\put(6854,-6720){\makebox(0,0)[lb]{\smash{{\SetFigFont{14}{16.8}{\rmdefault}{\mddefault}{\updefault}{\color[rgb]{1,1,1}$c_2$}%
}}}}
\end{picture}}
\caption{Dividing the region $E^{(1)}_{x,y,0}(\textbf{a};\textbf{c};\textbf{b})$ in to two dented semihexagons.}\label{Basecaseoff1}
\end{figure}

\begin{figure}\centering
\setlength{\unitlength}{3947sp}%
\begingroup\makeatletter\ifx\SetFigFont\undefined%
\gdef\SetFigFont#1#2#3#4#5{%
  \reset@font\fontsize{#1}{#2pt}%
  \fontfamily{#3}\fontseries{#4}\fontshape{#5}%
  \selectfont}%
\fi\endgroup%
\resizebox{8cm}{!}{
\begin{picture}(0,0)%
\includegraphics{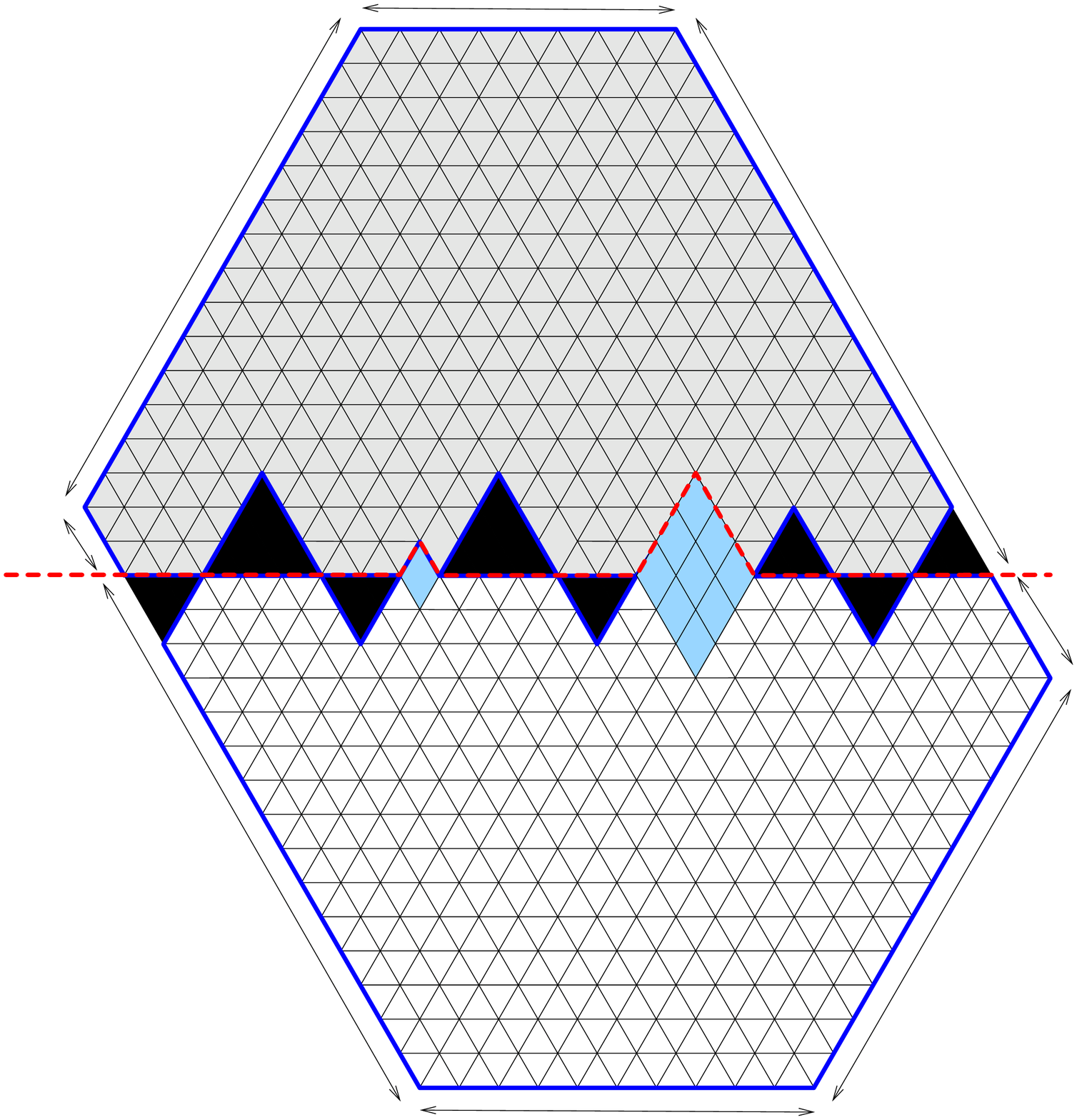}%
\end{picture}%
%
%

\begin{picture}(11521,12166)(980,-15096)
\put(5291,-3211){\makebox(0,0)[lb]{\smash{{\SetFigFont{14}{16.8}{\rmdefault}{\mddefault}{\updefault}{\color[rgb]{0,0,0}$x+o_a+e_b+e_c$}%
}}}}
\put(9371,-4801){\rotatebox{300.0}{\makebox(0,0)[lb]{\smash{{\SetFigFont{14}{16.8}{\rmdefault}{\mddefault}{\updefault}{\color[rgb]{0,0,0}$y+z+e_a+o_b+o_c$}%
}}}}}
\put(11871,-9441){\rotatebox{300.0}{\makebox(0,0)[lb]{\smash{{\SetFigFont{14}{16.8}{\rmdefault}{\mddefault}{\updefault}{\color[rgb]{0,0,0}$y+a-b$}%
}}}}}
\put(10421,-13781){\rotatebox{60.0}{\makebox(0,0)[lb]{\smash{{\SetFigFont{14}{16.8}{\rmdefault}{\mddefault}{\updefault}{\color[rgb]{0,0,0}$z+o_a+e_b+e_c$}%
}}}}}
\put(6281,-15081){\makebox(0,0)[lb]{\smash{{\SetFigFont{14}{16.8}{\rmdefault}{\mddefault}{\updefault}{\color[rgb]{0,0,0}$x+e_a+o_b+o_c$}%
}}}}
\put(2803,-11211){\rotatebox{300.0}{\makebox(0,0)[lb]{\smash{{\SetFigFont{14}{16.8}{\rmdefault}{\mddefault}{\updefault}{\color[rgb]{0,0,0}$y+z+o_a+e_b+e_c$}%
}}}}}
\put(1509,-8971){\makebox(0,0)[lb]{\smash{{\SetFigFont{14}{16.8}{\rmdefault}{\mddefault}{\updefault}{\color[rgb]{0,0,0}$y$}%
}}}}
\put(2501,-6611){\rotatebox{60.0}{\makebox(0,0)[lb]{\smash{{\SetFigFont{14}{16.8}{\rmdefault}{\mddefault}{\updefault}{\color[rgb]{0,0,0}$z+e_a+o_b+o_c$}%
}}}}}
\put(5115,-9933){\makebox(0,0)[lb]{\smash{{\SetFigFont{14}{16.8}{\rmdefault}{\mddefault}{\updefault}{\color[rgb]{0,0,0}$\frac{z}{2}-1$}%
}}}}
\put(2599,-9437){\makebox(0,0)[lb]{\smash{{\SetFigFont{14}{16.8}{\rmdefault}{\mddefault}{\updefault}{\color[rgb]{1,1,1}$a_1$}%
}}}}
\put(3544,-8846){\makebox(0,0)[lb]{\smash{{\SetFigFont{14}{16.8}{\rmdefault}{\mddefault}{\updefault}{\color[rgb]{1,1,1}$a_2$}%
}}}}
\put(4607,-9437){\makebox(0,0)[lb]{\smash{{\SetFigFont{14}{16.8}{\rmdefault}{\mddefault}{\updefault}{\color[rgb]{1,1,1}$a_3$}%
}}}}
\put(6036,-8964){\makebox(0,0)[lb]{\smash{{\SetFigFont{14}{16.8}{\rmdefault}{\mddefault}{\updefault}{\color[rgb]{1,1,1}$c_1$}%
}}}}
\put(7059,-9555){\makebox(0,0)[lb]{\smash{{\SetFigFont{14}{16.8}{\rmdefault}{\mddefault}{\updefault}{\color[rgb]{1,1,1}$c_2$}%
}}}}
\put(9095,-9082){\makebox(0,0)[lb]{\smash{{\SetFigFont{14}{16.8}{\rmdefault}{\mddefault}{\updefault}{\color[rgb]{1,1,1}$b_3$}%
}}}}
\put(9922,-9555){\makebox(0,0)[lb]{\smash{{\SetFigFont{14}{16.8}{\rmdefault}{\mddefault}{\updefault}{\color[rgb]{1,1,1}$b_2$}%
}}}}
\put(10639,-9086){\makebox(0,0)[lb]{\smash{{\SetFigFont{14}{16.8}{\rmdefault}{\mddefault}{\updefault}{\color[rgb]{1,1,1}$b_1$}%
}}}}
\put(7775,-9318){\makebox(0,0)[lb]{\smash{{\SetFigFont{14}{16.8}{\rmdefault}{\mddefault}{\updefault}{\color[rgb]{0,0,0}$\frac{z}{2}+1$}%
}}}}
\end{picture}}
\caption{Dividing the region $E^{(1)}_{0,y,z}(\textbf{a};\textbf{c};\textbf{b})$ in to two dented semihexagons.}\label{Basecaseoff2}
\end{figure}

If $z=0$,  we divide our region into two dented semihexagons with dents by cutting along the horizontal lattice line containing our three ferns. This semihexagons corresponds to the two
$s$-terms in our tiling formulas (shown in Figure \ref{Basecaseoff1}). By the Region-splitting Lemma \ref{RS}, the tiling number of our region is equal to the product of tiling numbers of the two dented semihexagons. Then our theorems follow from Cohn--Larsen--Propp's formula (\ref{semieq}).

If $x=0$, we also partition the region into two parts along the same lattice line above, the only difference is that we now add two `bump' of the same size as the gaps between the three ferns. The upper part is a dented semihexagon, and the lower part, after removing several vertical forced lozenges at the two bumps, is congruent with another dented semihexagon (illustrated in Figure \ref{Basecaseoff2}). Again, the theorems follows from Cohn--Larsen--Propp formula (\ref{semieq}) and the Region-splitting Lemma \ref{RS}.

If $p< 6$, by Lemma \ref{claimp}, we always have $p\geq 2x+4z$. This implies that $2x+4z<6$, and at least one of $x$ and $z$ is $0$. This base case is then reduced to the above base cases.

For the induction step, we assume that $x$ and $z$ are positive, that $p\geq 6$, and that all our 30  tiling formulas hold for off-central regions whose $h$-parameter strictly less than $h=p+x+z$.

If $y$ does not achieve its minimal value, by the $66$ recurrences in Sections 3.3--3.10, the tiling number of each of our off-central regions can be written in terms of tiling numbers of other regions, that are either the regions in the central case treated in \cite[Theorems 2.2--2.9]{HoleDent} or other off-central regions with $h$-parameter strictly less than $h$. This means that by Theorem 2.2--2.9 in \cite{HoleDent} and by the induction hypothesis, we get an explicit formula for the tiling number of each of our regions. Then, by performing a straightforward simplification, one can verify that the latter  explicit tiling formula agrees with that in our theorem.

If $y$ achieves its minimal value, then our recurrences dot not work well anymore. However, by Lemma \ref{lem4}, each of our regions has the same tiling number as that of a off-central region whose $h$-parameter strictly less than. Thus, our theorems follow from the induction hypothesis. This finishes our proof.



\section*{Acknowledgement}

The author would like to thank Dennis Stanton and Hjalmar Rosengren for pointing out to him paper \cite{Rosen}.

\end{document}